\DeclareMathAlphabet{\mathpzc}{OT1}{pzc}{m}{it}
\numberwithin{equation}{section}
\newcommand{\bh}{{\bf h}}
\newcommand{\bC}{{\bf C}}
\newcommand{\bF}{{\bf F}}
\newcommand{\bP}{{\bf P}}
\newcommand{\bQ}{{\bf Q}}
\newcommand{\bR}{{\bf R}}
\newcommand{\bZ}{{\bf Z}}
\newcommand{\cB}{\mathcal{B}}
\newcommand{\cC}{\mathcal{C}}
\newcommand{\cF}{\mathcal{F}}
\newcommand{\cH}{\mathcal{H}}
\newcommand{\cM}{\mathcal{M}}
\newcommand{\cS}{\mathcal{S}}
\newcommand{\sB}{\mathscr{B}}
\newcommand{\sC}{\mathscr{C}}
\newcommand{\sD}{\mathscr{D}}
\newcommand{\sF}{\mathscr{F}}
\newcommand{\sG}{\mathscr{G}}
\newcommand{\sM}{\mathscr{M}}
\newcommand{\sP}{\mathscr{P}}
\newcommand{\sR}{\mathscr{R}}
\newcommand{\sS}{\mathscr{S}}
\newcommand{\sT}{\mathscr{T}}
\newcommand{\sX}{\mathscr{X}}
\newcommand{\fa}{{\mathfrak a}}
\newcommand{\fd}{{\mathfrak d}}
\newcommand{\fg}{{\mathfrak g}}
\newcommand{\fj}{{\mathfrak j}}
\newcommand{\fm}{{\mathfrak m}}
\newcommand{\fp}{{\mathfrak p}}
\newcommand{\fu}{{\mathfrak u}}
\newcommand{\fC}{{\mathfrak C}}
\newcommand{\fE}{{\mathfrak E}}
\newcommand{\fI}{{\mathfrak I}}
\newcommand{\fJ}{{\mathfrak J}}
\newcommand{\Z}{\bZ}
\newcommand{\Q}{\bQ}
\newcommand{\R}{\bR}
\newcommand{\C}{\bC}
\newcommand{\su}{\mathfrak{su}}
\DeclareMathOperator{\coker}{coker}
\DeclareMathOperator{\ind}{ind}
\DeclareMathOperator{\tr}{tr}
\renewcommand{\det}{\operatorname{det}}
\renewcommand{\P}{\bP}
\renewcommand{\epsilon}{\varepsilon}
\def\({\mathopen{}\left(}
\def\){\right)\mathclose{}}
\def\<{\mathopen{}\left<}
\def\>{\right>\mathclose{}}
\newcommand{\iso}{\cong}
\definecolor{gold}{rgb}{0.85,.66,0}
\definecolor{cherry}{rgb}{0.9,.1,.2}
\definecolor{burgundy}{rgb}{0.8,.2,.2}
\definecolor{orangered}{rgb}{0.85,.3,0}
\definecolor{orange}{rgb}{0.85,.4,0}
\definecolor{olive}{rgb}{.45,.4,0}
\definecolor{lime}{rgb}{.6,.9,0}
\definecolor{green}{rgb}{.2,.7,0}
\definecolor{grey}{rgb}{.4,.4,.2}
\definecolor{brown}{rgb}{.4,.3,.1}
\def\makeautorefname#1#2{\AtBeginDocument{\expandafter\def\csname#1autorefname\endcsname{#2}}}
\newcommand{\mynewtheorem}[2]{
  \newaliascnt{#1}{equation}          
  \newtheorem{#1}[#1]{#2}
  \aliascntresetthe{#1}
  \makeautorefname{#1}{#2}
}
\numberwithin{substep}{step}
\numberwithin{subcase}{case}
\theoremstyle{remark}
\theoremstyle{definition}
\newtheorem*{convention*}{Convention}
\newtheorem*{conventions*}{Conventions}
\newcommand\hrI{{\widehat { I}}}
\newcommand\hrC{{\widehat { C}}}
\newcommand\fhrC{{\widehat { \mathfrak C}}}
\newcommand\crI{{\widecheck { I}}}
\newcommand\crC{{\widecheck { C}}}
\newcommand\fcrC{{\widecheck {\mathfrak C}}}
\newcommand\brI{{\overline { I}}}
\newcommand\brC{{\overline {C}}}
\newcommand\fbrC{{\overline {\mathfrak C}}}
\newcommand\diamd{\diamond}
\newcommand\hinv{h}
\title{Equivariant aspects of singular instanton Floer homology}
\author{Aliakbar Daemi\thanks{The work of AD was supported by NSF Grant DMS-1812033.} \hspace{1cm} Christopher Scaduto}
\date{}
\newcommand{\Addresses}{{
  \bigskip
  \footnotesize
  Aliakbar Daemi, \textsc{Department of Mathematics, Washington University in St. Louis, One Brookings drive, Room 207A,
  St. Louis, MO 63130}\par\nopagebreak
  \textit{E-mail address}: \texttt{adaemi@wustl.edu}
  \vspace{.2cm}

Christopher Scaduto, \textsc{Department of Mathematics, University of Miami, 1365 Memorial Dr 515, Coral Gables, FL 33124}\par\nopagebreak
  \textit{E-mail address}: \texttt{cscaduto@miami.edu}
}}
\begin{document}
\maketitle

{\abstract{We associate several invariants to a knot in an integer homology 3-sphere using $SU(2)$ singular instanton gauge theory. There is a space of framed singular connections for such a knot, equipped with a circle action and an equivariant Chern--Simons functional, and our constructions are morally derived from the associated equivariant Morse chain complexes. In particular, we construct a triad of groups analogous to the knot Floer homology package in Heegaard Floer homology, several Fr\o yshov-type invariants which are concordance invariants, and more. The behavior of our constructions under connected sums are determined. We recover most of Kronheimer and Mrowka's singular instanton homology constructions from our invariants. Finally, the ADHM description of the moduli space of instantons on the 4-sphere can be used to give a concrete characterization of the moduli spaces involved in the invariants of spherical knots, and we demonstrate this point in several examples.}}

\newpage

\hypersetup{linkcolor=black}
\tableofcontents

\newpage

%!TEX root = main.tex
\section{Introduction}

Instanton Floer homology \cite{floer:inst1} and Heegaard Floer homology \cite{os-1} provide two powerful invariants of 3-manifolds, each of which have knot-theoretic variations: singular instanton Floer homology \cite{KM:YAFT} and knot Floer homology \cite{os-knot, rasmussen-thesis}. These knot invariants share many formal properties: they are both functorial with respect to surface cobordisms, they each have skein exact triangles, and it is even conjectured that some versions of the theories agree with one another \cite[Conjecture 7.25]{km-sutures}. Despite their similarities, each of the two theories have some advantage over the other.

On the one hand, singular instanton Floer homology is more directly related to the fundamental group of the knot complement. For example, this Floer homology can be used to show that the knot group of any non-trivial knot admits a non-abelian representation into the Lie group $SU(2)$ \cite{km-propertyp, km-sutures}. On the other hand, knot Floer homology currently has a richer algebraic structure which can be used to obtain invariants of closed 3-manifolds obtained by surgery on a knot \cite{os-intsurgeries, os-rationalsurgeries}. Moreover, knot Floer homology is more computable, and in fact has combinatorial descriptions \cite{mos-knot-comb, os-borderedknot}.

A natural question is whether there is a refinement of singular instanton Floer homology that helps bridge the gap between the two theories. An important step in this direction was recently taken by Kronheimer and Mrowka \cite{km-barnatan}. The main goal of the present paper is to propose a different approach to this question. Like \cite{KM:YAFT, KM:unknot}, we construct invariants of knots in integer homology spheres using singular instantons. However, in contrast to those constructions, we do not avoid reducibles, and instead exploit them to derive equivariant homological invariants. As we explain below, the relevant symmetry group in this setting is $S^1$.

The knot invariants in this paper recover various versions of singular instanton Floer homology in the literature, including all of the ones constructed in \cite{KM:unknot, km-rasmussen, km-barnatan}. Moreover, some of the structures of our invariants do not seem to have any obvious analogues in the context of Heegaard Floer invariants. For instance, a {\it filtration by the Chern-Simons functional} and a Floer homology group {\it categorifying the knot signature} can be derived from the main construction of the present work.

\subsection*{Motivation}
The basic idea behind the main construction of the present paper is to construct a configuration space of singular connections with an $S^1$-action. Let $K$ be a knot in an integer homology sphere $Y$ and fix a basepoint on $K$. Consider the space of connections on the trivial $SU(2)$-bundle $E$ over $Y$ which are singular along $K$ and such that the holonomy along any meridian of $K$ is asymptotic to a conjugate of
\begin{equation}\label{i}
  \left[
  \begin{array}{cc}
   	i&0\\
	0&-i\\
   \end{array}
   \right]\in SU(2)
\end{equation}
as the size of meridian goes to zero. (See Section \ref{sec:defns} for a more precise review of the definition of such singular connections.) A {\it framed singular connection} is a singular connection with a trivialization of $E$ at the basepoint of $K$ such that the holonomy of the connection along a meridian of $K$ at the basepoint is asymptotic to \eqref{i} (rather than just conjugate to it). The space of automorphisms of $E$ acts freely on the space of framed singular connections and we denote the quotient by $\widetilde \sB(Y,K)$. There is an $S^1$-action on $\widetilde \sB(Y,K)$ given by changing the framing at the basepoint. 

An important feature of this $S^1$-action is that the stabilizers of elements in $\widetilde \sB(Y,K)$ are not all the same. The element $-1\in S^1$ acts trivially on $\widetilde \sB(Y,K)$. Thus the action factors through $S^1\cong S^1/\{\pm1\}$ which acts freely on a singular framed connection in $\widetilde \sB(Y,K)$ unless the underlying singular connection is {\it $S^1$-reducible}, namely, it respects a decomposition of $E$ into a sum of two (necessarily dual) complex line bundles. Although framed connections do not appear in the sequel, our constructions are motivated by the above $S^1$-action and the interactions between framed singular connections with different stabilizers. An important source of inspiration for the authors was a similar story for non-singular connections which is developed in \cite{donaldson-book, froyshov, miller}.

\subsection*{$\cS$-complexes associated to knots}

The fundamental object that we associate to a knot $K\subset Y$ in an integer homology 3-sphere is a chain complex $(\widetilde C_*(Y,K),\widetilde d)$ which is a module over the graded ring $\Z[\chi]/(\chi^2)$, where $\chi$ has degree $1$. The ring $\Z[\chi]/(\chi^2)$ should be thought of as the homology ring of $S^1$ where the ring structure is induced by the multiplication map. In particular, one expects a similar structure arising from the singular chain complex of a topological space with an $S^1$-action. In our setup, singular homology is replaced with Floer homology. In fact, the chain complex $\widetilde C_*(Y,K)$ we associate to a knot $K$ decomposes as:
\begin{equation}\label{cS-com}
  \widetilde C_*(Y,K)=C_*(Y,K)\oplus C_{*}(Y,K)[1]\oplus \Z.
\end{equation}
Here $C_*(Y,K)$ is $\Z/4$-graded, $C_{*}(Y,K)[1]$ is the same complex as $C_*(Y,K)$ with the grading shifted up by $1$, and $\Z$ is in grading $0$. The action of $\chi$ on $\widetilde C_*(Y,K)$ maps the first factor by the identity to the second factor, and maps the remaining two factors to zero. We call a chain complex over the graded ring $\Z[\chi]/(\chi^2)$ of the form \eqref{cS-com} an {\emph{$\cS$-complex}}. Although the complex $\widetilde C_*(Y,K)$ depends on some auxiliary choices (e.g. a Riemannian metric), the chain homotopy type of $\widetilde C_*(Y,K)$ in the category of $\cS$-complexes is an invariant of $(Y,K)$. (See Section \ref{sec:tilde} for more details.) In particular, the homology
\[
	\widetilde I_\ast(Y,K):=H(\widetilde C_*(Y,K),\widetilde d)
\]
is an invariant of the pair $(Y,K)$. We will see below that this homology group is naturally isomorphic to Kronheimer and Mrowka's $I^\natural (Y,K)$ from \cite{KM:unknot}.

By applying various algebraic constructions to the $\cS$-complex $\widetilde C_*(Y,K)$ we can recover various knot invariants and also construct new ones. One of the invariants we recover is a counterpart of Floer's instanton homology for integer homology spheres, and may be compared to a version of Collin and Steer's orbifold instanton homology from \cite{collin-steer}. The differential $\widetilde d$ gives rise to a differential $d$ on $C_*(Y,K)$, and we write $I_\ast(Y,K)$ for the homology of the complex $(C_*(Y,K),d)$. The Euler characteristic of $I_\ast(Y,K)$ was essentially computed by Herald \cite{herald}, generalizing the work of Lin \cite{lin}. In summary, we have the following:

\begin{theorem}\label{singular-Floer-Euler}
	Let $K\subset Y$ be a knot embedded in an integer homology 3-sphere $Y$. The $\Z/4$-graded abelian group $I_\ast(Y,K)$ is an invariant of the equivalence class of the knot $(Y,K)$. Its Euler characteristic satisfies
	\[
		\chi\left(I_\ast(Y,K)\right)= 4\lambda(Y) + \frac{1}{2}\sigma(K)
	\]
	where $\lambda(Y)$ is the Casson invariant of $Y$ and $\sigma(K)$ is the signature of the knot $K\subset Y$.
\end{theorem}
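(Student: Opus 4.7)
The statement has two parts: invariance of the $\Z/4$-graded abelian group $I_\ast(Y,K)$, and the Euler characteristic formula. My plan is to extract invariance from the invariance of $\widetilde C_\ast(Y,K)$ as an $\cS$-complex (already discussed in the introduction), and then identify the Euler characteristic of $I_\ast(Y,K)$ with a signed count of traceless $SU(2)$ representations of the knot group, which by Herald's generalization of Lin's theorem evaluates to $4\lambda(Y)+\tfrac12\sigma(K)$.

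\textbf{Step 1 (Invariance).} The decomposition \eqref{cS-com} identifies $C_\ast(Y,K)$ with the quotient of the $\chi$-module $\widetilde C_\ast(Y,K)$ by the submodule $\ker(\chi)$, shifted appropriately. More invariantly, the $\cS$-complex structure singles out the summand $\Z$ as the preferred reducible piece, and the remaining part $C_\ast(Y,K)\oplus C_\ast(Y,K)[1]$ is $\chi$ acting as an isomorphism onto a shifted copy. A standard algebraic check shows that any chain homotopy equivalence of $\cS$-complexes descends to a $\Z/4$-graded chain homotopy equivalence on the subquotient $C_\ast(Y,K)$, up to an overall shift which is pinned down by the preferred grading. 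Since $\widetilde C_\ast(Y,K)$ is an invariant of $(Y,K)$ up to $\cS$-equivalence, this yields the desired invariance of $I_\ast(Y,K)=H_\ast(C_\ast(Y,K),d)$.

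\textbf{Step 2 (Generators and signs).} For generic small holonomy perturbations, the generators of $C_\ast(Y,K)$ are the gauge-equivalence classes of irreducible singular flat connections on $Y$ with monodromy along meridians of $K$ conjugate to the element \eqref{i}. By the standard dictionary, these correspond to conjugacy classes of irreducible representations $\rho\co\pi_1(Y\setminus K)\to SU(2)$ with $\tr\rho(\mu)=0$ for each meridian $\mu$. The mod-$4$ grading of each generator is determined by the spectral flow of a family of Dirac-type operators, and the contribution of each generator to the Euler characteristic is a sign $\pm 1$ computed from this spectral flow. My plan is to verify that this sign coincides, for each nondegenerate traceless representation, with the sign assigned in Herald's signed count of such representations; this is a cohomological index calculation of the same flavor as in the Casson invariant.

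\textbf{Step 3 (Herald's formula).} Assuming Step 2, the Euler characteristic of $I_\ast(Y,K)$ equals the signed count of irreducible traceless representations of $\pi_1(Y\setminus K)$ in $SU(2)$. Herald \cite{herald} (building on Lin \cite{lin}, who treated the case $Y=S^3$) proved that this signed count equals $4\lambda(Y)+\tfrac12\sigma(K)$. This finishes the proof once the sign comparison is in place.

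\textbf{Main obstacle.} The delicate point is Step 2: matching the spectral-flow/orientation conventions used here with those used by Herald. Secondary care is needed in Step 1 to ensure that degeneracies or cancellations in $(C_\ast,d)$ do not disturb the grading on which the Euler characteristic depends, and to treat holonomy perturbations so that all critical points of the perturbed Chern--Simons functional are nondegenerate while the signed count of traceless representations is unchanged.
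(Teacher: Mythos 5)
Your proposal follows essentially the same route as the paper: establish invariance via Floer-theoretic cobordism maps, then identify the Euler characteristic $\chi(I_\ast(Y,K))$ with Herald's signed count of traceless $SU(2)$ representations (the paper's Theorem \ref{thm:herald}). Two remarks. First, your Step 1 derives $I_\ast$-invariance from invariance of the $\cS$-complex $\widetilde C_\ast(Y,K)$. The algebraic descent is correct: an $\cS$-morphism has block upper-triangular form \eqref{eq:tilde-map}, so its $(1,1)$-entry $\lambda:C_\ast\to C'_\ast$ is a chain map, an $\cS$-chain homotopy equivalence descends to a chain homotopy equivalence on the $C_\ast$-summands, and no grading-shift ambiguity arises since $C_\ast$ is canonically $\im(\chi)$ shifted down by one. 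However, this is a reorganization rather than a shortcut. The paper proves $I_\ast(Y,K)$-invariance directly in Subsection \ref{sing-Floer} using only the maps $\lambda_{(W,S)}$, before $\cS$-complexes are even defined; your approach presupposes the full $\cS$-complex functoriality (Theorem \ref{thm:framedcat}), which requires the extra maps $v$, $\delta_i$, $\mu$, $\Delta_i$ and their relations.

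Second, Step 2 is correctly flagged as the crux but left as a sketch. The paper carries it out in Proposition \ref{prop:localsigns} by adapting Herald's Proposition 7.2 (a pillowcase/Mayer--Vietoris argument in the style of Taubes' Proposition 5.2) to the order-4 meridian holonomy setting, whereas Herald's original statement treats the trivial-holonomy case. You also omit a sign issue: Herald's Theorem 0.1 only determines the count up to a universal sign $\pm$; the paper shows this sign is independent of $(Y,K)$ by adapting Herald's Lemma 7.5, and then fixes it by computing a concrete example, the $(3,5)$ torus knot of Subsection \ref{sec:35}. A minor terminology slip: the mod-$4$ grading is defined via spectral flow of the extended Hessian of the perturbed Chern--Simons functional (equivalently, the linearized ASD operator $\sD_A$ on the cylinder), not a Dirac-type operator.
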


Another chain complex that can be constructed from $(\widetilde C_*(Y,K),\widetilde d)$ is given by:
\[
  \widehat C_*(Y,K):=\widetilde C_*(Y,K)\otimes_{\Z}\Z[x]\hspace{1cm} \widehat d:= -\widetilde d+x\cdot \chi
\]
The homology of this complex can be regarded, morally, as the $S^1$-equivariant homology of $\widetilde \sB(Y,K)$. This equivariant complex inherits a $\Z/4$-grading from the tensor product grading of $\widetilde C_*(Y,K)$ and $\Z[x]$, where the latter has $x^i$ in grading $-2i$. The homology of $(\widehat C_*(Y,K),\widehat d)$ gives a counterpart of $HFK^-$ in the context of singular instanton Floer homology.
\begin{theorem}
	The homology of the complex $(\widehat C_*(Y,K),\widehat d)$, denoted by $\hrI_*(Y,K)$, is a topological 
	invariant of the pair $(Y,K)$ as a $ \Z/4$-graded $\Z[x]$-module. Moreover, one can construct $\Z/4$-graded $\Z[x]$-modules 
	$\crI_*(Y,K)$ and $\brI_\ast(Y,K)$ from $(\widehat C_*(Y,K),\widehat d)$ which are invariants of the pair
	$(Y,K)$. These modules fit into two exact triangles:

\noindent\begin{minipage}{.5\textwidth}
\begin{equation}\label{top-equiv-triangle-intro}
	\begin{tikzcd}[column sep=1ex, row sep=5ex, fill=none, /tikz/baseline=-10pt]
\crI_*(Y,K) \arrow{rr} & & \hrI_*(Y,K) \arrow{dl} \\
& \brI_\ast(Y,K) \arrow{ul} & \\
\end{tikzcd}
\end{equation}
\end{minipage}
\begin{minipage}{.5\textwidth}
\begin{equation}\label{top-equiv-triangle-intro-2}
	\begin{tikzcd}[column sep=1ex, fill=none, row sep=5ex, fill=none, /tikz/baseline=-10pt]
\hrI_*(Y,K) \arrow{rr} & & \hrI_*(Y,K) \arrow{dl} \\
& \widetilde I_\ast(Y,K) \arrow{ul} & \\
\end{tikzcd}
\end{equation}
\end{minipage}
	
	\noindent The top arrow in \eqref{top-equiv-triangle-intro-2} is induced by multiplication by $x$. 
	Furthermore, $\brI_*(Y,K)$ is isomorphic to $\Z[\![x^{-1},x]$ as a $\Z[x]$-module.
\end{theorem}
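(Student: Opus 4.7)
My plan is to derive all statements from the invariance of $(\widetilde C_*(Y,K), \widetilde d)$ up to chain homotopy in the category of $\cS$-complexes (established in Section~\ref{sec:tilde}), together with a short computation exploiting the specific shape of an $\cS$-complex.

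\emph{Invariance of the three modules.} The assignment $(\widetilde C, \widetilde d) \mapsto (\widehat C, \widehat d) := (\widetilde C \otimes_\Z \Z[x], \, -\widetilde d + x\chi)$ is a functor from $\cS$-complexes to $\Z/4$-graded $\Z[x]$-chain complexes: an $\cS$-chain map $f$ commutes with both $\widetilde d$ and $\chi$, so its $\Z[x]$-linear extension $\widehat f$ commutes with $-\widetilde d + x\chi = \widehat d$; chain homotopies extend analogously. Hence $\hrI_*(Y,K) := H(\widehat C, \widehat d)$ is a topological invariant as a $\Z/4$-graded $\Z[x]$-module. Setting
\[
  \brC_* := \widehat C_* \otimes_{\Z[x]} \Z[\![x^{-1}, x], \qquad \crC_* := \brC_* / \widehat C_*,
\]
with induced differentials, the $\Z/4$-graded $\Z[x]$-modules $\brI_* := H(\brC_*)$ and $\crI_* := H(\crC_*)$ inherit invariance from the functoriality of localization and quotient.

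\emph{The two exact triangles.} The short exact sequence of $\Z[x]$-chain complexes
\[
  0 \to \widehat C_* \to \brC_* \to \crC_* \to 0
\]
induces a long exact sequence on homology, which rewritten cyclically is the first triangle \eqref{top-equiv-triangle-intro}. For the second, the short exact sequence
\[
  0 \to \widehat C_* \xrightarrow{\,x\,} \widehat C_* \to \widehat C_* / x\widehat C_* \to 0
\]
gives a long exact sequence in which the self-map is multiplication by $x$; modulo $x$ the differential $\widehat d$ becomes $-\widetilde d$, so $\widehat C_*/x\widehat C_*$ coincides with $(\widetilde C_*, -\widetilde d)$, whose homology is $\widetilde I_*(Y,K)$. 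This yields \eqref{top-equiv-triangle-intro-2}.

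\emph{Computation of $\brI_*$.} Use the $\cS$-complex decomposition $\widetilde C_* = C_* \oplus C_*[1] \oplus \Z$ with $\chi$ the tautological identification $C_* \to C_*[1]$ and zero on the other summands. Consider the ``leading'' part $D_0 := x\chi$ of $\widehat d$ after inverting $x$. The operator $h : \brC_* \to \brC_*$ defined by $h|_{C_*[1]} = x^{-1}\chi^{-1}$ and zero on the other summands is $\Z[\![x^{-1},x]$-linear and satisfies
\[
  h D_0 + D_0 h = \id - \pi,
\]
where $\pi$ is the projection onto the summand $\Z \otimes_\Z \Z[\![x^{-1},x] \cong \Z[\![x^{-1},x]$. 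Thus $(\brC_*, D_0)$ strongly deformation retracts onto this summand with zero differential. Applying the homological perturbation lemma to the perturbation $D_1 := -\widetilde d$ produces a quasi-isomorphism from $(\brC_*, \widehat d)$ onto $\Z[\![x^{-1},x]$ equipped with some induced differential $\delta$. Since $\delta$ is $\Z[\![x^{-1},x]$-linear of degree $-1$ and every element of $\Z[\![x^{-1},x]$ has even $\Z/4$-grading, the differential $\delta$ must vanish. Hence $\brI_*(Y,K) \cong \Z[\![x^{-1}, x]$ as a $\Z[x]$-module.

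\emph{Main obstacle.} The invariance statements and the two exact triangles follow formally once one checks functoriality of the $\widehat{(-)}$ construction on $\cS$-complexes. The essential content of the theorem lies in the last step: one must recognize that the tautological identification $\chi: C_* \xrightarrow{\cong} C_*[1]$ built into the $\cS$-complex structure makes $x\chi$ invertible on the $C_* \oplus C_*[1]$ summands after inverting $x$, producing the contracting homotopy, and then rule out any perturbed differential on the $\Z$ summand by a parity argument in the $\Z/4$-grading.
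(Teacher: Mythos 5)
Your proof is correct and follows the same overall architecture as the paper's (Subsections~\ref{equiv-model-1}--\ref{small-model}), but it replaces the explicit chain homotopy equivalences of Lemma~\ref{equiv-2-models} with an application of the homological perturbation lemma, which is a genuinely different and more conceptual route to the computation of $\brI_\ast$. The paper constructs small models $(\fhrC_\ast,\widehat\fd)$, $(\fcrC_\ast,\widecheck\fd)$, $(\fbrC_\ast,0)$ and writes down closed-form contraction maps $\Phi$, $\Psi$ together with the homotopies $K$ verifying the equivalence; your HPL argument reproduces those formulas automatically (the series $\sum_j v^j\delta_2(\cdot)x^{-j-1}$ appearing in $\widecheck\Psi$, $\widebar\Psi$ are exactly the geometric series $\sum (hD_1)^n$ applied to generators) but packages them abstractly, and the parity argument you use to kill the induced differential on $\Z[\![x^{-1},x]$ is cleaner than anything in the paper, which simply builds the small model with vanishing differential by hand. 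Two remarks are worth making. First, your invocation of the HPL needs a convergence check: the perturbation $D_1=-\widetilde d$ is not nilpotent, but $hD_1$ strictly lowers the $x$-filtration (each application of $h=x^{-1}\chi^{-1}$ drops the $x$-degree by one, while $D_1$ fixes it), so for any fixed power $x^m$ only finitely many terms of $\sum_{n\geqslant 0}(hD_1)^n$ contribute; this local nilpotence is what makes the transferred structure on $\Z[\![x^{-1},x]$ well defined, and you should say so. Second, your parity argument requires only $\Z$-linearity of the transferred differential (a degree $-1\equiv 3 \pmod 4$ self-map of $\Z[\![x^{-1},x]$, whose entire content sits in gradings $0,2\pmod 4$, must vanish), so the clause about $\Z[\![x^{-1},x]$-linearity is unnecessary for that step, although it is needed to conclude that the resulting identification $\brI_\ast\cong\Z[\![x^{-1},x]$ is one of $\Z[x]$-modules rather than just abelian groups.
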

\noindent
The invariants $\crI_*(Y,K)$, $\brI_\ast(Y,K)$ and $\widetilde I_\ast(Y,K)$ are analogues of the Heegaard Floer knot homology groups $HFK^+(Y,K)$, $HFK^\infty(Y,K)$ and $\widehat {HFK}(Y,K)$. The exact triangles in \eqref{top-equiv-triangle-intro} and \eqref{top-equiv-triangle-intro-2} are also counterparts of similar exact triangles for the knot Floer homology groups in Heegaard Floer theory.

\begin{remark}
Recently, Li introduced $KHI^-$ in \cite{Li:KHIm} as another approach to define the instanton counterpart of $HFK^-$ using sutured manifolds. We expect that $KHI^-$ for a knot $K$ in an integer homology $3$-sphere $Y$ can be recovered from $\widetilde{C}_\ast(Y,K)$ using an algebraic construction similar to what appears in Subsection \ref{subsec:rasmussen}. $\diamd$
\end{remark}

There are even further refinements of $(\widetilde C_*(Y,K),\widetilde d)$ which can be constructed following ideas contained in \cite{KM:YAFT, AD:CS-Th}. The refinements come from equivariant local coefficient systems on the framed configuration space $\widetilde \sB(Y,K)$ that can be used to define twisted versions of the complex $(\widetilde C_*(Y,K),\widetilde d)$. The universal local coefficient system $\Delta$ that we consider is defined over the two-variable Laurent polynomial ring
\[
	\mathscr{R}:=\Z[U^{\pm1},T^{\pm 1}],
\]
and it gives rise to an $\cS$-complex $(\widetilde C_*(Y,K;\Delta),\widetilde d)$. Roughly, the variable $T$ is related to the holonomy of flat connections around the knot and the ``monopole charge'' of instantons, while the variable $U$ is related to the Chern--Simons functional on flat connections and the topological energy, or action, of instantons. All of the invariants in this paper may be derived from $(\widetilde C_*(Y,K;\Delta),\widetilde d)$, assuming one keeps track of all of its relevant structures. (See Section \ref{sec:loccoeffs} for more details.)

If $\mathscr{S}$ is an $\mathscr{R}$-algebra, we can change our local coefficient system by a base change and define an $\cS$-complex $(\widetilde C_*(Y,K;\Delta_\mathscr{S}),\widetilde d)$ over the ring $\mathscr{S}$, and its chain homotopy type as an $\cS$-complex over $\sS$ is again an invariant of the knot. We then obtain, for example, an $\mathscr{S}$-module $I(Y,K;\Delta_\mathscr{S})$ and an $\mathscr{S}[x]$-module $\hrI(Y,K;\Delta_\mathscr{S})$ which are also knot invariants. Evaluation of $T$ and $U$ at $1$ defines an $\mathscr{R}$-algebra structure on $\Z$, and the associated $\cS$-complex recovers the untwisted complex $(\widetilde C_*(Y,K),\widetilde d)$. Another case of interest is the base change given by $\mathscr{T}=\Z[T^{\pm 1}]$, which is an $\mathscr{R}$-algebra by evaluation of $U$ at $1$, and this gives the $\cS$-complex $(\widetilde C_*(Y,K;\Delta_\mathscr{T}),\widetilde d)$.

\subsection*{A connected sum theorem}

Given two pairs $(Y,K)$ and $(Y',K')$ of knots in integer homology spheres, we may form another such pair $(Y\# Y',K\# K')$ by taking the connected sum of 3-manifolds and knots. It is natural to ask if the $\cS$-complex associated to $(Y\# Y',K\# K')$ can be related to those of $(Y,K)$ and $(Y',K')$. The following theorem answers this question affirmatively, and should be compared with the connected sum theorem for instanton Floer homology of integer homology spheres \cite{fukaya}. In fact, our proof is inspired by the treatment of Fukaya's connected sum theorem in \cite[Section 7.4]{donaldson-book}.

\begin{theorem}\label{thm:connectedsumlocceoff} There is a chain homotopy equivalence of $\Z/4$-graded $\cS$-complexes
\begin{equation*}
	\widetilde C(Y\# Y',K\# K') \simeq  \widetilde C(Y,K)\otimes_\Z \widetilde C(Y',K').
\end{equation*}
More generally, in the setting of local coefficients, we have a chain homotopy equivalence of $\Z/4$-graded $\cS$-complexes over $\mathscr{R}=\Z[U^{\pm 1}, T^{\pm 1}]$:
\begin{equation*}
	\widetilde C(Y\# Y',K\# K';\Delta) \simeq  \widetilde C(Y,K;\Delta)\otimes_\mathscr{R} \widetilde C(Y',K';\Delta).
\end{equation*}
\end{theorem}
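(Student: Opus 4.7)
The plan is to adapt Fukaya's neck-stretching proof of the connected sum theorem for instanton Floer homology \cite[Section 7.4]{donaldson-book} to the singular, $S^1$-equivariant setting encoded by the $\cS$-complex. Write $(Y\# Y',K\# K')$ as the result of excising small ball neighborhoods of interior points of $K$ and $K'$ (each meeting the respective knot in an unknotted arc) and gluing the boundaries, which are $2$-spheres punctured in two points. Endow the gluing region with a family of metrics inserting a neck of the form $[-L,L]\times S^2$, with the pair of knot strands running through it, and analyze the moduli of perturbed singular flat connections and of finite-energy singular instantons in the limit $L\to\infty$.

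\textbf{Critical points.} For $L$ large, a standard compactness argument should show that critical points of the perturbed Chern--Simons functional on $\widetilde\sB(Y\# Y',K\# K')$ converge to pairs of critical points on $(Y,K)$ and $(Y',K')$ with matching restrictions to the neck cross-section. Since the asymptotic meridional holonomy is required to be conjugate to \eqref{i}, the moduli of flat singular framed connections on the neck reduces to a single $S^1$-orbit (parametrized by the residual framing rotation), and the bijection between generators organizes itself exactly as the tensor product
\[
\widetilde C(Y\# Y',K\# K')\;\simeq\;\widetilde C(Y,K)\otimes_\Z\widetilde C(Y',K')
\]
at the level of graded $\Z$-modules, with the three summands of \eqref{cS-com} on each side combining in the expected way. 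Under this identification, the graded Leibniz derivation $\chi\otimes 1+1\otimes\chi$ on the right should correspond to the rotational $S^1$-action on the framed configuration space of $(Y\# Y',K\# K')$.

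\textbf{Differentials and reducibles.} The next step is to match the differential $\widetilde d$ on $\widetilde C(Y\# Y',K\# K')$ with $\widetilde d\otimes 1+1\otimes\widetilde d$ on the tensor product. Away from the reducible critical orbits, a standard gluing theorem for the neck-stretched family of metrics identifies moduli of broken trajectories with fiber products of moduli on the two pieces, and a count reproduces the untwisted Leibniz terms. The essential contributions come from gluing at the reducible stratum, where the residual $S^1$-symmetry yields an obstructed gluing problem parametrized by a circle. Analyzing this obstruction---in the spirit of \cite{donaldson-book} but now explicitly incorporating the $\chi$-action as part of the data---produces exactly the cross terms required to realize the $\cS$-complex tensor product, together with the necessary sign coherence.

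\textbf{Local coefficients and main obstacle.} For the extension to coefficients in $\mathscr{R}=\Z[U^{\pm 1},T^{\pm 1}]$, both exponents appearing in $\Delta$---the topological action controlling $U$ and the meridional holonomy/monopole weight controlling $T$---are additive over the connected sum decomposition in the limit $L\to\infty$; any contribution supported in the finite neck region stays bounded independently of the broken trajectory and may be absorbed by a gauge change or cancels in the normalization. Consequently $\Delta$ on the connected sum pulls back to an external tensor product of the local systems on the two pieces, and the identifications of the preceding two steps extend verbatim to a chain homotopy equivalence of $\cS$-complexes over $\mathscr{R}$. The principal technical obstacle is the gluing analysis at the reducible orbit: controlling the fiber product and its $S^1$-equivariant obstruction bundle, and verifying the orientation conventions that match the $\cS$-complex tensor structure on the nose. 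This is the step where the singular and equivariant flavor of the theory introduces essential new difficulties beyond the classical case treated in \cite{donaldson-book}.
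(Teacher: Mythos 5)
Your proposal is genuinely different from the paper's argument, and I want to flag both the divergence and a concrete gap.

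The paper does \emph{not} stretch a neck in the connected sum and try to identify critical points on $(Y\# Y', K\# K')$ with generators of the tensor product. Instead, it defines explicit morphisms of $\cS$-complexes $\widetilde\lambda_{(W,S)}$ and $\widetilde\lambda_{(W',S')}$ from the pair-of-pants cobordisms $(W,S):(Y,K)\sqcup(Y',K')\to(Y\#Y',K\#K')$ and its reverse $(W',S')$, and proves these are mutually inverse up to $\cS$-chain homotopy by stretching metric families on the \emph{composite} cobordisms. This sidesteps entirely the question of what the critical set on the connected sum looks like: one never needs a bijection of generators, only chain maps in each direction with controlled compositions.

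Your route attempts a direct identification, and this is where the genuine gap lies. When you stretch the neck, flat singular connections on the connected sum are built from matching pairs on the two pieces glued along an $S^1$-family of choices (the relative framing parameter on the punctured sphere). A matched irreducible pair $(\alpha,\alpha')$ thus contributes a Morse--Bott degenerate $S^1$-family, which must be perturbed to two nondegenerate points; a pair involving one reducible contributes one point (the $U(1)$-stabilizer of $\theta$ absorbs the gluing circle); the pair of reducibles gives the reducible on the connected sum. The ranks do add up to $(2n+1)(2n'+1)$, but the correspondence with the basis $\{\alpha\otimes\alpha',\ \alpha\otimes\underline{\alpha'},\ \dots\}$ of $\widetilde C\otimes\widetilde C'$ is not canonical: it depends on the Morse function chosen on each $S^1$-family, and the resulting differential on the perturbed complex has contributions running along the $S^1$ direction that encode the $\chi$-interaction. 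The tensor product $\cS$-structure the paper sets up in Subsection~\ref{sec:tensor} is deliberately \emph{asymmetric} (see the discussion around equation~\eqref{eq:vtensor} and the remark about the non-symmetric choice of right inverse for $\chi^\otimes$), so matching the glued differential, and in particular the maps $v$, $\delta_1$, $\delta_2$, $\mu$, against those exact formulas is a substantial verification that your proposal asserts but does not carry out. The sentence ``Analyzing this obstruction \dots produces exactly the cross terms required'' is precisely the heart of the matter, and right now it is only a hope, not an argument. Similarly, for local coefficients, the crucial fact the paper uses is that the auxiliary glued instantons on $(D^2\times S^2, D^2\times 2\ \text{pts})$ and $(S^1\times D^3,S^1\times D^1)$ are flat, so $\kappa=\nu=0$; your phrase ``stays bounded and may be absorbed by a gauge change or cancels in the normalization'' does not engage with why the exponents of $U$ and $T$ are preserved exactly.

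In short: the approach by cobordism maps buys you the ability to prove a homotopy equivalence without ever identifying generators, whereas your direct-gluing approach requires handling the Morse--Bott degeneracy, the asymmetry of the tensor $\cS$-structure, and the full set of differential components explicitly. Either route could in principle work, but yours leaves the hardest step as an assertion.
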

We remark that the tensor product of two $\cS$-complexes is naturally an $\cS$-complex and refer the reader to Section \ref{sec:equivtheories} for more details. The above theorem allows us to recover the invariants of $I(Y\# Y',K\# K';\Delta_\mathscr{S})$ and  $\hrI(Y\# Y',K\# K';\Delta_\mathscr{S})$ from $ \widetilde C(Y,K;\Delta)$ and $\widetilde C(Y',K';\Delta)$. In particular, if the ring $\mathscr{S}[x]$ is a PID, then there is K\"unneth formula relating $\hrI(Y\# Y',K\# K';\Delta_\mathscr{S})$ to $\hrI(Y,K;\Delta_\mathscr{S})$ and $\hrI(Y',K';\Delta_\mathscr{S})$.

\subsection*{Recovering invariants of Kronheimer and Mrowka}

Kronheimer and Mrowka have defined several versions of singular instanton Floer homology groups. There are the {\it reduced} invariants $I^\natural(Y,K)$,  first defined as abelian groups in \cite{KM:unknot}, and later defined using local coefficients as modules over the ring $\bF[T_1^{\pm1}, T_2^{\pm1}, T_3^{\pm1}]$ where $\bF$ is the field of two elements \cite{km-barnatan}. There are also the {\it unreduced} invariants $I^\#(Y,K)$, first defined as abelian groups in \cite{KM:unknot}, then defined using local coefficients as modules over the ring $\Q[T^{\pm 1}]$ in \cite{km-rasmussen}, and finally as modules over the ring $\bF[T_0^{\pm1},T_1^{\pm1}, T_2^{\pm1}, T_3^{\pm1}]$ in \cite{km-barnatan}. 

The definition of each of these invariants follows a similar pattern. To avoid working with reducible singular connections, one firstly picks $(Y_0,K_0)$ such that there is no reducible singular connection associated to this pair. This assumption requires working in a set up that allows $K_0$ to be a link or more generally a web \cite{km-tait}, equipped with a bundle of structure group $SO(3)$, instead of $SU(2)$. Then the invariant of the pair $(Y,K)$ is defined by applying Floer theoretical methods to the configuration space of singular connections on the pair $(Y\#Y_0,K\#K_0)$. A variation of our connected sum theorem allows us to prove the following theorem. (For more details, see Section \ref{sec:kmgroups}.)

\begin{theorem} \label{relation-KM}
	All the different versions of the invariants $I^\natural(Y,K)$ and $I^\#(Y,K)$ can be recovered from the homotopy type of the chain complex $(\widehat C_*(Y,K;\Delta),\widehat d)$ over $\mathscr{R}[x]$. For instance, $I^\natural(Y,K)$, defined as in  \cite{KM:unknot},
	is isomorphic to $H(\widetilde{C}(Y,K),\widetilde{d})$:
	\begin{equation*}
		 I^\natural(Y,K) \cong \widetilde I (Y,K).
	\end{equation*}
	Furthermore, $I^\natural(Y,K)$, with local coefficients defined as in \cite{km-barnatan}, is isomorphic to 
	\begin{equation}
		\hrI(Y,K;\Delta_\mathscr{T})\otimes_{\mathscr{T}[x]} \bF[T_1^{\pm1}, T_2^{\pm1}, T_3^{\pm1}]\label{eq:inatequivintro}
	\end{equation}
	where the $\mathscr{T}[x]$-module structure on $\bF[T_1^{\pm1}, T_2^{\pm1}, T_3^{\pm1}]$ is given by mapping $T\in \sT=\Z[T^{\pm 1}]$ to $T_1$ and $x$ to the element
	\[
		P:= T_1T_2T_3 + T_1^{-1}T_2^{-1} T_3 + T_1^{-1}T_2T_3^{-1} + T_1T_2^{-1}T_3^{-1}.
	\]
	Similarly, $I^\#(Y,K)$, with local coefficients defined as in \cite{km-barnatan}, is isomorphic to 
	\begin{equation}
		 \hrI(Y,K;\Delta_\mathscr{T})\otimes_{\mathscr{T}[x]} \bF[T_{0}^{\pm 1}, T_1^{\pm1}, T_2^{\pm1}, T_3^{\pm1}] ^{\oplus 2}\label{eq:isharpequivintro}
	\end{equation}
	 where the $\mathscr{T}[x]$-module structure on $\bF[T_0^{\pm 1}, T_1^{\pm1}, T_2^{\pm1}, T_3^{\pm1}]$ sends $T\mapsto T_0$, $x\mapsto P$. 
\end{theorem}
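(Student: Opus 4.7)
The plan is to reduce the theorem to an explicit model computation via the connected sum theorem, Theorem \ref{thm:connectedsumlocceoff}. Recall that Kronheimer--Mrowka's $I^\natural(Y,K)$ is defined as singular instanton Floer homology of the pair $(Y, K^\natural)$, where $K^\natural$ is the union of $K$ with a small earring (an arc attached to $K$ together with a small meridional unknot), and this modification is supported in a 3-ball disjoint from the rest of $K$. Since the earring is local, $(Y, K^\natural)$ is naturally the connected sum of $(Y,K)$ with a model pair $(S^3, H)$. The same approach handles $I^\#(Y,K)$, where the auxiliary piece has an additional unknotted component.

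Applying Theorem \ref{thm:connectedsumlocceoff}, the $\cS$-complex underlying $I^\natural(Y,K)$ with local coefficients is
\[
	\widetilde C(Y,K;\Delta) \otimes_\mathscr{R} \widetilde C(S^3, H;\Delta).
\]
The first step is to compute the auxiliary $\cS$-complex $\widetilde C(S^3, H; \Delta)$ explicitly. The corresponding critical set of the Chern--Simons functional consists of a small number of orbits, and one can show from the Morse theory that with trivial $\Z$-coefficients, the resulting $\cS$-complex is of rank one. Tensoring with $\widetilde C(Y,K)$ then yields $\widetilde I(Y,K)$, proving the first isomorphism.

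For the local coefficient versions, the three edges of the earring configuration carry three independent holonomy variables, and after base change to $\bF$, these correspond to $T_1, T_2, T_3$. The key step is to identify $\widetilde C(S^3,H;\Delta_\mathscr{T})$ in a form that, combined with $\widehat C(Y,K;\Delta_\mathscr{T})$ via the equivariant construction $\widehat d = -\widetilde d + x\chi$, reproduces the Kronheimer--Mrowka complex. The variable $T\in\mathscr{T}$ corresponds to the holonomy of the meridian of $K$, which is identified with $T_1$. Crucially, the variable $x$, which encodes the equivariant parameter in $\widehat C$, must map under base change to the four-term polynomial $P = T_1T_2T_3 + T_1^{-1}T_2^{-1}T_3 + T_1^{-1}T_2T_3^{-1} + T_1 T_2^{-1}T_3^{-1}$, a sum of monomials $T_1^{\epsilon_1}T_2^{\epsilon_2}T_3^{\epsilon_3}$ with $\epsilon_1\epsilon_2\epsilon_3 = +1$. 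The argument for $I^\#$ is analogous, with the additional unknot contributing a new holonomy variable $T_0$ and a rank-two direct sum originating from the pair of reducible flat connections on that component, accounting for the direct summand structure in \eqref{eq:isharpequivintro}.

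The main obstacle is the precise identification $x \mapsto P$. This requires carefully computing the equivariant differential on the $\cS$-complex of the earring pair $(S^3,H)$ over $\mathscr{R}$ and verifying that the $\chi$-action, together with the holonomy corrections coming from $\Delta$, yields exactly the four monomials in $P$. The ADHM description of instantons on $S^4$, highlighted in the introduction as providing a concrete model for spherical knot moduli spaces, is a natural tool here. A secondary challenge is tracking $\Z/4$-gradings, signs, and base changes consistently from $\mathscr{R} = \Z[U^{\pm 1}, T^{\pm 1}]$ through $\mathscr{T} = \Z[T^{\pm 1}]$ to $\bF[T_i^{\pm 1}]$, and ensuring that the rank-two splitting in the $I^\#$ case emerges correctly from the auxiliary unknot together with its earring.
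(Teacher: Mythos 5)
Your high-level architecture matches the paper's: apply a connected sum theorem to split off an auxiliary piece, compute that auxiliary complex explicitly, and identify the result with the equivariant complex via $x \mapsto P$. The observation that the $v$-map on the auxiliary complex plays the role of the equivariant variable $x$ is the right key insight. However, there are two substantive gaps.

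First, you conflate two distinct geometric setups. The untwisted $I^\natural(Y,K)$ from \cite{KM:unknot} is defined via connected sum with a Hopf link $H$ and $w_2$-arc $\omega$ (the ``earring''), and the auxiliary mapping cone complex is rank two with vanishing differential and vanishing $v$-map — that is the computation that gives $I^\natural(Y,K)\cong\widetilde I(Y,K)$. But the local coefficient version $I^\natural(Y,K;\Delta_{BN})$ from \cite{km-barnatan} uses a \emph{different} auxiliary piece, namely connected sum with a theta web $\Theta$: the three holonomy variables $T_1,T_2,T_3$ live on the three \emph{edges} of the theta web, not on the Hopf link earring (which has no three edges). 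One therefore needs the variant of the connected sum theorem for a knot and a strongly marked web (Theorem~\ref{thm:oneweb}), which only holds over rings of characteristic two, and the computation that the auxiliary $v$-map is multiplication by $P$ is imported from \cite{km-def} rather than derived via ADHM. Also, in the $I^\#$ case the $\oplus 2$ does not come from ``reducible flat connections on that component'' — the admissibility condition rules out reducibles; the two generators $\mathbf{v}_\pm$ are irreducible critical points arising after Morse-perturbing the $S^2$ character variety.

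Second, and more critically, your proposal stops at a chain-level statement, but the theorem asserts an isomorphism of homology modules. The connected sum theorem yields a chain homotopy equivalence of the form $C^\natural_\ast(Y,K;\Delta_{BN}) \simeq \hrC_\ast(Y,K;\Delta_{\sT_\bF})\otimes_{\sT_\bF[x]}\sS_{BN}$, but to conclude $I^\natural(Y,K;\Delta_{BN})\cong \hrI(Y,K;\Delta_{\sT_\bF})\otimes_{\sT_\bF[x]}\sS_{BN}$ one needs the base change to commute with homology. The paper supplies exactly this via Proposition~\ref{SBN-free}: $\sS_{BN}=\bF[T_1^{\pm1},T_2^{\pm1},T_3^{\pm1}]$ is \emph{free} as a module over $\bF[T_1^{\pm1},P]$, proved by an explicit construction of a monomial basis. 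That freeness (and the analogue for $\sR^\#$) is indispensable and non-trivial; without it the proof of the theorem as stated would be incomplete.
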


The isomorphisms between the local coefficients versions of $I^\natural(Y,K)$ and $I^\#(Y,K)$ and the modules \eqref{eq:inatequivintro} and \eqref{eq:isharpequivintro} are given more precisely in Corollaries \ref{equiv-natural} and \ref{equiv-sharp}.

Although it is not clear from its definition, Theorem \ref{relation-KM} suggests that the most recent version of $I^\natural(Y,K)$ from \cite{km-barnatan} can be regarded as an $S^1$-equivariant theory, and similar results hold for the other versions of singular instanton Floer homology in an appropriate sense. The knot homology  $I^\natural(Y,K)$ is defined in \cite{km-barnatan} only for characteristic $2$ rings because of a feature of instanton Floer homology for webs. On the other hand, the above theorem suggests that this restriction is not essential. The above theorem also asserts that $I^\natural(Y,K)$ is given by applying a base change to a module defined over the subring $\bF[T_1^{\pm1}, P]$ of $\bF[T_1^{\pm1}, T_2^{\pm1}, T_3^{\pm1}]$. Thus $I^\natural(Y,K)$ is essentially a module over this smaller ring, and the $\bF[T_1^{\pm1}, T_2^{\pm1}, T_3^{\pm1}]$-module structure is obtained by applying a formal algebraic construction. Furthermore, while $I^\natural(Y,K)$ as defined in \cite{km-barnatan} only has a $\Z/2$-grading, our invariant \eqref{eq:inatequivintro} comes equipped with a $\Z/4$-grading.

\subsection*{Spherical knots and ADHM construction}

For a spherical knot $K$, the moduli spaces of singular instantons involved in the definition of the chain complex $(\widetilde C_*(Y,K;\Delta),\widetilde d)$ can be characterized in terms of the moduli spaces of (non-singular) instantons on $S^4$. In particular, it is reasonable to expect that the ADHM description of instantons on $S^4$ can be used to directly compute the $\cS$-complex $(\widetilde C_*(Y,K;\Delta),\widetilde d)$. To manifest this idea, let $K_{p,q}$ be the $(p,q)$ two-bridge knot whose branched double cover is the lens space $L(p,q)$. Using the results of \cite{austin, furuta-invariant} we can compute part of the $\cS$-complex $(\widetilde C_*(K_{p,q};\Delta),\widetilde d)$. In particular, a specialization of our instanton homology for $K_{p,q}$ recovers a version of instanton homology for the lens space $L(p,q)$ defined by Sasahira in \cite{sasahira-lens} (see also \cite{furuta-invariant}), which takes the form of a $\Z/4$-graded $\bF$-vector space $I_\ast(L(p,q))$. For the following, let $\bF_4:=\bF[x]/(x^2+x+1)$ be the field with four elements. (See Subsection \ref{subsec:sasahira} for more details.)

\begin{theorem}\label{thm:sasahiracomparison}
	There is an isomorphism of $\Z/4$-graded vector spaces over $\bF_4$
	\[
	I_\ast(K_{p,q};\Delta_{\bF_4}) \cong I_\ast(L(p,-q))\otimes \bF_4
	\]
	where the local system $\Delta_{\bF_4}$ is obtained from $\Delta_{\sT}\otimes \bF$ via the base change sending $T$ to $x$.
\end{theorem}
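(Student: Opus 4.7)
The plan is to establish the isomorphism by lifting the entire $\cS$-complex from $(S^3, K_{p,q})$ to an equivariant object on the branched double cover $L(p,q)$, and then recognize the resulting specialized complex as Sasahira's. First, use the fact that for the two-bridge knot $K_{p,q}$, the branched double cover $\pi\colon L(p,q)\to S^3$ ramified over $K_{p,q}$ is a lens space. Pullback identifies singular $SU(2)$-connections on $(S^3, K_{p,q})$ with meridian holonomy asymptotic to $\mathrm{diag}(i,-i)$ with honest $SU(2)$-connections on $L(p,q)$ that are equivariant with respect to the covering involution $\tau$, where $\tau$ acts on bundle data through conjugation by $\mathrm{diag}(i,-i)$. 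Under this correspondence, flat singular connections on $(S^3, K_{p,q})$ are in natural bijection with flat $SU(2)$-connections on $L(p,q)$; since $\pi_1(L(p,q)) = \Z/p$ is abelian, every such flat connection is reducible, giving an explicit enumeration of the generators on both sides of the conjectured isomorphism.

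Next, handle the differential using the ADHM characterization for spherical knots referenced earlier in the paper. Singular anti-self-dual instantons on $\R\times(S^3, K_{p,q})$ lift to $\tau$-equivariant instantons on $\R\times L(p,q)$, and the moduli spaces of the latter admit an explicit ADHM-type description, following Austin \cite{austin} and Furuta \cite{furuta-invariant}, as the fixed loci of a $\Z/2$-action on the ADHM moduli spaces for $S^4$ together with asymptotic decay conditions. The exact same moduli spaces, up to the equivariance decoration, are what Sasahira uses in \cite{sasahira-lens} to define the lens space chain complex, so the singular instanton count on $\R\times(S^3, K_{p,q})$ is expressible as an equivariant refinement of Sasahira's count on $\R\times L(p,q)$. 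The reversal $q\mapsto -q$ then comes from the comparison between the orientation pulled back from $S^3$ and the natural orientation of $L(p,q)$ used in Sasahira's definition.

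Third, trace through the local coefficient system. The module $\Delta_{\bF_4}$ weights a flow line by $T^n$ where $n$ records the meridian holonomy and monopole charge; lifting to $L(p,q)$, this weight becomes a sum over equivariant lifts indexed by a cyclic group. Specializing $T\mapsto x$ with $x^2+x+1=0$ makes $T$ a primitive cube root of unity, and the equivariant summation then collapses to precisely the untwisted lens space count, producing Sasahira's differential. Combining the bijection of generators with the identification of the differentials yields the stated chain-level isomorphism, from which the isomorphism on homology and the $\Z/4$-grading follow immediately.

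The main obstacle is the third step: making sure that the weighting $T\mapsto x$ in the local system on the singular side matches, on the nose and including signs, the count used by Sasahira. This requires careful equivariant index-theoretic bookkeeping on the branched cover to relate meridian holonomy, monopole charge, and energy to the power of $T$ appearing in $\Delta$, together with the verification that the precise substitution $x^2+x+1=0$ (rather than some other specialization) is what trivializes the relevant $\Z/2$-equivariance so that the ADHM description of \cite{austin, furuta-invariant} reproduces Sasahira's complex rather than some twisted variant.
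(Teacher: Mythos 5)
Your high-level structure matches the paper's: pass to the branched double cover $L(p,q)$, use the Austin--Furuta ADHM description of instanton moduli on $\R\times L(p,q)$ to identify the chain groups and differential, and then argue that the base change $T\mapsto x$ identifies the resulting complex with Sasahira's. The enumeration of generators via $\fC(K_{p,q})\cong\fC(L(p,q))$ and the fact that the underlying $\Z$-differential vanishes are both correctly anticipated.

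However, the step you flag as the ``main obstacle'' is in fact the mathematical content of the theorem, and your heuristic picture of it — ``a sum over equivariant lifts indexed by a cyclic group'' that ``collapses'' under $T\mapsto x$ — is not the right mechanism. What actually happens is more delicate. When the $\Z/2$ branched covering involution acts on the downstairs moduli space $\breve{M}(\xi^i,\xi^j)_0$, it acts \emph{freely}, exchanging two oppositely oriented instantons $[A]$ and $\iota[A]$ (this is why the $\Z$-differential vanishes). Their monopole numbers are related by $\nu(\iota A)=-\nu(A)$, so the contribution to $d$ with local coefficients is $\pm U^{-k_1k_2/p}(T^{\nu}-T^{-\nu})$. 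The paper then computes these monopole numbers via the equivariant Atiyah--Segal--Singer theorem applied to the twisted spin Dirac operator: $\text{Lef}(\widetilde\tau,\slashed D_{\widetilde A})=\nu(A)/2$, and a character-theoretic analysis of the binary dihedral group together with $\text{ind}_{\Z/2p}\slashed D_{\widetilde A}$ shows $\nu(A)\in\{0,\pm2\}$ with the $\pm2$ case occurring exactly when $k_1k_2$ is odd. This yields the coefficient $T^2-T^{-2}$ when $k_1k_2$ is odd, and the observation that $T^2-T^{-2}\mapsto x^2+x^{-2}=x^2+x=1$ in $\bF_4$ (using $x^3=1$ and $x^2+x+1=0$) is what turns the twisted two-bridge differential into Sasahira's $\bF$-valued coefficient. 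One further ingredient you omit: Sasahira's differential is defined by the parity of $\text{ind}\,\slashed D_{A'}$, and a separate computation (Proposition 7.11 in the paper) is needed to show this parity agrees with the parity of $k_1k_2$, completing the match. Without these index calculations the isomorphism is not established, so as written the proposal is a correct outline with the central computation unperformed.
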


\subsection*{Concordance invariants}

We say a knot $K$ in an integer homology sphere $Y$ is {\emph{homology concordant}} to a knot $K'$ in another integer homology sphere $Y'$ if there is an integer homology cobordism $W$ from $Y$ to $Y'$ and a properly and smoothly embedded cylinder $S$ in $W$ such that $\partial S=-K\cup K'$. In particular, a classical concordance for knots in $S^3$ produces a homology concordance. The collection of knots modulo this relation defines an abelian group $\mathcal C_\Z$, where addition is given by taking the connected sum of the knots within the connected sum of the ambient homology spheres. The $\cS$-complex $\widetilde C(Y,K;\Delta)$ can be used to define various algebraic objects invariant under homology concordance.

The simplest version of our concordance invariants is an integer-valued homomorphism from the homology concordance group, and its definition is inspired by Fr\o yshov's homomorphism $h$ from the homology cobordism group to the integers \cite{froyshov}, a predecessor to the Heegaard Floer $d$-invariant of Ozsv\'{a}th and Szab\'{o} \cite{os-dinv} and Fr\o yshov's monopole $h$-invariant \cite{froyshov-monopole}. In fact, we obtain a homology concordance homomorphism for each $(\widetilde C_*(Y,K;\Delta_\mathscr{S}),\widetilde d)$ that depends on the choice of an $\mathscr{R}$-algebra $\mathscr{S}$ and is denoted by $\hinv_\mathscr{S}(Y,K)\in \Z$. Its basic properties are summarized as follows.

\begin{theorem}\label{thm:hinvproperties}
	Let $\sS$ be an integral domain $\sR$-algebra. The invariant $\hinv_{\sS}$ satisfies:
	\begin{enumerate}
		\item[{\emph{(i)}}] $\hinv_{\sS}(Y\#Y',K\#K') = \hinv_{\sS}(Y,K) + \hinv_{\sS}(Y',K')$.
		\item[{\emph{(ii)}}] Suppose $(W,S):(Y,K)\to (Y',K')$ is a cobordism of pairs such that $H_1(W;\Z)=0$, the homology class of $S$ is divisible by $4$, 
		and the double cover of $W$ branched over $S$ is negative definite. Then we have:
		\[
			\hinv_{\sS}(Y,K) \leqslant \hinv_{\sS}(Y',K').
		\]
\end{enumerate}
In particular, $\hinv_{\sS}$ induces a homomorphism from the homology concordance group to the integers, which in turn induces a homomorphism from the smooth concordance group of knots in the 3-sphere to the integers.
\end{theorem}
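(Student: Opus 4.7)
The plan is to derive both statements from structural properties of the $\cS$-complex $\widetilde C(Y,K;\Delta_\sS)$ developed earlier in the paper, combined with an algebraic analysis of the Fr\o yshov-type invariant. In the spirit of Fr\o yshov's original construction, $\hinv_\sS(Y,K)$ will be a numerical invariant of the chain homotopy type of $\widetilde C(Y,K;\Delta_\sS)$ as an $\cS$-complex over $\sS$, extracted by tracking the distinguished generator of the $\Z$ summand in the decomposition \eqref{cS-com} through the localized equivariant theory built from $\hrI(Y,K;\Delta_\sS)$ and $\brI_*(Y,K)$. Because $\hinv_\sS$ depends only on this chain homotopy type, both properties will reduce to structural statements about $\cS$-complexes over $\sS$.

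For (i), Theorem \ref{thm:connectedsumlocceoff} provides a chain homotopy equivalence of $\cS$-complexes
\[
\widetilde C(Y\#Y',K\#K';\Delta)\simeq \widetilde C(Y,K;\Delta)\otimes_\sR\widetilde C(Y',K';\Delta),
\]
which, after base change to $\sS$, yields the analogous equivalence over $\sS$. Under this identification the reducible summand on the left corresponds to the tensor product of the two reducible summands on the right. A purely algebraic K\"unneth-type lemma---asserting that $\hinv$ of a tensor product of $\cS$-complexes over an integral domain is the sum of the factor invariants---then completes part (i). The assumption that $\sS$ is an integral domain is needed to rule out spurious torsion in the tensor product when isolating the reducible generator.

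For (ii), the first ingredient is functoriality: a cobordism of pairs $(W,S):(Y,K)\to(Y',K')$ satisfying the stated hypotheses induces, via the count of singular ASD connections on the cylindrical-end completion of $(W,S)$, a chain map $\widetilde C(Y,K;\Delta_\sS)\to\widetilde C(Y',K';\Delta_\sS)$ of $\cS$-complexes whose homotopy class depends only on $(W,S)$. The inequality will follow once we show that this map sends the distinguished reducible generator of the source to a nonzero multiple of the distinguished reducible generator of the target, with a grading shift controlled by the index of the $S^1$-reducible singular instanton on $(W,S)$. The topological hypotheses enter at exactly this point: divisibility of $[S]$ by $4$ guarantees integrality of the relevant monopole charge, $H_1(W;\Z)=0$ ensures uniqueness of the flat reducible extension across $W$, and passing to the branched double cover translates the negative-definiteness of $\Sigma(W,S)$ into the classical negative-definiteness hypothesis for $\Z/2$-equivariant $SU(2)$ instantons \cite{froyshov}. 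Under these conditions the moduli space of lowest-energy $S^1$-reducible singular ASD connections is, after transverse perturbation, a single regular point whose signed count gives the required nonvanishing.

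The main obstacle will be the analytic part of this last step, namely the equivariant transversality and gluing analysis needed to isolate this reducible contribution and to conclude that the induced $\cS$-complex morphism maps the reducible generator to a scalar multiple of the reducible generator with the asserted index shift. Once this is in hand, the grading shift computation yields $\hinv_\sS(Y,K)\leqslant\hinv_\sS(Y',K')$. The final assertion of the theorem is then automatic: a homology concordance gives cobordisms satisfying the hypotheses of (ii) in both directions, forcing equality on homology concordance classes and, together with (i), producing a homomorphism $\hinv_\sS\co\mathcal{C}_\Z\to\Z$, which restricts along the natural map from the smooth concordance group of knots in $S^3$.
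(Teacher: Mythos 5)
Your strategy is essentially the paper's: part (i) is the connected sum theorem plus additivity of the algebraic $h$-invariant under tensor product of $\cS$-complexes (Corollary \ref{h-sub-add}), and part (ii) is functoriality of the $\cS$-complex under negative definite pairs plus the monotonicity statement for the $h$-invariant under $\cS$-complex morphisms (Corollary \ref{s-mor-h}). However, a few imprecisions in your plan for (ii) are worth flagging. First, the language ``sends the distinguished reducible generator \dots\ to a nonzero multiple \dots\ with a grading shift controlled by the index'' does not match the actual structure: a morphism of $\cS$-complexes in the sense of Definition \ref{def:morphism} has its $(3,3)$-entry equal to \emph{exactly} $1$ and has degree zero, so the reducible maps to $\theta' + (\text{correction terms})$ with no shift. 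The index $-1$ of the flat reducible on a negative definite pair is precisely what makes it isolated after quotienting by the $1$-dimensional stabilizer, which is what forces the $(3,3)$-entry to be $1$ and why the induced map on $\brI$ is of the form $1 + \sum_{i<0}b_ix^i$ (Corollary \ref{localization}); the $h$-invariant comparison then follows formally. Second, the ``main obstacle'' you identify — equivariant transversality for the reducible — is in fact what the negative-definite-pair hypothesis is designed to sidestep: Proposition \ref{reg-dbl-cover} and the discussion in Subsection \ref{subsec:red} show the flat reducible is \emph{automatically} regular when $b^+(\widetilde W)=0$, with no perturbation required. Your reason for requiring $\sS$ to be an integral domain (``rule out spurious torsion in the tensor product'') is also not quite right; the hypothesis enters via the duality argument $h(\widetilde C^\dagger)=-h(\widetilde C)$ used to upgrade subadditivity of $h$ under tensor product to equality, which fails over rings with zero divisors. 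With these corrections, your outline aligns with the paper's proof.
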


The cobordism $(W,S)$ appearing in (ii) is an example of what we call a {\emph{negative definite pair}} in the sequel. When $K$ is a knot in the 3-sphere, we simply write $\hinv_{\sS}(K)$ for the invariant $\hinv_{\sS}(S^3,K)$, and similarly for the other invariants we define. The two choices of $\sS$ that we focus on are $\Z$ and $\sT=\Z[T^{\pm 1}]$. For the former choice we simply write $h$:
\[
	h(Y,K) := h_\Z(Y,K)
\]
The two invariants $h$ and $h_\sT$ take on different values for simple knots in the 3-sphere. Some of our computations from Section \ref{sec:computations} are summarized as follows.

\begin{theorem}\label{thm:hinvcomps}
We have the following computations for the invariants $\hinv$ and $\hinv_{\sT}$:
	\begin{enumerate}
		\item[\rm{(i)}] For any two-bridge knot we have $\hinv=0$.
		\item[\rm{(ii)}] For the positive (right-handed) trefoil we have $\hinv_{\sT}=1$.
		\item[\rm{(iii)}] For the positive $(3,4)$ and $(3,5)$ torus knots we have $\hinv=1$.
		\item[\rm{(iv)}] For the following families of torus knots, we have $h=0$:
		        \begin{align*}
 	       	(p,2pk+2), & \qquad k\geqslant 1,\;\; p\equiv \phantom{\pm}1 \pmod 2\\
 	       	(p,2pk\pm (2-p)), & \qquad k \geqslant 1,\;\; p \equiv \pm 1 \pmod 4
        \end{align*}
	\end{enumerate}
\end{theorem}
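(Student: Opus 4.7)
The plan is that $\hinv_\sS(Y,K)$ can be read off from the chain-level structure of the $\cS$-complex $(\widetilde C_\ast(Y,K;\Delta_\sS),\widetilde d)$ once one identifies the $\Z/4$-grading of the reducible generator relative to the lowest-lying irreducible generators. The proof therefore splits into two types of argument: explicit $\cS$-complex computations (for two-bridge knots and small torus knots) and cobordism inequalities from Theorem \ref{thm:hinvproperties}(ii) (for the infinite families of torus knots in Part (4)).

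For Part (1), I would apply Theorem \ref{thm:sasahiracomparison}, which identifies $I_\ast(K_{p,q};\Delta_{\bF_4})$ with $I_\ast(L(p,-q))\otimes \bF_4$. The lens-space instanton Floer homology was computed by Austin, Furuta, and Sasahira, and is supported in restricted $\Z/4$-gradings; in particular the reducible generator of the $\cS$-complex ends up isolated in a grading from which no differential can cancel it, giving $\hinv_{\bF_4}(K_{p,q})=0$. A parallel integer-valued analysis of the same moduli spaces (rigid enough for two-bridge knots that the $\bF_4$-argument lifts to $\Z$) gives $\hinv(K_{p,q})=0$. The same circle of ideas yields Part (2), $\hinv_\sT=1$ for the positive trefoil $K_{3,q}$, by tracking the $T$-variable through Sasahira's identification and locating a single irreducible generator one $\Z/4$-degree above the reducible, which produces a nontrivial leading contribution.

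For the $(3,4)$ and $(3,5)$ torus knots in Part (3), I would use Klassen's description of the character variety of a torus-knot group: the traceless irreducible $SU(2)$-representations form a finite set whose Chern-Simons values and $\Z/4$-gradings can be written down explicitly. A Morse-Bott analysis of the Chern-Simons functional then describes the $\cS$-complex by hand, and $\hinv=1$ is read off from the position of the lowest-lying irreducible generator relative to the reducible.

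For Part (4), I construct, for each family, a negative definite pair $(W,S)\co (S^3,\text{unknot})\to (S^3,K)$ with $H_1(W;\Z)=0$ and $[S]$ divisible by $4$, built from the standard surgery description of $T(p,q)$ on a two-component link. The congruence conditions on $q$ and on $p$ mod $4$ are exactly what is needed to ensure both that $[S]\equiv 0\pmod 4$ and that the branched double cover of $W$ is negative definite. Theorem \ref{thm:hinvproperties}(ii) then gives $\hinv(K)\geqslant 0$; running the same construction on the mirror family together with the concordance-homomorphism property of $\hinv$ yields $\hinv(K)\leqslant 0$, so $\hinv(K)=0$. The main obstacle throughout is the direct $\cS$-complex computation for the $(3,4)$ and $(3,5)$ torus knots in Part (3), which requires a Morse-Bott analysis at singular flat connections with nontrivial stabilizer and a careful identification of Chern-Simons levels; the ADHM-style description of the relevant moduli spaces of singular instantons appears to be essential there, and verifying the divisibility $[S]\equiv 0\pmod 4$ family-by-family in Part (4) is a secondary source of technical difficulty.
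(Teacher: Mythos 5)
Your proposal has genuine gaps in Parts (1), (3), and (4); only Part (2) is broadly on the right track, and even there the route via Sasahira's identification is an unnecessary detour compared with the paper's direct computation of the one-generator $\cS$-complex for the trefoil.

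For Part (1), your plan to first compute $\hinv$ over $\bF_4$ via Theorem \ref{thm:sasahiracomparison} and then ``lift to $\Z$'' confuses the untwisted invariant $\hinv=\hinv_\Z$ with the twisted invariants $\hinv_\sS$. These genuinely differ: the right-handed trefoil is itself a two-bridge knot and has $\hinv=0$ but $\hinv_\sT=1$, and its differential $\delta_1$ over $\Delta_{\bF_4}$ is nonzero (since $x^2 + x^{-2} = 1$ in $\bF_4$), so $\hinv_{\bF_4}\neq 0$ for the trefoil. Running your argument would therefore produce a nonzero answer for at least one two-bridge knot. The actual argument is a rank count over $\Z$: $\widetilde C(K_{p,q})$ has rank $p$, and $\widetilde I(K_{p,q})\cong I^\natural(K_{p,q})$ is also free of rank $p$ by Theorem \ref{thm:tildeconnsum} and \cite{KM:unknot}, so $\widetilde d=0$ and all of $d$, $v$, $\delta_1$, $\delta_2$ vanish; then $\hinv=0$ by Proposition \ref{h-g-reinterpret}. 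No local-coefficient moduli space analysis is needed for Part (1) at all (the two instantons contributing to $\delta_1$ cancel over $\Z$ because they are interchanged orientation-reversingly by the flip symmetry $\iota$).

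For Part (3), the claim that $\hinv=1$ can be ``read off from the position of the lowest-lying irreducible generator relative to the reducible'' is not correct: the $h$-invariant is not a function of gradings alone, it depends on the actual values of $d$, $v$, $\delta_1$, $\delta_2$. For $(3,4)$ and $(3,5)$ you must prove two things separately. First, $\hinv\geqslant 1$ requires showing $\delta_1\neq 0$ on a generator of grading $1$; the paper gets this from Austin's equivariant ADHM computation that $\breve{M}(\alpha_\Sigma,\theta_\Sigma)_0$ is a single point, which lifts to a nonempty moduli space over the orbifold cylinder. Second, and this is missing entirely from your sketch, one needs the upper bound $\hinv\leqslant 1$, which the paper obtains from a rank inequality: since $\operatorname{rank} I^\natural(K) = |\Delta_K|$ (bounded above by $\operatorname{rank} Kh^{\mathrm{red}}(K)$ and below by \eqref{eq:alexanderbound}), any further nonvanishing of $\delta_1 v^{j}$ with $j\geqslant 1$ would reduce the rank below $|\Delta_K|$. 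Klassen's character variety computation gives the generators and gradings but not these differentials, so the Morse--Bott analysis you allude to would not by itself pin down $\hinv$.

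For Part (4), the cobordism inequality does not close; it is a one-sided bound. A negative definite pair $([0,1]\times S^3, S):(S^3,U)\to(S^3,K)$ requires $\sigma(K)=\chi(S)+\tfrac{1}{2}S\cdot S$, i.e.\ (for $S\cdot S=0$) $\sigma(K)=-2g_4(K)$, which already fails for $(3,8)$ (where $\sigma=-10$ but $-2g_4 = -14$), so the construction does not exist for the families in question. And even where it exists, Theorem \ref{thm:hinvproperties}(ii) only gives $\hinv(K)\geqslant \hinv(U)=0$; getting $\hinv(K)\leqslant 0$ via the mirror would require a negative definite pair into the mirror $\overline K$, which has positive signature and so admits no such cobordism from the unknot inside $[0,1]\times S^3$. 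The paper's proof instead uses the same rank count as Part (1): for these families one verifies the arithmetic identity $|\sX(T_{p,q})| = \tfrac{1}{2}(|\Delta_{T_{p,q}}|+1)$ using explicit signature and Alexander-polynomial formulas, and this implies via \eqref{eq:vanimp} that $\widetilde d=0$, hence $\hinv=0$. Your approach would need to be discarded and replaced by this counting argument.
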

Although Theorem \ref{thm:hinvcomps} computes $\hinv_{\sT}$ only for one knot, we expect that $\hinv_{\sT}(K)$ for a general knot can be evaluated in terms of classical invariants of $K$. We will address this claim in a forthcoming work.

\begin{remark}
	Recently, a version of the invariant $\hinv_{\sS}(K)$ and a 1-parameter family variation of it is used in \cite{SFO} to study a Furuta-Ohta type invariant for tori embedded in a 4-manifold with the integral homology of $S^1\times S^3$. $\diamd$
\end{remark}

A refinement of $\hinv_\mathscr{S}(Y,K)$ has the form of a nested sequence of ideals of $\mathscr{S}$,
\[
  \dots \subseteq J_{i+1}^\mathscr{S}(Y,K) \subseteq J_{i}^\mathscr{S}(Y,K)\subseteq J_{i-1}^\mathscr{S}(Y,K)\subseteq \dots \subseteq\mathscr{S}.
\]
This sequence depends only on the homology concordance of $(Y,K)$, and recovers the invariant $\hinv_{\sS}(Y,K)$. Its basic properties are summarized as follows.

\begin{theorem}\label{thm:jideals}
	The nested sequence of ideals $\{J_i^{\sS}(Y,K)\}_{i\in \Z}$ in $\sS$ satisfy: 
	\begin{enumerate}
		\item[{\emph{(i)}}]  $J_i^{\sS}(Y,K)\cdot J_{j}^{\sS}(Y',K') \subset J_{i+j}^\sS(Y\# Y', K\# K')$
		\item[{\emph{(ii)}}] If $(W,S):(Y,K)\to (Y',K')$ is a negative definite pair, $J^{\sS}_i(Y,K)\subset J^{\sS}_i(Y',K')$.
		\item[{\emph{(iii)}}] $\hinv_{\sS}(Y,K) = \max \left\{ i\in \Z : J^{\sS}_i(Y,K)\neq 0\right\}$.
\end{enumerate}
\end{theorem}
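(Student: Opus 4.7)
The approach is to give a definition of $J_i^{\sS}(Y,K)$ directly in terms of the $\cS$-complex $\widetilde C(Y,K;\Delta_\sS)$, as an ideal measuring the $x$-divisibility of a canonical class, and then verify invariance under chain homotopy of $\cS$-complexes along with the three properties. Writing $[\theta]\in \hrI(Y,K;\Delta_\sS)$ for the canonical ``reducible'' class in equivariant homology used in the definition of $\hinv_\sS$ earlier in the paper, I would define
\[
   J_i^{\sS}(Y,K) := \bigl\{\, s\in\sS \ :\ s\cdot[\theta] \in x^i\,\hrI(Y,K;\Delta_\sS) \,\bigr\}
\]
for $i\geq 0$, and $J_i^{\sS}(Y,K):=\sS$ for $i<0$. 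Each is an $\sS$-ideal by linearity, the nesting $J_{i+1}^\sS\subseteq J_i^\sS$ follows from $x^{i+1}\hrI\subseteq x^i\hrI$, and invariance under chain equivalence of $\cS$-complexes reduces to the observation that any such equivalence induces an $\sS[x]$-linear isomorphism of equivariant homology preserving $[\theta]$ up to a unit. Property (iii) is then a reformulation of the definition of $\hinv_\sS$ as the maximal $x$-divisibility of $[\theta]$ modulo scaling by a nonzero element of $\sS$.

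For property (ii), a negative-definite pair $(W,S):(Y,K)\to(Y',K')$ induces a chain map of $\cS$-complexes $\lambda_{W,S}\colon\widetilde C(Y,K;\Delta_\sS)\to\widetilde C(Y',K';\Delta_\sS)$, established through the cobordism-functoriality framework earlier in the paper. The negative-definiteness hypothesis guarantees that $\lambda_{W,S}$ acts by multiplication by a unit on the distinguished $\sS$-summand, so the induced $\sS[x]$-linear map on equivariant homology sends $[\theta]$ to $u\cdot[\theta']$ for some $u\in\sS^{\times}$. If $s\cdot[\theta]=x^i\eta$, then $su\cdot[\theta']=x^i(\lambda_{W,S})_*(\eta)$, and since $u$ is a unit this gives $s\in J_i^\sS(Y',K')$.

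The main content, and the principal technical obstacle, is property (i). By Theorem~\ref{thm:connectedsumlocceoff} there is a chain equivalence $\widetilde C(Y\#Y',K\#K';\Delta_\sS)\simeq \widetilde C(Y,K;\Delta_\sS)\otimes_\sS\widetilde C(Y',K';\Delta_\sS)$ of $\cS$-complexes, and unwinding the construction shows that $[\theta_\#]$ corresponds to $[\theta]\otimes[\theta']$ under the induced map on equivariant homology. Given $s\in J_i^\sS(Y,K)$ and $s'\in J_j^\sS(Y',K')$, choose chain-level witnesses $\alpha,\alpha'$ and cycles $\tilde\xi,\tilde\xi'$ satisfying $\widehat d\alpha = s\theta - x^i\tilde\xi$ and $\widehat d\alpha'=s'\theta'-x^j\tilde\xi'$. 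A direct computation in the tensor product equivariant complex, expanding $(s\theta)\otimes(s'\theta')=(\widehat d\alpha+x^i\tilde\xi)\otimes(\widehat d\alpha'+x^j\tilde\xi')$ and using the Leibniz rule together with $\widehat d(s'\theta')=0$, yields, up to signs from the $\Z/4$-grading,
\[
   ss'(\theta\otimes\theta') - x^{i+j}(\tilde\xi\otimes\tilde\xi') = \widehat d_{\otimes}\bigl(\alpha\otimes s'\theta' \;\pm\; x^i\tilde\xi\otimes\alpha'\bigr),
\]
so that $ss'[\theta_\#]\in x^{i+j}\hrI(Y\#Y',K\#K';\Delta_\sS)$, i.e.\ $ss'\in J_{i+j}^\sS(Y\#Y',K\#K')$. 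The key difficulty is correctly tracking signs from the $\Z/4$-grading and verifying that the identification $[\theta_\#]\leftrightarrow[\theta]\otimes[\theta']$ is compatible with the tensor-product $\sS[x]$-module structure on equivariant chains; since $\sS[x]$ is generally not a PID, no formal K\"unneth comparison of $\hrI(Y\#Y',K\#K')$ with $\hrI(Y,K)\otimes_{\sS[x]}\hrI(Y',K')$ is available, and the entire argument must be carried out at the chain level.
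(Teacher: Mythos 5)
Your proposed definition of $J_i^{\sS}(Y,K)$ has a genuine gap: the class $[\theta]$ you invoke is not a well-defined element of $\hrI(Y,K;\Delta_\sS)$. In the large equivariant complex $\hrC_\ast=\widetilde C_\ast\otimes\sS[x]$ with $\widehat d(x^i\zeta)=-x^i\widetilde d\zeta+x^{i+1}\chi\zeta$, the reducible generator $\theta=(0,0,1)$ satisfies $\widehat d\theta=-(0,\delta_2(1),0)$, which is nonzero whenever $\delta_2(1)\neq 0$. The same is true in the small model $\fhrC_\ast=C_{\ast-1}\oplus\sS[x]$, where $\widehat\fd(0,1)=(-\delta_2(1),0)$. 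So there is no canonical ``reducible'' homology class, and no such class is used in the paper's definition of $\hinv_\sS$ --- Definition~\ref{h-def} works entirely with the $\sS[x]$-submodule $\fI=\mathrm{im}(\mathfrak i_\ast)\subset\sS[\![x^{-1},x]$. The same problem reappears in your chain-level computation for (i): the equation $\widehat d\alpha=s\theta-x^i\tilde\xi$ with $\tilde\xi$ a cycle forces $s\,\widehat d\theta=0$, i.e.\ $s\,\delta_2(1)=0$, an extra constraint that is not part of the hypothesis $s\in J_i^\sS(Y,K)$.

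Even under the most generous re-reading --- interpreting your condition via the image $i_\ast$ in $\brI\cong\sS[\![x^{-1},x]$ so that ``$s[\theta]\in x^i\hrI$'' becomes ``$sx^{-i}\in\fI$'' --- you recover a strictly stronger (monomial) condition than the paper's Definition~\ref{j-def}, which only asks for the existence of \emph{some} element $a_0x^{-i}+a_{-1}x^{-i-1}+\cdots\in\fI$ with $a_0=s$. Because $\fI$ is the image of $\mathfrak i_\ast(\alpha,P(x))=\sum_{k<0}\delta_1 v^{-k-1}(\alpha)x^k+P(x)$, its elements of negative degree typically carry nontrivial infinite tails and are never pure monomials; with your stronger definition, property (iii) can fail (you could have $J_{h}=0$ even when $h$ is the correct Fr\o yshov value), so the identification with $\hinv_\sS$ breaks.

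The paper's argument is purely algebraic and lives in Subsection~\ref{subsec:ideals}. With $J_i$ defined by \eqref{eq:idealjidef} in terms of $\fI$, property (ii) follows from Corollary~\ref{localization}: a morphism of $\cS$-complexes induces multiplication by a unit of the form $1+\sum_{k<0}b_kx^k$ on $\brI\cong\sS[\![x^{-1},x]$, carrying $\fI$ into $\fI'$ and preserving leading coefficients degree-by-degree; a negative definite pair $(W,S)$ supplies such a morphism via Theorem~\ref{thm:framedcat} and its local-coefficient extension. Property (i) follows from Lemma~\ref{ideal-tensor}, which identifies $\brC\otimes_{\sS[x]}\brC'\to\brC^\otimes$ compatibly with $\widehat T$, and shows $\fI\cdot\fI'\subseteq\fI^\otimes$ inside $\sS[\![x^{-1},x]$; combined with Theorem~\ref{thm:connectedsumlocceoff} this gives $J_i^\sS(Y,K)\cdot J_j^\sS(Y',K')\subseteq J_{i+j}^\sS(Y\#Y',K\#K')$ directly, with no sign bookkeeping or chain-level K\"unneth needed because the products are taken in the ring $\sS[\![x^{-1},x]$. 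Property (iii) is immediate from Definition~\ref{h-def}. Your instinct that the connected-sum theorem and negative-definite functoriality are the geometric inputs is correct; what is missing is the right algebraic object to feed them into, namely $\fI$ rather than a (nonexistent) reducible cycle class.
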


All of the constructions discussed thus far are derived from the chain homotopy type of the $\cS$-complex $\widetilde C(Y,K;\Delta_\mathscr{S})$. However,  there is more structure to exploit on this complex, coming from a filtration induced by the Chern--Simons functional. (The terminology that we use for $\cS$-complexes with this extra structure is an {\emph{enriched $\cS$-complex}}. We refer the reader to Subsection \ref{enriched-cS} for a more precise definition.) 

The Chern--Simons filtration can also be used to define homology concordance invariants. To illustrate this, we associate $\Gamma^{R}_{(Y,K)}:\Z\to \R^{\geq 0}\cup \infty$ to a pair $(Y,K)$ by adapting the construction of  \cite{AD:CS-Th} to our setup. Here $R$ is any integral domain which is an algebra over the ring $\Z[T^{\pm 1}]$. The function $\Gamma_{(Y,K)}^R$ depends only on the homology concordance class of $(Y,K)$. Some other properties are mentioned in the following theorem. For a slightly stronger version see Theorem \ref{Gamma-Y-K}.

\begin{theorem}
	Let $(Y,K)$ be a knot in an integer homology 3-sphere.
	\begin{itemize}
		\item[\emph{(i)}] The function $\Gamma_{(Y,K)}^R$ is an invariant of the homology concordance class of $(Y,K)$.
		\item[\emph{(ii)}] For each $i\in\Z$, we have $\Gamma_{(Y,K)}^R(i)<\infty$ if and only if $i\leqslant \hinv_R(Y,K)$.
		\item[\emph{(iii)}] For each $i\in \Z$, if $\Gamma_{(Y,K)}^R(i)\not\in\{ 0, \infty\}$, then it is congruent {\emph{(mod $\Z$)}} to the value of the Chern-Simons functional at an irreducible singular flat $SU(2)$ connection on $(Y,K)$.
	\end{itemize}
\end{theorem}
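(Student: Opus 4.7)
\medskip

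\noindent\textbf{Proof plan.} The plan is to follow the strategy of \cite{AD:CS-Th}, adapted to the enriched $\cS$-complex $\widetilde C(Y,K;\Delta_R)$ introduced in Subsection \ref{enriched-cS}. First I would recall the precise definition of $\Gamma^R_{(Y,K)}(i)$: the $\cS$-complex carries an $\R^{\geq 0}$-valued filtration $\cF^{\leq \lambda}\widetilde C(Y,K;\Delta_R)$ induced by the Chern--Simons functional, and $\Gamma^R_{(Y,K)}(i)$ is the infimum of all $\lambda\geq 0$ such that the $i$-th ideal $J_i^R(Y,K)$, computed using only those cycles/chain elements supported in filtration level $\leq\lambda$, is nonzero. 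Equivalently, one works with the truncated subcomplexes and tracks the minimal level at which the algebraic data witnessing $J_i^R \neq 0$ appears.

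For (i), I would argue invariance under a homology concordance $(W,S)$ by showing that the cobordism map of enriched $\cS$-complexes, constructed in parallel with Section \ref{sec:tilde}, respects the Chern--Simons filtration up to a shift controlled by the topological action of instantons on $(W,S)$. Since $W$ is an integer homology cobordism and $S$ is a cylinder, the relevant reducible background connection contributes zero action, so the cobordism morphism (and its inverse, from the opposite concordance) is filtration-preserving. This yields two-sided inequalities $\Gamma^R_{(Y,K)}(i)\leq \Gamma^R_{(Y',K')}(i)$ and vice versa, establishing invariance. The subtle point here, which is also the main obstacle, is to produce a chain homotopy between the two compositions that likewise preserves the filtration: this requires a careful moduli-theoretic energy estimate for 1-parameter families of instantons, which is exactly the step that was handled in \cite{AD:CS-Th} for the closed case and must be transplanted to the singular setting; the $S^1$-equivariance and the presence of reducibles require one to track the tensorial interaction with the $\Z[\chi]/(\chi^2)$-module structure.

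For (ii), the finiteness statement reduces immediately to the algebra: $\Gamma^R_{(Y,K)}(i)<\infty$ means there exists a finite filtration level at which the witnessing cycle for $J_i^R(Y,K)\neq 0$ already appears, which in particular implies $J_i^R(Y,K)\neq 0$; conversely, any nonzero element of $J_i^R(Y,K)$ lies in the union of the filtered pieces $\cF^{\leq \lambda}$ over $\lambda\in\R^{\geq 0}$ and hence is visible at some finite level. Thus $\Gamma^R_{(Y,K)}(i)<\infty$ if and only if $J_i^R(Y,K)\neq 0$, which by Theorem \ref{thm:jideals}(iii) is equivalent to $i\leq \hinv_R(Y,K)$.

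For (iii), I would use the standard fact that filtration levels of elements in the enriched $\cS$-complex arise as critical values of the singular Chern--Simons functional, i.e. as values at singular flat $SU(2)$ connections. Since $Y$ is an integer homology sphere and $K$ carries the meridional holonomy \eqref{i}, the Chern--Simons values at the reducible singular flat connection lie in $\Z$, while values at irreducible singular flat connections are well-defined modulo $\Z$. If $\Gamma^R_{(Y,K)}(i)$ is realized as an infimum that is not attained at $0$ or $\infty$, compactness of the moduli space of singular flat connections (modulo gauge, up to a given action) together with the fact that $\Gamma(i)$ is a limit of critical values forces it to coincide modulo $\Z$ with the Chern--Simons value of such a critical point. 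The irreducible case is the only non-trivial one, because a reducible contribution would give $\Gamma(i)\in \Z$, and together with $\Gamma(i)\geq 0$ we may then subtract integer shifts coming from differences of reducibles without affecting the mod-$\Z$ reduction, reducing to the irreducible case.
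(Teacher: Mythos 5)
Your proposal is essentially the right reconstruction of the argument: the paper itself proves this theorem by citing that "the proofs are entirely analogous to those of Theorems~1--4 and Proposition~1 of \cite{AD:CS-Th}," and you correctly identify the three inputs, namely (a)~monotonicity of $\Gamma$ under morphisms of enriched $\cS$-complexes plus functoriality under negative definite pairs (in both directions, for a concordance), (b)~the algebraic relation between $\Gamma$ and $h_R$ via the Fr\o yshov-type chain conditions, and (c)~discreteness and compactness of the flat critical set together with a limiting argument over shrinking perturbations. However, a few points deserve tightening.

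First, the definition you recall is a reformulation rather than the paper's. The paper defines $\Gamma(\widetilde\fE)(k)$ directly as $\lim_{j\to\infty}\inf_\alpha \deg_I(\alpha)$, where $\alpha$ ranges over chain elements satisfying precisely the conditions of Proposition~\ref{h-g-reinterpret} (i.e.\ $d^j\alpha=0$ and $k-1 = \min\{i : \delta_1^j (v^j)^i(\alpha)\neq 0\}$ for $k>0$, and the $\delta_2$-version for $k\leqslant 0$). Your "$J_i$ computed in filtration level $\leqslant\lambda$" picture is morally equivalent, but translating between them requires the identification of $h$-witnesses in Proposition~\ref{h-g-reinterpret} with nonvanishing of the ideals $J_i$, which you should make explicit before using it in~(ii).

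Second, for~(i), saying "the cobordism morphism is filtration-preserving because the reducible has zero action" is too coarse. The morphism $\widetilde\lambda_{(W,S;\Delta)}$ is a morphism of I-graded $\cS$-complexes of some level $\delta$ which can be made arbitrarily small; the components $\lambda,\mu,\Delta_1,\Delta_2$ count instantons of positive action and the correct observation is that these \emph{decrease} $\deg_I$ (action is expended along the flow), so the morphism does not \emph{increase} the instanton degree beyond $\delta$. Moreover, you flag "producing a chain homotopy that preserves the filtration" as the main obstacle, but this is already built into the definition of a morphism of enriched $\cS$-complexes and established when one verifies that $\widetilde\fE(Y,K;\Delta)$ is well defined; once that is in place, the monotonicity $\Gamma(\widetilde\fE)(k)\geqslant\Gamma(\widetilde\fE')(k)$ under morphisms is all that is needed, and invariance follows from having morphisms in both directions.

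Third, for~(iii), your discussion of reducible flat connections is unnecessary: the chain groups $C^j$ are by construction generated only by irreducible (perturbed) critical points, and the reducible $\theta$ enters only through the distinguished summand $R[U^{\pm 1}]$, which does not contribute to $\deg_I(\alpha)$ for $\alpha\in C^j$. So $\deg_I(\alpha)$ is always a lift of the CS value of an irreducible perturbed flat connection; the $j\to\infty$ limit and the finiteness of $\text{CS}(\mathscr{X}^{\text{irr}}(Y,K))\subset\R/\Z$ then give the conclusion directly, without a case analysis.
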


A {\it traceless} $SU(2)$-representation for a pair $(Y,K)$ is a representation of $\pi_1(Y\backslash K)$ into $SU(2)$ such that a (and hence any) meridian of $K$ is mapped to an element of $SU(2)$ with vanishing trace. For instance, the unknot has a unique conjugacy class of such representations which, of course, has an abelian image. Similarly, for a given homology concordance $(W,S):(Y,K)\to (Y',K')$, a traceless representation is a homomorphism of $\pi_1(W\backslash S)$ into $SU(2)$ such that a meridian of $S$ is mapped to a traceless element of $SU(2)$. In particular, any traceless representation of the pair $(Y,K)$ (resp. $(W,S)$) induces an $SO(3)$-representation of the orbifold fundamental group of the $\Z/2$-orbifold structure on $Y$ (resp. $W$) with singular locus $K$ (resp. $S$).

The following is a corollary of the invariance of $\Gamma_{(Y,K)}^\mathscr{S}$ under homology concordances. (Compare to the case for integer homology 3-spheres in \cite[Theorem 3]{AD:CS-Th}.)

\begin{cor}
	Let $(W,S):(Y,K)\to (Y',K')$ be a homology concordance with $\Gamma_{(Y,K)}^R\neq \Gamma_{(S^3,U)}^R$. Then there exists a traceless representation of $(W,S)$ that extends non-abelian traceless representations of $(Y,K)$ and $(Y',K')$.
	In particular, the images of $\pi_1(Y\backslash K)$ and $\pi_1(Y'\backslash K')$ in $\pi_1(W\backslash S)$ are non-abelian.
\end{cor}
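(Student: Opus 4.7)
The plan is to argue by contradiction, adapting the analogous argument for integer homology 3-spheres in \cite[Theorem 3]{AD:CS-Th} to the knot setting. I would suppose that every traceless representation $\rho:\pi_1(W\setminus S)\to SU(2)$ restricts to an abelian representation on at least one of the two ends, and then derive a contradiction with the hypothesis $\Gamma^R_{(Y,K)}\neq \Gamma^R_{(S^3,U)}$.

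The first step is to recall the mechanism behind Theorem \ref{Gamma-Y-K}(i): the homology concordance $(W,S)$ induces a chain map $\lambda_{W,S}:\widetilde C(Y,K;\Delta_R)\to \widetilde C(Y',K';\Delta_R)$ of enriched $\cS$-complexes, defined by counting isolated finite-energy singular instantons on the cylindrical-end manifold built from $(W,S)$, weighted by local-coefficient monomials in the variables $T,U$ that encode meridional holonomy and topological action. The Chern--Simons levels of the critical points are tracked by the exponent of $U$, and the homology concordance invariance of $\Gamma^R$ comes precisely from this filtered control. The baseline $\Gamma^R_{(S^3,U)}$ corresponds to the enriched $\cS$-complex of the unknot, whose critical set consists only of the reducible traceless representation.

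The analytic heart of the argument is a bubbling/compactness statement in the singular setting. A sequence $\{A_n\}$ of singular instantons on $(W,S)$ of uniformly bounded action whose asymptotic limits on both ends are \emph{irreducible} traceless flat connections admits, after Uhlenbeck compactness, a subsequential limit consisting of a finite-action singular flat connection $A_\infty$ on $(W,S)$ together with a finite collection of instanton and monopole bubbles, each carrying a positive quantum of action. The connection $A_\infty$ corresponds to a traceless representation $\rho$ of $\pi_1(W\setminus S)$ whose restrictions on the two ends equal the prescribed asymptotic limits. Under the contradiction hypothesis, $\rho$ must be abelian on at least one end, so no such unbubbled $A_\infty$ can exist when both asymptotes are irreducible; consequently any non-empty moduli space between two irreducible critical points must be fed by genuinely bubbled configurations, whose topological action lies in a strictly deeper Chern--Simons filtration level.

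Iterating this dichotomy, the matrix coefficients of $\lambda_{W,S}$ between pairs of irreducible critical points can be pushed to arbitrarily deep filtration levels, so $\lambda_{W,S}$ becomes chain-homotopic, to any prescribed filtration depth, to a map that factors through the reducible $\Z$-summand of the $\cS$-complex on each side. By the algebraic formula defining $\Gamma^R$ from Subsection \ref{enriched-cS}, such a factorization forces $\Gamma^R_{(Y,K)}=\Gamma^R_{(S^3,U)}$, giving the contradiction. The hardest step will be the careful Chern--Simons bookkeeping through the singular Uhlenbeck compactification in the presence of both instanton and monopole bubbles, but the template from \cite{AD:CS-Th} transfers after systematically replacing non-singular with singular instanton moduli, and the quantization of bubble energies in the singular setting provides the positive filtration jump that makes the iteration work. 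The final ``In particular'' statement is then immediate: any traceless $\rho:\pi_1(W\setminus S)\to SU(2)$ whose restrictions to the two ends are non-abelian certifies that the inclusion-induced maps $\pi_1(Y\setminus K)\to \pi_1(W\setminus S)$ and $\pi_1(Y'\setminus K')\to \pi_1(W\setminus S)$ have non-abelian image.
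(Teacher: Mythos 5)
Your contrapositive strategy points in the right direction, and you correctly identify the analytic input (Uhlenbeck compactness for singular instantons producing a flat limit) and the ultimate source of the obstruction (Chern--Simons filtration control on cobordism maps). However, the specific mechanism you describe has two genuine gaps, and also overlooks that the paper has already packaged exactly what is needed into Theorem \ref{Gamma-Y-K}.

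The intended argument is much shorter. Since $(W,S)$ is a homology concordance, part (i) of Theorem \ref{Gamma-Y-K} gives $\Gamma^R_{(Y,K)} = \Gamma^R_{(Y',K')}$. The hypothesis $\Gamma^R_{(Y,K)} \neq \Gamma^R_{(S^3,U)}$ combined with parts (ii) and (iv) produces some $i$ with $0 < \Gamma^R_{(Y,K)}(i) < \infty$. Feeding this into the cobordism inequality of part (iii) yields $\Gamma^R_{(Y,K)}(i) \leqslant \Gamma^R_{(Y,K)}(i) - \eta(W,S)$, forcing $\eta(W,S)=0$. The Furthermore clause of (iii) --- which is exactly where the Uhlenbeck compactness argument lives, since $\eta(W,S)$ is defined as the infimum of $2\kappa(A)$ over instantons with irreducible flat limits at both ends --- then produces the desired traceless representation extending irreducibles.

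Your route tries to re-derive this from scratch, and the two steps that fail are: (a) A cobordism $(W,S)$ with $\eta(W,S)>0$ gives matrix coefficients between irreducibles with Chern--Simons jump at least $\eta$, but not ``arbitrarily deep.'' The proposed iteration is not explained and cannot be carried out with a single cobordism; composing $(W,S)$ with its reverse gives a self-cobordism whose induced map is chain homotopic to the identity, which has jump zero, so iterating the composite destroys rather than deepens the filtration gain. (b) Even granting that all irreducible-to-irreducible matrix coefficients have jump $\geqslant \eta>0$, this does not make $\widetilde\lambda_{(W,S)}$ chain homotopic to a map factoring through the $\Z$-summand. A matrix entry $U^{2\eta}$ is not null-homotopic, and the enriched $\cS$-complex framework does not permit discarding contributions merely because they sit deep in the filtration; the level of the relevant chain homotopies is controlled, not made infinite. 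Consequently the target $\Gamma^R_{(Y,K)}=\Gamma^R_{(S^3,U)}$ is also the wrong contradiction to aim for: the actual contradiction obtained from $\eta>0$ is the strict drop $\Gamma^R_{(Y',K')}(i) < \Gamma^R_{(Y,K)}(i)$ against concordance invariance.
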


Note that the condition $\Gamma_{(Y,K)}^R\neq \Gamma_{(S^3,U)}^R$ is satisfied if $\hinv_R(Y,K) \neq 0$, examples for which can be found in Theorem \ref{thm:hinvcomps} (and more examples may be generated by additivity).

\subsection*{Further discussion}

Functoriality of the $\cS$-complex $\widetilde C(Y,K;\Delta)$ with respect to homology concordances plays the key role in proving the desired properties of the above concordance invariants. In fact, if $(W,S):(Y,K)\to (Y',K')$ is a negative definite pair, then there is an induced morphism $\widetilde C(W,S;\Delta):\widetilde C(Y,K;\Delta) \to \widetilde C(Y',K';\Delta)$ in the category of $\cS$-complexes, which preserves the Chern-Simons filtration, in the sense of enriched $\cS$-complexes. This notion of functoriality implies that the chain complexes and homology groups constructed from $\widetilde C(Y,K;\Delta)$, such as $\hrC(Y,K;\Delta)$, $\hrI(Y,K;\Delta)$ and $I(Y,K;\Delta)$, are functorial with respect to such negative definite pairs.

The main reason that we develop the functoriality for this limited family of cobordisms is to avoid working with moduli spaces of singular instantons that have reducible elements that are not cut out transversely. To achieve a regular moduli space, one cannot simply perturb these connections, due to the well-known phenomenon that equivariant transversality does not hold generically. However, there is enough evidence to believe that at least the equivariant theory $\hrC(Y,K;\Delta)$ (and hence the homology theory $\hrI(Y,K;\Delta)$) is functorial with respect to more general cobordisms. We plan to return to this issue elsewhere.

In addition to extending the theory to include more general cobordisms, the authors also expect that an Alexander grading may be constructed on the homology groups studied here, perhaps adapting the ideas used in \cite{KM:alexander}. 

In \cite{km-concordance,km-rasmussen}, Kronheimer and Mrowka introduce various concordance invariants out of the singular instanton homology groups $I^\#(S^3,K)$ and $I^\natural(S^3,K)$. In fact, they show that their invariants can be used to obtain lower bounds for the slice genus, unoriented slice genus, and unknotting number. Due to our limited functoriality, at this point we cannot examine our concordance invariants in this generality here. We hope that our conjectured functoriality for $\hrC(Y,K;\Delta)$ allows us to achieve this goal. In light of Theorem \ref{relation-KM}, we believe that this extended functoriality would be useful to answer the following:

\begin{question}\label{question:kmconc}
	Is there any relationship between the concordance invariants in \cite{km-concordance,km-rasmussen} and the ideals $\{J_{i}^\mathscr{S}(Y,K)\}_{i\in \Z}$ appearing in Theorem \ref{thm:jideals}?
\end{question}

In Subsection \ref{subsection:remarks-conc} we propose an approach to construct yet another family of concordance invariants which we argue should recover Kronheimer and Mrowka's invariants in \cite{km-concordance}. Moreover, if $K$ is a knot in $S^3$ satisfying the following slice genus identity:
\begin{equation}\label{slice-genus-cond}
  g_4(K)=-\sigma(K)/2,
\end{equation}
such as the right-handed trefoil, then the functoriality developed in this paper allows us to carry out the proposed construction. In particular, we show that the concordance invariants obtained from the unreduced theory $I^\#(S^3,K)$ and the reduced theory $I^\natural(S^3,K)$ in \cite{km-concordance} are essentially equal to each other, a relation which is not obvious from the constructions of \cite{km-concordance}. For the knots satisfying \eqref{slice-genus-cond}, we also give a partial answer to Question \ref{question:kmconc} by providing some relations between the concordance invariants of \cite{km-concordance} and the ideals $\{J_{i}^\mathscr{S}(Y,K)\}_{i\in \Z}$.\\

{\it Organization.} The necessary background on the gauge theory of singular connections, which was developed by Kronheimer and Mrowka, is reviewed in Section \ref{sec:defns}. In particular, we devote Subsection \ref{subsec:red} to analyzing reducible singular ASD connections, which play an important role in our construction. The definition of negative definite pair arises naturally from this analysis. The geometrical setup of Section \ref{sec:defns} allows us to define the $\cS$-complex $(\widetilde C(Y,K),\widetilde d)$ in Section \ref{sec:tilde}. Some technical constructions involving holonomy maps of singular connections used in Section 3 are explained in Appendix A at the end of the paper.

We make a digression in Section \ref{sec:equivtheories} to develop the homological algebra of $\cS$-complexes. In Subsections \ref{equiv-model-1} and \ref{small-model}, we give two models for the chain complexes underlying the equivariant homology groups $\hrI$, $\crI$ and $\brI$.  We also define tensor products (needed for Theorem \ref{thm:connectedsumlocceoff}) and duals of $\cS$-complexes in Section \ref{sec:equivtheories}. We use these operations to define a {\it local equivalence group} following the construction of \cite{stoffregen}. The algebraic framework for the ideals $J_{i}^\mathscr{S}(Y,K)$ is defined in Subsection \ref{subsec:ideals}. Next, in Section \ref{sec:eq-I}, the algebraic constructions of Section \ref{sec:equivtheories} are used to define equivariant Floer homology groups $\hrI_*(Y,K)$, $\crI_*(Y,K)$ and  $\brI_\ast(Y,K)$ and the concordance invariant $\hinv(Y,K)$.

Theorem \ref{thm:connectedsumlocceoff} on invariants of connected sums is proved in Section \ref{sec:consum}. In Section \ref{sec:loccoeffs} we explain how one can obtain additional algebraic structures on $\widetilde C(Y,K)$ using local coefficient systems. Here the general concordance invariants $h_\sS(Y,K)$, $\{ J_i^\sS(Y,K)\}$ and $\Gamma^R_{(Y,K)}$ are defined. Theorem \ref{relation-KM} is discussed in detail in Section \ref{sec:kmgroups}. In the final section of the paper, we focus on computations, where proofs of Theorems \ref{thm:sasahiracomparison} and \ref{thm:hinvcomps} are given.\\

{\it Acknowledgements.} The authors would like to thank the Simons Center for Geometry and Physics where this project started, as well as the organizers of the 2019 PCMI Research Program ``Quantum Field Theory and Manifold Invariants'' where some of the work was carried out. Discussions with Mike Miller Eismeier were instrumental in improving the treatment of the foundations of the established constructions. In particular, the appendix at the end of this paper developed after conversations with him. The authors would also like to thank Peter Kronheimer, Tom Mrowka and Nikolai Saveliev for helpful discussions. The authors thank Hayato Imori, Kouki Sato and the anonymous referee for pointing out several mistakes and providing comments on an earlier version of the paper.

\newpage

%!TEX root = main.tex
\section{Background on singular $SU(2)$ gauge theory}\label{sec:defns}

In this section we survey the relevant aspects of singular $SU(2)$ gauge theory. The objects we begin with are $SU(2)$ connections on a homology 3-sphere which are singular along a knot, with limiting holonomies of order 4 around small meridional loops. Most of the definitions and results are due to Kronheimer and Mrowka \cite{KM:YAFT}. The main difference in our setup is the presence of a distinguished flat reducible $\theta$. In particular, we modify the holonomy perturbation scheme of \cite{KM:YAFT} so as to not disturb $\theta$, which is isolated and non-degenerate in the moduli space of singular flat connections. 

Next, we consider ASD connections on cobordisms of homology spheres which are singular along an embedded cobordism of knots. We start with the product case, and then move to the arbitrary case. To any such connection, we can associate an elliptic operator, called the ASD operator. We study the index of such operators for reducible singular connections on a cobordism, which motivates the definition of {\it negative definite pairs}. We also use the ASD operator to define an absolute $\Z/4$-grading for irreducible critical points using $\theta$, analogous to Floer's grading in the non-singular setting.

We review a formula due to Herald \cite{herald} that expresses the signed count of singular flat $SU(2)$ connections in terms of the Casson invariant of the homology 3-sphere and the signature of the knot. Finally, we review the data needed to fix orientations on moduli spaces of singular ASD connections.

\subsection{Singular $SU(2)$ connections}\label{sec:connections}

Let $Y$ be an integer homology 3-sphere, and $K\subset Y$ a smoothly embedded knot. Fix a rank 2 Hermitian vector bundle $E$ over $Y$ with structure group $SU(2)$, with a reduction $E|_K=L\oplus L^\ast$ over the knot for some Hermitian line bundle $L$. Note that $E$ and $L$ are necessarily trivializable bundles. The pair $(Y,K)$ determines a smooth 3-dimensional $\Z/2$-orbifold $\check{Y}$, with underlying topological space $Y$ and singular locus $K$.

Choose a regular neighborhood of $K\subset Y$ diffeomorphic to $S^1\times D^2$, in which $K$ is identified with $S^1\times\{0\}$. Let $(r,\theta)\in D^2$ be polar coordinates normal to $K$. Define
\[
	\lambda_0 = b(r)\frac{1}{4}id\theta
\]
where $b(r)$ is a bump function equal to $1$ for $r<1/2$ and zero for $r>1$. Then $\lambda_0$ is a 1-form on $Y\setminus K$ with values in $i\R=\fu(1)$. 
Using trivializations of $E$ and $L$ that respect the splitting $E|_K=L\oplus L^\ast$, we view $B_0:=\lambda_0\oplus\lambda_0^\ast$ as a connection on $E|_{Y\setminus K}$. The holonomy of this connection is of order 4 around small meridional loops of $K$.

The adjoint bundle of $E$, written $\fg_E$, is the subbundle of $\text{End}(E)$ consisting of skew-Hermitian endomophisms, and has structure group $SO(3)=\text{Aut}(\su(2))$. The singular connection $B_0$ induces a connection on $\fg_E$ denoted $B_0^\text{ad}$. It has holonomy of order 2 around small meridional loops of $K$, and so it extends to an orbifold connection $\check{B}_0^\text{ad}$ on an orbifold bundle $\check{\fg}_E$ over $\check{Y}$ whose underlying topological bundle is $\fg_E$.

Fix $k\geqslant 3$, and choose a Riemannian metric $g_o$ on $Y$ with cone angle $\pi$ along $K$, which induces a Riemannian metric on the orbifold $\check{Y}$. The space of $SU(2)$ connections on $Y$ with singularities of order 4 along $K$ is defined as follows:
\begin{equation*}
	\sC(Y,K) = B_0 + \check{L}_{k,B^\text{ad}_0}^2(\check{Y}; \check{\Lambda}^\ast \otimes \check{\fg}_E) \label{eq:connections}
\end{equation*}
The function space on the right-hand side consists of sections $b$ of $\check{\Lambda}^\ast \otimes \check{\fg}_E$ such that $\nabla^i b$ are $L^2$ for $0\leqslant i \leqslant k$, where the orbifold connection $\nabla$ is defined using the Levi-Civita derivative induced by $g_o$ and the covariant derivative induced by the adjoint of $B_0$. We have written $\check{\Lambda}^\ast$ for the orbifold bundle of exterior forms on $\check{Y}$.

The gauge transformation group $\sG(Y,K)$ consists of the orbifold automorphisms $g$ of the bundle $E$ such that $\nabla_{B
_0} g\in \check{L}_{k,B^\text{ad}_0}^{2}$. Write $\sB(Y,K)=\sC(Y,K)/\sG(Y,K)$ for the quotient configuration space. The homotopy type of $\sG(Y,K)$ is the same as that of the space of continuous automorphisms of $E$ that preserve each factor of $E|_K=L\oplus L^\ast$, and this latter group may be identified with the space of continuous maps $g:Y\to SU(2)$ such that $g(K)\subset U(1)$. We have an isomorphism
\begin{equation}
	d:\pi_0(\sG(Y,K)) \to \Z\oplus \Z, \quad d(g) = (k,l)\label{eq:gaugedegree}
\end{equation}
With the above homotopy identifications understood, the number $k$ is the degree of the map $g:Y\to SU(2)$, and $l$ is the degree of the restriction $g|_K:K\to U(1)$.

\subsection{The Chern-Simons functional and flat connections}
There is defined a Chern-Simons functional $\text{CS}:\sC(Y,K)\to \R$, uniquely characterized up to a constant as the functional whose formal $L^2$ gradient is given by
\begin{equation*}
	(\text{grad}\;\text{CS})_{B}=\frac{1}{4\pi^2}\ast F_{B}\label{eq:csgrad}
\end{equation*}
for each $B\in\sC(Y,K)$, where $F_{B}$ is the curvature of $B$. For a gauge transformation $g\in \sG(Y,K)$ with homotopy invariants $d(g)=(k,l)$ as in \eqref{eq:gaugedegree}, we have
\[
	\text{CS}(B) - \text{CS}(g(B)) = 2k + l.
\]
We thus obtain a circle-valued functional $\text{CS}:\sB(Y,K)\to \R/\Z$, defined up to the addition of a constant, denoted by the same name. The critical points of $\text{CS}$ are flat connections on $E|_{Y\setminus K}$ with prescribed holonomy around meridians of $K$. We denote by $\fC\subset\mathscr{B}(Y,K)$ the set of gauge equivalence classes of flat connections.

By choosing a basepoint in $Y\setminus K$ and taking holonomy around based loops in $Y\setminus K$, we obtain a homeomorphism between $\fC$ and the traceless $SU(2)$ character variety,
\begin{equation}
	\mathscr{X}(Y,K) := \left\{ \rho: \pi_1(Y\setminus K)\to SU(2): \tr \rho(\mu) =0 \right\}/SU(2).\label{eq:charvar}
\end{equation}
Here $\mu$ is any meridional loop around $K$, and the action of $SU(2)$ is by conjugation. This correspondence does not depend on the chosen basepoint. 

There is a distinguished class $\theta\in\sB(Y,K)$, the {\emph{(flat) reducible}}, characterized as the orbit of flat connections in $\sC(Y,K)$ corresponding to the unique conjugacy class of representations in \eqref{eq:charvar} that factor through $H_1(Y\setminus K;\Z)\cong \Z\cdot\mu$. We call a class of flat connections $[B]\in\fC$ {\emph{non-degenerate}} if the Hessian of $\text{CS}$ at $B$ is non-degenerate. The following result is implied by \cite[Lemma 3.13]{KM:YAFT}. See also Proposition \ref{reg-dbl-cover}.

\begin{prop}\label{prop:nondeg}
	The reducible $\theta\in\fC$ is isolated and non-degenerate.
\end{prop}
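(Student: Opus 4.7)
The plan is to reduce the statement to a cohomology vanishing computation on $\check Y$ with coefficients in the adjoint bundle, and then to handle the two isotypic pieces separately: the invariant piece via the $\Z$-homology sphere hypothesis on $Y$, and the anti-invariant piece via the double branched cover $\Sigma_2(Y,K)$, which is a rational homology sphere.

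First, by the standard deformation-theoretic analysis for singular connections from \cite{KM:YAFT}, non-degeneracy of the Hessian of $\text{CS}$ at $\theta$ together with the slice theorem for $\sG(Y,K)$ yields, via an implicit function theorem argument, that $\theta$ is isolated in $\fC$. Both properties follow once we show the vanishing of the first cohomology of the deformation complex
\[
	0 \longrightarrow \check L^2_{k+1}(\check Y;\check\fg_E) \xrightarrow{\;d_\theta\;} \check L^2_k(\check Y;\check\Lambda^1\otimes\check\fg_E) \xrightarrow{\;\ast d_\theta\;} \check L^2_{k-1}(\check Y;\check\Lambda^1\otimes\check\fg_E),
\]
i.e.\ the twisted orbifold cohomology $H^1(\check Y;\check\fg_\theta)$.

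Second, I will use that the holonomy of $\theta$ lies in the maximal torus $U(1)\subset SU(2)$, and that a meridian $\mu$ of $K$ acts (in the limit) via $\text{Ad}\,\mathrm{diag}(i,-i)$ on $\su(2)$. This operator is trivial on the diagonal $i\R$-summand and equal to $-1$ on the off-diagonal $\C$-summand. Consequently the flat orbifold bundle $\check\fg_\theta$ splits as $\underline{i\R}\oplus\xi$, where $\xi$ is a real rank two orbifold local system on $\check Y$ whose local isotropy along $K$ acts by $-1$. This gives
\[
	H^1(\check Y;\check\fg_\theta) \;\cong\; H^1(Y;\R)\;\oplus\;H^1(\check Y;\xi).
\]

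Third, I will dispatch the two summands. Since $Y$ is an integer homology sphere, $H^1(Y;\R)=0$. For the twisted piece, let $p:\Sigma_2\to Y$ be the double cover of $Y$ branched along $K$, with deck involution $\tau$. The pullback $p^\ast\xi$ is canonically trivialized (the meridional holonomy $-1$ becomes trivial on the connected double cover), and the transfer isomorphism identifies $H^1(\check Y;\xi)$ with the $(-1)$-eigenspace of $\tau^\ast$ acting on $H^1(\Sigma_2;\R)$. But $\Sigma_2=\Sigma_2(Y,K)$ is a rational homology sphere (its first Betti number vanishes because $|H_1(\Sigma_2;\Z)|=|\Delta_K(-1)|$ is finite for any knot in an integer homology sphere), so $H^1(\Sigma_2;\R)=0$ and a fortiori its $(-1)$-eigenspace vanishes. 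This proves $H^1(\check Y;\check\fg_\theta)=0$, giving both the non-degeneracy of $\theta$ and, via the implicit function theorem, its isolation in $\fC$.

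The only genuinely technical point is the identification of the orbifold Sobolev cohomology with the topological group $H^1(\check Y;\xi)$ and the transfer to $\Sigma_2$; this is essentially the content of Kronheimer--Mrowka's local model for singular connections along a knot and is alluded to in \cite[Lemma 3.13]{KM:YAFT}, with the cleaner orbifold/branched-cover reformulation to be given in Proposition \ref{reg-dbl-cover}.
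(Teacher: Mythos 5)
Your argument is correct and is exactly the approach the paper intends: the paper's justification for Proposition~\ref{prop:nondeg} is the citation to \cite[Lemma 3.13]{KM:YAFT} together with the pointer to Proposition~\ref{reg-dbl-cover}, and your writeup unpacks those references. You split $\check\fg_\theta$ by meridional holonomy into the $+1$ and $-1$ isotypic pieces --- the same splitting that appears in \eqref{ker}--\eqref{coker} via the local model \eqref{eq:locmodelbranched} --- then kill the invariant piece because $Y$ is an integer homology sphere, and kill the anti-invariant piece by pulling back to the double branched cover $\Sigma_2(Y,K)$, a rational homology sphere. One cosmetic imprecision: since $\xi$ has real rank two, the transfer gives $H^1(\check Y;\xi)\cong\bigl(H^1(\Sigma_2;\R)^{\tau^*=-1}\bigr)^{\oplus 2}$ rather than a single copy (compare the $H^1_-(\widetilde X)^{\oplus 2}$ in \eqref{ker}); of course this does not affect the vanishing you need.
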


\noindent Flat connections in the class $\theta$ have $\sG(Y,K)$-stabilizer isomorphic to $U(1)$. Indeed, gauge stabilizers arise as centralizers of holonomy groups, and the holonomy group of a connection in the class $\theta$ is conjugate to the subgroup $\{\pm 1, \pm i \}\subset SU(2)$, with centralizer $U(1)$.

We note that $\theta$ is not the only gauge equivalence class of reducible connections: any connection in $\sC(Y,K)$ compatible with a reduction of $E|_{Y\setminus K}$ into a sum of line bundles also has stabilizer $U(1)$. However, among such reducibles, the connections in the orbit $\theta$ are the only ones that are flat. As the other reducibles are not relevant to the sequel, we feel justified in calling $\theta$ {\emph{the}} reducible, with ``flat'' being implicit.

We see now that $\fC$ may be written as the disjoint union $\{\theta\}\sqcup \fC^\text{irr}$ where $\fC^\text{irr}$ consists of flat {\emph{irreducible}} connection classes, each with $\sG(Y,K)$-stabilizer $\{\pm 1\}$. Finally, we may fix the ambiguity in the definition of $\text{CS}:\sB(Y,K)\to \R/\Z$ by declaring that $\text{CS}(\theta)=0$.

\subsection{The flip symmetry}\label{subsec:flip}

There is an involution $\iota$ on the configuration space $\sB(Y,K)$, defined as follows. Consider a flat $\Z/2$ bundle-with-connection $\xi$ over $Y\setminus K$ with holonomy $-1$ around meridians of $K$, corresponding to a generator of $H^1(Y\setminus K;\Z/2)$. Then for $[B]\in\sB(Y,K)$ we have
\begin{equation}
	\iota [B]  = [B\otimes \xi] \label{eq:flip}
\end{equation}
The involution $\iota$ is the ``flip symmetry'' considered, for example, in \cite[Section 2(iv)]{km-embedded-i}. (The flip symmetry there is in fact in the 4-dimensional setting, but is defined similarly.) Although the involution $\iota$ will not play an essential role in most of the sequel, it inevitably appears in the structure of our examples in Section \ref{sec:computations}. In a forthcoming work, we give a more systematic study of the interaction of $\iota$ with the $S^1$-equivariant theories introduced throughout this paper.

The flip symmetry $\iota$ restricts to an involution on the critical set $\fC$. In terms of the character variety $\sX(Y,K)$, the action of $\iota$ is induced by the assignment which sends a representation $\rho:\pi_1(Y\setminus K)\to SU(2)$ to the representation $\chi_\mu \cdot \rho$, where $\chi_\mu$ is the unique non-trivial representation $\chi_\mu:\pi_1(Y\setminus K)\to \{\pm 1\}$, again corresponding to a generator of $H^1(Y\setminus K;\Z/2)$. (In particular, note $\chi_\mu$ itself does not define a class in $\sX(Y,K)$.) From this it is clear that $\iota (\theta)=\theta$. More generally, the following elementary lemma is observed in \cite{PS}, where this involution on the character variety is studied:

\begin{lemma}\label{lemma:binarydihedral}
	An element of the critical set $\fC$ is fixed by the flip symmetry $\iota$ if and only if its corresponding representation class in $\sX(Y,K)$ has image in $SU(2)$ conjugate to a binary dihedral subgroup.
\end{lemma}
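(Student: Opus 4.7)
The plan is to translate the condition "$[\rho]\in\fC$ is fixed by $\iota$" into the existence of an element $g\in\SU(2)$ with $g\rho(\gamma)g^{-1}=\chi_\mu(\gamma)\rho(\gamma)$ for all $\gamma\in\pi_1(Y\setminus K)$, and then analyze what this algebraic condition forces on the image of $\rho$. I fix a maximal torus $T\subset\SU(2)$ and an element $j\in\SU(2)$ with $j^2=-1$ and $jtj^{-1}=t^{-1}$ for $t\in T$, so that $N(T)=T\sqcup jT$ serves as the model binary dihedral subgroup. Two elementary facts, both consequences of $ij=-ji$, will do the work: the centralizer of $i$ in $\SU(2)$ equals $T$, and the set of $h\in\SU(2)$ with $ihi^{-1}=-h$ is exactly the coset $jT$.

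For the forward direction I would first square the defining relation to see that $g^2$ centralizes the image of $\rho$. When $\rho$ is irreducible its image is non-abelian and its centralizer in $\SU(2)$ is the center $\{\pm 1\}$; the case $g^2=1$ would make conjugation by $g$ trivial and hence force $\chi_\mu\equiv 1$, contradicting the existence of the unique non-trivial class in $H^1(Y\setminus K;\Z/2)\cong\Z/2$, so $g^2=-1$. Any such $g$ is $\SU(2)$-conjugate to $i$, and after conjugating $[\rho]$ I may take $g=i$. The two facts above then immediately give $\rho(\gamma)\in T$ when $\chi_\mu(\gamma)=+1$ and $\rho(\gamma)\in jT$ when $\chi_\mu(\gamma)=-1$, so the image of $\rho$ lies in $N(T)$. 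The reducible class $\theta$ is handled by inspection: its image is already contained in $T\subset N(T)$, and it is fixed by $\iota$ since its class in $\sX(Y,K)$ is determined by the abelianization of $\pi_1(Y\setminus K)$.

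For the reverse direction, given $\rho$ with image in a conjugate of $N(T)$, the assignment sending $\gamma$ to $+1$ if $\rho(\gamma)\in T$ and $-1$ otherwise defines a homomorphism $\chi\colon\pi_1(Y\setminus K)\to\Z/2$. Since $H^1(Y\setminus K;\Z/2)$ is generated by $\chi_\mu$, either $\chi$ is trivial, which forces $\rho$ to have abelian image and thus to be the class $\theta$ (already known to be fixed by $\iota$), or $\chi=\chi_\mu$, in which case $g=i$ realizes the required intertwining by the same two facts. No serious analytic obstacle is anticipated, as the argument is pure group theory in $\SU(2)$; the only care required is to separate the reducible case from the irreducible one and to rule out $g^2=+1$, both of which follow cleanly from the uniqueness of the non-trivial class in $H^1(Y\setminus K;\Z/2)$.
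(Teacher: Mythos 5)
Your argument is correct and is essentially the standard elementary group-theoretic proof. The paper itself does not prove this lemma — it simply attributes it to Poudel–Saveliev \cite{PS} as an observation — so there is no in-paper argument to compare against, but your reasoning matches what one expects that reference to contain: fixing $[\rho]$ under $\iota$ is equivalent to an intertwiner $g$ with $g\rho g^{-1}=\chi_\mu\cdot\rho$; squaring shows $g^2$ centralizes the image; Schur forces $g^2=\pm1$ in the irreducible case; $g^2=1$ is excluded since it would force $\chi_\mu\equiv 1$; hence $g$ is conjugate to $i$, and the two centralizer/anti-centralizer facts place the image in $N(T)=T\cup jT$. The converse via the $\Z/2$-valued homomorphism recording membership in $T$ versus $jT$, together with $H^1(Y\setminus K;\Z/2)\cong\Z/2$, is exactly right, as is the separate treatment of $\theta$. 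One small point you could make explicit: showing the image lies in $N(T)$ is a priori weaker than showing it is \emph{equal} (up to conjugacy) to a binary dihedral subgroup. For an irreducible $\rho$ this gap closes immediately — a non-abelian subgroup of $N(T)$ is generated by a subgroup $C\leqslant T$ containing $-1$ together with an element of $jT$, all of whose squares equal $-1$, and such a subgroup is by definition binary dihedral (or, in the infinite case, the whole $\mathrm{Pin}(2)$, which is the natural limiting case of the binary dihedral family). For the reducible $\theta$ the image $\{\pm1,\pm i\}$ is the degenerate case $2D_1\cong\Z/4$. These are conventions rather than gaps, but spelling out that last step would make the proof airtight.
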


\subsection{Perturbing the critical set}\label{sec:perturb}

In general, the critical set $\fC^\text{irr}$ is degenerate. To fix this, we add a small perturbation to the Chern-Simons functional. Kronheimer and Mrowka use holonomy perturbations in \cite[Section 3]{KM:YAFT} modelled after those used in the non-singular setting, see \cite{taubes, donaldson:orientations, floer:inst1}. Although not essential, we would like to have a class of perturbations that leave the reducible alone, just as in Floer's instanton homology for integer homology 3-spheres.

We first describe the pertubations used in \cite[Section 3]{KM:YAFT}. Let $q:S^1\times D^2\to Y\setminus K$ be a smooth immersion. Let $s$ and $z$ be the coordinates of $S^1=\R/\Z$ and $D^2$, respectively. Consider the bundle $G_E\to Y$ whose sections are gauge transformations in $\sG(Y,K)$, and for each $B\in\sC(Y,K)$ and $z\in D^2$ let $\text{Hol}_{q(-,z)}(B)\in (G_E)_{q(0,z)}$ be the holonomy of $B$ around the corresponding loop based at $q(0,z)$. As $z$ varies we obtain a section $\text{Hol}_q(B)$ of the bundle $q^\ast (G_E)$ over the disk $D^2$.

Suppose we have a tuple of such immersions, $\mathbf{q}=(q_1,\ldots,q_r)$, with the property that they all agree on $[-\eta,\eta]\times D^2$ for some $\eta > 0$. The bundles $q^\ast_j(G_E)$ are canonically isomorphic over this neighborhood, and for each $B\in\sC(Y,K)$, the holonomy maps define a section $\text{Hol}_\mathbf{q}(B):D^2\to q_1^\ast(G_E^r)$. Choose a smooth function $h:SU(2)^r\to \R$ invariant under the diagonal adjoint action on the factors. Then $h$ also defines a function on $q_1^\ast(G^r_E)$. Choose a non-negative 2-form $\mu$ supported on the interior of $D^2$ with integral 1. Define
\[
	f_\mathbf{q}(B) = \int_{D^2} h(\text{Hol}_\mathbf{q}(B))\mu
\]
Kronheimer and Mrowka call such functions {\emph{cylinder functions}}. The space of perturbations they consider is a Banach space completion of sums of cylinder functions where $\mathbf{q}$ and $h$ run over a fixed dense set. This Banach space is called $\sP$.

When adding a cylinder function to the Chern-Simons functional, the reducible $\theta$ may be perturbed.  To avoid this, consider the point in $SU(2)^r$ obtained by choosing a representative connection for $\theta$ and taking its holonomy around the loops $q_1,\ldots,q_r$. The orbit of this point under the conjugation action of $SU(2)$ defines a subset $O_\theta\subset SU(2)^r$ independent of the choice of the representative for $\theta$. Note that if $h:SU(2)^r\to \R$ is constant on a neighborhood of $O_\theta$ then any associated cylinder function $f_\mathbf{q}$ which is small leaves the reducible $\theta$ unperturbed, isolated and non-degenerate. We may form a Banach space $\sP'\subset \sP$ of such perturbations.  We write $\fC_\pi$ for the critical set of the Chern-Simons functional perturbed by $\pi \in \sP$.

\begin{prop}\label{prop:perturb1}
	There is a residual subset of $\sP'$ such that for all sufficiently small $\pi$ in this subset, the set of irreducible critical points $\fC_\pi^\text{\emph{irr}}$ of the perturbed Chern-Simons functional is finite and non-degenerate, and $\fC_\pi=\{\theta\}\sqcup \fC_\pi^\text{\emph{irr}}$, where $\theta$ remains non-degenerate.
\end{prop}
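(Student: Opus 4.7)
The plan is to follow the Sard--Smale holonomy-perturbation scheme of \cite[Section 3]{KM:YAFT}, adapted to the restricted Banach space $\sP'$. The observation about $\sP'$ is tailor-made for the statement about $\theta$: because the cylinder functions $f_\bq\in\sP'$ are built from $h\co SU(2)^r\to\R$ that are constant on a neighborhood of the orbit $O_\theta$, the perturbed gradient vanishes identically at $\theta$ and the Hessian of $\text{CS}+f_\pi$ at $\theta$ coincides with that of $\text{CS}$. Combining this with Proposition \ref{prop:nondeg} and the openness of non-degeneracy, for every sufficiently small $\pi\in\sP'$ the reducible $\theta$ remains an isolated non-degenerate critical point, and no other reducible critical point can appear in a small neighborhood of $\theta$.

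The main task is transversality at irreducible critical points. I would form the universal moduli space
\[
	\cM^{\mathrm{irr}} = \left\{([B],\pi)\in\sB^{\mathrm{irr}}(Y,K)\times\sP' \co \operatorname{grad}(\text{CS}+f_\pi)|_{[B]} = 0\right\}
\]
and show that it is a smooth Banach submanifold whose projection to $\sP'$ is Fredholm. As in \cite{KM:YAFT}, the fibrewise derivative at $([B],\pi)$ is a compact perturbation of the Hessian of $\text{CS}+f_\pi$ at $[B]$, so regularity reduces to checking that the variations $\delta\pi\mapsto\operatorname{grad}f_{\delta\pi}|_{[B]}$ span a complement to the image of that Hessian. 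The main obstacle is to establish this span property using only the restricted class $\sP'$: the analogous statement for $\sP$ is the content of \cite[Proposition 3.7]{KM:YAFT} and its surrounding density results, and the argument must be refined. Here irreducibility is exactly what is needed: since the holonomy representation of an irreducible $[B]$ is not $SU(2)$-conjugate to the abelian holonomy of $\theta$, one can choose collections of loops $\bq$ along which the holonomy of $B$ lies uniformly off the orbit $O_\theta$, and then invariant functions $h$ which are constant on a neighborhood of $O_\theta$ but whose derivatives at the holonomy of $B$ span arbitrary tangent directions. This refinement yields density of regular $\pi$ in $\sP'$, and Sard--Smale produces the desired residual subset for which the irreducible part of $\fC_\pi$ is cut out transversely.

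Finally, to conclude finiteness one combines transversality with Uhlenbeck-type compactness for the set of perturbed critical points, which applies because the perturbing term $\operatorname{grad}f_\pi$ extends to a bounded smooth map on $\sB(Y,K)$. A compact set of non-degenerate, hence isolated, critical points must be finite, giving that $\fC_\pi^{\mathrm{irr}}$ is finite. The decomposition $\fC_\pi=\{\theta\}\sqcup\fC_\pi^{\mathrm{irr}}$ then follows from the first paragraph together with the fact that the only reducible flat class for an unperturbed connection is $\theta$ (so any new reducible critical point would have to lie near $\theta$, contradicting its isolation proved in the first paragraph), completing the plan.
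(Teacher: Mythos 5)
Your proposal follows the same Sard--Smale strategy that the paper adopts (as a variant of \cite[Proposition 3.10]{KM:YAFT}), and it identifies the same essential point: because the holonomy tuple of any irreducible $[B]$ avoids the orbit $O_\theta$, the requirement that $h$ be constant near $O_\theta$ does not constrain $h$ near $\text{Hol}_\bq(B)$, so perturbations in $\sP'$ remain dense on compact submanifolds of $\sB^{\mathrm{irr}}(Y,K)$. Your account of why $\theta$ stays unmoved and non-degenerate, and the compactness argument that no new reducibles appear, matches the paper's (terse) sketch; the only minor elaboration worth flagging is that the density step requires the fixed dense set of immersion data $\bq$ underlying $\sP'$ to actually separate irreducible holonomies from $O_\theta$, which holds by compactness of the relevant submanifold together with a suitable choice of that dense set.
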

\begin{proof}[Sketch of the proof]
	This is analogue of \cite[Proposition 3.10]{KM:YAFT} and the the proof is similar. The essential point is that for any compact 
	finite dimensional submanifold $M$ of the space of irreducibles in $\sB(Y,K)$ the restrictions of the perturbation functions in 
	$\sP'$ form a dense subset of $C^\infty(M)$.
\end{proof}

\begin{remark}
In \cite[Section 5.5]{donaldson-book}, Donaldson uses a different class of holonomy perturbations to deform the ordinary, non-singular flat equation. As mentioned in \cite[Section 3]{KM:YAFT}, this approach may also be adapted to the singular setting. The above perturbations are modified as follows: each immersion $q_j$ from above is assumed to be an embedding, but we no longer require that the $q_j$'s agree on $[-\eta,\eta]\times D^2$; and we now require that $h:SU(2)^r\to \R$ is invariant under the adjoint action on each factor separately. Following the discussion in \cite[Section 5.5]{donaldson-book}, we may proceed just as in the non-singular case, ensuring that the reducible remains unmoved and non-degenerate. $\diamd$
\end{remark}

\begin{remark}
Although not needed in the sequel, we may actually perturb the Chern-Simons functional, keeping the reducible isolated and non-degenerate, and achieving non-degeneracy at the remaining elements of the critical set, by a perturbation which is invariant with respect to the flip symmetry $\iota$. If $\mathbf{q}=(q_1,\ldots,q_r)$ is as above, the involution $\iota$ either fixes the holonomy of a singular connection along a loop $q_i$ or changes it by a sign, depending on whether the homology class of $q_i$ is an even or odd multiple of the meridian of $K$. This induces an action of $\Z/2$ on $SU(2)^r$ and we consider functions $h:SU(2)^r\to \R$ which are additionally invariant with respect to this $\Z/2$-action. The induced function on $\sB(Y,K)$ is invariant with respect to the action of $\iota$. We may proceed as above to define a space $\sP''$ and an analogue of Proposition \ref{prop:perturb1} holds for this more constrained space of perturbations. Indeed, in this new set up we must show that for $M$ a compact $\iota$-invariant submanifold of irreducibles in $\sB(Y,K)$, the restrictions of functions in $\sP''$ is dense in the space of $\iota$-invariant smooth functions on $M$; this can be done as in \cite{Was:G-Morse}. $\diamd$
\end{remark}

\subsection{Gradient trajectories and gradings}\label{subsec:gr}

Solutions to the formal $L^2$ gradient flow of the Chern-Simons functional satisfy the anti-self-duality (ASD) equations on the cylinder $Z=\R\times Y$. To describe the latter, we consider connections $A=B + C dt$ on $Z$, where $B$ is a $t$-dependent singular $SU(2)$ connection on $(Y,K)$, and $C$ is a $t$-dependent section in $\check{L}^2_k(\check{Y};\check{\fg}_E)$. Then the 4-dimensional ASD equations on $\R\times Y$, perturbed by a holonomy perturbation $\pi$, are
\begin{equation}
	F_A^+ + \widehat{V}_\pi(A) =0.\label{eq:asd}
\end{equation}
Here $\widehat{V}_\pi(A)$ is the projection of $dt\wedge V_\pi(A)$ to the self-dual bundle-valued 2-forms, where $V_\pi$ is the pull-back of the gradient of the perturbation $\pi$ of the Chern-Simons functional. Solutions $A$ to \eqref{eq:asd} are called {\emph{(singular) instantons}} on the cylinder.

Let $\pi\in\sP'$ be a perturbation such that $\fC_\pi^\text{irr}$ is finite and non-degenerate. Consider irreducible classes $\alpha_i=[B_i]\in \fC_\pi^\text{irr}$ for $i=1,2$. Let $A_0$ be a connection on $\R\times Y$ as written above, which agrees with pullbacks of $B_1$ and $B_2$ for large negative and large positive $t\in \R$, respectively. The connection $A$ determines a path $\gamma:\R\to \sB(Y,K)$, constant outside of a compact set. From this we have a relative homotopy class $z=[\gamma]\in \pi_1(\sB(Y,K);\beta_1,\beta_2)$. We then have a space of connections
\[
	\sC_{\gamma}(Z,S;B_1,B_2) = \left\{ A: A-A_0 \in \check{L}_{k,A^\text{ad}_0}^2(\check{Z}; \check{\fg}_{F}\otimes\check{\Lambda}^1)\right\}
\]
where $S=\R\times K$, and $\check{Z}$ is the $\Z/2$-orbifold with the underlying space $Z$ and singular locus $S$. The corresponding gauge transformation group $\sG_\gamma(Y,K;B_1,B_2)$ consists of orbifold automorphisms $g$ of $E$ with $\nabla_{A_0} g\in \check{L}^2_{k,A_0}$. We then have the quotient space $\sB_z(Y,K;\alpha_1,\alpha_2)=\sC_{\gamma}(Z,S;B_1,B_2)/\sG_{\gamma}(Z,S;B_1,B_2)$.

The associated moduli space of ASD connections on the cylinder is defined as
\[
	M_z(\alpha_1,\alpha_2) = \left\{[A] \in \sB_z(Y,K;\alpha_1,\alpha_2): F_A^+  + \widehat{V}_\pi(A) =0  \right\}
\]
We write $M(\alpha_1,\alpha_2)$ for the disjoint union of the $M_z(\alpha_1,\alpha_2)$ as $z$ ranges over all relative homotopy classes from $\alpha_1$ to $\alpha_2$. There is an $\R$-action on $M(\alpha_1,\alpha_2)$ induced by translation in the $\R$-factor of the cylinder $\R\times Y$. This action is free on non-constant trajectories; we write $\breve{M}(\alpha_1,\alpha_2)$ for the subset of the quotient $M(\alpha_1,\alpha_2)/\R$ which excludes the constant trajectories. We have a relative grading
\[
	\text{gr}_z(\alpha_1,\alpha_2) = \text{ind}(\sD_{A}) = \text{v.dim}M_z(\alpha_1,\alpha_2) \in \Z
\]
Here $A$ is any connection in $\sC_\gamma(Z,S;B_1,B_2)$, for example $A=A_0$; and the elliptic operator $\sD_A = -d_A^\ast\oplus (d_A^+  + D\widehat{V}_\pi ) $ is the linearized (perturbed) ASD operator with gauge fixing:
\begin{equation}
	\sD_A : \check{L}^2_{k,A^\text{ad}}(\check{Z}; \check{\fg}_F\otimes \check{\Lambda}^1) \to \check{L}^2_{k-1,A^\text{ad}}(\check{Z}; \check{\fg}_F\otimes (\check{\Lambda}^0\oplus \check{\Lambda}^+)) \label{eq:linasdopcyl}
\end{equation}
We have written $\text{v.dim}M_z(\alpha_1,\alpha_2)$ for the virtual dimension of the moduli space; when $d_A^+  + D\widehat{V}_\pi $ is surjective, we say that $[A]\in M_z(\alpha_1,\alpha_2)$ is a {\emph{regular}} solution, and when this is true for all $[A]\in M_z(\alpha_1,\alpha_2)$, we say that the moduli space is regular. When $M_z(\alpha_1,\alpha_2)$ is regular, it is a smooth manifold of dimension $\text{gr}_z(\alpha_1,\alpha_2)$. We write $M(\alpha_1,\alpha_2)_d$ for the disjoint union of moduli spaces $M_z(\alpha_1,\alpha_2)$ with $\text{gr}_z(\alpha_1,\alpha_2)=d$, and $\breve{M}(\alpha_1,\alpha_2)_{d-1} =M(\alpha_1,\alpha_2)_d/\R$. In general, our conventions will be compatible with the rule that a subscript $d\in \Z$ in the notation for a moduli space is equal to its virtual dimension.

Now we slightly diverge from \cite{KM:YAFT} and consider moduli spaces with reducible flat limits. This is done exactly as in \cite{floer:inst1}. When one or both of $\alpha_i$ are reducible, then in the definition of $M_z(\alpha_1,\alpha_2)$ we consider classes $[A]$ such that $A-A_0$ is in
\begin{equation}
	\phi\check{L}^2_{k,A^{\text{ad}}_0}(\check{Z};\check{\fg}_F\otimes \check{\Lambda}^1),\label{eq:expweightspace}
\end{equation}
a weighted Sobolev space. The weight $\phi:Z\to \R$ is a smooth function equal to $e^{-\epsilon |t|}$ for some sufficiently small $\epsilon >0$ and $|t| \gg 0$. In particular, our sections decay exponentially along the ends of the cylinder. With this modification, we may define $\sD_A$ and $\text{gr}_z(\alpha_1,\alpha_2)=\text{ind}(\sD_A)$ when one or both of $\alpha_i$ are reducible.

The following is adapted from \cite[Proposition 3.8]{KM:YAFT}, and differs by our inclusion of the reducible $\theta$ and our restrictions on perturbations from the previous subsection. The essential point is that the only reducible element of $M_z(\alpha_1,\alpha_2)$ is the constant solution associated to $\theta$, which we already know is regular by Proposition \ref{reg-dbl-cover}. Now similar arguments as in \cite[Chapter 5]{donaldson-book} can be used to verify the following proposition.

\begin{prop}\label{prop:perturb2}
	Suppose $\pi_0\in\sP'$ is a perturbation such that the critical points of $\fC_{\pi_0}=\{\theta\}\cup\fC_{\pi_0}^\text{\emph{irr}}$ are non-degenerate. Then there exists $\pi\in \sP'$ such that
	\begin{enumerate}
		\item[{\emph{(i)}}] $f_\pi=f_{\pi_0}$ in a neighborhood of the critical points of $\text{\emph{CS}}+f_{\pi_0}$;
		\item[{\emph{(ii)}}] the critical sets for the two perturbations are the same, $\fC_{\pi}=\fC_{\pi_0}$;
		\item[{\emph{(iii)}}] all moduli spaces $M_z(\alpha_1,\alpha_2)$ for the perturbation $\pi$ are regular.
	\end{enumerate}
\end{prop}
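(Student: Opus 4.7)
The plan is to run a parametric Sard--Smale argument on a Banach subspace of $\sP'$ consisting of perturbations that are locally constant on a neighborhood of every holonomy orbit coming from a critical point in $\fC_{\pi_0}$. This restriction enforces (i) and (ii) automatically; the content of the proposition is that there are still enough such perturbations to achieve regularity of all the moduli spaces $M_z(\alpha_1,\alpha_2)$.

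More concretely, I would define $\sP'_0 \subset \sP'$ to be the closure of the span of cylinder functions $f_\mathbf{q}$ whose generating function $h:SU(2)^r \to \R$ is constant on open neighborhoods of each of the finitely many holonomy orbits $O_\alpha \subset SU(2)^r$ attached to the critical points $\alpha \in \fC_{\pi_0}$. Then for any $\pi \in \pi_0 + \sP'_0$, the perturbed functional $\text{CS} + f_\pi$ agrees with $\text{CS} + f_{\pi_0}$ in a neighborhood of $\fC_{\pi_0}$, so the critical set and its non-degeneracy are unchanged; in particular $\theta$ remains the unique reducible and remains isolated and non-degenerate.

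For condition (iii), I would form the universal moduli space
\begin{equation*}
\mathcal{M}_z(\alpha_1,\alpha_2) = \bigl\{ (A,\pi) \in \sB_z(Y,K;\alpha_1,\alpha_2) \times \sP'_0 \;:\; F_A^+ + \widehat{V}_{\pi_0+\pi}(A) = 0 \bigr\},
\end{equation*}
and argue that it is a Banach manifold except at the constant trajectory at $\theta$, which is already regular by Proposition \ref{prop:nondeg} (and the standard fact that the linearized ASD operator on the cylinder at a constant trajectory at a non-degenerate critical point is invertible on exponentially weighted spaces). The essential and most delicate step is to verify that the linearization
\begin{equation*}
(a,\dot{\pi}) \;\longmapsto\; \sD_A a + D\widehat{V}_{\dot{\pi}}(A)
\end{equation*}
is surjective at each irreducible $(A,\pi) \in \mathcal{M}_z(\alpha_1,\alpha_2)$. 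An element $\omega$ of the cokernel satisfies a formally adjoint elliptic equation, and unique continuation forces $\omega$ to vanish identically if it vanishes on any open set. Because $[A]$ is non-constant, there is a time slice $t_0 \in \R$ at which the restriction of $A$ to $\{t_0\} \times Y$ is not gauge-equivalent to any element of $\fC_{\pi_0}$, so one may find immersed loops $\mathbf{q}$ through points of that slice whose holonomies avoid the forbidden orbits $O_\alpha$; the associated cylinder perturbations lie in $\sP'_0$ and their derivatives pair non-trivially with $\omega$, giving a contradiction. This is the main obstacle: one must upgrade the density statement of \cite[Section 5.5]{donaldson-book} and \cite[Proposition 3.8]{KM:YAFT} to the subspace of perturbations constrained near the critical holonomy orbits, which uses both unique continuation for ASD connections and the fact that the ``forbidden'' set $\bigcup_\alpha O_\alpha$ is a finite union of orbits, hence generically avoided.

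With surjectivity established, Sard--Smale applied to the Fredholm projection $\mathcal{M}_z(\alpha_1,\alpha_2) \to \sP'_0$ yields a residual set of perturbations $\pi$ in $\sP'_0$ for which $M_z(\alpha_1,\alpha_2)$ (with perturbation $\pi_0 + \pi$) is regular. Intersecting the countably many such residual sets as $(\alpha_1,\alpha_2,z)$ varies produces the desired $\pi$; shrinking if necessary preserves (i) and (ii) and completes the proof.
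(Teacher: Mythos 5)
Your proposal is correct and follows essentially the approach the paper has in mind: the paper itself gives no detailed proof, instead citing Proposition 3.8 of Kronheimer--Mrowka \cite{KM:YAFT} and Chapter 5 of \cite{donaldson-book}, and noting that the only new point is that the constant trajectory at $\theta$ is already regular (Proposition \ref{reg-dbl-cover}). Your parametric Sard--Smale argument over the restricted perturbation subspace, together with the unique-continuation and density discussion for perturbations constant near the finitely many forbidden holonomy orbits, is precisely the content of those cited arguments adapted to $\sP'$.
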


\begin{remark} The involution $\iota$ may be defined on the singular connection classes we consider here on $\R\times (Y,K)$, just as in \eqref{eq:flip}, using the pullback of $\xi$. Following the discussion at the end of Subsection \ref{sec:perturb}, we may in fact choose a perturbation which is invariant under the involution $\iota$ and such that the conclusions of Proposition \ref{prop:perturb2} hold. The key point is that before perturbing, there are no non-constant gradient flow lines invariant under $\iota$.  $\diamd$ 
\end{remark}

From now on we assume that the perturbation $\pi$ in the definition of the moduli spaces $M_z(\alpha_1,\alpha_2)$ is chosen such that the claims in Propositions \ref{prop:perturb1} and \ref{prop:perturb2} hold. Some other important properties of the moduli spaces are summarized as follows. The first is essentially Proposition 3.22 of \cite{KM:YAFT}.

\begin{prop}\label{prop:mod1} Let $\alpha_1,\alpha_2\in\fC_\pi$. If $M_z(\alpha_1,\alpha_2)$ is of dimension less than 4, then the space of unparametrized broken trajectories $\breve{M}^+_z(\alpha_1,\alpha_2)$ is compact.
\end{prop}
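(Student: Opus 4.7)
My plan is to run the standard Uhlenbeck-style compactness argument for singular ASD connections, and to use the dimension hypothesis purely as an obstruction to bubble formation. Concretely, let $[A_n]$ be a sequence in $M_z(\alpha_1,\alpha_2)$. After translation, one first applies the general weak compactness theorem for singular instantons of bounded energy (the analogue in the singular setting of Uhlenbeck's theorem, cf.\ Kronheimer--Mrowka's treatment in \cite{KM:YAFT}) to extract a subsequence converging, modulo gauge, to a \emph{broken ideal trajectory}: a finite chain
\[
	[\check A_i]\in \breve M_{z_i}(\beta_{i-1},\beta_i), \qquad 1\leqslant i\leqslant r,
\]
with $\beta_0=\alpha_1$, $\beta_r=\alpha_2$, $\beta_i\in\fC_\pi$, and $z_1\ast\cdots\ast z_r=z$, together with a finite collection of bubble points in $\R\times Y$ at which energy concentrates. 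The convergence is smooth (up to gauge) on compact subsets of the complement of the bubble set and the singular surface $S=\R\times K$. Finiteness of $\fC_\pi$ from Proposition \ref{prop:perturb1} ensures only finitely many such combinatorial types arise.

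The second and essential step is to rule out bubbles under the hypothesis $\dim M_z(\alpha_1,\alpha_2)<4$. There are two types of bubbles to consider. An \emph{interior bubble}, at a point of $\R\times(Y\setminus K)$, is modelled on a non-trivial finite-energy ASD instanton on $S^4$; its minimal topological action is $8\pi^2$, and its removal from the broken trajectory reduces the dimension of the limiting stratum by $8$. A \emph{surface bubble}, at a point of $S$, is modelled on a singular ASD solution on $(S^4,S^2)$ with holonomy parameter $1/4$; by the standard index computation for singular instantons (\cite[Section 2]{KM:YAFT}), the smallest such bubble contributes at least $4$ to the dimension drop. Consequently, the presence of any bubble in the weak limit forces
\[
	\dim M_z(\alpha_1,\alpha_2) \geqslant 4,
\]
contradicting the hypothesis. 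Therefore every subsequential limit is bubble-free, and the convergence is smooth (modulo gauge) away from the finitely many breaking levels.

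Combining the two steps, every sequence in $M_z(\alpha_1,\alpha_2)/\R$ has a subsequence converging, in the topology of broken unparametrized trajectories, to an element of $\breve M_z^+(\alpha_1,\alpha_2)$; this is precisely the asserted compactness. The main technical point, and the one where I would spend most care, is the second kind of bubble: surface bubbles in singular gauge theory can include holonomy-changing configurations, and one must invoke Kronheimer--Mrowka's removable-singularities theorem for singular connections to identify the bubble model, to extend the limit smoothly across the bubble point, and to verify the energy/dimension accounting quoted above. Once this input is granted, the dimension inequality closes the argument.
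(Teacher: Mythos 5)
Your approach is the same one the paper intends: Proposition \ref{prop:mod1} is quoted from Kronheimer--Mrowka (Proposition~3.22 of {\sf KM:YAFT}), and the underlying argument there is precisely Uhlenbeck weak compactness plus an index argument ruling out bubbles below a dimension threshold. The sketch is on the right track, but two points deserve to be pinned down.

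First, the inequality ``a surface bubble contributes at least $4$ to the index'' is not a generic fact about singular instantons; it is special to the holonomy parameter $\alpha = 1/4$, and you should make the arithmetic explicit rather than citing ``the standard index computation.'' A surface bubble carries topological invariants $(k_0,l_0)\in\Z^2$, its index contribution is $8k_0+4l_0$, and its action is $\kappa_0 = k_0 + 2\alpha l_0$. Only at $\alpha = 1/4$ do these tie together: then $8k_0 + 4l_0 = 8\kappa_0 = 4(2k_0+l_0)$, and $2k_0+l_0 = 2\kappa_0$ is a positive integer, forcing the contribution to be $\geqslant 4$. For other $\alpha$ one can arrange $\kappa_0>0$ while $8k_0+4l_0$ is small or even zero, so the threshold $\dim < 4$ has no content. (This is the same mechanism the paper flags as ``special to our hypothesis that our model singular connection has order 4 holonomy'' when discussing Proposition \ref{prop:mod2}.)

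Second, your bookkeeping ``bubble $\Rightarrow \dim M_z \geqslant 4$'' tacitly uses that the index of the post-bubbled background is nonnegative. This is fine when at least one of $\alpha_1,\alpha_2$ is irreducible, since every non-constant regular piece has index $\geqslant 1$. But the proposition is stated for all $\alpha_1,\alpha_2\in\fC_\pi$, including $\alpha_1=\alpha_2=\theta$, and there the limiting background may be the \emph{constant} trajectory at $\theta$, whose index is $-1$ because of the $U(1)$ stabilizer. The index identity then gives only $d = -1 + b \geqslant 3$, which does not contradict $d<4$. A complete argument must either show the minimal-action surface bubble (with $\kappa_0 = 1/2$, index $4$) cannot be grafted onto the constant reducible in this way, or treat the $\theta\text{-to-}\theta$ case separately. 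KM's original proposition has no reducible in the picture, so this wrinkle is precisely the ``essentially'' in ``essentially Proposition~3.22''; a self-contained proof should address it. Finally, ``finiteness of $\fC_\pi$ ensures only finitely many combinatorial types arise'' is a bit quick: bounding the number of breaking levels also uses that the (perturbed) Chern--Simons functional decreases across each non-constant piece by a uniformly positive amount, in combination with $\fC_\pi$ being finite.
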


\noindent Recall that an element of $\breve{M}^+_z(\alpha_1,\alpha_2)$, {\it an unparametrized broken trajectory}, is by definition a collection $[A_i]\in \breve{M}_{z_i}(\beta_{i},\beta_{i+1})$ for $i=1,\ldots ,l-1$ with $\beta_1=\alpha_1$ and $\beta_l=\alpha_2$, and such that the concatenation of the homotopy classes $z_i$ is equal to $z$. We use the standard approach to topologize $\breve{M}^+_z(\alpha_1,\alpha_2)$.

The second result follows from Corollary 3.25 of \cite{KM:YAFT}, and is special to our hypothesis that our model singular connection has order 4 holonomy around meridians.

\begin{prop}\label{prop:mod2} Given $d\geqslant 0$, there are only finitely many $\alpha_1,\alpha_2\in\fC_\pi$ and $z$ such that $M_z(\alpha_1,\alpha_2)$ is non-empty and $\text{\emph{gr}}_z(\alpha_1,\alpha_2)=d$.
\end{prop}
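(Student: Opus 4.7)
The plan is to first reduce to fixed endpoints and then exploit the special structure of the holonomy parameter $\alpha=1/4$ together with Uhlenbeck-type compactness. Since $\fC_\pi$ is finite by Proposition \ref{prop:perturb1}, it suffices to fix $\alpha_1,\alpha_2\in\fC_\pi$ and prove that only finitely many relative homotopy classes $z$ from $\alpha_1$ to $\alpha_2$ satisfy $\text{gr}_z(\alpha_1,\alpha_2)=d$ with $M_z(\alpha_1,\alpha_2)\neq\emptyset$. Such classes form a torsor over $\pi_0(\sG(Y,K))\cong\Z\oplus\Z$, parametrized via \eqref{eq:gaugedegree} by the change $(k,l)$ in the instanton number and the monopole number.

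The next observation is that for $\alpha=1/4$ the index shift and the topological-energy shift under the action of $(k,l)\in\pi_0(\sG(Y,K))$ are proportional: the index changes by $8k+4l$, while the energy changes by $\pi^2(8k+4l)$. For a generic value of $\alpha$ these two linear functionals on $\Z\oplus\Z$ would be linearly independent; here they are not. Consequently, once the endpoints and the index $d$ are fixed, the topological energy is pinned down to a single value, and the non-emptiness of $M_z$ forces this value to be non-negative.

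The final step is to invoke the singular Uhlenbeck compactness principle in the form of \cite[Corollary 3.25]{KM:YAFT}: given a uniform energy bound, only finitely many relative homotopy classes support non-empty moduli. Indeed, a divergent sequence of topological types $(k_n,l_n)$ on the level set, with $M_{z_n}(\alpha_1,\alpha_2)\neq\emptyset$ for each $n$, would, after passing to a subsequence, extract a bubbling limit; each instanton or monopole bubble consumes a definite quantum of energy drawn from a finite list of possible topological types, so the fixed energy budget permits only boundedly many bubbles and bounds the discrepancy $(k_n,l_n)$, contradicting divergence.

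I expect the main technical obstacle to be this singular bubbling analysis in the $\alpha=1/4$ regime, specifically the uniform lower bound on the energy cost of each non-compactness mode. This is precisely the content developed in \cite[Section 3]{KM:YAFT}, so in practice the proof reduces to citing the Uhlenbeck-type compactness theorem stated there, after carrying out the index/energy proportionality computation specific to order-$4$ meridional holonomy.
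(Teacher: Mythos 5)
Your proposal is correct and follows essentially the same route as the paper, which simply cites \cite[Corollary 3.25]{KM:YAFT} with the remark that the finiteness is special to order-$4$ meridional holonomy. You have supplied the content behind that remark: under a gauge transformation with $d(g)=(k,l)$, $\text{CS}$ shifts by $2k+l$ and the index by $8k+4l$, so for $\alpha=1/4$ the action shift $k+l/2$ and the index shift $8(k+l/2)$ are proportional; hence fixing the index fixes the action, and the Uhlenbeck compactness of \cite[Section 3]{KM:YAFT} (which gives a uniform energy quantum for both instanton and monopole bubbles) bounds the topological types $(k,l)$ on that action level set. This is the argument the paper is implicitly invoking, and your identification of the bubble-energy lower bound as the genuine technical input is accurate.
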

\noindent In particular, $\breve{M}(\alpha_1,\alpha_2)_0$ is a finite set of points.

We now discuss some aspects of these gradings. Let $g\in\sG(Y,K)$ have homotopy invariants $d(g)=(k,l)$ as in \eqref{eq:gaugedegree}. Choose $\alpha=[B]\in \sB(Y,K)$, and let $z\in \pi_1(\sB(Y,K);\alpha)$ be the homotopy class induced by a path from $B$ to $g(B)$. Then
\begin{equation}
	\text{gr}_z(\alpha,\alpha) = 8k + 4l,\label{eq:grcharge}
\end{equation}
see \cite[Lemma 3.14]{KM:YAFT}. From standard linear gluing theory, as in \cite[Chapter 3]{donaldson-book}, when $\alpha_2$ is non-degenerate and irreducible we have
\begin{equation}
	\text{gr}_{z_{12}}(\alpha_1,\alpha_2) + \text{gr}_{z_{23}}(\alpha_2,\alpha_3) = \text{gr}_{z_{13}}(\alpha_1,\alpha_3)\label{eq:irradd}
\end{equation}
where $z_{13}$ is the concatenation of $z_{12}$ and $z_{23}$. Using \eqref{eq:grcharge} and \eqref{eq:irradd} we conclude that $\text{gr}_z(\alpha_1,\alpha_2)$ modulo 4 does not depend on the homotopy class $z$, and we set
\[
	\text{gr}(\alpha_1,\alpha_2) := \text{gr}_z(\alpha_1,\alpha_2) \mod 4.
\]
This defines a relative $\Z/4$-grading on the irreducible ciritical set $\fC^\text{irr}_\pi$. We lift this to an absolute $\Z/4$-grading using the reducible, analogous to Floer \cite{floer:inst1}: for $\alpha\in \fC^\text{irr}_\pi$ set
\begin{equation}
	\text{gr}(\alpha) := \text{gr}_z(\alpha,\theta) \mod 4\label{eq:absgr}
\end{equation}
for any choice of homtopy class $z$. Now \eqref{eq:irradd} does not hold when $\alpha_2=\theta$; as the dimension of the gauge stabilizer of $\theta$ is $1=\dim U(1)$, by \cite[Section 3.3.1]{donaldson-book} we instead have
\begin{equation}
	\text{gr}_{z_{12}}(\alpha_1,\theta) + 1 +  \text{gr}_{z_{23}}(\theta,\alpha_3) = \text{gr}_{z_{13}}(\alpha_1,\alpha_3).\label{eq:irrred}
\end{equation}
In particular, if we write $\text{gr}(\alpha)=\text{gr}_Y(\alpha)\in\Z/4$ to emaphasize the underlying 3-manifold $Y$, we obtain the orientation-reversing property
\begin{equation}
	\text{gr}_{-Y}(\alpha) \equiv 3-\text{gr}_Y(\alpha) \mod 4. \label{eq:orrevgr}
\end{equation}

From \eqref{eq:irrred} we deduce the following, which is analogous to part of the compactness principle in the non-singular setting, see Section 5.1 of \cite{donaldson-book}.

\begin{prop}\label{prop:mod3} Let $\alpha_1,\alpha_2\in\fC^\text{\emph{irr}}_\pi$. If $M_z(\alpha_1,\alpha_2)$ is of dimension less than 3, then $\breve{M}^+_z(\alpha_1,\alpha_2)$ has no broken trajectories that factor through $\theta$.
\end{prop}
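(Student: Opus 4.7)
The plan is a dimension count based on the grading additivity formulas \eqref{eq:irradd} and \eqref{eq:irrred}. The decisive point is that each passage of a broken trajectory through the reducible $\theta$ contributes an extra $+1$ to the total index, because of the one-dimensional stabilizer $U(1)$ of $\theta$. Combined with the fact that each piece of a broken trajectory is nonconstant and thus has index at least $1$, this forces $\mathrm{gr}_z(\alpha_1,\alpha_2)$ to be at least $3$ whenever the trajectory factors through $\theta$, which will contradict the hypothesis.

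To make this precise, represent a putative element of $\breve{M}^+_z(\alpha_1,\alpha_2)$ that factors through $\theta$ by pieces $[A_i]\in\breve{M}_{z_i}(\beta_i,\beta_{i+1})$ for $i=1,\ldots,l$, where $\beta_1=\alpha_1$, $\beta_{l+1}=\alpha_2$, the concatenation of the homotopy classes $z_i$ equals $z$, and $\beta_i=\theta$ for at least one $i\in\{2,\ldots,l\}$. Let $n\geqslant 1$ denote the number of such indices. Since each $\breve{M}_{z_i}(\beta_i,\beta_{i+1})$ excludes constant trajectories, its nonemptiness requires $\mathrm{gr}_{z_i}(\beta_i,\beta_{i+1})\geqslant 1$. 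Iterating \eqref{eq:irradd} at the irreducible intermediate critical points and \eqref{eq:irrred} at the reducible intermediates yields the identity
\[
\sum_{i=1}^l \mathrm{gr}_{z_i}(\beta_i,\beta_{i+1}) + n = \mathrm{gr}_z(\alpha_1,\alpha_2).
\]

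Since $n\geqslant 1$ forces $l\geqslant 2$, combining the two inequalities gives $\mathrm{gr}_z(\alpha_1,\alpha_2)\geqslant l+n\geqslant 3$, contradicting the hypothesis $\dim M_z(\alpha_1,\alpha_2)<3$. There is really no substantial obstacle here: the argument is a purely formal consequence of the additive index formulas already recorded in the excerpt, and is the exact analogue of the familiar compactness phenomenon in Floer's instanton homology for integer homology 3-spheres, where the reducible can obstruct compactification only in sufficiently high relative grading. The only item to verify carefully is that \eqref{eq:irrred} indeed iterates to the displayed identity with exactly $n$ correction terms, which is immediate by induction on the number of intermediate critical points equal to $\theta$.
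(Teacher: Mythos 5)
Your proof is correct and is essentially identical to the argument the paper itself gives immediately after the statement of the proposition: both iterate the additivity formulas to obtain $\mathrm{gr}_z(\alpha_1,\alpha_2)=\sum_i \mathrm{gr}_{z_i}(\beta_i,\beta_{i+1}) + N$ (with $N$ the number of passes through $\theta$), then combine $N\geqslant 1$ with the fact that each nonconstant piece contributes index at least $1$ to conclude $\mathrm{gr}_z(\alpha_1,\alpha_2)\geqslant 3$. The only difference from the paper is the indexing of the pieces of the broken trajectory, which is immaterial.
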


\noindent Indeed, suppose a broken trajectory $([A_1],\ldots,[A_{l-1}])$ factors through the reducible $N\geqslant 1$ times. Note that $l\geqslant 3$. Then from our discussion thus far we have
\[
	\text{gr}_z(\alpha_1,\alpha_2)=\sum_{i=1}^{l-1} \text{gr}_{z_i}(\beta_i,\beta_{i+1}) + N \geqslant l-1+N \geqslant  3,
\]
where we use $ \text{gr}_{z_i}(\beta_i,\beta_{i+1})=\ind(\sD_{A_i})\geq 1$ because $A_i$ is a non-constant singular instanton.
Thus we must have $\text{gr}_z(\alpha_1,\alpha_2)=\dim M_z(\alpha_1,\alpha_2)\geqslant 3$ for such a factoring to occur. Note that in the non-singular setting, the dimension of the moduli space must be less than 5 to avoid breaking at the reducible. This is because the dimension of the stabilizer of the reducible in that setting is $3=\dim SO(3)$ instead of $1$.

\subsection{Moduli spaces for cobordisms}\label{subsec:cobmoduli}

We next discuss moduli spaces of instantons on cobordisms. Suppose we have a cobordism of pairs $(W,S):(Y,K)\to (Y',K')$ between two homology 3-spheres $Y$ and $Y'$ with embedded knots $K$ and $K'$, respectively. More precisely, $W$ is an oriented 4-manifold with boundary $Y'\sqcup -Y$, and $S\subset W$ is an embedded surface intersecting the boundary transversely with $\partial S = S\cap \partial W=K\sqcup K'$. Although it is possible to consider unoriented surfaces as in \cite{KM:unknot}, in this paper we will only be concerned with the case in which $S$ is connected and oriented. For a pair of composable cobordisms $(W_1,S_1)$ and $(W_2,S_2)$  we write $(W_2,S_2)\circ (W_1,S_1) = (W_2\circ W_1,S_2\circ S_1)$ for the composite cobordism.

Given a cobordism $(W,S):(Y,K)\to (Y',K')$, equip $W$ with an orbifold metric that has a cone angle $\pi$ along $S$, and which is a product near the boundary. Let $W^+$ (resp. $S^+$) be obtained from $W$ (resp. $S$) by attaching cylindrical ends to the boundary, and extend the metric data in a translation-invariant fashion. Given classes $\alpha\in \sB(Y,K)$ and $\alpha'\in \sB(Y',K')$, choose an $SU(2)$ connection $A$ on $W^+$ singular along $S^+$ such that the restrictions of $A$ to the two ends are in the gauge equivalence classes of $\alpha$ and $\alpha'$. The homotopy class of $A$ mod gauge rel $\alpha,\alpha'$ will be denoted by $z$. Similar to the definition of $\sB_z(Y,K;\alpha_1,\alpha_2)$ in the cylindrical case, we may form $\sB_z(W,S;\alpha,\alpha')$, the gauge equivalence classes of singular connections on $W^+$ whose representatives differ from $A$ by elements of regularity $\check{L}_k^2$. Just as in the cylindrical case, when either of $\alpha$ or $\alpha'$ is reducible, we use an appropriately weighted Sobolev norm for the end(s).

\begin{remark}
	For a general discussion of the possibilities for the model connection and the associated ``singular bundle data'' see \cite[Section 2]{KM:unknot}. However, the construction of \cite[Section 2]{km-embedded-i} suffices for our purposes. In particular, we restrict our attention to the case of structure group $SU(2)$.  $\diamd$ 
\end{remark}

We may then form the moduli space of instantons $M_z(W,S;\alpha,\alpha') \subset \sB_z(W,S;\alpha,\alpha')$. The perturbed instanton equation defining the moduli space $M_z(W,S;\alpha,\alpha')$ is of the following form along the incoming end $(-\infty , 1] \times Y \subset W^+$:
\[
	F_A^+ + \psi(t) \widehat{V}_\pi(A) + \psi_0(t)\widehat{V}_{\pi_0}(A) =0.
\]
Here $\pi$ and $\pi_0$ are perturbations on $\R\times Y$ as in \eqref{eq:asd}, and $\psi(t)=1$ for $t<0$ and $0$ at $t=1$, while $\psi_0(t)$ is supported on $(0,1)$. We always choose $\pi\in \sP'$ such that $\fC_{\pi} \subset \sB(Y,K)$ is as in Propositions \ref{prop:perturb1} and  \ref{prop:perturb2}. Similar remarks hold for the other end. For generic choices of $\pi_0$ and its analogue at the end of $Y'$ the irreducible part of the moduli space $M_z(W,S;\alpha,\alpha')$ is cut out transversally, and is a smooth manifold of dimension $d$, where $d=\text{ind}(\sD_A)=:\text{gr}_z(W,S;\alpha,\alpha')$. (See \cite{KM:YAFT} and \cite[Section 24]{km:monopole} for more details.) Here $\sD_A$ is the linearized ASD operator on $(W^+,S^+)$, analogous to \eqref{eq:linasdopcyl}, defined using Sobolev spaces with exponential decay at the ends with reducible limits, as in \eqref{eq:expweightspace}. Write
\[
	M(W,S;\alpha,\alpha')_d = \bigcup_{\text{gr}_z(W,S;\alpha,\alpha')=d} M_z(W,S;\alpha,\alpha').
\]
Note that $\text{gr}_z(I\times (Y,K);\alpha_1,\alpha_2)=\text{gr}_z(\alpha_1,\alpha_2)$. Furthermore, the linear gluing formulae for $\text{gr}_z(\alpha_1,\alpha_2)$ in the cylindrical case extend in this more general context. In particular, for cobordisms $(W,S):(Y,K)\to (Y',K')$ and $(W',S'):(Y',K')\to (Y'',K'')$ with composite cobordism $(W'',S'')=(W',S')\circ (W,S)$, we have
\begin{align*}
	\text{gr}_{z}(W,S;\alpha,\alpha') + \dim \text{Stab}(\alpha') +  \text{gr}_{z'}(W',S';\alpha',\alpha'') = \text{gr}_{z''}(W'',S'';\alpha,\alpha'')
\end{align*}
where $\text{Stab}(\alpha')\subset \sG(Y',K')$ is isomorphic to $\{\pm 1\}$ if $\alpha'$ is irreducible, and $U(1)$ if it is reducible. Just as in the cylindrical case, the mod 4 congruence class of $\text{gr}_z(W,S;\alpha,\alpha')$ is independent of $z$, and for this we write $\text{gr}(W,S;\alpha,\alpha')\in \Z/4$.

An unparametrized broken trajectory for $M_z(W,S;\alpha,\alpha')$ is a triple consisting of an instanton in $M_{z_2}(W,S;\beta,\beta')$ and unparametrized broken trajectories in $\breve{M}_{z_1}^+(\alpha,\beta)$ and $\breve{M}_{z_3}^+(\beta',\alpha')$, where $\beta$ and $\beta'$ are critical points for $Y$ and $Y'$, and $z=z_3\circ z_2\circ z_1$. The space of such broken trajectories is denoted $M^+_z(W,S;\alpha,\alpha')$. 

\begin{remark}
When $z$ is dropped from either  $\sB_z(W,S;\alpha,\alpha')$ or $M_z(W,S;\alpha,\alpha')$, it should be understood that we are considering the union over all homotopy classes $z$. $\diamd$
\end{remark}

Suppose $[A]\in \sB_z(W,S;\alpha,\alpha')$ is a singular connection for the pair $(W,S)$. Then the {\emph{action}}, or {\emph{topological energy}}, of $[A]$ is defined to be the Chern-Weil integral
\[
	\kappa(A) := \frac{1}{8\pi^2} \int_{W^+\setminus S^+} \text{Tr}(F_A\wedge F_A).
\]
Instantons $[A]$ are characterized as having energy equal to $8\pi^2 \kappa(A)$, and in particular $\kappa(A)\geqslant 0$, with equality if and only if $A$ is flat. Furthermore, 
\begin{equation}
	2\kappa(A)=\text{CS}(\alpha)-\text{CS}(\alpha')-\frac{1}{8}S\cdot S \pmod \Z.\label{eq:csmodz}
\end{equation}
In this paper we will focus on the case in which the homology class of $S$ is divisible by $4$, in which case we can ignore the term $\frac{1}{8}S\cdot S$ in this formula. Next, we define the {\emph{monopole number}} of $[A]$, denoted $\nu(A)$, by the following integral:
\[
	\nu(A) := \frac{i}{\pi} \int_{S^+}  \Omega.
\]
The connection $F_A$ extends to the singular locus $S^+$, and $\Omega$ in the above formula is a 2-form with values in the orientation bundle of $S^+$ such that the restriction of $F_A$ to the singular locus has the following form:
\[
  F_A\vert_{S^+}=\left[
  \begin{array}{cc}
 	\Omega&0\\
	0&-\Omega
  \end{array}
  \right]
\] 
The numbers $\kappa(A)$ and $\nu(A)$ are invariants of the homotopy class $z$, and determine it. Moreover, the dimension $d$ of a moduli space $M(W,S;\alpha,\alpha')_d$ is determined by $\kappa(A)$, and the homotopy classes $z$ of the components of $M(W,S;\alpha,\alpha')_d$ are distinguished by their monopole numbers $\nu(A)$. 

The flip symmetry of Subsection \ref{subsec:flip} extends to the case in which the homology class of $S$ is a multiple of $2$ within $H_2(W;\Z)$. In this case, there exists a flat $\Z/2$ bundle-with-connection $\xi$ over $W^+\setminus S^+$ with holonomy $-1$ around small circles linking $S$; then $\iota [A]:= [A\otimes \xi]$ as before. We have the relations
\begin{equation}\label{iota-effect}
	\kappa(\iota A) = \kappa(A), \qquad \nu(\iota  A) = -\nu (A),
\end{equation}
see \cite[Lemma 2.12]{km-embedded-i}. In particular, note that when $(W^+,S^+)$ is a cylinder, the monopole number is negated under $\iota$.

\subsection{Reducible connections and negative definite pairs}\label{subsec:red}

The goal of this subsection is to study the reducible solutions of the ASD equation on a cobordism of pairs $(W,S):(Y,K)\to (Y',K')$ between knots in integer homology 3-spheres. Any such connection is necessarily asymptotic to the reducibles associated to $(Y,K)$ and $(Y',K')$. The following lemma gives a formula for the index of the ASD operator associated to a connection that is asymptotic to reducibles:

\begin{lemma}\label{index-triv-limit}
	Suppose the connection $A$ represents an element of $\sB_z(W,S;\theta,\theta')$. Then:
	\begin{equation}\label{ind-form-red-ends}
		\ind(\sD_A)=8\kappa(A)-\frac{3}{2}(\sigma(W)+\chi(W))+\chi(S)+\frac{1}{2}S\cdot S+\sigma(K)-\sigma(K')-1
	\end{equation}
	where $\sigma(W)$ and $\sigma(K)$ are respectively the signature of the 4-manifold $W$ and the signature of the knot $K$, and for a 
	topological space $X$, $\chi(X)$ denotes the Euler characteristic of $X$.
\end{lemma}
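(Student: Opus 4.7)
My plan is to apply the APS-type index theorem for the singular ASD operator $\sD_A$ on the orbifold $(W^+,S^+)$ in the weighted Sobolev setup of \eqref{eq:expweightspace}, writing $\ind(\sD_A)$ as a bulk topological integral plus $\eta$-invariant corrections at the two reducible ends, and then evaluating each piece separately.

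First, the integral of the local ASD index density over $(W^+,S^+)$ contributes the bulk terms
\[
    8\kappa(A)-\tfrac{3}{2}(\sigma(W)+\chi(W))+\chi(S)+\tfrac{1}{2}S\cdot S,
\]
by the orbifold Chern-Weil/Hirzebruch computation of Kronheimer-Mrowka on cone-angle-$\pi$ orbifolds (compare \cite[Section 4]{KM:YAFT}). Since the connection is flat at the reducible ends, the density is supported on the compact core and, once the action $\kappa(A)$ has been fixed, the integral reduces to these topological quantities.

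Second, at the reducible $\theta$ the adjoint bundle $\check\fg_E$ splits as $i\R\oplus\xi$, where $i\R$ is the Cartan line tangent to the $U(1)$ stabilizer and $\xi$ is the orthogonal rank-$2$ real bundle whose holonomy around the meridian of $K$ is $-1$. The tangent operator $\sD_\theta$ decomposes into an untwisted de Rham-type operator on $i\R$ and a twisted operator on $\xi$. The untwisted summand has vanishing $\eta$ by spectral symmetry, while its kernel contributes $h=1$ from the constant sections of $i\R$; an analogous statement holds at $\theta'$. These two kernel contributions together yield the $-1$ term. The twisted summand is invertible by Proposition \ref{prop:nondeg}, so only its $\eta$-invariant contributes. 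By a theorem of Herald \cite{herald} (extending Lin \cite{lin}), this $\eta$-invariant is related to $\sigma(K)$ in such a way that after the APS normalization the incoming end produces $+\sigma(K)$; by orientation reversal the outgoing end contributes $-\sigma(K')$, so together the two ends give $\sigma(K)-\sigma(K')-1$.

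Assembling bulk and boundary contributions produces the claimed formula. As a consistency check, for $(W,S)=I\times(Y,K)$ with $\theta\to\theta$ all topological and signature terms vanish and we get $\ind(\sD_A)=-1$, matching the $1$-dimensional kernel of $\sD_\theta$ on the infinite cylinder spanned by the constant Cartan generator, and consistent with the $+1$ shift at reducibles appearing in \eqref{eq:irrred}. The main obstacle is the precise identification of the twisted $\eta$-invariant at $\theta$ with $\sigma(K)$: the cleanest approach is to deform from $\theta$ on $(Y,K)$ to the reducible on $(S^3,U)$ through a path of orbifold $SU(2)$ connections, compute the spectral flow of the twisted tangent operator in terms of Tristram-Levine signatures of $K$ evaluated at the meridional rotation angle $\pi/2$, and recognize the resulting sum as $\sigma(K)$ as in \cite{herald}. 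Once this $\eta$-invariant identification is granted, the remainder of the proof is bookkeeping with the APS index formula and the orbifold Chern-Weil computations.
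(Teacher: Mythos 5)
The paper takes a genuinely different route from the one you propose. Instead of invoking an APS-type index theorem with $\eta$-invariant boundary corrections, the authors cap off the two ends of $(W^+,S^+)$ with explicit model pieces $(X,\Sigma)$ and $(X',\Sigma')$: a Seifert surface for $K$ pushed into $[0,1]\times Y$ together with a filling of $Y$, and likewise for $K'$. On these caps the reducible flat connection $A_0$ has the property that its adjoint pulls back to the trivial connection on the branched double cover $\widetilde X$, so $\ker \sD_{A_0}$ and $\coker \sD_{A_0}$ are identified by hand with $\tau$-eigenspaces of ordinary de~Rham cohomology of $\widetilde X$, and the $\sigma(K)$ term drops out for purely topological reasons --- the signature of the branched double cover of a pushed-in Seifert surface equals the knot signature. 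They then close up $(W,S)$, use the additivity of the index under gluing, and apply the closed-manifold index formula of \cite{km-embedded-i}. This is an excision/capping argument that never mentions an $\eta$-invariant explicitly; the $\eta$-invariant information is implicitly encoded in the cap computation, via the standard fact that gluing additivity and the APS formula say the same thing in two languages. What your approach would buy, if carried out fully, is a more intrinsic, end-by-end description of the index; what the paper's approach buys is that it trades all spectral analysis for the computation of $H^\ast(\widetilde X)^{\pm\tau}$ on an explicit model, which is elementary.

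The genuine gap in your proposal is the step you yourself flag: identifying the twisted boundary $\eta$-invariant at $\theta$ (and $\theta'$) with $\sigma(K)$ (and $-\sigma(K')$) after the correct normalization. This is precisely the nontrivial content of the lemma, and the sketch via spectral flow and Tristram--Levine signatures, while in the right spirit, is not carried out. Before it can be considered a proof you would need to (a) pin down that the weighted Sobolev index with exponential decay matches the APS boundary-condition index (and not the $L^2$-extended index), since this determines whether the kernel correction at each end enters with $-h/2$ or a different sign, and is exactly what produces the overall $-1$; (b) check the orientation-reversal sign at the outgoing end to obtain $-\sigma(K')$ rather than $+\sigma(K')$; and (c) actually prove that $\eta$ for the twisted rank-2 summand is, with the right normalization, the signature of the double cover of the pushed-in Seifert surface, hence $\sigma(K)$. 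The deformation argument you propose for (c) is, in disguise, a cylindrical-end version of the paper's cap computation --- the spectral flow along your deformation is an index on a cobordism of exactly the kind the paper caps with --- so if you push it through you will have essentially reproved the first half of the paper's argument, at which point gluing and the closed formula give the result more quickly than assembling an APS formula from scratch.
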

\begin{proof}
	We first compute the index for a slightly simpler case. Suppose that $(X,\Sigma)$ has only one outgoing end $(Y,K)$, the homology class of $\Sigma$ is trivial, and 
	$\widetilde X$ denotes a branched double cover of $X$ branched along $\Sigma$ 
	with covering involution $\tau:\widetilde X\to \widetilde X$. There is a flat singular connection associated to the pair 
	$(X,\Sigma)$ such that after lifting up to $\widetilde X$ and taking the induced $SO(3)$ adjoint connection, it can be extended to the trivial connection over $\widetilde X$. As an alternative description, we may consider the involution on the trivial bundle 
	$\underline{\R}^{3}$ over $\widetilde X$ which lifts the involution $\tau$ and is given by:
	\begin{equation}
		((v_1,v_2,v_3),x)\in \R^3\times \widetilde X \mapsto ((v_1,-v_2,-v_3),\tau(x)).\label{eq:locmodelbranched}
	\end{equation}
	The quotient by this involution sends the trivial connection to an orbifold $SO(3)$ connection on $X$ which lifts to our desired $SU(2)$ reducible singular connection $A_0$. 
	
	We define the ASD operator $\sD_{A_0}$ in the same way as before 
	using weighted Sobolev spaces with exponential decay at the end. From the description of $A_0$, it is clear that $\ker(\sD_{A_0})$ is isomorphic as a vector space to the subspace of
	\[
		H^1(\widetilde X; \underline{\R}) = H^1(\widetilde X) \otimes \R^3
	\]
	which is invariant under the involution induced by \eqref{eq:locmodelbranched}, which may be identified with $\tau^\ast\otimes \text{diag}(1,-1,-1)$ where $\tau^\ast$ acts on $H^1(\widetilde X)$ and the diagonal matrix acts on $\R^3$. We obtain that $\ker(\sD_{A_0})$ is isomorphic to $H^1_{+}(\widetilde X)\oplus H^1_{-}(\widetilde X)^{\oplus 2}$ where $H^i_{\pm}(\widetilde X)$ denotes the $(\pm 1)$-eigenspace of the action of $\tau$ on the cohomology group $H^i(\widetilde X)$. A similar argument applies to the cokernel. In summary, we obtain:
	\begin{equation}
	  \ker(\sD_{A_0})\iso H^1_{+}(\widetilde X)\oplus H^1_{-}(\widetilde X)^{\oplus 2}\label{ker}
	\end{equation}
	\begin{equation}
	  \coker(\sD_{A_0})\iso H^+_{+}(\widetilde X)\oplus  H^+_{-}(\widetilde X)^{\oplus 2}\oplus H^0(\widetilde X)\label{coker}
	\end{equation}
	A straightforward calculation shows that
	\begin{equation}\label{ind-sim}
	  \ind(\sD_{A_0})=-(\sigma(\widetilde X)+\chi(\widetilde X))+\frac{1}{2}(\sigma(X)+\chi(X))-\frac{1}{2}
	\end{equation}
	We further specialize to the case that $(X,\Sigma)$ is obtained by firstly pushing a Seifert surface for $K\subset \{1\}\times Y$ into $[0,1]\times Y$ and then capping 
	the incoming end of $[0,1]\times Y$ with a 4-manifold $X$ with boundary $Y$. 
	The signature of the 4-manifold obtained as the branched double cover of the Seifert surface $\Sigma$ pushed into 
	$[0,1]\times Y$ is equal to the signature of $K$. Therefore, in this case we have: 
	\[
	  \sigma(\widetilde X)=2\sigma(X)+\sigma(K)\hspace{1cm}\chi(\widetilde X)=2\chi(X)-\chi(\Sigma)
	\]
	and the formula in \eqref{ind-sim} simplifies to:
	\[
	  -\frac{3}{2}(\sigma(X)+\chi(X))-\sigma(K)+\chi(\Sigma)-\frac{1}{2}.
	\]
	Applying a similar construction as above to the pair $(Y',K')$ and then changing the orientation of the underlying 4-manifold produces a pair $(X',\Sigma')$
	with boundary $(-Y',K')$ and a reducible singular flat connection $A_0'$. A similar argument as above shows:
	\[
	  \ind(\sD_{A_0'})=-\frac{3}{2}(\sigma(X')+\chi(X'))+\sigma(K')+\chi(\Sigma')-\frac{1}{2}.
	\]

	Gluing $(X,\Sigma)$, $(W,S)$ and $(X',S')$ produces a closed pair $(\overline W,\overline S)$. We may also glue $A_0$, $A$ and $A_0'$ 
	to obtain a singular connection $\overline A$ on $(\overline W,\overline S)$ with the same topological energy as $A$. 
	Additivity of the ASD indices implies that:
	\[
	  \ind(\sD_{\overline A})=\ind(\sD_{A_0})+\ind(\sD_{A})+\ind(\sD_{A_0'})+2
	\]
	where the appearance of the term $2$ on the left hand side is due to reducibility of the connections $\theta$ and $\theta'$. Now we can obtain \eqref{ind-form-red-ends}
	using the index formula in the closed case \cite{km-embedded-i} and our calculation of the indices of $A_0$ and $A_0'$.
\end{proof}

The same elementary observation which was used in \eqref{ker}--\eqref{coker} implies that:
\begin{prop}\label{reg-dbl-cover} 
	Suppose a cobordism of pairs $(W,S)$ has a double branched cover $\pi:\widetilde W\to W$. 
	Let $A$ be a singular connection on $(W,S)$ for some singular bundle data. If the non-singular connection $\pi^\ast{A}^\text{\emph{ad}}$ 
	is a regular ASD connection on $\widetilde W$, then $A$ is regular. In particular, if $\pi^\ast{A}^\text{\emph{ad}}$ is trivial 
	and $b^+(\widetilde W)=0$, then $A$ is regular.
 \end{prop}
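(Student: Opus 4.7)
The strategy is to upgrade the eigenspace decomposition already used in the proof of Lemma~\ref{index-triv-limit} from an index computation to a statement about the full cokernel of the linearized ASD operator. The point is to identify $\sD_A$ downstairs with the $\tilde\tau$-invariant part of $\sD_{\pi^\ast A^{\text{ad}}}$ upstairs, where $\tilde\tau$ is the involution on the pullback bundle induced by the deck transformation $\tau$ of the branched cover.

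First I would set up the correspondence of bundles and sections. Let $\widetilde S = \pi^{-1}(S)$, which maps isomorphically onto $S$ and is the fixed locus of $\tau$. Away from $\widetilde S$ the cover is unramified, and $\pi^\ast \check{\fg}_E$ extends to a non-singular $\SO(3)$-bundle $\widetilde{\fg}$ over $\widetilde W$ whose connection is $\pi^\ast A^{\text{ad}}$. The covering involution $\tau$ lifts to an involution $\tilde\tau$ on $\widetilde{\fg}$ which, at each point of $\widetilde S$, has the local form of \eqref{eq:locmodelbranched}: it preserves a 1-dimensional subbundle of $\widetilde{\fg}|_{\widetilde S}$ and acts as $-1$ on its 2-dimensional complement. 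Exactly as in the passage leading to \eqref{ker}--\eqref{coker}, the $\tilde\tau$-invariant sections of $\widetilde{\fg}\otimes\Lambda^{\ast}$ on $\widetilde W$ (in the appropriate weighted $L^2_k$ spaces at the ends) correspond bijectively with sections of $\check{\fg}_E\otimes\check{\Lambda}^{\ast}$ on $(W,S)$ in the orbifold Sobolev spaces $\check{L}^2_{k,A^{\text{ad}}}$; the order-$4$ holonomy asymptotics along $S$ are encoded precisely by the eigenspace decomposition of $\tilde\tau$ near $\widetilde S$.

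Next, since the orbifold metric on $W$ is by construction the quotient of the $\tau$-invariant metric on $\widetilde W$, and $\pi^\ast A^{\text{ad}}$ is $\tau$-invariant, the operator $\sD_{\pi^\ast A^{\text{ad}}}$ commutes with $\tilde\tau$ and so preserves the $(\pm 1)$-eigenspace decomposition of its domain and codomain. Under the identification of the previous paragraph, $\sD_A$ is precisely the restriction of $\sD_{\pi^\ast A^{\text{ad}}}$ to the $\tilde\tau$-invariant subspace. Consequently one obtains an inclusion
\[
   \coker(\sD_A) \;\hookrightarrow\; \coker(\sD_{\pi^\ast A^{\text{ad}}})
\]
that identifies the left-hand side with the $\tilde\tau$-invariant part of the right-hand side. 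Regularity of the non-singular ASD connection $\pi^\ast A^{\text{ad}}$ means the right-hand side vanishes, and therefore so does its invariant part, proving that $A$ is regular.

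For the \emph{in particular} clause, suppose $\pi^\ast A^{\text{ad}}$ is trivial and $b^+(\widetilde W)=0$. Then $\sD_{\pi^\ast A^{\text{ad}}} = \sD\otimes \mathrm{id}_{\fg}$, where $\sD = -d^\ast \oplus d^+$ acts on ordinary differential forms over $\widetilde W^+$ in the weighted Sobolev setting of \eqref{eq:expweightspace}. The $d^+$ part of the cokernel is computed by $L^2$ self-dual harmonic 2-forms, which is identified with the positive part of the intersection form on $\widetilde W$ and therefore vanishes when $b^+(\widetilde W)=0$; the remaining cokernel contribution is the $H^0$ factor recorded in \eqref{coker}, which corresponds to the stabilizer of the reducible and does not obstruct the surjectivity of $d^+_A$. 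Thus $d^+_{\pi^\ast A^{\text{ad}}}$ is surjective, and by the previous paragraph the same holds for $d^+_A$, so $A$ is regular. The main point of care throughout is the matching of the singular orbifold Sobolev spaces downstairs with the $\tilde\tau$-invariant part of the standard weighted Sobolev spaces upstairs, but this matching is immediate from the local model \eqref{eq:locmodelbranched} near $\widetilde S$ and the product structure near the ends.
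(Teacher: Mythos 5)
Your approach matches the paper's: the paper proves this proposition simply by appealing to the eigenspace identification used in \eqref{ker}--\eqref{coker} for Lemma~\ref{index-triv-limit}, which is precisely the $\tilde\tau$-invariant correspondence you spell out. However, your first paragraph contains a claim that is false when $A$ is reducible --- which is exactly the case used in the ``in particular'' clause and throughout Subsection~\ref{subsec:red}. You write that ``regularity of $\pi^\ast A^{\text{ad}}$ means the right-hand side vanishes,'' but regularity in this paper (Subsection~\ref{subsec:gr}) means surjectivity of $d_A^+ + D\widehat{V}_\pi$ alone, not of the full operator $\sD_A = -d_A^\ast \oplus (d_A^+ + D\widehat{V}_\pi)$. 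For a reducible connection the full cokernel $\coker(\sD_A)$ always contains an $H^0$ summand arising from the stabilizer Lie algebra --- this is precisely the last term in \eqref{coker} --- even when the connection is regular. So the assertion in paragraph~1 that $\coker(\sD_{\pi^\ast A^{\text{ad}}})=0$ is inconsistent with your own treatment of the residual $H^0$ factor in paragraph~2.

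The fix is small and does not alter the shape of your argument. Since $\tilde\tau$ acts compatibly with the direct-sum decomposition $\sD = -d^\ast \oplus d^+$ of the linearized operator, your inclusion of cokernels refines to
\[
  \coker\bigl(d_A^+ + D\widehat{V}_\pi\bigr) \;\hookrightarrow\; \coker\bigl(d^+_{\pi^\ast A^{\text{ad}}}\bigr),
\]
identifying the left-hand side with the $\tilde\tau$-invariant part of the right. Regularity of $\pi^\ast A^{\text{ad}}$ means the right-hand side is zero, hence so is the left, which is exactly what the statement of the proposition requires; the $H^0$ piece of $\coker(\sD)$ never enters the discussion. Once paragraph~1 is reformulated in terms of $d^+$-cokernels rather than $\sD$-cokernels, the argument is correct and consistent throughout.
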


To simplify our discussion about reducible singular instantons, we henceforth assume $H_1(W;\Z)=0$, $b^+(W)=0$, and $S$ is an orientable surface of genus $g$ whose homology class is divisible by $4$. By a slight abuse of notation, we write $S$ for both the homology class of $S$ and its Poincar\'e dual. The space of reducible elements of the moduli spaces $M_z(W,S;\theta,\theta')$ (with the trivial perturbation term) is in correspondence with the set of isomorphism classes of $U(1)$-bundles on $W$. For any line bundle $L$ on $W$, there is a $U(1)$ reducible singular ASD connection $A_L:=\eta\oplus \eta^*$ such that $\eta$ is a singular ASD connection on $L$, defined over $W^+\backslash S^+$. The connection $\eta$ has the property that its holonomy along a meridian of $S$ is asymptotic to $i$ (rather than $-i$) as the size of the meridian goes to zero. The topological energy and the monopole number of $A_L$ are given as follows:
\[
  \kappa(A_L)=-(c_1(L)+\frac{1}{4}S)\cdot (c_1(L)+\frac{1}{4}S)\hspace{1cm} \nu(A_L)=2c_1(L)\cdot S + \frac{1}{2} S\cdot S
\]
In particular, the topological energy is strictly positive unless $c_1(L)+\frac{1}{4}S$ is a torsion cohomology class. By requiring $H_1(W;\Z)=0$, we guarantee that there is a unique reducible instanton with vanishing topological energy and monopole number.

The index formula of Lemma \ref{index-triv-limit}, under the current assumptions, simplifies to:
\begin{align*}
  \ind(\sD_{A_L})&=8\kappa(A_L)-2g+\frac{1}{2}S\cdot S+\sigma(K)-\sigma(K')-1\nonumber\\
  &=8\kappa(A_L)+2(b_1(\widetilde W)-b^+(\widetilde W))-1\label{ind-red}
\end{align*}
where $\widetilde W$ denotes the double cover of $W$ branched along $S$, and the second identity can be derived from the following standard identities:
\[
  \sigma(\widetilde W)=2\sigma(W)-\sigma(K)+\sigma(K')-\frac{1}{2}S\cdot S,\hspace{1cm} \chi(\widetilde W)=2\chi(W)-\chi(S).
\]
As another observation about the topology of $\widetilde W$, note that $b_1(\widetilde W)=0$. This is shown, for example, in \cite{rohlin} for the case that the pair $(W,S)$ is a closed pair and the homology class of $S$ is non-trivial and a similar argument can be used to verify the same identity in our case. In fact, we can reduce our case to the closed case by gluing the pairs $(X,\Sigma)$ and $(X',\Sigma')$ as in the proof of Lemma \ref{index-triv-limit}. We may also assume that the homology class of $S$ is non-trivial by taking the connected sum with the pair $(\overline{\C\P}^2,B)$ where $B$ is a surface representing a non-trivial homology class.

 \begin{definition}\label{def:negdef}
 	A cobordism of pairs $(W,S):(Y,K)\to (Y',K')$ between knots in integer homology 3-spheres is a {\emph{negative definite pair}} if $H_1(W;\Z)=0$, $b^+(W)=0$, 
	the homology class of $S$ is divisible by $4$, and $b^+(\widetilde W)=0$. The latter condition about the branched double cover can be replaced with the following 
	identity:
	\[
	  \sigma(K')=\sigma(K)+\frac{1}{2}S\cdot S+\chi(S).\quad \diamd
	\]
 \end{definition}

For a negative definite pair $(W,S)$, $\ind(\sD_{A_L})$ is equal to $8\kappa(A_L)-1$. In particular, the flat reducible $A_0$ has index $-1$, and it is regular and has 1-dimensional stabilizer. All remaining reducibles have higher indices. In fact, we may assume that all the other reducibles are also regular \cite[Subsection 7.3]{dcx}. However, we do not need this fact in the sequel. As the moduli space $M(W,S;\theta,\theta')_0$ defined with trivial perturbation contains a unique regular reducible with vanishing $\kappa$ and $\nu$, the same is true for a small enough perturbation.

\begin{example}
	For any pair $(Y,K)$ of a knot in an integer homology sphere, the product $([0,1]\times Y,[0,1]\times K)$ is a negative definite pair. We fix two perturbations of 
	the Chern-Simons functional for $(Y,K)$ and a perturbation of the ASD connection on the cobordism associated to the product cobordism. 
	The above discussion shows that if the perturbation of the ASD equation is small enough, then $M(W,S;\theta,\theta')_0$  contains a unique 
	regular reducible with vanishing topological energy and monopole number.  Here the ASD equation is defined 
	with respect to an orbifold metric, which is not necessarily a product metric. $\diamd$
\end{example}

\begin{example}
	Any homology concordance $(W,S):(Y,K) \to (Y',K')$, as defined in the introduction, is a negative definite pair. $\diamd$
\end{example}

For a negative definite pair $(W,S)$, we may use the discussions of the previous and present sections to ensure the regularity of the moduli spaces $M(W,S;\alpha,\alpha')_z$ of expected dimension at most $3$. The analogues of Propositions \ref{prop:mod1} and \ref{prop:mod2} carry over as stated to the setting of non-cylindrical cobordisms. However, we note that the analogue of  Proposition \ref{prop:mod3} requires $M_z(W,S;\alpha,\alpha')$ to be regular and of dimension less than 2, instead of 3.

We close this subsection with some remarks on compactness and gluing theory for singular instantons. Let $(W,S)$ be a cobordism with auxiliary data as in Subsection \ref{subsec:cobmoduli}. For simplicity of notation to follow we assume $(W,S):\emptyset \to (Y,K)$, i.e. it has one boundary component $(Y,K)$. We assume that the critical set $\fC_\pi$ is non-degenerate, and all moduli spaces $M_z(W,S;\alpha)$ for $\alpha\in \fC_\pi$ are unobstructed and smooth.

Suppose $\alpha$ is irreducible, and let $\beta_1,\ldots,\beta_l \in \fC_\pi^\text{irr}$ with $\beta_l=\alpha$. For any homotopy classes $z_1,\ldots,z_l$ with concatenation equal to $z$, there is a gluing map of the form
\[
	M_{z_1}(W,S;\beta_1)	\times \R_{>0} \times \breve{M}_{z_2}(\beta_1,\beta_2)\times \R_{>0}\times \cdots \times \R_{>0} \times \breve{M}_{z_l}(\beta_{l-1},\beta_l) \longrightarrow M_z(W,S;\alpha)
\]
which is an embedding. In many cases we consider, the moduli space $M(W,S;\alpha)$ is compact away from the image of this gluing map. There are only two other sources of non-compactness that might occur. The first is from bubbling, which can occur only when $\text{ind}(\sD_A)\geqslant 4$ for instantons $[A]\in M_z(W,S;\alpha)$. We will always be in a situation where bubbling can be avoided.

The other source of non-compactness comes from reducible connections. This case is important for us in the sequel. Suppose above that some $\beta_j$ for $1<j<l$ is reducible, i.e. $\beta=\theta$. Then the relevant gluing map in this situation takes the form
\[
	\cdots \times   \breve{M}_{z_{j}}(\beta_{j-1},\theta)\times S^1\times \R_{>0}\times \breve{M}_{z_{j+1}}(\theta,\beta_{j+1}) \times \cdots  \longrightarrow M_z(W,S;\alpha)
\]
where the rest of the domain is as before. The factor $S^1$ of ``gluing parameters'' may be identified with the stabilizer of $\theta$. In general one can glue along multiple reducibles, but in the sequel we will only see the above case.

Finally, consider the case in which $\beta_1=\theta$ is reducible, and $M_{z_1}(W,S;\theta)=\{[A_1]\}$ contains a unique {\emph{regular}} reducible $[A_1]$ and no irreducibles. There is a gluing map
\[
	\{[A_1]\}	\times \R_{>0} \times \breve{M}_{z_2}(\theta,\beta_2)\times \R_{>0}\times \cdots \times \R_{>0} \times \breve{M}_{z_l}(\beta_{l-1},\beta_l) \longrightarrow M_z(W,S;\alpha)
\]
where as before $\beta_j$ for $j>1$ are irreducible. Here there is no gluing parameter as in the previous case; from another viewpoint, the gluing parameter cancels with the stabilizer of $A_1$. Compare the discussion in \cite[p.325]{DK}.

The situations we encounter below are all minor variations of the above. We will allow $(W,S)$ to have multiple boundary components, for example. Further, for $M_z(W,S;\theta)$, a moduli space of irreducible singular instantons, the same constructions are available.

The compactness principle underlying our arguments in the sequel is as follows: if bubbling can be ruled out (as will always be the case), a sequence of instantons in $M_z(W,S;\alpha)$ which does not have a convergent subsequence does have a subsequence that eventually lies in the image of one of the types of gluing maps described above.

The moduli spaces $M^+_z(W,S;\alpha)$ which contain broken trajectories are topologized as follows. Let $\mathfrak{a}=([A],[B_1],\ldots,[B_{l-1}])\in M^+_z(W,S;\alpha)$ be a broken trajectory of the first type considered above, where $[A]$ is an instanton on $(W,S)$ and $[B_i]$ on $\R\times (Y,K)$, with all limits irreducible. Then any neighborhood $N$ of $\mathfrak{a}\in M^+_z(W,S;\alpha)$ contains the image under the gluing map of $U\times (T,\infty)\times U_1\times \cdots \times (T,\infty)\times U_{l-1}$ for some $T\gg 0$ depending on $N$, and neighborhoods $U$ and $U_i$ of $[A]$ and $[B_i]$ in their respective moduli spaces, which also depend on $N$. The other cases are similar. Note that the gluing parameter factors involved in breakings at reducibles are forgotten in the completion of $M^+_z(W,S;\alpha)$. Furthermore, if bubbling does not occur, then $M^+_z(W,S;\alpha)$ is compact.

The above summary relies on a substantial amount of technical work which is by now standard or treated elsewhere. Gluing theory in instanton theory began with Taubes \cite{taubes-glue}, and Floer developed the case of $\R\times Y$ with $Y$ an integer homology 3-sphere in his original construction of instanton homology \cite{floer:inst1}. In the singular setting, the machinary developed in \cite{KM:YAFT}, which also references \cite{km:monopole}, handles the cases in which reducibles can be avoided, and also describes the conditions under which bubbling occurs. The results are similar to the case of non-singular instanton homology as treated in \cite[4.4,5.1]{donaldson-book}. The cases involving breaking at reducibles are also analogous to the non-singular case as in \cite[4.4.1]{donaldson-book} and \cite[Theorem 5]{froyshov}, except that instances of $SO(3)$ (the stabilizer of the reducible) are replaced by $S^1$. All of the above fits into the general framework of {\emph{unobstructed}} gluing theory.

\subsection{Counting critical points}

In the non-singular $SU(2)$ gauge theory setting, Taubes showed in \cite{taubes} that the signed count of (perturbed) irreducible flat connections on an integer homology 3-sphere is equal to $2\lambda(Y)$, twice the Casson invariant. Here ``signed count'' means that a critical point $\alpha$ is counted with sign $(-1)^{\text{gr}(\alpha)}$ where $\text{gr}(\alpha)$ is defined analogously to \eqref{eq:absgr}. In the singular setting, we have the following, which is essentially a special case of a result due to Herald \cite{herald}, which followed the work of Lin \cite{lin}.

\begin{theorem}[cf. Theorem 0.1 of \cite{herald}]\label{thm:herald}
	Let $Y$ be an integer homology 3-sphere and $K\subset Y$ a knot. Suppose $\pi$ is a small perturbation such that $\fC_\pi$ is non-degenerate. Then
	\begin{equation}\label{eq:herald}
		\sum_{\alpha\in \fC^\text{\emph{irr}}_\pi} (-1)^{{\emph{\text{gr}}}(\alpha)} = 4\lambda(Y) + \frac{1}{2}\sigma(K)
	\end{equation}
	where $\lambda(Y)$ is the Casson invariant of $Y$ and $\sigma(K)$ is the signature of the knot $K\subset Y$. 
\end{theorem}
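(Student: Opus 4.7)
The identity is essentially Theorem 0.1 of Herald \cite{herald}, which generalizes Lin's earlier result \cite{lin} from the case $Y=S^3$. My strategy is to first reduce the claim to Herald's topologically defined invariant, and then invoke his formula. The work therefore splits into two pieces: (a) showing that the signed count in \eqref{eq:herald} is a topological invariant of $(Y,K)$, independent of the perturbation $\pi\in\sP'$ used to achieve non-degeneracy; and (b) matching our sign conventions with Herald's orientation conventions on the traceless character variety $\sX(Y,K)$.

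For (a), I argue by a standard bifurcation analysis on a generic 1-parameter family of perturbations $\{\pi_t\}_{t\in [0,1]}$ connecting two admissible choices $\pi_0,\pi_1\in\sP'$. Since the reducible $\theta$ is isolated and non-degenerate throughout (by our restriction to $\sP'$ and Proposition \ref{prop:nondeg}), only irreducible critical points are affected. At generic values of $t$ the critical set $\fC^{\text{irr}}_{\pi_t}$ is non-degenerate, and at finitely many $t_i$ a standard Morse-type birth or death occurs: a pair of irreducibles with gradings differing by one is created or destroyed. Hence the signed count is constant in $t$.

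For (b), I exploit the one-parameter family of holonomy parameters: for $\alpha\in(0,1/2)$, let $\sX_\alpha(Y,K)$ denote the variety of conjugacy classes of $SU(2)$-representations of $\pi_1(Y\setminus K)$ sending a meridian $\mu$ to an element of trace $2\cos(2\pi\alpha)$. Our setup corresponds to $\alpha=1/4$. As $\alpha\to 0^+$, the irreducible representations in $\sX_\alpha(Y,K)$ limit, via the trace constraint, to suitable deformations of the irreducible $SU(2)$-representations of $\pi_1(Y)$; each such closed-manifold representation yields two nearby traceless-family deformations, so by Taubes' theorem \cite{taubes} the contribution from this limit is $2\cdot (2\lambda(Y))=4\lambda(Y)$. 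As $\alpha$ increases from $0^+$ to $1/4$, the count of critical points changes by bifurcations at values of $\alpha$ where $e^{4\pi i\alpha}$ is a root of the Alexander polynomial $\Delta_K(t)$; the net signed change along $(0,1/4]$ is computed via the Levine--Tristram equivariant signature to equal $\tfrac{1}{2}\sigma_{1/4}(K)=\tfrac{1}{2}\sigma(K)$. Adding these two contributions yields the right-hand side of \eqref{eq:herald}.

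The main obstacle is to verify that the signs $(-1)^{\text{gr}(\alpha)}$ defined by the $\Z/4$ Floer grading \eqref{eq:absgr} agree with Herald's orientation signs on $\sX(Y,K)$. This is a spectral flow computation: for $\alpha\in\fC^{\text{irr}}_\pi$, the parity of $\text{gr}_z(\alpha,\theta)$ equals the mod 2 spectral flow of the Hessian of the Chern--Simons functional along a path from $\alpha$ to $\theta$, corrected by the $+1$ term in \eqref{eq:irrred} coming from the 1-dimensional stabilizer of $\theta$. On the other hand, Herald's sign at $\alpha$ is the orientation of $\sX(Y,K)$ at $\alpha$, which is detected by the same Hessian spectral flow relative to a fixed reference. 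Once this match is established at a single model pair (e.g.\ the unknot, where both sides of \eqref{eq:herald} vanish and $\fC^{\text{irr}}_\pi$ is empty after a small perturbation), the invariance under $(Y,K)$-cobordism of both sides propagates the agreement. With signs reconciled, Herald's theorem furnishes \eqref{eq:herald}.
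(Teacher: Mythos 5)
Your overall plan is correct in outline — the identity is Herald's, and the substantive work is reconciling the Floer sign $(-1)^{\text{gr}(\alpha)}$ with Herald's orientation on the traceless character variety — but two points in the sign-matching step are genuinely problematic, and the second is fatal as stated.

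First, the paragraph you label (b) is not actually a sign-matching argument; it is a sketch of Herald's own proof of his theorem (degenerate the meridional trace from $2\cos(2\pi\alpha)$ to $2$, pick up $4\lambda(Y)$ from Taubes, then track bifurcations in $\alpha$ via the Levine--Tristram signature). That re-derivation does nothing to compare your grading $\text{gr}(\alpha)=\text{gr}_z(\alpha,\theta)\bmod 4$ with the orientation Herald assigns to intersection points of $\cM^{\text{irr}}_{Y^\circ}$ with the traceless circle $\cS$ inside the pillowcase $\cM_{\partial N}$. The paper isolates that comparison as Proposition~\ref{prop:localsigns}, whose proof is a nontrivial Mayer--Vietoris analysis of Fredholm bundles modelled on Herald's Proposition~7.2: it relates the spectral flow of the extended Hessian on $Y$ to separate data on $Y^\circ$ and on the solid torus $N$. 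Your proposal asserts that the two sign conventions ``are detected by the same Hessian spectral flow,'' but that is exactly what needs to be shown, and the passage from a global spectral flow on $Y$ to orientations of $\cM^{\text{irr}}_{Y^\circ}\cap\cS$ lives in the pillowcase picture, not in the Floer picture directly. A reader cannot verify the claim from what you have written.

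Second, and more seriously: your strategy for pinning down the overall sign is to ``establish the match at a single model pair (e.g.\ the unknot)'' and then propagate via cobordism invariance. This cannot work. For the unknot $\fC^{\text{irr}}_\pi=\emptyset$ after a small perturbation and $\sigma(K)=\lambda(Y)=0$, so both sides of \eqref{eq:herald} vanish identically; there is no datum to anchor a sign. (An analogy: you cannot determine the sign in a statement like ``$\deg f = \pm\chi$'' by checking a map of degree $0$.) The actual route, and what the paper does, is to first show that the ambiguous global sign $\pm$ in the comparison between the two conventions is \emph{universal} in $(Y,K)$ --- this is an adaptation of Lemma~7.5 of~\cite{herald}, which uses a surgery/splitting argument rather than a cobordism-theoretic one --- and then to fix that universal sign by computing one knot for which at least one side is nonzero. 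The paper uses the $(3,5)$ torus knot from Subsection~\ref{sec:35}, where both the irreducible critical set and the gradings are explicitly known and $4\lambda(Y)+\tfrac12\sigma(K)=-4\neq 0$. You need to replace the unknot with such an example; otherwise the last step of your argument is vacuous.

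Finally, a smaller point: your perturbation-independence argument in (a) is sound in spirit, but you should state that the bifurcation analysis leaves the reducible $\theta$ untouched precisely because $\pi_t\in\sP'$ and cite Proposition~\ref{prop:perturb1} rather than~\ref{prop:nondeg}; the latter only asserts nondegeneracy of $\theta$ for the unperturbed functional.
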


As our setup is different from Herald's, we explain how the work in \cite{herald} implies Theorem \ref{thm:herald}. Suppose, as in the statement, that $\pi$ is a small perturbation such that $\fC_\pi$ is non-degenerate. In particular, $\fC_\pi^\text{irr}$ is a finite set. Let $\alpha_1,\alpha_2\in \fC_\pi^\text{irr}$ and write $\alpha_i=[B_i]$. The orientation, or sign, associated to $\alpha_i$ is determined by the parity of $\text{gr}(\alpha_i)$. In particular, the signs for the two critical points $\alpha_1$ and $\alpha_2$ agree if and only if
\[
	\text{gr}(\alpha_1) - \text{gr}(\alpha_2) \equiv  \text{gr}(\alpha_1,\alpha_2) \equiv  \text{ind}(\sD_A) \mod 4
\]
is even. Here $A$ is a connection on the cylinder $\R\times Y$ with limits $B_1$ and $B_2$, as in Subsection \ref{subsec:gr}. In fact, we may arrange that the operator $\sD_A$ is of the form $\frac{\partial}{\partial t} + D_{B(t)}$, where $B(t)$ is a path of connections $\R\to \sC(Y,K)$ equal to $B_1$ and $B_2$ for $t\ll 0$ and $t \gg 0$, respectively, and $D_{B}$ is the extended Hessian of $\text{CS}+f_\pi$ at $B$. It is well-known, in this situation, that $\text{ind}(\sD_A)$ is equal to the {\emph{spectral flow}} of the path of operators $D_{B(t)}$.

Now, consider a closed tubular neighborhood $N\subset Y$ of the knot $K$, which as an orbifold is isomorphic to the pair $(S^1\times D^2,S^1)$. Extend the reduction of the bundle $E|_K=L\oplus L^\ast$ over $N$. We may assume that $\pi$ is compactly supported. In particular, we may assume that $N$ is chosen small enough so that $\pi$ is supported on $Y\setminus N$. The boundary of $N$ is a 2-torus. Write $\sM_{\partial N}$ for the moduli space of flat $U(1)$ connections on $L|_{\partial N}$, which is naturally identified with the dual torus of $\partial N$. The torus $\sM_{\partial N}$ is a 2-fold branched cover over the moduli space of flat $SU(2)$ connections on $E|_{\partial N}$.

Let $\cS\subset \cM_{\partial N}$ be the embedded circle consisting of flat connections which are trace free at the meridian of $K$. Let $Y^\circ$ be the closure of $Y\setminus N$. Denote by
\[
	\cM^\text{irr}_{Y^\circ}\subset \cM_{\partial N}
\]
the image of the moduli space of $\pi$-perturbed flat irreducible connections on $E|_{Y^\circ}$ which preserve the $U(1)$ bundle $L\subset E|_{\partial N}$. 
After perhaps changing our small perturbation $\pi$, we can assume that $\cM^\text{irr}_{Y^\circ}$ is immersed in $\cM_{\partial N}$ and also that $\cM^\text{irr}_{Y^\circ}$ intersects $\cS$ transversely, away from self-intersection points of $\cM^\text{irr}_{Y^\circ}$. We orient these manifolds as in \cite{herald}.

\begin{prop}\label{prop:localsigns}
	Let $\alpha_i=[B_i]\in \fC^\text{irr}_\pi$ for $i=1,2$. As points in $\cM^\text{\emph{irr}}_{Y^\circ} \cap \cS \subset \cM_{\partial N}$, the local orientations for $\alpha_1$ and $\alpha_2$ agree if and only if the spectral flow of $D_{B(t)}$ is even.
\end{prop}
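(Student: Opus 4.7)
The plan is to compute the parity of the spectral flow of $D_{B(t)}$ by decomposing $Y$ along the 2-torus $T=\partial N$ and relating the spectral flow on the two pieces to a Maslov-type intersection index of Lagrangians in the symplectic vector space tangent to $\cM_{\partial N}$. The proposition will then follow by matching the orientation conventions used in \cite{herald} with the sign $(-1)^{\operatorname{gr}(\alpha)}$ coming from spectral flow.

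First I would choose the path $B(t)$ of connections from $B_1$ to $B_2$ so that its restriction to $N$ is constant in $t$ and equal to the standard singular model, while all of the variation takes place on $Y^\circ$. This is possible because any two singular flat connections on $N$ with meridional holonomy asymptotic to the fixed conjugacy class \eqref{i} are gauge equivalent to the model, and one can deform $B(t)$ across a collar of $\partial N$ without affecting the perturbation $\pi$ (which was assumed supported on $Y^\circ$). With this arrangement, the family of extended Hessians $D_{B(t)}$ decomposes, via a standard Atiyah--Patodi--Singer / Nicolaescu-type gluing argument for spectral flow, into a contribution from $N$, a contribution from $Y^\circ$, and a Maslov index of a pair of Lagrangian paths in the symplectic vector space $V:=T_\alpha\cM_{\partial N}$, which carries the Poincar\'e-duality symplectic form. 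Because $B(t)$ is constant on $N$, the contribution from $N$ is zero, and because the path on $Y^\circ$ can be taken arbitrarily short (a small loop in $\cM_{\partial N}$ near the intersection point), its spectral-flow contribution modulo $2$ vanishes as well. Hence
\[
	\operatorname{SF}(D_{B(t)}) \equiv \operatorname{Mas}\bigl(L_{Y^\circ}(t),\, L_{\cS}\bigr)\pmod{2},
\]
where $L_{Y^\circ}(t)\subset V$ is the tangent to $\cM^{\operatorname{irr}}_{Y^\circ}$ along the path, and $L_\cS\subset V$ is the tangent to the trace-free locus $\cS$ (both being Lagrangian by the standard arguments of Atiyah--Bott / Goldman for $Y^\circ$ and by direct computation for $\cS$).

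The second step is to interpret the mod-$2$ Maslov index as a local intersection sign. Since by hypothesis $\cM^{\operatorname{irr}}_{Y^\circ}$ meets $\cS$ transversely at $\alpha_1$ and $\alpha_2$ (away from self-intersections), the path $L_{Y^\circ}(t)$ crosses $L_\cS$ transversely, and the mod-$2$ Maslov index equals the number of crossings mod $2$, which in turn equals the product of the local intersection signs of $\cM^{\operatorname{irr}}_{Y^\circ}\cap\cS$ at $\alpha_1$ and $\alpha_2$ (with the orientations fixed as in \cite{herald}). Equivalently, this product is $+1$ if and only if the local orientations at $\alpha_1$ and $\alpha_2$ agree. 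Combining with the previous paragraph gives
\[
	(-1)^{\operatorname{SF}(D_{B(t)})} \,=\, \varepsilon(\alpha_1)\,\varepsilon(\alpha_2),
\]
where $\varepsilon(\alpha_i)\in\{\pm 1\}$ is the local orientation sign at $\alpha_i$, which is exactly the claim of the proposition.

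The main obstacle, and the only place real care is required, is to verify that the orientation conventions match on the nose: the symplectic form on $V=H^1(T;\fg_E^\flat)$ used in defining the Maslov index, the orientations on $\cM^{\operatorname{irr}}_{Y^\circ}$ and $\cS$ used in \cite{herald}, and the trivialization of the determinant line used to orient the family $D_{B(t)}$ (hence to sign the spectral flow). Once one fixes compatible conventions, the sign matching is forced because both quantities change simultaneously under a flip of any one of these orientations. The reduction in the first paragraph makes the problem local near each $\alpha_i$, so that the global signs from Floer's convention \eqref{eq:absgr} and from Herald's intersection-theoretic convention can be compared inside the single symplectic vector space $V$.
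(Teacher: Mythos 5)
Your route differs from the one the paper takes: the paper defers to Herald's Proposition 7.2, which (following Taubes' Proposition 5.2) compares the spectral flow to data on $Y^\circ$ and $N$ via a Mayer--Vietoris sequence of Fredholm determinant line bundles, whereas you propose to decompose the spectral flow along $\partial N$ using a Nicolaescu/Cappell--Lee--Miller--type splitting theorem and then read it off as a Maslov index of Lagrangian paths in $T\cM_{\partial N}$. Both implement the same underlying geometric idea (splitting $Y$ along the separating torus), and either implementation should in principle work; but your implementation, as written, has a gap.

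The problem is the second sentence of your first paragraph. You assert that, after arranging $B(t)$ to be constant on $N$, the $Y^\circ$ contribution to the splitting formula also vanishes mod $2$ ``because the path on $Y^\circ$ can be taken arbitrarily short (a small loop in $\cM_{\partial N}$ near the intersection point).'' This is not correct: $\alpha_1$ and $\alpha_2$ are \emph{distinct} points of $\cM^{\rm irr}_{Y^\circ}\cap\cS$, so the path $B(t)|_{Y^\circ}$ joins two genuinely different perturbed-flat connections on $Y^\circ$ and cannot be taken to be a small loop, nor to be short. Even granting shortness, a short path does not automatically carry trivial boundary spectral flow unless one has chosen the boundary condition carefully. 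What you actually need here is a precise version of the splitting theorem (e.g.\ with Cauchy-data/Calder\'on or scattering Lagrangian boundary conditions) in which the interior spectral-flow contributions on each side vanish identically and the total spectral flow is \emph{equal} to a Maslov index of the two families of boundary Lagrangians. That statement is a theorem with hypotheses, not a soft consequence of making the path short. Once that is supplied, the second paragraph of your argument --- interpreting the mod-$2$ Maslov index as the product of local intersection signs, and checking orientation conventions --- is the right thing to do and mirrors what Herald must do, but it is built on the unjustified step above, so the proof as written does not go through.
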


A proof of Proposition \ref{prop:localsigns} follows by essentially repeating the proof of Proposition 7.2 in \cite{herald}, which itself is a modification of Proposition 5.2 from \cite{taubes}. In the proof of Proposition 7.2 in \cite{herald}, the spectral flow of $D_{B(t)}$ is related to data on $Y^\circ$ and $N$ by analyzing a Mayer--Vietoris sequence of Fredholm bundles. While Proposition 7.2 in \cite{herald} treats the case of flat connections with trivial holonomy around meridians of $K$, so that $N$ is instead, as an orbifold, simply $S^1\times D^2$, the argument easily adapts to our situation.

The main result of \cite{herald}, Theorem 0.1, with $\alpha=\pi/2$, tells us that the intersection number $\cM^\text{irr}_{Y^\circ} \cdot \cS$ is equal, up to a sign $\pm$ which depends on our conventions, to the quantity $\pm (4\lambda(Y) + \frac{1}{2}\sigma(K))$. Together with Proposition \ref{prop:localsigns}, this implies equation \eqref{eq:herald} of Theorem \ref{thm:herald} up to the ambiguity $\pm$. An adaptation of \cite[Lemma 7.5]{herald} shows that the sign $\pm$ is universal, i.e. independent of $(Y,K)$. Finally, we determine that the sign is in fact $+$ by computing a non-trivial example, see e.g. Subsection \ref{sec:35}.

\subsection{Orienting moduli spaces}\label{subsec:ors}

In this subsection, we fix our conventions for the orientation of ASD moduli spaces based on \cite{KM:YAFT}. A similar discussion about the orientation of moduli spaces in the non-singular case appears in \cite[Section 5]{donaldson-book}. For any cobordism of pairs $(W,S):(Y,K)\to (Y',K')$, and a path $z$ along $(W,S)$ between the critical points $\alpha$, $\alpha'$ for $Y$, $Y'$, the moduli space $M_z(W,S;\alpha,\alpha')$ is orientable. In fact, the index of the family of ASD operators $\sD_A$ associated to connections $[A]\in \sB_z(W,S;\alpha,\alpha')$ determines a trivial line bundle $l_z(W,S;\alpha,\alpha')$ on $\sB_z(W,S;\alpha,\alpha')$ and the restriction of this bundle to $M_z(W,S;\alpha,\alpha')$ is the orientation bundle of the moduli space $M_z(W,S;\alpha,\alpha')$.

 In the case that either $\alpha$ or $\alpha'$ is reducible, we may form a variation of the bundle $l_z(W,S;\alpha,\alpha')$. For example, in the case that $\alpha=\theta$, we may change the definition of the weighted Sobolev spaces of the domain and codomain of the ASD operator by allowing exponential growth for an exponent $\epsilon$ rather than the exponential decay condition that we used earlier. If $\epsilon$ is non-zero and small enough, then this new ASD operator is still Fredholm and its index is independent of $\epsilon$. Thus when $\alpha'$ is irreducible, we have two choices of determinant line bundles, both trivial, denoted by $l_z(W,S;\theta_-,\alpha')$ and $l_z(W,S;\theta_+,\alpha')$, depending on whether we require exponential decay or exponential growth. We use a similar notation in the case that $\alpha'$ is reducible.

We denote the set of orientations of the bundle $l_z(W,S;\alpha,\alpha')$ by $\Lambda_z[W,S;\alpha,\alpha']$, which is a $\Z/2$-torsor. For a composite cobordism, there is a natural isomorphism:
\[
  \Phi:\Lambda_{z_1}[W_1,S_1;\alpha,\alpha']\otimes_{\Z/2\Z}\Lambda_{z_2}[W_2,S_2;\alpha',\alpha'']\to \Lambda_{z_2\circ z_1}[W_2\circ W_1,S_2\circ S_1;\alpha,\alpha''].
\]
In the case that $\alpha'$ is reducible, we require that one of the appearances of $\alpha'$ in the domain of $\Phi$ is $\theta_+$, and the other $\theta_-$. The isomorphism $\Phi$ is associative when we compose three cobordisms. Moreover, there is also a natural isomorphism between $\Lambda_z[W,S;\alpha,\alpha']$ and $\Lambda_{z'}[W,S;\alpha,\alpha']$ for any two paths $z$, $z'$ from $\alpha$ to $\alpha'$ and this isomorphism is compatible with $\Phi$. This allows us to drop $z$ from our notation for $\Lambda_z[W,S;\alpha,\alpha']$. We also drop $(W,S)$ from our notation whenever the choice of $(W,S)$ is clear from the context.

For a cobordism of pairs $(W,S)$, an element in $\Lambda[W,S;\theta_-,\theta'_-]$ is identified with an orientation of the determinant line $l_z(W,S;\theta_-,\theta'_-)$ over any connection in the homotopy class of paths $z$. Assuming $S$ is oriented and $A_0$ represents a reducible element of $\sB_z(W,S;\theta,\theta')$, this line may be identified as follows:
\begin{equation}
	l_z(W,S;\theta_-,\theta'_-)|_{[A_0]} \cong \wedge^\text{top} (H^1(W) \oplus H^+(W)^\ast\oplus H^0(W)^\ast).\label{eq:reddetlinemin}
\end{equation}
This holds because the connection $A_0^{\rm ad}$ decomposes into a trivial connection on a trivial real line bundle and an $S^1$-connection on a complex line bundle $L_0$. The index of the operator $\sD_{A_0}$ decomposes accordingly. The contribution from $L_0$ can be oriented canonically as it is a complex vector space, and the orientation of the contribution from the trivial line bundle can be identified with the right hand side of \eqref{eq:reddetlinemin}. Similarly, we have an isomorphism
\begin{equation}\label{ori-bdle}
	l_z(W,S;\theta_+,\theta'_-)|_{[A_0]} \cong \wedge^\text{top} (H^1(W) \oplus H^+(W)^\ast).
\end{equation}
A {\it homology orientation} for $(W,S)$ is defined to be an element of $\Lambda[W,S;\theta_+,\theta'_-]$, which by \eqref{ori-bdle} amounts to an orientation of the vector space
\[
  H^1(W)\oplus H^+(W).
\]
In particular, if $(W,S)$ is a negative definite pair, we may take $A_0$ to be the unique flat reducible on $(W,S)$, and we have a canonical element of $\Lambda[W,S;\theta_+,\theta'_-]$.

Changing the orientation of $S$ changes the orientation of the trivial real line bundle and dualizes the complex line bundle $L_0$. In particular, the identifications in \eqref{eq:reddetlinemin} and \eqref{ori-bdle} change sign respectively according to the parities of $d+d'-1$ and $d+d'$, where: 
\[
	d=b^1(W)-b^+(W), \hspace{1cm} d'=\frac{\ind(\sD_{A_0})-d+1}{2}
\]
In the case that $(W,S)$ is a negative definite pair and $A_0$ is the unique flat reducible, then the identification in \eqref{eq:reddetlinemin} changes by a sign whereas the identification in \eqref{ori-bdle} is preserved. In particular, the canonical homology orientation is independent of the orientation of $S$.

Given $(Y,K)$ and $\alpha\in \fC_\pi$, let $\Lambda[\alpha]:=\Lambda[I\times Y, I\times K; \alpha,\theta_-]$ if $\alpha$ is irreducible, and $\Lambda[\alpha]:=\Lambda[I\times Y, I\times K; \theta_+,\theta_-]$ if $\alpha=\theta$. From the discussion in the previous paragraph, because the cobordism $(I\times Y, I\times K)$ has $b^1=b^+=0$, it has a canonical homology orientation, and there is thus a canonical element of $\Lambda[\theta]$. We use this canonical choice whenever we need an element from this set.

Given a homology orientation $o_W$ for $(W,S)$ and $o_\alpha\in \Lambda[\alpha]$ and $o_\beta\in \Lambda[\beta]$, we can fix $o_{(W,S;\alpha,\alpha')}\in \Lambda[W,S;\alpha,\alpha']$, and hence an orientation of $M_z(W,S;\alpha,\alpha')$, by requiring:
\begin{equation*}
	\Phi(o_\alpha\otimes o_{W})=\Phi(o_{(W,S;\alpha,\alpha')}\otimes o_{\alpha'}).
\end{equation*}
As a special case, we may apply this rule to orient a cylinder moduli space $M_z(\alpha_1,\alpha_2)$ from the data of $o_{\alpha_1}\in \Lambda[\alpha_1]$ and $o_{\alpha_2}\in \Lambda[\alpha_2]$. Let $\tau_s$ be the translation on $\R\times Y$ defined by $\tau_s(t,y)=(t-s,y)$. Then $\tau_s$ acts on $M_z(\alpha_1,\alpha_2)$ by pull-back, and the identification
\begin{equation}
	M_z(\alpha_1,\alpha_2) = \R \times \breve{M}_z(\alpha_1,\alpha_2)\label{eq:modrtransl}
\end{equation}
is such that the action of $\tau_s$ is by addition by $s$ on the $\R$-factor. Then we may orient $\breve{M}_z(\alpha_1,\alpha_2)$ using an orientation of $M_z(\alpha_1,\alpha_2)$, and requiring that the identification \eqref{eq:modrtransl} is orientation-preserving, with the ordering of factors as written.

\newpage

%!TEX root = main.tex

\section{Instanton Floer homology groups for knots}\label{sec:tilde}

In this section we introduce instanton homology groups for based knots in integer homology 3-spheres. Although we first introduce an analogue of Floer's instanton homology for homology 3-spheres, our main object of interest is a ``framed'' instanton homology, or rather its chain-level manifestation, analogous to Donaldson's theory for homology 3-spheres in \cite[Section 7.3.3]{donaldson-book}. The framed theory incorporates the reducible critical point and certain maps defined via holonomy.

\subsection{An analogue of Floer's instanton homology for knots}\label{sing-Floer}

Let $(Y,K)$ be an integer homology 3-sphere with an embedded knot, as in Section \ref{sec:defns}. We now also choose an orientation of the knot $K$, which will be required for some of the later constructions. Equip $Y$ with a Riemannian metric $g_o$ with cone angle $\pi$ along $K$. Choose a holonomy perturbation as in Propositions \ref{prop:perturb1} and \ref{prop:perturb2}, so that the critical set $\fC_\pi=\{\theta\}\cup \fC_\pi^\text{irr}$ is a finite set of non-degenerate points, and the moduli spaces $M(\alpha_1,\alpha_2)$ are all regular. We define $C=C(Y,K)$ to be the free abelian group generated by the {\emph{irreducible}} critical set:
\begin{equation}
	C(Y,K) = \bigoplus_{\alpha\in\fC_\pi^\text{irr}} \Z\cdot \alpha. \label{eq:complexwithoutors}
\end{equation}
The mod 4 grading $\text{gr}(\alpha)$ of \eqref{eq:absgr} gives $C$ the structure of a $\Z/4$-graded abelian group. As usual, the grading will be indicated as a subscript, $C_\ast=C_\ast(Y,K)$, but is often omitted.

The differential $d$ on the group $C_\ast$ is defined as follows:
\begin{equation}\label{eq:irrdiff}
	d(\alpha_1) = \sum_{\substack{\alpha_2\in\fC_\pi^\text{irr}\\ \text{gr}(\alpha_1,\alpha_2)\equiv 1}} \# \breve{M}(\alpha_1,\alpha_2)_0\cdot \alpha_2.
\end{equation}
In the sequel we depict the map $d$ by an undecorated cylinder {\smash{\raisebox{-.1\height}{\includegraphics[scale=0.5]{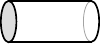}}}} which should be thought of as $I \times K$. By Propositions \ref{prop:mod1} and \ref{prop:mod2}, the moduli space $\breve{M}(\alpha_1,\alpha_2)_0$ is a finite set of points. The coefficient $\# \breve{M}(\alpha_1,\alpha_2)_0$ is a signed count of these points. More precisely, we may more invariantly define the chain group to be
\begin{equation}
	C(Y,K) = \bigoplus_{\alpha\in\fC_\pi^\text{irr}} \Z\Lambda[\alpha]\label{eq:complexwithors}
\end{equation}
where $\Z\Lambda[\alpha]$ is the rank 1 free abelian group with generators the elements of $\Lambda[\alpha]$ and the relation that the sum of the two elements in $\Lambda[\alpha]$ is equal to zero. Recall that $\Lambda[\alpha]$ is defined in Subsection \ref{subsec:ors}. A choice of an element in each $\Lambda[\alpha]$ identifies \eqref{eq:complexwithors} with \eqref{eq:complexwithoutors}. Then, given $o_{\alpha_1}\in \Lambda[\alpha_1]$ and $o_{\alpha_2}\in \Lambda[\alpha_2]$, the moduli space $\breve{M}(\alpha_1,\alpha_2)_0$ is oriented using the rules given in Subsection \ref{subsec:ors}, and $\# \breve{M}(\alpha_1,\alpha_2)_0$ is the count of this oriented $0$-manifold.

Now $(C(Y,K),d)$ is a chain complex, as follows from the usual argument by virtue of Propositions \ref{prop:mod1}, \ref{prop:mod2} and \ref{prop:mod3}. Specifically, the boundary of a compactified 1-dimensional moduli space $\breve{M}^+(\alpha_1,\alpha_2)_1$ consists of the components $\breve{M}^+(\alpha_1,\beta)_0 \times \breve{M}^+(\beta,\alpha_2)_0$. The relation $d^2=0$ is depicted as {\smash{\raisebox{-.1\height}{\includegraphics[scale=0.5]{graphics/dmap}}}}{\smash{\raisebox{-.1\height}{\includegraphics[scale=0.5]{graphics/dmap}}}} $=0$. Although not reflected in the notation, $C(Y,K)$ and $d$ depend on the choices of metric $g_o$ and perturbation $\pi$. Define
\[
	I_\ast(Y,K) = H_\ast(C(Y,K),d).
\]
Then $I(Y,K)$ is a $\Z/4$-graded abelian group. We call $I(Y,K)$ the {\emph{irreducible}} instanton homology of $(Y,K)$, as it only takes into account the irreducible critical points of the Chern-Simons functional. It is a singular, or orbifold, analogue of Floer's $\Z/8$-graded instanton homology $I(Y)$ for homology 3-spheres from \cite{floer:inst1}.

Now suppose we have a cobordism of pairs $(W,S):(Y,K)\to (Y',K')$ with metric and perturbations compatible with ones chosen for the boundaries. Suppose further that our cobordism $(W,S)$ is a negative definite pair in the sense of Definition \ref{def:negdef}. Then we have a map $\lambda=\lambda_{(W,S)}:C(Y,K)\to C(Y',K')$ defined by
\begin{equation*}\label{chain-map-sing-flo}
	\lambda(\alpha) = \sum_{\substack{\alpha'\in \fC_{\pi'}^\text{irr} \\ \text{gr}(W,S;\alpha,\alpha')\equiv 0}} \#M(W,S;\alpha,\alpha')_0\cdot \alpha'
\end{equation*}
where $\fC_{\pi}$ and $\fC_{\pi'}$ are the corresponding critical sets, and $\alpha\in \fC_{\pi}^\text{irr}$. More precisely, using the complexes \eqref{eq:complexwithors}, we orient $M(W,S;\alpha,\alpha')_0$ using $o_\alpha\in \Lambda[\alpha]$ and $o_{\alpha'}\in \Lambda[\alpha']$ as described in Subsection \ref{subsec:ors}, and $\#M(W,S;\alpha,\alpha')_0$ is defined using this orientation. Note that because $b^1(W)=b^+(W)=0$, we have a canonical homology orientation of $(W,S)$. 

Although the map $\lambda$ in general depends on the metric and perturbations, we have omitted these dependencies from the notation. We depict the map $\lambda$ by an undecorated picture of $S$, given for example by {\smash{\raisebox{-.2\height}{\includegraphics[scale=0.25]{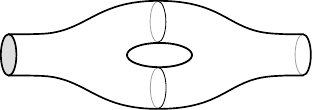}}}}. Write $d'$ for the differential of $C(Y',K')$. Then we have
\[
	d' \circ \lambda - \lambda \circ d = 0
\]
with the two terms representing factorizations {\smash{\raisebox{-.2\height}{\includegraphics[scale=0.25]{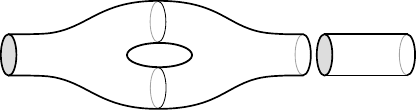}}}} and {\smash{\raisebox{-.2\height}{\includegraphics[scale=0.25]{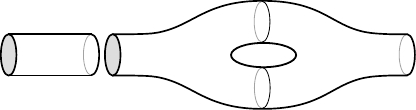}}}} corresponding to the boundary points of $M^+(W,S;\alpha,\alpha')_1$ from $M(W,S;\alpha,\beta')_0\times \breve{M}(\beta',\alpha')_0$ and $\breve{M}(\alpha,\beta)_0\times M(W,S;\beta,\alpha')_0$ respectively. 

We mention three standard properties of these cobordism maps. First, the composition of two cobordism maps is chain chomotopic to the map associated to the composite cobordism. That is, if we write $(W,S)=(W_2\circ W_1, S_2\circ S_1)$, then there exists $\phi$ such that
\begin{equation}
	\lambda_{(W_2,S_2)}\circ \lambda_{(W_1,S_1)}  - \lambda_{(W, S)} = d \circ \phi - \phi \circ d \label{eq:compositemap}
\end{equation}
where on the composite cobordism $W$ one takes the composite metric and perturbation data. The second property is similar: there is a chain homotopy between two cobordism maps $\lambda_{(W,S)}$ that are defined using different perturbation and metric data on the interior of $W$. Finally, the third property says that if $(W,S):(Y,K)\to (Y,K)$ is diffeomorphic to a cylinder with equal auxiliary data at the ends, then $\lambda_{(W,S)}$ is chain homotopic to the identity map on $C(Y,K)$. The chain homotopy in \eqref{eq:compositemap} is defined by counting isolated points in the moduli space of $G$-instantons, where $G$ is a 1-parameter family of metrics with perturbations interpolating between the composite auxiliary data on $W^+$ and the result of stretching $W^+$ along the 3-manifold at which $W_1$ and $W_2$ are glued. The other two properties are proven similarly. See e.g. \cite[Section 5.3]{donaldson-book}.

\begin{remark}
	In the verification of equation \eqref{eq:compositemap} it is important that there is only one reducible and that it remains unobstructed and isolated in the moduli space of $G$-instantons, where $G$ is the 1-parameter family of auxiliary data. This follows from our assumption that the cobordisms are negative definite pairs and the discussion in Subsection \ref{subsec:red}. This is in contrast to the analogous situation in Seiberg--Witten theory for 3-manifolds, where in the same situation one might encounter degenerate reducibles. $\diamd$
\end{remark}

We write $I(W,S):I(Y,K)\to I(Y',K')$ for the map induced by $\lambda_{(W,S)}$ on homology. Following \cite[Section 5.3]{donaldson-book}, the above properties imply that $I(Y,K)$ is an invariant of $(Y,K)$, i.e., its isomorphism class does not depend on the metric and perturbation chosen to define $C(Y,K)$. Along with Theorem \ref{thm:herald}, we obtain the following result, stated as Theorem \ref{singular-Floer-Euler} in the introduction.

\begin{theorem}\label{thm:irrhomthy}
	Let $Y$ be an integer homology 3-sphere and $K\subset Y$ a knot. Then the $\Z/4$-graded abelian group $I_\ast(Y,K)$ is an invariant of the equivalence class of the knot $(Y,K)$. The euler characteristic of the irreducible instanton homology $I_\ast(Y,K)$ is
	\[
		\chi \left(I_\ast(Y,K)\right)  = 4 \lambda(Y) + \frac{1}{2}\sigma(K).
	\]
	where $\lambda(Y)$ is the Casson invariant and $\sigma(K)$ is the knot signature.
\end{theorem}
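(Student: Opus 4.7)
The plan is to deduce both assertions directly from the chain-level machinery already in place. Invariance is a standard consequence of the three cobordism properties stated just after equation \eqref{eq:compositemap}, applied to cylindrical cobordisms. The Euler characteristic computation, once invariance is known, is essentially a restatement of Theorem \ref{thm:herald}.

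For topological invariance, suppose $(g_o,\pi)$ and $(g'_o,\pi')$ are two choices of metric and perturbation data on $(Y,K)$ satisfying the hypotheses of Propositions \ref{prop:perturb1} and \ref{prop:perturb2}, yielding complexes $(C,d)$ and $(C',d')$. Consider the product cobordism $(W,S)=([0,1]\times Y,[0,1]\times K):(Y,K)\to (Y,K)$, which is a negative definite pair (indeed, $H_1(W)=0$, $b^+(W)=0$, $[S]=0$, and the double branched cover is $[0,1]$ times the double branched cover of $(Y,K)$). Equip $W$ with a metric and perturbation that interpolate between the two choices at the two ends; this yields a chain map $\lambda:C\to C'$. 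Reversing the cobordism and interpolating in the opposite direction gives $\lambda':C'\to C$. By the composition property \eqref{eq:compositemap}, $\lambda'\circ \lambda$ is chain homotopic to the map $\lambda_{(W',S')}$ associated to the composite, where $(W',S')$ is diffeomorphic to a cylinder with equal data at both ends; by the cylinder property this is chain homotopic to the identity on $C$. Symmetrically $\lambda\circ\lambda'\simeq \id_{C'}$, so $\lambda$ is a chain homotopy equivalence and $H(C,d)\cong H(C',d')$. For a diffeomorphism of pairs $f:(Y,K)\to(Y',K')$, the same argument applied to the mapping cylinder of $f$, equipped with any compatible metric and perturbation, produces a chain homotopy equivalence $C(Y,K)\to C(Y',K')$ inducing an isomorphism on homology; this shows $I_\ast(Y,K)$ is an invariant of the equivalence class of $(Y,K)$ as a $\Z/4$-graded abelian group (the grading is preserved since the absolute grading \eqref{eq:absgr} is defined intrinsically from $\theta$).

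For the Euler characteristic, invariance allows us to compute using any allowable $(g_o,\pi)$. The complex $C_\ast(Y,K)$ is a finitely generated free abelian group with $\Z/4$-grading \eqref{eq:absgr}, so
\[
\chi(I_\ast(Y,K)) \;=\; \chi(C_\ast(Y,K),d) \;=\; \sum_{\alpha\in\fC_\pi^{\mathrm{irr}}} (-1)^{\mathrm{gr}(\alpha)}.
\]
Theorem \ref{thm:herald} identifies this signed count with $4\lambda(Y)+\tfrac12\sigma(K)$, completing the argument.

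The main technical point is to verify the three cobordism properties in the present singular set-up, but the excerpt already cites \cite[Section 5.3]{donaldson-book} for the non-singular analogues and remarks that the isolatedness and non-degeneracy of $\theta$ in the relevant one-parameter families of moduli spaces (guaranteed by the negative definite pair condition and the perturbation class $\sP'$) are exactly what is needed. Thus the only subtlety I would check carefully is that the interpolating auxiliary data on $[0,1]\times Y$ can be chosen so that the parametrized moduli spaces $M_z(W,S;\alpha,\alpha')$ of dimension $\leqslant 1$ and the associated 1-parameter moduli spaces used to build the chain homotopy are all regular, with no breaking through $\theta$ beyond what is controlled by the analogue of Proposition \ref{prop:mod3} in the cobordism setting; this is handled by the standard transversality theory for the space $\sP'$ combined with the computation of the index at the reducible in Subsection \ref{subsec:red}.
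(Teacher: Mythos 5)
Your argument matches the paper's own proof: invariance is deduced from the three cobordism properties listed after \eqref{eq:compositemap} applied to product cobordisms (which the paper cites as "following \cite[Section 5.3]{donaldson-book}"), and the Euler characteristic identity is then exactly Theorem \ref{thm:herald} once one notes $\chi(I_\ast)=\chi(C_\ast)$. The extra detail you give about tracking regularity through $\sP'$ and the isolated reducible is a fair elaboration of the paper's compressed remark rather than a departure from it.
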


\subsection{The operators $\delta_1$ and $\delta_2$}\label{sec:delta1and2}

The chain complex $(C,d)$ is defined only using irreducible critical points. To begin incorporating the reducible flat connection, we define two chain maps $\delta_1:C_1\to \Z$ and $\delta_2:\Z\to C_{-2}$, analagous to maps defined in the non-singular setting using the trivial connection, see \cite{froyshov} and \cite[Ch. 7]{donaldson-book}. We define 
\[
	\delta_1(\alpha) = \# \breve{M}(\alpha,\theta)_0, \qquad \delta_2(1) = \sum_{\substack{\alpha\in \fC^\text{irr}_{\pi} \\ \text{gr}(\alpha)\equiv 2}} \#\breve{M}(\theta,\alpha)_0.
\]
More precisely, the signs of these maps are defined, using the complexes \eqref{eq:complexwithors}, as follows. The map $\delta_1$ is straightforward: a choice $o_\alpha\in \Lambda[\alpha]$ in the chain complex $C(Y,K)$ determines an orientation of $M(\alpha,\theta)_0$ and hence of $\breve{M}(\alpha,\theta)_0$ as described in Subsection \ref{subsec:ors}, and $\# \breve{M}(\alpha,\theta)_0$ is defined by using this orientation. For $\delta_2$, we use the following rule. Given $o_\alpha\in \Lambda[\alpha]$, the moduli space $M(\theta,\alpha)_0$ obtains an orientation $o'$ by requiring that
\begin{equation}
	\Phi(o' \otimes o_\alpha) \in \Lambda[I\times Y, I\times K; \theta_-, \theta_-]\label{eq:orientdelta2}
\end{equation}
is the {\emph{negative}} of the preferred element in this set. (Our particular choice of convention is not important, but gives the signs that we use in our relations below.) Then $\breve{M}(\theta,\alpha)_0$ is oriented from $M(\theta,\alpha)_0$ as in Subsection \ref{subsec:ors}, from which $\#\breve{M}(\theta,\alpha)_0$ is defined.

\begin{remark}\label{rmk:delta2or}
	Recall that elements in the set appearing in \eqref{eq:orientdelta2} may be identified with orientations of \eqref{eq:reddetlinemin} upon setting $(W,S)=(I\times Y, I \times K)$. From the discussion there, a preferred orientation depends on the orientation of $S=I \times K$. This is the first point at which we use our chosen orientation of $K$. $\diamd$
\end{remark}

Just as in the non-singular case, see \cite[Section 7.1]{donaldson-book}, we have the chain relations
\begin{equation}
	\delta_1\circ d=0, \qquad d\circ \delta_2=0,\label{eq:deltamaps}
\end{equation}
which follow by counting the boundary points of 1-manifolds of the form $\breve{M}^+(\alpha_1,\alpha_2)_1$ where one of $\alpha_1$ or $\alpha_2$ is the reducible $\theta$. We depict $\delta_1$ and $\delta_2$ as {\smash{\raisebox{-.1\height}{\includegraphics[scale=0.5]{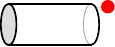}}}} and{\smash{\raisebox{-.1\height}{\includegraphics[scale=0.5]{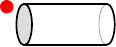}}}} respectively, placing a dot at the end of the cylinder that has a reducible flat limit. Then the relations in \eqref{eq:deltamaps} are {\smash{\raisebox{-.1\height}{\includegraphics[scale=0.5]{graphics/dmap}}}}{\smash{\raisebox{-.1\height}{\includegraphics[scale=0.5]{graphics/delta1map}}}} $=0$ and {\smash{\raisebox{-.1\height}{\includegraphics[scale=0.5]{graphics/delta2map}}}}{\smash{\raisebox{-.1\height}{\includegraphics[scale=0.5]{graphics/dmap}}}} $=0$, respectively.

Now suppose $(W,S):(Y,K)\to (Y',K')$ is a cobordism of pairs. Then we define maps $\Delta_1=\Delta_{1,(W,S)}:C(Y,K)\to \Z$ and $\Delta_2=\Delta_{2,(W,S)}:\Z\to C(Y',K')$ as follows:
\[
	\Delta_1(\alpha) = \# M(W,S;\alpha,\theta')_0, \qquad \Delta_2(1) = \sum_{\substack{\alpha'\in \fC^\text{irr}_{\pi'} \\ \text{gr}(W,S;\theta,\alpha')\equiv 0}} \#M(W,S;\theta,\alpha')_0.
\]
The signed counts are determined as follows. Identify the chain complexes as generated by orientations as in \eqref{eq:complexwithors}. First, for the map $\Delta_1$, an element $o_\alpha\in \Lambda[\alpha]$ determines an orientation $o'$ of the moduli space $M(W,S;\alpha,\theta')_0$ by the requirement
\[
	\Phi(o_\alpha\otimes o_W) = o'
\]
where $o_W$ is the canonical homology orientation of $(W,S)$. Next, for $\Delta_2$, we define an orientation $o'$ of $M(W,S;\theta,\alpha')_0$ given $o_{\alpha'}\in \Lambda[\alpha']$ by requiring that 
\[
	\Phi(o' \otimes o_{\alpha'}) \in \Lambda[W, S; \theta_-, \theta'_-]
\]
is the canonical element. Note that this rule for $\Delta_2$ depends on the orientation of $S$, just as when we defined the orientation rule for $\delta_2$.

The maps $\Delta_1$ and $\Delta_2$ are depicted by placing dots at the appropriate ends of a picture for $S$, e.g. {\smash{\raisebox{-.2\height}{\includegraphics[scale=0.25]{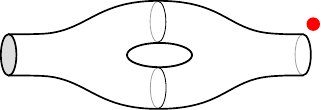}}}} and {\smash{\raisebox{-.2\height}{\includegraphics[scale=0.25]{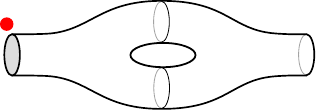}}}}.  The following is an analogue of \cite[Lemma 1]{froyshov}.

\begin{prop}\label{prop:cobdelta} Suppose $(W,S):(Y,K)\to (Y',K')$ is a negative definite pair. Then
\begin{itemize}
	\item[{\emph{(i)}}] $ \Delta_1 \circ d  + \delta_1 - \delta'_1\circ \lambda = 0$,
	\item[{\emph{(ii)}}] $d' \circ \Delta_2 -\delta'_2 + \lambda\circ \delta_2  = 0$.
\end{itemize}
\end{prop}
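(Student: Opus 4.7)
The plan is to establish both identities via signed boundary counts of 1-dimensional compactified moduli spaces on $(W,S)$, in analogy with \cite[Section 7.1]{donaldson-book}.

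For (i), for each irreducible $\alpha \in \fC_\pi^{\rm irr}$ I would consider the 1-dimensional moduli space $M(W,S;\alpha,\theta')_1$. By the cobordism versions of Propositions \ref{prop:mod1} and \ref{prop:mod2} noted in Subsection \ref{subsec:red}, its broken-trajectory/Uhlenbeck compactification is a compact oriented 1-manifold with boundary; bubbling is excluded in a 1-dimensional moduli space on dimensional grounds. Standard gluing theory identifies the 0-dimensional boundary strata as three types: (a) strong broken trajectories with an irreducible incoming intermediate $\beta \in \fC_\pi^{\rm irr}$, namely $\breve{M}(\alpha,\beta)_0 \times M(W,S;\beta,\theta')_0$, with total signed count $\Delta_1(d\alpha)$; (b) strong broken trajectories with an irreducible outgoing intermediate $\beta' \in \fC_{\pi'}^{\rm irr}$, namely $M(W,S;\alpha,\beta')_0 \times \breve{M}(\beta',\theta')_0$, with total signed count $-\delta'_1(\lambda(\alpha))$; and (c) a reducible-end stratum $\breve{M}(\alpha,\theta)_0 \times \{A_0\}$, where $A_0$ is the unique regular flat reducible on $(W,S)$ provided by Subsection \ref{subsec:red}. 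In (c), the index $-1$ and $U(1)$-stabilizer of $A_0$ combine with the parametrized cylinder $\alpha \to \theta$ of index $1$ to yield, after accounting for the $U(1)$-action on the gluing parameter, a 0-dimensional stratum with signed count $\delta_1(\alpha)$. Summing the signed counts to zero gives (i). Note that potential breakings at $\theta$ on the outgoing end or at $\theta'$ on the incoming end are excluded by the grading formula \eqref{eq:irrred} together with the fact that the only reducible on $(W,S)$ contributing at this level of energy is $A_0$.

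For (ii), the analogous argument applied to the compactification of $M(W,S;\theta,\alpha')_1$ for each irreducible $\alpha'$ produces three 0-dimensional boundary strata contributing $d'(\Delta_2(1))$, $\lambda(\delta_2(1))$, and $-\delta'_2(1)$ respectively, where the third comes from the reducible-end stratum $\{A_0\} \times \breve{M}(\theta',\alpha')_0$ in which $A_0$ is glued to an outgoing cylindrical trajectory $\theta' \to \alpha'$. The main technical obstacle throughout is the sign verification. The sign on each stratum is computed by applying the composition isomorphism $\Phi$ of Subsection \ref{subsec:ors} to the broken connections, comparing with the preferred elements of the relevant $\Lambda$-torsors, and using the canonical homology orientation on the negative definite pair $(W,S)$. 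The asymmetric conventions of Subsection \ref{sec:delta1and2} --- orientations for $\Delta_1$ use the canonical homology orientation, while those for $\Delta_2$ use its negative --- are precisely what produce the $+\delta_1$ appearing in (i) versus the $-\delta'_2$ appearing in (ii).
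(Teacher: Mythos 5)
Your proposal is correct and matches the paper's proof: both count the ends of the compactified 1-dimensional moduli spaces $M(W,S;\alpha,\theta')_1$ and $M(W,S;\theta,\alpha')_1$, identify the same three types of boundary strata (two irreducible breakings and one reducible end given by gluing a cylinder trajectory onto the unique unobstructed flat reducible $A_0$ on the negative definite pair), and trace the sign discrepancies to the orientation conventions of Subsections \ref{subsec:ors} and \ref{sec:delta1and2}. The paper's proof treats (i) and then says ``(ii) is similar,'' whereas you spell out (ii) explicitly and include the grading argument ruling out spurious reducible breakings at the ends, but these are elaborations of the same argument rather than a different route.
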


\begin{proof}
Consider (i). This relation can be verified by counting the ends of the 1-dimensional moduli space $M(W,S;\alpha,\theta')_1$. Studying the ends of such moduli spaces relies on the on the compactness and gluing theory of the moduli spaces of singular instantons, which were reviewed in Subsection \ref{subsec:red}. There are three types of ends in this moduli space. The first two types are cylinders on components of $\breve{M}(\alpha,\beta)_0\times M(W,S;\beta,\theta')_0$ and $M(W,S;\alpha,\beta')_0\times \breve{M}(\beta',\alpha')_0$, corresponding to instantons approaching trajectories that are broken along irreducible critical points. Counting these contributions gives $\Delta_1 \circ d(\alpha)- \delta_1'\circ \lambda(\alpha)$. 

\begin{figure}[t]
\centering
\includegraphics[scale=0.75]{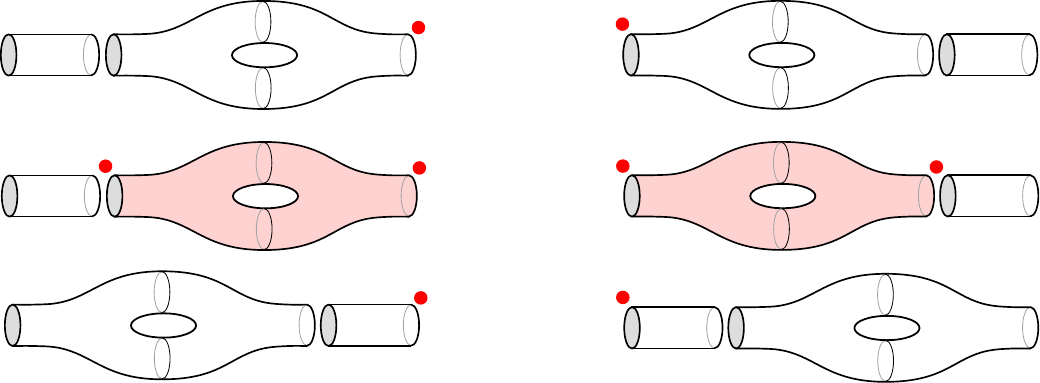}
\caption{The relations (i) (left) and (ii) (right) of Proposition \ref{prop:cobdelta} .}
\label{fig:deltarels}
\end{figure}

The third type of end in $M(W,S;\alpha,\theta')_1$ consists of singular instantons which factorize into an instanton $[A]\in \breve{M}(\alpha,\theta)_0$ grafted to a reducible instanton on $(W,S)$. The condition that $(W,S)$ is a negative definite pair implies that there is a unique such reducible connection class, and by our discussion in Subsection \ref{subsec:red} it is unobstructed, so that the standard gluing theory applies. This third type of end thus contributes the term $\delta_1(\alpha)$.

 The proof of (ii) is similar. 
\end{proof}

\begin{remark}
	The verification of the signs in the above relations is straightforward given our conventions for orienting moduli spaces. The argument is similar, for example, to the proof of \cite[Proposition 20.5.2]{km:monopole}. The same remark holds for the relations that appear below. $\diamd$
\end{remark}

For a depiction of the relations in Proposition \ref{prop:cobdelta}, see Figure \ref{fig:deltarels}. A shaded picture such as {\smash{\raisebox{-.2\height}{\includegraphics[scale=0.25]{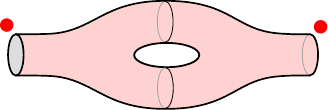}}}} is to be understood as representing a reducible singular instanton, and as a map sends the reducible to the reducible, each represented by a dot, as before.

\subsection{Holonomy operators and $v$-maps}\label{loop-mu}

In this section we describe maps that are obtained by taking the holonomies of instantons along an embedded curve $\gamma\subset S$, see e.g. \cite[Section 2.2]{KM:unknot}. We treat the following cases: (i) $\gamma$ is a closed loop and $(W,S)$ is a negative definite pair; (ii) $(W,S)$ and $\gamma$ are cylinders, i.e. $(W,S)=(I\times Y)$ and $\gamma=I\times\{y\}$, which yields the $v$-map; and (iii) $\gamma$ is a properly embedded interval intersecting both ends of a negative definite pair $(W,S)$.

\subsubsection{The case of closed loops}\label{subsec:closedloops}

Consider a negative definite pair $(W,S):(Y,K)\to (Y',K')$ and an associated configuration space $\sB(W,S;\alpha,\alpha')$ where $\alpha$ and $\alpha'$ are irreducible critical points for $(Y,K)$ and $(Y',K')$, respectively. Let $\gamma$ be a closed loop lying on the interior of the surface $S$. Suppose $\nu_S$ denotes the $S^1$-bundle associated to the normal bundle of $S$ and fix a trivialization of this bundle over $\gamma$. Given $[A]\in \sB(W,S;\alpha,\alpha')$, the adjoint connection $A^\text{ad}$ can be used to define an $S^1$-connection over $\gamma$, as described in the following paragraph.

The boundary of an $\varepsilon$ tubular neighborhood of $S$ in $W$ is naturally isomorphic to the $S^1$-bundle $\nu_S$ over $S$, and by pulling back we obtain a connection $A_\epsilon^\text{ad}$ on $\nu_S$. The limit of these connections as $\epsilon$ goes to zero defines a connection $A^\text{ad}_0$ on $\nu_S$ such that the curvature of $A^\text{ad}_0$ has the fiber of $\nu_S$ in its kernel. Fixing an orientation for the fiber of $\nu_S$ (or equivalently an orientation for $S$) determines an $S^1$-reduction of this bundle over $\nu_S$. In particular, the holonomy of this connection along a lift of $\gamma$ to $\nu_S$, given by the trivialization of $\nu_S$ over $\gamma$, defines a map which depends only on the gauge equivalence class of $A$:
\begin{equation}\label{hol-map-4d}
	 h_{\alpha\alpha'}^\gamma:\sB(W,S;\alpha,\alpha') \longrightarrow S^1.
\end{equation}
Note that to make sense of the holonomy, we must choose a basepoint, an orientation of the loop $\gamma$, an orientation of the fiber of $\nu_S$ and a lift of $\gamma$ to $\nu_S$. As $S^1$ is abelian, the map $h_{\alpha\alpha'}^\gamma$ is independent of the choice of basepoint defining the holonomy around $\gamma$. If we change the orientation of $\gamma$, then $h^\gamma_{\alpha\alpha'}$ is post-composed with the conjugation $S^1\to S^1$. Changing the orientation of $S$ has the same effect. Finally, changing the chosen lift of $\gamma$ would multiply $h_{\alpha\alpha'}^\gamma$ by $\pm 1$.

\begin{remark}
	In the sequel, we slightly abuse the description of this holonomy map by saying that we take the holonomy of 
	$A^\text{ad}$ along $\gamma$, and do not refer to the limiting process. $\diamd$
\end{remark}

Using this holonomy map we define $\mu=\mu_{(W,S,\gamma)}:C(Y,K)\to C(Y',K')$ by:
\[
	\mu(\beta_1) = \sum_{\substack{\alpha'\in\fC_{\pi'}^\text{irr} \\ \text{gr}(W,S;\alpha,\alpha')\equiv 1}} \text{deg}\left( h_{\alpha\alpha'}^\gamma|_{M(W,S;\alpha,\alpha')_1} \right)\cdot \alpha'
\]
This deserves some explanation, as the moduli space $M(W,S;\alpha,\alpha')_1$ is in general not compact. The boundary components of the compactified moduli space $M^+(W,S;\alpha,\alpha')_1$ come from two types of factorizations:
\begin{gather*}
	\breve{M}(\alpha,\beta)_0 \times M(W,S;\beta,\alpha')_0,\\
	 M(W,S;\alpha,\beta')_0\times \breve{M}(\beta',\alpha')_0.
\end{gather*}
As each of the maps $h_{\beta\alpha'}^\gamma$ and $h_{\alpha\beta}^\gamma$ are defined on $0$-dimensional moduli spaces and transverse to some generic $h\in S^1$, the preimage $(h^\gamma_{\alpha\alpha'})^{-1}(h)$ restricted to $M(W,S;\alpha,\alpha')_1$ is supported on the interior of $M^+(W,S;\beta_1,\beta_2)_1$. We may then define $\text{deg}( h_{\alpha\alpha'}^\gamma|_{M(W,S;\alpha,\alpha')_1})$ to be the oriented count of this preimage. We depict the map $\mu$ by a picture of the surface $S$ containing the closed loop $\gamma$, such as {\smash{\raisebox{-.2\height}{\includegraphics[scale=0.25]{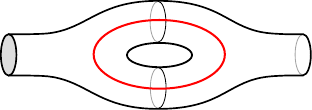}}}}.

Let us be more precise about our sign conventions for $\mu$. As usual, when determining signs, our complexes are given by \eqref{eq:complexwithors}. The moduli space $M(W,S;\alpha,\alpha')_1$ is then oriented from $o_{\alpha}\in\Lambda[\alpha]$ and $o_{\alpha'}\in \Lambda[\alpha']$ as described in Subsection \ref{subsec:ors}. In this paper, we use the convention that if $f:M\to N$ is a smooth map of oriented manifolds with regular value $y\in N$, then $f^{-1}(y)$ is oriented by the normal-directions-first convention. This orients the submanifold $(h^\gamma_{\alpha\alpha'})^{-1}(h)\subset M(W,S;\alpha,\alpha')_1$.

Now consider a holonomy map $h_{\alpha\alpha'}^\gamma$ restricted to a 2-dimensional space $M(W,S;\alpha,\alpha')_2$. The codimension-1 faces of $M^+(W,S;\alpha,\alpha')_2$ are
\begin{gather*}
	\breve{M}^+(\alpha,\beta)_{i-1}  \times M^+(W,S;\beta,\alpha')_{2-i},\\
	 M^+(W,S;\alpha,\beta')_{2-i} \times \breve{M}^+(\beta',\alpha')_{i-1}
\end{gather*} 
where $i\in\{1,2\}$. Consider the 1-manifold $(h^\gamma_{\alpha\alpha'})^{-1}(h)\subset M^+(W,S;\alpha,\alpha')_2$. Again, because $\gamma$ is supported on the interior of $S$, this 1-manifold is supported away from the codimension-1 faces with $i=2$. Counting the contributions from $i=1$ gives the relation
\begin{equation}
	d \circ \mu -  \mu \circ d = 0, \label{eq:closedloop}
\end{equation}
showing that $\mu$ is a chain map. See Figure \ref{fig:vmaprels1}.

\begin{figure}[t]
\centering
\includegraphics[scale=0.85]{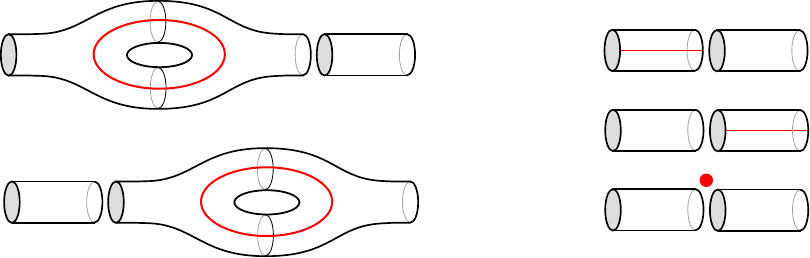}
\caption{On the left is depicted relation \eqref{eq:closedloop}; on the right, the relation of Proposition \ref{prop:vmap}.}
\label{fig:vmaprels1}
\end{figure}

\subsubsection{The case of a cylinder: the $v$-map}\label{sec:vmap}
We move on to the cases in which $\gamma$ is not closed. We first consider the case of the cylinder $\R\times (Y,K)$. The output of the construction is a degree $-2$ (mod $4$) endomorphism on $C_\ast = C_\ast(Y,K)$. Choose a basepoint $y\in K$ and a lift of each $\alpha\in\fC^\text{irr}_\pi$ to $\sC(Y,K)$ once and forever. Following a similar construction as above, we obtain a translation-invariant map
\[
	h_{\alpha_1\alpha_2}:\sB(Y,K;\alpha_1,\alpha_2) \longrightarrow S^1
\]
for each pair of irreducible critical points $\alpha_i\in\fC^\text{irr}_\pi$ on $(Y,K)$. To be more detailed, we define $\sB(Y,K;\alpha_1,\alpha_2)$ using the chosen lifts of $\alpha_1$ and $\alpha_2$, and for a given $[A]\in \sB(Y,K;\alpha_1,\alpha_2)$, the holonomy of $A^\text{ad}$ along $\R\times\{ y\}$ determines $h_{\alpha_1\alpha_2}([A])$. (A lift of $\alpha_i$'s to the space of framed connections $\widetilde \sB(Y,K)$ suffices for the definition of $h_{\alpha_1\alpha_2}$.) The induced map $h_{\alpha_1\alpha_2}$ on $\breve{M}(\alpha_1,\alpha_2)_{d}$ extends to the moduli space of unparametrized broken trajectories which break along irreducible critical points, and on the codimension-1 faces $\breve{M}^+(\alpha_1,\beta)_{i-1}\times \breve{M}^+(\beta,\alpha_2)_{d-i}$ with $\beta$ irreducible, factors accordingly as $h_{\alpha_1\beta}\cdot h_{\beta\alpha_2}$.

For $h_{\alpha_1\alpha_2}$ to be well-defined we also require that meridians have preferred directions; this is true because the knot $K$ is oriented. Changing the orientation of $K$ alters $h_{\alpha_1\alpha_2}$ by post-composition with the conjugation map $S^1\to S^1$.

To define a well-behaved map on $C_\ast$ in this situation, in general we must modify the above holonomy maps, similar to what is done in \cite{donaldson-book}. In particular, we define maps
\begin{equation*}
	H_{\alpha_1\alpha_2}:\breve{M}(\alpha_1,\alpha_2)_{d} \longrightarrow S^1 \label{eq:holonomymaps1}
\end{equation*}
by modifying the maps $h_{\alpha_1\alpha_2}$ near broken trajectories. Since we need these modified holonomy maps only for moduli spaces of up to dimension $2$, we may assume that $d\leq 2$. The maps $H_{\alpha_1\alpha_2}$ extend to unparametrized broken trajectories which break along irreducible critical points, and satisfy the following properties.
\begin{itemize}
\item[(H1)] $H_{\alpha_1\alpha_2}=1$ if the dimension $d$ of the unparametrized moduli space $\breve{M}(\alpha_1,\alpha_2)_{d}$ on which $H_{\alpha_1\alpha_2}$ is defined is equal to zero.

\item[(H2)]  For a given sequence of instantons $[B_i]$ in $\breve{M}(\alpha_1,\alpha_2)_d$ converging to a broken instanton $([B],[B'])\in \breve{M}(\alpha_1,\beta)_{i-1}\times \breve{M}(\beta,\alpha_2)_{d-i}$ where $\beta$ is an irreducible critical point, the holonomies $H_{\alpha_1\alpha_2}(B_i)$ converge to the product $H_{\alpha_1\beta}(B)\cdot H_{\beta\alpha_2}(B')$.

\item[(H3)]  If $\text{gr}(\alpha_1)\equiv 1$ and $\text{gr}(\alpha_2)\equiv 2$, there is an end of $\breve{M}(\alpha_1,\alpha_2)_{2}$, corresponding to trajectories broken along the reducible $\theta$, which by standard gluing theory can be identified with $\breve{M}(\alpha_1,\theta)_{0}\times  S^1\times \R_{>0}\times \breve{M}(\theta,\alpha_2)_{0}$. We require that the restriction of $H_{\alpha_1\alpha_2}$ to $\breve{M}(\alpha_1,\theta)_{0}\times \{T\} \times S^1\times \breve{M}(\theta,\alpha_2)_{0}$ for some (and hence any) $T\in\R_{>0}$ is a degree $1$ map on each circle component.
\end{itemize}
The unmodified holonomy maps $h_{\alpha_1\alpha_2}$ satisfy the properties (H2) and (H3) but do not necessarily satisfy (H1). The modified holonomy maps are constructed in a way that is inspired by what is done in \cite[Section 7.3.2]{donaldson-book}. See Appendix \ref{app:hol} for details on the contruction of the modified holonomy maps $H_{\alpha_1\alpha_2}$ which satisfy (H1)--(H3).

We may now define an operator $v: C_\ast \to C_{\ast-2}$ as follows:
\begin{equation}
	v(\alpha_1) = \sum_{\substack{\alpha_2\in\fC_\pi^\text{irr} \\ \text{gr}(\alpha_1,\alpha_2)\equiv 2}} \text{deg}\left( H_{\alpha_1\alpha_2}|_{\breve{M}(\alpha_1,\alpha_2)_1} \right)\cdot \alpha_2 \label{eq:vmapdef}
\end{equation}
The degree may be computed by taking the preimage of a generic $h\in S^1\setminus \{1\}$. By property (H1), such a preimage is supported away from the ends of $\breve{M}(\alpha_1,\alpha_2)_1$, and generically is a finite set of oriented points. The expression $\text{deg}( H_{\alpha_1\alpha_2}|_{\breve{M}(\alpha_1,\alpha_2)_1} )$ is defined to be the signed count of these points.

 The following proposition is a singular instanton analogue of \cite[Proposition 7.8]{donaldson-book} and \cite[Theorem 4]{froyshov}, and its proof is analogous. The main difference is that $SO(3)$, which in the non-singular setting plays both the role of the stabilizer of the trivial connection and the codomain of the (adjoint) holonomy maps, is replaced in the singular setting by $S^1$.

\begin{prop}\label{prop:vmap}
	$d \circ v - v \circ d  - \delta_2 \circ \delta_1 = 0$. 
\end{prop}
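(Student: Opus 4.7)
The plan is to count the boundary points of a one-dimensional cut-down moduli space and identify the three types of contributions with the three terms of the identity. Fix irreducible critical points $\alpha_1, \alpha_2 \in \fC^{\text{irr}}_\pi$ whose relative grading $\text{gr}(\alpha_1,\alpha_2)$ is congruent to $3 \pmod 4$, so that $\breve{M}(\alpha_1,\alpha_2)_2$ is a smooth $2$-manifold. Pick a generic $h \in S^1 \setminus \{1\}$ and consider the $1$-manifold
\[
	N := H_{\alpha_1\alpha_2}^{-1}(h) \subset \breve{M}(\alpha_1,\alpha_2)_2.
\]
Since $h \neq 1$, property (H1) guarantees that after passing to the compactification $\breve{M}^+(\alpha_1,\alpha_2)_2$, the closure $\overline N$ is a compact $1$-manifold whose boundary lies in the codimension-one faces of $\breve{M}^+(\alpha_1,\alpha_2)_2$. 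The relation $d \circ v - v \circ d - \delta_2 \circ \delta_1 = 0$ will come from equating the (signed) count $\# \partial \overline N = 0$.

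The codimension-one faces fall into two classes. The first consists of factorizations through an intermediate \emph{irreducible} critical point $\beta$, namely $\breve{M}^+(\alpha_1,\beta)_{i-1} \times \breve{M}^+(\beta,\alpha_2)_{2-i}$ with $i \in \{1,2\}$. On these faces property (H2) splits the holonomy as $H_{\alpha_1\beta} \cdot H_{\beta\alpha_2}$, and property (H1) forces the factor defined on a $0$-dimensional moduli space to equal $1$. Thus for $i=1$ one recovers $\deg(H_{\beta\alpha_2}|_{\breve{M}(\beta,\alpha_2)_1})$ weighted by $\#\breve{M}(\alpha_1,\beta)_0$, i.e.\ the coefficient of $\alpha_2$ in $v\circ d(\alpha_1)$; for $i=2$ one similarly gets the coefficient of $\alpha_2$ in $d\circ v(\alpha_1)$. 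The second class consists of the face through the \emph{reducible} $\theta$. By Proposition \ref{prop:mod3} such a breaking can only occur once the dimension is at least $3$, which is exactly our case, and by the standard $U(1)$-gluing construction (the $S^1$ arising from the stabilizer of $\theta$) this face is diffeomorphic to $\breve{M}(\alpha_1,\theta)_0 \times S^1 \times \breve{M}(\theta,\alpha_2)_0$. Property (H3) identifies the restriction of $H_{\alpha_1\alpha_2}$ to this face with projection to the $S^1$-factor, so its preimage of $h$ is exactly $\breve{M}(\alpha_1,\theta)_0 \times \{h\} \times \breve{M}(\theta,\alpha_2)_0$, whose signed count is the coefficient of $\alpha_2$ in $\delta_2 \circ \delta_1(\alpha_1)$.

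Summing these boundary contributions, $\# \partial \overline N = 0$ yields the identity in the statement, up to signs. The signs are dictated by our orientation conventions from Subsection \ref{subsec:ors}, the normal-directions-first rule for preimages of regular values, and the boundary orientation convention on $\breve{M}^+(\alpha_1,\alpha_2)_2$; the verification parallels that of the non-singular analogue in \cite[Proposition 7.8]{donaldson-book} and the argument for the analogous monopole Floer relations in \cite[Proposition 20.5.2]{km:monopole}, with $SO(3)$ replaced throughout by the $U(1)$-stabilizer of $\theta$.

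The main obstacle is the gluing analysis at the reducible $\theta$: one needs to confirm that the ends of $\breve{M}(\alpha_1,\alpha_2)_2$ arising from breaking at $\theta$ are genuinely modelled on $\breve{M}(\alpha_1,\theta)_0 \times S^1 \times \breve{M}(\theta,\alpha_2)_0$ and that the holonomy extends to this end as projection to $S^1$ (which is precisely the content of (H3)); this uses that $\theta$ is non-degenerate and isolated (Proposition \ref{prop:nondeg}), and that the gluing parameter is the reducible's $U(1)$-stabilizer acting on framings. The sign bookkeeping, while routine, is the other delicate point and is handled exactly as in the non-singular setting after substituting $U(1)$ for $SO(3)$ throughout.
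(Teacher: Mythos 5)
Your argument is correct and follows essentially the same route as the paper's proof: both cut down $\breve{M}(\alpha_1,\alpha_2)_2$ by the modified holonomy map at a generic $h\neq 1$, invoke compactness and properties (H1)--(H3) to identify the three types of boundary/end contributions (two from breaking at an irreducible $\beta$, one from breaking at $\theta$ via the $S^1$-gluing parameter), and equate the signed count to zero. The only presentational difference is that you phrase the argument in terms of the closure in the compactification $\breve{M}^+(\alpha_1,\alpha_2)_2$, whereas the paper counts ends directly using the local gluing model near each broken trajectory; these are the same computation.
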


\begin{proof}
Consider the 1-dimensional moduli space 
\[
	M:=\{[A]\in \breve{M}(\alpha_1,\alpha_2)_2: H_{\alpha_1\alpha_2}([A])=h\}
\]
for some generic $h\in S^1\setminus\{1\}$. Studying the ends of $M$ will lead to the desired relation. As the dimension of $M(\alpha_1,\alpha_2)_3$ is 3, there is no bubbling, and $\breve{M}^+(\alpha_1,\alpha_2)_2$ is compact. Thus an end of $M$ contains a sequence of instantons that approaches an unparametrized broken trajectory $\fa=([A_1],\ldots,[A_{l-1}])$ where $l\geqslant 3$, $[A_i]\in \breve{M}(\beta_i,\beta_{i+1})_{d_i}$ and $\beta_1=\alpha_1$, $\beta_{l}=\alpha_2$. By index additivity and dimension considerations, $l\leqslant 4$.

 First suppose that each $\beta_i$ is irreducible. Then $H_{\alpha_1\alpha_2}$ factors as $\prod_{i=1}^{l-1} H_{\beta_i\beta_{i+1}}$ near $\fa$, as follows inductively from property (H2) above. If $l=4$, then each $[A_i]$ is of index 1, and since $H_{\beta_i\beta_{i+1}}=1$ for such instantons by property (H1), this case does not occur.  From now on we may assume $l=3$, so that $\fa=([A_1],[A_2])$. Write $\beta=\beta_2$.

 Suppose $[A_1]$ is of index 1 and $[A_2]$ of index 2, i.e. $\fa\in \breve{M}(\alpha_1,\beta)_0\times \breve{M}(\beta,\alpha_2)_1$. For these types of breakings, consider a corresponding gluing map
 \[
 	  \psi_\beta:\breve{M}(\alpha_1,\beta)_0  \times \R_{>0}\times \breve{M}(\beta,\alpha_2)_1\to \breve{M}(\alpha_1,\alpha_2)_2
 \]
which is a diffeomorphism onto its image. Let $N_{0,1}^\beta\subset M$ be the image of this map, with $\R_{>0}$ restricted to some $(T,\infty)$, intersected with $M$. Here and in what follows, $T>0$ is some large generic number to be specified later. Consider the map
\[
		f:\breve{M}(\alpha_1,\beta)_0\times \breve{M}(\beta,\alpha_2)_1 \to S^1
\]
given by $f([A_1],[A_2])=H_{\alpha_1\alpha_2}(\psi_\beta([A_1],T,[A_2]))$. With $T$ generically chosen, $h$ is a regular value of $f$. Then $f^{-1}(h)$ consists of the boundary points of $M\setminus N_{0,1}^\beta$ that are adjacent to $N^\beta_{0,1}$. Taking $T\to \infty$ and using (H2), $f$ is homotopic to $([A_1],[A_2])\mapsto H_{\alpha_1\beta}(A_1)H_{\beta\alpha_2}(A_2)$. For $[A_1]$ fixed, this latter map has the same degree as $H_{\beta\alpha_2}$, which is $\pm \langle v(\beta),\alpha_2\rangle$. There are $\pm \langle d(\alpha_1),\beta\rangle$ many choices for $[A_1]\in \breve{M}(\alpha_1,\beta)_0$. Ranging over all possibilities for $\beta$, and using our orientation conventions, we find that the total number of boundary points of this type is given by $-\langle vd(\alpha_1),\alpha_2\rangle$.

For breakings where instead $[A_1]$ is of index 2 and $[A_2]$ of index 1, we define a corresponding subset $N_{1,0}^\beta\subset M$ using the image of a gluing map and intersecting with $M$. A similar argument to the above shows that the number of boundary points in this situation, again ranging over all relevant irreducible $\beta$, is given by $\langle d v (\alpha_1), \alpha_2\rangle$.

There is also the case in which there is a sequence of instantons in $M$ approaching an unparametrized broken trajectory $\fa=([A_1],[A_2])$ that factors through the reducible $\theta$. Such a sequence eventually lies in the image of a gluing map
 \[
 	\psi_\theta :  \breve{M}(\alpha_1,\theta)_0 \times S^1 \times \R_{>0}\times \breve{M}(\theta,\alpha_2)_0\to \breve{M}(\alpha_1,\alpha_2)_2
 \]
Let $N^\theta$ be the image of this map, with $\R_{>0}$ restricted to the interval $(T,\infty)$, intersected with the moduli space $M$. Consider the boundary points of $M\setminus N^\theta$ that are adjacent to $N^\theta$. These are in $(f')^{-1}(h)$ where
\[
		f':\breve{M}(\alpha_1,\theta)_0\times S^1\times \breve{M}(\theta,\alpha_2)_0 \to S^1
\]
is defined by $f'([A_1],[A_2]) = H_{\alpha_1\alpha_2}(\psi_\theta([A_1],g,T,[A_2]))$. Using (H3), the map $f'$ is homotopic to a map which for each point in $\breve{M}(\alpha_1,\theta)_0\times \breve{M}(\theta,\alpha_2)_0$ restricts to a degree $1$ map $S^1\to S^1$. The number of boundary points, using our orientation conventions, is equal to $-\langle \delta_2 \delta_1(\alpha_1),\alpha_2\rangle$.

Now consider $M'$, the complement in $M$ of the open sets $N_{0,1}^\beta$, $N_{1,0}^\beta$, $N^\theta$ defined above, where $\beta$ ranges over irreducible critical points with appropriate index in each case. We choose $T$ large enough so that these open sets are disjoint, and generic so that $M'$ is a 1-manifold with boundary. By the discussion in Subsection \ref{subsec:red}, $M'$ is compact. The boundary points have been counted above, and yield the desired relation. This completes the proof. The three types of factorizations are depicted in Figure \ref{fig:vmaprels1}.
\end{proof}

\begin{remark}\label{rmk:truncations}
	Ideally, the compactified moduli space would have the structure of a smooth manifold with corners near broken trajectories, induced by the natural compactifications of the domains of the gluing maps. In the above proof, we would then define a smooth manifold with boundary $M^+\subset \breve{M}^+(\alpha_1,\alpha_2)_2$ and simply count its boundary points. (Technically, in this strategy, we would also modify our compactification to include gluing parameters for reducibles.) However, proving that the compactified moduli space indeed has such structure is a technical issue which also arises in other Floer theories, and we would like to avoid it. This is why, in the above proof, we consider a truncation of the moduli space to obtain a compact manifold with boundary. A similar approach is employed in \cite{SS}. In the sequel, we will often ignore this subtlety, with the understanding that one should always really truncate the moduli spaces as done in the proof above.	$\diamd$
\end{remark}

\begin{remark}\label{rmk:deg}
	By properties (H1) and (H2), the map $H_{\alpha_1\alpha_2}$ is defined on the compactified 1-manifold $\breve{M}^+(\alpha_1,\alpha_2)_1$ and sends the boundary components $\breve{M}(\alpha_1,\beta)_0\times\breve{M}(\beta,\alpha_2)_0$ to the identity $1\in S^1$. Thus $H_{\alpha_1\alpha_2}$ descends to a map $M\to S^1$, where $M$ is the disjoint union of circles obtained by identifying the boundary points of $\breve{M}^+(\alpha_1,\alpha_2)_1$ in pairs. Then the degree of $H_{\alpha_1\alpha_2}$ appearing in \eqref{eq:vmapdef} is nothing more than the degree of $M\to S^1$.  In particular, this shows that the map $v:C_\ast \to C_{\ast-2}$ is independent of the choice of $h\in S^1$. Changing the choice of modified holonomy maps will alter $v$ by a chain homotopy. The proof is a standard continuation map argument. See also Theorem \ref{thm:framedcat} below.
	$\diamd$
\end{remark}

\begin{remark}\label{rmk:signsofvmaprel}
Our construction of $v$ depends on the orientation of $K$. If the orientation is reversed, then $v$ changes sign. Similar remarks hold for $\delta_2$, following Remark \ref{rmk:delta2or}. However, the maps $d$ and $\delta_1$ do not depend on the orientation of $K$. It follows that the relation of Proposition \ref{prop:vmap} is invariant under orientation-reversal of $K$. $\diamd$
\end{remark}

\subsubsection{Curves with boundary in cobordisms}\label{cut-down}

Now consider any negative definite pair $(W,S):(Y,K)\to (Y',K')$. We assume that $\gamma$ is an embedded interval in $(W,S)$ with its boundary intersecting both $(Y,K)$ and $(Y',K')$. Denote by $p\in K$ and $p'\in K'$ the boundary points of $\gamma$ in $S$. Recall that $W^+$ is obtained from $W$ by attaching cylindrical ends $(-\infty, 0]\times Y$ and $[0,\infty)\times Y'$, and $S^+$ is obtained from $S$ similarly, by attaching $(-\infty, 0]\times K$ and $[0,\infty)\times K'$. We extend $\gamma$ to a non-compact curve $\gamma^+\subset S^+$ by attaching $(-\infty,0]\times \{p\}$ and $[0,\infty )\times \{p'\}$. We obtain a map
\[
	h_{\alpha\alpha'}^\gamma:\sB(W,S;\alpha,\alpha') \longrightarrow S^1
\]
by taking as before the holonomy of the adjoint connection $A^\text{ad}$ along $\gamma^+$ compatible with the $S^1$ reduction of the bundle along $S^+$. Modified holonomy maps $H^\gamma_{\alpha\alpha'}$ of Appendix \ref{app:hol} give
\[
	\mu(\alpha) = \sum_{\substack{\alpha'\in\fC_{\pi'}^\text{irr} \\ \text{gr}(W,S;\alpha,\alpha')\equiv 1}} \text{deg}\left( H^\gamma_{\alpha\alpha'}|_{M(W,S;\alpha,\alpha')_1} \right)\cdot \alpha'.
\]
The relation in the following proposition is depicted in Figure \ref{fig:vmaprels2} in the case that the curve $\gamma\subset S$ looks like {\smash{\raisebox{-.2\height}{\includegraphics[scale=0.25]{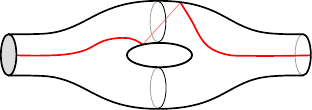}}}}. It is an analogue of the relation in the non-singular setting of \cite[Theorem 6]{froyshov}, and the proof is similar to that of Proposition \ref{prop:vmap}.

\begin{figure}[t]
\centering
\includegraphics[scale=0.75]{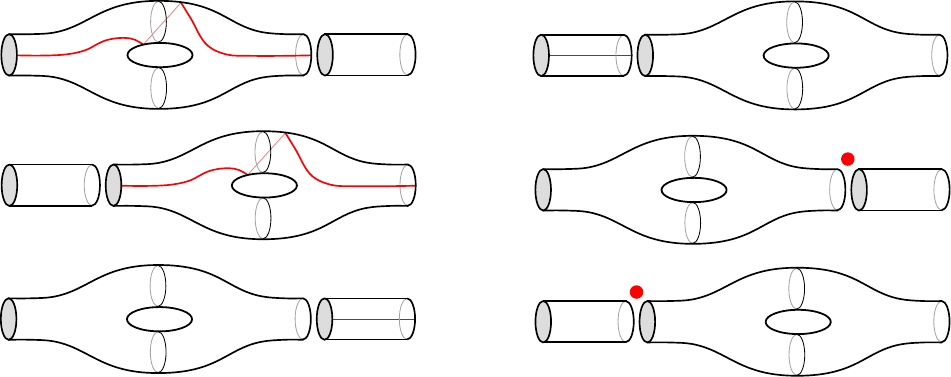}
\caption{The relation of Proposition \ref{prop:vmaprels2}.}
\label{fig:vmaprels2}
\end{figure}

\begin{prop}\label{prop:vmaprels2}
	 $d' \circ \mu + \mu \circ d + \Delta_2 \circ \delta_1 - \delta_2' \circ \Delta_1 - v' \circ \lambda +\lambda\circ v = 0$.
\end{prop}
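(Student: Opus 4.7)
The plan is to adapt the proof of Proposition \ref{prop:vmap} to the cobordism setting. First I would fix irreducibles $\alpha\in\fC^\text{irr}_\pi$ and $\alpha'\in\fC^\text{irr}_{\pi'}$ with $\text{gr}(W,S;\alpha,\alpha')\equiv 2\pmod 4$, pick a generic $h\in S^1\setminus\{1\}$, and study the cut-down $1$-manifold
\[
 M := \left\{[A]\in M(W,S;\alpha,\alpha')_2 \,:\, H^\gamma_{\alpha\alpha'}([A]) = h\right\}.
\]
Its compactification $M^+$ inherits its ends from $M^+(W,S;\alpha,\alpha')_2$. Because the moduli has total dimension $2$ (so no bubbling) and the unique reducible on $(W,S)$ is isolated and unobstructed by the negative definiteness hypothesis (Subsection \ref{subsec:red}), every end comes from a broken trajectory. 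The signed count of the ends of $M^+$, summed over $\alpha'$, must vanish, and reading off each contribution by type will give the claimed identity.

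For an irreducible break at $\beta\in\fC^\text{irr}_\pi$ on the incoming end, index additivity forces the factorization to be either $\breve{M}(\alpha,\beta)_0\times M(W,S;\beta,\alpha')_1$ or $\breve{M}(\alpha,\beta)_1\times M(W,S;\beta,\alpha')_0$. The modified holonomy factors as $H^\gamma_{\alpha\alpha'}=H_{\alpha\beta}\cdot H^\gamma_{\beta\alpha'}$ near the broken trajectory, and property (H1) kills the cylinder factor whenever it is $0$-dimensional. The first case therefore localizes the condition $H=h$ onto the cobordism piece and contributes $(\mu\circ d)(\alpha)$; the second localizes it onto the cylinder piece, where the condition is precisely that defining $v$, and contributes $(\lambda\circ v)(\alpha)$. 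Symmetric analysis at an irreducible outgoing break yields $(d'\circ\mu)(\alpha)$ and $(v'\circ\lambda)(\alpha)$. For breaking through the reducible $\theta$ on the incoming end, equivariant gluing as in the proof of Proposition \ref{prop:vmap} provides a neighborhood of the broken trajectory modelled on $\breve{M}(\alpha,\theta)_0 \times (T,\infty)\times S^1\times M(W,S;\theta,\alpha')_0$, where the $S^1$ records the relative holonomy along $\gamma$ on the glued neck. Imposing $H^\gamma_{\alpha\alpha'}=h$ pins the $S^1$ factor to a point and produces the contribution $(\Delta_2\circ\delta_1)(\alpha)$. The mirror break through $\theta'$ on the outgoing end contributes $(\delta_2'\circ\Delta_1)(\alpha)$ with a sign. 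No further ends occur, since the unique reducible on $(W,S)$ is the one already approached through the cylindrical ends.

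The main obstacle, as in Propositions \ref{prop:vmap} and \ref{prop:cobdelta}, will be verifying that these six contributions carry exactly the signs $+,+,+,-,-,+$ demanded by the stated relation. This requires a careful comparison of the boundary orientation of $M^+$ against the product orientations induced by the gluing isomorphisms of Subsection \ref{subsec:ors}; the two negative signs ultimately track the convention fixed in \eqref{eq:orientdelta2} for reducible outgoing ends and the rule \eqref{eq:modrtransl} used to orient $\R$-quotients on the outgoing cylinder factor.
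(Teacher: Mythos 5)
Your proposal is correct and follows the same route the paper indicates: the paper's own proof of Proposition \ref{prop:vmaprels2} is exactly a statement that the argument mirrors Proposition \ref{prop:vmap} (with the breaking types depicted in Figure \ref{fig:vmaprels2}), and you have faithfully unpacked that argument — the cut-down $1$-manifold inside $M(W,S;\alpha,\alpha')_2$, the four irreducible-break faces giving $\mu\circ d$, $\lambda\circ v$, $d'\circ\mu$, $v'\circ\lambda$ via (H1)--(H2), and the two reducible-break regions giving $\Delta_2\circ\delta_1$ and $\delta_2'\circ\Delta_1$ via the $S^1$ gluing parameter and (H3). Your dimension bookkeeping correctly rules out extra strata (double breaks and breaks hitting the interior flat reducible of $(W,S)$ all exceed the available index), and your caution about tracking the signs $+,+,+,-,-,+$ against the conventions of Subsection \ref{subsec:ors} and \eqref{eq:orientdelta2} matches the level of detail the paper itself provides.
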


Similar to Remark \ref{rmk:signsofvmaprel}, in this situation, the maps $d,d',\Delta_1,\lambda$ do not depend on the orientations of $K,K',S$, while $v,v',\mu,\delta_2,\delta_2',\Delta_2$ do, and change sign under orientation-reversal. Thus the relation of Proposition \ref{prop:vmaprels2} is invariant under orientation-reversal.

\subsection{A framed instanton homology for knots}\label{sec:framed}

We now assemble the above data to define a framed instanton chain group $(\widetilde{C}(Y,K), \widetilde{d})$. 
The main apparatus is the following.

\begin{definition}\label{S-comp-first}
	A chain complex $(\widetilde C_*,\widetilde d)$ is an {\emph{$\cS$-complex}} if there are a finitely generated free
	chain complex $(C_*,d)$ and graded maps 
	$v:C_*\to C_{*-2}$, $\delta_1:C_1\to \Z$ and $\delta_2:\Z \to C_{-2}$ such that $\widetilde C_\ast=C_*\oplus C_{*-1}\oplus \Z$, and such that the differential is given by
				\begin{equation}\label{eq:d-tilde}
					\widetilde d=\left[
					\begin{array}{ccc}
						d&0&0\\
						v&-d&\delta_2\\
						\delta_1&0&0\\
					\end{array}
					\right].
				\end{equation}
The copy of $\Z$ in the decomposition of $\widetilde C_\ast$ is supported in grading $0$. $\diamd$
\end{definition}
Because $(\widetilde C_\ast, \widetilde d)$ is a chain complex, $\widetilde d\circ \widetilde d=0$, which is equivalent to $d\circ d=0$ and
\begin{align}
	\delta_1\circ d &=0 \label{eq:d-tilderel1}\\
	d\circ \delta_2& =0\label{eq:d-tilderel2}\\
	d \circ v-v \circ d - \delta_2\circ \delta_1 & = 0\label{eq:d-tilderel3}
\end{align}

\begin{remark}\label{cS-coefficient}
	 More generally, if $R$ is any commutative ring, an $\cS$-complex over $R$ is defined to be a finitely generated free chain complex over $R$, with the same structure as in the definition above, replacing each instance of $\Z$ with $R$. If no ring is specified, the reader can safely assume that we are working over the integers. $\diamd$
\end{remark}

\begin{remark}\label{cS-gradings}
	In the above definition of an $\cS$-complex, the chain complex comes with a $\Z$-grading which decreases the differential by 1. However, in the sequel we will consider $\cS$-complexes graded by $\Z/2N$ for some positive integer $N$, in which case all grading subscripts in the above definition should be taken modulo $2N$. For technical reasons, an $\cS$-complex with no grading must be defined over a ring of characteristic two.
	$\diamd$
\end{remark}

\begin{definition}\label{def:morphism}
A {\emph{morphism}} $\widetilde{\lambda}: (\widetilde C_\ast, \widetilde d)\to (\widetilde C'_\ast, \widetilde d')$ of $\cS$-complexes is a degree zero chain map that may be written in the form
\begin{equation}\label{eq:tilde-map}
	\widetilde \lambda=\left[
					\begin{array}{ccc}
						\lambda &0&0\\
						\mu &\lambda &\Delta_2\\
						\Delta_1&0&1\\
					\end{array}
					\right]
\end{equation}
with respect to decompositions $\widetilde C_\ast=C_*\oplus C_{*-1}\oplus \Z$ and $\widetilde C'_\ast=C'_*\oplus C'_{*-1}\oplus \Z$. $\diamd$
\end{definition}

The condition of $\widetilde \lambda$ being a chain map is equivalent to the following relations:
\begin{align*}
	\lambda \circ d - d' \circ \lambda & = 0\\
	  \Delta_1 \circ d  + \delta_1 - \delta_1'\circ \lambda & = 0\\
	   d' \circ \Delta_2 -\delta_2' + \lambda\circ \delta_2 & = 0 \\
	    \mu \circ d +  \lambda \circ v + \Delta_2 \circ \delta_1 - v' \circ \lambda + d' \circ \mu - \delta_2'\circ \Delta_1 &= 0
\end{align*}

\begin{definition}\label{def:shomotopy}
An $\cS$-{\emph{chain homotopy}} $ (\widetilde C_\ast, \widetilde d)\to (\widetilde C'_\ast, \widetilde d')$ of $\cS$-complexes is a chain homotopy of complexes that when written with respect to decompositions $\widetilde C_\ast=C_*\oplus C_{*-1}\oplus \Z$ and $\widetilde C'_\ast=C'_*\oplus C'_{*-1}\oplus \Z$ takes the following form:
				\begin{equation*}\label{eq:tilde-chainhomotopy}
					\left[
					\begin{array}{ccc}
						K &0&0\\
						L&-K& M_2\\
						M_1&0&0\\
					\end{array}
					\right]. 
				\end{equation*}
A {\emph{chain homotopy equivalence}} of $\cS$-complexes is a pair of morphisms $f:\widetilde C_\ast\to \widetilde C'_\ast$ and $g: \widetilde C'_\ast\to\widetilde C_\ast$ of $\cS$-complexes with the property that $f\circ g $ and $g\circ f$ are $\cS$-chain homotopy equivalent to identity morphisms. $\diamd$
\end{definition}

Let $Y$ be an integer homology 3-sphere containing a knot $K$, and $(C_\ast,d) = (C_\ast(Y,K),d)$ its irreducible instanton chain complex for some choice of metric and perturbation. Choose a basepoint $p\in K$. We define $\widetilde C_\ast (Y, K) =  C_\ast (Y, K) \oplus  C_{\ast-1} (Y, K)\oplus \Z$, and $\widetilde d$ by \eqref{eq:d-tilde} using the maps from the previous subsections; note that the choices of basepoint and modified holonomy maps are required to define $v$. Then \eqref{eq:deltamaps} verifies \eqref{eq:d-tilderel1}--\eqref{eq:d-tilderel2}, and Proposition \ref{prop:vmap} gives \eqref{eq:d-tilderel3}. Thus $(\widetilde C_\ast (Y, K),\tilde d)$ is a $\Z/4$-graded $\cS$-complex. Its homology,
\[
	\widetilde I_\ast(Y,K) = H_\ast ( \widetilde C(Y,K), \widetilde d),
\]
is a $\Z/4$-graded abelian group, which we call the {\emph{framed}} instanton homology of $(Y,K)$.

\begin{remark}
	Our framed instanton chain complex should be distinguished from what is called framed instanton homology for knots in \cite{KM:YAFT}. However, the relationships established in Section \ref{sec:kmgroups} justify the overlapping use of terminology. $\diamd$
\end{remark}

\begin{remark}\label{cS-com-non-sing}
	In the non-singular set up, Donaldson introduces a similar object for any integer homology 
	sphere $Y$ called an $(\cF,\sigma)$-complex \cite{donaldson-book}. 
	This object, denoted by $(\widetilde C_*(Y),\widetilde d)$, is essentially an 
	$\cS$-complex. The main differences are that the complex $(\widetilde C_*(Y),\widetilde d)$ is defined over $\Q$,
	and $\widetilde C_*(Y)=C_*(Y)\oplus C_{*-3}(Y)\oplus \Q$, where $C_*(Y)$ is a $\Z/8$-graded complex.
	The complex $\widetilde C_*(Y)$ is defined out of a theory which is $SO(3)$-equivariant, rather than 
	$S^1$-equivariant. This is the primary reason for the appearance of a degree shift by $3$ (dimension of $SO(3)$) 
	instead of $1$ (dimension of $S^1$). $\diamd$
\end{remark}

Now suppose $(W,S):(Y,K)\to (Y',K')$ is a negative definite pair. We also assume that a properly embedded arc $\gamma$ in $S$ is fixed such that it forms a cobordism from the basepoint of $K$ to the basepoint of $K'$. We slightly abuse terminology and refer to the data of $(W,S)$ with $\gamma$ a negative definite pair, and omit $\gamma$ from our notation. Upon choosing metric, perturbation data, framings of critical points and modified holonomy maps, we define a map
\[
  \widetilde \lambda= \widetilde \lambda_{(W,S)}:\widetilde C(Y,K) \longrightarrow \widetilde C(Y',K')
\]
by the expression \eqref{eq:tilde-map}, using the maps previously defined for $(W,S)$ and $\gamma$. By virtue of Propositions \ref{prop:cobdelta}  and \ref{prop:vmaprels2}, $\widetilde \lambda$ is a morphism of $\cS$-complexes.

\begin{definition}
	Define $\cH$ to be the category whose objects are pairs $(Y,K)$ where $Y$ is an integer homology 3-sphere and $K\subset Y$ is an oriented based knot, and whose morphisms are negative definite cobordisms of pairs $(W,S):(Y,K)\to (Y',K')$ in the sense of Definition \ref{def:negdef}, equipped with an embedded arc on $S$ connecting the basepoints of $K$ and $K'$. Composition of morphisms in this category is defined by composing cobordisms. $\diamd$
\end{definition}

For any category with a notion of chain homotopy, we define the associated {\emph{homotopy category}} to be the category with the same objects, but whose morphisms are chain homotopy equivalences of morphisms. We have the following analogue of \cite[Theorem 7.11]{donaldson-book}.

\begin{theorem}\label{thm:framedcat}
	The assignments $(Y,K)\mapsto (\widetilde C(Y,K),\widetilde d)$ and $(W,S)\mapsto \widetilde \lambda_{(W,S)}$ induce a functor of categories from $\mathcal{H}$ to the homotopy category of $\Z/4$-graded $\cS$-complexes.
\end{theorem}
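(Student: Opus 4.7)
The plan is to establish the four usual functoriality properties within the framework of $\cS$-complexes: well-definedness of $(\widetilde C(Y,K),\widetilde d)$ up to $\cS$-chain homotopy equivalence, the chain-map property for $\widetilde\lambda_{(W,S)}$, identity on product cobordisms, and composition. The chain-map property was already verified in Propositions~\ref{prop:cobdelta} and \ref{prop:vmaprels2} together with Proposition~\ref{prop:vmap}, so what remains is to promote these into statements about chain-homotopy classes in the category of $\cS$-complexes.

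For the identity, I would show that with a product metric, a translation-invariant perturbation, and the straight arc $I\times\{p\}$, every moduli space $M(I\times Y, I\times K;\alpha,\alpha')_0$ reduces to constant trajectories, while the moduli spaces underlying $\mu,\Delta_1,\Delta_2$ are empty for dimensional/translation-action reasons. Combined with our orientation conventions and the canonical homology orientation on a cylinder, this yields $\widetilde\lambda_{(I\times Y,I\times K)}=\mathrm{Id}$ as a matrix. For the well-definedness of $(\widetilde C(Y,K),\widetilde d)$, I would interpolate any two admissible choices $(g_0,\pi_0)$ and $(g_1,\pi_1)$ by a 1-parameter family of cylinder data, interpret this as a morphism $\widetilde\lambda^{01}$ between the two $\cS$-complexes, run it in reverse to get $\widetilde\lambda^{10}$, and apply the identity and composition statements to conclude that $\widetilde\lambda^{10}\circ\widetilde\lambda^{01}$ is $\cS$-chain homotopic to the identity.

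For the composition law, stretch $(W,S)=(W_2\circ W_1,S_2\circ S_1)$ along the middle 3-manifold $(Y',K')$ to obtain a 1-parameter family of metrics interpolating between the composite auxiliary data and the degenerated glued data. Counting ends of the associated parametrized moduli spaces (respectively the preimages of a generic $h\in S^1$ for the $v$- and $\mu$-components) produces the four components $K,L,M_1,M_2$ of an $\cS$-chain homotopy between $\widetilde\lambda_{(W_2,S_2)}\circ\widetilde\lambda_{(W_1,S_1)}$ and $\widetilde\lambda_{(W,S)}$. Each of the four relations defining an $\cS$-chain homotopy arises by exactly the argument used in Propositions~\ref{prop:cobdelta}, \ref{prop:vmap}, and \ref{prop:vmaprels2}: the new broken trajectory that appears is either a factorization through an irreducible critical point of the end 3-manifolds or through the reducible on one of the $(W_i,S_i)$, the latter using standard gluing near the unique unobstructed flat reducible guaranteed by the negative definite pair hypothesis.

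The main obstacle is handling the reducible $\theta$ consistently in these 1-parameter families. For the composition law one must ensure that along the entire stretching family, and its extension to the glued limit, the reducible on each piece remains isolated and unobstructed so that the gluing theory of Subsection~\ref{subsec:red} applies uniformly in the parameter; this is precisely why we restrict to negative definite pairs, where Proposition~\ref{reg-dbl-cover} and the index formula of Lemma~\ref{index-triv-limit} give a unique regular flat reducible of index $-1$, a property preserved by small generic perturbations in the Banach space $\sP'$ that fixes $\theta$. A secondary technical point is that the modified holonomy maps $H$ governing the $v$- and $\mu$-entries must be chosen coherently across the family so that properties \textup{(H1)}--\textup{(H3)} hold throughout and the expected cancellations occur at the reducible-breaking strata; this is the $S^1$ analogue of Donaldson's treatment in \cite[Section 7.3]{donaldson-book}, with $SO(3)$ replaced by the $U(1)$ structure group coming from the reduction along the singular locus, and it is carried out in the same manner once one verifies that the codimension-one ends involving $\theta$ are still modeled on $\breve{M}(\cdot,\theta)_0\times S^1\times\breve{M}(\theta,\cdot)_0$.
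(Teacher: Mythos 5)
Your proposal is correct and follows essentially the same line of argument the paper takes: the chain-map property is exactly the content of Propositions~\ref{prop:cobdelta}, \ref{prop:vmap}, and \ref{prop:vmaprels2}, and the remaining three functoriality properties (invariance, identity, composition) are established by the standard neck-stretching and $G$-instanton moduli arguments, following Donaldson, with the decisive point being that the negative definite pair hypothesis keeps $\theta$ the unique flat reducible, isolated, unobstructed, and of index $-1$ throughout the one-parameter families. The paper does not write out a formal proof-environment for Theorem~\ref{thm:framedcat}; it points to the same Propositions, the same remark about the reducible remaining regular in $G$-instanton moduli, and the same analogue of Donaldson's treatment, so your outline reproduces the intended argument.
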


\begin{proof}
	Let  $(W,S):(Y,K)\to (Y',K')$ be a negative definite pair with an embedded arc $\gamma$ on $S$ connecting the basepoints. 
	Fix the required auxiliary choices (Riemannian metric, perturbation and modified holonomy maps) and let 
	$\widetilde C(Y,K)$ and $\widetilde C(Y',K')$ be the $\cS$-complexes associated to $(Y,K)$ and $(Y',K')$. As explained above, we obtain a  morphism
	\[
	  \widetilde \lambda_{(W,S)}:\widetilde C(Y,K) \longrightarrow \widetilde C(Y',K')
	\]
	after fixing a metric, perturbation data and modified holonomy maps for $(W,S)$ compatible with the auxiliary data of $(Y,K)$ and $(Y',K')$.
	For any two sets of auxiliary choices for $(W,S)$, a standard argument  shows that the resulting morphism differs from 
	$\widetilde\lambda$ by an $\cS$-chain homotopy of $\cS$-complexes. To be a bit more specific, one first connects the two collections of auxiliary choices for $(W,S)$
	using a 1-parameter family of orbifold metrics on $W$, perturbation data and modified holonomy maps. Then the associated family moduli spaces can be used to define the 
	required $\cS$-chain homotopy. In fact, one can use a similar proof to show that homotoping $\gamma$ changes $\widetilde \lambda_{(W,S)}$ by an 
	$\cS$-chain homotopy. We may also check that the morphism associated to a composition of negative definite pairs is $\cS$-chain homotopic to the composition of the morphisms associated to the cobordism pairs in the same way as in 
	\eqref{eq:compositemap}.
	
	Using another standard argument involving continuation maps we see that the $\cS$-chain homotopy type of the $\cS$-complex $\widetilde C(Y,K)$ of $(Y,K)$
	does not depend on the orbifold Riemannian metric on $Y$, perturbation of the Chern--Simons functional and modified holonomy maps. This uses connectedness of the spaces of these auxiliary choices (see Proposition 
	\ref{cons-cont-almost-hom} for the relevant result for modified holonomy maps). Technically, the assignment $(Y,K)\mapsto (\widetilde C(Y,K),\widetilde d)$ only determines an object in the homotopy category up to canonical 
	isomorphism. However, this is remedied as follows. 
	
	Let $\mathsf{C}$ be a category of modules closed under taking arbitrary products and submodules. A {\emph{transitive system}} in $\mathsf{C}$ is the data $(C,\phi, I)$ where $C=\{ C^i \}_{i\in I}$ is a set of objects in $\mathsf{C}$ 
	and $\phi=\{\phi_i^j:C^i\to C^j\}_{i,j\in I}$ is a set of isomorphisms such that $\phi_i^i=\text{id}$ and $\phi_k^j \circ \phi_i^k =\phi_i^j$. A {\emph{morphism}} of transitive systems $(C,\phi,I)\to (D,\psi,J)$ is a collection of morphisms 
	$\{\lambda_i^j:C^i\to D^j\}_{i\in I, j\in J}$ such that $\psi_k^l \lambda_{i}^k=\lambda_j^l\phi_i^j$ for $i,j\in I$ and $k,l\in J$. Transitive systems form a category $\mathsf{C}_\text{Trans}$. 
	There is a functor $\mathsf{C}_\text{Trans} \to\mathsf{C}$ that sends $(C,\phi,I)$ to the submodule of $\prod_{i\in I} C^i$ consisting of $\{c_i\}_{i\in I}$ with $c_i\in C^i$ and $c_j = \phi_i^j (c_i)$ for all $i,j\in I$.
	
	Thus to a based knot $(Y,K)$ we actually assign a transitive system of $\Z/4$-graded $\cS$-complexes in the homotopy category, indexed by admissible  metric, perturbation and modified holonomy maps data, 
	and to this transistive system we may then assign a $\Z/4$-graded $\cS$-complex in the homotopy category as described in the previous paragraph. Similar remarks hold for the morphisms, and this is precisely how the functor 
	in Theorem \ref{thm:framedcat} is defined. The situation is essentially the same as in any other construction of Floer homology, see e.g. \cite[p. 453]{km:monopole}.
\end{proof}

\begin{remark}
	The isomorphism class of $\widetilde I(Y,K)$ does not depend on the basepoint on $K$. Indeed, the identity cobordism of $(Y,K)$ with an arbitrary path from one choice of the basepoint on $K$ to another choice induces 
	an isomorphism between the Floer homology groups $\widetilde I(Y,K)$ for different choices of basepoints. $\diamd$
\end{remark}

\newpage

%!TEX root = main.tex

\section{The algebra of $\cS$-complexes}\label{sec:equivtheories}

The goal of this section is to further develop the algebraic aspects of $\cS$-complexes. In particular, we associate various equivariant homology theories to an $\cS$-complex and discuss how they give rise to Fr\o yshov-type invariants. We also study the behavior of $\cS$-complexes with respect to taking duals and tensor products. Although we work over $\Z$ throughout this section, all of the constructions carry over for any commutative ring $R$.

We also introduce the set $\Theta_R^\cS$ of local equivalence classes of $\cS$-complexes over any commutative ring $R$, following \cite{stoffregen}. This set has the structure of a partially ordered abelian group. When $R$ is an integral domain, the Fr\o yshov invariant $h$ may be viewed as a homomorphism $h:\Theta_R^\cS \to \Z$ of partially ordered abelian groups. 

The constructions here are applied to the setting of singular instanton Floer theory in the next section. However, the material in this section is entirely algebraic, and much of it fits into the framework of $S^1$-equivariant algebraic topology. In particular, the equivariant chain complexes we consider are particular models of the borel, co-borel, and Tate homology theories; see e.g. the discussions in \cite{manolescu:pin2, stoffregen, miller}. Although much of the material is standard in some circles, we include it here for completeness.

\subsection{An equivalent formulation of $\cS$-complexes}

In this subsection, we first give another definition of $\cS$-complexes:
\begin{definition}\label{cs-somplex-2}
	An $\cS$-complex is a finitely generated free abelian graded group $\widetilde C_*$ together with homomorphisms $\widetilde d:\widetilde C_*\to \widetilde C_{*-1}$ and $\chi: \widetilde C_*\to \widetilde C_{*+1}$ which respectively have degree $-1$ and $1$, and which satisfy the following properties:
	\begin{itemize}
		\item[(i)] $\widetilde d \circ \widetilde d=0$, $\chi \circ \chi=0$ and $\chi \circ \widetilde d+\widetilde d \circ \chi=0$.
		\item[(ii)]There is a subgroup $\Z$ of $\widetilde C_0$ such that ${\rm ker}(\chi)$ is equal to ${\rm image}(\chi)\oplus \Z$. 
	\end{itemize}
\end{definition}

\begin{remark}
	Remarks \ref{cS-coefficient} and \ref{cS-gradings} about coefficient rings and gradings for $\cS$-complexes still apply here.  The algebraic results in this section will hold for $\Z/2N$-graded $\cS$-complexes over any commutative ring, and $\cS$-complexes with arbitrary grading $\Z/N$ (in particular no grading) over any commutative ring of characteristic two. However, for concreteness we will typically work with $\Z$-graded $\cS$-complexes over $\Z$. $\diamd$
\end{remark}

This definition of $\cS$-complexes essentially agrees with the definition from the previous section. Let $C_*$ be ${\rm image}(\chi)$ after shifting down the degree by $1$ and $d:=-\widetilde d\vert_{C_*}$. The identities in (i) imply that $d$ is an endomorphism of $C_*$ and defines a differential on $C_*$. The assumption (ii) implies that $\widetilde C_*$ fits into a short exact sequence with degree preserving maps:
\begin{equation}\label{exact-seq}
  0\longrightarrow{}C_{*-1}\oplus \Z\xhookrightarrow{\hspace{.4cm}}\widetilde C_*\xrightarrow{\;\;\chi\;\;}C_{*}\longrightarrow{}0
\end{equation}
By splitting this exact sequence, we have an identification of $\widetilde C_*$ with $C_*\oplus C_{*-1}\oplus \Z$, with respect to which $\chi$ has the following form:
\[
  \chi(\alpha,\beta,r)=(0,\alpha,0).
\]
Since the map $\chi$ anti-commutes with $\widetilde d$, and $\widetilde d$ has degree $-1$, it is easy to see that $\widetilde d$ has the form given in \eqref{eq:d-tilde}. 

The notions of morphism and homotopy of $\cS$-complexes can be also reformulated using the new definition of $\cS$-complexes. Suppose $(\widetilde C_*,\widetilde d,\chi)$ and $(\widetilde C_*',\widetilde d',\chi')$ are  $\cS$-complexes and $\widetilde \lambda:\widetilde C_*\to \widetilde C_*'$ is a degree $0$ homomorphism of abelian groups such that $\widetilde \lambda\circ \widetilde d=\widetilde d'\circ \widetilde \lambda$ and $\widetilde \lambda\circ \chi=\chi'\circ \widetilde \lambda$. Once we split $\widetilde C_*$ and $\widetilde C_*'$ as $C_*\oplus C_{*-1}\oplus \Z$ and $C_*'\oplus C_{*-1}'\oplus \Z$ following the above strategy, the map $\widetilde \lambda$ has the form
\begin{equation*}
	\widetilde \lambda=\left[
					\begin{array}{ccc}
						\lambda &0&0\\
						\mu &\lambda &\Delta_2\\
						\Delta_1&0&c\\
					\end{array}
					\right]
\end{equation*}
for a constant $c\in \Z$. Thus $\widetilde \lambda$ is a morphism of $\cS$-complexes if we require that $c=1$. If $\widetilde \lambda':\widetilde C_*\to \widetilde C_*'$ is another morphism of $\cS$-complexes, an $\cS$-chain homotopy of  $\widetilde \lambda$ and $\widetilde \lambda'$ is a degree $1$ map $\widetilde h$ such that $\widetilde d' \circ \widetilde h+\widetilde h \circ \widetilde d=\widetilde \lambda-\widetilde \lambda'$ and $\chi' \circ h+h \circ \chi=0$.

\begin{remark}
	Suppose we have two different 
	splittings of $\widetilde C_*$. Then the identity map 
	of $\widetilde C_*$ induces a map between the two splittings which have the form in \eqref{eq:tilde-map}
	and is a morphism in the sense of Definition \ref{def:morphism}. Moreover, this morphism defines 
	an $\cS$-chain homotopy equivalence. This shows that the $\cS$-chain homotopy equivalences of $\cS$-complexes 
	with respect to Definitions \ref{S-comp-first} and \ref{cs-somplex-2} coincide with one another. 
	This justifies our switching between Definitions \ref{S-comp-first} and \ref{cs-somplex-2}. $\diamd$
\end{remark}

\begin{remark}
	Let $X$ be a finite CW complex on which $S^1$ acts cellularly, and freely away from a unique fixed point, a 0-cell $e^0$. The CW chain complex of $S^1$ is a differential graded algebra isomorphic to $\Z[\chi]/(\chi^2)$ with trivial differential. This dg-algebra acts on $(\widetilde C_\ast, \widetilde d)$, the CW chain complex of $X$, making it into a dg-module over $\Z[\chi]/(\chi^2)$. Then $(\widetilde C_{\ast}, \widetilde d, \chi)$ is an $\cS$-complex, with $\Z\subset \widetilde C$ generated by the fixed point $e^0$. $\diamd$
\end{remark}

In what follows, we freely switch between the two equivalent formulations of $\cS$-complexes. If we wish to use Definition \ref{cs-somplex-2}, we also assume that a splitting of the sequence \eqref{exact-seq} has been chosen. This allows us to obtain a splitting of $\widetilde C_*$ as $C_*\oplus C_{*-1}\oplus \Z$ and hence we can talk about the maps $v:C_*\to C_{*-2}$, $\delta_1:C_1\to \Z$, $\delta_2:\Z \to C_{-2}$. We denote a typical element of $\widetilde C_*$ by $\zeta$. Typical elements of the summand $C_*$ of $\widetilde C_*$ are denoted by $\alpha$, $\beta$ and the corresponding elements in the summand $C_{*-1}$ are denoted by $\underline \alpha$, $\underline \beta$.

\subsection{Equivariant homology theories associated to $\cS$-complexes}\label{equiv-model-1}

Suppose $(\widetilde C_*,\widetilde d, \chi)$ is as above.
In what follows, we will write $\Z[x]$ for the ring of polynomials with integer coefficients. Let also $\Z[\![x^{-1},x]$ be the ring of Laurent power series in the variable $x^{-1}$. That is to say, any element of $\Z[\![x^{-1},x]$ has finitely many terms with positive powers of $x$ and possibly infinitely many terms with negative powers of $x$. We shall regard $\Z[\![x^{-1},x]$ as a module over the ring $\Z[x]$.

We associate chain complexes $(\hrC_*,\widehat d)$, $(\crC_*,\widecheck d)$ to $(\widetilde C_*,\widetilde d,\chi)$ defined as follows:
\begin{align*}
   \hrC_* & :=\Z[x]\otimes \widetilde C_*\hspace{1cm}  & \widehat d(x^i \cdot \zeta) & =-x^i \cdot \widetilde d\zeta+x^{i+1}\cdot \chi(\zeta)\\
   \crC_*  & :=(\Z[\![x^{-1},x]/\Z[x])\otimes \widetilde C_*\hspace{1cm}  & \widecheck d(x^{i} \cdot \zeta) & = \phantom{-}x^{i} \cdot
  \widetilde d\zeta-x^{i+1}\cdot \chi(\zeta)
\end{align*}
We may define a chain map $j:\crC_{*}\to\hrC_{*-1}$ as follows:
\[
  j(x^k\xi)=\left\{
  \begin{array}{ll}
  	-\chi(\xi)&k=-1\\
	0&k<-1
  \end{array}
  \right.
\]
We can then define the mapping cone of $j$, which takes the following form:
\[
  \brC_*:=\widetilde C_*\otimes \Z[\![x^{-1},x]\hspace{1cm}\overline d(x^{i} \cdot \zeta)=-x^{i} \cdot \widetilde d\zeta+x^{i+1}\cdot \chi(\zeta)
\]
These complexes inherit $\Z$-gradings as follows: we declare that $x$ has grading $-2$, and use the natural tensor product gradings. In particular, if $\zeta\in \widetilde C_i$, then $x^j\cdot \zeta$ has grading $-2j+i$. With this convention, the differentials on the three complexes defined above have degree $-1$. By definition, we have a triangle of chain maps between chain complexes which induces an exact triangle at the level of homology:
\begin{equation}\label{equiv-triangle}
	\xymatrix{
	\crC_* \ar[rr]^{j}& &
	 \hrC_* \ar[dl]^{i}\\
	& \brC_* \ar[ul]^{p} &}
\end{equation}
Here $i$ is the inclusion map and $p$ is given by the composition of the projection map and the sign map $\epsilon$ associated to the graded vector space $\crC_*$:

\begin{definition}\label{sign-hom}
	For a $\Z$-graded vector space $V_*$, the sign homomorphism $\epsilon:V_*\to V_*$
	is defined by $\epsilon(a)=(-1)^{k}a$ where $a$ is a homogeneous element of $V_*$ with grading $k$. $\diamd$
\end{definition}

While $i$ and $p$ in \eqref{equiv-triangle} preserve gradings, $j$ has degree $-1$. It is clear from the definitions that all chain complexes here are defined over the graded ring $\Z[x]$ and all chain maps are $\Z[x]$-module homomorphisms, up to homotopy. We call \eqref{equiv-triangle} the {\it large equivariant triangle} associated to the $\cS$-complex $(\widetilde C_*,\widetilde d)$.

There is another exact triangle associated to an $\cS$-complex $\widetilde C_*$. Multiplication by $x$ defines an injective chain map from $(\hrC_*,\widehat d)$ to itself and the quotient complex is isomorphic to $(\widetilde C_*,\widetilde d)$. In particular, we have a triangle of the following form which induces an exact triangle at the level of homology:
\begin{equation}\label{equiv-triangle-3}
	\xymatrix{
	 \widetilde C_* \ar[rr]^{z}& &
	 \hrC_* \ar[dl]^{x}\\
	& \hrC_* \ar[ul]^{y} &}
\end{equation}
Here $x$ denotes the map given by multiplication by $x\in \Z[x]$, $y$ is the composition of the projection to the constant term and the sign map, and $z$ is given by $\chi$. In particular, $x$, $y$ and $z$ have respective degrees $-2$, $0$ and $1$.

\begin{prop}\label{functoriality}
	For any morphism $\widetilde \lambda:\widetilde C_* \to \widetilde C_*'$ of $\cS$-complexes, there are maps
	$\widecheck \lambda:\crC_*\to \crC_*'$, $\widehat \lambda:\hrC_*\to \hrC_*'$ and $\widebar \lambda:\brC_*\to \brC_*'$
	which satisfy the following properties:
	\begin{itemize}
		\item[(i)] $\widecheck \lambda$, $\widehat \lambda$ and $\widebar \lambda$ are chain maps over $\Z[x]$, and preserve gradings.
		\item[(ii)] If $\widetilde K$ is an $\cS$-chain homotopy of morphisms $\widetilde \lambda,\,\widetilde \lambda':\widetilde C_* \to \widetilde C_*'$,
		there are $\Z[x]$-module homomorphisms $\widecheck K:\crC_*\to \crC_*$, $\widehat K:\hrC_*\to \hrC_*'$, $\widebar K:\brC_*\to \brC_*'$ such that:
		\begin{align*}
		  \widecheck \lambda'-\widecheck \lambda & =\widecheck K\circ \widecheck d+\widecheck d\circ \widecheck K\\
		  \widehat \lambda'-\widehat \lambda & =\widehat K\circ \widehat d+\widehat d\circ \widehat K\\
		  \widebar \lambda'-\widebar \lambda & =\widebar K\circ \widebar d+\widebar d\circ \widebar K
		\end{align*}
		\item[(iii)] If  $\widetilde \lambda:\widetilde C_* \to \widetilde C_*'$ and $\widetilde \lambda':\widetilde C_*' \to \widetilde C_*''$ are 
		morphisms and $\widetilde {\lambda'\circ \lambda}:\widetilde C_* \to \widetilde C_*''$ is the composed morphism,
		then the following identities hold:
		\[
		  \widecheck{\lambda'\circ \lambda}=\widecheck \lambda'\circ \widecheck \lambda,\hspace{1cm}
		  \widehat{\lambda'\circ \lambda}=\widehat \lambda'\circ \widehat \lambda,\hspace{1cm}
		  \widebar{\lambda'\circ \lambda}=\widebar \lambda'\circ \widebar \lambda.
		\]
		If $\widetilde {\rm id}:\widetilde C_* \to \widetilde C_*$ is the identity morphism, then $\widehat {\rm id}$, $\widecheck {\rm id}$ and
		$\widebar {\rm id}$ are also identity maps.
	\end{itemize}
\end{prop}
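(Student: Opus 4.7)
The plan is to define all three equivariant maps simultaneously by tensoring the morphism $\widetilde{\lambda}$ with the identity on the appropriate polynomial or Laurent module. Concretely, I would set
\[
\widehat{\lambda}(x^i \cdot \zeta) := x^i \cdot \widetilde{\lambda}(\zeta), \qquad \widecheck{\lambda}(x^i \cdot \zeta) := x^i \cdot \widetilde{\lambda}(\zeta), \qquad \widebar{\lambda}(x^i \cdot \zeta) := x^i \cdot \widetilde{\lambda}(\zeta),
\]
where $x^i$ ranges over the obvious basis of $\Z[x]$, $\Z[\![x^{-1},x]/\Z[x]$, or $\Z[\![x^{-1},x]$ respectively. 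Each is manifestly $\Z[x]$-linear and preserves the tensor-product grading, since $\widetilde{\lambda}$ has degree zero.

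Next I would verify that each is a chain map. This is a direct computation using only the two defining relations of a morphism of $\cS$-complexes, namely $\widetilde{\lambda}\circ \widetilde d = \widetilde d'\circ \widetilde{\lambda}$ and $\widetilde{\lambda}\circ \chi = \chi'\circ \widetilde{\lambda}$. For example, for $\widehat{\lambda}$,
\[
\widehat d' \widehat{\lambda}(x^i\zeta) = -x^i \widetilde d' \widetilde{\lambda}(\zeta) + x^{i+1}\chi'\widetilde{\lambda}(\zeta) = -x^i \widetilde{\lambda}\widetilde d(\zeta) + x^{i+1}\widetilde{\lambda}\chi(\zeta) = \widehat{\lambda}\widehat d(x^i\zeta).
\]
The computations for $\widecheck{\lambda}$ and $\widebar{\lambda}$ are identical, the opposite sign in the definition of $\widecheck d$ presenting no difficulty. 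This establishes (i).

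For (ii), I would set $\widehat K := -\mathrm{id}\otimes \widetilde K$, and similarly for $\widecheck K$ and $\widebar K$, with the overall sign chosen to match the sign convention in the statement. The key observation is that an $\cS$-chain homotopy $\widetilde K$ automatically anti-commutes with $\chi$, that is $\chi'\widetilde K + \widetilde K\chi = 0$, as is immediate from its prescribed block form. Consequently
\[
\widehat d'\widehat K(x^i\zeta) + \widehat K \widehat d(x^i\zeta) = -x^i(\widetilde d'\widetilde K + \widetilde K\widetilde d)(\zeta) + x^{i+1}(\chi'\widetilde K + \widetilde K\chi)(\zeta),
\]
and the second summand vanishes, reducing the desired identity to the $\cS$-chain homotopy relation $\widetilde d'\widetilde K + \widetilde K\widetilde d = \widetilde{\lambda}' - \widetilde{\lambda}$. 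The verifications for $\widecheck K$ and $\widebar K$ proceed identically.

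Finally, (iii) is immediate from the construction, since tensoring with an identity preserves both composition and identity morphisms on the nose. Throughout, the only real point to keep track of is the sign bookkeeping; the only substantive algebraic input is the anti-commutation $\chi'\widetilde K + \widetilde K\chi = 0$ for an $\cS$-chain homotopy, which is forced by the prescribed block shape of $\widetilde K$.
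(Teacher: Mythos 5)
Your proposal matches the paper's proof, which also defines all three maps by tensoring $\widetilde\lambda$ with the identity on $\Z[x]$, $\Z[\![x^{-1},x]/\Z[x]$, or $\Z[\![x^{-1},x]$ respectively, and then notes that the verification is straightforward; you have simply written out the verification that the paper leaves implicit. The only point worth flagging: the correct sign for $\widehat{K}$ and $\widebar{K}$ is opposite to the one for $\widecheck{K}$, because $\widehat{d}$ and $\widebar{d}$ put a minus sign on $\widetilde{d}$ whereas $\widecheck{d}$ does not. With the paper's convention $\widetilde d'\widetilde K + \widetilde K\widetilde d = \widetilde\lambda - \widetilde\lambda'$, one needs $\widehat{K}=+\mathrm{id}\otimes\widetilde{K}=\widebar{K}$ but $\widecheck{K}=-\mathrm{id}\otimes\widetilde{K}$, so the phrase ``and similarly for $\widecheck{K}$ and $\widebar{K}$'' should not be read as using the same sign for all three. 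Since you hedged with ``with the overall sign chosen to match,'' this is a presentation wrinkle rather than a gap.
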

\begin{proof}
	Given a morphism $\widetilde \lambda:\widetilde C_* \to \widetilde C_*'$ of $\cS$-complexes and an element $x^i\cdot \zeta \in \hrC_*$, we define $\widehat \lambda(x^i\cdot \zeta)=x^i\cdot  \widetilde \lambda(\zeta)\in \hrC_*'$. The maps $\widecheck \lambda$ and $\widebar \lambda$ are defined similarly. It is straightforward to check that these maps satisfy the required properties.
\end{proof}

\subsection{Small equivariant complexes and the $h$-invariant} \label{small-model}

Suppose $(\widetilde C_*,\widetilde d,\chi)$ is an $\cS$-complex as above. We introduce two other chain complexes over $\Z[x]$, denoted by $(\fhrC_*,\widehat\fd)$ and $(\fcrC_*,\widecheck \fd)$. Essentially the same complexes are defined in \cite{donaldson-book}. It is also shown in \cite{AD:CS-Th} that homology of these chain complexes and $\Z[\![x^{-1},x]$ form an exact triangle. In this subsection, we review these constructions and show that this information is equivalent to the triangle \eqref{equiv-triangle} up to homotopy. 

The chain complexes $(\fhrC_*,\widehat\fd)$ and $(\fcrC_*,\widecheck \fd)$ are given as follows:
\begin{align*}
  \fhrC_* & :=C_{*-1}\oplus \Z[x]  & \widehat \fd(\alpha,\sum_{i=0}^Na_ix^i) & =(d\alpha-\sum_{i=0}^{N}v^i\delta_2(a_i),0)\\
  \fcrC_* & :=C_*\oplus (\Z[\![x^{-1},x]/\Z[x])  & \widecheck \fd(\alpha,\sum_{i=-\infty}^{-1}a_ix^i)  & =(d\alpha,
  \sum_{i=-\infty}^{-1}\delta_1v^{-i-1}(\alpha)x^i)
\end{align*}
The $\Z$-grading on $\fhrC_*$ is given by the shifted grading on $C_*$ and the grading on $\Z[x]$ where $x^i$ has grading $-2i$. The $\Z$-grading on $\fcrC_*$ is defined similarly, except that we do not shift the grading on $C_*$. The $\Z[x]$-module structures on these complexes are defined as follows:
\begin{align*}
	 x\cdot (\alpha,\sum_{i=0}^Na_ix^i) &=(v\alpha,\delta_1(\alpha)+\sum_{i=0}^{N}a_ix^{i+1})&\forall(\alpha,\sum_{i=0}^Na_ix^i)\in \fhrC_*\\
	 x\cdot (\alpha,\sum_{i=-\infty}^{-1}a_ix^i) &=(v\alpha+\delta_2(a_{-1}),\sum_{i=-\infty}^{-2}a_ix^{i+1})&\forall(\alpha,\sum_{i=-\infty}^{-1}a_ix^i)\in  \fcrC_*
\end{align*}
We also define $\fbrC_*=\Z[\![x^{-1},x]$ with the trivial differential and the obvious module structure, again such that $x^i$ has grading $-2i$.

Next, we define chain maps:
\begin{equation}\label{equiv-triangle-2}
	\dots \xrightarrow{\hspace{5pt}\fp\hspace{5pt}} \fcrC_*\xrightarrow{\hspace{5pt}\fj\hspace{5pt}}\fhrC_*\xrightarrow{\hspace{5pt}\mathfrak i\hspace{5pt}}\fbrC_*\xrightarrow{\hspace{5pt}\mathfrak \fp\hspace{5pt}}  
	\fcrC_* \xrightarrow{\hspace{5pt}\fj\hspace{5pt}}\dots
\end{equation}
by the following formulas:
\begin{align*}
	  \mathfrak i(\alpha,\sum_{i=0}^{N}a_ix^i) &=\sum_{i=-\infty}^{-1}\delta_1v^{-i-1}(\alpha)x^{i}+\sum_{i=0}^{N}a_ix^i\\
	 \fj(\alpha,\sum_{i=-\infty}^{-1}a_ix^{i}) &=(-\alpha,0), \\
	 \fp(\sum_{i=-\infty}^{N}a_ix^i) &=(\sum_{i=0}^{N}v^{i}\delta_2(a_i),\sum_{i=-\infty}^{-1}a_ix^{i}),
\end{align*}
We call \eqref{equiv-triangle-2} the {\it small equivariant triangle} associated to the $\cS$-complex $(\widetilde C_*,\widetilde d)$. It is shown in \cite[Subsection 2.3]{AD:CS-Th} that the maps $\mathfrak i$ and $\fp$ are $\Z[x]$-module homomorphisms and $\fj$ commutes with the action of $x$ up to chain homotopy. Moreover, the maps at the level of homology groups induced by $\mathfrak i$, $\fj$ and $\fp$ form an exact triangle:
\begin{equation*}\label{hom-triangle-top}
	\xymatrix{
	H(\fcrC_*,\widecheck d) \ar[rr]^{\fj_*}& &
	H (\fhrC_* ,\widehat d)\ar[dl]^{\mathfrak i_*}\\
	& \fbrC_* \ar[ul]^{\fp_*} &
	}
\end{equation*}
We now show that the small and large equivariant chain complexes, and their associated triangles, are chain homotopy equivalent over $\Z[x]$. To this end, we define linear maps $\widehat \Phi:\hrC_* \to \fhrC_*$, $\widecheck \Phi:\crC_* \to \fcrC_*$ and $\widebar \Phi:\brC_* \to \fbrC_*$ as follows:
\begin{align*}
	\widehat \Phi(\sum_{i=0}^N\alpha_ix^i,\sum_{i=0}^N\beta_ix^i,\sum_{i=0}^Na_ix^i) &:=(\sum_{i=0}^Nv^i(\beta_i),\sum_{i=0}^Na_ix^i+\sum_{i=1}^N\sum_{j=0}^{i-1}\delta_1v^j(\beta_i)x^{i-j-1})\\
	 \widecheck \Phi(\sum_{i=-\infty}^{-1}\alpha_ix^i,\sum_{i=-\infty}^{-1}\beta_ix^i,\sum_{i=-\infty}^{-1}a_ix^i) &:=(\alpha_{-1},\sum_{i=-\infty}^{-1}a_ix^i+\sum_{i=-\infty}^{-1}\sum_{j=0}^{\infty}\delta_1v^j(\beta_i)x^{i-j-1})\\
	 \widebar \Phi(\sum_{i=-\infty}^{N}\alpha_ix^i,\sum_{i=-\infty}^{N}\beta_ix^i,\sum_{i=-\infty}^{N}a_ix^i) &:=\sum_{i=-\infty}^Na_ix^i+\sum_{i=-\infty}^{N}\sum_{j=0}^{\infty}\delta_1v^j(\beta_i)x^{i-j-1}
\end{align*}
We also define linear maps $\widehat \Psi:\fhrC_* \to \hrC_*$, $\widecheck \Psi:\fcrC_* \to \crC_*$ and $\widebar \Psi:\fbrC_* \to \brC_*$:
\begin{align*}
	 \widehat \Psi(\alpha,\sum_{i=0}^Na_ix^i) &:=(\sum_{i=1}^N\sum_{j=0}^{i-1}v^j\delta_2(a_i)x^{i-j-1},\alpha,\sum_{i=0}^Na_ix^i)\\
	\widecheck \Psi(\alpha,\sum_{i=-\infty}^{-1}a_ix^i) &:=(\sum_{i=-\infty}^{-1}v^{-i-1}(\alpha)x^i+\sum_{i=-\infty}^{-1}\sum_{j=0}^{\infty}v^j\delta_2(a_i)x^{i-j-1},0,\sum_{i=-\infty}^{-1}a_ix^i)\\
	\widebar \Psi(\sum_{i=-\infty}^{N}a_ix^i) &:=(\sum_{i=-\infty}^{N}\sum_{j=0}^{\infty}v^j\delta_2(a_i)x^{i-j-1},0,\sum_{i=-\infty}^{N}a_ix^i).
\end{align*}
The following lemma summarizes the properties of these maps:
\begin{lemma}\label{equiv-2-models}
	The above maps are chain maps, which commute with the action of $x$ 
	and the maps in the triangles \eqref{equiv-triangle} and \eqref{equiv-triangle-2} up to chain homotopy. We have:
	\[\widehat \Phi\circ \widehat \Psi={\rm id},\hspace{1cm}\widecheck \Phi\circ \widecheck \Psi={\rm id},\hspace{1cm}\widebar \Phi\circ \widebar \Psi={\rm id}.\]
	Moreover, the compositions $\widehat \Psi\circ \widehat \Phi$, $\widecheck \Psi\circ \widecheck \Phi$ and $\widebar \Psi\circ \widebar \Phi$ are chain homotopic to the identity.
\end{lemma}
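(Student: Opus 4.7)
The verification consists of explicit calculations based on the structural relations of an $\cS$-complex: $d^2=0$, $\delta_1 d=0$, $d\delta_2=0$, and $dv-vd=\delta_2\delta_1$. The key combinatorial consequence, proved by induction on $k$, is the identity
\begin{equation*}
	dv^k - v^k d \;=\; \sum_{j=0}^{k-1} v^j \delta_2 \delta_1 v^{k-j-1} \qquad (k\geq 1),
\end{equation*}
together with its two corollaries $\delta_1 v^k d = -\sum_{j=0}^{k-1} \delta_1 v^j \delta_2\delta_1 v^{k-j-1}$ and $dv^k\delta_2 = \sum_{j=0}^{k-1} v^j\delta_2\delta_1 v^{k-j-1}\delta_2$, obtained by composing the displayed identity with $\delta_1$ and $\delta_2$ respectively. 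These telescoping identities are the combinatorial engine behind the whole proof.

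First I would verify that all six maps are $\Z[x]$-linear chain maps. To check that $\widehat\Phi$ is a chain map, for instance, one expands both $\widehat\fd\circ\widehat\Phi$ and $\widehat\Phi\circ\widehat d$ on a generator $(\alpha_i x^i,\underline\beta_i x^i,a_i x^i)$ using the matrix form of $\widetilde d$; the resulting expressions agree after invoking the structural relations above. Compatibility with the $x$-action is verified by a careful reindexing argument: the terms shifted by $x$ in $\widehat\Phi(x\cdot\zeta)$ reorganize so that a constant term is absorbed into the $\delta_1$-contribution produced by the $x$-action on $\fhrC_\ast$. The analogous verifications for $\widecheck\Phi,\widebar\Phi$ and for the three $\Psi$-maps follow the same pattern. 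The identities $\widehat\Phi\circ\widehat\Psi=\mathrm{id}$, $\widecheck\Phi\circ\widecheck\Psi=\mathrm{id}$ and $\widebar\Phi\circ\widebar\Psi=\mathrm{id}$ then hold on the nose, and this is the easiest step: $\widehat\Psi$ embeds $\fhrC_\ast$ into $\hrC_\ast$ with the $\underline C$-coordinate supported in $x$-degree zero, and $\widehat\Phi$ then reads off that constant coefficient; the apparent cross-terms cancel because the $\delta_1 v^j$ contributions in $\widehat\Phi$ vanish on the image of $\widehat\Psi$.

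The heart of the proof is the construction of chain homotopies $\widehat\Psi\widehat\Phi\simeq\mathrm{id}$ and its two companions. I propose the explicit formula
\begin{equation*}
	\widehat H\Bigl(\sum_i\alpha_i x^i,\;\sum_i\underline\beta_i x^i,\;\sum_i a_i x^i\Bigr) \;=\;\Bigl(\sum_{i\geq 1}\sum_{j=0}^{i-1} v^j\beta_i\, x^{i-j-1},\;0,\;0\Bigr),
\end{equation*}
where $\beta_i$ denotes $\underline\beta_i$ viewed in the $C_\ast$-slot of $\widetilde C$. A direct computation of $\widehat d\widehat H+\widehat H\widehat d$ on a generator yields three kinds of terms, one for each slot of $\widetilde C$. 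In the $\underline C$-slot, the pairs $-v^{j+1}\beta_i x^{i-j-1} + v^j\beta_i x^{i-j}$ telescope to $\beta_i x^i - v^i\beta_i$, which is exactly $(\mathrm{id}-\widehat\Psi\widehat\Phi)$ there. In the $\Z$-slot, the terms $-\delta_1 v^j\beta_i x^{i-j-1}$ match after reindexing $k=i-j-1$. In the $C_\ast$-slot, the cross terms involving $v^j d-dv^j$ are absorbed by the $v^j\delta_2\delta_1 v^{\bullet}\beta_i$ contributions produced by $\widehat\Psi\widehat\Phi$ via the displayed telescoping identity, and the remaining $\alpha$-terms combine to $\sum_i\alpha_i x^i$ by a similar index-shift computation. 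The homotopies $\widecheck H$ and $\widebar H$ are defined by analogous formulas on the appropriate truncated Laurent series, and verified in exactly the same way.

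Finally, compatibility of $\Phi$ and $\Psi$ with the triangle maps $i,j,p$ and $\fi,\fj,\fp$ is a further direct computation. Several of these relations hold strictly after reindexing (for instance $\fi\circ\widehat\Phi=\widebar\Phi\circ i$ matches term-by-term once one collects the two contributions to the $x^m$-coefficient of each $\beta_l$), while the remaining ones, notably those involving $j$ and $\fj$ (which themselves commute with the $x$-action only up to chain homotopy, as noted in \cite{AD:CS-Th}), hold up to a chain homotopy assembled from pieces of $\widehat H$, $\widecheck H$, and $\widebar H$. The principal obstacle throughout is not conceptual but organizational: one must set up the bookkeeping so that the telescoping cancellations are transparent, and the ansatz for $\widehat H$ above was chosen precisely to exhibit these cancellations cleanly.
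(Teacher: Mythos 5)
Your approach is essentially the same as the paper's: the explicit chain homotopy $\widehat K = -\widehat H$ is precisely what the paper uses (the sign difference is just whether one writes $\mathrm{id}-\widehat\Psi\widehat\Phi$ or $\widehat\Psi\widehat\Phi-\mathrm{id}$ on the right-hand side), and the telescoping identity $dv^k-v^kd=\sum_{j=0}^{k-1}v^j\delta_2\delta_1 v^{k-j-1}$ is indeed the combinatorial engine behind all the verifications, even though the paper leaves it implicit. However, you have misclassified which of the commutativity relations hold on the nose and which only up to homotopy. In the paper, $\widehat\Phi\circ x=x\circ\widehat\Phi$, $\widecheck\Psi\circ x=x\circ\widecheck\Psi$, $\widebar\Phi\circ x=x\circ\widebar\Phi$, $\widebar\Psi\circ x=x\circ\widebar\Psi$ hold exactly, \emph{and so do} the $j/\fj$ relations $\widehat\Phi\circ j=\fj\circ\widecheck\Phi$, $\widehat\Psi\circ\fj=j\circ\widecheck\Psi$, plus $\widebar\Phi\circ i=\fi\circ\widehat\Phi$ and $\widecheck\Psi\circ\fp=p\circ\widebar\Psi$. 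The relations that hold \emph{only} up to homotopy are $x\circ\widehat\Psi\simeq\widehat\Psi\circ x$, $x\circ\widecheck\Phi\simeq\widecheck\Phi\circ x$, $\widebar\Psi\circ\fi\simeq i\circ\widehat\Psi$, and $\widecheck\Phi\circ p\simeq\fp\circ\widebar\Phi$. In particular your claim that ``the three $\Psi$-maps follow the same pattern'' for $x$-commutativity is not correct: a quick computation shows $x\cdot\widehat\Psi(\alpha,\cdot)$ has $\alpha\cdot x$ in the middle slot while $\widehat\Psi(x\cdot(\alpha,\cdot))$ has $v\alpha$ there, so an explicit homotopy is required — the paper uses $\widehat K_x(\alpha,\sum a_ix^i)=(-\alpha,0,0)$, and the homotopies $K_\fi$, $\widecheck K_x$, $K_\fp$ are likewise simple explicit formulas, not pieces of $\widehat H$, $\widecheck H$, $\widebar H$ as you suggest. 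This is a bookkeeping slip rather than a conceptual gap, since the lemma only asserts commutativity up to homotopy, but the proof should supply the correct homotopies where they are needed.
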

\begin{proof}
	It is straightforward to check that the maps  $\widehat \Phi$, $\widecheck \Phi$, $\widebar \Phi$, $\widehat \Psi$, $\widecheck \Psi$ and $\widebar \Psi$
	are chain maps and they satisfy the following identities:
	\begin{align*}
	  \widehat \Phi \circ x &=x \circ \widehat \Phi, & \widecheck \Psi \circ x &=x \circ \widecheck \Psi,&
	  \widebar \Phi \circ x &=x \circ \widebar \Phi, & \widebar \Psi\circ x &=x \circ \widebar \Psi,\\
	  \widehat \Phi \circ j &=\fj \circ \widecheck \Phi, & \widehat \Psi \circ \fj &=j \circ \widecheck \Psi,&
	  \widebar \Phi \circ i &=\mathfrak i \circ \widehat \Phi, & \widecheck \Psi \circ \fp &=p \circ \widebar \Psi.
	\end{align*}
	Moreover, if we define the following four additional maps:
	\begin{align*}
	  \widehat K_{x}(\alpha,\sum_{i=0}^{N}a_ix^i) &:=(\alpha,0,0),\\
	  K_{\mathfrak i}(\alpha,\sum_{i=0}^{N}a_ix^i) &:=(-\sum_{i=0}^\infty v^i(\alpha)x^{-i-1},0,0),\\
	  \widecheck K_{x}(\sum_{i=-\infty}^{-1}\alpha_ix^i,\sum_{i=-\infty}^{-1}\beta_ix^i,\sum_{i=-\infty}^{-1}a_ix^i) &:=(\beta_{-1},0),\\
	  K'_{\fp}(\sum_{i=-\infty}^{N}\alpha_ix^i,\sum_{i=-\infty}^{N}\beta_ix^i,\sum_{i=-\infty}^{N}a_ix^i) &:=(-\sum_{i=0}^N v^i(\beta_i),0),\quad K_{\fp} = K'_{\fp}\circ \epsilon
	\end{align*}
	then we have the following relations, which are straightforward to verify:
	\begin{align*}
	x \circ \widehat \Psi- \widehat \Psi \circ x&=\widehat d \widehat K_{x}+\widehat K_{x}\widehat \fd &
	x \circ \widecheck \Phi-\widecheck \Phi \circ x&= \widecheck \fd\widecheck K_{x}+\widecheck K_{x}\widecheck d,\\
	\widebar \Psi \circ \mathfrak i-i \circ \widehat \Psi&=\widebar dK_{\mathfrak i}+K_{\mathfrak i}\widehat \fd &
	\widecheck \Phi \circ p-\fp \circ \widebar \Phi&=\widecheck \fd K_{\fp}+K_{\fp} \widebar d.
	\end{align*}
	In order to verify the last part of the lemma, define the following maps:
	\begin{align*}
		\widehat K(\sum_{i=0}^N\alpha_ix^i,\sum_{i=0}^N\beta_ix^i,\sum_{i=0}^Na_ix^i)&:=(-\sum_{i=0}^N\sum_{j=0}^{i-1} v^j(\beta_i)x^{i-j-1},0,0)\\
		\widecheck K(\sum_{i=-\infty}^{-1}\alpha_ix^i,\sum_{i=-\infty}^{-1}\beta_ix^i,\sum_{i=-\infty}^{-1}a_ix^i)&:=(\sum_{i=-\infty}^{-1}\sum_{j=0}^{\infty} v^j(\beta_i)x^{i-j-1},0,0)\\
		\widebar K(\sum_{i=-\infty}^{N}\alpha_ix^i,\sum_{i=-\infty}^{N}\beta_ix^i,\sum_{i=-\infty}^{N}a_ix^i)&:=(-\sum_{i=-\infty}^{N}\sum_{j=0}^{\infty} v^j(\beta_i)x^{i-j-1},0,0)
	\end{align*}
	Again, it is straightforward to verify the following relations:
	\begin{align*}
	  \widehat \Psi\circ \widehat \Phi-{\rm id} &=\widehat d\widehat K+\widehat K\widehat d,\\
	  \widecheck \Psi\circ \widecheck \Phi-{\rm id} &=\widecheck d\widecheck K+\widecheck K\widehat d,\\
	\widebar \Psi\circ \widebar \Phi-{\rm id} &=\widebar d\widebar K+\widebar K\widebar d. \qedhere
	\end{align*}
\end{proof}

Let $\widetilde C'$ be another $\cS$-complex and $\widetilde \lambda:\widetilde C\to \widetilde C'$ be a morphism. We write $\widehat \Phi'$, $\widecheck \Phi'$, $\widebar \Phi'$, $\widehat \Psi'$, $\widecheck \Psi'$ and $\widebar \Psi'$ for the chain homotopy equivalences associated to $\widetilde C'$. These chain homotopies can be employed to define $\widehat {\fm}_{\widetilde \lambda}:\fhrC_*\to \fhrC_*'$, $\widecheck {\fm}_{\widetilde \lambda}:\fcrC_*\to \fcrC_*'$ and $\widebar {\fm}_{\widetilde \lambda}:\fbrC_*\to \fbrC_*'$ as follows:
\begin{equation*}
	\widehat {\fm}_{\widetilde \lambda}:=\widehat \Phi' \circ \widehat \lambda \circ \widehat \Psi\hspace{1cm}
	\widecheck {\fm}_{\widetilde \lambda}:=\widecheck \Phi' \circ \widecheck {\lambda}\circ \widecheck \Psi\hspace{1cm}
	\widebar {\fm}_{\widetilde \lambda}:=\widebar \Phi' \circ \widebar{\lambda}\circ \widebar \Psi
\end{equation*}
These homomorphisms agree with the definitions given in \cite[Subsection 2.3]{AD:CS-Th}. For example, we have the following explicit formula:
\begin{align*}
  \widebar {\fm}_{\widetilde \lambda}(\sum_{i=-\infty}^{N}a_ix^i)=&(\sum_{i=-\infty}^{N}a_ix^i)\left(1+\sum_{i=0}^{\infty}\Delta_1v^{i}\delta_2(1)x^{-i-1}
  +\sum_{i=0}^{\infty}\delta_1'(v')^{i}\Delta_2(1)x^{-i-1}\right.\\
  &\left. +\sum_{k=0}^{\infty}\sum_{j=0}^{\infty}\delta_1'(v')^{j}\mu v^{k}\delta_2(1)x^{-k-j-2}\right)
\end{align*}
In particular, the map $\widebar {\fm}_{\widetilde \lambda}$ is an isomorphism of the $\Z[x]$-module $\Z[\![x^{-1},x]$. Therefore, we have the following consequence of this observation and Lemma \ref{equiv-2-models}:
\begin{cor}\label{localization}
	The $\Z[x]$-module $H(\brC_*,\widebar d)$ is naturally isomorphic to $\Z[\![x^{-1},x]$. Any morphism of $\cS$-complexes $\widetilde \lambda : \widetilde C_\ast\to \widetilde C_\ast'$ induces an isomorphism $\smash{\widebar {\fm}_{\widetilde \lambda}}:\brC_*\to \brC_*'$. Moreover, there are $b_i\in \Z$ such that after the identifications of 
	$H(\brC_*,\widebar d)$ and $H(\brC_*',\widebar d')$ with $\Z[\![x^{-1},x]$, the map $\widebar {\fm}_{\widetilde \lambda}$ is multiplication by $1+\sum_{i=-\infty}^{-1}b_ix^i$.
\end{cor}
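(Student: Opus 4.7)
The plan is straightforward given Lemma \ref{equiv-2-models} together with the explicit formula for $\widebar{\fm}_{\widetilde\lambda}$ displayed immediately before the statement. First, since $(\fbrC_*, 0) = \Z[\![x^{-1}, x]$ carries the zero differential, its homology is canonically $\Z[\![x^{-1}, x]$ as a $\Z[x]$-module. By Lemma \ref{equiv-2-models}, the chain map $\widebar \Phi : \brC_* \to \fbrC_*$ is a $\Z[x]$-linear homotopy equivalence, and hence it induces the required natural isomorphism $H(\brC_*, \widebar d) \cong \Z[\![x^{-1}, x]$, giving the first assertion.

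For the remaining assertions, given a morphism $\widetilde\lambda$, Proposition \ref{functoriality} produces a $\Z[x]$-linear chain map $\widebar\lambda : \brC_* \to \brC_*'$, and under the above identifications this corresponds to
\[
	\widebar{\fm}_{\widetilde\lambda} := \widebar\Phi' \circ \widebar\lambda \circ \widebar\Psi : \fbrC_* \to \fbrC_*'.
\]
The explicit formula displayed just above the statement shows that $\widebar{\fm}_{\widetilde\lambda}$ acts on $\fbrC_* = \Z[\![x^{-1}, x]$ as multiplication by a series of the form $1 + \sum_{i<0} b_i x^i$: the constant term $1$ comes from the lower-right entry of $\widetilde\lambda$, which equals $1$ by the definition of a morphism of $\cS$-complexes, while the negatively-powered corrections arise from the compositions $\Delta_1 v^i \delta_2(1)$, $\delta_1'(v')^i \Delta_2(1)$, and $\delta_1'(v')^j \mu v^k \delta_2(1)$. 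Any such series is a unit in $\Z[\![x^{-1}, x]$, with inverse constructed by solving recursively for its coefficients (the recursion is well-posed since only finitely many terms contribute to any fixed power of $x$). Multiplication by this unit is therefore a $\Z[x]$-module isomorphism, which is the required statement.

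The only real piece of verification is the explicit formula for $\widebar{\fm}_{\widetilde\lambda}$ itself, carried out by unwinding the definitions of $\widebar\Psi$, $\widebar\Phi'$, and the matrix form of $\widetilde\lambda$ from Definition \ref{def:morphism}. This is routine bookkeeping rather than a serious obstacle; the conceptual point driving the result is that morphisms of $\cS$-complexes are normalized to have $1$ in their lower-right entry, which is precisely what forces the leading term of the multiplier to be $1$ and thereby guarantees invertibility.
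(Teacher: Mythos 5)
Your proposal is correct and follows the paper's argument essentially verbatim: the paper's proof of this corollary consists precisely of Lemma~\ref{equiv-2-models} plus the explicit formula for $\widebar{\fm}_{\widetilde\lambda}$ displayed immediately before the statement, and your write-up does the same, simply with more of the details filled in (identifying $H(\fbrC_*)=\Z[\![x^{-1},x]$ since the differential vanishes, tracing the $1$ in the lower-right of the matrix form of $\widetilde\lambda$ to the leading coefficient of the multiplier, and recursively inverting a series with leading coefficient $1$). No gaps.
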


Consider the $\Z[x]$-submodule $\fI\subset \Z[\![x^{-1},x]$ given by the image of $\mathfrak i_*: H (\fhrC_* ,\widehat d)\to \Z[\![x^{-1},x]$, or equivalently the kernel of the map $\fp_*: \Z[\![x^{-1},x]\to H (\fcrC_* ,\widecheck d)$. Corollary \ref{localization} implies that if two $\cS$-complexes are related to each other by a morphism of $\cS$-complexes, then the associated $\Z[x]$-modules $\fI$ are related by multiplication by an element of the form $1+\sum_{i=-\infty}^{-1}b_ix^i$. This observation suggests the following definition:
\begin{definition}\label{h-def}
	For an $\cS$-complex $\widetilde C$ as above we define its $h$-invariant as follows:
	\[h(\widetilde C):=-\inf_{Q(x)\in \fI}\{{\rm Deg}(Q(x))\}\]
	where for $Q(x)\in \Z[\![x^{-1},x]$, ${\rm Deg}(Q(x))$ denotes the degree of $Q(x)$. 
\end{definition}

The following result is an immediate consequence of Corollary \ref{localization}:
\begin{cor}\label{s-mor-h}
	If there is a morphism of $\cS$-complexes from $\widetilde C$ to $\widetilde C'$, then $h(\widetilde C) \leq h(\widetilde C') $. 
\end{cor}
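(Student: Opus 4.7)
The plan is to deduce the inequality by transporting the definition of $h$ through the maps induced on the small equivariant triangle by the morphism $\widetilde{\lambda}:\widetilde C\to \widetilde C'$, and using the explicit form of the induced map on the $\brC$ piece from Corollary~\ref{localization}.

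First, by Proposition~\ref{functoriality} the morphism $\widetilde\lambda$ induces chain maps $\widehat\lambda$, $\widecheck\lambda$ and $\widebar\lambda$ on the large equivariant complexes, and these fit into a map of the large triangle \eqref{equiv-triangle}. Conjugating by the chain homotopy equivalences $\widehat\Phi,\widecheck\Phi,\widebar\Phi$ and $\widehat\Psi,\widecheck\Psi,\widebar\Psi$ of Lemma~\ref{equiv-2-models}, I obtain induced maps $\widehat{\fm}_{\widetilde\lambda}$, $\widecheck{\fm}_{\widetilde\lambda}$, $\widebar{\fm}_{\widetilde\lambda}$ on the small equivariant complexes, and these are compatible with the maps $\mathfrak i,\fj,\fp$ of the small triangle \eqref{equiv-triangle-2} up to chain homotopy. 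Passing to homology produces a commutative square
\[
\begin{tikzcd}
H(\fhrC_*,\widehat\fd) \arrow{r}{\mathfrak i_*}\arrow{d}[left]{\widehat{\fm}_{\widetilde\lambda*}} & \Z[\![x^{-1},x] \arrow{d}{\widebar{\fm}_{\widetilde\lambda*}}\\
H(\fhrC_*',\widehat\fd') \arrow{r}{\mathfrak i'_*} & \Z[\![x^{-1},x]
\end{tikzcd}
\]
Consequently $\widebar{\fm}_{\widetilde\lambda*}(\fI)\subseteq \fI'$, where $\fI$ and $\fI'$ are the images of $\mathfrak i_*$ and $\mathfrak i'_*$ respectively.

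Second, by Corollary~\ref{localization} the map $\widebar{\fm}_{\widetilde\lambda*}$ is given by multiplication by an element of the form $U(x)=1+\sum_{i\leqslant -1}b_i x^i\in \Z[\![x^{-1},x]$. The key observation is that multiplication by $U(x)$ preserves the top degree: if $Q(x)\in \Z[\![x^{-1},x]$ has leading term $q_N x^N$, then $U(x)Q(x)$ has the same leading term $q_N x^N$, since $U(x)$ contributes only terms of non-positive $x$-degree beyond the leading $1$. Hence for every non-zero $Q\in \fI$ of degree $N$, the element $\widebar{\fm}_{\widetilde\lambda*}(Q)\in \fI'$ is a non-zero element of degree $N$. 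It follows that
\[
\{\mathrm{Deg}(Q) : Q\in \fI,\, Q\neq 0\} \subseteq \{\mathrm{Deg}(Q') : Q'\in \fI',\, Q'\neq 0\},
\]
so the infimum of the left-hand set is at least the infimum of the right-hand set. Negating gives $h(\widetilde C)\leqslant h(\widetilde C')$, which is the desired conclusion.

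The main thing to verify carefully is the commutativity of the diagram above at the level of homology, given that Lemma~\ref{equiv-2-models} only provides compatibility of the various maps with $\mathfrak i$ and $\mathfrak i'$ up to chain homotopy; but chain homotopies disappear on passing to $H$, so this is not a real obstacle. The degree-preservation step is elementary, so the entire proof is essentially a formal consequence of functoriality together with the structure of $\widebar{\fm}_{\widetilde\lambda*}$ established in Corollary~\ref{localization}.
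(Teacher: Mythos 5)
Your proposal is correct and fills in the argument that the paper leaves implicit (the paper merely states that the corollary is an ``immediate consequence'' of Corollary~\ref{localization}). The route you take — chasing the morphism through the small equivariant triangle to obtain $\widebar{\fm}_{\widetilde\lambda*}(\fI)\subseteq\fI'$, then using that multiplication by $1+\sum_{i\leqslant -1}b_ix^i$ preserves the top degree of a nonzero Laurent series — is exactly the intended one.
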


The following proposition gives an alternative definition for $h(\widetilde C)$, and it can be easily verified from the definitions. It is also closely related to Proposition 4 in \cite{froyshov}.

\begin{prop}\label{h-g-reinterpret}
	The constant $h(\widetilde C_*)$ is positive if and only if there is $\alpha\in C_*$ such that $d(\alpha)=0$ and $\delta_1(\alpha)\neq 0$.
	If $h(\widetilde C_*)=k$ for a positive integer $k$, then $k$ is the largest integer such that there exists $\alpha\in C_*$
	satisfying the following properties:
	\begin{equation}\label{h-pos}
	  d\alpha=0,\hspace{1cm}\delta_1v^{k-1}(\alpha)\neq 0,\hspace{1cm}\delta_1v^{i}(\alpha)=0\hspace{.5cm} \text{ for }i\leq k-2.
	\end{equation}
	If $h(\widetilde C_*)=k$ for a non-positive integer $k$, then $k$ is the largest integer such that there are 
	elements $a_0,\ldots , a_{-k}\in\Z$ and $\alpha\in C_*$ such that
	\begin{equation}\label{h-neg}
	  d\alpha=\sum_{i=0}^{-k}v^i\delta_2(a_i),\hspace{1cm} a_{-k}\neq 0.
	\end{equation}
\end{prop}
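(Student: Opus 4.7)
The plan is to work directly with the explicit formula for $\mathfrak{i}$ and unpack the definition of $h$ as the negative infimum of degrees of nonzero elements of $\fI \subseteq \Z[\![x^{-1},x]$. Because the differential $\widehat\fd$ on $\fhrC_\ast = C_{\ast-1}\oplus \Z[x]$ kills the $\Z[x]$-summand, a cycle in $\fhrC_\ast$ is a pair $(\alpha, P(x))$ with $P(x)=\sum_{i=0}^{N} a_i x^i$ satisfying $d\alpha = \sum_i v^i \delta_2(a_i)$, and its image in $\fI$ is
\[
  Q(x) := \mathfrak{i}(\alpha, P(x)) = P(x) + \sum_{j\geqslant 0} \delta_1 v^j(\alpha)\, x^{-j-1}.
\]
The non-negative-degree part of $Q$ is exactly $P(x)$, while the strictly negative-degree part is encoded in the sequence $\{\delta_1 v^j(\alpha)\}_{j\geqslant 0}$.

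From this formula I would first handle the non-positive case. A nonzero $Q \in \fI$ with $\text{Deg}(Q) = -k \geqslant 0$ must have $P(x)$ nonzero of degree exactly $-k$, with leading coefficient $a_{-k}\neq 0$ and cycle condition $d\alpha = \sum_{i=0}^{-k} v^i\delta_2(a_i)$. Thus the existence of an element of $\fI$ of degree $\leqslant -k$ is equivalent to the existence of data $(\alpha, a_0, \ldots, a_{-k})$ with $a_{-k}\neq 0$ and $d\alpha = \sum v^i\delta_2(a_i)$; taking $k$ to be the largest such integer matches $\inf_{Q\in \fI\setminus \{0\}} \text{Deg}(Q) = -k$, i.e. $h(\widetilde C_\ast)=k$.

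For the positive case, observe that $\text{Deg}(Q)\leqslant -1$ forces $P(x)=0$ (hence $d\alpha=0$) and that then $\text{Deg}(Q) = -j_0 - 1$ where $j_0 = \min\{j\geqslant 0 : \delta_1 v^j(\alpha)\neq 0\}$. So $h \geqslant 1$ is equivalent to the existence of a cycle $\alpha$ with $\delta_1 v^{j_0}(\alpha)\neq 0$ for some $j_0$. To reduce from a general $j_0$ to the stated condition involving $\delta_1(\alpha')\neq 0$, I would use the iterated commutator relation
\[
  dv^j - v^j d = \sum_{i=0}^{j-1} v^i\, \delta_2\, \delta_1\, v^{j-1-i},
\]
proved by induction from \eqref{eq:d-tilderel3}. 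If $j_0$ is minimal with $\delta_1 v^{j_0}(\alpha)\neq 0$, then each $\delta_1 v^{j_0 - 1 - i}(\alpha)$ vanishes, so $\alpha' := v^{j_0}(\alpha)$ is still a cycle and $\delta_1(\alpha')\neq 0$. Applying this reduction relative to the truncated sequence $\{\delta_1 v^j(\alpha)\}_{j\geqslant 0}$, one sees that the largest $k$ admitting an $\alpha$ with $d\alpha=0$, $\delta_1 v^{k-1}(\alpha)\neq 0$ and $\delta_1 v^i(\alpha)=0$ for $i\leqslant k-2$ is exactly the minimal value of $-\text{Deg}(Q)$, which is $h(\widetilde C_\ast)$. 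The main conceptual input is the commutator identity used in the positive case; the rest of the argument is bookkeeping with the explicit formula for $\mathfrak{i}$ together with the definition of $h$ as negative infimum of degrees.
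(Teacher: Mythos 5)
Your proof is correct, and since the paper treats this as routine (``easily verified from the definitions''), your unpacking of $h$ via the explicit formula for $\mathfrak{i}$ on cycles in the small equivariant complex, together with the iterated commutator $dv^j - v^j d = \sum_{i=0}^{j-1} v^i\delta_2\delta_1 v^{j-1-i}$ needed to pass from ``$\delta_1 v^{j_0}(\alpha)\neq 0$'' to ``$\delta_1(\alpha')\neq 0$'' for the cycle $\alpha'=v^{j_0}(\alpha)$, is the intended argument. One wording slip in your closing sentence: $h$ is the \emph{maximal}, not minimal, value of $-\mathrm{Deg}(Q)$ over nonzero $Q\in\fI$.
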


\subsection{Dual $\cS$-complexes}\label{sec:dual}

Let $(V_\ast,d)$ be an arbitrary $\Z$-graded chain complex. Our convention is that the dual complex $\text{Hom}(V_\ast,\Z)$ has differential $f\mapsto -\epsilon(f)\circ d$, with the $\Z$-grading that declares $f:V_i\to \Z$ to be in grading $-i$.  Recall that the sign map $\epsilon$ is given in Definition \ref{sign-hom}.

Now let $(\widetilde C_*,\widetilde d,\chi)$ be an $\cS$-complex. Let $\widetilde C_*^\dagger$ be the dual chain complex ${\rm Hom}(\widetilde C_*,\Z)$ with differential $\widetilde d^\dagger$. The endomorphism $\chi^\dagger$ of $\widetilde C_*^\dagger$ is defined similarly, so that we have:
\begin{align*}
	\widetilde d^\dagger(f)&:=-\epsilon(f)\circ \widetilde d,\\
	\chi^\dagger(f)&:=-\epsilon(f)\circ \chi.
\end{align*}
These clearly satisfy property (i) of Definition \ref{cs-somplex-2}. The spaces $\ker(\chi^\dagger)$ and ${\rm image}(\chi^\dagger)$ are given by the subspaces of $\widetilde C_*^\dagger$ which vanish respectively on ${\rm image}(\chi)$ and $\ker(\chi)$. Thus property (ii) of Definition \ref{cs-somplex-2} is also satisfied, and $(\widetilde C_*^\dagger,\widetilde d^\dagger,\chi^\dagger)$ is an $\cS$-complex.

Let $C_*^\dagger$ be given by ${\rm image}(\chi^\dagger)$ after shifting gradings down by $1$, and $d^\dagger$ be the differential on $C_*^\dagger$ given by the restriction of $-\widetilde d^\dagger$. The space $C_*^\dagger$ can be identified with ${\rm Hom}( C_{*},\Z)[1]$ and the differential $d^\dagger$ of $f:C_i\to \Z$, which has degree $-i-1$, is equal to $(-1)^{i}f\circ d$. If we split $\widetilde C_*^\dagger$ as the sum $C_*^\dagger\oplus C_{*-1}^\dagger\oplus \Z$ using a corresponding splitting of $\widetilde C_*$, then the $\cS$-complex structure on $\widetilde C_*^\dagger$ determines maps $v^\dagger:C_*^\dagger\to C_{*-2}^\dagger$, $\delta^\dagger_1:C_1^\dagger \to \Z$ and $\delta^\dagger_2:\Z\to C_0^\dagger$ which are given by the following formulas:
\begin{equation*}
	v^\dagger(f):=f\circ v, \hspace{1cm}
	\delta^\dagger_1(f):=-f\circ \delta_2(1),\hspace{1cm}
	\delta_2^\dagger(m):=m\delta_1.
\end{equation*}

In what follows, for an $\cS$-complex $(\widetilde C, \widetilde d,\chi)$ write $\fcrC^\ast = \text{Hom}(\fcrC_\ast ,\Z)$, $\fhrC^\ast = \text{Hom}(\fhrC_\ast ,\Z)$, and $\fbrC^\ast = \text{Hom}(\fbrC_\ast ,\Z)$ for the duals of the small equivariant chain complexes associated to $\widetilde C_\ast$, and call these the equivariant {\emph{cohomology}} chain complexes. We write 
\[
	(\fcrC^\dagger_\ast, \widecheck{\fd}^\dagger), \qquad (\fhrC_\ast^\dagger, \widehat{\fd}^\dagger ), \qquad (\fbrC^\dagger_\ast, \overline{\fd}^\dagger)
\]
for the small equivariant homology chain complexes associated to the dual $\cS$-complex $(\widetilde C^\dagger,\widetilde d^\dagger, \chi^\dagger)$, and $\mathfrak{i}^\dagger$, $\mathfrak{j}^\dagger$, $\mathfrak{p}^\dagger$ for the maps in the associated small equivariant triangle. We write $\mathfrak{i}^\vee$, $\mathfrak{j}^\vee$, $\mathfrak{p}^\vee$ for the duals of the maps  $\mathfrak{i}$, $\mathfrak{j}$, $\mathfrak{p}$ in the small equivariant triangle for $(\widetilde C,\widetilde d, \chi)$. The relationship between these is summarized as follows.

\begin{lemma}\label{dual-small-equiv-tr}
	We have a commutative diagram of small equivariant exact triangles, where the vertical maps are isomorphisms of graded chain complexes over $\Z[x]$:
\begin{equation}\label{eq:dualeqtri}
	\xymatrix@C+=1.5cm{
	\cdots & \fhrC^\dagger_*  \ar[d]  \ar[l]_{\mathfrak i^\dagger}   & \fcrC^\dagger_* \ar[d]  \ar[l]_{\mathfrak j^\dagger}&  \fbrC^\dagger_*   \ar[l]_{\mathfrak p^\dagger}  \ar[d] &  \ar[l]_{ \mathfrak i^\dagger}  \cdots\\
	\cdots   & \fcrC^* \ar[l]_{ \mathfrak p^\vee }& \fhrC^*   \ar[l]_{\mathfrak j^\vee} &  \fbrC^*  \ar[l]_{\mathfrak i^\vee}   & \cdots \ar[l]_{\mathfrak p^\vee}
	}
\end{equation}
\end{lemma}

\begin{proof}
	The vertical maps, written from left to right as $F_1$, $F_2$ and $F_3$, are defined by:
	\begin{align*} 
		\widehat F(f,\sum_{j=0}^Nb_jx^j)(\alpha,\sum_{i=-\infty}^{-1}a_ix^i) &=f(\alpha)+\sum_{j=0}^Nb_ja_{-j-1},\\
		\widecheck F(f,\sum_{j=-\infty}^{-1}b_jx^j)(\alpha,\sum_{i=0}^{N}a_ix^i) &=f(\alpha)+\sum_{j=-N-1}^{-1}b_ja_{-j-1},\\
		\overline F(\sum_{j=-\infty}^{N}b_jx^j)(\sum_{i=-\infty }^{N'}a_ix^i) &=\sum_{j=-N'-1}^{N}b_ja_{-j-1}.
	\end{align*}
	It is straightforward to check that these are isomorphisms of chain complexes, and fit into the diagram \eqref{eq:dualeqtri} above, matching the two small equivariant triangles.
\end{proof}

\begin{remark}
	An analogous statement holds for the large equivariant triangles. $\diamd$
\end{remark}

\begin{prop} \label{h-dual}
	$h(\widetilde C^\dagger_*)=-h(\widetilde C_*)$.
\end{prop}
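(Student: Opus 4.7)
The plan is to derive the result by translating the characterizations of $h$ given by Proposition~\ref{h-g-reinterpret}, applied to the dual complex $\widetilde C^\dagger$, into concrete statements about the structure maps of $\widetilde C$ itself, and then reading off the equality from these translations. Using that the dual structure maps are $v^\dagger(f) = f\circ v$, $\delta_1^\dagger(f) = f(\delta_2(1))$, $\delta_2^\dagger(m) = -m\,\delta_1$, and $d^\dagger(f) = \pm f\circ d$, the condition~\eqref{h-pos} for $\widetilde C^\dagger$ at a positive index $k$ translates into the existence of a functional on $C_*$ vanishing on $\mathrm{im}(d)$ and non-vanishing on $v^{k-1}\delta_2(1)$, which is the statement that $v^{k-1}\delta_2(1)$ is non-torsion in $H(C_*,d)$. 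Similarly, condition~\eqref{h-neg} for $\widetilde C^\dagger$ at index $-j\leq 0$ translates into the condition that the functional $\delta_1 v^j\colon C_{2j+1}\to\Z$ vanishes on $\ker d$, with the caveat that to solve $d^\dagger f = -a\,\delta_1 v^j$ over $\Z$ one may need to multiply $a$ by the order of torsion in the cokernel of $\mathrm{Hom}(C_{2j},\Z)\to\mathrm{Hom}(\mathrm{im}(d),\Z)$.

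The bridge linking these dual conditions back to the original $\cS$-complex is the relation $dv - vd = \delta_2\delta_1$ of~\eqref{eq:d-tilderel3}. Iterating this and using that, in the $\Z$-grading, $\delta_1 v^i(\alpha)$ vanishes automatically whenever the grading of $\alpha$ does not match, a direct calculation shows that if $\alpha\in\ker d$ satisfies $\delta_1 v^{k-1}(\alpha)\neq 0$, then $d(v^{j+1}(v^{k-1}\alpha)) = (\delta_1 v^{k-1}\alpha)\,v^j\delta_2(1)$ for all $j\geq 0$, so that $v^j\delta_2(1)$ is torsion in $H(C_*,d)$ for every $j\geq 0$. Conversely, an identity of the form $d\alpha = v^j\delta_2(a)$ with $a\neq 0$ is the direct statement that $v^j\delta_2(1)$ is $a$-torsion.

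With this dictionary, I would conclude by case analysis on the sign of $h(\widetilde C)$. Suppose $h(\widetilde C) = h > 0$; the maximality clause in Proposition~\ref{h-g-reinterpret} gives $\delta_1 v^h|_{\ker d} = 0$, which after the torsion extension solves~\eqref{h-neg} for $\widetilde C^\dagger$ at $k = -h$, establishing $h(\widetilde C^\dagger)\geq -h$. The existence clause provides $\alpha\in\ker d$ with $\delta_1 v^{h-1}(\alpha)\neq 0$, which blocks~\eqref{h-neg} at $k = -h+1$, while the torsion bridge rules out any positive $h(\widetilde C^\dagger)$, giving $h(\widetilde C^\dagger)\leq -h$. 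The case $h(\widetilde C)=h\leq 0$ is handled symmetrically: the identity $d\alpha = v^{-h}\delta_2(a_{-h})$ with $a_{-h}\neq 0$ makes $v^{-h}\delta_2(1)$ torsion and thereby rules out~\eqref{h-pos} for $\widetilde C^\dagger$ at any $k > -h$, while the non-existence of an analogous identity at larger index makes $v^{-h-1}\delta_2(1)$ non-torsion, giving~\eqref{h-pos} at exactly $k = -h$. The main hurdle is the integrality of the extension from a submodule to the ambient free abelian group, which requires multiplying by the order of a torsion cokernel and accounts for the nonzero integer $a_{-k}$ that appears in~\eqref{h-neg}.
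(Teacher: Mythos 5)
Your proposal is correct, and it rests on the same two ingredients as the paper's proof: Proposition~\ref{h-g-reinterpret} applied to $\widetilde C^\dagger_*$, together with the explicit dual formulas $v^\dagger(f)=f\circ v$, $\delta_1^\dagger(f)=f(\delta_2(1))$, $\delta_2^\dagger(m)=-m\,\delta_1$, $d^\dagger(f)=\pm f\circ d$. The difference is in how the two inequalities are organized. The paper's argument stays entirely on the side of functionals on $C_*$: from a witness $\alpha$ for $h(\widetilde C_*)=k>0$ it deduces that no relation $fd+\sum_{i\leq k-1}a_i\delta_1 v^i=0$ with $a_{k-1}\neq 0$ can hold (evaluate at $\alpha$), so $h(\widetilde C^\dagger_*)\leq -k$ via the nesting $J^\dagger_{i+1}\subseteq J^\dagger_i$, which quietly excludes positive values too; conversely, the linear independence of $\delta_1,\ldots,\delta_1 v^{k-1}$ on $\ker d$ versus the dependence after adjoining $\delta_1 v^k$ yields the required relation once one scales to clear the torsion of $\mathrm{coker}\bigl(\mathrm{Hom}(C_*,\Z)\to\mathrm{Hom}(\mathrm{im}\,d,\Z)\bigr)$. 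You instead pass to the homology classes $[v^j\delta_2(1)]\in H(C_*,d)$ and introduce the identity $d(v^{k+j}\alpha)=\delta_1 v^{k-1}(\alpha)\cdot v^j\delta_2(1)$, obtained by iterating~\eqref{eq:d-tilderel3} and using that $\delta_1 v^i(\alpha)\neq 0$ forces $\alpha\in C_{2i+1}$, as a ``torsion bridge'' to exclude $h(\widetilde C^\dagger_*)>0$. This reformulation is accurate and makes the mechanism transparent, but it is a detour: once you block~\eqref{h-neg} for $\widetilde C^\dagger_*$ at index $-h+1$, the nesting of the ideals $J^\dagger_i$ already gives $h(\widetilde C^\dagger_*)\leq -h$ including the positive range, so the bridge is not needed to close the argument. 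Note also that both your argument and the paper's implicitly exploit the $\Z$-grading to separate the terms $\delta_1 v^i$ from one another; it is worth being explicit about that reliance.
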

\begin{proof}
	First assume that $h(\widetilde C_*)=k>0$. By Proposition \ref{h-g-reinterpret} there is $\alpha\in C_*$
	such that $d\alpha=0$ and the smallest integer $i$ such that $\delta_1v^{i}(\alpha)$ is non-zero is equal to $k-1$.
	This implies that there is no $f: C_*\to \Z$ and $a_0,\ldots,a_{k-1}\in \Z$ such that $a_{k-1}\neq 0$ and
	\[f d+\sum_{i=0}^{k-1}a_i\delta_1v^i=0,\]
	because we can apply this linear form to $\alpha$. This implies that $h(\widetilde C^\dagger_*)\leq -k$. 
	
	Next, we consider the set $\{\delta_1,\delta_1 v,\dots, \delta_1 v^{k-1}\}$. By restriction, elements of this set define linearly independent maps on $\ker(d)$. 
	Our assumption implies that we obtain linearly dependent elements by adding $\delta_1 v^{k}$. Therefore, there are $a_0$, $\dots$, $a_{k}$ such that $a_{k}\neq 0$ and:
	\[\sum_{i=0}^{k}a_i\delta_1v^i\vert_{\ker(d)}=0\]
	This implies that there is $f:C_*\to \Z$ such that after multiplying all constants $a_i$ by a non-zero integer,
	we have:
	\[f d+\sum_{i=0}^{k}a_i\delta_1v^i=0\]
	Consequently, we have $h(\widetilde C^\dagger_*)\geq -k$ which completes the proof in the case that  
	$h(\widetilde C_*)$ is positive. The case that  $h(\widetilde C_*)$ is negative can be verified similarly.
\end{proof}

\begin{remark}
	An alternative proof of Proposition \ref{h-dual} may be obtained using the perspective of Definition \ref{h-def} and directly applying Lemma \ref{dual-small-equiv-tr}. $\diamd$
\end{remark}

\subsection{Tensor products of $\cS$-complexes}\label{sec:tensor}

Here we show that the tensor product of two $\cS$-complexes is naturally isomorphic to an $\cS$-complex. Let $(\widetilde C_\ast, \widetilde d,\chi)$ and $(\widetilde C'_\ast, \widetilde d',\chi')$ be two $\cS$-complexes. We also fix splittings $\widetilde C_\ast=C_*\oplus C_{*-1}\oplus \Z$ and $\widetilde C'_\ast=C'_*\oplus C'_{*-1}\oplus \Z$ and let $d,v,\delta_1,\delta_2$ and $d',v',\delta_1',\delta_2'$ be the associated maps. From this data we define an $\cS$-complex $(\widetilde C^\otimes_\ast, \widetilde d^\otimes,\chi^\otimes)$. Firstly let $\widetilde C^\otimes_\ast=\widetilde C_\ast\otimes \widetilde C'_\ast$, and then define $\widetilde d^\otimes$ and $\chi^\otimes$ as follows:
\begin{align*}
	\widetilde d^\otimes&:=\widetilde d\otimes 1+\epsilon \otimes \widetilde d'\\
	\chi^\otimes&:=\chi\otimes 1+\epsilon \otimes \chi'
\end{align*}
It is clear that $\widetilde d^\otimes$ and $\chi^\otimes$ satisfy property (i) of Definition \ref{cs-somplex-2}. To see (ii) of Definition \ref{cs-somplex-2}, note that ${\rm image}(\chi^\otimes)$ is generated by the elements of the following form:
\begin{equation}\label{im-chi}
  \underline \alpha\otimes  \alpha'+\epsilon(\alpha)\otimes \underline\alpha',\hspace{1cm}\underline \alpha\otimes \underline \alpha',\hspace{1cm}\underline \alpha\otimes 1,\hspace{1cm}1\otimes \underline \alpha',
\end{equation}
and the kernel of $\chi^\otimes$ is generated by the above elements and $1\otimes 1$.

The subgroup of $\widetilde C_*^\otimes$ generated by the elements in \eqref{im-chi}, after shifting down the degree by $1$, is denoted by $C_*^\otimes$. We also write $d^\otimes$ for the differential on $C_*^\otimes$ defined as $-\widetilde d^\otimes\vert_{C^\otimes}$. The four types of elements in \eqref{im-chi} determine a natural decomposition of $C_*^\otimes$ as follows:
\[
	C^\otimes_\ast = (C \otimes C')_\ast \oplus (C\otimes C')_{\ast - 1} \oplus C_\ast \oplus C'_\ast.
\]
The differential $d^\otimes$ with respect to this decomposition is given by
\[
	d^\otimes =\left[
					\begin{array}{cccc}
						d\otimes 1 + \varepsilon \otimes d' &0&0&0\\
						-\varepsilon v \otimes 1 + \varepsilon \otimes v' & d\otimes 1 - \varepsilon \otimes d' & \varepsilon \otimes \delta_2' & -\delta_2 \otimes 1\\
						\varepsilon \otimes \delta_1' &0&d& 0\\
						\delta_1\otimes 1& 0 & 0 & d'\\
					\end{array}
					\right]
\]

As in \eqref{exact-seq}, we have a short exact sequence of the following form:
\[
  0\longrightarrow C_{*-1}^\otimes\oplus \Z\xhookrightarrow{\hspace{.4cm}}\widetilde C_*^\otimes\xrightarrow{\;\chi^\otimes\;}C_{*-1}^\otimes\longrightarrow 0
\]
We may split this sequence using the following right inverse of the map $\chi^\otimes$:
\[
  \underline \alpha\otimes  \alpha'+\epsilon(\alpha)\otimes \underline\alpha'\mapsto \alpha\otimes  \alpha', \hspace{.7cm}\underline \alpha\otimes \underline \alpha'\mapsto \alpha\otimes \underline \alpha',\hspace{.7cm}\underline \alpha\otimes 1\mapsto \alpha\otimes 1,
  \hspace{.7cm}1\otimes \underline \alpha' \mapsto 1\otimes \alpha'.
\]
This gives rise to a splitting of $\widetilde C_*^\otimes$ as $C_*^\otimes\oplus C_{*-1}^\otimes\oplus \Z$ and the maps $v^\otimes:C_*\to C_{*-2}$, $\delta_1^\otimes:C_1\to \Z$, $\delta_2^\otimes:\Z \to C_{-2}$. The endomorphism $v^\otimes:C_\ast^\otimes\to C_{\ast-2}^\otimes$ is given as follows:
\begin{equation}
	v^\otimes =\left[
					\begin{array}{cccc}
						v\otimes 1 &0& 0& \delta_2\otimes 1\\
						0 & v\otimes 1 & 0 &0\\
						0 & 0& v & 0\\
						0 & \delta_1\otimes 1& 0  & v'\\
					\end{array}
					\right] \label{eq:vtensor}
\end{equation}
We also have $\delta_1^\otimes=[0, 0, \delta_1,\delta_1']$ and $\delta_2^\otimes = [0,0,\delta_2,\delta_2']^\intercal$.

\begin{remark}
	The non-symmetrical form of $v^\otimes$ is due to our non-symmetrical choice of the right inverse for the map $\chi^\otimes$. In the case that we replace the integers with a ring in which $2$ is invertible, we can modify the above right inverse for $\chi^\otimes$
	by mapping $\underline \alpha\otimes \underline \alpha'$ to $\frac{1}{2}(\alpha\otimes \underline \alpha'+\epsilon(\underline \alpha)\otimes  \alpha')$. Then the new map $v^\otimes$ with respect to the induced splitting of $\widetilde C_*^{\otimes}$ has the following form:
	\[
	  \frac{1}{2} \left[
			\begin{array}{cccc}
				v\otimes 1 +  1 \otimes v' &0& 1 \otimes \delta_2'& \delta_2\otimes 1\\
				0 & v\otimes 1 + 1 \otimes v' & 0 &0\\
				0 & 1 \otimes \delta_1' & 2v & 0\\
				0 & \delta_1\otimes 1& 0  & 2v'\\
			\end{array}
			\right]
	\]
	which is symmetrical with respect to switching $\widetilde C_*$ and $\widetilde C_*'$. $\diamd$
\end{remark}

\begin{remark}\label{sign}
	Suppose $(\widetilde C_*^0,\widetilde d^0,\chi^0)$ is the trivial $\cS$-complex with $\widetilde C_*^0=\Z$,
	$\widetilde d^0=0$ and $\chi^0=0$.
	Our sign convention for tensor products and duals of $\cS$-complexes implies that the following natural pairing 
	on $\widetilde C_*^\dagger$ and $ \widetilde C_*$ defines a 
	morphism $Q:\widetilde C_*^\dagger\otimes \widetilde C_*\to \widetilde C_*^0$:
	\[
	  (f,\alpha)\longmapsto  f(\alpha). \quad \diamd
	\]
\end{remark}

In Subsection \ref{equiv-model-1}, to any $\cS$-complex we associated a series of equivariant theories which fit into an exact triangle. For example, $(\widetilde C_*,\widetilde d,\chi)$ gives rise to the complex $(\hrC_*,\widehat d)$ over $\Z[x]$. We denote the corresponding objects associated to the $\cS$-complexes $(\widetilde C'_*,\widetilde d')$ and $(\widetilde C^\otimes_*,\widetilde d^\otimes)$ by $(\hrC_*',\widehat d')$ and $(\hrC_*^\otimes,\widehat d^\otimes)$. We follow a similar terminology for the other objects introduced in Subsection \ref{equiv-model-1}. Our goal is to study how $h(\widetilde C^\otimes_\ast)$ is related to $h(\widetilde C_\ast)$ and $h(\widetilde C_\ast')$.

\begin{lemma}\label{ideal-tensor}
	There are $\Z[x]$-module isomorphisms:
	\begin{equation*}
		\widehat T:\hrC_*\otimes_{\Z[x]}\hrC_*'\to \hrC_*^\otimes \hspace{1cm}
		\widebar T: \brC_*\otimes_{\Z[\![x^{-1},x]}\brC_*' \to \brC_*^\otimes
	\end{equation*}	
	which are chain maps, and the following diagram commutes:
	\begin{equation}\label{T-comm}
		\xymatrix{
		\hrC_*\otimes_{\Z[x]}\hrC_*'\ar[d]^{i\otimes i'}\ar[rr]^{\widehat T}&&\hrC_*^\otimes\ar[d]^{i^\otimes}\\
		\brC_*\otimes_{\Z[\![x^{-1},x]}\brC_*'\ar[rr]^{\widebar T} &&\brC_*^\otimes\\}
	\end{equation}	
	Moreover, we have $\frak I\otimes \frak I'\subseteq \frak I^\otimes$.
\end{lemma}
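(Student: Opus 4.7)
The plan is to define both $\widehat T$ and $\widebar T$ as multiplication maps and then verify the four claims in order: chain-map property, bijectivity, diagram commutativity, and the ideal inclusion. On elementary tensors I would set
\[
  \widehat T\bigl((P(x)\otimes \zeta)\otimes (Q(x)\otimes \zeta')\bigr) := P(x)Q(x)\otimes (\zeta\otimes \zeta'),
\]
with $P,Q\in \Z[x]$, and define $\widebar T$ by the same formula with $P,Q\in \Z[\![x^{-1},x]$. Both are well-defined $\Z[x]$-linear maps by the universal property of the tensor product, since multiplication in $\Z[x]$ (respectively $\Z[\![x^{-1},x]$) is $\Z[x]$-balanced.

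Next I would check the chain-map property by direct computation on an elementary tensor. Taking $\zeta\in \widetilde C_p$, $\zeta'\in \widetilde C'_q$, and applying $\widehat d\otimes 1+\epsilon\otimes \widehat d'$ to $(x^i\otimes\zeta)\otimes(x^j\otimes\zeta')$ yields, after using the formula $\widehat d(x^i\otimes\zeta)=-x^i\otimes \widetilde d\zeta+x^{i+1}\otimes\chi\zeta$,
\[
 (-x^{i+j}\otimes \widetilde d\zeta\otimes\zeta')+x^{i+j+1}\otimes\chi\zeta\otimes\zeta'+(-1)^{p+1}x^{i+j}\otimes\zeta\otimes \widetilde d'\zeta'+(-1)^p x^{i+j+1}\otimes\zeta\otimes\chi'\zeta'
\]
after mapping through $\widehat T$. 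On the other side, $\widehat d^\otimes\widehat T$ applied to the same tensor expands via $\widetilde d^\otimes=\widetilde d\otimes 1+\epsilon\otimes\widetilde d'$ and $\chi^\otimes=\chi\otimes 1+\epsilon\otimes\chi'$ to give exactly the same four terms. Hence $\widehat T$ is a chain map, and the verification for $\widebar T$ is verbatim identical since it only uses the formula for the differentials and the tensor-product sign convention, not any property special to $\Z[x]$.

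For the isomorphism property, since $\hrC=\Z[x]\otimes_\Z \widetilde C$ is free over $\Z[x]$ on any $\Z$-basis of $\widetilde C$, the natural map $\Z[x]\otimes_\Z \widetilde C\otimes_\Z \widetilde C'\to \hrC\otimes_{\Z[x]}\hrC'$ is an isomorphism, and under this identification $\widehat T$ is the identity map to $\hrC^\otimes$. For $\widebar T$, one writes $\brC=\hrC\otimes_{\Z[x]}\Z[\![x^{-1},x]$ using that $\hrC$ is $\Z[x]$-free, and constructs the inverse explicitly by $P\otimes(\zeta\otimes\zeta')\mapsto (P\otimes\zeta)\otimes(1\otimes\zeta')$; verifying that this is a two-sided inverse uses that $x$ acts invertibly on $\brC$ and $\brC'$, so that Laurent polynomials (and, by a limiting/finiteness argument using that $\widetilde C,\widetilde C'$ are finitely generated, arbitrary elements of $\Z[\![x^{-1},x]$) can be moved across the tensor. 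Commutativity of \eqref{T-comm} is then immediate: both $i,i'$ are induced from the inclusion $\Z[x]\hookrightarrow \Z[\![x^{-1},x]$ tensored with the identity, and $\widehat T,\widebar T$ are both given by the multiplication formula, so either composite applied to $(P\otimes\zeta)\otimes(Q\otimes\zeta')$ with $P,Q\in\Z[x]$ yields $PQ\otimes(\zeta\otimes\zeta')$ viewed in $\brC^\otimes$.

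For the inclusion $\fI\otimes \fI'\subseteq \fI^\otimes$, I would use Lemma \ref{equiv-2-models} and Corollary \ref{localization} to identify $\fI$ with the image of the map $i_*:H(\hrC)\to H(\brC)\cong \Z[\![x^{-1},x]$, and similarly for $\fI'$ and $\fI^\otimes$. Given $a\in \fI$, $b\in \fI'$, lift to cycles $\hat a\in \hrC$, $\hat b\in \hrC'$; then $\widehat T(\hat a\otimes \hat b)\in \hrC^\otimes$ is a cycle, and the commutativity of \eqref{T-comm} at the chain level gives
\[
  i^\otimes\bigl(\widehat T(\hat a\otimes \hat b)\bigr)=\widebar T\bigl(i(\hat a)\otimes i'(\hat b)\bigr)=a\,b
\]
as an element of $\brC^\otimes\cong \Z[\![x^{-1},x]\otimes \widetilde C^\otimes$, whose class in $H(\brC^\otimes)\cong \Z[\![x^{-1},x]$ is the product $ab$. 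Hence $ab$ lies in $\operatorname{image}(i^\otimes_*)=\fI^\otimes$. The main obstacle I anticipate is in verifying that $\widebar T$ is actually bijective (not merely surjective): a priori $\brC\otimes_{\Z[x]}\brC'$ carries only relations from $\Z[x]$, whereas in $\brC^\otimes$ multiplication by the larger ring $\Z[\![x^{-1},x]$ has been built in. Constructing the explicit inverse and checking it is well-defined requires careful use of the invertibility of $x$ on $\brC$ and $\brC'$ combined with the finite generation of $\widetilde C$ and $\widetilde C'$ over $\Z$.
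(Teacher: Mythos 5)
Your definitions of $\widehat T$ and $\widebar T$, the direct verification that they are chain maps, and the commutativity of \eqref{T-comm} all match what the paper does (the paper states these more briefly, defining the maps via $\Z[x]\otimes_{\Z[x]}\Z[x]=\Z[x]$ and $\Z[\![x^{-1},x]\otimes_{\Z[x]}\Z[\![x^{-1},x]=\Z[\![x^{-1},x]$ and calling the rest straightforward). The genuine issue is in the last part, the inclusion $\fI\otimes\fI'\subseteq\fI^\otimes$. After lifting $a\in\fI$, $b\in\fI'$ to cycles $\hat a\in\hrC$, $\hat b\in\hrC'$ and using the commutative square to write $i^\otimes(\widehat T(\hat a\otimes\hat b))=\widebar T(i(\hat a)\otimes i'(\hat b))$, you assert that this chain-level element equals $ab$ ``as an element of $\brC^\otimes$'' and that its class under $H(\brC^\otimes)\cong\Z[\![x^{-1},x]$ is $ab$. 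The first statement is a type confusion: $ab$ lives in the small model $\fbrC^\otimes=\Z[\![x^{-1},x]$, not in the large model $\brC^\otimes$. The second statement is exactly the nontrivial content of the claim, and it is not proved. The identifications of $H(\brC)$, $H(\brC')$, $H(\brC^\otimes)$ with $\Z[\![x^{-1},x]$ come from the chain homotopy equivalence $\widebar\Phi$ of Lemma \ref{equiv-2-models}, whose formula contains a correction term $\sum_{i,j}\delta_1v^j(\beta_i)x^{i-j-1}$ built from the middle component of the representative; nothing formal says that these identifications carry the chain-level multiplication $\widebar T$ to genuine multiplication on $\Z[\![x^{-1},x]$.

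The paper closes precisely this step by a concrete calculation rather than a formal one. Given $\sum a_ix^i\in\fI$, the map $\widebar\Psi$ from Lemma \ref{equiv-2-models} supplies the specific cycle $\bigl(\sum_i\sum_{j\geq 0}v^j\delta_2(a_i)x^{i-j-1},\,0,\,\sum a_ix^i\bigr)$ in the image of $i$, whose middle component vanishes. The $\widebar T$-tensor of two such elements again has middle component zero, so applying $\widebar\Phi^\otimes$ drops the correction term and literally returns $\sum_{i,j}a_ia_j'x^{i+j}$, which therefore lies in $\fI^\otimes$. Your more conceptual route can be rescued, but requires extra work that the proposal omits: one should check that $\widebar\Phi_\ast$ is $\Z[\![x^{-1},x]$-linear and sends the class of the distinguished generator $\theta$ to $1$ (so that the identification $H(\brC)\cong\Z[\![x^{-1},x]$ is determined by $\theta$), that $\widebar T(\theta\otimes\theta')=\theta^\otimes$, and that $i(\hat a)$ is homologous to $a\otimes\theta$, so that the class of the product may be computed on these representatives. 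As written, that verification is absent, and the ``limiting/finiteness argument'' sketch for why $\widebar T$ is injective would also need to be fleshed out, though the paper itself is terse on that point.
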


Note that although we have previously considered $\brC_*$ and $\brC_*' $ as $\Z[x]$-modules, they are also modules over the larger ring $\Z[\![x^{-1},x]$ in the obvious way, and this is used in the above statement. 

\begin{proof}
	We define $\widehat T$ and $\widebar T$ to be the isomorphism of chain complexes which are induced by the following natural isomorphisms:
	\[
	  \Z[x]\otimes_{\Z[x]}\Z[x]=\Z[x],\hspace{1cm}\Z[\![x^{-1},x]\otimes_{\Z[\![x^{-1},x]}\Z[\![x^{-1},x]=\Z[\![x^{-1},x].
	\]
	The definition of the $\cS$-complex structure on $\widetilde C_*^\otimes$ immediately implies that $\widehat T$ and $\widebar T$  are chain maps.
	It is also straightforward to check that Diagram \eqref{T-comm} commutes.
	
	Suppose $\sum_{i=-\infty}^{N}a_ix^i\in \fI$ and $\sum_{i=-\infty}^{N'}a_i'x^i\in \fI'$ such that $a_N$ and $a_{N'}'$ are non-zero.
	Then Lemma \ref{equiv-2-models} implies that we have
	\[(\sum_{i=-\infty}^{N}\sum_{j=0}^{\infty}v^j\delta_2(a_i)x^{i-j-1},0,\sum_{i=-\infty}^{N}a_ix^i)\in {\rm image}(i_*),\]
	  and there is a corresponding element in $\text{image}(i_\ast')$, replacing $a_i$ and $N$ by  $a_i'$ and $N'$.
	 The tensor product of these two elements, using the isomorphism $\overline T$, gives rise to an element in the image of the map
	 $i^\otimes_*:H(\hrC_*^\otimes) \to H(\brC_*^\otimes)$, which is of the following form:
	 \[(A,0,\sum_{i=-\infty}^{N}\sum_{j=-\infty}^{N'}a_ia_j'x^{i+j})\]
	 where $A\in C_*^\otimes \otimes\Z[\![x^{-1},x]$. By Lemma \ref{equiv-2-models}, the map $\overline \Phi$ sends this element to $\fI^\otimes $ 
	 which has the form $\sum_{i=-\infty}^{N}\sum_{j=-\infty}^{N'}a_ia_j'x^{i+j}$.
	 Consequently, $\frak I\otimes \frak I'\subseteq \frak I^\otimes$.
\end{proof}

\begin{cor}\label{h-sub-add} $h(\widetilde C^\otimes_\ast)= h(\widetilde C_\ast)+h(\widetilde C_\ast')$. 	
\end{cor}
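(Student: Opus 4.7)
The plan is to prove $h(\widetilde C^\otimes_\ast) = h(\widetilde C_\ast) + h(\widetilde C_\ast')$ by establishing the two opposite inequalities separately. The inequality $h(\widetilde C^\otimes_\ast) \geq h(\widetilde C_\ast) + h(\widetilde C_\ast')$ will follow almost immediately from Lemma \ref{ideal-tensor}, while the opposite inequality will be obtained by applying the first one to the dual $\cS$-complexes and invoking Proposition \ref{h-dual}.

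For the first inequality, assume both $h$-invariants on the right are finite (the edge cases can be handled by the same idea). By the definition of the $h$-invariant, there exist nonzero elements $Q(x) \in \fI$ and $Q'(x) \in \fI'$ with $\text{Deg}(Q) = -h(\widetilde C_\ast)$ and $\text{Deg}(Q') = -h(\widetilde C_\ast')$. Their leading coefficients are nonzero integers, and since $\Z$ is an integral domain, these leading coefficients have nonzero product, so $\text{Deg}(QQ') = -h(\widetilde C_\ast) - h(\widetilde C_\ast')$. But Lemma \ref{ideal-tensor} asserts that $QQ' \in \fI^\otimes$, so $h(\widetilde C^\otimes_\ast) \geq h(\widetilde C_\ast) + h(\widetilde C_\ast')$.

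For the reverse inequality, I would first verify that the natural evaluation pairing induces a canonical isomorphism of $\cS$-complexes
\[
  (\widetilde C_\ast \otimes \widetilde C'_\ast)^\dagger \;\iso\; \widetilde C^\dagger_\ast \otimes (\widetilde C'_\ast)^\dagger,
\]
which holds because both $\widetilde C_\ast$ and $\widetilde C'_\ast$ are finitely generated free complexes over $\Z$, so duality commutes with tensor product at the chain level in the usual way. Granting this and using Corollary \ref{s-mor-h} (so that chain homotopy equivalent $\cS$-complexes have the same $h$-invariant), I would apply the already-proved inequality to $\widetilde C^\dagger_\ast$ and $(\widetilde C'_\ast)^\dagger$, and then use Proposition \ref{h-dual} to convert everything back, giving
\[
  -h(\widetilde C^\otimes_\ast) \;=\; h\bigl((\widetilde C^\otimes_\ast)^\dagger\bigr) \;=\; h\bigl(\widetilde C^\dagger_\ast \otimes (\widetilde C'_\ast)^\dagger\bigr) \;\geq\; h(\widetilde C^\dagger_\ast) + h\bigl((\widetilde C'_\ast)^\dagger\bigr) \;=\; -h(\widetilde C_\ast) - h(\widetilde C'_\ast),
\]
which rearranges to the desired upper bound. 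The main obstacle is the verification of the displayed $\cS$-isomorphism: the underlying chain isomorphism is completely standard, so the real content is a Koszul-sign check confirming that the dual $\chi^\dagger$ of the tensor product agrees, via the natural pairing, with the tensor product $\chi^\otimes$ of the duals, using the sign conventions fixed in Subsections \ref{sec:dual} and \ref{sec:tensor}. Once this compatibility is confirmed, the argument collapses to the two-line manipulation above.
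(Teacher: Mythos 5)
Your proposal is correct, and the first inequality matches the paper exactly. For the reverse inequality, however, you take a genuinely different route: you invoke the $\cS$-isomorphism $(\widetilde C_\ast \otimes \widetilde C'_\ast)^\dagger \cong \widetilde C^\dagger_\ast \otimes (\widetilde C'_\ast)^\dagger$, apply the first inequality to the two duals, and then convert back via Proposition~\ref{h-dual}. The paper avoids the need for this isomorphism altogether: instead of dualizing the whole tensor product, it tensors the evaluation morphism $Q\colon \widetilde C_\ast^\dagger\otimes\widetilde C_\ast\to\widetilde C_\ast^0$ of Remark~\ref{sign} with $\operatorname{id}_{\widetilde C'_\ast}$ to produce a morphism of $\cS$-complexes $\widetilde C_\ast^\dagger\otimes\widetilde C^\otimes_\ast\to\widetilde C'_\ast$, and then chains Corollary~\ref{s-mor-h} with the already-proved subadditivity and Proposition~\ref{h-dual}. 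Both arguments work, but note what each buys: your version establishes a stronger statement (the full $\cS$-isomorphism between dual-of-tensor and tensor-of-duals), at the cost of the Koszul-sign check you flag as the ``main obstacle''; the paper's version only needs $Q$ to be a morphism --- not an isomorphism onto a dual --- which is precisely what Remark~\ref{sign} asserts, so the sign bookkeeping has already been discharged elsewhere. If you do want to pursue your route, carrying out the sign check would be worthwhile, since that compatibility is implicitly used (in weaker form) in several places in Section~\ref{sec:equivtheories}; but be aware you are proving more than is needed for this corollary.
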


\begin{proof}
	Lemma \ref{ideal-tensor} implies that $h(\widetilde C^\otimes_\ast)\geq h(\widetilde C_\ast)+h(\widetilde C_\ast')$.
	The morphism $Q$ in Remark \ref{sign} induces a morphism from 
	$\widetilde C_\ast^\dagger \otimes \widetilde C^\otimes_\ast$ to $\widetilde C_\ast'$. Then we have:
	\begin{align*}
		h(\widetilde C_\ast')&\geq h(\widetilde C_\ast^\dagger \otimes \widetilde C^\otimes_\ast) \geq -h(\widetilde C_\ast)+h(\widetilde C^\otimes_\ast).
	\end{align*}
	The first inequality frollows from Corollary \ref{s-mor-h}, while the second inequality follows from Proposition \ref{h-dual} and another application of Lemma \ref{ideal-tensor}. 	
\end{proof}

\subsection{Local equivalence groups}\label{sec:localequiv}

We next introduce an equivalence relation on the set of $\cS$-complexes, essentially that of ``(chain) local equivalence'' as defined in \cite{stoffregen}; see also \cite{hmz, hendrickshomlidman}. In those references, the focus is on equivariant homological algebra over $\text{Pin}(2)$ and $\Z/4$. In this sense our setup, which is in principle that of $S^1$-equivariant homological algebra, is more basic. However, we allow for more general coefficient rings, and will later consider a filtered analogue of these constructions, providing a broader context.

A {\emph{preorder}} on a set $S$ is a binary relation $\dot{\leqslant}$ which is reflexive and transitive. The quotient set $S/\sim$ obtained by identifying elements $s,t\in S$ with $s\, \dot{\leqslant}\, t$ and $t\, \dot{\leqslant}\, s$ is a partially ordered set with the induced binary relation, denoted $\leqslant$.

To any category, we may define a preorder on objects as follows: two objects $s$ and $t$ have $s\, \dot{\leqslant} \, t$ if there is a morphism from $s$ to $t$. We apply this to the category of $\Z$-graded $\cS$-complexes over a commutative ring $R$, and call the resulting set $\Theta_{R}^\cS$:
\[
	\Theta_{R}^\cS := \left\{ \cS\text{-complexes over } R \right\} /\sim
\]
\[
	 (\widetilde C, \widetilde d, \chi) \sim ( \widetilde C', \widetilde d', \chi') \;\;\; \Longleftrightarrow \;\;\; \exists\;  f:\widetilde C_\ast \to \widetilde C_\ast', \; f':\widetilde C'_\ast \to \widetilde C_\ast
\]
Here $f$ and $f'$ are morphisms of $\cS$-complexes. We refer to the equivalence relation $\sim$ as {\emph{local equivalence}}. An equivalence class in $\Theta_{R}^\cS$ will be denoted $[(\widetilde C_\ast, \widetilde d,\chi)]$. Then
\[
	[(\widetilde C_\ast, \widetilde d,\chi)] \leqslant [(\widetilde C'_\ast, \widetilde d',\chi')] \;\;\; \Longleftrightarrow \;\;\; \exists\;  f:\widetilde C_\ast \to \widetilde C_\ast'
\]
Thus $\Theta_R^\cS$ is a partially ordered set. Our above work implies the following:

\begin{prop}
	The set $\Theta_R^\cS$ has the natural structure of an abelian group, compatible with the partial order $\leqslant$, making it a partially ordered abelian group.
\end{prop}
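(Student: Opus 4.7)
The plan is to equip $\Theta_R^\cS$ with the operation $[\widetilde C] + [\widetilde C'] := [\widetilde C \otimes \widetilde C']$ coming from the tensor product of $\cS$-complexes constructed in Section \ref{sec:tensor}, and to verify in turn well-definedness, the group axioms, and compatibility with the partial order. The identity will be $[\widetilde C^0] = [R]$, where $R$ denotes the trivial $\cS$-complex from Remark \ref{sign}, and the inverse of $[\widetilde C]$ will be the class of the dual complex $\widetilde C^\dagger$ from Section \ref{sec:dual}.

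Well-definedness amounts to checking that if $f:\widetilde C \to \widetilde D$ and $g:\widetilde C' \to \widetilde D'$ are morphisms of $\cS$-complexes, then the Koszul-signed tensor product $f \otimes g$ is again a morphism of $\cS$-complexes. Inspection of the block form \eqref{eq:tilde-map} shows that $f \otimes g$ sends the distinguished generator $1 \otimes 1$ to itself (since both $f$ and $g$ do so, up to lower-order terms) and commutes with $\chi^\otimes = \chi \otimes 1 + \varepsilon \otimes \chi'$ because $f$ and $g$ commute with $\chi$ and $\chi'$. Commutativity and associativity of the operation follow from the standard Koszul braiding isomorphism $\widetilde C \otimes \widetilde C' \cong \widetilde C' \otimes \widetilde C$ and the associator, each of which one verifies restricts to an isomorphism of $\cS$-complexes. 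The identity axiom $\widetilde C \otimes R \cong \widetilde C$ is immediate.

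The main work is in producing inverses. For one direction, the evaluation morphism $Q$ of Remark \ref{sign}, precomposed with the Koszul braiding, gives a morphism $\widetilde C \otimes \widetilde C^\dagger \to \widetilde C^0$, establishing $[\widetilde C] + [\widetilde C^\dagger] \leqslant 0$. For the opposite inequality, I will construct a coevaluation morphism $\eta: \widetilde C^0 \to \widetilde C \otimes \widetilde C^\dagger$ by the formula $\eta(1) = \sum_i \zeta_i \otimes \zeta_i^*$, where $\{\zeta_i\}$ is any homogeneous $R$-basis of $\widetilde C$ and $\{\zeta_i^*\}$ is its dual basis. A direct calculation using the dual sign conventions $\widetilde d^\dagger (f) = -\varepsilon(f) \circ \widetilde d$ and $\chi^\dagger(f) = -\varepsilon(f) \circ \chi$ from Section \ref{sec:dual} shows that $\widetilde d^\otimes(\eta(1)) = 0$ and $\chi^\otimes(\eta(1)) = 0$. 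Choosing the basis $\{\zeta_i\}$ to include the distinguished generator $1$ of $\widetilde C$ identifies the corresponding summand of $\eta(1)$ with the distinguished generator $1 \otimes 1^*$ of $\widetilde C \otimes \widetilde C^\dagger$, verifying the final condition for $\eta$ to be a morphism of $\cS$-complexes. Consequently $0 \leqslant [\widetilde C] + [\widetilde C^\dagger]$, and combining gives the equality.

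For compatibility with the partial order, if $[\widetilde C] \leqslant [\widetilde D]$ is witnessed by a morphism $f: \widetilde C \to \widetilde D$, then $f \otimes 1_{\widetilde E}: \widetilde C \otimes \widetilde E \to \widetilde D \otimes \widetilde E$ is a morphism by the well-definedness step, giving $[\widetilde C] + [\widetilde E] \leqslant [\widetilde D] + [\widetilde E]$. The anticipated main obstacle is the sign bookkeeping in verifying that the coevaluation morphism $\eta$ is a chain map and commutes with $\chi^\otimes$, particularly in the $\Z/2N$-graded setting where one must also confirm that $\eta$ is independent of the chosen homogeneous basis; beyond that, all other verifications are essentially formal consequences of the symmetric monoidal structure on the category of chain complexes of finitely generated free $R$-modules.
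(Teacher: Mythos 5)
Your proof is correct and takes essentially the same approach as the paper: addition via tensor product, identity given by the trivial $\cS$-complex, inverses given by duals, and invertibility established by pairing the evaluation morphism of Remark \ref{sign} with a coevaluation morphism sending $1$ to the identity endomorphism. You spell out the coevaluation $\eta(1)=\sum_i \zeta_i\otimes\zeta_i^*$ and flag the sign checks (which do go through), whereas the paper states the corresponding morphism with the tensor factors in the opposite order and leaves the verification implicit.
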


\begin{proof}
	Addition is given by the tensor product of $\cS$-complexes. Associativity follows because the natural tensor product associativity isomorphism, written
	\[
		(\widetilde C_\ast \otimes \widetilde C_\ast' ) \otimes \widetilde C_\ast'' \cong \widetilde C_\ast \otimes ( \widetilde C_\ast' \otimes \widetilde C_\ast''  ),
	\]
	 is an isomorphism of $\cS$-complexes. Similarly, commutativity follows because the map $\widetilde C_\ast \otimes  \widetilde C'_\ast\to \widetilde C'_\ast \otimes  \widetilde C_\ast$ defined by $\alpha\otimes \alpha'\mapsto (-1)^{ij}\alpha'\otimes \alpha$, where $\alpha$ and $\alpha'$ have respective gradings $i$ and $j$, is an isomorphism of $\cS$-complexes.
	 
	  Next, the addition operation is well-defined on $\Theta_R^\cS$, as it is straightforward to verify that the tensor product of two $\cS$-chain homotopy equivalences is again an $\cS$-chain homotopy equivalence. The identity element is represented by the trivial $\cS$-complex $\widetilde C^0_\ast = R$ with $\widetilde d^0=0$, and the inverse of an $\cS$-complex $\widetilde C_\ast$ is given by its dual $\widetilde C_\ast^\dagger$. Indeed, the natural pairing of Remark \ref{sign} and the morphism $R=\widetilde C^0_\ast \to \widetilde C_\ast^\dagger \otimes \widetilde C_\ast$ sending $1$ to the identity show that $\widetilde C_\ast^\dagger \otimes \widetilde C_\ast$ is equivalent to $\widetilde C^0_\ast$.
\end{proof}

When we want to consider $\Z/2N$-graded $\cS$-complexes over $R$, we may form an analogous partially ordered group, and we denote this by $\Theta_{R,\Z/2N}^{\cS}$:
\[
	\Theta_{R,\Z/2N}^\cS := \left\{ \Z/2N\text{-graded } \cS\text{-complexes over } R \right\} /\sim
\]
 When our $\cS$-complexes are graded by $\Z/N$ and $R$ has characteristic $2$, we similarly have $\Theta_{R,\Z/N}^{\cS}$. If in this latter case $N=1$ (i.e. there are no gradings) then we write $\Theta_{R,\emptyset}^{\cS}$. 
 
 The $h$-invariant of Definition \ref{h-def} extends to the case when the coefficient ring is any commutative ring $R$. In the case that $R$ is an integral domain, $h$ is a homomorphism:

\begin{prop}
	Let $R$ be an integral domain. The $h$-invariant induces a homorphism $h: \Theta_{R}^\cS \to \Z$ of partially ordered abelian groups. If $R$ is a field, then $h$ an isomorphism. In general, the $h$-invariant factors as follows, where $\text{{\emph{Frac}}}(R)$ is the field of fractions of $R$:
	\begin{align*}
		\Theta^\cS_{R}  &\longrightarrow \Theta^\cS_{\text{{\emph{Frac}}}(R),\Z/2} \xrightarrow{\,\;h\;\,} \Z \qquad &\text{\emph{char}}(R)\neq 2\\
		\Theta^\cS_{R}  &\longrightarrow \Theta^\cS_{\text{{\emph{Frac}}}(R),\emptyset} \xrightarrow{\,\;\;h\;\;\;} \Z \qquad &\text{\emph{char}}(R)= 2
	\end{align*}
	
\end{prop}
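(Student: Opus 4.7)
The plan is to establish this in three steps: $h$ is a well-defined homomorphism of partially ordered abelian groups; $h$ factors through the claimed intermediate groups; and $h$ is an isomorphism when $R$ is a field. For the first step, Corollary \ref{s-mor-h} shows that the existence of a morphism $\widetilde C \to \widetilde C'$ forces $h(\widetilde C) \le h(\widetilde C')$, so $h$ descends to $\Theta^\cS_R$ and preserves the partial order. Additivity under tensor product (Corollary \ref{h-sub-add}) and negation under dualization (Proposition \ref{h-dual}) then imply $h$ is a group homomorphism, using that the group operation on $\Theta^\cS_R$ is $\otimes$ and inverses are given by dual $\cS$-complexes.

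For the factorization, I would observe that $\widetilde C \mapsto \widetilde C \otimes_R \text{Frac}(R)$ is a group homomorphism $\Theta^\cS_R \to \Theta^\cS_{\text{Frac}(R)}$, since base change by a flat $R$-algebra preserves $\cS$-complex structure, morphisms, and tensor products; likewise the reduction of grading from $\Z$ to $\Z/2$ (or to no grading when $\text{char}(R)=2$) defines a functor. Invariance of $h$ under both operations reduces to the observation that $h$ depends only on the ideal $\fI \subset R[\![x^{-1}, x]$ of the small equivariant triangle: this ideal is preserved under flat base change, and its minimum $x$-degree does not drop upon passing to $\text{Frac}(R)$ since any $\text{Frac}(R)$-linear combination of elements of $\fI$ can be multiplied by a nonzero element of $R$ to land in $\fI$ without altering its $x$-degree (as $R$ is a domain); and $\fI$ is built from $d, v, \delta_1, \delta_2$, hence is unaffected by grading reduction.

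For the isomorphism claim when $R$ is a field, surjectivity is handled by explicit models: for $n \geq 1$, take $\widetilde C^n$ with $C$ a free $R$-module on generators $\alpha_0, \ldots, \alpha_{n-1}$, $d = 0$, $\delta_2 = 0$, $v(\alpha_i) = \alpha_{i+1}$ (with $v(\alpha_{n-1}) = 0$), $\delta_1(\alpha_{n-1}) = 1$, and $\delta_1(\alpha_i) = 0$ for $i < n-1$. Proposition \ref{h-g-reinterpret} applied to $\alpha_0$ then yields $h(\widetilde C^n) = n$. For $n < 0$ take $(\widetilde C^{|n|})^\dagger$, using Proposition \ref{h-dual}; for $n=0$ take the trivial complex. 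For injectivity, by additivity it suffices to show $h(\widetilde C) = 0 \Rightarrow [\widetilde C] = [\widetilde C^0]$. Proposition \ref{h-g-reinterpret}, over a field, converts $h(\widetilde C) = 0$ into the two conditions: (a) $\delta_1$ vanishes on $\ker d$ and (b) $\delta_2(1) \in \text{image}(d)$. Using a splitting of $\ker d \hookrightarrow C$ (available because $R$ is a field), condition (a) produces $\Delta_1 : C \to R$ with $\Delta_1 \circ d = -\delta_1$, assembling into a morphism $\widetilde C \to \widetilde C^0$; condition (b) produces $\Delta_2 : R \to C$ with $d \circ \Delta_2 = \delta_2$, giving a morphism $\widetilde C^0 \to \widetilde C$. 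Together these establish $[\widetilde C] = [\widetilde C^0]$.

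The main obstacle is the injectivity step above: over an arbitrary integral domain, condition (a) is strictly weaker than $\delta_1$ being a coboundary, the gap measured by the $\text{Ext}$-contribution to $H^1(C;R)$ from torsion in $H_0(C)$. The field hypothesis eliminates this obstruction and makes $h$ a complete invariant; without it one must first pass to $\text{Frac}(R)$ to clear such obstructions, which is the conceptual reason for the factorization through $\Theta^\cS_{\text{Frac}(R), \Z/2}$ (or $\Theta^\cS_{\text{Frac}(R), \emptyset}$ in characteristic 2) in the general statement.
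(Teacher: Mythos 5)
Your proposal is correct and follows the same overall architecture as the paper's proof (homomorphism via Corollaries \ref{s-mor-h}, \ref{h-sub-add} and Proposition \ref{h-dual}; field case via Proposition \ref{h-g-reinterpret}; factorization by observing that the ideal $\fI$ is insensitive to passage to $\text{Frac}(R)$ and to grading reduction). There are two cosmetic differences worth noting. For surjectivity, the paper only exhibits a single $\cS$-complex with $h=1$ (one generator $\alpha$ with $d=v=\delta_2=0$, $\delta_1(\alpha)=1$) and then invokes the already-established homomorphism property to cover all of $\Z$; your explicit family $\widetilde C^n$ with a $v$-chain of length $n$ is valid but does more work than needed. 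For injectivity, the paper constructs only the morphism $\widetilde C^0\to\widetilde C$ from condition $\delta_2(1)\in\operatorname{im}(d)$, and then obtains $\widetilde C\to\widetilde C^0$ by applying the same construction to $\widetilde C^\dagger$ (which also has $h=0$, by Proposition \ref{h-dual}) and dualizing; you instead build $\Delta_1$ directly from the vanishing of $\delta_1$ on $\ker d$ using a field splitting. These are equivalent by duality, as condition (a) for $\widetilde C$ is exactly condition (b) for $\widetilde C^\dagger$. Your final commentary correctly identifies why the field hypothesis is needed — over a general domain, $\delta_1$ vanishing on $\ker d$ need not extend to a map $\Delta_1$ with $\Delta_1 d=-\delta_1$ — which is precisely the obstruction that clearing denominators over $\text{Frac}(R)$ removes.
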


\begin{proof}
	That $h$ is a homomorphism follows from Corollaries \ref{s-mor-h} and \ref{h-sub-add} and Proposition \ref{h-dual}, which directly adapt when $\Z$ is replaced by any integral domain $R$. 
	
	Next, suppose that $R$ is a field. Let $\widetilde C_\ast$ be an $\cS$-complex over $R$ such that $h(\widetilde C)=0$. Proposition \ref{h-g-reinterpret} implies that there is $\alpha\in C_\ast$ such that $\delta_2(1)=d\alpha$. Then a morphism $\widetilde \lambda:\widetilde C_*^0 \to \widetilde C_\ast$, from the trivial $\cS$-complex $\widetilde C_\ast^0$ to $\widetilde C_\ast$, is given in components by $\lambda=\mu=\Delta_1=0$ and $\Delta_2(1)=\alpha$. The same construction applies to the dual, giving a morphism $\widetilde C_*^0 \to \widetilde C^\dagger_\ast$ whose dual is a morphism $\widetilde C_\ast \to \widetilde C_\ast^0$. Thus $\widetilde C_\ast$ is locally equivalent to the trivial complex. This implies that $h$ is injective. To see that $h$ is surjective, take the complex $\widetilde C_\ast = C_\ast\oplus C_{\ast-1}\oplus R$ with $C_\ast$ freely generated by a single element $\alpha$, with $d=v=\delta_2=0$ and $\delta_1(\alpha)=1$.  Proposition \ref{h-g-reinterpret} implies that $h(\widetilde C_\ast)=1$, so $h$ is surjective.
	
	Proposition \ref{h-g-reinterpret} also makes it clear that the $h$-invariant is the same whether we work over $R$ or $\text{Frac}(R)$, and similarly the grading of $\widetilde C_\ast$ plays no essential role in its determination.
\end{proof}

In particular, the $h$ invariant depends on the weakest possible type of grading we allow for $\cS$-complexes over $R$, and moreover only sees the field of fractions of $R$. In particular, the $h$ invariant defined with $\Z$-coefficients is the same as if it is defined with $\Q$-coefficients, and in this case only the $\Z/2$-gradings of the complexes are necessary.

\subsection{Nested sequences of ideals}\label{subsec:ideals}

Here we describe a refinement of the invariant $h(\widetilde C )$ in the form of a sequence of ideals which in general depends on more than the field of fractions of $R$. 

We begin with the concrete example in which $R=\Z$. In this case, we have another natural invariant associated to the local equivalence class of an $\cS$-complex. Let $(\widetilde C_\ast, \widetilde d,\chi)$ be an $\cS$-complex over $\Z$, and let $\fI\subset \Z[\![x^{-1},x]$ be the associated $\Z[x]$-module $\text{im}(\mathfrak{i}_\ast)$. Set
\[
	  \fg(\widetilde C):={\rm gcd}\{m\mid\text{ $m$ is the leading factor of $Q(x)\in \fI$ with ${\rm Deg}(Q(x))=-h(\widetilde C)$}\} \in \Z_{>0}
\]
An application of Corollary \ref{localization} shows that if there is a morphism from $\widetilde C$ to $\widetilde C'$, then 		
	$\fg(\widetilde C)$ is divisible by $\fg(\widetilde C')$. From Subsection \ref{sec:tensor} we gather that $\fg(\widetilde C\otimes \widetilde C')$ divides $\fg(\widetilde C)\fg(\widetilde C')$. There is also an analogue of Proposition \ref{h-g-reinterpret} for $\fg(\widetilde C)$:
	\begin{equation*}\label{def-g-pos}
		 \fg(\widetilde C)=\begin{cases} {\rm gcd}\{\delta_1v^{k-1}(\alpha)\mid\text{ $\alpha$ satisfies \eqref{h-pos}}\}, & h(\widetilde C)>0 \\ 
		 {\rm gcd}\{a_{-k}\mid\text{  $a_0$, $\dots$, $a_{-k}$ and $\alpha$ satisfy \eqref{h-neg}}\}, & h(\widetilde C) \leqslant 0 \end{cases}
	\end{equation*}
	We have an induced map $\fg:\Theta_\Z^\cS\to \Z_{>0}$ of partially ordered sets, where $\Z_{>0}$ is given the partial order of divisibility. More generally, for any commutative ring $R$, we obtain a similar map from $\Theta_R^\cS$ to the set of ideals of $R$.

	We expand on the above construction in the case that $R$ is a general integral domain. Let $(\widetilde C_\ast, \widetilde d,\chi)$ be an $\cS$-complex over $R$, and again let $\fI\subset R[\![x^{-1},x]$ be the associated $R[x]$-submodule $\text{im}(\mathfrak{i}_\ast)$. Then we make the following:
	
\begin{definition}\label{j-def}
	For an $\cS$-complex $\widetilde C$ as above we define its associated ideal sequence
	\begin{equation}
		\cdots \subseteq J_{i+1} \subseteq J_{i} \subseteq J_{i-1} \subseteq \cdots \subseteq  R \label{eq:nestedidealdef}
	\end{equation}
	where we also write $J_{i}= J_i(\widetilde C)$, as follows:
	\begin{equation}
		J_i(\widetilde C) := \left\{ a_0\in R \mid \exists \; a_0x^{-i} + a_{-1}x^{-i-1} + \cdots \in \fI \right\}.\;\; \diamd \label{eq:idealjidef}
	\end{equation}
\end{definition}

The ideals $J_i$ defined by \eqref{eq:idealjidef} are nested as in \eqref{eq:nestedidealdef} by virtue of the fact that $\fI$ is an $R[x]$-submodule. The maximum $i\in \Z$ such that $J_i\neq 0$ is by definition the invariant $h:=h(\widetilde C)$, and so we may write
\begin{equation}
	J_h \subseteq J_{h-1} \subseteq J_{h-2} \subseteq \cdots \subseteq  R \label{eq:sequenceofideals}
\end{equation}
Note $J_h(\widetilde C)=\fg(\widetilde C)$. 
Corollary \ref{localization} implies that the nested sequence of ideals \eqref{eq:sequenceofideals} is an invariant of the local equivalence class of the $\cS$-complex $(\widetilde C, \widetilde d, \chi)$. More generally, for a morphism $\widetilde C \to \widetilde C'$, with associated ideals $J_i$ and $J'_i$, respectively, we have $J_i\subseteq J_i'$. Lemma \ref{ideal-tensor} implies that these ideals behave with respect to tensor products as follows:
\[
	J_i(\widetilde C) \cdot J_{j}(\widetilde C') \subset J_{i+j}(\widetilde C\otimes \widetilde C').
\]
With respect to taking duals of $\cS$-complexes, we have
\[
	J_i(\widetilde C^\dagger ) = 0 \qquad \Longleftrightarrow \qquad J_{-i-1}(\widetilde C) \neq 0,
\]
which follows from the characterization of the $h$-invariant in terms of $J_i$ mentioned above.

\newpage

%!TEX root = main.tex

\section{Equivariant invariants from singular instanton theory}\label{sec:eq-I}

In this section we apply the machinary of the previous section to the framed instanton $\cS$-complex defined in Section \ref{sec:tilde}, associated to a based knot in an integer homology 3-sphere $(Y,K)$. The output is a triangle of equivariant Floer homology groups,  $\hrI_*(Y,K)$, $\crI_*(Y,K)$ and $\brI_*(Y,K)$, the Fr\o yshov-type invariant $\hinv_\Z(Y,K)$, and a nested sequence of ideals.

\subsection{Equivariant Floer homology groups}\label{sec:eq-I-i}

For an oriented based knot $K$ in an integer homology sphere $Y$, we constructed a $\Z/4$-graded $\cS$-complex $(\widetilde C_\ast(Y,K), \widetilde d)$ whose $\cS$-chain homotopy type is a natural invariant of the pair $(Y,K)$. Associated to this $\cS$-complex, we have equivariant chain complexes $(\hrC_\ast (Y, K),\widehat  d)$, $(\crC_\ast (Y, K),\widecheck d)$ and $(\brC_\ast (Y, K),\widebar d)$ as defined in Subsection \ref{equiv-model-1}. We write
\begin{equation}
	\hrI_*(Y,K), \qquad \crI_*(Y,K), \qquad \brI_*(Y,K) \label{eq:equivsinggroups}
\end{equation}
for the homology groups of these chain complexes and call them the {\it equivariant singular instanton homology} groups of $(Y,K)$. These homology groups are $\Z[x]$-modules. Our notation is motivated by the notation in the monopole Floer homology of \cite{km:monopole}, and the three groups \eqref{eq:equivsinggroups} are respectively called ``{\rm I-from}'', ``{\rm I-to}'' and ``{\rm I-bar}''.

We may alternatively use the small model for equivariant Floer homology groups in Subsection \ref{small-model} to define equivariant singular instanton homology groups. In particular,
\begin{equation*}
	\brI_*(Y,K)\cong \Z[\![x^{-1},x] 
\end{equation*}
 Invariance of the $\cS$-chain homotopy type of $(\widetilde C_\ast (Y, K),\tilde d)$ implies that these $\Z[x]$-modules are invariants of $(Y,K)$. Moreover, we have exact triangles:
\begin{equation}\label{top-equiv-triangle}
	\xymatrix{
	\crI_*(Y,K) \ar[rr]^{j_*}& &
	\hrI_*(Y,K)\ar[dl]^{i_*}\\
	& \brI_\ast(Y,K) \ar[ul]^{p_*} &}
\end{equation}

\begin{equation}\label{top-equiv-triangle-2}
	\xymatrix{
	\hrI_*(Y,K) \ar[rr]^{x_*}& &
	\hrI_*(Y,K)\ar[dl]^{y_*}\\
	& \widetilde I_\ast(Y,K) \ar[ul]^{z_*} &}
\end{equation}
induced by \eqref{equiv-triangle} and \eqref{equiv-triangle-3}. The equivariant singular instanton homology groups are $\Z/4$-graded over the graded ring $\Z[x]$, where $x$ has grading $-2$. With respect to these gradings, the maps $p_\ast$ and $i_\ast$ have degree zero, while $j_\ast$ has degree $-1$. Moreover, the maps $x_*$, $y_*$ and $z_*$ have respective degrees $-2$, $0$ and $1$.

We may similarly define the equivariant singular instanton {\emph{cohomology}} groups $\brI^\ast(Y,K)$, $\hrI^\ast(Y,K)$ and $\brI^\ast(Y,K)$. These satisfy similar properties. In the triangles \eqref{top-equiv-triangle} and \eqref{top-equiv-triangle-2}, the arrows are reversed and the maps are replaced by $i^\ast$, $j^\ast$, $p^\ast$, $x^*$, $y^*$ and $z^*$.

Equivariant singular instanton homology groups are functorial with respect to negative definite pairs. A negative definite pair $(W,S):(Y,K)\to (Y',K')$ induces a morphism $\widetilde \lambda_{(W,S)}:\widetilde C_\ast (Y, K)\to 
\widetilde C_\ast (Y', K')$. The discussion of Subsection \ref{equiv-model-1} shows that this morphism induces morphisms of $\Z[x]$-modules $\hrI_*(W,S): \hrI_*(Y,K)\to \hrI_*(Y',K')$, $\crI_*(W,S): \crI_*(Y,K)\to \crI_*(Y',K')$ and $\brI_*(W,S): \brI_*(Y,K)\to \brI_*(Y',K')$.

\begin{theorem}
The equivariant singular instanton homology groups define functors
\begin{align*}
	\crI_\ast : \mathcal{H} \longrightarrow \text{{\emph{Mod}}}^{\Z/4}_{\Z[x]}\\
	\hrI_\ast : \mathcal{H} \longrightarrow \text{{\emph{Mod}}}^{\Z/4}_{\Z[x]}\\
	\brI_\ast :  \mathcal{H} \longrightarrow \text{{\emph{Mod}}}^{\Z/4}_{\Z[x]}
\end{align*}
from the category $\mathcal{H}$ of based knots in homology 3-spheres to the category of $\Z/4$-graded modules over the graded ring $\Z[x]$. The maps $i_\ast$, $j_\ast$, $p_\ast$ determine natural transformations. Similarly, we have cohomology functors, satisfying the same properties:
\begin{align*}
	\crI^\ast : \mathcal{H} \longrightarrow \text{{\emph{Mod}}}^{\Z/4}_{\Z[x]}\\
	\hrI^\ast : \mathcal{H} \longrightarrow \text{{\emph{Mod}}}^{\Z/4}_{\Z[x]}\\
	\brI^\ast :  \mathcal{H} \longrightarrow \text{{\emph{Mod}}}^{\Z/4}_{\Z[x]}
\end{align*}
\end{theorem}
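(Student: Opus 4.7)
The plan is to obtain the theorem as a formal consequence of two previously established results: Theorem \ref{thm:framedcat}, which says that $(Y,K)\mapsto (\widetilde{C}(Y,K),\widetilde{d})$ and $(W,S)\mapsto \widetilde{\lambda}_{(W,S)}$ define a functor from $\mathcal{H}$ to the homotopy category of $\Z/4$-graded $\cS$-complexes, and Proposition \ref{functoriality}, which promotes $\cS$-morphisms to $\Z[x]$-module chain maps on the equivariant complexes $\hrC_*$, $\crC_*$, $\brC_*$ in a way that is compatible with $\cS$-chain homotopies, strict composition, and identities. Composing these two assignments and then passing to homology will produce the desired functors.

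Concretely, for each object $(Y,K)$ I would organize the dependence of the equivariant complexes on auxiliary choices (metric, perturbation, framings, holonomy data) exactly as after Theorem \ref{thm:framedcat}: one obtains a transitive system indexed by admissible choices, and the equivariant homology assignment preserves this structure because the maps in Proposition \ref{functoriality}(iii) are strictly associative and unital. For a morphism $(W,S):(Y,K)\to (Y',K')$ of $\mathcal{H}$, apply $\widehat{(\cdot)}$, $\widecheck{(\cdot)}$, $\widebar{(\cdot)}$ to $\widetilde{\lambda}_{(W,S)}$ and then take homology. The identities $\widehat{\mathrm{id}}=\mathrm{id}$ and $\widehat{\lambda'\circ\lambda}=\widehat{\lambda'}\circ\widehat{\lambda}$, together with the fact (from Proposition \ref{functoriality}(ii)) that an $\cS$-chain homotopy between two representatives of a morphism produces a $\Z[x]$-module chain homotopy between the induced maps, give the required functoriality after passing to homology. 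All maps automatically preserve the $\Z/4$-grading because the chain-level constructions do.

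For the natural transformations $i_*$, $j_*$, $p_*$, observe that the defining triangle \eqref{equiv-triangle} is built from purely formula-based maps (inclusion, projection composed with the sign map $\varepsilon$, and $x^{-1}\cdot \zeta \mapsto -\chi(\zeta)$). Inspection of the explicit formulas for $\widehat{\lambda}$, $\widecheck{\lambda}$, $\widebar{\lambda}$ in the proof of Proposition \ref{functoriality} shows that they strictly commute with $i$, $j$, $p$ at the chain level; passing to homology gives naturality with respect to morphisms in $\mathcal{H}$. The same argument applies to the sequence \eqref{equiv-triangle-3} and yields the naturality of $x_*$, $y_*$, $z_*$.

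For the cohomology version I would dualize the equivariant chain complexes via $\mathrm{Hom}(-,\Z)$ using the sign conventions of Subsection \ref{sec:dual}. Since dualization is a contravariant exact functor that preserves chain homotopies, the previous steps transport verbatim, producing contravariant functors $\crI^\ast,\hrI^\ast,\brI^\ast$ on $\mathcal{H}$ together with natural transformations $i^\ast$, $j^\ast$, $p^\ast$. The only genuine bookkeeping issue — the transitive-system/canonical-isomorphism formalism needed to turn ``invariant up to isomorphism'' into an honest functor — is identical to the one already handled after Theorem \ref{thm:framedcat}, and does not interact with the passage to equivariant homology in any nontrivial way; thus I do not expect any substantive obstacles beyond correctly checking the strict commutations claimed above.
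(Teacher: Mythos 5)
Your proposal matches the paper's approach exactly: compose the $\cS$-complex functor from Theorem \ref{thm:framedcat} with the strictly functorial passage to equivariant complexes given by Proposition \ref{functoriality}, pass to homology, and handle auxiliary choices with the transitive-system formalism already set up after Theorem \ref{thm:framedcat}. The paper simply states the theorem with this reasoning in the surrounding paragraphs; your added detail on the strict commutation of $\widehat\lambda$, $\widecheck\lambda$, $\widebar\lambda$ with $i$, $j$, $p$ (which reduces to $\widetilde\lambda$ commuting with $\chi$) and on dualization for the cohomology functors is correct and fills in what the paper leaves implicit.
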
 

\begin{remark}
	In this article, we have restricted our attention to negative definite pairs, in the sense of Definition \ref{def:negdef}. However, we hope that the functoriality of the equivariant singular instanton homology groups can be extended to other cobordisms. $\diamd$
\end{remark}

\begin{remark}
Technically, our constructions assign to a pair $(Y,K)$ a transitive system of equivariant singular instanton homology modules, indexed by the choices of auxiliary data, and to a cobordism $(W,S)$ a morphism of such transitive systems. We may then assign to each transitive system a module, as discussed after Theorem \ref{thm:framedcat}. $\diamd$
\end{remark}

Let $(Y,K)$ be a based knot in an integer homology 3-sphere and let $(-Y,-K)$ be its orientation reversal. From \eqref{eq:orrevgr} and the discussion in Subsection \ref{sec:dual} we conclude that the $\Z/4$-graded $\cS$-complex associated to $(-Y,-K)$ is naturally identified with the dual of the $\Z/4$-graded $\cS$-complex of $(Y,K)$. This implies the following:

\begin{prop}
	Let $r:\mathcal{H}\to \mathcal{H}$ denote the functor which reverses orientations, i.e. $r(Y,K)=(-Y,-K)$, and $r(W,S)=(-W,-S)$. Then we have the following equalities:
	\begin{align*}
	\crI^\ast \circ r =  \hrI_\ast, \qquad \hrI^\ast \circ r = \crI_\ast, \qquad \brI^\ast \circ r = \brI_\ast,\\
	\crI_\ast \circ r =  \hrI^\ast, \qquad \hrI_\ast \circ r = \crI^\ast, \qquad \brI_\ast \circ r = \brI^\ast.
\end{align*}
\end{prop}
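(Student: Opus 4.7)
The plan is to promote the chain-level identification described in the paragraph preceding the statement to a natural identification of the six functors above. The basic input is that the $\cS$-complex assigned to $(-Y,-K)$ is canonically $\cS$-chain homotopy equivalent to the dual of the $\cS$-complex of $(Y,K)$. At the chain level this comes from the following observations: critical points are in bijection (traceless representations do not care about orientations), and for each pair $\alpha_1,\alpha_2$, time reversal gives a diffeomorphism $\breve M(\alpha_1,\alpha_2) \xrightarrow{\sim} \breve M(\alpha_2,\alpha_1)$ between the moduli spaces for $(Y,K)$ and $(-Y,-K)$. The grading identity \eqref{eq:orrevgr}, namely $\text{gr}_{-Y}(\alpha) \equiv 3 - \text{gr}_Y(\alpha) \pmod 4$, together with the conventions of Subsection \ref{sec:dual}, makes this identification match the grading of the dual $\cS$-complex. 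The signs also match: the operators $v$ and $\delta_2$ change sign when the orientation of $K$ is reversed (Remarks \ref{rmk:delta2or} and \ref{rmk:signsofvmaprel}), and this is exactly what is required by the dual formulas $v^\dagger(f)=f\circ v$ and $\delta_2^\dagger(m)=-m\delta_1$ from Subsection \ref{sec:dual}.

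Having established $\widetilde C(-Y,-K)\simeq \widetilde C(Y,K)^\dagger$ in the homotopy category of $\cS$-complexes, I would apply Lemma \ref{dual-small-equiv-tr}, which yields natural isomorphisms of chain complexes
\[
(\fhrC^\dagger_\ast, \widehat{\fd}^\dagger) \cong \fcrC^\ast, \qquad (\fcrC^\dagger_\ast, \widecheck{\fd}^\dagger) \cong \fhrC^\ast, \qquad (\fbrC^\dagger_\ast, \overline{\fd}^\dagger) \cong \fbrC^\ast,
\]
compatible with the small equivariant triangle maps. Combined with Lemma \ref{equiv-2-models}, which identifies the small and large equivariant models over $\Z[x]$, this produces natural isomorphisms
\[
\hrI_\ast(-Y,-K) \cong \crI^\ast(Y,K), \qquad \crI_\ast(-Y,-K) \cong \hrI^\ast(Y,K), \qquad \brI_\ast(-Y,-K) \cong \brI^\ast(Y,K),
\]
which are three of the six equalities claimed. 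Applying $r$ to both sides and using $r\circ r = \text{id}$ yields the remaining three identities.

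To upgrade this to a statement about functors, I would check naturality under morphisms in $\cH$. Given a negative definite pair $(W,S)\co (Y,K)\to (Y',K')$, the orientation-reversed cobordism $r(W,S)=(-W,-S)\co (-Y',-K')\to (-Y,-K)$ has the same moduli spaces as $(W,S)$ but with reversed orientations and swapped roles of the two ends; running through the definitions of $\lambda$, $\mu$, $\Delta_1$, $\Delta_2$ in Section \ref{sec:tilde} identifies the resulting $\cS$-complex morphism $\widetilde\lambda_{(-W,-S)}$ with the dual morphism $\widetilde\lambda_{(W,S)}^\dagger$, up to $\cS$-chain homotopy. Proposition \ref{functoriality} together with Lemma \ref{dual-small-equiv-tr} then gives the desired equality of induced maps on equivariant (co)homology.

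The main obstacle will be the sign bookkeeping: keeping track of how the grading shift \eqref{eq:orrevgr} interacts with the dual conventions from Subsection \ref{sec:dual}, confirming that the orientation data for moduli spaces on $(-Y,-K)$ matches our orientation convention for duals (especially the canonical homology orientation used in defining $\Delta_1,\Delta_2$), and verifying that the identification of $\widetilde\lambda_{(-W,-S)}$ with $\widetilde\lambda_{(W,S)}^\dagger$ respects the off-diagonal entries $\mu$ and $\Delta_1,\Delta_2$. None of these checks is conceptually difficult, but each requires tracing through the orientation conventions set up in Subsection \ref{subsec:ors}.
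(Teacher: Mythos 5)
Your proposal is correct and takes essentially the same route as the paper: establish the identification $\widetilde C(-Y,-K)\simeq \widetilde C(Y,K)^\dagger$ and then invoke Lemma \ref{dual-small-equiv-tr} to transfer this to the equivariant triangles. The paper in fact gives no explicit proof of the proposition, presenting it as an immediate consequence of \eqref{eq:orrevgr} and the duality conventions in Subsection \ref{sec:dual}; your write-up merely spells out the same steps in more detail.
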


\begin{remark}
	Observe that we are working over the coefficient ring $\Z$, and yet we do not need homology orientations, as is necessary, for example, in \cite{km:monopole}. This is because we have restricted our attention to knots in homology 3-spheres and cobordisms which are negative definite pairs, which have canonical homology orientations. $\diamd$
\end{remark}

\begin{remark}
	All of the above works if $\Z$ is replaced by any commutative ring $R$. $\diamd$
\end{remark}

\subsection{Local equivalence and the concordance invariant $\hinv$}

Recall the category $\mathcal{H}$, whose objects are based knots in integer homology 3-spheres, and whose morphisms are negative definite pairs. Consider the set $\Theta_\Z^{3,1}$ obtained from $\mathcal{H}$ by the general procedure described in Subsection \ref{sec:localequiv}. We obtain the following description:
\[
	\Theta_\Z^{3,1} := \left\{ (Y,K): Y \text{ an integer homology 3-sphere}, K\subset Y \text{ a knot}  \right\} /\sim
\]
\[
	 (Y,K) \sim ( Y', K') \;\;\; \Longleftrightarrow \;\;\; \begin{array}{c} \exists\; \text{ negative definite pairs}\\ (Y,K)\to (Y',K'),\;\; (Y',K')\to (Y,K)\end{array}
\]
The partially ordered set $\Theta_\Z^{3,1}$ has a group operation: the identity is represented by the unknot in the 3-sphere, the group operation is connected sum of knots, and inverses are obtained by reversing orientation. This abelian group is also a partially ordered group, with $[(Y,K)]\leqslant [(Y',K')]$ if and only if there is a negative definite pair from $(Y,K)$ to $(Y',K')$. Furthermore, there is a natural homomorphism to $\Theta_\Z^{3,1}$ from the homology concordance group defined in the introduction. We will prove the following:

\begin{theorem}\label{thm:slocequivhomom}
	Let $R$ be a commutative ring. The assignment $(Y,K)\mapsto (\widetilde C_\ast(Y,K;R),\widetilde d,\chi)$ induces a homorphism $\Xi:\Theta_\Z^{3,1}\to \Theta_{R,\Z/4}^\cS$ of partially ordered abelian groups.
\end{theorem}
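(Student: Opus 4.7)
The plan is to deduce this formal statement from the two nontrivial results already established: the functoriality Theorem \ref{thm:framedcat} and the connected sum Theorem \ref{thm:connectedsumlocceoff}. The argument is essentially categorical; the substance lies entirely in those cornerstones.

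First, I would establish well-definedness and monotonicity of $\Xi$ simultaneously. A negative definite pair $(W,S):(Y,K)\to (Y',K')$ is, by construction, a morphism in $\mathcal{H}$, so by Theorem \ref{thm:framedcat} it induces a morphism of $\Z/4$-graded $\cS$-complexes over $R$,
\[
	\widetilde\lambda_{(W,S)}\co \widetilde C_\ast(Y,K;R)\longrightarrow \widetilde C_\ast(Y',K';R),
\]
well-defined up to $\cS$-chain homotopy equivalence. This immediately gives the implication $[(Y,K)]\leqslant [(Y',K')]\Rightarrow \Xi([(Y,K)])\leqslant \Xi([(Y',K')])$ in $\Theta^\cS_{R,\Z/4}$, and in particular shows that two-sided local equivalences in $\Theta_\Z^{3,1}$ are sent to two-sided local equivalences in $\Theta^\cS_{R,\Z/4}$, so that $\Xi$ descends to a well-defined, order-preserving map of underlying partially ordered sets.

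Next I would verify additivity. By Theorem \ref{thm:connectedsumlocceoff}, applied with base change to $R$-coefficients, there is an $\cS$-chain homotopy equivalence
\[
	\widetilde C_\ast(Y\# Y',K\# K';R)\;\simeq\;\widetilde C_\ast(Y,K;R)\otimes_R\widetilde C_\ast(Y',K';R),
\]
which a fortiori yields a local equivalence. Since connected sum is the group operation on $\Theta_\Z^{3,1}$ and tensor product is the group operation on $\Theta^\cS_{R,\Z/4}$, this gives $\Xi([(Y,K)]+[(Y',K')])=\Xi([(Y,K)])+\Xi([(Y',K')])$. To upgrade $\Xi$ from a semigroup map to a group homomorphism it suffices to check that identities are preserved, which then forces inverses to be preserved. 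Specializing additivity to $(Y',K')=(S^3,U)$ and using $(S^3\# Y, U\# K)=(Y,K)$ yields $\Xi([(S^3,U)])+\Xi([(Y,K)])=\Xi([(Y,K)])$; since $\Theta^\cS_{R,\Z/4}$ is an abelian group, cancellation forces $\Xi([(S^3,U)])$ to be the identity. (Equivalently, with $(Y,K)=(Y',K')=(S^3,U)$ one sees that $\Xi([(S^3,U)])$ is idempotent in a group, hence trivial.)

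I do not expect a genuine obstacle here, as the two deep inputs are already proven. The only mild subtlety to flag is the transitive-system nature of $\widetilde C_\ast(Y,K;R)$ — it depends on choices of Riemannian metric and perturbation only up to canonical $\cS$-chain homotopy equivalence. This causes no trouble, because local equivalence is defined as a relation strictly coarser than $\cS$-chain homotopy equivalence, so every representative of the transitive system determines the same class in $\Theta^\cS_{R,\Z/4}$, and the induced maps of Theorem \ref{thm:framedcat} are well-defined on these classes. Combining the three steps above proves that $\Xi$ is a homomorphism of partially ordered abelian groups.
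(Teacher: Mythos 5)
Your proof is correct and takes essentially the same approach as the paper, which proves this theorem by a brief remark citing Theorem \ref{thm:framedcat} (for well-definedness and monotonicity via cobordism maps) together with the connected sum theorem (for additivity). The one small slip is the citation: for the untwisted $R$-coefficient complex $\widetilde C_\ast(Y,K;R)$ with $R$ an arbitrary commutative ring, the relevant result is Theorem \ref{thm:connectedsum}, which already holds over any coefficient ring, rather than Theorem \ref{thm:connectedsumlocceoff} (the local-coefficients version over $\mathscr{R}=\Z[U^{\pm 1},T^{\pm 1}]$) followed by a base change — although that detour does also work, since every commutative ring is an $\mathscr{R}$-algebra via $U,T\mapsto 1$.
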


\noindent That the assignment induces a well-defined map $\Theta_\Z^{3,1}\to \Theta_{R,\Z/4}^\cS$ of partially ordered sets follows from the discussion in Subsection \ref{sec:framed}. That the map is a homomorphism will follow from our connected sum theorem, to be proved in Section \ref{sec:consum}.

\begin{definition}
	For a based knot in an integer homology 3-sphere $(Y,K)$, we define $h(Y,K)=\hinv_\Z(Y,K)$ to be the Fr\o yshov invariant of the $\cS$-complex $(\widetilde C(Y,K), \widetilde d)$. That is, $\hinv$ is the invariant of the equivalence class $[(Y,K)]\in \Theta_\Z^{3,1}$ obtained from the composition
	\[
		\hinv:\Theta_\Z^{3,1} \xrightarrow{\;\;\Xi\;\;}\Theta_{\Z,\Z/4}^\cS \xrightarrow{\;\;h\;\;} \Z
	\]
	When $Y$ is the 3-sphere, we simply write $\hinv(K)$. More generally, we write $\hinv_R(Y,K)$ for the Fr\o yshov invariant obtained using a coefficient ring $R$ which is an integral domain. $\diamd$
\end{definition}

\begin{remark}
	In Section \ref{sec:loccoeffs}, we will generalize this invariant to the collection of invariants $\hinv_\sS(Y,K)$ for $\sS$-algebras over $\sR=\Z[U^{\pm 1}, T^{\pm 1}]$ using local coefficient systems. These more general versions are the ones discussed in the introduction. $\diamd$
\end{remark}

Let $\mathcal{C}_\Z$ be the homology concordance group. There is a natural homomorphism 
\[
	\mathcal{C}_\Z\longrightarrow \Theta_\Z^{3,1}
\]
In this way, any integer valued function defined on $ \Theta_{R,\Z/4}^\cS$ gives rise to a homology concordance invariant for knots. We use the same notation $\hinv_R:\cC_\Z\to \Z$ for the homomorphism induced by $\hinv_R$ in this way.

\begin{theorem}\label{thm:hcheckprops}
	Let $R$ be an integral domain. Then $\hinv_R$ induces a homology concordance invariant which is a homomorphism of partially ordered groups:
	\[
	\hinv_R:\cC_\Z\to \Z.
	\]
	If $(W,S):(Y,K) \to (Y',K')$ is a negative definite pair, then $\hinv_R(Y,K)\leqslant \hinv_R(Y',K')$.
\end{theorem}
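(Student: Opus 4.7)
The plan is to deduce this theorem as a formal consequence of results already in hand. By definition
\[
\hinv_R\colon \Theta_\Z^{3,1}\xrightarrow{\;\Xi\;}\Theta_{R,\Z/4}^\cS\xrightarrow{\;h\;}\Z,
\]
and both arrows are homomorphisms of partially ordered abelian groups: $\Xi$ by Theorem~\ref{thm:slocequivhomom}, and $h$ by the proposition in Subsection~\ref{sec:localequiv} applied to the integral domain $R$. The composition of such homomorphisms is again one, so $\hinv_R$ is a homomorphism of partially ordered abelian groups on $\Theta_\Z^{3,1}$.

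The second assertion of the theorem is then immediate: a negative definite pair $(W,S):(Y,K)\to (Y',K')$ is by construction a morphism in $\mathcal{H}$, so $[(Y,K)]\leqslant [(Y',K')]$ in $\Theta_\Z^{3,1}$, and the order-preserving property of $\hinv_R$ forces $\hinv_R(Y,K)\leqslant \hinv_R(Y',K')$.

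To descend to the homology concordance group, I would verify that the natural map $\mathcal{C}_\Z\to \Theta_\Z^{3,1}$ is a well-defined group homomorphism. A homology concordance $(W,S):(Y,K)\to(Y',K')$ satisfies every clause of Definition~\ref{def:negdef}: $H_1(W;\Z)=0$ and $b^+(W)=0$ hold because $W$ is an integer homology cobordism; the class of $S$ is trivial in $H_2(W,\partial W;\Z)$ and hence divisible by $4$; and the double cover of $W$ branched along $S$ is a rational homology cobordism between the double branched covers of $(Y,K)$ and $(Y',K')$, so $b^+(\widetilde W)=0$. Applying the same reasoning to the reversed cobordism yields a negative definite pair $(Y',K')\to(Y,K)$, so homology-concordant knots are identified in $\Theta_\Z^{3,1}$. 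Compatibility with connected sum is built into the group structure on $\Theta_\Z^{3,1}$, so the induced map $\hinv_R:\mathcal{C}_\Z\to \Z$ inherits the homomorphism property, and its order-preserving property reflects the fact that it factors through $\Theta_\Z^{3,1}$.

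The substantive work behind this statement is not here but in Theorem~\ref{thm:slocequivhomom}: additivity of $\Xi$ requires the connected sum theorem (Theorem~\ref{thm:connectedsumlocceoff}), and compatibility with inverses requires the duality of $\cS$-complexes from Subsection~\ref{sec:dual} together with Proposition~\ref{h-dual}, applied to show that orientation reversal of $(Y,K)$ corresponds to the additive inverse in $\Theta_{R,\Z/4}^\cS$. Granting these ingredients, the present theorem is purely formal and requires no further gauge-theoretic input.
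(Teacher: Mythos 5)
Your proposal is correct and follows exactly the route the paper takes: the theorem is presented there as an immediate consequence of factoring $\hinv_R$ as $h\circ\Xi$, of the homomorphism-and-order properties of each factor (Theorem~\ref{thm:slocequivhomom} and the proposition in Subsection~\ref{sec:localequiv}), and of the observation---stated without proof as an Example after Definition~\ref{def:negdef}---that a homology concordance is a negative definite pair. If anything you spell out more detail than the paper does, including the verification that a homology concordance satisfies all four clauses of Definition~\ref{def:negdef} (for the clause $b^+(\widetilde W)=0$ you could alternatively use the paper's substitute criterion $\sigma(K')=\sigma(K)+\frac{1}{2}S\cdot S+\chi(S)$, which for an annulus $S$ with $S\cdot S=0$ reduces to concordance-invariance of the knot signature).
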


We may refine the invariants $\hinv_R$ using Definition \ref{j-def} to obtain a sequence of ideals
\[
	J^R_{h}(Y,K) \subseteq J^R_{h-1}(Y,K) \subseteq \cdots \subseteq R
\]
where $h=\hinv_R(Y,K)$, the construction of which again factors through $\Theta_\Z^{3,1}$. The properties of these ideals carry over from the discussion in Subsection \ref{subsec:ideals}. We will also generalize this construction in Section \ref{sec:loccoeffs}.

\newpage

%!TEX root = main.tex
\section{The connected sum theorem}\label{sec:consum}

Let $(Y,K)$ and $(Y',K')$ be pairs of integer homology 3-spheres with embedded oriented based knots. Then the connected sum $(Y\# Y', K\# K')$, performed at the distinguished basepoints of $K$ and $K'$, is also an oriented  based knot. The main result of this section is:

\begin{theorem}\label{thm:connectedsum}  {\emph{\bf{(Connected Sum Theorem for Knots)}}} In the situation described above, there is a chain homotopy equivalence of $\Z/4$-graded $\cS$-complexes:
\begin{equation}
	\widetilde C(Y\# Y',K\# K') \simeq  \widetilde C(Y,K)\otimes \widetilde C(Y',K') \label{eq:connsumforknots}
\end{equation}
The statement holds over any coefficient ring. This equivalence is natural, up to $\cS$-chain homotopy, with respect to split cobordisms.
\end{theorem}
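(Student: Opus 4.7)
The plan is to adapt Donaldson's treatment of Fukaya's connected sum theorem from \cite[Section 7.4]{donaldson-book} to the singular setting. The key geometric input is that on $(S^3,U)$, where $U$ is the unknot, the traceless $SU(2)$ character variety consists of a single reducible point $\theta_0$, which is non-degenerate with $U(1)$-stabilizer (cf.\ Proposition \ref{reg-dbl-cover}). The connected sum $(Y\#Y',K\#K')$ carries a one-parameter family of metrics with a neck of length $2T$ modelled on a cylinder on $(S^3,U)$, and I would study the moduli spaces in the limit $T\to\infty$, with perturbations supported on the complement of the neck.

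First I would show that, for small perturbations and large enough $T$, the critical set of the Chern--Simons functional on $(Y\#Y',K\#K')$ is in natural bijection with
\[
\bigl(\fC_\pi^{\mathrm{irr}}(Y,K)\times\fC_{\pi'}^{\mathrm{irr}}(Y',K')\bigr)\;\cup\;\fC_\pi^{\mathrm{irr}}(Y,K)\;\cup\;\fC_{\pi'}^{\mathrm{irr}}(Y',K')\;\cup\;\{\theta_\#\},
\]
where $\theta_\#$ is the distinguished reducible corresponding to $(\theta,\theta')$; non-degeneracy is inherited from the factors together with the non-degeneracy of $\theta_0\in\sB(S^3,U)$ by a splicing argument analogous to \cite[Proposition 7.14]{donaldson-book}. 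As a graded abelian group this matches the four-block decomposition of $\widetilde C(Y,K)\otimes\widetilde C(Y',K')$ corresponding to the generators in \eqref{im-chi}.

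Next I would analyze ASD moduli spaces on $\R\times(Y\#Y',K\#K')$ through a gluing theorem: for large $T$, a trajectory with bounded energy decomposes into a pair of trajectories on $\R\times(Y,K)$ and $\R\times(Y',K')$ matched across the neck through $\theta_0$, with matching parametrised by $U(1)=\mathrm{Stab}(\theta_0)$. This $U(1)$-parameter is precisely the $S^1$-symmetry underlying the $\cS$-complex structure. Counting the oriented zero-dimensional glued moduli spaces, and tracking the four types of contributions parallel to the four blocks in \eqref{im-chi}, defines a degree-zero chain map
\[
F:\widetilde C(Y,K)\otimes\widetilde C(Y',K')\longrightarrow\widetilde C(Y\#Y',K\#K').
\]
The chain-map identity for $F$ must reproduce the matrix $d^\otimes$ written just before \eqref{eq:vtensor}: the diagonal $v\otimes 1$ and $1\otimes v'$ pieces come from holonomy along a neck-crossing arc via the $U(1)$-gluing parameter (as in Subsection \ref{sec:vmap}), the off-diagonal $\delta_2\otimes 1$ and $\varepsilon\otimes\delta_2'$ entries arise from broken trajectories factoring through $\theta_\#$ on one side of the neck, and commutation with $\chi$ follows from the fact that rotation of the framing at the basepoint on $K\#K'$ acts on glued configurations by the $U(1)$-gluing parameter. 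This shows $F$ is a morphism of $\cS$-complexes.

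Finally, to upgrade $F$ to a chain homotopy equivalence I would construct a candidate inverse $G$ by the analogous neck-stretching limit in the opposite direction, and verify $G\circ F\simeq\mathrm{id}$ and $F\circ G\simeq\mathrm{id}$ via a parametrised moduli space on a $1$-parameter family of metrics interpolating between the stretched and unstretched necks, in the spirit of \eqref{eq:compositemap}. Naturality with respect to a split cobordism $(W_1,S_1)\sqcup(W_2,S_2)$ then follows by applying the same neck stretching to the composite $(W_1\#W_2,S_1\#S_2)$ and invoking the uniqueness (up to $\cS$-chain homotopy) of cobordism maps supplied by Proposition \ref{functoriality}. The main obstacle will be the singular gluing theorem at the neck $(S^3,U)$ identifying nearby ASD solutions on $(Y\#Y',K\#K')$ with pairs of ASD solutions on $(Y,K)$ and $(Y',K')$ glued through the reducible $\theta_0$ with $U(1)$-parameter, together with the careful sign and orientation bookkeeping required to match exactly the off-diagonal pieces of $v^\otimes$ in \eqref{eq:vtensor}; the analysis is new in the singular setting but structurally parallel to the non-singular argument in \cite[Section 7.4]{donaldson-book}, with $SO(3)$ replaced throughout by $U(1)$.
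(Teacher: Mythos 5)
Your proposal takes a genuinely different route from the paper's, and the difference matters. The paper, following Donaldson's Section~7.4 closely, never directly analyzes the critical set on $(Y\#Y',K\#K')$. Instead it works with the pair-of-pants cobordism $(W,S)\colon (Y,K)\sqcup(Y',K')\to(Y\#Y',K\#K')$ and its reverse $(W',S')$, constructs $\cS$-morphisms from moduli spaces on these cobordisms cut down by holonomy along the arcs $\gamma,\gamma',\gamma^\#$ (and their mirrors $\sigma,\sigma',\sigma^\#$), and then shows the two compositions are $\cS$-chain homotopic to an isomorphism by stretching the composite cobordism and invoking a surgery move exchanging $(S^1\times D^3,S^1\times D^1)$ for $(D^2\times S^2,D^2\times 2\text{ pts})$ (Proposition~\ref{prop:morphchqi}, a singular analogue of \cite[Theorem~7.16]{donaldson-book}). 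What you describe is a direct neck-stretching argument on $\R\times(Y\#Y',K\#K')$, which is closer in spirit to Fukaya's original proof than to the route actually taken in \cite[Section~7.4]{donaldson-book} or in this paper.

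That alternative route is not unreasonable, but your identification of the critical set is wrong in a way that is structural rather than cosmetic. When you glue $\alpha\in\fC_\pi^{\text{irr}}(Y,K)$ and $\alpha'\in\fC_{\pi'}^{\text{irr}}(Y',K')$ through $\theta_0$ across the long neck on $(S^3,U)$, the gluing parameter is $\text{Stab}(\theta_0)\big/\bigl(\text{Stab}(\alpha)\cdot\text{Stab}(\alpha')\bigr)\cong U(1)/\{\pm1\}\cong S^1$: for large $T$ the corresponding piece of $\fC(Y\#Y',K\#K')$ is an $S^1$-family of flat connections, not a single point. This family is Morse--Bott degenerate and must be killed by a further perturbation, after which each pair $(\alpha,\alpha')$ contributes \emph{two} nondegenerate critical points of gradings differing by one. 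Your claimed bijection therefore undercounts by $|\fC_\pi^{\text{irr}}|\cdot|\fC_{\pi'}^{\text{irr}}|$ generators, and consequently your $\widetilde C(Y\#Y',K\#K')$ has rank $2|\fC_\pi^{\text{irr}}|\cdot|\fC_{\pi'}^{\text{irr}}|$ less than $\widetilde C(Y,K)\otimes\widetilde C(Y',K')$; no chain map between them can be a homotopy equivalence. The missing doubling is precisely what produces the two summands $(C\otimes C')_\ast\oplus(C\otimes C')_{\ast-1}$ in the decomposition of $C^\otimes_\ast$ written just before \eqref{eq:vtensor}---they are the degree-$0$ and degree-$1$ Morse generators of the gluing circle. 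Even after correcting the generator count, you would still need to realize the off-diagonal entries $\delta_2\otimes 1$ and $\delta_1\otimes 1$ of $v^\otimes$ in \eqref{eq:vtensor} as contributions of trajectories connecting the two Morse generators of the neck circle to the $C_\ast$ and $C'_\ast$ blocks; this is a substantial piece of Morse--Bott gluing analysis that your sketch does not address and that the paper's cobordism approach is designed to sidestep.
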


\noindent Each framed instanton chain complex that appears in the statement has some fixed choices of metric and perturbation, which are as usual suppressed from the notation. The connected sum theorem, together with Lemma \ref{ideal-tensor}, implies the following result for $\hrI_\ast$.

\begin{cor}
	Let $(Y,K)$ and $(Y',K')$ be based knots in integer homology 3-spheres. There is a chain homotopy equivalence of $\Z/4$-graded complexes over $\Z[x]$:
	\begin{align*}
		\hrC_*(Y,K)\otimes_{\Z[x]}  \hrC_*(Y',K') \simeq \hrC_*(Y\# Y',K\# K')
	\end{align*}
	natural up to homotopy with respect to split cobordisms. In particular, if $R$ is a field, then there is a K\"unneth formula 
	relating $\hrI_*(Y,K;R)$, $\hrI_*(Y',K';R)$ and $\hrI_*(Y\# Y',K\# K';R)$.
\end{cor}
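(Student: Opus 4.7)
The plan is to follow Fukaya's connected sum theorem as treated in Section 7.4 of \cite{donaldson-book}, adapted to the $S^1$-equivariant singular instanton setting. The first step is to understand the critical set on $(Y\#Y', K\#K')$. Using a neck-stretching argument applied to the 2-sphere separating the two summands (with the connect-sum arc of $K\# K'$ contributing two transverse punctures), flat singular connections on the connect sum correspond, in the long-neck limit, to pairs of flat singular connections on $(Y,K)$ and $(Y',K')$ whose limiting holonomies around the neck match up to conjugation. For a pair of irreducibles $(\alpha,\alpha')$ the resulting space of configurations is an $S^1$-family parameterized by the gluing, and a small $S^1$-invariant Morse perturbation breaks each such $S^1$ into two non-degenerate critical points of consecutive gradings. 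When exactly one of the pair is $\theta$ or $\theta'$, the $U(1)$ stabilizer of the reducible kills the gluing freedom and yields a single rigid critical point, and $(\theta,\theta')$ produces the unique reducible of the connect sum. This matches the decomposition
\[
\widetilde C^\otimes_\ast = (C\otimes C')_\ast \oplus (C\otimes C')_{\ast-1}\oplus C_\ast\oplus C'_\ast\oplus \Z
\]
from Subsection \ref{sec:tensor} bijectively on generators, with the correct $\Z/4$-gradings by the additivity formula \eqref{eq:irradd}.

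The second step is to construct an explicit $\cS$-morphism $\Phi: \widetilde C(Y,K)\otimes \widetilde C(Y',K') \to \widetilde C(Y\#Y', K\#K')$ by a neck-stretching analysis of the moduli spaces $M(\alpha\#\alpha', \beta\#\beta')$ on the cylinder over the connect sum. The low-dimensional components of these moduli spaces realize as pairs of trajectories on $(Y,K)$ and $(Y',K')$ grafted along their matching $S^1$-data in the neck; counting isolated points yields the matrix entries of $\Phi$. The combinatorics of these counts, including contributions from broken trajectories which pass through the $S^1$-family of reducibles over the neck, are designed to reproduce exactly the differential and structural maps $\widetilde d^\otimes$, $v^\otimes$, $\delta_1^\otimes$, $\delta_2^\otimes$ of Subsection \ref{sec:tensor}. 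In particular, the off-diagonal entries $\delta_2\otimes 1$ and $\delta_1\otimes 1$ in equation \eqref{eq:vtensor} will arise from trajectories in which one side of the neck factors through its reducible while the other evolves nontrivially, analogous to the factorization that produces the $\delta_2\circ\delta_1$ term in Proposition \ref{prop:vmap}.

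The third step is to exhibit $\Phi$ as an $\cS$-chain homotopy equivalence via the standard parametrized moduli space construction. An inverse $\Psi$ is produced by the analogous neck-stretch in the opposite direction, and a 1-parameter family of metrics interpolating between the stretched and unstretched configurations yields, upon counting isolated points in the resulting parametrized moduli spaces, $\cS$-chain homotopies between $\Phi\circ\Psi$, $\Psi\circ\Phi$ and the identity morphisms. The local coefficients version is proved in parallel, with all gluing parameters tracked and the scalars $U$ and $T$ contributing multiplicatively over the two factors. Naturality under split cobordisms follows by parameterizing the same argument over the cobordism.

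The main obstacle is the gluing analysis around reducibles in the connect-sum neck, since each reducible contributes an $S^1$ of gluing parameters from its $U(1)$-stabilizer, which is the source of the non-symmetric entries in \eqref{eq:vtensor}. Tracking this $S^1$ across the various breaking patterns, with correct signs given the orientation conventions of Subsection \ref{subsec:ors} and the modified holonomy maps of Subsection \ref{loop-mu}, is exactly the mechanism that will reproduce the $v^\otimes$ and $\delta^\otimes_i$ operators on the algebraic side. Handling the bookkeeping here, especially to verify that no spurious contributions arise from further bubbling at the neck, will require the most care; the hypothesis that the separating region is simply connected and that reducibles there are isolated and unobstructed (as in Subsection \ref{subsec:red}) is what makes this analysis tractable.
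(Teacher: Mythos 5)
The paper proves this corollary in a single line: it invokes the connected sum theorem for $\cS$-complexes (Theorem \ref{thm:connectedsum}, already established), applies the functor $\hrC$ to the resulting chain homotopy equivalence $\widetilde C(Y\#Y',K\#K')\simeq \widetilde C(Y,K)\otimes\widetilde C(Y',K')$, and then uses the purely algebraic isomorphism $\widehat T\colon \hrC_*\otimes_{\Z[x]}\hrC_*'\to\hrC_*^\otimes$ from Lemma \ref{ideal-tensor}. Your proposal instead re-derives the $\cS$-complex connected sum theorem from scratch and never once appeals to Theorem \ref{thm:connectedsum}; this is a very different, and far heavier, route.

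There is also a genuine gap. Even if your Steps 1--3 established $\widetilde C(Y\#Y',K\#K')\simeq \widetilde C(Y,K)\otimes\widetilde C(Y',K')$ as $\cS$-complexes, this only gives a homotopy equivalence $\hrC_*(Y\#Y',K\#K')\simeq \hrC_*^\otimes$, where $\hrC_*^\otimes = (\widetilde C\otimes\widetilde C')\otimes\Z[x]$. To reach the stated conclusion you must still identify this with $\hrC_*(Y,K)\otimes_{\Z[x]}\hrC_*(Y',K')$, which is $(\widetilde C\otimes\Z[x])\otimes_{\Z[x]}(\widetilde C'\otimes\Z[x])$. This identification is exactly the $\Z[x]$-chain isomorphism $\widehat T$ of Lemma \ref{ideal-tensor}, and you never mention it. Without it, the displayed equivalence in the corollary does not follow.

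On the geometric side your sketch is also imprecise. The construction of a homotopy inverse ``by the analogous neck-stretch in the opposite direction'' does not quite make sense: the morphisms run between $\widetilde C(Y,K)\otimes\widetilde C(Y',K')$ and $\widetilde C(Y\#Y',K\#K')$, and the paper needs two distinct cobordisms $(W,S)$ and $(W',S')$ (oriented pairs of pants) to produce maps in both directions, followed by a nontrivial chain of surgery and Morse--Bott gluing arguments (Propositions \ref{prop:comphomotopy1}, \ref{prop:morphchqi}, \ref{prop:lasthomotopy}) to show the composites are homotopic to identity morphisms. Your Step 1 analysis of the critical set on the connect sum is morally along these lines but would require $S^1$-invariant holonomy perturbations and Morse--Bott non-degeneracy of the glued $S^1$-families, which you assert but do not justify; the paper's approach via cobordism maps $\widetilde\lambda_{(W,S)}$ deliberately avoids having to describe the critical set of the connect sum at all, which is part of why it is organized the way it is.
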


\noindent A similar statement holds for the $\brI_\ast$ theory, and the two are intertwined by the map $i_\ast$. We remark also that Theorem \ref{thm:connectedsum} completes the proof of Theorem \ref{thm:slocequivhomom}.

In this section we prove the equivalence \eqref{eq:connsumforknots} and its naturality (explained in Subsection \ref{subsec:natcon}) over $\Z$, as the case for arbitrary coefficients follows from this. 

Theorem \ref{thm:connectedsum} is a singular instanton homology analogue of Fukaya's connected sum theorem for the instanton Floer homology of integer homology 3-spheres \cite{fukaya}. In fact, our result goes further than Fukaya's theorem, which does not determine the full $\cS$-complex for the connected sum. Our proof is an adaptation of the one described by Donaldson \cite[Section 7.4]{donaldson-book} in the non-singular setting. Apart from our having repackaged the algebra, the main difference between our proof of Theorem \ref{thm:connectedsum} and Donaldson's proof in the non-singular case occurs in the proof of Proposition \ref{prop:morphchqi}, where a singular analogue of \cite[Theorem 7.16]{donaldson-book} is used; see Remark \ref{rmk:cfdonaldsonglue} for more details.

\subsection{Topology of the connected sum theorem}

There is a standard cobordism of pairs $(Y,K)\sqcup(Y',K')\to (Y\#Y',K\#K')$ which we denote by $(W,S)$. The cobordism $W:Y\sqcup Y'\to Y\#Y'$ is obtained by attaching a 4-dimensional 1-handle $H$ to $[0,1] \times ( Y \sqcup Y')$ along 3-ball neighborhoods of the basepoints $p\times \{ 1 \}$ and $p'\times \{ 1 \}$. The surface cobordism $S:K\sqcup K'\to K\#K'$ is similarly obtained by attaching a 2-dimensional 1-handle, embedded inside $H$. Note that if $K$ and $K'$ are unknots, then $S$ is a pair of pants. For this reason we depict $(W,S)$ by a directed pair of pants, i.e. {\smash{\raisebox{-.2\height}{\includegraphics[scale=0.25]{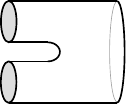}}}}. We have a similar cobordism of pairs $(W',S'):(Y\#Y',K\#K')\to (Y,K)\sqcup(Y',K')$, which is obtained from $(W,S)$ by swapping the roles of incoming and outgoing ends, and reversing orientation. This is depicted by {\smash{\raisebox{-.2\height}{\includegraphics[scale=0.25]{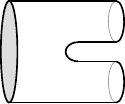}}}}. Both surfaces $S$ and $S'$ admit framings which are compatible with the Seifert framings of the knots $K$, $K'$ and $K\#K'$.

\begin{figure}[h]
\labellist
\pinlabel $\gamma$ at 38 94
\pinlabel $\gamma'$ at 133 110
\pinlabel $\gamma^\#$ at 240 88
\pinlabel $\gamma\cup\gamma'\cup\gamma^\#$ at 323 69
\pinlabel $\sigma$ at 20 18
\pinlabel $\sigma'$ at 120 33
\pinlabel $\sigma^\#$ at 215 40
\pinlabel $\sigma\cup\sigma'\cup\sigma^\#$ at 323 -7
\endlabellist
\centering
\vspace{.3cm}
\includegraphics[scale=.8]{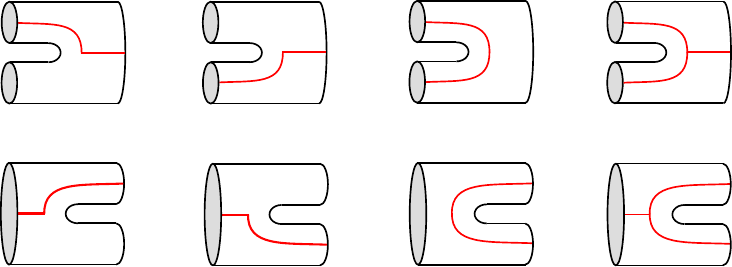}
\caption{}
\label{fig:connsumpaths}
\end{figure}

Fix three oriented, piecewise-differentiable paths $\gamma$, $\gamma'$ and $\gamma^\#$ on the surface $S\subset W$ in the following way. The path $\gamma$ (resp. $\gamma'$) begins at the base point $p$ of $Y$ (resp. $p'$ of $Y'$) and ends at the base point $p^\#$ of $Y\#Y'$. The path $\gamma^\#$ begins at $p$ and ends at $p'$. These three paths together form a graph in the shape of the letter $Y$ as it is shown in Figure \ref{fig:connsumpaths}. The holonomy of any connection along $\gamma$ is equal to the product of its holonomies along the paths $\gamma'$ and $\gamma^\#$. Similarly, we denote by $\sigma$, $\sigma'$ and $\sigma^\#$ the paths on the surface $S'$ which are the mirrors of the paths $\gamma$, $\gamma'$ and $\gamma^\#$, as depicted in Figure \ref{fig:connsumpaths}.

The composite cobordism $(W,S)\circ (W',S')$ has an embedded loop on $S\circ S'$, depicted in Figure \ref{fig:connsumcomposites1}, formed by joining together $\gamma^\#$ and $\sigma^\#$. A regular neighborhood $N$ of this loop is diffeomorphic to the pair $(S^1\times D^3, S^1\times D^1)$, the boundary of which is the pair $(S^1\times S^2, S^1\times \text{2 pts})$. Excising $N$ and gluing back in a copy of $(D^2\times  S^2, D^2\times \text{2 pts} )$ produces a cobordism isomorphic to $[0,1]\times (Y\# Y', K\# K')$, the identity cobordism.\\

\begin{figure}[h]
\labellist
\pinlabel $(W,S)\circ (W',S')$ at 55 58
\pinlabel $[0,1]\times (Y\#Y',K\#K')$ at 231 58
\endlabellist
\centering
\vspace{.45cm}
\includegraphics[scale=1]{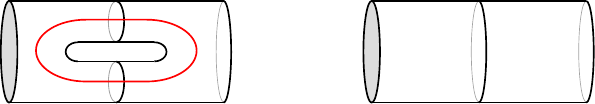}
\caption{}
\label{fig:connsumcomposites1}
\end{figure}

\noindent Now we consider the other composite, $(W',S')\circ (W,S)$. Within this cobordism there is an embedded pair $(S^3,S^1)$; in Figure \ref{fig:connsumcomposites2} below, this $S^1$ is the horizontal circle. Cutting along this 3-sphere and circle, and gluing in two pairs of the form $(B^4,D^2)$, yields a cobodism isomorphic to the identity cobordism $[0,1]\times \left((Y,K)\sqcup (Y',K')\right)$.\\

\begin{figure}[h]
\labellist
\pinlabel $(W',S')\circ (W,S)$ at 55 58
\pinlabel $[0,1]\times \left((Y,K)\sqcup (Y',K')\right)$ at 231 58
\endlabellist
\centering
\vspace{.45cm}
\includegraphics[scale=1]{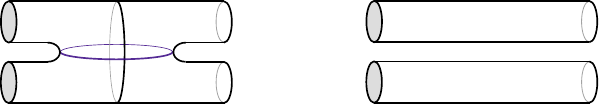}
\caption{}
\label{fig:connsumcomposites2}
\end{figure}

\subsection{Moduli spaces on the cobordisms $(W,S)$ and $(W',S')$} \label{conn-moduli-spaces}

Throughout this section, we write $\alpha$, $\alpha'$ and $\alpha^\#$ for gauge equivalence classes of critical points for the perturbed Chern-Simons functionals on $(Y,K)$, $(Y',K')$ and $(Y\# Y',K\#K')$. Similarly, we write $\theta$, $\theta'$ and $\theta^\#$ for the corresponding reducible classes. We use the abbreviated notation $M(\alpha,\alpha';\alpha^\#)_d$ for the instanton moduli space $M(W,S;\alpha,\alpha',\alpha^\#)_d$. Similarly, $M(\alpha^\#;\alpha,\alpha')_d$ denotes the moduli space $M(W',S';\alpha^\#,\alpha,\alpha')_d$.

We can use the paths $\gamma$ and $\gamma' $ defined above to define maps as in Subsection \ref{cut-down}:
\begin{equation*}
	H^\gamma:\sB(W,S;\alpha,\alpha',\alpha^\#) \to S^1\hspace{1cm}
	H^{\gamma'}:\sB(W,S;\alpha,\alpha',\alpha^\#) \to S^1
\end{equation*}
Note that in contrast to our convention from Section \ref{loop-mu}, for the sake of brevity, we omit the critical limits from the notation of these holonomy maps. To define $H^\gamma$ (resp. $H^{\gamma'}$) we need $\alpha$ (resp. $\alpha'$) to be irreducible, and both maps require $\alpha^\#$ irreducible. By picking generic points $h, h'\in S^1$, we define the following cut-down moduli spaces:
\begin{equation}\label{Mg}
	M_{\gamma}(\alpha,\alpha';\alpha^\#)_{d}:=\{[A]\in M(\alpha,\alpha';\alpha^\#)_{d+1}\mid 
	H^\gamma([A])=h\},
\end{equation}
\begin{equation}\label{Mgp}
	M_{\gamma'}(\alpha,\alpha';\alpha^\#)_{d}:=\{[A]\in M(\alpha,\alpha';\alpha^\#)_{d+1}\mid 
	H^{\gamma'}([A])=h'\},
\end{equation}
\begin{equation}\label{Mggp}
	M_{\gamma\gamma'}(\alpha,\alpha';\alpha^\#)_{d}:=\{[A]\in M(\alpha,\alpha';\alpha^\#)_{d+2}\mid 
	H^{\gamma}([A])=h,\, H^{\gamma'}([A])=h'\}.
\end{equation}
The moduli spaces \eqref{Mg}, \eqref{Mgp} and \eqref{Mggp} are defined only in the case that $\alpha^\#$ is irreducible. Moreover, we need irreducibility of $\alpha$ (resp. $\alpha'$) to define the moduli spaces in \eqref{Mg} (resp. \eqref{Mgp}) and \eqref{Mggp}. There is another obvious way in which we can define a cut-down moduli space in the case that $\alpha$ and $\alpha'$ are both irreducible:
\begin{equation}\label{Mgs}
	M_{\gamma^\#}(\alpha,\alpha';\alpha^\#)_{d}:=\{[A]\in M(\alpha,\alpha';\alpha^\#)_{d+1}\mid 
	H^{\gamma^\#}([A])= h'\cdot h^{-1}\}
\end{equation}
We will mainly be concerned with the moduli spaces in \eqref{Mg}, \eqref{Mgp}, \eqref{Mggp} and \eqref{Mgs} in the case that $d=0$ or $d=1$. By choosing $h$ and $h'$ generically, we may assume that all such moduli spaces are smooth manifolds. 

\begin{remark}
	We follow similar orientation conventions as before to orient the moduli spaces. For example, to orient the moduli space 
	$M(\alpha,\alpha';\alpha^\#)$, we use the canonical homology orientation $o_W$ for the pair $(W,S)$, which is an element of 
	$\Lambda[W,S;\theta_+,\theta_+';\theta^\#_-]$, defined as in Subsection \ref{subsec:ors}. Given elements 
	$o_\alpha\in \Lambda[\alpha]$, $o_{\alpha'}\in \Lambda[\alpha']$ and $o_{\alpha^\#}\in \Lambda[\alpha^\#]$, 
	we can fix $o_{\alpha,\alpha';\alpha^\#}\in \Lambda[W,S;\theta_+,\theta_+';\theta^\#_-]$, and hence an orientation of 
	$M(\alpha,\alpha';\alpha^\#)$, by demanding:
	\[
	  \Phi(o_{\alpha}\otimes o_{\alpha'}\otimes o_W)=\Phi(o_{\alpha,\alpha';\alpha^\#}\otimes o_{\alpha^\#}). \;\diamd
	\]
\end{remark}

\subsection{Proof of the connected sum theorem}

Let us write $(\widetilde C_\ast, \widetilde d,\chi)$, $(\widetilde C'_\ast, \widetilde d',\chi')$ and $(\widetilde C_\ast^\#,\widetilde d^\#,\chi^\#)$ for the framed instanton $\cS$-complexes for the based knots $(Y,K)$, $(Y',K')$ and $(Y\# Y', K\# K')$, respectively. Write $(\widetilde C^\otimes_\ast,\widetilde d^\otimes,\chi^\otimes)$ for the tensor product $\cS$-complex defined using $(\widetilde C_\ast, \widetilde d,\chi)$ and $(\widetilde C'_\ast, \widetilde d',\chi')$ as in Subsection \ref{sec:tensor}, so that its underlying chain complex is simply $(\widetilde C_\ast, \widetilde d)\otimes (\widetilde C'_\ast, \widetilde d')$.

The moduli spaces discussed in Subsection \ref{conn-moduli-spaces} will be used to define morphisms
\begin{align*}
  \widetilde \lambda_{(W,S)}:& (\widetilde C_*^\otimes,\widetilde d^\otimes,\chi^\otimes) \to(\widetilde C_*^\#,\widetilde d^\#,\chi^\#),\\
  \widetilde \lambda_{(W',S')}: & (\widetilde C_*^\#,\widetilde d^\#,\chi^\#) \to (\widetilde C_*^\otimes,\widetilde d^\otimes,\chi^\otimes).
\end{align*}
In Subsection \ref{CWS}, we define ${\smash{\widetilde \lambda_{(W,S)}}}$ and show that it is a morphism of $\cS$-complexes. The definition of ${\smash{\widetilde \lambda_{(W',S')}}}$ is similar and is given in Subsection \ref{CWS-p}. Finally, in Subsection \ref{comp}, we show that these maps are $\cS$-chain homotopy equivalences.

\subsubsection{Definition of the map $\smash{\widetilde \lambda_{(W,S)}}$}\label{CWS}

Using the $\cS$-compex decomposition of $\widetilde C_*^\otimes$ from Subsection \ref{sec:tensor} and the notation of Definition \ref{def:morphism}, giving a morphism \[\widetilde \lambda_{(W,S)}:(\widetilde C_*^\otimes,\widetilde d^\otimes,\chi^\otimes) \to(\widetilde C_*^\#,\widetilde d^\#,\chi^\#)\] amounts to defining four maps, $\lambda:C^\otimes_\ast \to C^\#_\ast$, $\mu:C^\otimes_\ast \to C^\#_{\ast-1}$, $\Delta_1: C^\otimes_0 \to \Z$, and $\Delta_2:\Z\to C_{-1}^\#$. Upon further decomposing $C^\otimes_\ast$, these may be written as maps
\begin{align*}
   & \lambda:(C\otimes C')_\ast\oplus (C\otimes C')_{\ast-1}\oplus C_*\oplus C_*' \to C_*^\# \\
  &\mu:(C\otimes C')_\ast\oplus (C\otimes C')_{\ast-1}\oplus C_*\oplus C_*' \to C_{*-1}^\# \\
  &\Delta_1:(C\otimes C')_0 \oplus (C\otimes C')_{-1}\oplus C_0\oplus C_0'  \to \Z \\
  &\Delta_2:\Z \to C_{-1}^\#.
\end{align*}
The maps $\lambda$, $\mu$ and $\Delta_1$ can be further decomposed using the decomposition of their domains into four components; we write
\[
  \lambda=[\lambda_1,\lambda_2,\lambda_3,\lambda_4],\hspace{.75cm}\mu=[\mu_1,\mu_2,\mu_3,\mu_4],\hspace{.75cm}
  \Delta_1=[\Delta_{1,1},\Delta_{1,2},\Delta_{1,3},\Delta_{1,4}].
\]
We proceed to define these maps. Suppose $\alpha$, $\alpha'$ and $\alpha^\#$ are all irreducible. Then define:
\[
\begin{array}{ll}
    \langle \lambda_1(\alpha\otimes \alpha'), \alpha^\#\rangle = \# M_{\gamma^\#}(\alpha,\alpha';\alpha^\#)_0 \hspace{.5cm}&  \Delta_{1,1}(\alpha\otimes \alpha') = \# M_{\gamma^\#}(\alpha,\alpha';\theta^\#)_0  \\
   \langle \lambda_2(\alpha\otimes \alpha'), \alpha^\#\rangle = \# M(\alpha,\alpha';\alpha^\#)_0 & \Delta_{1,2}(\alpha\otimes \alpha') = \# M(\alpha,\alpha';\theta^\#)_0 \\
   \langle \lambda_3(\alpha), \alpha^\#\rangle = \# M(\alpha,\theta';\alpha^\#)_0  & \Delta_{1,3}(\alpha) = \# M(\alpha,\theta';\theta^\#)_0 \\
   \langle \lambda_4( \alpha'), \alpha^\#\rangle = \# M(\theta,\alpha';\alpha^\#)_0 &   \Delta_{1,4}(\alpha') = \# M(\theta,\alpha';\theta^\#)_0  \\
\end{array}
\]
We also define $\langle \Delta_{2}(1),\alpha^\#\rangle=\# M(\theta,\theta';\alpha^\#)_0$. Finally, we define $\mu$ as follows:
\[
\begin{array}{l}
    \langle \mu_1(\alpha\otimes \alpha'), \alpha^\#\rangle = \# M_{\gamma\gamma'}(\alpha,\alpha';\alpha^\#)_0 \hspace{.5cm}\\
      \langle \mu_{2}(\alpha\otimes \alpha'),\alpha^\#\rangle = \# M_{\gamma}(\alpha,\alpha';\alpha^\#)_0\\
   \langle \mu_3(\alpha), \alpha^\#\rangle = \# M_\gamma(\alpha,\theta';\alpha^\#)_0 \\
   \langle \mu_4( \alpha'), \alpha^\#\rangle = \# M_{\gamma'}(\theta,\alpha';\alpha^\#)_0 \\
\end{array}
\]
Using the pictorial calculus introduced in Section \ref{sec:tilde}, we may write these maps as follows:
\[
\lambda = \left[\;\;\raisebox{-.3\height}{\includegraphics[scale=.35]{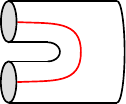}},\;\;
		\raisebox{-.3\height}{\includegraphics[scale=.35]{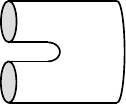}},\;\;
		\raisebox{-.3\height}{\includegraphics[scale=.35]{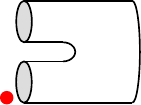}},\;\;
		\raisebox{-.3\height}{\includegraphics[scale=.35]{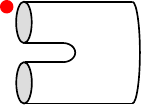}}\;\; \right]
		\hspace{1cm}
\Delta_1 = \left[\;\;\raisebox{-.3\height}{\includegraphics[scale=.35]{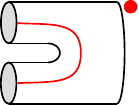}},\;\;
		\raisebox{-.3\height}{\includegraphics[scale=.35]{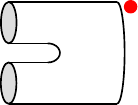}},\;\;
		\raisebox{-.3\height}{\includegraphics[scale=.35]{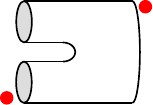}},\;\;
		\raisebox{-.3\height}{\includegraphics[scale=.35]{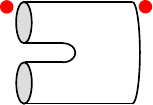}}\;\; \right]
\]
\[
\Delta_2 = \raisebox{-.3\height}{\includegraphics[scale=.35]{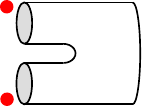}}
\hspace{1cm}
\mu = \left[\;\;\raisebox{-.3\height}{\includegraphics[scale=.35]{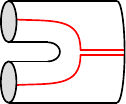}},\;\;
		\raisebox{-.3\height}{\includegraphics[scale=.35]{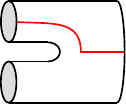}},\;\;
		\raisebox{-.3\height}{\includegraphics[scale=.35]{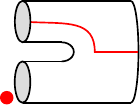}},\;\;
		\raisebox{-.3\height}{\includegraphics[scale=.35]{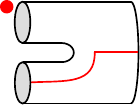}}\;\; \right]
\]

\begin{remark}
	There is a unique reducible ASD connection $A_0$ on $(W,S)$ up to gauge equivalence, which is unobstructed and has index $-1$, as can be verified using arguments similar to those in Subsection \ref{subsec:red}. From this one can deduce that $\widetilde \lambda$ has degree $0$ (mod $4$), as is already implicit in the above notation. $\diamd$
\end{remark}

\begin{remark}\label{rmk:vsquared}
	We have indicated the convention, as in the case of $\mu_1$, that a picture involving more than one path represents a map defined by cutting down moduli spaces by holonomy constraints $H^\gamma([A]) = h_\gamma$ for each path $\gamma$, where $h_\gamma\in S^1$. In the generic case, as is always assumed, the parameters $h_\gamma$ are distinct from one another. Thus we have the relation
	\[
		\raisebox{-.3\height}{\includegraphics[scale=.75]{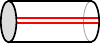}} = 0,
	\]
	because this map counts instantons whose holonomy along one path is equal to two distinct quantities. We note here, in passing, that whenever two paths overlap, our convention is to draw them slightly separated from one another. $\diamd$
\end{remark}

\begin{remark}\label{rmk:holrel}
	Because $\gamma^\#$ is homotopic to $\gamma$ concatenated with the reverse of $\gamma'$, we have the relation $H^\gamma \cdot ( H^{\gamma'})^{-1} = H^{\gamma^\#}$. From this we obtain the relation
	\[
		\raisebox{-.3\height}{\includegraphics[scale=.5]{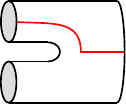}} - \raisebox{-.3\height}{\includegraphics[scale=.5]{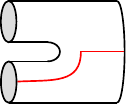}}= \raisebox{-.3\height}{\includegraphics[scale=.5]{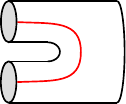}}
	\]
	This follows from Remark \ref{rmk:deg} and the elementary fact that for two maps $f$ and $f'$ from a closed oriented 1-manifold $M$ to $S^1$, we have $\text{deg}(f\cdot f') = \text{deg}(f)+\text{deg}(f')$. $\diamd$
\end{remark}

\begin{figure}
\centering
\includegraphics[scale=.65]{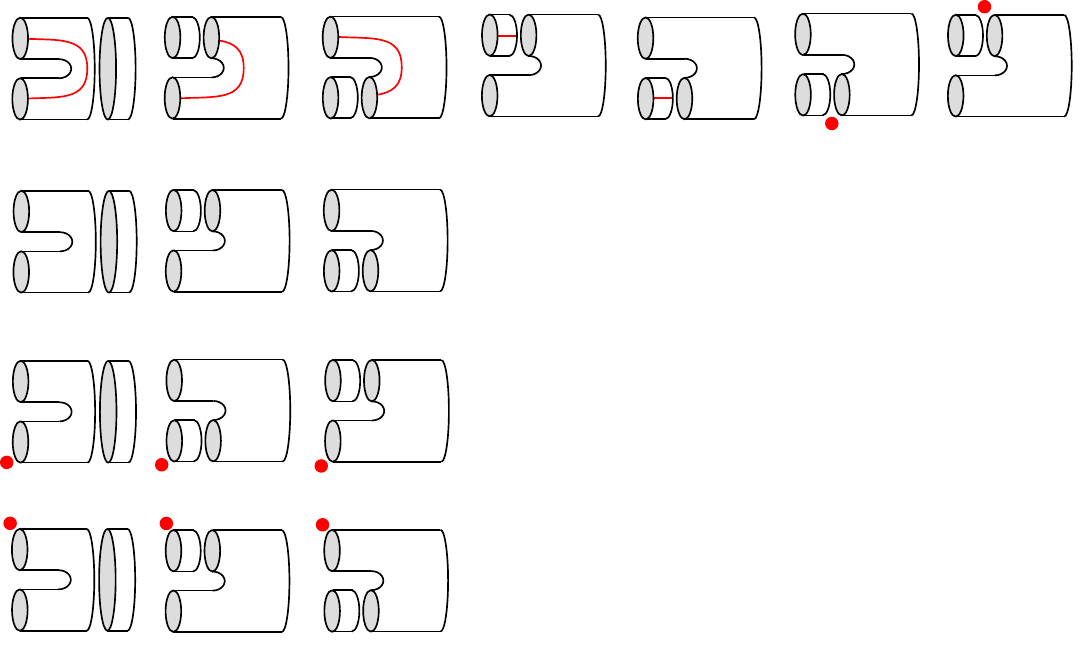}
\caption{}\label{fig:lambdarel}
\end{figure}

\begin{figure}
\centering
\includegraphics[scale=.65]{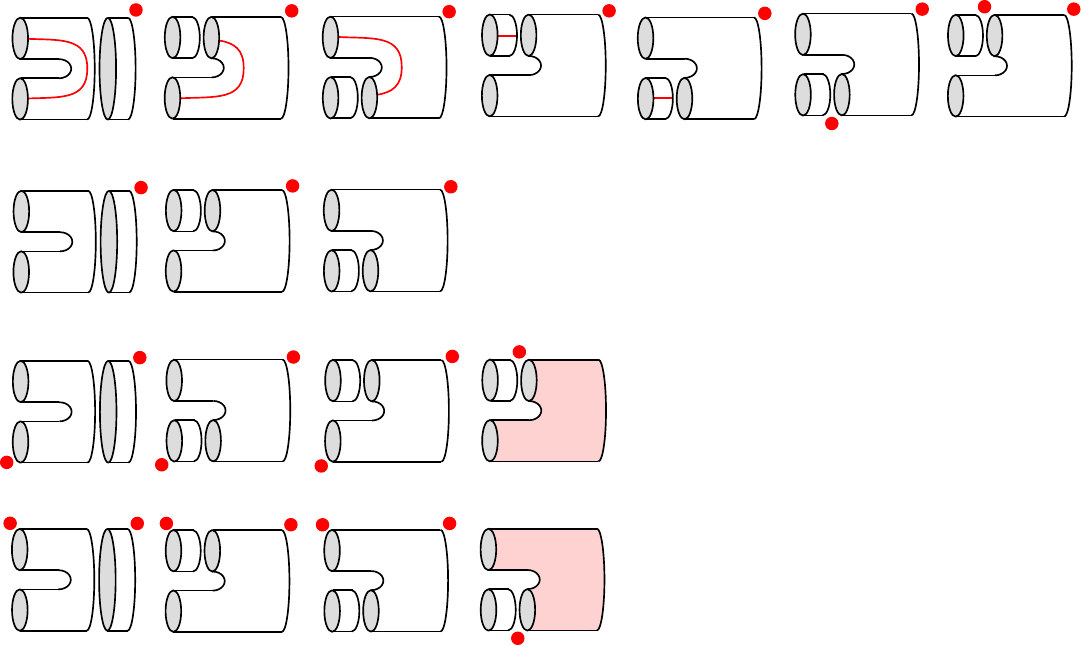}
\caption{}\label{fig:delta1rel}
\end{figure}

\begin{prop}\label{CWS-C}
	The maps $\lambda$, $\mu$, $\Delta_1$ and $\Delta_2$ define a morphism of $\cS$-complexes.
	That is to say, they satisfy the following identities:
	\begin{align*}
		d^\# \circ \lambda&=\lambda \circ d^{\otimes},\\
		\delta_1^\#\circ \lambda&=\Delta_1\circ d^{\otimes}+\delta_1^\otimes,\\
		\lambda\circ \delta_2^\otimes &=\delta_2^\#  -d^{\#} \circ \Delta_2,\\
		 d^{\#}\circ \mu+\mu \circ d^{\otimes}
		&=v^\# \circ \lambda-\lambda \circ v^\otimes+\delta_2^\#\circ \Delta_1-\Delta_2\circ \delta_1^\otimes.
	\end{align*}
\end{prop}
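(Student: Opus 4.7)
The plan is to verify each of the four identities by a boundary-counting argument for suitably chosen one-dimensional moduli spaces on $(W,S)$, using the pictorial calculus from Section~\ref{sec:tilde} to organize the bookkeeping. The compactness results (Propositions~\ref{prop:mod1}--\ref{prop:mod3}) extend to moduli spaces on $(W,S)$, and as noted after the definition of $\widetilde\lambda_{(W,S)}$, the pair $(W,S)$ carries a unique regular reducible ASD connection of index $-1$. This puts us in the same situation as Subsection~\ref{subsec:red} for controlling reducible bubbling, and lets us transcribe the proofs of Propositions~\ref{prop:cobdelta}, \ref{prop:vmap}, and \ref{prop:vmaprels2} to the present setting.

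For the first identity, I would count boundary points of the $1$-dimensional moduli spaces associated to each of the four components $\lambda_1,\dots,\lambda_4$ — that is, $M(\alpha,\alpha';\alpha^\#)_1$ and its three variants with $\theta$ or $\theta'$, together with the cut-down space $M_{\gamma^\#}(\alpha,\alpha';\alpha^\#)_1$ entering $\lambda_1$. In each case the codimension-$1$ strata come only from broken trajectories through irreducibles on one of the three cylindrical ends (property (H1) trivialises the $H^{\gamma^\#}$-constraint on $0$-dimensional components, and no reducible bubbling at $(W,S)$ can occur in this dimension by the index $-1$ count). Matching the incoming and outgoing breakings with the matrix blocks of $d^\otimes$ and the differential $d^\#$ gives $d^\#\!\circ\lambda=\lambda\circ d^\otimes$. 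The second and third identities are proved analogously by counting ends of $M(\alpha,\alpha';\theta^\#)_1$ and $M(\theta,\theta';\alpha^\#)_1$ and their variants: the extra terms $\delta_1^\otimes$ and $\delta_2^\#$ arise from sequences of instantons that concentrate onto the unique flat reducible on $(W,S)$, exactly as in the proof of Proposition~\ref{prop:cobdelta}.

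The fourth identity is the main work. I would model its proof on Proposition~\ref{prop:vmap}, but applied to the several $2$-dimensional moduli spaces underlying the four components of $\mu$. For $\mu_2,\mu_3,\mu_4$ one cuts down a $2$-dimensional moduli space by a single holonomy constraint ($H^\gamma$ or $H^{\gamma'}$); for $\mu_1$ one cuts down a $3$-dimensional space by both constraints. In each case the ends of the resulting $1$-manifold decompose as: (a) factorisations through irreducibles on one of the three cylindrical ends, giving the terms $d^\#\!\circ\mu$, $\mu\circ d^\otimes$, $v^\#\!\circ\lambda$, and $\lambda\circ v^\otimes$; (b) factorisations through the reducible on the outgoing end, which by the $S^1$-gluing property (H3) contribute $\delta_2^\#\!\circ\Delta_1$; and (c) factorisations through a reducible on one of the incoming ends, which by the same gluing contribute $\Delta_2\!\circ\delta_1^\otimes$. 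The off-diagonal entries $\delta_2\otimes 1$ and $\delta_1\otimes 1$ of $v^\otimes$ from \eqref{eq:vtensor} make their appearance precisely through these mixed reducible-irreducible breakings (e.g.\ ends of $M_\gamma(\alpha,\theta';\alpha^\#)_2$ running out at a reducible on the first incoming end couple a $\mu_2$-type count with a $\delta_1$-factor). One then uses the identity $H^\gamma\cdot(H^{\gamma'})^{-1}=H^{\gamma^\#}$ from Remark~\ref{rmk:holrel} to collate contributions across the different components in a consistent way.

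The hard part will be the fourth identity: simultaneously tracking which of the four blocks of $\lambda$, $\mu$, $v^\otimes$, $\delta_1^\otimes$, $\delta_2^\#$, $\Delta_1$, $\Delta_2$ is relevant for each breaking; choosing the holonomy parameters $h,h'\in S^1$ generically so that every cut-down space is transverse and disjoint from the collapsing loci forced by Remark~\ref{rmk:vsquared}; and pinning down the signs. For signs, I would fix the canonical homology orientation $o_W\in\Lambda[W,S;\theta_+,\theta'_+;\theta^\#_-]$ and propagate orientations through the gluing maps as in \cite[Proposition 20.5.2]{km:monopole}, together with the conventions on $\Delta_2$-type maps from Subsection~\ref{sec:delta1and2}. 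Everything else is a direct translation of the cylindrical arguments already carried out in Section~\ref{sec:tilde}.
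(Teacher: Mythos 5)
Your proposal takes essentially the same route as the paper: verify each relation by counting boundary points of the appropriate 1-dimensional (possibly holonomy-cut-down) moduli spaces on $(W,S)$, with reducible breakings handled via the $S^1$-gluing property (H3) and the unique unobstructed flat reducible of index $-1$, and with Remark~\ref{rmk:holrel} used to reconcile the $\gamma$, $\gamma'$, $\gamma^\#$ constraints when both curves exit through the same cylindrical end. Aside from small bookkeeping slips (e.g.\ the dimension of $M_\gamma(\alpha,\theta';\alpha^\#)$ you quote for the $\mu_3$-type end, and mislabeling a couple of the four variants), the argument matches the paper's proof, which likewise treats the fourth identity component-by-component with Figures~\ref{fig:mu1rel}--\ref{fig:mu2rel} and uses the identity $M^+_{\gamma\gamma'}=M^+_{\gamma^\#\gamma'}$ to reduce the double-constraint reducible breaking to the single-constraint situation of Proposition~\ref{prop:vmap}.
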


\begin{proof}
The relations listed will follow by analyzing the ends of certain 1-dimensional moduli spaces using the gluing theory which is outlined in Subsection \ref{subsec:red}.

First, the relation $d^\# \circ \lambda = \lambda\circ d^\otimes$ splits into the equations:
\begin{align*}
d^\# \lambda_1 &= \lambda_1 (d\otimes 1) + \lambda_1 (\varepsilon\otimes d') - \lambda_2(\varepsilon v \otimes 1) + \lambda_2 (\varepsilon\otimes v') + \lambda_3  (\varepsilon\otimes \delta_1') + \lambda_4 (\delta_1 \otimes 1)\\
d^\#  \lambda_2 &= \lambda_2  (d\otimes 1) - \lambda_2  (\varepsilon \otimes d')\\
d^\#  \lambda_3 &= \lambda_2  (\varepsilon \otimes \delta_2') + \lambda_3  d \\
d^\#  \lambda_4 &= -\lambda_2 (\delta_2\otimes 1) + \lambda_4  d'
\end{align*}
These relations follow by counting the boundary points of the 1-dimensional manifolds $\smash{M^+_{\gamma^\#}(\alpha,\alpha';\alpha^\#)_1}$, $M^+(\alpha,\alpha';\alpha^\#)_1$, $M^+(\alpha,\theta';\alpha^\#)_1$ and $M^+(\theta,\alpha';\alpha^\#)_1$, respectively. The boundary points in the four cases correspond to certain factorizations of instantons which are depicted by the four rows in Figure \ref{fig:lambdarel}. The details of this analysis are completely analogous to the proofs of Propositions \ref{prop:cobdelta} and \ref{prop:vmaprels2}.

Similarly, the relation $\delta_1^\#\circ \lambda=\Delta_1\circ d^{\otimes}+\delta_1^\otimes$ splits into four equations, which correspond to the four rows in Figure \ref{fig:delta1rel}, obtained by counting boundary points of the moduli spaces $\smash{M^+_{\gamma^\#}(\alpha,\alpha';\theta^\#)_1}$, $M^+(\alpha,\alpha';\theta^\#)_1$, $M^+(\theta,\alpha';\theta^\#)_1$ and $M^+(\theta,\alpha';\theta^\#)_1$. 

The relation $\lambda\circ \delta_2^\otimes =\delta_2^\#  -d^{\#} \circ \Delta_2$ is equivalent to
\[
	\lambda_3\circ \delta_2 + \lambda_4 \circ \delta_2'  = \delta_2^\# - d^\# \circ \Delta_2,
\]
the terms corresponding to the boundary points of moduli spaces $M^+(\theta,\theta';\alpha^\#)_1$:
\[
\includegraphics[scale=.5]{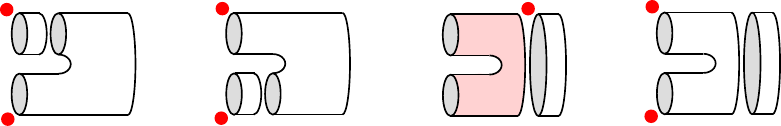}
\]

\begin{figure}[t]
\centering
\includegraphics[scale=.65]{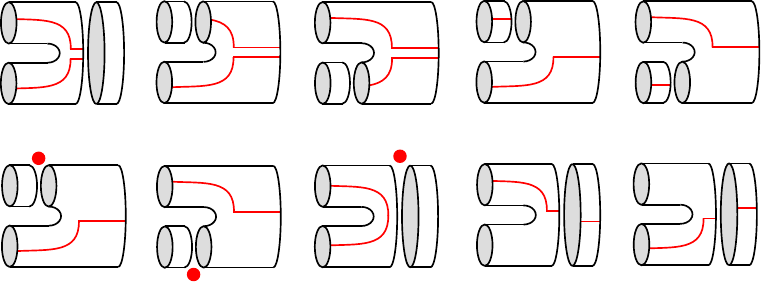}\hspace{.5cm}
\caption{}\label{fig:mu1rel}
\end{figure}

Finally, consider the fourth relation 
\begin{equation}
	d^{\#}\circ \mu+\mu \circ d^{\otimes} = v^\# \circ \lambda-\lambda \circ v^\otimes+\delta_2^\#\circ \Delta_1-\Delta_2\circ \delta_1^\otimes\label{eq:murelfromprop}.
\end{equation}
This splits into four equations. The first equation follows by counting the boundary points of 1-dimensional moduli spaces $M^+_{\gamma\gamma'}(\alpha,\alpha';\alpha^\#)_1$. See Figure \ref{fig:mu1rel}. Note that gluing theory gives an additional contribution to Figure \ref{fig:mu1rel} of the form 
\[
	\includegraphics[scale=.65]{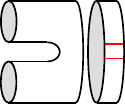}
\]
However, this term vanishes by Remark \ref{rmk:vsquared}. The term in Figure \ref{fig:mu1rel} depicted as
\begin{equation}
	\includegraphics[scale=.65]{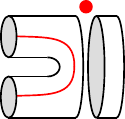} \label{pic:mu}
\end{equation}
comes from broken trajectories in $M^+_{\gamma\gamma'}(\alpha,\alpha';\alpha^\#)_1$ which break along a reducible on $Y\# Y'$. Although the two curves $\gamma$, $\gamma'$ travel through $[1,\infty ) \times Y \# Y'$, it is clear that
\[
	M^+_{\gamma\gamma'}(\alpha,\alpha';\alpha^\#)_1 = M^+_{\gamma^\#\gamma'}(\alpha,\alpha';\alpha^\#)_1.
\]
Thus from the viewpoint of the latter moduli space, we only need understand how such trajectories interact with the holonomy map of $\gamma'$, from which the contribution \eqref{pic:mu} follows just as in Proposition \ref{prop:vmap}. In verifying the relation at hand from 
Figure \ref{fig:mu1rel}, we use Remark \ref{rmk:holrel} several times, for example:
	\[
		\raisebox{-.3\height}{\includegraphics[scale=.6]{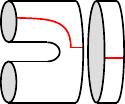}}\;\; -\;\; \raisebox{-.3\height}{\includegraphics[scale=.6]{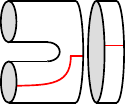}}\;\;=\;\; \raisebox{-.3\height}{\includegraphics[scale=.6]{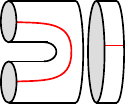}}
		\vspace{.25cm}
	\]
	\[
		\raisebox{-.3\height}{\includegraphics[scale=.6]{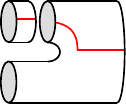}}\;\; -\;\; \raisebox{-.3\height}{\includegraphics[scale=.6]{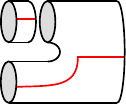}}\;\;=\;\; \raisebox{-.3\height}{\includegraphics[scale=.6]{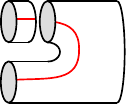}}
	\vspace{.35cm}
	\]

The second equation stemming from \eqref{eq:murelfromprop} is obtained from the boundary points of the moduli space $M^+_{\gamma}(\alpha,\alpha';\alpha^\#)_1$, which are represented in Figure \ref{fig:mu2rel}. The third equation follows from considering the boundary of $M^+_\gamma(\alpha,\theta';\alpha^\#)_1$; the fourth and final equation is similar, and uses $M^+_{\gamma'}(\theta,\alpha';\alpha^\#)_1$ and the relation of Remark \ref{rmk:holrel}.
\begin{figure}[t]
\centering
\includegraphics[scale=.65]{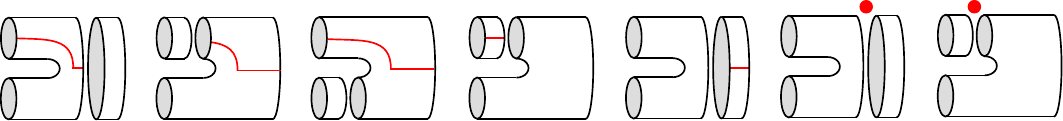}\hspace{.5cm}
\caption{}\label{fig:mu2rel}
\end{figure}
\end{proof}

\subsubsection{Definition of the map $\smash{\widetilde \lambda_{(W',S')}}$}\label{CWS-p}
We have a similar decomposition for the map $ \widetilde \lambda_{(W',S')}$:
\begin{align*}
   & \lambda': C_*^\#\to (C\otimes C')_\ast\oplus (C\otimes C')_{\ast-1}\oplus C_*\oplus C_*'  \\
  &\mu':  C_{*}^\#\to (C\otimes C')_{\ast-1}\oplus (C\otimes C')_{\ast-2}\oplus C_{*-1}\oplus C_{*-1}' \\
  &\Delta'_1: C_1^\# \to \Z \\
  &\Delta'_2:\Z \to (C\otimes C')_{-1} \oplus (C\otimes C')_{-2}\oplus C_{-1}\oplus C_{-1}' .
\end{align*}
The maps $\lambda'$, $\mu'$ and $\Delta'_2$ can be further decomposed using the decomposition of their codomains into four components, as seen above; we write
\[
  \lambda'=[\lambda'_1,\lambda'_2,\lambda'_3,\lambda'_4]^\intercal,\hspace{.75cm}\mu=[\mu'_1,\mu'_2,\mu'_3,\mu'_4]^\intercal,\hspace{.75cm}
  \Delta'_2=[\Delta'_{2,1},\Delta'_{2,2},\Delta'_{2,3},\Delta'_{2,4}]^\intercal.
\]
We proceed to define these maps. Suppose $\alpha$, $\alpha'$ and $\alpha^\#$ are all irreducible. Then define:
\[
\begin{array}{ll}
    \langle \lambda'_1(\alpha^\#), \alpha\otimes \alpha'\rangle = \# M(\alpha^\#;\alpha,\alpha')_0 \hspace{.5cm}&  \langle \Delta'_{2,1}(1),\alpha \otimes \alpha'\rangle = \# M(\theta^\#;\alpha,\alpha')_0  \\
   \langle \lambda'_2(\alpha^\#), \alpha\otimes \alpha'\rangle = \# M_{\sigma^\#}(\alpha^\#;\alpha,\alpha')_0 & \langle \Delta'_{2,2}(1),\alpha \otimes \alpha'\rangle = \# M_{\sigma^\#}(\theta^\#;\alpha,\alpha')_0 \\
   \langle \lambda'_3(\alpha^\#), \alpha\rangle = \# M(\alpha^\#;\alpha,\theta')_0  &\langle \Delta'_{2,3}(1),\alpha \rangle = \# M(\theta^\#;\alpha,\theta')_0 \\
   \langle \lambda'_4( \alpha^\#), \alpha'\rangle = \# M(\alpha^\#;\theta,\alpha')_0 &  \langle \Delta'_{2,4}(1), \alpha'\rangle = \# M(\theta^\#;\theta,\alpha')_0  \\
\end{array}
\]
We also define $ \Delta'_{1}(\alpha^\#)=\# M(\alpha^\#;\theta,\theta')_0$. Finally, we define $\mu'$ as follows:
\[
\begin{array}{l}
    \langle \mu'_1(\alpha^\#), \alpha\otimes \alpha'\rangle =\# M_{\sigma}(\alpha^\#;\alpha,\alpha')_0  \\
    \langle \mu'_2(\alpha^\#), \alpha\otimes \alpha'\rangle =\# M_{\sigma\sigma'}(\alpha^\#;\alpha,\alpha')_0   \\
   \langle \mu'_3(\alpha^\#), \alpha\rangle = \# M_{\sigma}(\alpha^\#;\alpha,\theta')_0 \\
   \langle \mu'_4( \alpha^\#), \alpha' \rangle = \# M_{\sigma'}(\alpha^\#;\theta,\alpha')_0 \\
\end{array}
\]
The proof of the following proposition is similar to the proof of Proposition \ref{CWS-C}. All the relations are obtained from Subsection \ref{CWS} by reversing the pictures from right to left.
\begin{prop}\label{CWS-p-C}
	The maps $\lambda'$, $\mu'$, $\Delta'_1$ and $\Delta'_2$ define a morphism of $\cS$-complexes.
	That is to say, they satisfy the following identities:
	\begin{align*}
		d^{\otimes} \circ \lambda'&=\lambda' \circ d^\#,\\
		\delta_1^\otimes\circ \lambda'&=\Delta_1'\circ d^{\#}+\delta_1^\#,\\
		\lambda'\circ \delta_2^\# &=\delta_2^\otimes  -d^{\otimes} \circ \Delta_2',\\
		d^{\otimes}\circ \mu'+\mu' \circ d^{\#}&=
		v^\otimes \circ \lambda'-\lambda' \circ v^\#+\delta_2^\otimes\circ \Delta_1'-\Delta_2'\circ \delta_1^\#.
	\end{align*}
\end{prop}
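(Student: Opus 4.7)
The proof will proceed in direct parallel to the proof of Proposition \ref{CWS-C}, but applied to the reversed pair-of-pants cobordism $(W',S')$ and the paths $\sigma,\sigma',\sigma^\#$ in place of $\gamma,\gamma',\gamma^\#$. The first step is to observe that each of the four claimed identities, when unpacked using the decomposition $\widetilde C^\otimes_\ast = (C\otimes C')_\ast \oplus (C\otimes C')_{\ast-1} \oplus C_\ast \oplus C'_\ast$, splits into a family of component equations indexed by the possible combinations of reducible and irreducible limits on the three ends.

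The plan is to verify each component equation by counting the boundary points of an appropriate 1-dimensional moduli space on $(W',S')$ (possibly cut down by one or two holonomy constraints along $\sigma,\sigma',\sigma^\#$). For a generic choice of metric, perturbations, and generic values $h,h'\in S^1$, the relevant moduli spaces are smooth manifolds, and Propositions \ref{prop:mod1} and \ref{prop:mod2} together with the argument in Subsection \ref{subsec:red} show that their compactifications have codimension-1 strata of three types: (a) trajectories breaking along an irreducible or the reducible $\theta^\#$ at the incoming end, (b) trajectories breaking at $\theta$, $\theta'$ or an irreducible on one of the outgoing ends, and (c) for cut-down moduli spaces, ends arising from gluing a flat reducible to a broken trajectory at one of the ends (handled exactly as in the proofs of Propositions \ref{prop:cobdelta}, \ref{prop:vmap}, and \ref{prop:vmaprels2}).

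The first identity $d^\otimes\circ\lambda'=\lambda'\circ d^\#$ is obtained from the boundaries of $M^+(\alpha^\#;\alpha,\alpha')_1$ and $M^+_{\sigma^\#}(\alpha^\#;\alpha,\alpha')_1$, with the cut-down case invoking the mirror of Remark \ref{rmk:holrel}, namely $H^{\sigma^\#}=H^\sigma\cdot(H^{\sigma'})^{-1}$. The second and third identities follow by repeating the analysis when one of $\alpha,\alpha',\alpha^\#$ is replaced by the corresponding reducible; here the single flat reducible on $(W',S')$, unobstructed of index $-1$ by the discussion in Subsection \ref{subsec:red}, contributes the $\delta_1^\#$ and $\delta_2^\otimes$ terms. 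The fourth identity, involving $\mu'$, is the most elaborate: one analyzes the boundaries of $M^+_\sigma(\alpha^\#;\alpha,\alpha')_1$, $M^+_{\sigma\sigma'}(\alpha^\#;\alpha,\alpha')_1$, $M^+_\sigma(\alpha^\#;\alpha,\theta')_1$, and $M^+_{\sigma'}(\alpha^\#;\theta,\alpha')_1$. As in Figures \ref{fig:mu1rel} and \ref{fig:mu2rel}, one uses (i) the vanishing of terms with two coincident holonomy constraints (the mirror of Remark \ref{rmk:vsquared}), and (ii) the holonomy relation to combine terms of the form $\sigma$-cutdown minus $\sigma'$-cutdown into $\sigma^\#$-cutdown contributions, which match the composition $\delta_2^\otimes\circ\Delta_1'$ on the right-hand side.

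The main obstacle in carrying this out is bookkeeping: one must verify that the signs from the orientation conventions of Subsection \ref{subsec:ors} (with $(W',S')$ oriented using the negative of the canonical homology orientation where $\Delta_2'$ is concerned, matching the convention introduced for $\delta_2$ and $\Delta_2$ in Subsections \ref{sec:delta1and2} and \ref{sec:framed}) reproduce exactly the signs appearing in the tensor product differential $\widetilde d^\otimes$, the map $v^\otimes$ of \eqref{eq:vtensor}, and in $\delta_2^\otimes\circ\Delta_1'$ and $\Delta_2'\circ\delta_1^\#$. No new analytic input is required beyond what was used for Proposition \ref{CWS-C}, since all moduli spaces on $(W',S')$ are entirely analogous and the gluing theory at reducibles proceeds identically.
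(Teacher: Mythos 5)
Your proposal is correct and takes essentially the same approach as the paper: the paper's proof of this proposition is a one-line remark that it is obtained from the proof of Proposition \ref{CWS-C} by "reversing the pictures from right to left," and you have simply spelled out that mirror argument in detail, with the reversed cobordism $(W',S')$, the mirrored paths $\sigma,\sigma',\sigma^\#$, and the corresponding moduli-space boundary analysis. The one small bookkeeping gap is that for the first identity you only cite the boundaries of $M^+(\alpha^\#;\alpha,\alpha')_1$ and $M^+_{\sigma^\#}(\alpha^\#;\alpha,\alpha')_1$; the four component equations arising from the decomposition $\widetilde C^\otimes_\ast=(C\otimes C')_\ast\oplus(C\otimes C')_{\ast-1}\oplus C_\ast\oplus C'_\ast$ also require the boundaries of $M^+(\alpha^\#;\alpha,\theta')_1$ and $M^+(\alpha^\#;\theta,\alpha')_1$, mirroring the four moduli spaces listed for the corresponding identity in Proposition \ref{CWS-C}.
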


\subsubsection{Chain homotopies of compositions}\label{comp}

We firstly identify the composition $\widetilde \lambda_{(W,S)}\circ \widetilde \lambda_{(W',S')}$, which is a morphism of $\mathcal S$-complexes, as a morphism associated to $(W^\circ,S^\circ):=(W\circ W',S\circ S')$. Let $\rho^\#:=\gamma^\#\circ \sigma^\#$ be the closed loop embedded in the surface $S^\circ$, and similarly set $\rho = \gamma\circ \sigma$, $\rho'=\gamma'\circ \sigma'$. Let 
\[
	\widetilde \lambda_{(W^\circ, S^\circ,\rho^\#)}:\widetilde{C}_\ast^\#\to \widetilde{C}_\ast^\#
\]
be the $\cS$-morphism defined by components $\lambda^\circ$, $\mu^\circ$, $\Delta_1^\circ$ and $\Delta_2^\circ$, where
\[
\begin{array}{l}
    \langle \lambda^\circ(\alpha^\#), \beta^\# \rangle =\# M_{\rho^\#}(W^\circ,S^\circ;\alpha^\#,\beta^\#)'_0 \\
    \langle \mu^\circ(\alpha^\#), \beta^\#\rangle = \# M_{\rho^\#\rho}(W^\circ,S^\circ;\alpha^\#,\beta^\#)'_0 \\
    \langle\Delta_1^\circ(\alpha^\#), 1\rangle = \# M_{\rho^\#}(W^\circ,S^\circ;\alpha^\#,\theta^\#)'_0 \\
   \langle \Delta_2^\circ(1), \beta^\# \rangle = \# M_{\rho^\#}(W^\circ,S^\circ;\theta^\#,\beta^\#)'_0 \\
\end{array}
\]
In all of these moduli spaces, we use a slightly larger gauge group than usual; this is indicated by the primed superscripts. To say more, viewing our moduli spaces as consisting of $SO(3)$ (orbifold) adjoint connections, we mod out by not only determinant-1 gauge transformations, but all $SO(3)$ (orbifold) gauge transformations. In the case at hand, the determinant-1 gauge group is of index 2 in this larger group, as $H^1(W^\circ;\Z/2)\cong \Z/2$. The residual $\Z/2$ action is free and orientation-preserving on the determinant-1 moduli spaces  \cite[Subsection 5.1]{KM:unknot}, so for example we have
\[
	\# M_{\rho^\#}(W^\circ,S^\circ;\alpha^\#,\beta^\#)_0 = 2 \#M_{\rho^\#}(W^\circ,S^\circ;\alpha^\#,\beta^\#)'_0
\]
This modification of gauge groups is to avoid factors of $2$ in our chain relations below. 

Otherwise, our notation is just as before; for example, we have
\[
	M_{\rho^\#\rho}(W^\circ,S^\circ;\alpha^\#,\beta^\#)'_0:=\{[A]\in M(W^\circ,S^\circ;\alpha^\#,\beta^\#)'_2
\mid H^{\rho^\#}([A])=s, H^{\rho}([A])=t\}
\]
for generic fixed $s,t\in S^1$, and $H^{\rho^\#}$ and $H^\rho$ are modified holonomy maps. In pictures:
\[
\lambda^\circ = \raisebox{-.3\height}{\includegraphics[scale=.45]{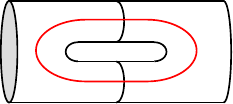}}
		\hspace{1cm}
\mu^\circ =\; \;\raisebox{-.3\height}{\includegraphics[scale=.45]{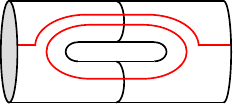}}
\]
\[
\Delta_1^\circ = \raisebox{-.3\height}{\includegraphics[scale=.45]{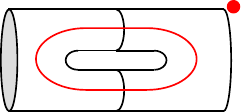}}
		\hspace{1cm}
\Delta_2^\circ = \raisebox{-.3\height}{\includegraphics[scale=.45]{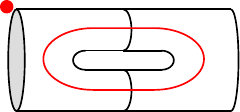}}
\]
That the map $\widetilde \lambda_{(W^\circ, S^\circ,\rho^\#)}$ just defined is a morphism of $\cS$-complexes follows, for the most part, from the usual arguments. The one essential difference is that $(W^\circ, S^\circ)$ is not a negative definite pair in the sense of Definition \ref{def:negdef}, because
$H_1(W^\circ\setminus S^\circ;\Z)$ is free abelian of rank 2. In considering its reducible traceless representations in $\mathscr{X}(W^\circ,S^\circ)$, one of these generators, upon conjugating, must go to $i\in SU(2)$, and the other is then of the form $e^{i\theta}\in SU(2)$. Thus there is not one reducible, but a circle's worth. Nonetheless, all the moduli spaces used in the definition are cut down by holonomy around the loop $\rho^\#$, and this has the effect of picking out a single reducible which is unobstructed. Similar matters are discussed in the proof of Proposition \ref{prop:morphchqi} below.

\begin{figure}[t]
\centering
\includegraphics[scale=.5]{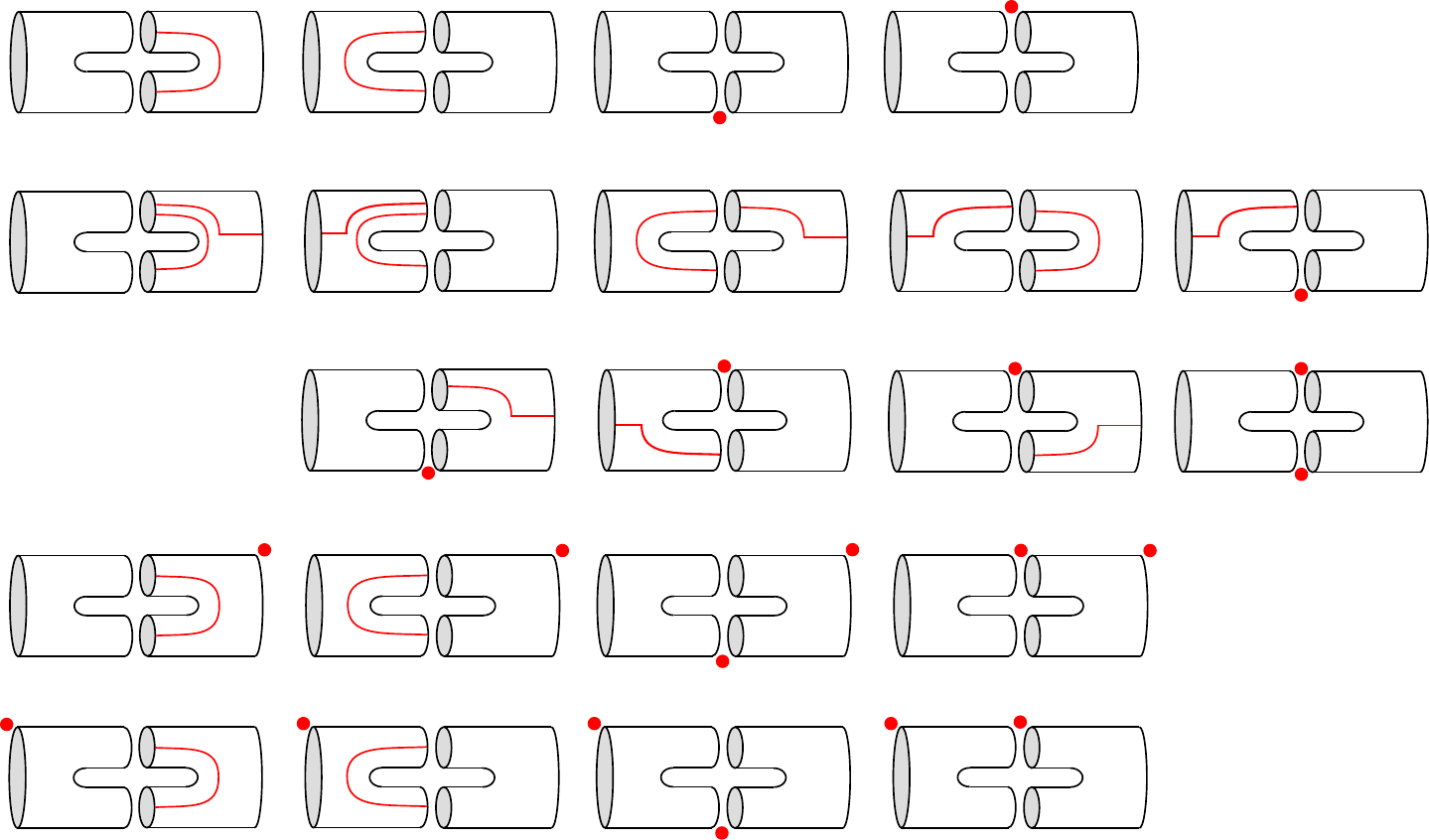}\hspace{.5cm}
\caption{}\label{fig:circrels}
\end{figure}

\begin{prop}\label{prop:comphomotopy1}
	There is an $\mathcal S$-chain homotopy equivalence between the $\mathcal S$-morphism
	$\widetilde \lambda_{(W,S)}\circ \widetilde \lambda_{(W',S')}$ and the map
	$\widetilde \lambda_{(W\circ W',S\circ S',\rho^\#)}$.
\end{prop}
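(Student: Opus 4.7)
The plan is to adapt the neck-stretching argument of \cite[Section 7.4]{donaldson-book} to the singular setting, while tracking the components of the $\cS$-complex morphisms carefully. I would first construct a one-parameter family of orbifold metrics $\{g_T\}_{T\in [0,\infty)}$ on the orbifold associated with $(W^\circ, S^\circ)$, together with a compatible family of perturbations, such that $g_0$ is the fixed metric used in the definition of $\widetilde \lambda_{(W^\circ,S^\circ,\rho^\#)}$, while $g_T$ for large $T$ stretches a long cylindrical neck along the intermediate 3-manifold pair $(Y,K)\sqcup (Y',K')$ where $(W,S)$ and $(W',S')$ are glued. The paths $\rho^\#$, $\rho = \gamma\circ \sigma$, $\rho' = \gamma'\circ \sigma'$ on $S^\circ$ may be arranged to be translation-invariant along the growing neck, so that the associated modified holonomy maps extend consistently over the family.

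For each pair of critical points $\alpha^\#, \beta^\#$ for $(Y\# Y',K\# K')$, one forms the parametrized moduli spaces
\[
M^{[0,\infty)}_{\rho^\#}(W^\circ,S^\circ;\alpha^\#,\beta^\#)'_d := \bigcup_{T\in [0,\infty)} \{T\}\times M_{\rho^\#}(W^\circ(T),S^\circ(T);\alpha^\#,\beta^\#)'_d
\]
and their variants cut down by additional holonomy constraints along $\rho$ or by changing one of the limits to $\theta^\#$. The loop $\rho^\#$ is essential here: since $H_1(W^\circ\setminus S^\circ;\Z)\cong \Z^2$ is generated by the meridian of $S^\circ$ and by $\rho^\#$, imposing $H^{\rho^\#}([A])=s$ for a generic $s\in S^1$ picks out a unique reducible on the composite, which is unobstructed for the same dimension-counting reasons as in Subsection~\ref{subsec:red} applied to the double branched cover. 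This is what makes $\widetilde \lambda_{(W^\circ,S^\circ,\rho^\#)}$ well-defined and what will make the parametrized moduli spaces behave analogously to those for a negative definite pair.

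The chain homotopy $\widetilde K$ is then built componentwise, each component being a signed count of isolated points of a parametrized moduli space obtained by assembling the above $M^{[0,\infty)}_{\rho^\#}$-style spaces with further holonomy cuts along $\rho$, $\gamma, \gamma', \sigma, \sigma'$ and/or reducible endpoints, in exact parallel with the list of decorations appearing in the definitions of $\lambda,\mu,\Delta_1,\Delta_2$ of Subsections~\ref{CWS} and~\ref{CWS-p}. The boundary of a corresponding 1-dimensional parametrized moduli space decomposes as usual into three types of contributions: boundary at $T=0$, boundary at $T\to\infty$, and interior breakings. The $T=0$ contribution matches the appropriate component of $\widetilde \lambda_{(W^\circ,S^\circ,\rho^\#)}$. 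The $T\to\infty$ contribution, by a singular analogue of \cite[Theorem 7.16]{donaldson-book} (whose proof in the present setting parallels the reasoning behind Proposition~\ref{prop:comphomotopy1}'s sister results, e.g. equations \eqref{eq:compositemap} and Proposition~\ref{prop:vmap}, using the identity $H^{\rho^\#}=H^{\gamma^\#}\cdot H^{\sigma^\#}$ and the analogous identities for $\rho,\rho'$ in the limit), reproduces the appropriate component of $\widetilde \lambda_{(W,S)}\circ \widetilde \lambda_{(W',S')}$; these are exactly the terms recorded in the matrix descriptions of the two $\cS$-morphisms after tensor expansion, including the mixed $\Delta_2\circ\Delta_1'$, $\lambda\circ \mu' + \mu\circ\lambda'$ type contributions to the $\mu$-component. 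Interior breakings supply the chain-homotopy terms $\widetilde d^\#\widetilde K + \widetilde K\widetilde d^\#$. The anticommutation $\chi^\#\widetilde K+\widetilde K\chi^\#=0$ is automatic from the fact that $\widetilde K$ has the matrix shape prescribed in Definition~\ref{def:shomotopy}, which in turn reflects which pairs of limits (reducible or irreducible) define each component.

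The main obstacle will be the accounting at the $T\to\infty$ end when one or both of the instantons in the limiting broken trajectory is reducible. Standard gluing for the irreducible pieces is handled in a straightforward, if tedious, analogue of the relation verifications in Propositions~\ref{CWS-C} and~\ref{CWS-p-C}; the subtle part is the interaction between the circle of reducibles on $(W^\circ,S^\circ)$, the unique reducibles on $(W,S)$ and $(W',S')$ (since both are negative definite pairs, as used throughout Subsection~\ref{subsec:red}), and the holonomy cut $H^{\rho^\#}=s$. In the limit, the holonomy $H^{\rho^\#}$ factorizes through the neck into a product of holonomies running through each piece, and the generic choice of $s$ together with the 1-dimensional family of intermediate reducibles produces exactly the boundary contributions that match the off-diagonal entries coming from $\delta_2^\#$, $\delta_1^\#$, $\Delta_2$, $\Delta_1'$ in $\widetilde \lambda_{(W,S)}\circ\widetilde \lambda_{(W',S')}$. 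Once this counting is carried out carefully—essentially the singular version of the calculations behind \cite[Proposition 7.8]{donaldson-book} and the composition law of \cite[Section 7.4]{donaldson-book}—the desired $\cS$-chain homotopy equivalence follows.
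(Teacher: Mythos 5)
Your overall strategy---a one-parameter family of metrics stretching a neck along $(Y,K)\sqcup(Y',K')$, forming 1-dimensional parametrized moduli spaces, and reading off the two maps and the homotopy from the $T=0$, $T\to\infty$, and interior-breaking ends---is indeed the paper's approach, and your observation that the $\rho^\#$-holonomy cut rescues the unobstructedness of the reducible on the composite is correct. Two points need attention. First, the citation of \cite[Theorem 7.16]{donaldson-book} for the $T\to\infty$ identification is misplaced: that theorem is the surgery/excision result, and in this paper it is the reference for Proposition~\ref{prop:morphchqi} (see Remark~\ref{rmk:cfdonaldsonglue}), not for the present proposition. The $T\to\infty$ contribution here is identified by standard composite-cobordism gluing, of the sort underlying \eqref{eq:compositemap}---you gesture at this, but the wrong lead citation obscures what is doing the work.

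Second, and more substantively, you do not address the discrete gluing parameter, and this is the step that would fail on a literal reading of your proposal. When a pair of instantons on $(W,S)$ and $(W',S')$ with irreducible intermediate flat limit is glued at $T\to\infty$, the gluing parameter is the stabilizer $\{\pm 1\}$; the two resulting gluings differ by a gauge transformation equal to $-1$ on one of $Y$, $Y'$ and $+1$ on the other, which is not a global determinant-one gauge transformation on $W^\circ$. Hence the determinant-one count of ends at $T\to\infty$ is \emph{twice} $\widetilde\lambda_{(W,S)}\circ\widetilde\lambda_{(W',S')}$, not the composite itself. The paper's $\widetilde\lambda_{(W^\circ,S^\circ,\rho^\#)}$ is therefore defined with the enlarged $SO(3)$ gauge group (the primed moduli spaces), using $H^1(W^\circ;\Z/2)\cong\Z/2$ and the free, orientation-preserving residual $\Z/2$ action to absorb the factor of two. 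You write the primed notation but never explain that this cancellation is what makes the $T\to\infty$ count match the composite; without that observation, the proof would produce a spurious factor of $2$. (A minor further imprecision: the chain homotopy components are built with cuts along $\rho^\#$ and $\rho$ only---matching the decorations of $\lambda^\circ,\mu^\circ,\Delta_1^\circ,\Delta_2^\circ$, not those of $\lambda,\mu,\Delta_1,\Delta_2$ from Subsections~\ref{CWS}--\ref{CWS-p}; the cuts along $\gamma,\gamma',\sigma,\sigma'$ appear only in the $T\to\infty$ factorization, not in $\widetilde K$.)
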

\begin{proof}
	Choose a path of metrics $G$ on $(W^\circ,S^\circ)$, starting at $g_0$ and ending at a broken metric $g_\infty$, the latter of which is broken along the gluing region of the composition $(W,S)\circ (W',S')$. Define $K^\circ$, $L^\circ$, $M_1^\circ$ and $M_2^\circ$ as follows:
	\begin{align*}
		\langle K^\circ (\alpha^\#), \beta^\#\rangle &= \#\{[A]\in \bigcup_{g\in G} M^g(W^\circ,S^\circ;\alpha^\#,\beta^\#)'_0
\mid H^{\rho^\#}([A])=s\} \\
		\langle L^\circ (\alpha^\#), \beta^\#\rangle & = \#\{[A]\in \bigcup_{g\in G} M^g(W^\circ,S^\circ;\alpha^\#,\beta^\#)'_1
\mid H^{\rho^\#}([A])=s, H^{\rho}([A])=t\}\\
\langle M_1^\circ (\alpha^\#), 1\rangle &= \#\{[A]\in \bigcup_{g\in G} M^g(W^\circ,S^\circ;\alpha^\#,\theta^\#)'_0
\mid H^{\rho^\#}([A])=s\} \\
\langle M_2^\circ (1),\beta^\#\rangle &= \# \{[A]\in \bigcup_{g\in G} M^g(W^\circ,S^\circ;\theta^\#,\beta^\#)'_0
\mid H^{\rho^\#}([A])=s\} 
\end{align*}
These maps define a chain homotopy as in Definition \ref{def:shomotopy} between $\widetilde \lambda_{(W^\circ,S^\circ)}$ defined above and the map $\widetilde \lambda^\infty_{(W^\circ,S^\circ)}$ defined similarly to $\widetilde \lambda_{(W^\circ,S^\circ)}$ but using the broken metric $g_\infty$ in place of $g_0$. That is, if we write the components of $\widetilde \lambda^\infty_{(W^\circ,S^\circ)}$ as $\lambda^\infty$, $\mu^\infty$, $\Delta_1^\infty$ and $\Delta_2^\infty$, then
\begin{align*}
	d^\# K + K d^\# &= \lambda^\infty - \lambda^\circ\\
	v^\#K - d^\# L + \delta_2^\#M_1 + Ld^\# - Kv^\# + M_2 \delta_1^\# & = \mu^\infty - \mu^\circ\\
	\delta_1^\# K + M_1d^\# &= \Delta_1^\infty - \Delta_1^\circ\\
	-d^\#M_2- K\delta_2^\# & = \Delta_2^\infty - \Delta_2^\circ
\end{align*}
These relations are proved in the usual way; for the first, consider the 1-dimensional moduli space $\bigcup_{g\in G} M^g_{\rho^\#}(W^\circ,S^\circ;\alpha^\#,\beta^\#)'_0$. Counting the ends of this moduli space that contain sequences of pairs $([A_i],g_i)$ where the metrics $g_i$ converge to the interior of $G$ yield the left hand side of the equation $d^\#K + Kd^\# = \lambda^\infty - \lambda^\circ$, and ends containing sequences with $g_i\to g_0$ (resp. $g_\infty$) contribute to $\lambda^\circ$ (resp. $\lambda^\infty$) on the right side. Finally, we claim that 
\[
	 \widetilde \lambda^\infty_{(W^\circ,S^\circ)} = \widetilde \lambda_{(W,S)}\circ \widetilde \lambda_{(W',S')}
\]
This amounts to straightforward verifications of the following identities:
	\begin{align*}
	\lambda^{\infty}&=\lambda_1\lambda_1'+\lambda_2\lambda_2'+\lambda_3\lambda_3'+\lambda_4\lambda_4'\\
	\mu^{\infty}&=\mu_1\lambda_1'+\mu_2\lambda_2'+\mu_3\lambda_3'+\mu_4\lambda_4'+
				\lambda_1\mu_1'+\lambda_2\mu_2'+\lambda_3\mu_3'+\lambda_4\mu_4'+\Delta_2\Delta_1'\\
	\Delta_1^{\infty}&=\Delta_{1,1}\lambda_1'+\Delta_{1,2}\lambda_2'+
				\Delta_{1,3}\lambda_3'+\Delta_{1,4}\lambda_4'\\
	\Delta_2^{\infty} &=\lambda_1\Delta_{2,1}'+\lambda_2\Delta_{2,2}'+
	\lambda_3\Delta_{2,3}'+\lambda_4\Delta_{2,4}'
\end{align*}
The right hand sides are represented by the rows of Figure \ref{fig:circrels}. (If we had not modified our gauge groups, factors of $2$ would appear on the right hand sides, from a discrete gluing parameter, multiplying by $-1$ on one of $Y$ or $Y'$, as in \cite[Section 3.2]{bd}.) We also remark that in verifying the relation for $\mu^\infty$ we use the following identity, and its symmetries:
\[
		\raisebox{-.3\height}{\includegraphics[scale=.5]{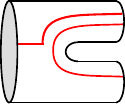}} =\raisebox{-.3\height}{\includegraphics[scale=.5]{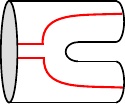}}
\]
\end{proof}
\begin{prop}\label{prop:morphchqi}
	The morphism $\widetilde \lambda_{(W^\circ,S^\circ,\rho^\#)}$
	is $\mathcal S$-chain homotopic to $ \text{\emph{id}}$.
\end{prop}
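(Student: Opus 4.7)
The proof uses a neck-stretching argument, adapting Donaldson's strategy from the non-singular setting \cite[Theorem 7.16]{donaldson-book}. The topological discussion preceding Subsection \ref{conn-moduli-spaces} supplies a tubular neighborhood $N \cong (S^1 \times D^3, S^1 \times D^1)$ of $\rho^\#$ in $(W^\circ, S^\circ)$, with separating boundary $(S^1 \times S^2, S^1 \times \{2 \text{ pts}\})$, such that excising $N$ and capping with $(D^2 \times S^2, D^2 \times \{2 \text{ pts}\})$ yields the identity cobordism $[0,1] \times (Y\#Y', K\#K')$. The plan is to pick a $1$-parameter family of metrics $G = \{g_t\}$ on $(W^\circ, S^\circ)$ stretching this neck from $g_0$ to a broken metric $g_\infty$, and to define chain homotopy operators $K^\circ, L^\circ, M_1^\circ, M_2^\circ$ by the same recipe as in the proof of Proposition \ref{prop:comphomotopy1}, using cut-down parametrized moduli spaces with respect to $G$ and the holonomy constraints on $\rho^\#$ (and additionally on $\rho$ for $L^\circ$). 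The usual boundary analysis will then give an $\cS$-chain homotopy from $\widetilde \lambda_{(W^\circ, S^\circ, \rho^\#)}$ to a limiting morphism $\widetilde \lambda^\infty$ defined using the broken metric $g_\infty$.

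The core task is to identify $\widetilde \lambda^\infty$ with a morphism already known to be $\cS$-chain homotopic to the identity. A singular analogue of \cite[Theorem 7.16]{donaldson-book} should assert that instantons on $(W^\circ, S^\circ)$ for $g_\infty$ subject to $H^{\rho^\#} = s$ correspond to pairs $([A_1],[A_2])$, where $[A_1]$ is a singular instanton on the complement $(W^\circ\setminus N)^+$ asymptotic on its cylindrical neck end to a flat $U(1)$-reducible on $(S^1\times S^2, S^1\times \{2\text{ pts}\})$, and $[A_2]$ is a singular flat reducible on $N$ with matching boundary value. The moduli space of such flat reducibles on $(S^1\times S^2, S^1\times \{2\text{ pts}\})$ is a circle, parametrized by the $S^1$-factor holonomy. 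Since $\rho^\#$ is the core of the excised (and replaced) loop, the constraint $H^{\rho^\#} = s$ selects a unique such asymptotic value; this value in turn extends uniquely to a flat reducible on $N$, which is regular. Crucially, the $S^1$-gluing parameter arising from the $U(1)$-stabilizer of the central reducible is precisely cancelled by the holonomy cut-down along $\rho^\#$.

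Under the excision-and-capping correspondence, $(W^\circ\setminus N)^+$ together with the flat reducible asymptotic value is naturally identified with a configuration on the identity cobordism $[0,1]\times (Y\#Y', K\#K')$, and the path $\rho$ is identified with an arc on the surface of this cobordism joining the two ends. It follows that $\widetilde\lambda^\infty$ coincides, up to sign conventions, with the morphism $\widetilde\lambda_{\text{id}}$ associated to the identity cobordism of $(Y\#Y',K\#K')$ equipped with the arc $\rho$. By Theorem \ref{thm:framedcat}, $\widetilde\lambda_{\text{id}}$ is $\cS$-chain homotopic to the identity morphism. Composing this with the homotopy from $\widetilde\lambda_{(W^\circ, S^\circ, \rho^\#)}$ to $\widetilde\lambda^\infty$ then yields the desired statement.

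The main obstacle will be the singular-setting analogue of the gluing result \cite[Theorem 7.16]{donaldson-book}: one must verify carefully that the holonomy cut-down along $\rho^\#$ precisely kills the $S^1$-stabilizer gluing parameter of the central $U(1)$-reducible, and that transversality can be achieved compatibly on both sides of the stretched neck. Modulo this technical input, the identification of $\widetilde\lambda^\infty$ with the identity-cobordism morphism, and the resulting $\cS$-chain homotopy to $\text{id}$, are formal consequences of the functoriality established in Section \ref{sec:tilde}.
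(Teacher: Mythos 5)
Your overall strategy matches the paper's: stretch the neck along $(S^1\times S^2, S^1\times\{2\text{ pts}\})$, use Morse--Bott gluing over the circle of flat reducibles, and excise $(S^1\times D^3, S^1\times D^1)$ replacing it by $(D^2\times S^2, D^2\times 2\text{ pts})$ to recognize the limiting morphism as coming from the identity cobordism. However, two concrete points are missing or incorrect.

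First, you assert that the constraint $H^{\rho^\#}=s$ ``selects a unique'' asymptotic value in the circle of flat reducibles $\fC$ on $(S^1\times S^2, S^1\times\{2\text{ pts}\})$. This is false: $H^{\rho^\#}$ is defined via the adjoint $SO(3)$ connection, so passing from the $SU(2)$ holonomy on $\fC\cong S^1$ to $H^{\rho^\#}$ squares the circle parameter, and a generic value $s$ has exactly \emph{two} preimages $\theta_\pm$. This factor of $2$ does not vanish by a ``sign convention''; the paper absorbs it by having set up the morphism $\widetilde\lambda_{(W^\circ,S^\circ,\rho^\#)}$ from the start using the primed moduli spaces, i.e.\ modding out by all $SO(3)$ gauge transformations rather than only determinant-$1$ ones (the two gauge groups differ by index $2$ here because $H^1(W^\circ;\Z/2)\cong\Z/2$). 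Your proposal never acknowledges this enlargement of the gauge group, so your limiting morphism would come out as \emph{twice} the identity-cobordism morphism rather than equal to it.

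Second, to justify that the Morse--Bott fiber product only involves the \emph{flat} (hence reducible) part of $M(S^1\times D^3,S^1\times D^1)$ and likewise for $(D^2\times S^2, D^2\times 2\text{ pts})$, one needs the dimension count: irreducible instantons on these pieces have index $\equiv 0 \pmod 4$ and strictly positive dimension, so they cannot contribute to fiber products landing in $0$- or $1$-dimensional moduli spaces on $(W^\circ,S^\circ)$. This requires noting that the branched double covers of both pieces are negative definite (making all flat connections unobstructed), and computing the kernel of the deformation complex at the flat reducibles (giving $H^1 = 0$, hence index $-1$). Without this, the identification of the broken-metric moduli spaces with those of the identity cobordism is not established. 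Filling in these two points would complete the proof along the paper's lines.
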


\begin{proof}
	Recall from Figure \ref{fig:connsumcomposites1} that embedded in the composite $(W^\circ,S^\circ) = (W\circ W',S\circ S')$ is a copy of $(S^1\times D^3,S^1\times D^1)$, surgery on which yields a product cobordism. Write $(W^c,S^c)$ for the closure of the complement of $(S^1\times D^3,S^1\times D^1)$, so that
	\[
		(W^\circ, S^\circ) = (W^c,S^c) \cup (S^1\times D^3,S^1\times D^1),
	\]
	where the two pieces are glued along $(S^1\times S^2,S^1\times \{2\text{ pts}\})$. The basic idea of the proof is to relate the morphism associated to $(W^\circ,S^\circ)$ to a morphism associated to the pair in which $ (S^1\times D^3,S^1\times D^1)$ is replaced by $(D^2\times S^2,D^2\times 2\text{ pts})$.
	
	Stretching the metric along a collar neighborhood of the gluing region provides a 1-parameter family of metrics, starting from our initial choice of metric, and limiting to a metric broken along $S^1\times S^2$. Along this 1-parameter family of metrics, we homotop the loops $\rho$, $\rho'$ and $\rho^\#$ and vary the constant $s\in S^1$ continuously such that when the metric is broken along $S^1\times S^2$, the loop $\rho^\#$ is contained in $S^1\times D^1$, away from the region of stretching, but that $\rho$ and $\rho'$ are in the interior of $W^c$ and $s=1$. We also arrange that the perturbation data both near the gluing region and on the component $(S^1\times D^3,S^1\times D^1)$ are zero. (These assumptions will be justified in the course of the proof.) Write $\widetilde \lambda^{+}$ for the map defined just as $\widetilde \lambda_{(W^\circ,S^\circ,\rho^\#)}$ was defined, but using the limiting broken metric; write its components as $\lambda^+, \mu^+, \Delta_1^+, \Delta_2^+$. The family of metrics determines an $\mathcal S$-chain homotopy from $\widetilde \lambda_{(W^\circ,S^\circ,\rho^\#)}$ to $\widetilde \lambda^{+}$ in the usual way.
	
We now describe $\widetilde \lambda^{+}$. First, the critical set $\fC$ of the unperturbed Chern-Simons functional on $(S^1\times S^2, S^1\times 2\text{ pts})$ may be identified via holonomy with the traceless character variety, similarly to \eqref{eq:charvar}. This latter set is identified with $S^1$ as follows. Let $\mu$ be a meridian generator for the fundamentral group of $S^2\setminus 2\text{ pts}$, and $\nu$ a generator corresponding to the $S^1$ factor. The condition $\tr \rho(\mu)=0$ implies there is some $g_\rho\in SU(2)$ such that $g_\rho\rho(\mu)g_\rho^{-1}=i$. As $\mu$ and $\nu$ commute, we have $g_\rho\rho(\nu )g_\rho^{-1} =e^{i\theta}\in S^1$. Sending $\rho$ to $g_\rho \rho(\nu) g_\rho^{-1}$ gives the bijection. In summary, we have an induced bijection $\fC \to S^1$. 

Note that the stabilizer of each point in $\fC$ is isomophic to $U(1)$. We next claim that $\fC$ is Morse--Bott non-degenerate. As $\fC$ has been identified with the smooth 1-manifold $S^1$, this amounts to showing, for each class in $\fC$, that $H^1$ of the associated deformation complex has dimension 1. This in turn is equivalent to 
\begin{equation}
	\dim H^1 ( \Sigma; \pi^\ast \check{B}^\text{ad})^{\widetilde{\tau}^\ast}=1\label{eq:dbh1}
\end{equation}
where $\pi:\Sigma\to S^1\times S^2$ is the double branched cover, $\check{B}^\text{ad}$ is the orbifold adjoint connection associated to $[B]\in \fC$, and $\widetilde{\tau}^\ast$ is the action induced by a lift $\widetilde{\tau}$ of the covering involution $\tau$ on $\Sigma$ to the adjoint bundle. Each class is reducible, and so $ \pi^\ast \check{B}^\text{ad}$ is the sum of some $U(1)$-connection $B'$ and a trivial connection. Thus $H^1 ( \Sigma; \pi^\ast \check{B}^\text{ad})$ is given by
\begin{equation}
	H^1(S^1\times S^2; B') \oplus H^1(S^1\times S^2;\R).\label{eq:dbh1s}
\end{equation}
The action of $\widetilde{\tau}^\ast$ is by $-1$ on the left factor of \eqref{eq:dbh1s} and the identity on the right factor. This implies the relation (\ref{eq:dbh1}).

Next, let $M(S^1\times D^3,S^1\times D^1)^{\text{red}}$ denote the set of reducible instantons on the pair $(S^1\times D^3,S^1\times D^1)$, with cylindrical end attached. Then we have the map 
\begin{equation}
	M(S^1\times D^3,S^1\times D^1)^{\text{red}} \to \fC \label{eq:redlimitmap}
\end{equation}
which associates to an instanton its flat limit. This is a bijection, as every flat connection on $(S^1\times S^2,S^1\times 2 \text{ pts})$ extends uniquely over $(S^1\times D^3,S^1\times D^1)$.

Note that each flat instanton in $M(S^1\times D^3,S^1\times D^1)^{\text{red}}$ is unobstructed, because the branched cover $S^1\times D^3$ is negative definite. Furthermore, each such instanton has $\dim H^1=0$ in the deformation complex, as follows from a similar computation to that of \eqref{eq:dbh1}. In particular, the index of the ASD operator for any $[A]\in M(S^1\times D^3,S^1\times D^1)^{\text{red}}$ is equal to $-1$. Moreover, any instanton on $(S^1\times D^3,S^1\times D^1)$ with index $-1$ is neccesarily in $M(S^1\times D^3,S^1\times D^1)^{\text{red}}$, because all such instantons must have the same energy, and must therefore all be flat.

Write $M(W^c,S^c;\alpha^\#,\beta^\#)_d$ for the union of $M(W^c,S^c;\alpha^\#,\gamma,\beta^\#)_{d-1}$ over all $\gamma\in \fC$, and similarly for  $M(S^1\times D^3,S^1\times D^1)^\text{irr}_d$. Then each of these is a smooth manifold whose dimension is recorded in the subscript. From the above discussion, 
\begin{equation}
	M(S^1\times D^3,S^1\times D^1)^\text{irr}_{d} = \emptyset,  \qquad d\leqslant 0 \text{   or   } d \not\equiv 0 \mod 4\label{eq:dimempty}
\end{equation} 

We now employ gluing theory in the Morse--Bott case, see e.g. \cite[Section 4.5.2]{donaldson-book}. Write $M_{\rho^\#}(S^1\times D^3,S^1\times D^1)^\text{irr}_{d}$ for the subspace of $[A]\in M(S^1\times D^3,S^1\times D^1)^\text{irr}_{d+1}$ satisfying a holonomy condition $H^{\rho^\#}([A])=1$. Consider the diagram
\begin{equation}\label{fiberprod}
	\xymatrix{
	M(W^c,S^c;\alpha^\#,\beta^\#)_d \ar[dr]_{r}&  & M_{\rho^\#}(S^1\times D^3,S^1\times D^1)^\text{irr}_{d'}
	\ar[dl]^{r'}\\
	&  \fC &}
\end{equation}
where each of $r$ and $r'$ record the limit $\gamma\in\fC$ along the cylindrical ends. Note that all instantons in $M(W^c,S^c;\alpha^\#,\beta^\#)_d$ are irreducible. Similarly, we have a diagram 
\begin{equation}\label{fiberprod22}
	\xymatrix{
	M(W^c,S^c;\alpha^\#,\beta^\#)_1 \ar[dr]_{r}&  & M_{\rho^\#}(S^1\times D^3,S^1\times D^1)^\text{red}
	\ar[dl]^{r'}\\
	&  \fC &}
\end{equation}
where $r'$ is a restriction of the map \eqref{eq:redlimitmap}. The Morse--Bott gluing theory tells us that for our limiting broken metric, the moduli space $M_{\rho^\#}(W^\circ,S^\circ;\alpha^\#,\beta^\#)_0$ may be identified with union of the fiber products of \eqref{fiberprod} for $d+d'=1$, along with the fiber product \eqref{fiberprod22}. However, \eqref{eq:dimempty} implies that \eqref{fiberprod} is empty for any pair $(d,d')$ with $d+d'=1$, so we may restrict our attention to \eqref{fiberprod22}.

The holonomy constraint $H^{\rho^\#}([A])=1$ picks out exactly two points in the domain of the bijection \eqref{eq:redlimitmap}. This is because $H^{\rho^\#}$ is defined by first taking the adjoint connection, which has the effect of squaring the holonomy in $S^1$. We conclude that the map
\[
	r':M_{\rho^\#}(S^1\times D^3,S^1\times D^1)^\text{red}\to \fC
\]
is an embedding of two points into $\fC$ with image being two elements $\theta_\pm$ of $\fC$ that have holonomies $\pm 1$ along the $S^1$-factor. By picking appropriate metric and perturbation on the interior of $(W^c,S^c)$, we may assume that $r$ is transverse to $\theta_\pm \in\fC$.
Thus 
\begin{align*}
	\# M^g_{\rho^\#}(W^\circ,S^\circ;\alpha^\#,\beta^\#)_0 &= \# M(W^c,S^c;\alpha^\#,\theta_+,\beta^\#)_0+\# M(W^c,S^c;\alpha^\#,\theta_-,\beta^\#)_0\\
	&=2\# M(W^c,S^c;\alpha^\#,\theta_+,\beta^\#)_0
\end{align*}
where $g$ is the metric broken along $S^1\times S^2$. The second equality holds because there is again an element of the $SO(3)$ gauge group which maps $M(W^c,S^c;\alpha^\#,\theta_+,\beta^\#)_0$ into $M(W^c,S^c;\alpha^\#,\theta_-,\beta^\#)_0$ in an orientation preserving way. As the map $\lambda^+$ is defined using the moduli space $M^g_{\rho^\#}(W^\circ,S^\circ;\alpha^\#,\beta^\#)'_0$, a quotient of $M^g_{\rho^\#}(W^\circ,S^\circ;\alpha^\#,\beta^\#)_0$ by a free involution, the factor of $2$ is absorbed and we have the identity
\[
	\langle \lambda^+(\alpha^\#),\beta^\#\rangle = \# M(W^c,S^c;\alpha^\#,\theta_+,\beta^\#)_0.
\]
The other components may be described similarly:
\begin{align*}
	\langle \mu^+(\alpha^\#), \beta^\#\rangle &= \# M_\rho(W^c,S^c;\alpha^\#,\theta_+,\beta^\#)_0 \\
	\Delta_1^+(\alpha^\#) &=  \# M(W^c,S^c;\alpha^\#,\theta_+,\theta^\#)_0\\
	  \langle \Delta_2^+(1),\beta^\#\rangle &=  \# M(W^c,S^c;\theta^\#,\theta_+,\beta^\#)_0
\end{align*}
Recall from Figure \ref{fig:connsumcomposites1} that replacing $(S^1\times D^3,S^1\times D^1)$ with $(D^2\times S^2, D^2\times 2\text{ pts})$ results in the product cobordism $[0,1]\times (Y\# Y', K\# K')$. By attaching $(D^2\times S^2, D^2\times 2\text{ pts})$ metrically, with a cylindrical end, this product cobordism inherits a broken metric. Then the associated moduli space $M([0,1]\times (Y\# Y', K\# K');\alpha^\#,\beta^\#)_0$ may be identified with union of the following fiber product:
\begin{equation*}\label{fiberprod2}
	\xymatrix{
	M(W^c,S^c;\alpha^\#,\beta^\#)_1 \ar[dr]_{r}&  & M(D^2\times S^2,D^2\times 2\text{ pts})^\text{red}
	\ar[dl]^{r'}\\
	&  \fC &}
\end{equation*}
We have ruled out the possibility of fiber products involving $M(D^2\times S^2,D^2\times 2\text{ pts})^\text{irr}_d$ as in the previous case for $S^1\times D^3$. The moduli space $M(D^2\times S^2,D^2\times 2\text{ pts})^\text{red}$ consists of one point, the unique flat connection which extends $\theta_+\in \fC$. Note that this connection is unobstructed because the branched cover, also identified with $D^2\times S^2$, is negative definite, and it has $\dim H^1=0$ because $b_1(D^2\times S^2)=0$. Thus 
\[
	\langle \lambda^+(\alpha^\#),\beta^\#\rangle = \# M([0,1]\times (Y\# Y', K\# K');\alpha^\#,\beta^\#)_0
\]
and similarly for $\mu^+$, $\Delta_1^+$ and $\Delta_2^+$. In summary, we may write
\begin{equation}
	\widetilde \lambda^+ = \widetilde \lambda^\infty_{[0,1]\times (Y\# Y' ,K\# K')}\label{eq:plusmap}
\end{equation}
where the map on the right side is the usual $\cS$-morphism associated to a cobordism, with the understanding that the auxiliary data involves a metric broken along $S^1\times S^2$. Finally, using an $\cS$-chain homotopy induced by the family of metrics which starts at this broken metric and ``unstretches'' to the product metric, we obtain an $\cS$-chain homotopy from the right side of \eqref{eq:plusmap} to the $\cS$-morphism defined using the product metric, which is the identity.
\end{proof}

\begin{remark}\label{rmk:cfdonaldsonglue}
	Consider a negative definite pair $(W,S):(Y,K)\to (Y',K')$ as in Subsection \ref{subsec:closedloops}, and an embedded 2-sphere $F$ with $F\cdot F=0$, which intersects $S$ transversally in 2 points, such that $F\cdot S=0$. A neighborhood of $F$ is diffeomorphic to the pair $(D^2\times S^2,D^2\times 2\text{ pts})$, and we can cut this out and reglue a copy of $(S^1\times D^3,S^1\times 2\text{ pts})$ to obtain a pair $(W',S')$. Let $\gamma$ denote the closed loop which is the core of $S^1\times D^3 \subset W'$. From Subsection \ref{subsec:closedloops} we have a holonomy induced map $\mu_{(W',S',\gamma)}:C(Y,K)\to C(Y',K')$. To the original cobordism $(W,S)$ we consider the usual cobordism-induced map $\lambda_{(W,S)}:C(Y,K)\to C(Y',K')$, which counts isolated instantons. Then the argument in the proof of Proposition \ref{prop:morphchqi} shows:
\end{remark}
\begin{prop}\label{prop:chainhomotopyexcision}
		The maps $\mu_{(W',S',\gamma)}$ and $2\lambda_{(W,S)}$ are chain homotopic.
\end{prop}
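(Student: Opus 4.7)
The plan is to mimic the argument of Proposition \ref{prop:morphchqi}, performing Morse--Bott gluing at the neck separating a neighborhood of $\gamma$ from its complement. First, I would introduce a one-parameter family of metrics $G$ on $(W', S')$ starting at the initial metric and limiting to a metric broken along $(S^1\times S^2, S^1\times 2\text{ pts})$, the boundary of the regular neighborhood $(S^1\times D^3, S^1\times 2\text{ pts})$ of $\gamma$. I would arrange the family so that $\gamma$ remains inside $S^1\times D^3$ away from the neck, and that the perturbation vanishes near the neck and on $S^1\times D^3$. The standard parametrized moduli space argument then produces a chain homotopy between $\mu_{(W', S', \gamma)}$ and a map $\mu^+$ defined by the same formula using the broken metric.

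Next, I would analyze $\mu^+$ via Morse--Bott gluing, relying on the key facts already established in the proof of Proposition \ref{prop:morphchqi}: the critical set $\fC$ of the unperturbed Chern--Simons functional on $(S^1\times S^2, S^1\times 2\text{ pts})$ is identified with $S^1$ and is Morse--Bott nondegenerate; every flat class in $\fC$ extends uniquely to an unobstructed reducible instanton on $(S^1\times D^3, S^1\times 2\text{ pts})$ of index $-1$; and the irreducible moduli space $M(S^1\times D^3, S^1\times 2\text{ pts})^{\text{irr}}_d$ is empty for $d\leqslant 0$ or $d\not\equiv 0\pmod 4$. By dimension counting, the only contribution to $\mu^+$ comes from the fiber product of $M(W^c, S^c; \alpha, \gamma, \alpha')_0$ with the cut-down reducible locus $M_\gamma(S^1\times D^3, S^1\times 2\text{ pts})^{\text{red}}$ over $\fC$, where $(W^c, S^c)$ is the closure of the complement of $(S^1\times D^3, S^1\times 2\text{ pts})$ in $(W', S')$. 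Since the adjoint holonomy squares the underlying $U(1)$ holonomy, the constraint $H^\gamma = h$ picks out exactly two points $\theta_\pm \in \fC$, and an orientation-preserving element of the larger $SO(3)$ gauge group identifies the contributions from $\theta_+$ and $\theta_-$. After perturbing the metric and perturbation on $(W^c, S^c)$ so that the endpoint map to $\fC$ is transverse to $\theta_\pm$, this yields
\[
\langle \mu^+(\alpha), \alpha' \rangle \;=\; 2\,\# M(W^c, S^c; \alpha, \theta_+, \alpha')_0.
\]

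Finally, I would apply the analogous stretching to $(W, S)$ along the separating $(S^1\times S^2, S^1\times 2\text{ pts}) \subset (D^2\times S^2, D^2\times 2\text{ pts})$ to produce a map $\lambda^+_{(W, S)}$ chain homotopic to $\lambda_{(W, S)}$. The Morse--Bott gluing now involves $M(D^2\times S^2, D^2\times 2\text{ pts})^{\text{red}}$, which consists of a single unobstructed point: the unique flat connection extending $\theta_+ \in \fC$. Indeed, the generator of $\pi_1(S^1) \subset \pi_1((S^1\times S^2)\setminus (S^1\times 2\text{ pts}))$ bounds a disc in $D^2\times S^2$, forcing its holonomy to be trivial there, so only $\theta_+$ extends. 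This gives $\langle \lambda^+_{(W, S)}(\alpha), \alpha' \rangle = \# M(W^c, S^c; \alpha, \theta_+, \alpha')_0$, and composing the chain homotopies yields $\mu_{(W', S', \gamma)} \simeq \mu^+ = 2\,\lambda^+_{(W, S)} \simeq 2\,\lambda_{(W, S)}$. The main obstacle is justifying the Morse--Bott gluing picture in the singular setting, including the orientation bookkeeping that produces the factor of $2$; but this parallels the analysis already carried out in the proof of Proposition \ref{prop:morphchqi} (where the factor of $2$ was absorbed by quotienting by the larger gauge group), so the required technical foundation is in place.
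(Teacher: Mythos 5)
Your proposal follows the same Morse--Bott excision strategy the paper indicates (via the proof of Proposition~\ref{prop:morphchqi}): stretch along $(S^1\times S^2, S^1\times 2\text{ pts})$, use that the critical set $\fC\cong S^1$ there is Morse--Bott with every class extending uniquely to an unobstructed index $-1$ reducible on the cap, describe $\mu^+$ and $\lambda^+$ as fiber products with $M(W^c,S^c)$, and account for the factor of $2$ by the free action of the extra $SO(3)$ gauge transformation. This matches the argument the paper has in mind.

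One step you should make explicit, which the paper does in the proof of Proposition~\ref{prop:morphchqi}: the holonomy parameter $h\in S^1$ must be deformed to $1$ as part of the 1-parameter family of metrics. Your map $\mu_{(W',S',\gamma)}$ is defined with a fixed generic $h$, and at the broken metric the constraint $H^\gamma = h$ singles out the pair of flat classes in $\fC$ with $U(1)$ holonomy $\pm\sqrt{h}$, not $\theta_\pm$. Meanwhile $\lambda^+$ is a fiber product with the single class $\theta_+$ (trivial $U(1)$ holonomy, since the core circle bounds in $D^2\times S^2$). Since the endpoint map $r\colon M(W^c,S^c;\alpha,\alpha')_1\to\fC$ is defined only on a manifold with boundary (the compactification acquires boundary from cylinder breaking), the signed count of $r^{-1}(\gamma_0)$ is \emph{not} automatically independent of $\gamma_0\in\fC$; moving from $\pm\sqrt{h}$ to $\theta_\pm$ introduces boundary terms, i.e.\ a further chain homotopy. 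Varying $h\to 1$ along the family absorbs this into the chain homotopy you already have, after which the identification $\mu^+ = 2\lambda^+$ holds on the nose as you wrote it. With this adjustment your proof is complete.
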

\noindent This is a singular and relative analogue of \cite[Theorem 7.16]{donaldson-book}. To remove the factor of $2$, we can work with a slightly larger gauge group when defining $\mu_{(W',S',\gamma)}$, as done in the proof of Proposition \ref{prop:morphchqi}. $\diamd$\\

Next, we analyze the reverse composition.

\begin{prop}\label{prop:lasthomotopy}
	$\widetilde \lambda_{(W',S')} \circ \widetilde \lambda_{(W,S)}$
	is $\cS$-chain homotopic to an isomorphism.
\end{prop}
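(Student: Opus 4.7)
The plan is to run the two-step strategy used in the proofs of Propositions \ref{prop:comphomotopy1} and \ref{prop:morphchqi}, now applied to the composite $(W'\circ W, S'\circ S)$ instead of $(W\circ W', S\circ S')$.

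First, I would stretch the metric on $(W'\circ W, S'\circ S)$ along the gluing hypersurface $Y\# Y'$ separating $W$ from $W'$, yielding a $1$-parameter family of metrics from the composite to a broken metric. A parametrized moduli space analysis entirely analogous to Proposition \ref{prop:comphomotopy1} produces an $\cS$-chain homotopy between $\widetilde\lambda_{(W',S')}\circ \widetilde\lambda_{(W,S)}$ and an $\cS$-morphism $\widetilde\lambda^\flat$ defined directly on $(W'\circ W, S'\circ S)$, using moduli spaces on the composite cobordism cut down by holonomies along the various concatenations of the paths $\gamma,\gamma',\gamma^\#$ on $S$ with $\sigma,\sigma',\sigma^\#$ on $S'$. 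Verification of the analogue of \eqref{eq:plusmap} is bookkeeping similar to what is done for $\widetilde\lambda^\infty$ in the previous proof.

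Second, I would stretch the metric along the separating embedded pair $(S^3, S^1_{\text{unknot}})\subset (W'\circ W, S'\circ S)$ displayed in Figure \ref{fig:connsumcomposites2}. This pair separates the cobordism into two cap pieces $(B^4,D^2)$ together with a complement whose completion by two such caps recovers the identity cobordism $[0,1]\times((Y,K)\sqcup(Y',K'))$. The family of metrics obtained by stretching provides an $\cS$-chain homotopy from $\widetilde\lambda^\flat$ to a broken-metric morphism $\widetilde\lambda^+$.

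The payoff now comes from Morse--Bott gluing, exactly as in the proof of Proposition \ref{prop:morphchqi}. The traceless character variety of the unknot complement in $S^3$ consists of a single reducible class $\theta_0$, and an analogue of the computation in \eqref{eq:dbh1}--\eqref{eq:dbh1s} shows this critical point is Morse--Bott non-degenerate. On each cap $(B^4,D^2)$ the branched double cover is $B^4$ itself with $b_1=b^+=0$, so Proposition \ref{reg-dbl-cover} ensures that the unique flat reducible extending $\theta_0$ is regular and of index $-1$, and an energy argument rules out any irreducible instanton of index $\leqslant 0$ on the cap. The Morse--Bott fiber product description of broken moduli spaces (cf.\ \eqref{fiberprod22}) therefore collapses to a point on each cap factor, and the resulting moduli spaces on $(W'\circ W, S'\circ S)$ with the broken metric identify with the corresponding moduli spaces on the product cobordism $[0,1]\times((Y,K)\sqcup(Y',K'))$. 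The holonomy cut-downs that land on the cap pieces evaluate trivially at $\theta_0$, and thus $\widetilde\lambda^+$ is identified, up to $\cS$-chain homotopy, with the $\cS$-morphism induced by the identity cobordism, possibly multiplied by a non-zero scalar arising from the $U(1)$ stabilizers of the cap reducibles.

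The main obstacle will be the Morse--Bott gluing analysis: one has to verify carefully that the various holonomy cut-downs from paths on $S'\circ S$ interact correctly with the broken decomposition (some paths lie entirely in one side, others cross the neck), and confirm that the $\lambda$-diagonal component of $\widetilde\lambda^+$ is $\cS$-chain homotopic to a non-zero scalar multiple of the identity on $(C\otimes C')_\ast\oplus (C\otimes C')_{\ast-1}\oplus C_\ast\oplus C'_\ast$. Once this diagonal statement is secured, the triangular form of an $\cS$-morphism displayed in \eqref{eq:tilde-map} implies that $\widetilde\lambda^+$ is an $\cS$-chain homotopy equivalence, whence $\widetilde\lambda_{(W',S')}\circ\widetilde\lambda_{(W,S)}$ is chain homotopic to an isomorphism of $\cS$-complexes, as claimed.
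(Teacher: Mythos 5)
Your proposal is correct and follows essentially the same route as the paper's proof: stretch the metric to a break along $(S^3,S^1)$, identify the broken moduli spaces with those on the product cylinder via the trivial behaviour of the $(B^4,D^2)$ caps at $\theta_0$, and conclude with the triangular-form argument (this is exactly Lemma \ref{lemma:smoriso}). One small imprecision worth noting: cutting the composite along $(S^3,S^1)$ does not produce two $(B^4,D^2)$ \emph{pieces} together with a complement—rather it yields two punctured cylinders, and the $(B^4,D^2)$'s are the caps one glues in afterward to recover $[0,1]\times\bigl((Y,K)\sqcup(Y',K')\bigr)$; also, the critical set for the unknot in $S^3$ is a single non-degenerate point (Morse, not Morse--Bott), unlike the $S^1$-worth of reducibles that arise in Proposition~\ref{prop:morphchqi}, though your invocation still lands correctly because the needed index/regularity facts for the cap are exactly as you describe.
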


\begin{proof}
Set $(W^\text{I},S^\text{I}):=(W',S')\circ (W,S)$. We consider an $\cS$-chain homotopy $H^\text{I}$ such that
\begin{equation}\label{chain-htpy}
	\widetilde{d}^\otimes H^\text{I} + H^\text{I} \widetilde{d}^\otimes = \widetilde \lambda_{(W',S')} \circ \widetilde \lambda_{(W,S)} - \widetilde \lambda_{(W^\text{I},S^\text{I})}^\infty
\end{equation}
The moduli spaces for the map $H^\text{I}$ are defined similarly to the chain homotopy used in the proof of Proposition \ref{prop:comphomotopy1}, but this time using a path of metrics $G$ that starts at the broken metric $g_0$ for the composite $(W^\text{I},S^\text{I}):=(W',S')\circ (W,S)$, broken along $(Y\# Y', K\# K')$, and ending at the metric $g_\infty$ broken along the $(S^3,S^1)$ from Figure \ref{fig:connsumcomposites2}. The components of $H^\text{I}$ are defined in a straightforward manner, by looking at the shape of the corresponding component in $\widetilde \lambda_{(W',S')} \circ \widetilde \lambda_{(W,S)}$, and defining the component of $H^\text{I}$ using the same kind of moduli space but incorporating the metric family $G$. For example, if we write
\[
	H^\text{I} = \left[\begin{array}{ccc}
				K^\text{I} & 0 & 0 \\
				L^\text{I} & - K^\text{I} & M^\text{I}_2 \\
				M^\text{I}_1 & 0 &0
			\end{array}\right]
\]
then $K^\text{I}$ is a $4\times 4$ matrix, with entries $K^\text{I}_{ij}$. Now, the $(1,1)$-entry of $\lambda\lambda'$ is equal to $\lambda_1\lambda_1'$, which is defined by counting instantons on $(W^\text{I},S^\text{I})$ with metric $g_0$ and constrained holonomy along $\rho^\#$. To define $K_{11}^\text{I}$ we use $G$ instead of the single metric $g_0$:
\[
	\langle K^\text{I}_{11}(\alpha\otimes \alpha'),\beta\otimes\beta'\rangle = \#\{ [A]\in \bigcup_{g\in G} M^g(W^\text{I},S^\text{I}; \alpha,\alpha'; \beta,\beta')_0 \mid H^{\gamma^\#}([A])=s \}
\]
The other components of $H^\text{I}$ are defined similarly. Although tedious, checking that $H^\text{I}$ is indeed an $\cS$-chain homotopy as in \eqref{chain-htpy} is straightforward and analogous to previous computations.

\begin{figure}[t]
\centering
\includegraphics[scale=.32]{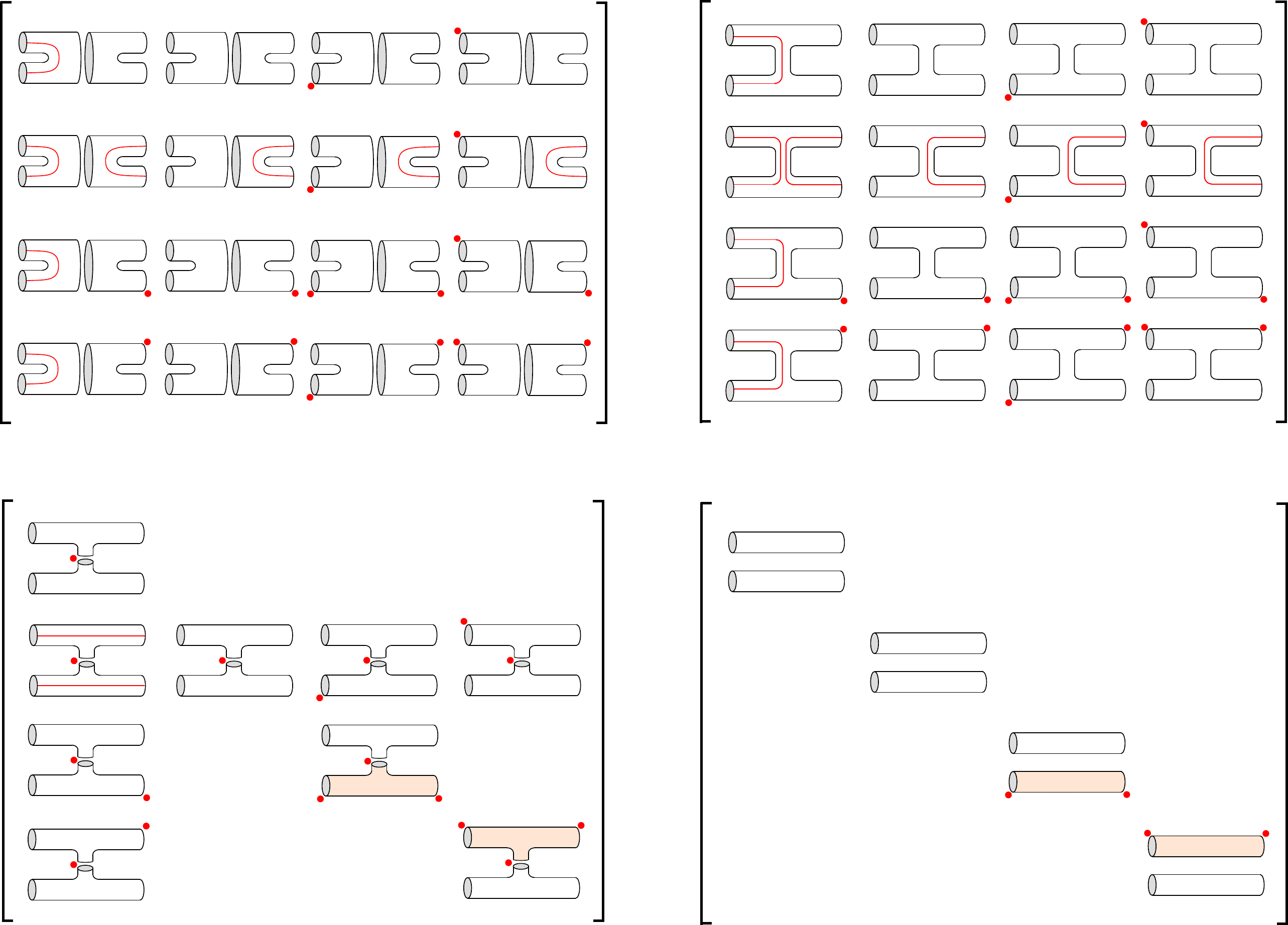}
\caption{A homotopy from $\lambda\lambda'$ to the identity is depicted in stages, from left to right, top to bottom. The top left matrix represents $\lambda\lambda'$, the top right matrix a map halfway through the chain homotopy of $K^\text{I}$, the bottom left is the map $\lambda^\text{I}$, and the bottom right represents the identity map. (Our depiction of a cylinder here, for a cobordism map, should not be confused with our prior use of a cylinder representing the boundary map $d$.)}\label{fig:lambdalambdaprime}
\end{figure}

We claim that the map $\widetilde \lambda_{(W^\text{I},S^\text{I})}^\infty$ is $\cS$-chain homotopic to an isomorphism. Write its $\cS$-morphism components as $\lambda^\text{I}$, $\mu^\text{I}$, $\Delta_1^\text{I}$ and $\Delta_2^\text{I}$. Then $\lambda^\text{I}$ is depicted in the bottom left matrix of Figure \ref{fig:lambdalambdaprime}. To see this, it is convenient to also have in mind the map obtained halfway through the homotopy from $\lambda\lambda'$ to $\lambda^{\text{I}}$, depicted in the top right matrix of Figure \ref{fig:lambdalambdaprime}. The vanishing entries of $\lambda^{\text{I}}$ in Figure \ref{fig:lambdalambdaprime} are instances of standard vanishing theorems. For example, consider the component $\lambda^{\text{I}}_{13}$. By gluing theory, the instantons under consideration correspond to pairs of instantons $[A],[A']$ and a gluing parameter in $S^1$. Here $[A]$ and $[A']$ are connections on the punctured cylinders $\R\times Y$ and $\R\times Y'$, respectively, where $A$ has limits $\alpha$ and $\beta$ at the ends of the cylinder and the reducible $\theta_0$ at the puncture, while $A'$ has limits $\alpha'$ and $\beta'$ along its cylinder, and $\theta_0$ at the puncture. As $\lambda^{\text{I}}_{13}$ counts index 0 instantons, the relevant moduli spaces containing $[A]$ and $[A']$ are empty. The vanishing of the other entries is argued similarly. 

An argument similar to that in the proof of Proposition \ref{prop:morphchqi} shows that $\lambda^\text{I}$ is equal to the map defined using moduli spaces on the unpunctured cylinder $\R \times (Y\sqcup Y')$ using a metric $g$ which is broken along chosen 3-spheres (surrounding the prior punctures) in each of the two cylinders. Using a family of metrics $G^\text{I}$ from $g$ to a translation-invariant metric, we obtain a homotopy from $\lambda^\text{I}$ to the identity. Indeed, the off-diagonal terms in $\lambda^{\text{I}}$, such as 
\[
	\includegraphics[scale=.5]{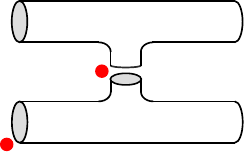}
\]
go under this chain homotopy to zero, because there are no translation-invariant instantons of index $0$ from a reducible to an irreducible; and the diagonal terms in $\lambda^\text{I}$ go to identity maps, because translation-invariant instantons of index $0$ with irreducible limits are (perturbed) flat, and induce the identity maps on the chain level.

In fact, the 1-parameter family of metrics $G^\text{I}$ induces in the usual manner an $\cS$-chain homotopy between $\widetilde \lambda_{(W^\text{I},S^\text{I})}^\infty$ and an $\cS$-morphism which from the previous paragraph has $\lambda$-component equal to the identity. The proof is completed by Lemma \ref{lemma:smoriso} below.
\end{proof}

\begin{lemma}\label{lemma:smoriso}
	If $\widetilde{\lambda}: (\widetilde C_\ast, \widetilde d)\to (\widetilde C'_\ast, \widetilde d')$ is a morphism of $\cS$-complexes with its component $\lambda:C_\ast \to C_\ast'$ an isomorphism, then in fact $\widetilde{\lambda}$ is an isomorphism of $\cS$-complexes.
\end{lemma}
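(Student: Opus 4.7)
The plan is to prove that $\widetilde\lambda$ is bijective as a map of underlying graded abelian groups, and then show that the inverse $\widetilde\lambda^{-1}$ is automatically an $\cS$-morphism. I will use the equivalent formulation of $\cS$-complexes given in Definition~\ref{cs-somplex-2}, where a morphism is a degree $0$ chain map $\widetilde\lambda$ satisfying $\widetilde\lambda \chi = \chi' \widetilde\lambda$ and inducing the identity on $\ker\chi/\operatorname{im}\chi \cong \Z$.

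First I would establish bijectivity via the five lemma. Because $\widetilde\lambda$ commutes with $\chi$, it restricts to a map $\ker\chi \to \ker\chi'$ and descends to a map $\widetilde C_\ast/\ker\chi \to \widetilde C'_\ast/\ker\chi'$, giving a morphism between the short exact sequences of the form \eqref{exact-seq}:
\[
\xymatrix{
0 \ar[r] & C_{\ast-1}\oplus\Z \ar[d]^{\phi} \ar[r] & \widetilde C_\ast \ar[d]^{\widetilde\lambda} \ar[r]^{\chi} & C_\ast \ar[d]^{\lambda} \ar[r] & 0 \\
0 \ar[r] & C'_{\ast-1}\oplus\Z \ar[r] & \widetilde C'_\ast \ar[r]^{\chi'} & C'_\ast \ar[r] & 0
}
\]
With respect to the splittings chosen, the induced quotient map is $\lambda$, which is an isomorphism by hypothesis. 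The induced map $\phi$ on the kernels has matrix $\bigl[\begin{smallmatrix} \lambda & \Delta_2 \\ 0 & 1 \end{smallmatrix}\bigr]$ (read off from the formula for $\widetilde\lambda$ applied to $(0,y,z)$), which is upper triangular with invertible diagonal entries, hence itself an isomorphism. The five lemma then gives that $\widetilde\lambda$ is an isomorphism of graded abelian groups.

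Next I would verify that the inverse $\widetilde\lambda^{-1}$ is a morphism of $\cS$-complexes. That $\widetilde\lambda^{-1}$ is a chain map follows formally from $\widetilde\lambda \widetilde d = \widetilde d' \widetilde\lambda$ by composing with $\widetilde\lambda^{-1}$ on both sides. Similarly, the relation $\chi \widetilde\lambda^{-1} = \widetilde\lambda^{-1} \chi'$ follows by inverting $\widetilde\lambda \chi = \chi' \widetilde\lambda$. Finally, since $\widetilde\lambda$ induces the identity on $\ker\chi/\operatorname{im}\chi \cong \Z$ (which corresponds to the bottom-right entry $1$ in the matrix form), so does $\widetilde\lambda^{-1}$. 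Therefore $\widetilde\lambda^{-1}$ meets all the requirements of Definition~\ref{def:morphism}, and exhibits $\widetilde\lambda$ as an isomorphism in the category of $\cS$-complexes.

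No substantial obstacle is expected: the argument is entirely formal diagram-chasing, relying only on the structural fact that an $\cS$-morphism sits in a triangular form with $\lambda$ and the identity on $\Z$ as its ``diagonal'' pieces, both of which are automatically invertible under our hypothesis. One could alternatively exhibit the inverse explicitly in matrix form as
\[
\widetilde \lambda^{-1}=\begin{bmatrix}
\lambda^{-1} & 0 & 0 \\
-\lambda^{-1}\mu\lambda^{-1} + \lambda^{-1}\Delta_2\Delta_1\lambda^{-1} & \lambda^{-1} & -\lambda^{-1}\Delta_2 \\
-\Delta_1\lambda^{-1} & 0 & 1
\end{bmatrix}
\]
and check the chain map identities component-wise, but the abstract argument above avoids these routine calculations.
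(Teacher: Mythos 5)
Your proof is correct and takes essentially the same approach as the paper's: both exploit the block-triangular form of an $\cS$-morphism with $\lambda$, $\lambda$, and $1$ on the diagonal, so invertibility of $\lambda$ yields invertibility of $\widetilde\lambda$ with an inverse of the same triangular type. Your explicit matrix for $\widetilde\lambda^{-1}$ is correct (the paper writes it down after normalizing $\lambda=\mathrm{id}$), and the five-lemma passage is a mild repackaging of the same triangularity observation.
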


\begin{proof}
	After identifying $C_\ast$ and $C'_\ast$ using $\lambda$, we are reduced to the case in which $\widetilde{\lambda}$ is a morphism from $(\widetilde C_\ast, \widetilde d)$ to itself, and $\widetilde{\lambda}$ has decomposition given in \eqref{eq:tilde-map} where $\lambda=1$ is the identity. The inverse of $\widetilde{\lambda}$ in this case is given by
	\begin{equation*}
\left[
					\begin{array}{ccc}
						1 &0&0\\
						-\mu+\Delta_2\Delta_1 & 1 & -\Delta_2\\
						-\Delta_1&0&1\\
					\end{array}
					\right]
\end{equation*}
which is of course a morphism of $\cS$-complexes. Alternatively, a morphism of $\cS$-complexes $\widetilde C_\ast\to \widetilde C'_\ast$, after reordering the summands, is an upper triangular matrix whose diagonal entries are $\lambda, 1, \lambda$, and the inverse is of the same form.
\end{proof}

We have established that the compositions
\[
	\widetilde \lambda_{(W,S)}\circ \widetilde \lambda_{(W',S')},   \qquad \widetilde \lambda_{(W',S')}\circ \widetilde \lambda_{(W,S)}
\]
are $\cS$-chain homotopic to the identity and an isomorphism, respectively. It follows formally that the second composition is in fact $\cS$-chain homotopic to the identity. This completes the proof of the chain homotopy equivalence \eqref{eq:connsumforknots}.

\subsubsection{Naturality of the equivalences}\label{subsec:natcon}

Finally, we discuss the naturality of \eqref{eq:connsumforknots} with respect to split cobordisms. Let us consider negative definite pairs $(X,F):(Y_1,K_1)\to (Y_2,K_2)$ and $(X',F'):(Y'_1,K_1')\to (Y_2',K_2')$. As usual, our knots are based, and in the cobordisms $(X,F)$ and $(X',F')$ we choose neatly embedded arcs whose endpoints are the distinguished basepoints. Using these arcs, we may form the glued cobordism $(X\# X', F\# F'):(Y_1\# Y_1',K_1\# K_1')\to (Y_2\# Y_2',K_2\# K_2')$ in a standard manner. This is what is meant by a {\emph{split}} cobordism. We consider the square:
	\begin{equation}\label{diagram:nat}
		\xymatrix{
		\widetilde C_*(Y_1,K_1)\otimes \widetilde C_*(Y_1',K_1') \ar[d]_{\widetilde \lambda_{(X,F)}\otimes \widetilde \lambda_{(X',F')}}\ar[rr]^{\widetilde \lambda_{(W_1,S_1)}}&&\widetilde C(Y_1\# Y_1',K_1\# K_1')\ar[d]^{\widetilde \lambda_{(X\# X',F\# F')}}\\
		\widetilde  C_*(Y_2,K_2)\otimes \widetilde C_*(Y_2',K_2') \ar[rr]_{\widetilde \lambda_{(W_2,S_2)}} &&\widetilde C_*(Y_2\# Y_2',K_2\# K_2')}
	\end{equation}	
The vertical maps are the usual morphisms of $\cS$-complexes we associate to given cobordisms, while the horizontal maps are the maps as constructed in Subsection \ref{CWS}. We have implicitly chosen metrics and perturbations for each of the cobordisms. The cobordisms
\[
	(X\# X', F\# F') \circ (W_1,S_1), \qquad  (W_2,S_2)\circ \left((X, F) \sqcup (X', F')\right)
\]
are topologically equivalent, an elementary fact which is left to the reader. Thus the two different compositions obtained in the above square may be viewed as induced by the same cobordism, but with different (broken) metrics and perturbation choices. Then, choosing a 1-parameter family of auxiliary data interpolating these choices gives rise, in the usual manner, to an $\cS$-chain homotopy between the two morphisms
\[
\widetilde \lambda_{(X\# X', F\# F')} \circ \widetilde \lambda_{(W_1,S_1)}, \qquad  \widetilde \lambda_{(W_2,S_2)}\circ \left(\widetilde \lambda_{(X, F)}\otimes \widetilde \lambda_{(X', F')}\right)
\]
This establishes the commutativity of \eqref{diagram:nat} up to $\cS$-chain homotopy. There is a similar square with horizonal arrows reversed, using the morphisms defined in Subsection \ref{CWS-p}, and the same statements hold.

\newpage

%!TEX root = main.tex

\section{Local coefficient systems and filtrations}\label{sec:loccoeffs}

In this section we describe how to generalize our previous constructions to the case of local coefficients. Our general local coefficient system is a hybrid of the one defined by Kronheimer and Mrowka in \cite{KM:YAFT}, which roughly measures the holonomy of connections along the longitudal direction of the knot, and one the defined by the Chern-Simons functional. The latter ingredient has more structure, inherited from the fact that the Chern-Simons functional is (almost) non-increasing for (perturbed) instantons. In this context, we carry over constructions from the non-singular setting given in \cite{AD:CS-Th}.

\subsection{Local coefficients}\label{subsec:local}

Let $(Y,K)$ be a based knot in an integer homology 3-sphere. The most general local coefficient system on $\mathcal B(Y,K)$ that we consider is defined over the ring 
\[
	\mathscr{R}:=\Z[U^{\pm1},T^{\pm 1}].
\]
To an element of $\mathcal B(Y,K)$ represented by the connection $B$, we associate the $\mathscr R$-module
\[
  \Delta_{[B]}:=\Z[U^{\pm1},T^{\pm 1}]\cdot U^{{\rm CS}(B)} T^{{\rm hol}_K(B)}
\]
where ${\rm hol}_K(B)\in \R/\Z$ is defined analogously to \eqref{hol-map-4d}, and roughly gives the holonomy of the $S^1$-connection induced by $B^{\text{ad}}$ along $K$. To define ${\rm hol}_K(B)$, we must choose a framing of our knot to fix our procedure for taking the holonomy. For more details see \cite[Section 3.9]{KM:YAFT}. A knot in an integral homology 3-sphere has a canonical framing induced by a Seifert surface, and we always use this framing.

 The holonomy ${\rm hol}_K(B)$ is related to the monopole number $\nu(A)$ in the same way that the Chern-Simons functional is related to the topological energy $\kappa(A)$. In particular, we have an analogue of relation \eqref{eq:csmodz}: if $A$ is a connection on a cobordism of pairs $(W,S):(Y,K)\to (Y',K')$ then we have the relation
\begin{equation}
	\nu(A) \equiv  {\rm hol}_{K'}(B') - {\rm hol}_K(B) \pmod \Z \label{eq:numodz}
\end{equation}
where $B$ and $B'$ are the limiting connections on $(Y,K)$ and $(Y',K')$, respectively.

Let $\gamma:[-1,1]\to \mathcal B(Y,K)$ be a path from $\alpha_1$ to $\alpha_2$. Let $A$ be a singular connection on $\R\times Y$ representing $\gamma$, and representing the pull-backs of $\alpha_1$ and $\alpha_2$ for $t<-1$ and $t>1$, respectively. Then we define $\Delta_{\gamma}:\Delta_{\alpha_1}\to \Delta_{\alpha_2}$ to be multiplication by $U^{-2\kappa(A)}T^{\nu(A)}$, which is well-defined by relations \eqref{eq:csmodz} and \eqref{eq:numodz}. Monomials in $\Delta_{\alpha}$ can be identified with homotopy classes of paths $\gamma$ from $\alpha$ to the reducible class $\theta$. The action and monopole number of the path $\gamma$ determines a pair of real numbers $(\widetilde {\rm CS}(\gamma), \widetilde {\rm hol}_K(\gamma))$. We use this pair to define two (real valued) gradings on $\Delta_{\alpha}$, which are respectively called {\it instanton} and {\it monopole} gradings. Moreover, the ASD index of the path $\gamma$ can be used to define a $\Z$-grading $\widetilde {\rm gr}$ on $\Delta_\alpha$ which is an integer lift of ${\rm gr}(\alpha)$. This grading satisfies $\widetilde{\rm gr}(1)=0$ for $1\in \Delta_\theta$. Further, multiplication by $U^{\pm 1}$ changes the $\Z$-grading by $\pm 4$ and it is fixed by multiplication by $T^{\pm 1}$.

Suppose $K$ is a based knot in an integer homology sphere $Y$. Let $(C_*(Y,K;\Delta),d)$ be the $\Z$-graded chain complex over $\mathscr{R}$ defined as follows:
\begin{equation}\label{eq:irrdiff-loc}
C_*(Y,K;\Delta) = \bigoplus_{\alpha\in\fC_\pi^\text{irr}} \Delta_\alpha, \hspace{1cm}
	d(\alpha_1) = \sum_{\substack{\alpha_2\in\fC_\pi^\text{irr}\\ [A] \in \breve{M}(\alpha_1,\alpha_2)_0}} 
	\Delta(A)( \alpha_2)
\end{equation}
Here $\Delta(A)=\pm \Delta_\gamma$ such that $\gamma$ is the path in $\mathcal B(Y,K)$ from $\alpha_1$ to $\alpha_2$ determined by $A$ and the sign is given by the orientation of the moduli space $\breve{M}(\alpha_1,\alpha_2)_0$. A discussion similar to the one in Subsection \ref{sing-Floer} shows that $(C_*(Y,K;\Delta),d)$ is indeed a chain complex over $\mathscr R$, where $d$ has degree $-1$.

Suppose $(W,S):(Y,K)\to (Y',K')$ is a negative definite pair in the sense of Definition \ref{def:negdef}. Choose, as usual, a path from the basepoint of $K$ to the basepoint of $K'$. By a slight modification of \eqref{eq:irrdiff-loc}, we may define a cobordism map
\begin{equation}\label{chain-map-sing-flo}
	\lambda_{(W,S;\Delta)}(\alpha) = \sum_{\substack{\alpha'\in \fC_{\pi'}^\text{irr} \\ [A]\in M(W,S;\alpha,\alpha')_0}} \Delta(A)\cdot \alpha'.
\end{equation}
Here $\Delta(A):=\pm U^{-2\kappa(A)} T^{\nu(A)}$ where the sign is determined as usual by the orientation of the moduli space $M(W,S;\alpha,\alpha')_0$.

We may continue in this fashion, and adapt all of the constructions in Section \ref{sec:tilde} to the setting of local coefficients. We obtain an $\cS$-complex
\begin{equation}
	(\widetilde C_\ast(Y,K;\Delta),\widetilde d) \label{eq:defofscomploccoeff}
\end{equation}
over the ring $\mathscr R$. To a negative definite pair $(W,S)$ as above, we may associate a morphism $\widetilde \lambda_{(W,S;\Delta)}$ of $\cS$-complexes, and so forth. The same arguments as before show that the $\cS$-chain homotopy type of the $\cS$-complex $(\widetilde C(Y,K;\Delta),\widetilde d)$ is independent of the choice of the orbifold metric on $Y$ and the perturbation of the Chern-Simons functional. 

We may recover the $\cS$-complex $(\widetilde C_*(Y,K),\widetilde d)$ over $\Z$ from \eqref{eq:defofscomploccoeff} using the change of basis that evaluates $U$ and $T$ at $1$. Given an $\mathscr{R}$-algebra $\mathscr{S}$, we obtain an $\cS$-complex over the ring $\mathscr{S}$ by performing a change of basis on our local coefficient system:
\[
	(\widetilde C_*(Y,K;\Delta_\mathscr{S}),\widetilde d), \qquad \Delta_\mathscr{S} := \Delta \otimes_\mathscr{R} \mathscr{S}
\]
In general, this $\cS$-complex is only $\Z/4$-graded. However, it becomes a $\Z$-graded algebra in the obvious way if $\mathscr{S}$ is a $\Z$-graded $\mathscr{R}$-algebra, which means that multiplication by $U^{\pm 1}$ changes the grading by $\pm1$ and multiplication by $T^{\pm1}$ does not change the grading.

\begin{remark} \label{rem:tring}
If $\mathscr{T}=\Z[T^{\pm 1}]$ is the $\mathscr{R}$-algebra obtained by setting $U=1$, then we obtain an $\cS$-complex over $\mathscr{T}$ denoted $(\widetilde C(Y,K;\Delta_\mathscr{T}), \widetilde d)$. The system $\Delta_\mathscr{T}$ is essentially the local coefficient system considered in \cite[Section 3.9]{KM:YAFT}. $\diamd$
\end{remark}

\begin{remark}\label{cS-non-sing-loc-coef}
	If $\mathscr{U}=\Z[U^{\pm 1} ]$ is the $\mathscr{R}$-algebra obtained by setting $T=1$, then we obtain an $\cS$-complex over $\mathscr{U}$ 
	denoted $(\widetilde C(Y,K;\Delta_\mathscr{U}), \widetilde d)$. 
	There is an analogous construction in the non-singular case, where one can define an $\cS$-complex 
	$(\widetilde C_*(Y;\Delta),\widetilde d)$, such that the local coefficient system $\Delta$ is over the ring $\Q[U^{\pm 1}]$. The definition of $\Delta$ follows a similar pattern to $\Delta_\mathscr{U}$, where we use the action $\kappa(A)$ (for a non-singular connection) to define a homomorphism associated to a path of connections. $\diamd$
\end{remark}

\begin{remark}
	Strictly speaking the $\Z$-graded complex $(\widetilde C_*(Y,K;\Delta),\widetilde d)$ does not fit into the definition of  $\cS$-complexes that we have been using so far 
	because the coefficient ring 
	$\mathscr R$ is graded. However, one can
	modify the definition to include such $\cS$-complexes. On its face, it might seem that $\widetilde C_*(Y,K;\Delta)$
	does not give any extra information in comparison to $\widetilde C(Y,K;\Delta_\mathscr{T})$ of 
	Remark \ref{rem:tring} because $\widetilde C_*(Y,K;\Delta)$ is the periodic $\Z$-graded complex obtained by unrolling  the $\Z/4\Z$-graded complex $\widetilde C(Y,K;\Delta_\mathscr{T})$.
	However, in the remaining part of this section we shall show that $\widetilde C_*(Y,K;\Delta)$ can be equipped with an ``almost-filtration'' which gives rise to additional information. 
	$\diamd$
\end{remark}

The machinery of Section \ref{sec:equivtheories} applies to this setting: for any $\sR$-algebra $\sS$ we obtain three equivariant singular instanton homology groups, denoted
\begin{equation*}
	\hrI_*(Y,K;\Delta_\sS), \qquad \crI_*(Y,K;\Delta_\sS), \qquad \brI_*(Y,K;\Delta_\sS).
\end{equation*}
These are $\Z$-graded $\sS[x]$-modules, and the discussion of Subsection \ref{sec:eq-I-i} carries over to this setting in a straightforward manner. For each $\sR$-algebra $\sS$ we have a Fr\o yshov invariant
\begin{equation}
	h_\sS(Y,K)\in \Z \label{eq:froyshovinvloccoeff}
\end{equation}
by taking the algebraic Fr\o yshov invariant of the $\cS$-complex $(\widetilde C_\ast(Y,K;\Delta_\sS),\widetilde d)$ as given by Definition \ref{h-def} or equivalently Proposition \ref{h-g-reinterpret}. Further, we also have ideals
\begin{equation}
	J_h^\sS(Y,K) \subset J_{h-1}^\sS(Y,K) \subset \cdots \subset \sS \label{eq:locjideals}
\end{equation}
by applying Definition \ref{j-def} to the $\cS$-complex $(\widetilde C_\ast(Y,K;\Delta_\sS),\widetilde d)$. Here $h=h_\sS(Y,K)$. Theorems \ref{thm:hinvproperties} and \ref{thm:jideals} of the introduction about the properties of these invariants follow from our discussions in Section \ref{sec:equivtheories}.

The connected sum theorem also generalizes to the setting of local coefficients.

\begin{theorem}\label{thm:connectedsumlocceoff} Let $(Y,K)$ and $(Y',K')$ be based knots in integer homology 3-spheres. There is a chain homotopy equivalence of $\Z$-graded $\cS$-complexes over $\mathscr{R}=\Z[U^{\pm 1}, T^{\pm 1}]$:
\begin{equation*}
	\widetilde C(Y\# Y',K\# K';\Delta) \simeq  \widetilde C(Y,K;\Delta)\otimes_\mathscr{R} \widetilde C(Y',K';\Delta) 
\end{equation*}
This equivalence is natural, up to $\cS$-chain homotopy, with respect to split cobordisms.
\end{theorem}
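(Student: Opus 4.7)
The plan is to adapt the proof of Theorem \ref{thm:connectedsum} to the local coefficient setting, using the fact that both the action $\kappa(A)$ and the monopole number $\nu(A)$ are additive under gluing of singular connections. I will use the same cobordisms $(W,S):(Y,K)\sqcup(Y',K')\to (Y\#Y',K\#K')$ and $(W',S'):(Y\#Y',K\#K')\to (Y,K)\sqcup(Y',K')$ as in Section \ref{sec:consum}, together with the same families of metrics and holonomy constraints along the paths $\gamma,\gamma',\gamma^\#,\sigma,\sigma',\sigma^\#$. The definitions of the morphisms $\widetilde\lambda_{(W,S;\Delta)}$ and $\widetilde\lambda_{(W',S';\Delta)}$ are obtained from those in Subsections \ref{CWS} and \ref{CWS-p} by decorating each signed count $\#M$ of an isolated moduli space element $[A]$ with the factor $\Delta(A)=\pm U^{2\kappa(A)}T^{\nu(A)}$, just as in formula \eqref{chain-map-sing-flo} for the irreducible cobordism map with local coefficients.

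Verifying that these are morphisms of $\cS$-complexes over $\mathscr{R}$ proceeds exactly as in Proposition \ref{CWS-C}, by analyzing the boundary points of 1-dimensional cut-down moduli spaces. The only new issue is bookkeeping: for a broken trajectory $([A_1],[A_2])$ appearing as an end of some $\breve M^+_z$, one has $\kappa(A)=\kappa(A_1)+\kappa(A_2)$ and $\nu(A)=\nu(A_1)+\nu(A_2)$ (additivity of the Chern--Weil and monopole integrals), so that $\Delta(A)=\Delta(A_1)\Delta(A_2)$. This is precisely the multiplicativity needed to identify boundary contributions with compositions of maps in the tensor product $\cS$-complex over $\mathscr{R}$. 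For the pants cobordism this gives the local coefficient analogs of all the relations depicted in Figures \ref{fig:lambdarel}, \ref{fig:delta1rel}, \ref{fig:mu1rel}, \ref{fig:mu2rel}. The relation of Remark \ref{rmk:holrel} for the three paths $\gamma,\gamma',\gamma^\#$ is purely topological (it only involves holonomies around $S^1$-reductions) and is unaffected by the coefficient system.

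To prove the compositions are chain homotopy equivalences of $\cS$-complexes over $\mathscr{R}$, I will follow the three-stage strategy of Subsection \ref{comp}. First, using a family of metrics stretching $(W^\circ,S^\circ)=(W',S')\circ(W,S)$ as in Proposition \ref{prop:comphomotopy1}, the composition $\widetilde\lambda_{(W,S;\Delta)}\circ \widetilde\lambda_{(W',S';\Delta)}$ is $\cS$-chain homotopic to the map $\widetilde\lambda_{(W^\circ,S^\circ,\rho^\#;\Delta)}$ defined by cut-down moduli spaces on the composite; again $\Delta$-weights glue multiplicatively. Second, following Proposition \ref{prop:morphchqi}, I stretch along the $(S^1\times S^2,S^1\times 2\text{ pts})$ to reduce the map to a sum over fiber products with reducible instantons on $(S^1\times D^3,S^1\times D^1)$. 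Since these reducibles are flat, they contribute $\kappa=0$ and $\nu=0$, hence factor $\Delta=1$, so the local coefficient decoration reduces to that on the identity cobordism $[0,1]\times(Y\#Y',K\#K')$, whose associated morphism is $\cS$-chain homotopic to the identity. Third, the argument of Proposition \ref{prop:lasthomotopy} carries over verbatim: the composition $\widetilde\lambda_{(W',S';\Delta)}\circ \widetilde\lambda_{(W,S;\Delta)}$ is $\cS$-chain homotopic to a morphism whose $\lambda$-component is the identity on $\widetilde C(Y,K;\Delta)\otimes_\mathscr{R}\widetilde C(Y',K';\Delta)$, and then Lemma \ref{lemma:smoriso} upgrades it to an $\cS$-isomorphism. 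Naturality with respect to split cobordisms follows as in Subsection \ref{subsec:natcon} by comparing the two orderings of gluing via a 1-parameter family of auxiliary data.

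The main obstacle is the Morse--Bott gluing step of Proposition \ref{prop:morphchqi}, where one must verify that the local coefficient weights do not disturb the vanishing theorem that rules out contributions from $M(S^1\times D^3,S^1\times D^1)^{\text{irr}}_d$ for $d\leqslant 0$. This vanishing is an energy argument: any irreducible instanton of index $\leqslant 0$ would force $\kappa>0$, hence a nontrivial $U$-power, but in fact dimension counts force the moduli spaces to be empty before the coefficient decoration is even evaluated. Thus the reducible contribution with trivial $\Delta$-weight is the only one, as needed to identify $\widetilde\lambda^+$ with the identity-cobordism map over $\mathscr{R}$.
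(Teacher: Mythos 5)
Your proposal is correct and follows essentially the same path as the paper: decorate each moduli count with $\Delta(A)=\pm U^{2\kappa(A)}T^{\nu(A)}$, use additivity of $\kappa$ and $\nu$ under gluing to get multiplicativity of weights on broken trajectories, and then observe that in the Morse--Bott gluing step of Proposition~\ref{prop:morphchqi} the reducible instantons on $(S^1\times D^3,S^1\times D^1)$ and $(D^2\times S^2,D^2\times 2\text{ pts})$ are flat, so carry trivial $\Delta$-weight, and the vanishing of the irreducible strata is a dimension count unaffected by coefficients. One small remark: the paper actually notes in passing that the vanishing of $\kappa(A)$ and $\nu(A)$ for these flat reducibles is not strictly necessary for the present theorem (the two sides of the identification $\widetilde\lambda^+ = \widetilde\lambda^\infty_{[0,1]\times(Y\#Y',K\#K')}$ would acquire matching weights regardless); the vanishing of $\kappa$ becomes genuinely important only in the later enriched/filtered version (Theorem~\ref{thm:connectedsumenriched}), where one must control the Chern--Simons filtration level of the equivalence.
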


The proof is for the most part the same as that of Theorem \ref{thm:connectedsum}. In particular, all maps defined in the proof are modified to follow the same pattern as in \eqref{chain-map-sing-flo}, where we now keep track of the terms $\kappa(A)$ and $\nu(A)$ for each instanton in the exponents of our formal variables. The only part of the proof that requires additional commentary is Proposition \ref{prop:morphchqi}. The key observation is that the instantons in the proof that appear in the moduli spaces
\begin{equation}
	M_{\rho^\#}(S^1\times D^3, S^1\times D^1)^{\rm red} , \qquad M(D^2 \times S^2, D^2 \times 2 \text{ pts})^{\rm red} \label{eq:connsuminstantons}
\end{equation}
have $\kappa(A)=\nu(A)=0$, which follows because all of these instantons are flat. (In fact, the vanishing of $\kappa(A)$ and $\nu(A)$ here are not essential for the proof of Theorem \ref{thm:connectedsumlocceoff}, but $\kappa(A)=0$ will play an important role below.) This shows, in particular, that Proposition \ref{prop:chainhomotopyexcision} holds with local coefficients, and that the proof of Section \ref{sec:consum} carries through to prove Theorem \ref{thm:connectedsumlocceoff}. As a consquence, we have:

\begin{theorem}
	Let $\mathscr{S}$ be an $\mathscr{R}$-algebra. The assignment $(Y,K)\mapsto (\widetilde C_\ast(Y,K;\Delta_\mathscr{S}),\widetilde d,\chi)$ induces a homomorphism $\Theta_\Z^{3,1}\to \Theta_{\mathscr{S},\Z/4}^\cS$ 
	of partially ordered abelian groups. 
\end{theorem}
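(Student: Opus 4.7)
The plan is to mimic the proof of Theorem \ref{thm:slocequivhomom}, replacing the untwisted $\cS$-complex with $(\widetilde C_\ast(Y,K;\Delta_\sS),\widetilde d,\chi)$ throughout, and to verify the three items required: well-definedness on equivalence classes and compatibility with the partial orders, additivity under connected sum, and existence of inverses.

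First I would check that the assignment $(Y,K)\mapsto (\widetilde C_\ast(Y,K;\Delta_\sS),\widetilde d,\chi)$ descends to a map of partially ordered sets $\Theta_\Z^{3,1}\to \Theta_{\sS,\Z/4}^\cS$. The $\cS$-chain homotopy type of $\widetilde C_\ast(Y,K;\Delta_\sS)$ over $\sS$ is an invariant of $(Y,K)$ because the proofs of the invariance statements in Section \ref{sec:tilde} and Subsection \ref{sec:framed} go through verbatim after decorating each chain-level count by $\Delta(A)=\pm U^{2\kappa(A)}T^{\nu(A)}\in \sR$ and then specializing along $\sR\to\sS$; the relations between moduli-space boundaries are homotopy-class-by-homotopy-class, so tracking $(\kappa,\nu)$ costs nothing. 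Similarly, a negative definite pair $(W,S)\co(Y,K)\to (Y',K')$ induces a morphism $\widetilde \lambda_{(W,S;\Delta_\sS)}$ of $\cS$-complexes, by the same argument that produced $\widetilde \lambda_{(W,S)}$ in Subsection \ref{sec:framed} together with the twisted cobordism map \eqref{chain-map-sing-flo}. Hence the map respects the preorder $\dot\leq$ and descends to a map of partially ordered sets.

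Next I would establish the homomorphism property. Additivity
\[
  \Xi_\sS([(Y,K)]+[(Y',K')])=\Xi_\sS([(Y,K)])+\Xi_\sS([(Y',K')])
\]
follows directly from the local-coefficient connected sum theorem: by Theorem \ref{thm:connectedsumlocceoff} there is a chain homotopy equivalence
\[
  \widetilde C(Y\# Y',K\# K';\Delta_\sS)\;\simeq\;\widetilde C(Y,K;\Delta_\sS)\otimes_{\sS}\widetilde C(Y',K';\Delta_\sS),
\]
obtained by applying $\otimes_\sR \sS$ to the $\sR$-linear equivalence. That the identity of $\Theta_\Z^{3,1}$ (the unknot in $S^3$) maps to the identity of $\Theta_{\sS,\Z/4}^\cS$ amounts to showing that $\widetilde C_\ast(S^3,U;\Delta_\sS)$ is locally equivalent to the trivial $\cS$-complex $\sS$; since for small perturbation the critical set consists only of $\theta$, all structure maps between irreducibles vanish and $\widetilde C_\ast(S^3,U;\Delta_\sS)$ is isomorphic as an $\cS$-complex to $\sS$. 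Inverses come from orientation reversal: by Subsection \ref{sec:dual} the $\cS$-complex attached to $(-Y,-K)$ is naturally isomorphic to the dual $\cS$-complex of $(Y,K)$, which represents $-[(Y,K)]$ in $\Theta_{\sS,\Z/4}^\cS$.

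The main technical point to verify carefully is the local-coefficient upgrade of Proposition \ref{prop:morphchqi} needed for Theorem \ref{thm:connectedsumlocceoff}, since everything else is formal. The key observation is that the reducible instantons on $(S^1\times D^3,S^1\times D^1)$ and on $(D^2\times S^2,D^2\times 2\text{pts})$ that appear in the gluing argument are flat, so they contribute $\kappa=\nu=0$ and $\Delta(A)=1$; thus the twisted versions of the maps $\lambda^+,\mu^+,\Delta_1^+,\Delta_2^+$ reduce to the twisted cobordism maps of the identity cobordism, as required. Once this is in hand, the connected sum theorem with $\sR$-coefficients follows, base change to $\sS$ preserves the chain homotopy equivalence, and assembling the three properties above yields the desired homomorphism of partially ordered abelian groups.
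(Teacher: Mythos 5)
Your proof is correct and follows the same approach as the paper: well-definedness and order-preservation come from the invariance results and functoriality for negative definite pairs (twisted as in \eqref{chain-map-sing-flo}), additivity comes from Theorem \ref{thm:connectedsumlocceoff}, and you correctly identify that the key technical point is that the reducible instantons in the proof of Proposition \ref{prop:morphchqi} are flat, hence have $\kappa=\nu=0$ and contribute trivially to the local coefficient system. The paper states the theorem immediately after noting this point with ``as a consequence, we have,'' so your proposal supplies the same argument the paper leaves implicit.
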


In particular, the Fr\o yshov invariant \eqref{eq:froyshovinvloccoeff} descends to a homomorphism $\Theta_\Z^{3,1} \to \Z$ of partially ordered abelian groups, and satisfies the analogue of Theorem \ref{thm:hcheckprops}.

\subsection{The Chern-Simons filtration}

The topological energy $\kappa(A)$ of the elements of our moduli spaces satisfies a positivity property which gives rise to more structure on $(\widetilde C(Y,K;\Delta),\widetilde d)$. This idea, applied to the complex $(\widetilde C_*(Y;\Delta),\widetilde d)$ of Remark \ref{cS-non-sing-loc-coef}, was used in \cite{AD:CS-Th} to produce invariants of the homology cobordism group of integral homology 3-spheres. (More precisely, the complex $(\widetilde C_*(Y;\Delta),\widetilde d)$ there is obtained by the change of basis associated to the inclusion of $\Q[U^{\pm 1}]$ into a Novikov ring.) These constructions can be adapted to the present set up to produce concordance invariants.

If $A$ is a non-flat ASD connection on a 4-manifold, which determines a path in $\cB(Y,K)$ from $\alpha$ to $\alpha'$, then the Chern-Weil integral defining the topological energy is positive:
\begin{equation*}
	\kappa(A)=\frac{1}{8\pi^2}\int_{W^+\setminus S^+}\tr(F_A\wedge F_A) = \frac{1}{8\pi^2}\int_{W^+\setminus S^+} |F_A|^2 > 0.
\end{equation*}
In particular, if the perturbation of the Chern-Simons functional is trivial in the definition of the complex $\widetilde C_*(Y,K;\Delta)$, the differential $\widetilde d$ strictly decreases the instanton grading. This structure may be formalized as follows. 

\begin{definition}
	An {\it I-graded $\cS$-complex (of level $\delta$)} over $R[U^{\pm 1}]$ is an $\cS$-complex $(\widetilde C, \widetilde d,\chi)$ over $R[U^{\pm 1}]$ with a $\Z\times \R$-bigrading as an $R$-module, which satisfies the following properties. Writing $\widetilde C_{i,j}$ for the $(i,j)\in \Z\times \R$-graded summand, we have:
	\begin{itemize}
		\item[(i)] $U\; \widetilde C_{i,j} \subset \widetilde C_{i+4,j+1}$
		\item[(ii)] $\widetilde d\; \widetilde C_{i,j} \subset \bigcup_{k< j+\delta} \widetilde C_{i-1,k}$
		\item[(iii)] $\chi\; \widetilde C_{i,j} \subset  \widetilde C_{i+1,j}$
	\end{itemize}
	Further, $\widetilde C$ is freely, finitely generated as an $R[U^{\pm 1}]$-module by homogeneously bigraded elements. The distinguished summand $R[U^{\pm 1}]\subset \widetilde C$ has $1\in R[U^{\pm 1}]$ in bigrading $(0,0)$. 
	We denote the integer and the 
	real gradings on $\widetilde C_{i,j}$ by $\widetilde {\rm gr}$ and $\deg_I$. $\diamd$ 
\end{definition}

Note that an I-graded $\cS$-complex of level $\delta$ is also one of level $\delta'$ for all $\delta'>\delta$. Concretely, an I-graded $\cS$-complex over $R[U^{\pm 1}]$ has underlying chain complex of the form:
\begin{equation}
	\widetilde C = \left(\bigoplus_{k=1}^n R[U^{\pm 1}]\gamma_k\right)\oplus \left(\bigoplus_{k=1}^n R[U^{\pm 1}]\underline\gamma_k\right)\oplus R[U^{\pm}]\gamma_0 \label{eq:igrscx}
\end{equation}
This gives the $\cS$-complex decomposition $\widetilde  C_\ast = C_\ast \oplus C_{\ast-1} \oplus R[U^{\pm 1}]$, where the indicated subscript gradings are the $\Z$-gradings. In particular, $\gamma_0$ generates the ``trivial'' summand, and the bigrading of $\gamma_0$ is $(0,0)$. The bigrading $(i_k,j_k)\in \Z\times \R$ of $\gamma_k$ is arbitrary, although it gives the bigrading of $\underline \gamma_k$ as $(i_k+1,j_k)$. The real I-grading $\deg_I$ can be extended to any element of $\widetilde C$ as follows:
\[
  \deg_I\left(\sum s_k \zeta_k\right)=\max\{\deg_I(\zeta_k)\mid s_k\neq 0\}
\]
where the $\zeta_k$ belong to distinct summands $ \widetilde C_{i,j}$ of $ \widetilde C$.

Thus $(\widetilde C_*(Y,K;\Delta),\widetilde d)$, if defined with a trivial perturbation, has the structure of an I-graded $\cS$-complex (of level $0$) over $R[U^{\pm 1}]$, where $R=\Z[T^{\pm }]$. In the form \eqref{eq:igrscx}, the generators $\gamma_k$ ($k\geqslant 1$) are choices of homotopy classes of paths from irreducible critical points to $\theta$, while $\gamma_0$ is the constant path at $\theta$. In general, in the presence of a perturbation $\pi$ the differential can possibly increase the instanton grading, but only less than some $\delta_\pi\geqslant 0$ determined by the perturbation. Thus in the general case,  $(\widetilde C_*(Y,K;\Delta),\widetilde d)$ is an I-graded $\cS$-complex of level $\delta_\pi$ over $R[U^{\pm 1}]$. For morphisms, we have:

\begin{definition}
	A morphism $\widetilde \lambda:\widetilde C\to \widetilde C'$ of level $\delta> 0 $ of I-graded $\cS$-complexes (of any levels) is an $R[U^{\pm 1}]$-module homomorphism and morphism of $\cS$-complexes such that
	\[
		\widetilde \lambda \;\widetilde C_{i,j} \subset \bigcup_{k \leqslant  j + \delta} \widetilde C'_{i,k}. \;\; 
	\]
	A level $\delta$ $\cS$-chain homotopy $\widetilde K$ between morphisms $\widetilde \lambda$ and 
	$\widetilde \lambda'$ of I-graded $\cS$-complexes is an $R[U^{\pm 1}]$-module homomorphism and an 
	$\cS$-chain homotopy between $\widetilde \lambda$ and $\widetilde \lambda'$ such that
	\[
		\widetilde K \;\widetilde C_{i,j} \subset \bigcup_{k \leqslant  j + \delta} \widetilde C'_{i+1,k}. \;\; \diamond
	\]
\end{definition}

If the perturbation of the ASD equation in the definition of a cobordism map $\widetilde \lambda_{(W,S;\Delta)}$ is trivial, then this morphism does not increase the instanton grading. Hence it is a morphism of I-graded complexes of level $0$. In the general case, the map induced by $(W,S)$ is a morphism of I-graded complexes of some level determined by the perturbation.

\subsection{Enriched $\cS$-complexes}\label{enriched-cS}

Ideally, we would like to associate an I-graded $\cS$-complex of level 0 to $(Y,K)$. As the zero perturbation is not always admissible, we settle for the following limiting structure.

\begin{definition}
	An {\it enriched $\cS$-complex} $\widetilde \fE$ is a sequence $\{(\widetilde C^i, \widetilde d^i,\chi^i)\}_{i\geqslant 1}$ of I-graded $\cS$-complexes over $R[U^{\pm 1}]$ of levels $\delta_i$, and morphisms $\phi^j_i:\widetilde C^i\to \widetilde C^j$ of levels $\delta_{i,j}$ satisfying:
	\begin{itemize}
		\item[(i)] $\phi^i_i = \text{id}$ and $\phi^j_k\circ \phi_i^k$ is $\cS$-chain homotopy equivalent to $\phi_i^j$ 
		via an $\cS$-chain homotopy of some level $\delta_{i,k,j}$.
		\item[(ii)] For each $\delta >0$ there exists an $N$ such that $i>N$ implies $\delta_i\leqslant \delta$, and $i,j>N$ (resp. $i,j,k>N$) implies $\delta_{i,j}\leqslant \delta$ (resp. $\delta_{i,k,j} \leqslant \delta$). $\diamd$
	\end{itemize}
\end{definition}

For a based knot $(Y,K)$ in an integer homology 3-sphere, we may take a sequence
\[
	\widetilde \fE(Y,K;\Delta):=\{(\widetilde C_*^i(Y,K;\Delta),\widetilde d^i,\chi^i)\}
\]
of I-graded $\cS$-complexes associated to a sequence of perturbations of the Chern-Simons functional that go to zero. Thus $(\widetilde C_*^i(Y,K;\Delta),\widetilde d^i,\chi^i)$ is of some level $\delta_i$ determined by the chosen perturbation. For any pair $i$, $j$ there is a morphism $\phi_i^j:\widetilde C^i(Y,K;\Delta) \to \widetilde C^j(Y,K;\Delta)$ of I-graded $\cS$-complexes of level $\delta_{i,j}$, determined by a path of auxiliary data. Moreover, $\widetilde \fE(Y,K;\Delta)$ satisfies the properties of an enriched $\cS$-complex over $R[U^{\pm 1}]$, where here $R=\Z[T^{\pm 1}]$. The proofs of these claims are identical to the proofs of analogous results in the non-singular setting given in \cite{AD:CS-Th}. Next, we define morphisms in this setting:

\begin{definition}
	A {\emph{morphism}} $\mathfrak{L}:\widetilde \fE(1) \to \widetilde \fE(2)$ of enriched complexes, where
	\[
		\widetilde \fE(r)=(\{(\widetilde C_*^i(r),\widetilde d^i(r),\chi^i(r))\},\phi_i^j(r)) \qquad r\in \{1,2\},
	\]
	is a collection of morphisms $\widetilde \lambda_i^j:\widetilde C^i(1)\to \widetilde C^j(2)$ of I-graded $\cS$-complexes 
	of level $\delta_{i,j}$ such that the following hold:
	\begin{itemize}
		\item[(i)] $\widetilde \lambda_k^j\circ \phi_i^k(1)$ and $\phi_k^j(2)\circ \widetilde \lambda_i^k$ are $\cS$-chain 
		homotopy equivalent to $\widetilde \lambda_i^j$ via an $\cS$-chain homotopy of some level $\delta_{i,k,j}$.
		\item[(ii)]  For each $\delta>0$, there exists an $N$ such that $i,j>N$ (resp. $i,j,k>N$) implies that $\delta_{i,j}<\delta$ 
		(resp. $\delta_{i,k,j}<\delta$). 
	\end{itemize}
The morphism is a {\emph{chain homotopy equivalence}} of enriched $\cS$-complexes if each $\widetilde \lambda_i^j$ is an $\cS$-chain homotopy equivalence where the involved $\cS$-chain homotopy equivalences have levels which converge to $0$. $\diamd$
\end{definition}

We have thus constructed the category of enriched $\cS$-complexes over $R[U^{\pm 1}]$. There is a forgetful functor to the category of $\cS$-complexes, that to any enriched $\cS$-complex associates the first $\cS$-complex in its sequence, and to any enriched $\cS$-morphism as above, we also associate the $\cS$-morphism $\lambda_1^1$. This forgetful map does not remember the positivity property of enriched complexes with respect to the instanton gradings.

To a negative definite pair $(W,S):(Y,K)\to (Y',K')$ we can associate a morphism $\mathfrak{L}_{(W,S)}$ of enriched $\cS$-complexes $\widetilde\fE (Y,K;\Delta)\to\widetilde\fE (Y',K';\Delta)$ by taking a sequence of $\cS$-morphisms, defined as usual, after choosing an appropriate sequence of auxiliary data.

We obtain the analogue of Theorem \ref{thm:framedcat}.

\begin{theorem}
	The assignments $(Y,K)\mapsto \widetilde\fE (Y,K;\Delta)$ and $(W,S)\mapsto \mathfrak{L}_{(W,S)}$ induce a functor from $\mathcal{H}$ to the homotopy category of enriched $\cS$-complexes over $\Z[T^{\pm 1}][U^{\pm 1}]$.
\end{theorem}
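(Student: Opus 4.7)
The approach is to verify the three pieces of structure required for a functor: well-definedness of the enriched $\cS$-complex $\widetilde\fE(Y,K;\Delta)$ up to chain homotopy equivalence, associating a morphism $\mathfrak{L}_{(W,S)}$ to each negative definite pair, and confirming that composition and identities are respected up to the homotopy equivalences of enriched $\cS$-complexes. The essential new content beyond Theorem \ref{thm:framedcat} is keeping all the error levels in the instanton direction under control, using the fact that unperturbed ASD trajectories strictly decrease $\text{CS}$ and that any increase along perturbed trajectories is bounded by a quantity going to zero with the perturbation norm.

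First I fix a sequence of admissible perturbations $\pi_i\to 0$ in $\sP'$ satisfying Propositions \ref{prop:perturb1} and \ref{prop:perturb2}; for each $i$ the complex $(\widetilde C^i(Y,K;\Delta),\widetilde d^i,\chi^i)$ built from $\pi_i$ is an I-graded $\cS$-complex of some level $\delta_i$, with $\delta_i\to 0$ because the only way the instanton grading can increase along a trajectory is through the contribution of $f_{\pi_i}$ to $\text{CS}+f_{\pi_i}$. For pairs $i,j$, a smooth interpolation of perturbations on $\R\times Y$ defines $\phi_i^j$ by the usual continuation count; the interpolation can be taken with perturbation norm bounded by $\max(\|\pi_i\|,\|\pi_j\|)$, so $\delta_{i,j}\to 0$. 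A double interpolation on $\R\times Y$ interpolating between the composed map $\phi_k^j\circ \phi_i^k$ and $\phi_i^j$ delivers the $\cS$-chain homotopy with level $\delta_{i,k,j}\to 0$, proving that $\widetilde\fE(Y,K;\Delta)$ satisfies the axioms of an enriched $\cS$-complex. Independence of this object up to chain homotopy equivalence of enriched $\cS$-complexes is established by comparing two different such sequences via an enriched interleaving, again using families of auxiliary data whose perturbation norms decay.

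For a negative definite pair $(W,S):(Y,K)\to (Y',K')$ with chosen arc, I pick compatible sequences of metrics and interior perturbations $\rho_{i,j}$ on $(W^+,S^+)$, whose norms tend to zero as $i,j\to\infty$, and define $\widetilde\lambda_i^j$ by the construction of Section \ref{sec:tilde} adapted to local coefficients as in Subsection \ref{subsec:local}. Positivity of $\kappa(A)$ for nonconstant instantons, combined with the relation $2\kappa(A)\equiv \text{CS}(\alpha)-\text{CS}(\alpha') \pmod{\Z}$ of \eqref{eq:csmodz} (the $S\cdot S$ term vanishes since the homology class of $S$ is divisible by $4$), shows that the level of $\widetilde\lambda_i^j$ is bounded by a quantity controlled by $\|\pi_i\|+\|\pi_j\|+\|\rho_{i,j}\|$ and hence converges to zero. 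The standard 1-parameter family of auxiliary data on $(W,S)$ produces the required $\cS$-chain homotopies between $\widetilde\lambda_k^j\circ \phi_i^k(1)$, $\widetilde\lambda_i^j$, and $\phi_k^j(2)\circ \widetilde\lambda_i^k$, whose levels likewise tend to zero. Invariance of $\mathfrak{L}_{(W,S)}$ up to chain homotopy equivalence in the enriched category follows from a similar parametrized argument.

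Finally, functoriality is established cobordism by cobordism. For composable negative definite pairs $(W_1,S_1)$ and $(W_2,S_2)$, stretching the metric along the separating 3-manifold in the composite cobordism, as in \eqref{eq:compositemap}, produces for each $i,j,k$ an $\cS$-chain homotopy between $\widetilde\lambda_{(W_2,S_2),k}^j\circ \widetilde\lambda_{(W_1,S_1),i}^k$ and $\widetilde\lambda_{(W_2\circ W_1,S_2\circ S_1),i}^j$; the one reducible throughout this family remains isolated and unobstructed by the analysis of Subsection \ref{subsec:red}, so the usual gluing theory yields a homotopy whose level is again controlled by the perturbation norms and so decays. For the identity cobordism one uses the standard translation-invariant family of metrics, noting that the identity morphism on $\widetilde C(Y,K;\Delta)$ automatically has level zero. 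The main obstacle is precisely this bookkeeping of the secondary levels $\delta_{i,k,j}$ governing two-parameter families of auxiliary data; the argument requires choosing the interpolating families so that their perturbation norms are jointly uniformly small and that the separating stretching parameter does not force the appearance of new reducibles or nontransverse configurations, which is where the negative definite hypothesis is essential.
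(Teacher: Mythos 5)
Your outline matches the paper's approach: build the enriched $\cS$-complex from a sequence of perturbations shrinking to zero, verify that the levels $\delta_i,\delta_{i,j},\delta_{i,k,j}$ tend to zero using the positivity of topological energy, construct enriched morphisms for negative definite pairs by the same sequence-of-auxiliary-data trick, and establish functoriality via families of metrics and perturbations interpolating between composite and glued data. The paper states the analogous facts with a pointer to \cite{AD:CS-Th} for details, and your proof is essentially a faithful expansion of that citation, correctly identifying the role of the negative definite hypothesis in keeping the reducible isolated and unobstructed throughout all parametrized families.

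One point where your reasoning is imprecise but not wrong: you argue that the level of $\widetilde\lambda_i^j$ goes to zero by invoking \eqref{eq:csmodz}, where the $\frac{1}{8}S\cdot S$ term vanishes \emph{mod} $\Z$; but the level of a morphism of I-graded $\cS$-complexes is controlled by the \emph{real-valued} instanton grading, for which one cannot simply discard the $S\cdot S$ term (for a negative definite pair with $[S]\neq 0$, $S\cdot S$ is strictly negative). The precise argument uses the fact that the cobordism map on local coefficients is defined relative to the flat reducible $A_0$ on $(W,S)$, which satisfies $\kappa(A_0)=0$ precisely because $[S]$ is divisible by $4$ (one can take $c_1(L)=-\frac{1}{4}[S]$, so $\kappa(A_0)=-(c_1(L)+\frac{1}{4}S)^2=0$). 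Tracking real-valued Chern--Simons values through this reference then shows the instanton grading of the image drops by exactly $2\kappa(A)\geqslant 0$, the $\frac{1}{8}S\cdot S$ contributions from $A$ and $A_0$ cancelling. With this replacement the argument is complete; as written, the mod-$\Z$ appeal is not quite what you need.
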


The enriched $\cS$-complex $\widetilde \fE(Y,K;\Delta)$ is the ``universal'' invariant defined in this paper, in the sense that all of our $\cS$-complexes associated to based knots in integer homology 3-spheres may be derived from this invariant by a change of basis (coefficient ring), and by possibly applying a forgetful functor.

\subsection{Local equivalence for enriched complexes}

We may apply the general procedure described in Subsection \ref{sec:localequiv} to the category of enriched $\cS$-complexes over $R[U^{\pm 1}]$. That is, we declare that two enriched $\cS$-complexes $\widetilde \fE$ and $\widetilde \fE'$ are equivalent $\widetilde\fE \sim \widetilde\fE'$ if there are morphisms $\widetilde\fE \to \widetilde\fE'$ and $\widetilde\fE' \to \widetilde\fE$. The resulting set
\[
	\Theta^\fE_{R[U^{\pm 1}]} = \left\{ \text{ enriched $\cS$-complexes over $R[U^{\pm 1}]$ } \right\} / \sim
\]
is a partially ordered abelian group, where $[\widetilde\fE]\leqslant [\widetilde\fE']$ if there is a morphism $\widetilde\fE\to \widetilde\fE'$. The group structure is inherited from that of the local equivalence group for $\cS$-complexes, by performing operations component-wise for each sequence. Furthermore, the forgetful functor from the category of enriched $\cS$-complexes to $\cS$-complexes induces a surjective homomorphism of partially ordered abelian groups:
\[
	\Theta^\fE_{R[U^{\pm 1}]} \longrightarrow \Theta^\cS_{R[U^{\pm 1}],\Z}
\] 
The target is the local equivalence group of $\Z$-graded $\cS$-complexes over $R[U^{\pm 1}]$. The grading is inherited from the $\Z$-grading of the I-graded $\cS$-complex.

\begin{theorem}
	The assignment $(Y,K)\mapsto \widetilde \fE(Y,K;\Delta)$ induces a homomorphism of partially ordered abelian groups $\Omega:\Theta^{3,1}_\Z\to \Theta^\fE_{R[U^{\pm 1}]}$ where $R=\Z[T^{\pm 1}]$.
\end{theorem}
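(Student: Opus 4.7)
The plan is to verify three properties of $\Omega$: well-definedness on equivalence classes, preservation of the partial order, and the group homomorphism property. The first two will follow quickly from functoriality in the category of enriched $\cS$-complexes, which has already been recorded in the paper, while the third will require upgrading the connected sum theorem (Theorem \ref{thm:connectedsumlocceoff}) to the enriched setting.

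First I would observe that if $(W,S):(Y,K)\to (Y',K')$ is a negative definite pair, then the morphism $\mathfrak{L}_{(W,S)}:\widetilde\fE(Y,K;\Delta)\to \widetilde\fE(Y',K';\Delta)$ constructed above is, by definition, an enriched $\cS$-morphism. This directly yields both the inequality $[\widetilde\fE(Y,K;\Delta)] \leqslant [\widetilde\fE(Y',K';\Delta)]$ in $\Theta^\fE_{R[U^{\pm 1}]}$ and the well-definedness of $\Omega$ on equivalence classes in $\Theta^{3,1}_\Z$. The identity $[\,(S^3,U)\,] \in \Theta^{3,1}_\Z$ is sent to the identity of $\Theta^\fE_{R[U^{\pm 1}]}$, since the Chern--Simons functional on $(S^3,U)$ has no irreducible critical points (for arbitrarily small perturbations) and the resulting enriched $\cS$-complex is the trivial one concentrated in bigrading $(0,0)$. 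Inverses are handled via the orientation-reversal functor: an enriched refinement of the duality discussion of Subsection \ref{sec:dual} gives $\widetilde\fE(-Y,-K;\Delta)\simeq \widetilde\fE(Y,K;\Delta)^\dagger$, and one checks that the dual of an enriched $\cS$-complex represents its inverse in $\Theta^\fE_{R[U^{\pm 1}]}$ just as in the purely algebraic setting.

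The remaining and essential content is the enriched connected sum relation
\[
  \widetilde\fE(Y\# Y',K\# K';\Delta)\simeq \widetilde\fE(Y,K;\Delta)\otimes \widetilde\fE(Y',K';\Delta),
\]
where the tensor product of enriched $\cS$-complexes is formed componentwise with I-grading given by the sum of the two I-gradings. I would prove this by running the argument of Section \ref{sec:consum} componentwise along the sequences of perturbations that define the two enriched complexes, producing for each pair of indices $(i,j)$ morphisms $\widetilde\lambda_{(W,S)}^{i,j}$ and $\widetilde\lambda_{(W',S')}^{i,j}$ together with the chain homotopies of Propositions \ref{prop:comphomotopy1}, \ref{prop:morphchqi} and \ref{prop:lasthomotopy}. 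What needs to be checked, in addition to the relations established there, is that each of these maps is a morphism of I-graded $\cS$-complexes of some level $\delta_{i,j}$, and that the corresponding levels (and the levels of the various $\cS$-chain homotopies) tend to zero as $i,j\to\infty$.

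The main obstacle is precisely this level-control across the entire proof of the connected sum theorem. The crucial geometric input that makes it work is that the extra reducible instantons entering the argument on the standard pieces $(S^1\times D^3,S^1\times D^1)$ and $(D^2\times S^2,D^2\times 2\text{ pts})$ appearing in Proposition \ref{prop:morphchqi} are all \emph{flat}, so they contribute $\kappa(A)=0$ and hence do not shift the instanton grading at all; the same holds for the flat reducibles on the cobordisms $(W,S)$ and $(W',S')$ that carry the homology orientation. Once this is noted, the levels of the constructed maps and chain homotopies are controlled by the energies required to cross the broken metric parameter families and by the perturbation levels on $(Y,K)$ and $(Y',K')$, both of which go to zero along our sequences. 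Combining this enriched connected sum equivalence with the preceding paragraph shows that $\Omega$ is a homomorphism of partially ordered abelian groups.
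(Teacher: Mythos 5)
Your proposal is correct and matches the paper's (largely implicit) argument: well-definedness and order preservation come from the enriched functoriality of $\mathfrak{L}_{(W,S)}$, and the homomorphism property reduces to the enriched connected sum relation (the paper's Theorem \ref{thm:connectedsumenriched}), whose proof the paper likewise bases on the observation that the reducible instantons on $(S^1\times D^3, S^1\times D^1)$ and $(D^2\times S^2, D^2\times 2\,\text{pts})$ in the proof of Proposition \ref{prop:morphchqi} are flat, hence have $\kappa(A)=0$ and do not shift the instanton grading, together with the freedom to choose all perturbations (and hence the levels $\delta_{i,j}$) as small as desired. The one minor redundancy is that the group structure on $\Theta^\fE_{R[U^{\pm 1}]}$ — including the fact that duals furnish inverses, performed componentwise — has already been recorded in Subsection \ref{sec:localequiv} and its enriched extension, so you need not re-derive it; otherwise the reasoning is the same.
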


We also have an analogue of the connected sum theorem in the setting of enriched $\cS$-complexes. For this, we note that the tensor product of two I-graded $\cS$-complexes is naturally an I-graded $\cS$-complex, and similarly for enriched $\cS$-complexes.

\begin{theorem}\label{thm:connectedsumenriched} Let $(Y,K)$ and $(Y',K')$ be based knots in integer homology 3-spheres. There is a chain homotopy equivalence of enriched $\cS$-complexes over $\mathscr{R}=\Z[U^{\pm 1}, T^{\pm 1}]$:
\begin{equation*}
	\widetilde \fE(Y\# Y',K\# K';\Delta) \simeq  \widetilde \fE(Y,K;\Delta)\otimes_\mathscr{R} \widetilde \fE(Y',K';\Delta) 
\end{equation*}
This equivalence is natural, up to enriched chain homotopy, with respect to split cobordisms.
\end{theorem}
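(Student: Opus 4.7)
The plan is to refine the proof of Theorem~\ref{thm:connectedsumlocceoff} level-by-level in the enrichment, exploiting the fact that the $U^{2\kappa(A)}$ weighting in the local coefficient system $\Delta$ causes any map defined by counting singular instantons on a cobordism to shift the instanton grading precisely by $\kappa(A)$. Since $\kappa(A)\geqslant 0$ for unperturbed ASD connections, with equality only for flat connections, the morphisms constructed on the connected sum cobordism automatically have small levels once the perturbations are small.

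More precisely, I would fix sequences of perturbations $\pi_i,\pi_i',\pi_i^\#$ of sizes $\epsilon_i\to 0$ defining the enriched complexes $\widetilde \fE(Y,K;\Delta)$, $\widetilde \fE(Y',K';\Delta)$, and $\widetilde \fE(Y\# Y', K\# K';\Delta)$. For each $i$, choose perturbations on the connected sum cobordisms $(W,S)$ and $(W',S')$ compatible with the end data, of size $O(\epsilon_i)$, and define morphisms $\widetilde\lambda^{i}_{(W,S)}$ and $\widetilde\lambda^{i}_{(W',S')}$ by the same formulas as in Subsections~\ref{CWS} and \ref{CWS-p}. Each contribution carries a coefficient $\pm U^{2\kappa(A)}T^{\nu(A)}$; since $\kappa(A)\geqslant -O(\epsilon_i)$ for perturbed instantons of fixed index, these assemble into morphisms of I-graded $\cS$-complexes of level $O(\epsilon_i)$. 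Standard family-of-metrics arguments produce the intertwining chain homotopies $\widetilde\lambda^{j}_{(W,S)}\circ \phi_{i}^{j}\simeq \phi_{i}^{j}\circ \widetilde\lambda^{i}_{(W,S)}$ of level $O(\epsilon_i+\epsilon_j)$, packaging the $\widetilde\lambda^i$ into a morphism $\mathfrak{L}$ of enriched complexes, and similarly a morphism $\mathfrak{L}'$ going the other direction.

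To conclude that $\mathfrak{L}$ and $\mathfrak{L}'$ are mutually inverse up to enriched chain homotopy, I would repeat the arguments of Propositions~\ref{prop:comphomotopy1}, \ref{prop:morphchqi}, and \ref{prop:lasthomotopy}, tracking levels throughout. The essential observation making this work is the one highlighted in the text following Theorem~\ref{thm:connectedsumlocceoff}: the reducible instantons on the gluing necks $(S^1\times D^3, S^1\times D^1)$ and $(D^2\times S^2, D^2\times 2\text{ pts})$ that arise in the Morse--Bott gluing step are flat, hence satisfy $\kappa(A)=0$ and contribute no energy penalty. The main obstacle will be the bookkeeping needed to ensure that the 1-parameter family-of-metrics chain homotopies (and their Morse--Bott analogues) all have levels $O(\epsilon_i)$; this reduces to a uniform energy bound along each family, which follows from the standard compactness statements for moduli spaces of perturbed instantons of fixed index together with the positivity $\kappa\geqslant 0$ in the unperturbed case. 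Naturality under split cobordisms follows from the same scheme applied to the composite auxiliary data, as in Subsection~\ref{subsec:natcon}.
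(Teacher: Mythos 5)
Your proposal is correct and follows essentially the same approach as the paper: the paper's proof also defers to the local-coefficients connected sum argument, points to the same two key facts (that the reducible instantons on the gluing necks in the Morse--Bott step are flat, so $\kappa(A)=0$, and that the perturbations on the connecting cobordisms can be chosen arbitrarily small), and concludes that the morphisms and chain homotopies from Section~\ref{sec:consum} assemble into a chain homotopy equivalence of enriched complexes. One trivial slip: since the coefficient is $U^{2\kappa(A)}$ and $U$ raises the I-grading by $1$, the shift in instanton grading is by $2\kappa(A)$, not $\kappa(A)$; this does not affect the argument.
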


The proof follows the remarks after Theorem \ref{thm:connectedsumlocceoff}, and relies on the fact that $\kappa(A)=0$ for the instantons in \eqref{eq:connsuminstantons}. We also use that in the proof we can choose the perturbations on the two cobordisms as small as we like.

\subsection{The concordance invariant $\Gamma^{R}_{(Y,K)}$}

The local equivalence class of the enriched $\cS$-complex associated to a based knot $(Y,K)$ is expected to be a strong invariant, as it has the behavior and values of the Chern-Simons functional built into its structure. Here we describe one way to extract numerical information from this local equivalence class which leads to a concordance invariant $\Gamma^R_{(Y,K)}$, an analogue of the invariant $\Gamma_Y$ for homology 3-spheres from \cite{AD:CS-Th}.

The definition of the invariant $\Gamma^R_{(Y,K)}$ factors through an algebraic map defined on the local equivalence group of enriched $\cS$-complexes:
\begin{equation}
	\Gamma:\Theta^{\fE}_{R[U^{\pm 1}]} \longrightarrow \text{Map}_{\geqslant}(\Z, \overline \R_{\geqslant 0})\label{eq:gammamap}
\end{equation}
In fact, a similar algebraic map can be used to define $\Gamma_Y$. Here and throughout this section, $R$ is an integral domain and an algebra over $\Z[T^{\pm 1}]$. The codomain in \eqref{eq:gammamap} is the set of non-decreasing functions from $\Z$ to the extended positive real line $\overline \R_{\geqslant 0} = \R_{\geqslant 0} \cup \infty$. The map $\Gamma$ is defined as follows. Take an enriched $\cS$-complex $\widetilde \fE$ defined by a sequence $\{(\widetilde C^j,\widetilde d^j , \chi^j)\}$ of I-graded $\cS$-complexes over $R[U^{\pm 1}]$. 
To each $\cS$-complex $(\widetilde C^j,\widetilde d^j , \chi^j)$ we have the associated chain group $C^j$ and the $R[U^{\pm 1}]$-module homomorphisms:
\begin{gather*}
	d^j:C^j\to C^j,\hspace{1cm}v^j:C^j\to C^j,\\
	\delta_1^j:C^j\to R[U^{\pm1}],\hspace{1cm}\delta_2^j:R[U^{\pm1}]\to C^j.
\end{gather*}
Then for each $k\in \Z^{>0}$ we define:
\[
	\Gamma(\widetilde \fE)(k) := \lim_{j\to \infty} \inf_{\alpha}\left(\deg_I(\alpha)\right) \in \overline \R_{\geqslant 0}
\]
where the infimum is over all $\alpha\in C^j$ with bigrading $(2k-1,\deg_I(\alpha))$ such that:
\begin{equation*}\label{Gamma-pos-inf}
  d^j(\alpha)=0,\hspace{.9cm}k-1=\min\{i\in \Z^{\geq 0}\mid \delta_1^j(v^j)^{i}(\alpha)\neq 0 \}.
\end{equation*}
For each $k\in \Z^{\leq 0}$ we define:
\[
	\Gamma(\widetilde \fE)(k) := \max \Bigg( \lim_{j\to \infty} \inf_{\alpha}\left(\deg_I(\alpha) \right),0  \Bigg) \in \overline \R_{\geqslant 0}
\]
where the infimum is over all $\alpha\in C^j$ with bigrading $(2k-1,\deg_I(\alpha))$ such that there are $\{a_0, a_1, \dots, a_{-k}\}\subset R[U^{\pm 1}]$ satisfying:
\begin{equation}\label{Gamma-neg-inf}
  d^j(\alpha)=\sum_{i=0}^{-k}(v^j)^{i}\delta_2^j(a_i).
\end{equation}
Note that in \eqref{Gamma-neg-inf}, a straightforward degree consideration implies that we can limit ourselves to the following case, where $s_i\in R$:
\[
  a_i=\left\{
  \begin{array}{ll}
 	s_iU^{\frac{k+i}{2}}&i\equiv k \mod 2\\
	0&i\nequiv k \mod 2\\
  \end{array}
  \right.
\]

If there is a morphism $\widetilde\fE \to \widetilde\fE'$ between enriched $\cS$-complexes then $\Gamma(\widetilde \fE)(k)\geq \Gamma(\widetilde \fE')(k)$ for any integer $k$. In particular, locally equivalent enriched complexes have the same $\Gamma$ functions. An equivalent definition of $\Gamma(\widetilde \fE)$ can also be given in terms of the small equivariant complexes associated to the $\cS$-complexes.  We refer to \cite{AD:CS-Th} for more details.

\begin{definition}
	Let  $R$ be an integral domain which is an algebra over $\Z[T^{\pm 1}]$. For a based knot in an integer homology 3-sphere $(Y,K)$, we define the function $\Gamma^R_{(Y,K)}$ as follows:
	\[
		\Gamma^{R}_{(Y,K)}:=\Gamma\left( \widetilde \fE(Y,K;\Delta_{R[U^{\pm 1}]}) \right)
	\]
	If $Y=S^3$ we write $K$ in place of $(Y,K)$. $\diamd$
\end{definition}

	That is, $\Gamma^R_{(Y,K)}$ is the invariant of the equivalence class $[(Y,K)]\in \Theta^{3,1}_\Z$ obtained as:
	\[
		\Gamma^R_{(Y,K)}:\Theta^{3,1}_\Z\xrightarrow{\;\;\Omega\;\;} \Theta^\fE_{R[U^{\pm 1}]} \xrightarrow{\;\;\Gamma\;\;} \text{Map}_{\geqslant 0} (\Z,\overline\R_{\geqslant 0})
	\]
	The following summarizes the basic properties of this function. The proofs are entirely analogous to those of Theorems 1--4 and Proposition 1 of \cite{AD:CS-Th}. 
	
\begin{theorem}\label{Gamma-Y-K}
	Let $(Y,K)$ be a based knot in an integer homology 3-sphere.
	\begin{itemize}
		\item[\emph{(i)}] The function $\Gamma_{(Y,K)}^R$ is an invariant of $[(Y,K)]\in \Theta^{3,1}_\Z$.
		
		\item[\emph{(ii)}] $\Gamma_{(Y,K)}^R$ is a non-decreasing function $\Z\to \overline\R_{\geqslant 0}$ which is positive for $i\in\Z_{>0}$.
		
		\item[\emph{(iii)}] If $(W,S):(Y,K)\to (Y',K')$ is a negative definite cobordism of pairs, then
		\[
			\Gamma_{(Y',K')}^R(i) \leqslant \begin{cases} \Gamma_{(Y,K)}^R(i)-\eta(W,S) & i>0 \\ \max(\Gamma_{(Y,K)}^R(i)-\eta(W,S),0) & i\leqslant 0 \end{cases}
		\]
		where $\eta(W,S)\in \R_{\geqslant 0}$ is an invariant of $(W,S)$. Furthermore, $\eta(W,S)>0$ unless there is a traceless $SU(2)$ representation of $\pi_1(W\setminus S)$ which extends irreducible traceless $SU(2)$ representations of $\pi_1(Y\setminus K)$ and $\pi_1(Y'\setminus K')$.
		
		\item[\emph{(iv)}] For each $i\in\Z$, we have $\Gamma_{(Y,K)}^R(i)<\infty$ if and only if $i\leqslant h_R(Y,K)$.
		
		\item[\emph{(v)}] For each $i\in \Z$, if $\Gamma_{(Y,K)}^R(i)\not\in\{ 0, \infty\}$ then it is congruent to $\text{{\emph{CS}}}(\alpha)$ {\emph{(mod $\Z$)}} for some irreducible singular flat $SU(2)$ connection $\alpha$ on $(Y,K)$.
	\end{itemize}
\end{theorem}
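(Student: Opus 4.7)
The plan is to follow the strategy of \cite{AD:CS-Th} closely, exploiting the fact that $\Gamma^R_{(Y,K)}$ is by construction the composition
\[
    \Theta^{3,1}_\Z \xrightarrow{\;\Omega\;}\Theta^\fE_{R[U^{\pm 1}]} \xrightarrow{\;\Gamma\;}\mathrm{Map}_{\geqslant}(\Z,\overline\R_{\geqslant 0}),
\]
so that any algebraic property of $\Gamma$ on the right will descend directly to the corresponding geometric property. First, for (i), the homomorphism $\Omega$ is a morphism of partially ordered groups, and the algebraic $\Gamma$ is manifestly monotone under morphisms of enriched $\cS$-complexes; in particular two locally equivalent enriched $\cS$-complexes have the same $\Gamma$ function. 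For (ii), monotonicity of $\Gamma^R_{(Y,K)}$ is an algebraic statement about the infima defining $\Gamma(\widetilde{\mathfrak{E}})(k)$: as $k$ increases by one, for $k>0$ any cycle witnessing the value at $k+1$ gives a cycle witnessing the value at $k$ (after applying $v^j$), while for $k\leq 0$ one extends the list $\{a_0,\dots,a_{-k}\}$ by a zero. Positivity for $i>0$ follows since a cycle $\alpha\in C^j$ with $\delta_1^j v^{j\,i-1}(\alpha)\neq 0$ would otherwise force the existence of a flat connection with positive action connecting to the reducible $\theta$, which is impossible once the perturbation is sufficiently small.

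For (iv), the condition that $\Gamma^R_{(Y,K)}(i)<\infty$ is exactly the condition that the set appearing in the infimum is non-empty for all sufficiently small perturbations, which by Proposition~\ref{h-g-reinterpret} (translated to the $R[U^{\pm 1}]$-coefficient setting) is precisely the statement $i\leq h_R(Y,K)$. For (iii), I will use functoriality of enriched $\cS$-complexes: a negative definite pair $(W,S)$ determines a morphism $\mathfrak L_{(W,S)}:\widetilde{\mathfrak E}(Y,K;\Delta)\to\widetilde{\mathfrak E}(Y',K';\Delta)$ whose component morphisms shift the $I$-grading by at most a constant depending on the chosen perturbations on $(W,S)$. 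Set
\[
    \eta(W,S):=\liminf_{\pi\to 0} \inf\bigl\{\,\kappa(A)\;:\;[A]\in M(W,S;\alpha,\alpha')\text{ non-constant, contributing to }\mathfrak L_{(W,S)}\bigr\},
\]
taken over sequences of admissible perturbations going to zero. Monotonicity of $\Gamma$ under morphisms together with the explicit $I$-grading shift gives the bound in (iii). When $\eta(W,S)=0$, a standard Uhlenbeck compactness argument provides a sequence of perturbed instantons with actions tending to zero, which (modulo bubbling, ruled out in low index) converges to a flat connection on $(W^+\setminus S^+)$, yielding the required traceless representation extending given ones on the ends. Finally, for (v), if $\Gamma^R_{(Y,K)}(i)\in(0,\infty)$ then the infimum is realized in the limit by a sequence of cycles $\alpha_j$ whose $I$-grading converges; by the action formula $\deg_I(\alpha_j)=-\mathrm{CS}(\tilde\alpha_j)\pmod \Z$ for any path representative, Uhlenbeck compactness extracts a limiting flat $SU(2)$ connection whose Chern--Simons value has the desired congruence, and the irreducibility comes from the fact that the flat reducible $\theta$ has $\mathrm{CS}=0$ while we have assumed the limit is non-zero modulo $\Z$.

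The main obstacle will be (iii), and in particular the geometric content of $\eta(W,S)>0$. The difficulty is that we must simultaneously control (a) the dependence of the algebraic $\cS$-morphism on the perturbation chosen on $(W,S)$, (b) bubbling phenomena in non-compact moduli spaces, and (c) the fact that reducibles on the cobordism must be handled carefully so that the limit of perturbed instantons, if its action tends to zero, is genuinely an irreducible flat connection extending the given irreducibles on the ends. For (a), I will work with a cofinal sequence of perturbations and use the definition of a morphism of enriched $\cS$-complexes; for (b) and (c), I will essentially reproduce the non-singular Uhlenbeck argument of \cite{AD:CS-Th}, replacing the non-singular moduli spaces there with their singular analogues introduced in Section~\ref{sec:defns}. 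The rest of the theorem is then formal.
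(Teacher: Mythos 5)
Your proposal follows the same route as the paper, which itself simply notes that the proofs are ``entirely analogous to those of Theorems 1--4 and Proposition 1 of [AD:CS-Th]'' and gives no further detail. You fill in the skeleton correctly. Two small discrepancies worth noting, though neither affects the substance: the paper defines $\eta(W,S)$ directly as the infimum of $2\kappa(A)$ over unperturbed finite-energy singular ASD connections with irreducible flat limits on the ends (so a factor of $2$, and no perturbation limit), whereas you define it as a $\liminf_{\pi\to 0}$ of $\kappa$ over perturbed moduli spaces contributing to $\mathfrak L_{(W,S)}$; these coincide by the same Uhlenbeck-compactness argument you invoke, but the paper's formulation is cleaner and manifestly a cobordism invariant. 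Also, in (iii) the precise relation between the $I$-grading shift of a morphism and instanton action uses \eqref{eq:csmodz} together with the divisibility of $[S]$ by $4$ (so that $\tfrac{1}{8}S\cdot S\equiv 0\pmod{\Z}$), giving the identity $\mathrm{CS}(\alpha)-\mathrm{CS}(\alpha')=2\kappa(A)$ that powers the estimate; your sketch implicitly uses this but it is the crux of ``the explicit $I$-grading shift.''
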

	
	The invariant $\eta(W,S)$ is defined to be the infimum of $2\kappa(A)$, as $A$ ranges over all finite energy singular ASD connections which limit to irreducible flat connections on the ends.\\

	In this subsection, we did not attempt to systematically exploit the Chern-Simons filtration to study concordances, and we content ourselves with the
	definition of one homology concordance invariant. For example, we believe that by a slight modification of our axiomatization of enriched complexes one can define analogues of the invariants $r_s$ introduced in \cite{NST:filtered}. Another possible direction is to apply the construction of Subsection \ref{subsec:ideals} in the context of enriched $\cS$-complexes.

\newpage

%!TEX root = main.tex

\section{Connections to Kronheimer and Mrowka's constructions}\label{sec:kmgroups}

In \cite{KM:YAFT,KM:unknot}, Kronheimer and Mrowka defined several instanton Floer homology groups associated to a given link in a 3-manifold, for cases in which no reducibles are present. In \cite{km-tait, km-def, km-barnatan}, over rings of characteristic $2$ they extended their constructions to webs, which are embedded trivalent graphs. Here we recall some of these constructions, and discuss their relationship to the invariants introduced earlier.

\subsection{Instanton homology for admissible links}\label{subsec:kmgroups}

We first recall the instanton homology groups $I^\omega(Y,L)$ construced in \cite{KM:YAFT,KM:unknot} for certain links in 3-manifolds. Special cases of this construction are the singular instanton groups $I^\natural(Y,L)$ and $I^\#(Y,L)$, where in this latter case $(Y,L)$ is a pair of any 3-manifold $Y$ with an embedded link $L$. We begin with:

\begin{definition}
	An {\emph{admissible link}} is a triple $(Y,L,\omega)$ where $L$ is an unoriented link embedded in a closed, oriented, connected 3-manifold $Y$, and  $\omega\subset Y$ is an unoriented 1-manifold embedded in $Y$ with $\partial \omega = \omega\cap L$, transversely, satisfying the following condition: there exists a closed oriented surface $\Sigma\subset Y$ such that either
	\begin{itemize}
\item $\Sigma$ is disjoint from $L$ and intersects $\omega$ transversely an odd number of times, or\label{eq:nonint1}
\item $\Sigma$ is transverse to $L$ and intersects it an odd number of times.
\end{itemize}
This condition for $\omega$ is called the {\emph{non-integrality}} condition.  $\diamd$
\end{definition}

We remark that with this terminology, a knot in an integer homology 3-sphere is not an admissible link for any choice of $\omega$.

Kronheimer and Mrowka associate to an admissible link $(Y,L,\omega)$ a relatively $\Z/4$-graded abelian group $I^\omega(Y,L)$, defined as follows. From the 1-manifold $\omega$ one may construct an $SO(3)$-bundle $P\to Y\setminus L$ whose second Stiefel-Whitney class is Poincar\'{e} dual to $[\omega]\in H_1(Y,L;\Z/2)$. Then define a chain complex $\left(C_\ast^\omega(Y,L),d\right)$ by setting
\[
	C^\omega_\ast = C_\ast^\omega(Y,L) =  \bigoplus_{\alpha\in\fC_\pi} \Z\cdot \alpha
\]
where $\fC_\pi$ are the critical points modulo the determinant-1 gauge group of a suitably perturbed Chern-Simons functional for $P$. The non-integrality condition on $\omega$ ensures that for small $\pi$, all such critical points on $P$ are {\emph{irreducible}}, i.e. $\fC_\pi=\fC_\pi^{\text{irr}}$. The differential $d$ is defined just as in \eqref{eq:irrdiff}, and $I^\omega(Y,L)$ is defined to be the homology of this chain complex:
\[
	I^\omega(Y,L) := H_\ast\left(C^\omega(Y,L),d\right).
\]
As before, unlike the group $I^\omega(Y,L)$, the chain complex $\left(C^\omega(Y,L),d\right)$ depends on a choice of metric and perturbation, which are suppressed from the notation.

Given {\emph{any}} link $L\subset Y$ with a basepoint $p\in L$, Kronheimer and Mrowka define
\[
	I^\natural(Y,L) := I^\omega(Y,L\# H)
\]
where $H\subset S^3$ is the Hopf link and $\omega$ is a unknotted arc connecting the two components of $H$; see Figure \ref{fig:hopf}. The connect sum is taken at the basepoint $p\in L$. The connected sum $L\# H$ may be identified with $L\cup \mu$ where $\mu$ is a small meridional component around $L$ near $p$. Note that the resulting $\omega$ always satisfies the non-integrality condition: take $\Sigma$ to be the 2-torus boundary of a small regular neighborhood of $\mu$. The homology $I^\natural(Y,L)$ is a relatively $\Z/4$-graded abelian group, and is an invariant of the based link $(Y,L,p)$.  Note that just as before we omit the basepoint $p$ from the notation. The group $I^\natural(Y,L)$ is in fact absolutely $\Z/4$-graded. In the sequel, we will use the notation
\[
	(C^\natural_\ast,d^\natural) = (C^\natural_\ast(Y,L) , d^\natural)
\]
for the chain complex $(C^\omega_\ast(Y,L\# H),d)$, so that $I^\natural_\ast(Y,L)$ is the homology of $(C^\natural_\ast,d^\natural)$. 

In a similar vein, for any link $L\subset Y$, Kronheimer and Mrowka define the group 
\[
 	I^\#(Y,L):=I^\omega(Y,L\sqcup H).
\]
Here we take the disjoint union of $L$ with the Hopf link $H$, along with its arc $\omega$. In order to perform this construction, we choose a small ball in $Y\setminus L$ in which to embed $H$. We write 
\[
	(C^\#_\ast,d^\#) = (C^\#_\ast(Y,L) , d^\#)
\]
for the chain complex $(C^\omega_\ast(Y,L\sqcup H),d)$, so that $I^\#_\ast(Y,L)$ is the homology of $(C^\#_\ast,d^\#)$. This group is also absolutely $\Z/4$-graded.

\begin{figure}[t]
\centering
\includegraphics[scale=1.65]{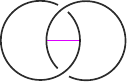}
\caption{The Hopf link $H$ with the arc $\omega$.}
\label{fig:hopf}
\end{figure}

Let $(Y,L,\omega)$ be an admissible link, and let $p\in L$ be a basepoint. The construction of Subsection \ref{sec:vmap} carries through in this setting to define a map $v:C^\omega_\ast \to C^\omega_{\ast-2}$ associated to $p$, using $S^1$ holonomy along the cylinder. Here we have:

\begin{prop}\label{prop:vmap2}
	$d \circ v - v \circ d  = 0$. 
\end{prop}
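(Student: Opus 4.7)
The plan is to adapt the argument for Proposition \ref{prop:vmap} to the present setting, noting that the entire chain-level story is formally identical \emph{except} that the reducible $\theta$ is absent: by the non-integrality condition on $\omega$ and the admissibility of $(Y,L,\omega)$, for a suitably small perturbation $\pi$ every critical point $\alpha\in\fC_\pi$ of the perturbed Chern--Simons functional on the bundle $P\to Y\setminus L$ is irreducible. Consequently there are no reducible gradient-flow trajectories on the cylinder to glue in, and the term $\delta_2\circ \delta_1$ in Proposition \ref{prop:vmap} simply does not arise.

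Concretely, I would first choose a lift of each $\alpha\in \fC_\pi$ to the appropriate framed configuration space so that, using the basepoint $p\in L$ to take the $S^1$-holonomy of $A^{\mathrm{ad}}$ along $\R\times\{p\}$, we obtain translation-invariant maps $h_{\alpha_1\alpha_2}:\sB^\omega(Y,L;\alpha_1,\alpha_2)\to S^1$ that factor multiplicatively on codimension-one faces. I would then construct modified holonomy maps
\[
  H_{\alpha_1\alpha_2}\co \breve{M}(\alpha_1,\alpha_2)_{d}\longrightarrow S^1, \qquad d\leqslant 2,
\]
exactly as in Subsection \ref{sec:vmap}, satisfying the analogues of properties (H1) and (H2); property (H3) is not needed here since there is no reducible. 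The map $v$ is defined by the same formula as \eqref{eq:vmapdef}, counting the preimage of a generic $h\in S^1\setminus\{1\}$ in $\breve{M}(\alpha_1,\alpha_2)_1$.

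To prove the relation, I would analyze the ends of the cut-down 1-manifold
\[
  M \;:=\; \{\,[A]\in \breve{M}(\alpha_1,\alpha_2)_2 : H_{\alpha_1\alpha_2}([A])=h\,\}
\]
for generic $h$. Compactness of $\breve{M}^+(\alpha_1,\alpha_2)_2$ (the dimension is too small for bubbling) means each end comes from an unparametrized broken trajectory $\fa=([A_1],\ldots,[A_{l-1}])$ with $l\geqslant 3$; index additivity forces $l\leqslant 4$, and the $l=4$ case is ruled out by (H1) together with the fact that all intermediate limits are irreducible (so (H2) applies inductively). The only surviving case is $l=3$ with a single intermediate irreducible critical point $\beta$, and the two subcases $([A_1],[A_2])\in \breve{M}(\alpha_1,\beta)_0\times \breve{M}(\beta,\alpha_2)_1$ and $\breve{M}(\alpha_1,\beta)_1\times \breve{M}(\beta,\alpha_2)_0$ contribute $\langle dv(\alpha_1),\alpha_2\rangle$ and $-\langle vd(\alpha_1),\alpha_2\rangle$ respectively, via (H1)–(H2). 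Since no reducible exists to produce the remaining gluing contribution present in Proposition \ref{prop:vmap}, summing gives $d\circ v - v\circ d = 0$.

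The main conceptual point, and the only thing to be careful about, is verifying that the absence of reducibles really does eliminate the $\delta_2\circ\delta_1$ contribution: this relies on the non-integrality condition to ensure no reducible critical point is introduced by a small perturbation, and on the resulting absence of any 1-parameter gluing parameter of the sort described in property (H3). Once that is in place, the rest of the argument is a direct transcription of the proof of Proposition \ref{prop:vmap}.
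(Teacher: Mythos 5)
Your proposal is correct and follows precisely the same reasoning the paper indicates: the paper's proof is simply the remark that the argument of Proposition \ref{prop:vmap} carries over verbatim, with the non-integrality condition guaranteeing that no reducible critical points exist, so that property (H3) and the resulting $\delta_2\circ\delta_1$ gluing contribution never arise. Your detailed transcription of the end-counting argument and the role of (H1)--(H2) matches the intended proof.
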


\noindent The proof is similar to that of Proposition \ref{prop:vmap}; the absence of any reducible critical points, in this case, precludes the appearance of the term $\delta_2\circ \delta_1$.

\begin{remark}\label{v-maps}
	There are at least two other ways that one can define a degree $2$ 
	operator on the complex $C^\omega_\ast(Y,L)$ using the basepoint $p$. Connected sum of 
	$[-1,1]\times L$ at the point $(0,p)$ with a standard torus determines a cobordism of pairs 
	$([-1,1]\times Y,S):(Y,L)\to (Y,L)$ and the induced cobordism map is a degree 
	$2$ chain map $\sigma$ acting on $C^\omega_\ast(Y,L)$ \cite[Subsection 8.3]{KM:unknot}. 
	Alternatively, the standard construction of $\mu$-maps \cite[Chapter 5]{DK} assigns to the point $p$ 
	a cohomology class of degree $2$ in the space of 
	singular connections on the bundle $P$ associated to $(Y,L,\omega)$ \cite{Kr:obs}. 
	This cohomology class in \cite{Kr:obs} is rational; 
	to obtain an integral class we consider $-2$ times the 2-dimensional point class in \cite{Kr:obs}.
	Cup product with this cohomology class 
	defines another chain map $\sigma'$ of degree $2$ on $C^\omega_\ast(Y,L)$. 
	The argument of \cite[Proposition 5.1]{Kr:obs} shows that the operators $\sigma$ and $\sigma'$ 
	are chain homotopy equivalent. 	
	Moreover, the operator $\sigma'$ is also chain homotopy equivalent to $v$. 
	The proof is analogous to the corresponding result in the 
	non-singular setting in \cite[Subsection 7.3.2]{donaldson-book}. $\diamd$
\end{remark}

The analogue of the $\cS$-complex $\widetilde C(Y,K)$ from Subsection \ref{sec:framed} in this setting is simply a mapping cone complex of $v$; we define $(\widetilde C^\omega(Y,L),\widetilde d)$ by
\begin{equation}\label{eq:mappingconecx}
	\widetilde C^\omega(Y,L) := C^\omega_\ast \oplus C^\omega_{\ast-2}, \hspace{1cm} \widetilde d = \left[\begin{array}{cc} d & 0 \\ v & -d \end{array}\right]
\end{equation}
We leave it to the interested reader to formulate the analogue of Theorem \ref{thm:framedcat} in this setting, describing a functor from a category whose objects are base-pointed admissible links $(Y,L,\omega)$ to a suitable category of mapping cone complexes.

We now describe variations of Theorem \ref{thm:connectedsum} obtained by replacing one or both of $(Y,K)$ and $(Y',K')$ by a based admissible link. Let $(Y,K)$ be an integer homology 3-sphere with an embedded based knot. Let $(Y',L',\omega')$ be a based admissible link. Then $\widetilde C(Y,K)$ is an $\cS$-complex, while \eqref{eq:mappingconecx} defines the chain complex $\widetilde{C}^{\omega'}(Y',L')$ as a mapping cone complex.

\begin{theorem}  {\emph{\bf{(Connected Sum Theorem for a knot and an admissible link)}}} \label{thm:oneadmis} There is a chain homotopy equivalence of relatively $\Z/4$-graded chain complexes:
\[
	\widetilde C^{\omega'}(Y\# Y',K\# L') \simeq  \widetilde C(Y,K)\otimes \widetilde C^{\omega'}(Y',L')
\]
Furthermore, the tensor product is naturally isomorphic to a mapping cone complex, making the chain homotopy equivalence one of mapping cone complexes. The equivalence is natural, up to mapping cone chain homotopies, with respect to split cobordisms.
\end{theorem}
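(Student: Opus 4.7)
The plan is to adapt the proof of Theorem \ref{thm:connectedsum} from Section \ref{sec:consum}, replacing the $\cS$-complex $\widetilde C(Y',K')$ by the mapping cone complex $\widetilde C^{\omega'}(Y',L')$. The first step is to verify that the tensor product carries a natural mapping cone structure: since $\widetilde C^{\omega'}(Y',L')$ is by construction the mapping cone of $v^{\omega'}\colon C^{\omega'}\to C^{\omega'}$ (a chain map of degree $-2$ by Proposition \ref{prop:vmap2}), and tensor products commute with mapping cones, we have
\[
	\widetilde C(Y,K)\otimes\widetilde C^{\omega'}(Y',L')\;\cong\;\mathrm{Cone}\!\left(\mathrm{id}\otimes v^{\omega'}\colon\widetilde C(Y,K)\otimes C^{\omega'}\longrightarrow \widetilde C(Y,K)\otimes C^{\omega'}\right).
\]
A morphism of mapping cone complexes is defined as a block upper-triangular chain map $(\lambda,\mu)$ respecting this decomposition, where $\mu$ is a chain homotopy between $v'\lambda$ and $\lambda v$, and likewise for the notion of chain homotopy of such morphisms.

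Next, I would construct chain maps in both directions by adapting Subsections \ref{CWS} and \ref{CWS-p}. Take the standard connect-sum cobordism $(W,S)\colon (Y,K)\sqcup (Y',L')\to (Y\#Y',K\#L')$ and push $\omega'$ through the $1$-handle region (disjointly from the attaching region) to give admissible bundle data on the connect sum. Because the closed surface $\Sigma\subset Y'$ witnessing non-integrality for $(Y',L',\omega')$ persists in $Y\#Y'$, the triple $(Y\#Y',K\#L',\omega')$ is admissible, and no reducible critical points appear on either the $(Y',L')$-end or the $(Y\#Y',K\#L')$-end of $(W,S)$. The only reducible entering the story is the flat reducible $\theta$ on the $(Y,K)$-end. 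The morphism $\widetilde\lambda_{(W,S)}$ is therefore defined by moduli-space counts exactly as in Subsection \ref{CWS}, but with every component featuring a reducible limit on the $(Y',L')$-side simply omitted. The components that survive encode the $\lambda,\mu,\Delta_1,\Delta_2$ data on the $(Y,K)$-side and the $d^{\omega'},v^{\omega'}$ data on the $(Y',L')$-side, and they visibly assemble into a chain map respecting the mapping cone structure. The relations of Proposition \ref{CWS-C} reduce accordingly. The reverse map $\widetilde\lambda_{(W',S')}$ is built by the same procedure applied to Subsection \ref{CWS-p}.

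I would then verify that the two compositions are chain homotopic to (iso)morphisms. Proposition \ref{prop:comphomotopy1} carries over to identify $\widetilde\lambda_{(W,S)}\circ\widetilde\lambda_{(W',S')}$ with a map $\widetilde\lambda_{(W\circ W',S\circ S',\rho^{\#})}$ defined by a $\rho^{\#}$-cut-down on the composite cobordism. Proposition \ref{prop:morphchqi} then applies with minimal modification: the swap of the $(S^1\times D^3,S^1\times D^1)$-neighborhood for $(D^2\times S^2,D^2\times 2\text{ pts})$, the Morse--Bott gluing along the neck $(S^1\times S^2,S^1\times 2\text{ pts})$, and the identification with the identity cobordism proceed as before. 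The key point is that $\omega'$ can be arranged disjoint from the surgery region, so the flat reducibles on the neck are unaffected by the bundle data. The reverse composition is handled by Proposition \ref{prop:lasthomotopy} via stretching along the $(S^3,S^1)$ of Figure \ref{fig:connsumcomposites2}, with Lemma \ref{lemma:smoriso} completing the argument. Naturality with respect to split cobordisms follows immediately from the argument of Subsection \ref{subsec:natcon}, by interpolating between two composite choices of auxiliary data.

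The main obstacle is the bookkeeping required to organize the morphism components and chain relations into the mapping cone framework. The absence of a $\Z$-summand on the $(Y',L',\omega')$-side is a welcome simplification, but it is partially offset by the need to check that the operators $\delta_1,\delta_2,v$ of $\widetilde C(Y,K)$ interact correctly with $v^{\omega'}$ under the tensor product, and that all surviving moduli spaces on $(W,S)$ with mixed bundle data are regular. A secondary technicality is verifying that for such mixed bundle data, the only reducibles in the relevant moduli spaces are those with $\theta$ as their limit on the $(Y,K)$-end, which follows from the standard argument that non-integrality of $\omega'$ obstructs global reducibility.
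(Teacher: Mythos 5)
Your proposal is correct and follows essentially the same route as the paper: the paper's proof consists of exactly the observation that one reruns the argument of Theorem \ref{thm:connectedsum}, deleting the $\Z$-summand on the $(Y',L',\omega')$-side (i.e.\ eliminating $\theta'$ and every moduli-space count referring to it) and noting that the tensor product of an $\cS$-complex with a mapping cone complex is itself a mapping cone complex via the discussion in Subsection \ref{sec:tensor}. Your additional commentary on persistence of the non-integrality surface, on the absence of reducibles in the mixed moduli spaces, and on how Propositions \ref{prop:comphomotopy1}, \ref{prop:morphchqi} and \ref{prop:lasthomotopy} adapt, supplies detail the paper leaves implicit but does not change the argument.
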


\begin{remark}
	One can generalize the definition of an $\cS$-complex to include mapping cone complexes, the latter being viewed as $\cS$-complexes with the distinguished summand $\Z$ replaced by $0$, and all maps modified accordingly. The notion of morphisms can be similarly generalized, and the theorems in this section can then be stated as homotopy equivalences between $\cS$-complexes in this larger category. $\diamd$
\end{remark}

\begin{remark}
	The naturality in Theorem \ref{thm:connectedsum}, as explained in Subsection \ref{subsec:natcon}, assumes that the cobordisms involved are negative definite pairs. However, for the naturality in the above Theorem \ref{thm:oneadmis}, we allow the cobordism on the side of the admissible links to be of the general sort considered in \cite{KM:YAFT, KM:unknot}. A similar remark holds for the other variations of the connected sum theorem stated below. $\diamd$
\end{remark}

The proof of this result is very similar to that of Theorem \ref{thm:connectedsum}. If $(Y',L',\omega')$ is admissible, then one modifies the proof above in which $(Y',K')$ is a knot by omitting the $\Z$-summand in the associated complex $(\widetilde C_\ast', \widetilde d')$ and all maps that have anything to do with it; in short, the reducible $\theta'$ is eliminated. That the tensor product of an $\cS$-complex and a mapping cone complex is naturally a mapping cone complex follows from the discussion in Subsection \ref{sec:tensor} by simply deleting the $\Z$-summand of one $\cS$-complex.

There is another variation where $(Y,K)$ is also replaced by a based admissible link $(Y,L,\omega)$. The statement in this situation is as follows:

\begin{theorem}  {\emph{\bf{(Connected Sum Theorem for admissible links)}}} \label{thm:connsumadlinks} Let $(Y,L,\omega)$, $(Y',L',\omega')$ be based admissible links.
There is a relatively $\Z/4$-graded chain homotopy equivalence
\[
	\widetilde C^{\omega \cup \omega'}(Y\# Y',L\# L') \simeq  \widetilde C^\omega(Y,L)\otimes \widetilde C^{\omega'}(Y',L')
\]
This equivalence is one of mapping cone complexes, and is natural, up to mapping cone chain homotopies, with respect to split cobordisms.
\end{theorem}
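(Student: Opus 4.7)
The proof adapts that of Theorem \ref{thm:connectedsum} from Section \ref{sec:consum}, with substantial simplification coming from admissibility. By the non-integrality hypothesis, all perturbed critical points on $(Y,L,\omega)$, $(Y',L',\omega')$ and $(Y\#Y',L\#L',\omega\cup\omega')$ are irreducible, so no reducible $\theta$ arises anywhere and the entire apparatus of maps $\delta_1,\delta_2,\Delta_1,\Delta_2$, together with the distinguished $\Z$ summand of the $\cS$-complex, simply disappears. The two complexes reduce to mapping cones of the corresponding $v$-maps, and a morphism $\widetilde\lambda:(\widetilde C,\widetilde d)\to(\widetilde C',\widetilde d')$ is just a pair $(\lambda,\mu)$ with $\lambda d=d'\lambda$ and $d'\mu+\mu d=v'\lambda-\lambda v$.

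I would use the same pair-of-pants cobordisms $(W,S):(Y,L)\sqcup(Y',L')\to (Y\#Y',L\#L')$ and $(W',S'):(Y\#Y',L\#L')\to(Y,L)\sqcup(Y',L')$ as in Section \ref{sec:consum}, now equipped with $\omega\sqcup\omega'$ extended trivially across the 1-handle, together with the paths $\gamma,\gamma',\gamma^\#,\sigma,\sigma',\sigma^\#$ of Figure \ref{fig:connsumpaths}. The morphism $\widetilde\lambda_{(W,S)}$ has $\lambda$-component built from the two maps $\lambda_1,\lambda_2$ of Subsection \ref{CWS} (with the rows and columns indexed by reducibles deleted), and $\mu$-component built analogously from $\mu_1,\mu_2$. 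The morphism $\widetilde\lambda_{(W',S')}$ is defined similarly via the mirrored paths. That these define morphisms of mapping cone complexes is obtained by counting the boundary points of the one-dimensional versions of the relevant moduli spaces, following the arguments of Propositions \ref{CWS-C} and \ref{CWS-p-C} with all reducible-related terms dropped; the relations become the two identities displayed above.

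To show the two compositions are chain homotopic to the identity, respectively to an isomorphism, I would mirror the three-step strategy of Propositions \ref{prop:comphomotopy1}, \ref{prop:morphchqi} and \ref{prop:lasthomotopy}. A neck-stretching family of metrics across the gluing region produces a chain homotopy from $\widetilde\lambda_{(W,S)}\circ\widetilde\lambda_{(W',S')}$ to a morphism $\widetilde\lambda_{(W^\circ,S^\circ,\rho^\#)}$ on the composite cobordism cut down by holonomy along the loop $\rho^\#=\gamma^\#\circ\sigma^\#$. A further metric-stretching across the embedded $(S^1\times D^3,S^1\times D^1)\subset W^\circ$ of Figure \ref{fig:connsumcomposites1}, combined with excision and gluing in $(D^2\times S^2,D^2\times 2\,\text{pts})$, then yields a chain homotopy to the identity morphism on $\widetilde C^{\omega\cup\omega'}(Y\#Y',L\#L')$. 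The reverse composition is treated by stretching along the $(S^3,S^1)$ embedded in the composite of Figure \ref{fig:connsumcomposites2}, producing a chain homotopy to a morphism whose $\lambda$-component is the identity, after which a mapping-cone analogue of Lemma \ref{lemma:smoriso} promotes this to invertibility. Formally, the second composition must then also be chain homotopic to the identity.

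The main obstacle is the Morse--Bott gluing analysis along $(S^1\times S^2, S^1\times 2\,\text{pts})$ in the analog of Proposition \ref{prop:morphchqi}. Because the connected sums are performed at basepoints on $L$ and $L'$, the 1-manifold $\omega\cup\omega'$ can be arranged to lie entirely in the complement of the neck, and the bundle over $(S^1\times D^3,S^1\times D^1)$ is trivial; the identification of the critical set with $S^1$ via traceless and longitudinal holonomies, its Morse--Bott nondegeneracy, and the unobstructedness and index computation of flat extensions over $(S^1\times D^3,S^1\times D^1)$ and $(D^2\times S^2,D^2\times 2\,\text{pts})$ all go through verbatim. The holonomy constraint $H^{\rho^\#}=s$ again picks out the precise fiber on which the required matching takes place, so the gluing argument proceeds without modification. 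Finally, naturality with respect to split cobordisms is handled exactly as in Subsection \ref{subsec:natcon} by interpolating auxiliary data on the glued cobordism, completing the proof.
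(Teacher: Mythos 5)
Your proposal is correct and follows essentially the same approach as the paper. The paper does not spell out a separate proof of Theorem~\ref{thm:connsumadlinks}: after Theorem~\ref{thm:oneadmis} it indicates that one simply omits the $\Z$-summand and all maps touching the reducible, and Theorem~\ref{thm:connsumadlinks} is obtained by applying that reduction on both factors, exactly as you propose. Your identification of the remaining morphism components ($\lambda_1,\lambda_2,\mu_1,\mu_2$), the observation that $\omega\cup\omega'$ avoids the neck so the bundle over $(S^1\times D^3,S^1\times D^1)$ is trivial, and the remark that the Morse--Bott gluing along $(S^1\times S^2, S^1\times 2\text{ pts})$ in the analog of Proposition~\ref{prop:morphchqi} is local to the gluing region and hence unaffected by admissibility of $L,L'$, are all the right points and match what the paper is implicitly asserting.
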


These variations have counterparts in non-singular instanton Floer homology, involving connected sums between homology 3-spheres and 3-manifolds with non-trivial admissible bundles, see e.g. \cite{scadutothesis}.

\subsection{Computing $I^\natural(Y,K)$ and $I^\#(Y,K)$ from the framed complex}\label{sec:naturalsharp}

We may now relate the framed instanton homology $\widetilde I(Y,K)$, or more precisely its underlying chain complex, to Kronheimer and Mrowka's instanton homology groups $I^\natural (Y,K)$ and $I^\#(Y,K)$.  We first consider $I^\natural(Y,K)$. Recall that this group has an absolute $\Z/4$-grading defined in \cite[Section 4.5]{KM:unknot}. 

\begin{theorem}\label{thm:tildeconnsum} 
	Let $(Y,K)$ be a based knot in an integer homology 3-sphere. There is a chain homotopy equivalence $C^\natural (Y,K ) \simeq \widetilde C(Y,K)$, natural up to chain homotopy, and homogeneous with respect to $\Z/4$-gradings. In particular, there is a natural isomorphism
	\[
		I^\natural(Y,K) \cong \widetilde I(Y,K)
	\]
	In the case that $Y$ is the 3-sphere, the isomorphism has degree $\sigma(K) \pmod 4$.
\end{theorem}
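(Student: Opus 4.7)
The plan is to apply Theorem \ref{thm:oneadmis}, the connected-sum theorem for a knot and an admissible link, to the based knot $(Y,K)$ and the admissible pair $(S^3, H, \omega)$ consisting of the Hopf link together with its arc. Since $Y \# S^3 = Y$ and $K \# H$ is the two-component link obtained from $K$ by adjoining a small meridian near the basepoint, we obtain a chain homotopy equivalence of mapping cone complexes
\[
	\widetilde C^\omega(Y, K \# H) \;\simeq\; \widetilde C(Y,K) \otimes_\Z \widetilde C^\omega(S^3, H),
\]
natural, up to mapping cone chain homotopies, with respect to split cobordisms. By definition the left-hand side is the mapping cone of the $v$-map acting on $C^\omega(Y, K \# H) = C^\natural(Y,K)$.

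Next, I would compute $\widetilde C^\omega(S^3, H)$ directly. The traceless character variety for $(S^3, H, \omega)$ parametrizes $\omega$-twisted representations of $\pi_1(S^3 \setminus H) \cong \Z \oplus \Z$ sending each meridian to a traceless element of $SU(2)$; such data consist of a pair of anticommuting unit imaginary quaternions modulo diagonal conjugation, which is a single irreducible conjugacy class. Hence, after a small perturbation, $C^\omega(S^3, H)$ is free of rank one and concentrated in a single $\Z/4$-grading with $d = 0$, and the $v$-map must vanish for grading reasons. Consequently $\widetilde C^\omega(S^3, H)$ is a split mapping cone, representing the base complex $\Z$ in the category of mapping cone complexes.

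Plugging this back in, the right-hand side is, up to homotopy of mapping cone complexes, the image of the $\cS$-complex $\widetilde C(Y,K)$ under the natural inclusion into the mapping cone category: its base chain complex is $\widetilde C(Y,K) \otimes \Z = \widetilde C(Y,K)$, and its mapping cone $v$-component corresponds to the $v$-component of the $\cS$-complex structure. Matching base chain complexes on the two sides of the equivalence then yields the desired chain homotopy equivalence
\[
	C^\natural(Y,K) \;\simeq\; \widetilde C(Y,K)
\]
of $\Z/4$-graded complexes, and naturality with respect to split cobordisms follows from the naturality clause in Theorem \ref{thm:oneadmis}.

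The main obstacle is pinning down the absolute grading shift of $\sigma(K) \pmod 4$ in the case $Y = S^3$. The absolute $\Z/4$-grading on $C^\natural(S^3, K)$ is normalized by the Kronheimer--Mrowka conventions of \cite[Section 4.5]{KM:unknot}, whereas the grading on $\widetilde C(S^3, K)$ is determined here by the reducible $\theta$ via \eqref{eq:absgr}. To compare the two, one chooses a reference irreducible critical point on a standard admissible pair, constructs a singular ASD connection on a cobordism relating it to the reducible $\theta$ on $(S^3, K)$, and computes the mod $4$ residue of $\ind(\sD_A)$ via Lemma \ref{index-triv-limit}; the knot signature $\sigma(K)$ enters explicitly through this index formula. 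A model calculation, for instance for the unknot where $\sigma(U) = 0$, pins down the overall offset and yields the asserted shift.
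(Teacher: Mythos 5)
Your main argument (steps 1--4) is essentially the paper's own proof: apply the connected-sum theorem for a knot and an admissible link with the Hopf link, observe that $C^\omega(S^3,H)$ is free of rank one with vanishing differential and $v$-map, conclude that $\widetilde C^\omega(S^3,H)$ is a ``trivial'' mapping cone, and match base complexes to obtain $C^\natural(Y,K)\simeq \widetilde C(Y,K)$. The extra detail you supply about the traceless character variety of the Hopf link being a single irreducible conjugacy class is an accurate justification of the paper's assertion but not a different route.

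The one place where you depart from the paper is the grading-shift computation. The paper disposes of this in one line by observing that the equivalence is homogeneous, the reducible $\theta$ sits in grading $0$ on the $\widetilde C(Y,K)$ side, and the grading of the corresponding reducible generator in $C^\natural(K)$ was computed to be $\sigma(K)\pmod 4$ by Poudel and Saveliev \cite[Theorem~1]{PS}; since the signature is even, the sign ambiguity is moot. You propose instead to rederive this residue from scratch via an ASD index computation on a cobordism. That is a legitimate alternative in principle, but as written it is only a sketch: Lemma~\ref{index-triv-limit} as stated applies to connections with \emph{reducible} limits at both ends, so comparing the reducible $\theta$ to an \emph{irreducible} reference critical point of $C^\natural$ requires either a variant of that index formula with one irreducible end, or a two-step argument passing through a reducible on the admissible side. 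You would also need to carefully unwind the absolute $\Z/4$-grading conventions from \cite[Section~4.5]{KM:unknot} to make the comparison meaningful, which is precisely the bookkeeping that the Poudel--Saveliev citation short-circuits. Your model check on the unknot is a sensible consistency test but does not by itself determine the dependence on $\sigma(K)$. So the route is plausible but incomplete as presented; the paper's citation is the shorter and cleaner path.
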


The chain homotopy $C^\natural (Y,K ) \simeq \widetilde C(Y,K)$ in the statement of the above theorem is a chain homotopy of $\Z/4$-graded chain complexes. That is to say, we forget the $\cS$-complex structure of $\widetilde C(Y,K)$ given by the endomorphism $\chi$.

\begin{proof}
	Recall from Subsection \ref{subsec:kmgroups} that $C^\natural(Y,K)$ is defined to be $C^\omega(S^3\# Y,H\# K)$ where $H\subset S^3$ is the Hopf link and $\omega$ is a small arc as in Figure \ref{fig:hopf}. We apply Theorem \ref{thm:oneadmis} in this situation to obtain a chain homotopy equivalence
	\begin{equation}
		\widetilde C^\omega(S^3\# Y,H\# K) \simeq \widetilde C^\omega (S^3,H)  \otimes \widetilde C(Y,K). \label{eq:hopfconn}
	\end{equation}
	The complex $C^\omega (S^3,H)$ is free abelian on one generator, with zero differential. The $v$-map, of degree $2$ (mod 4), is necessarily zero. Thus the mapping cone complex $\widetilde C^\omega(S^3,H)$ is free abelian of rank two with zero differential. We may then identity the right side of \eqref{eq:hopfconn} with two copies of $\widetilde C(Y,K)$; it is the mapping cone for the zero map on $\widetilde C(Y,K)$. As the chain homotopy \eqref{eq:hopfconn} is one of {\emph{mapping cone}} complexes, we conclude that 
	\[
		C^\natural (Y,K) = C^\omega(S^3\# Y,H\# K) \simeq \widetilde C(Y,K).
	\]
	Now suppose $Y$ is the 3-sphere. As the established equivalence is homogeneous with respect to gradings, to compute its degree, it suffices to compare the grading of the reducible generator on each side.  By definition, the reducible generator in $\widetilde C(Y,K)$ has grading zero (mod 4). On the other hand, the grading of the reducible in $C^\natural(K)$ is computed by Poudel and Saveliev in \cite[Theorem 1]{PS} to be $\sigma(K)$ (mod 4). (Note that the signature of a knot is always even, so we do not have to determine a sign.)
\end{proof}

Note that under the equivalence of Theorem \ref{thm:tildeconnsum}, the $v$-map on the complex $C^\natural(Y,K)$ corresponds to the map on $\widetilde C(Y,K)= C_\ast \oplus C_{\ast-1} \oplus \Z$ by formula \eqref{eq:vtensor} to the zero map. This recovers a special case of \cite[Proposition 4.6]{xie}.

Next, we observe that Theorem \ref{thm:tildeconnsum} combined with Theorem \ref{thm:connectedsum} recovers the following connected sum theorem for $I^\natural(Y,K)$:
\begin{cor}
	Let $(Y,K)$ and $(Y',K')$ be knots in integer homology 3-spheres. Then over a field there is a natural isomorphism of vector spaces
	\begin{equation}
		I^\natural (Y\# Y', K\# K') \cong I^\natural(Y,K)\otimes I^\natural (Y',L').\label{eq:inaturalconnsumthm}
	\end{equation}
	which preserves the $\Z/4$-gradings.
\end{cor}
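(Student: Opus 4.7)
The plan is to obtain the isomorphism by concatenating the two equivalences from Theorems \ref{thm:tildeconnsum} and \ref{thm:connectedsum} at the chain level, and then applying the Künneth formula. First, by Theorem \ref{thm:connectedsum} there is an $\cS$-chain homotopy equivalence of $\Z/4$-graded $\cS$-complexes
\[
  \widetilde C(Y\#Y',K\#K') \;\simeq\; \widetilde C(Y,K) \otimes \widetilde C(Y',K').
\]
Forgetting the $\cS$-structure (i.e., ignoring the distinguished summand $\Z$ and the map $\chi$) leaves a chain homotopy equivalence of $\Z/4$-graded chain complexes. Theorem \ref{thm:tildeconnsum} provides the remaining identifications $C^\natural(Y,K)\simeq \widetilde C(Y,K)$, $C^\natural(Y',K')\simeq \widetilde C(Y',K')$, and $C^\natural(Y\#Y',K\#K')\simeq \widetilde C(Y\#Y',K\#K')$ as $\Z/4$-graded chain complexes, all natural up to chain homotopy.

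Splicing these four equivalences together yields a natural $\Z/4$-graded chain homotopy equivalence
\[
  C^\natural(Y\#Y',K\#K') \;\simeq\; C^\natural(Y,K) \otimes C^\natural(Y',K').
\]
Passing to homology and invoking the algebraic Künneth theorem, which over a field has no $\mathrm{Tor}$ contribution, then delivers the desired natural isomorphism
\[
  I^\natural(Y\#Y',K\#K') \;\cong\; I^\natural(Y,K)\otimes I^\natural(Y',K').
\]
Naturality with respect to split cobordisms follows because both ingredient equivalences are natural up to (mapping cone, respectively $\cS$-) chain homotopy, and both forms of naturality reduce at the level of homology to honest commutative diagrams.

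The only subtlety is checking that the grading matches under the chain of equivalences. Theorem \ref{thm:tildeconnsum} records that the equivalence $C^\natural(Y,K)\simeq \widetilde C(Y,K)$ is homogeneous of some degree, which in the case $Y=S^3$ is $\sigma(K)\pmod 4$; the same applies to the other two factors. Since the knot signature is additive under connected sum, $\sigma(K\#K')=\sigma(K)+\sigma(K')\pmod 4$, the grading shifts on the two sides agree, and no inconsistency arises when passing to the tensor product. This compatibility — essentially a bookkeeping check once one knows both theorems are grading-homogeneous — is the one step that warrants attention, and it is the place where the signature computation of Poudel–Saveliev used in the proof of Theorem \ref{thm:tildeconnsum} is again implicitly required.
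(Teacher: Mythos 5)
Your argument is exactly the one the paper uses: combine Theorem \ref{thm:tildeconnsum} with Theorem \ref{thm:connectedsum}, pass to homology via the K\"unneth theorem over a field, and verify grading compatibility from additivity of the knot signature. Nothing further to add.
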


Note that, from our viewpoint, the preservation of the $\Z/4$-gradings in \eqref{eq:inaturalconnsumthm} follows from the additivity of the knot signature under connected sums.

We now turn to $I^\#(Y,K)$. Recall that on the $\cS$-complex $\widetilde C_\ast(Y,K) = C_\ast \oplus C_{\ast-1} \oplus \Z$ the map $\chi:\widetilde C_\ast(Y,K)\to \widetilde C_\ast(Y,K)$ defined with respect to this decomposition by
\[
	\chi = \left[ \begin{array}{ccc} 0 & 0 & 0 \\ 1 & 0 & 0 \\ 0 & 0 & 0 \end{array} \right]
\]
is an anti-chain map. Note that $\chi$ sends $C_\ast$ to $C_{\ast-1}$ identically and is otherwise zero. We may form $\text{Cone}(2\chi)$, the mapping cone of $2\chi$ acting on $\widetilde C_\ast(Y,K)$.

\begin{theorem}\label{thm:sharpconnsum} 
	Let $(Y,K)$ be a based knot in an integer homology 3-sphere. There is a chain homotopy equivalence $C^\# (Y,K )\simeq \text{\emph{Cone}}(2\chi)$. This equivalence is natural up to chain homotopy, and homogeneous with respect to $\Z/4$-gradings.
\end{theorem}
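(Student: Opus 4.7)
The strategy parallels that of Theorem \ref{thm:tildeconnsum}. The key topological observation is that the split pair $(Y, K \sqcup H, \omega)$ may be realized as a connected sum $(Y, K) \# (S^3, U \sqcup H, \omega)$ performed at the basepoint $p \in K$ and a point $q \in U$, where $U \subset S^3$ is an auxiliary unknot placed in a ball disjoint from $H$. Indeed, connected sum with the unknotted component $U$ reproduces $K$, while $H$ (together with its arc $\omega$) is carried along as a disjoint component. Applying Theorem \ref{thm:oneadmis} to this decomposition yields a chain homotopy equivalence of mapping cone complexes
\begin{equation}\label{eq:sharpconnsumproof}
	\widetilde{C}^\omega(Y, K \sqcup H) \simeq \widetilde{C}(Y,K) \otimes \widetilde{C}^\omega(S^3, U \sqcup H),
\end{equation}
natural with respect to split cobordisms and homogeneous for the $\mathbb{Z}/4$-grading.

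The crux of the argument is the computation of $\widetilde{C}^\omega(S^3, U \sqcup H)$. An analysis of the traceless character variety of $(S^3, U \sqcup H, \omega)$ shows that the non-integrality condition imposed by $\omega$ pins down the representation on the Hopf meridians inside a fixed maximal torus of $SU(2)$, while the meridian of $U$ contributes a traceless element modulo the residual $U(1)$-stabilizer. After a small generic holonomy perturbation, the complex $C^\omega(S^3, U \sqcup H)$ is a free abelian group of rank two with trivial differential, with its two generators supported in adjacent $\mathbb{Z}/4$-gradings. The essential computation is that the $v$-map associated to the basepoint $q \in U$ acts between these two generators by multiplication by $\pm 2$; the factor of $2$ has the same origin as in Proposition \ref{prop:chainhomotopyexcision}, namely the discrete gluing parameter that arises when enlarging the gauge group on the auxiliary $(S^1\times D^3, S^1\times D^1)$ piece appearing in the gluing analysis along $(S^1\times S^2, S^1\times 2\text{ pts})$.

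Given this description of $\widetilde{C}^\omega(S^3, U \sqcup H)$, the right-hand side of \eqref{eq:sharpconnsumproof} may be rearranged, using the formulas for tensor products with $\cS$-complexes from Subsection \ref{sec:tensor}, and shown to coincide as a mapping cone complex with $\mathrm{Cone}(2\chi : \widetilde{C}(Y,K)\to \widetilde{C}(Y,K))$, viewed itself as the mapping cone of the anti-chain map $2\chi$. On the left-hand side of \eqref{eq:sharpconnsumproof}, the underlying complex of the cone is precisely $C^\#(Y, K)$, and matching the two cone structures yields the desired chain homotopy equivalence $C^\#(Y, K)\simeq \mathrm{Cone}(2\chi)$. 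Naturality with respect to split cobordisms and homogeneity with respect to the $\mathbb{Z}/4$-grading are inherited from the corresponding properties of Theorem \ref{thm:oneadmis}.

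The principal obstacle is thus the explicit verification that the $v$-action on $C^\omega(S^3, U \sqcup H)$ equals multiplication by $\pm 2$ between its two generators: this is a computation of the degree of a cut-down holonomy map on a one-dimensional moduli space of singular instantons on $\mathbb{R}\times(S^3, U\sqcup H)$, which must be carried out along the lines of the gluing analysis used in the proofs of Propositions \ref{prop:morphchqi} and \ref{prop:chainhomotopyexcision}, and it is precisely this factor of $2$ that is responsible for the appearance of $2\chi$ (rather than $\chi$) in the statement of the theorem.
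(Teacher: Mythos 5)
Your high-level strategy matches the paper exactly: realize $(Y, K\sqcup H,\omega)$ as the connected sum of $(Y,K)$ with $(S^3, U_1\sqcup H,\omega)$ at the basepoint on the unknot $U_1$, apply Theorem~\ref{thm:oneadmis} to obtain the equivalence $\widetilde C^\omega(S^3\# Y, H\sqcup K)\simeq \widetilde C^\omega(S^3, H\sqcup U_1)\otimes \widetilde C(Y,K)$, and then compute $\widetilde C^\omega(S^3, H\sqcup U_1)$ explicitly. That is the right outline, and the reduction to the rank-two complex with trivial differential is also correct in spirit.

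However, there is a genuine gap in the central step, and a couple of inaccuracies that feed into it. First, the two generators $\mathbf{v}_\pm$ of $C^\omega(S^3, H\sqcup U_1)$ differ in $\Z/4$-grading by $2$, not by $1$ (``adjacent''); both sit in even gradings, which is what makes the differential vanish for grading reasons. Second, and more importantly, the $v$-map is \emph{not} symmetric ``multiplication by $\pm2$ between the two generators'': as you can see already from the local-coefficient refinement in Subsection~\ref{subsec:rasmussen}, the map is $v(\mathbf{v}_+)=2\mathbf{v}_-$ and $v(\mathbf{v}_-)=0$ (at $T=1$). The one-sidedness matters — it is exactly what makes the tensor product collapse to $\mathrm{Cone}(2\chi)$ rather than some more complicated complex. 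Third, your proposed mechanism for producing the factor of~$2$ — the discrete gluing parameter coming from the enlarged gauge group on $(S^1\times D^3, S^1\times D^1)$, as in the proofs of Propositions~\ref{prop:morphchqi} and~\ref{prop:chainhomotopyexcision} — is not the right source. Those propositions concern the composite cobordism analysis for the connected-sum theorem, not the $v$-map on $I^\#(U_1)$. The paper instead appeals to Remark~\ref{v-maps} to identify $v$ (up to chain homotopy) with the operator $\sigma$ of Kronheimer and Mrowka, and then cites the explicit computation of $\sigma$ on $I^\#(U_1)$ in \cite{KM:unknot} (Section 8.3); this is a calculation on the $2$-sphere traceless character variety of the unknot in the unreduced theory, not a gluing-parameter count. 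Without the correct input for the $v$-map, your argument as written does not close. With it, the remaining rearrangement of the tensor product via Subsection~\ref{sec:tensor} is routine and your conclusion follows.
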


\begin{proof}
	Recall from Subsection \ref{subsec:kmgroups} that $C^\#(Y,K)$ is defined to be $C^\omega(S^3\# Y,H\sqcup K)$ where $H\subset S^3$ is the Hopf link and $\omega$ is a small arc as in Figure \ref{fig:hopf}. We may view $(S^3\# Y,H\sqcup K)$ as the connected sum of $(S^3,H\sqcup U_1)$, a Hopf link with a disjoint unknot (the latter of which contains the basepoint) with the based knot $(Y,K)$. Apply Theorem \ref{thm:oneadmis} to obtain
	\begin{equation}
		\widetilde C^\omega(S^3\# Y,H\sqcup K) \simeq \widetilde C^\omega (S^3,H\sqcup U_1)  \otimes \widetilde C(Y,K).\label{eq:sharpapp}
	\end{equation}
	The complex $C^\#(U_1)=C^\omega (S^3,H\sqcup U_1)$ contains two generators, $\mathbf{v}_+$ and $\mathbf{v}_-$, which differ in degree by $2$ (mod $4$). Indeed, the traceless character variety for $(S^3,H\sqcup U_1)$ is a 2-sphere, and we may perturb the Chern-Simons functional using a standard Morse function for $S^2$ leaving us with two critical points. The differential on $C^\#(U_1)$ is zero for grading reasons, and we have a natural identification between $C^\# ( U_1)$ and its homology $I^\#(U_1)$.
	
	We may then align our notation of generators $\mathbf{v}_+$ and $\mathbf{v}_-$ with \cite{KM:unknot}, 
	where the $v$-map, denoted there by $\sigma$, is computed on $I^\#(U_1)$ as follows 
	(see Remark \ref{v-maps}):
	\[
			v(\mathbf{v}_+)= 2\mathbf{v}_-, \qquad v(\mathbf{v}_-)=0.
	\]
	Having determined $\widetilde C^\omega (S^3,H\sqcup U_1)$ to be the mapping cone of $v$ as above on 
	$\Z\mathbf{v}_+ \oplus \Z\mathbf{v}_-$, using Subsection \ref{sec:tensor} we compute 
	$C^\omega(S^3\# Y,H\sqcup K)\subset \widetilde C^\omega(S^3\# Y,H\sqcup K)$ to be
	\[
		 \Z \mathbf{v}_+ \otimes \widetilde C(Y,K)  \oplus \Z \mathbf{v}_- \otimes \widetilde C(Y,K)
	\]
	with differential $1\otimes \widetilde d \oplus 1\otimes \widetilde d - 2F$ where $F$ sends $\Z\mathbf{v}_+\otimes C_\ast$ to $\Z\mathbf{v}_-\otimes C_{\ast-1}$ identically. This chain complex is clearly the same as $\text{Cone}(-2\chi)$, which is isomorphic to $\text{Cone}(2\chi)$.
\end{proof}

Note that the results above fit together to form an exact triangle:
\begin{equation}\label{eq:naturalsharptriangle}
\xymatrixcolsep{.5cm}
	\xymatrix{
	\cdots  I^\natural(Y,K)  \ar[d] \ar[r] & I^\#(Y,K) \ar[d] \ar[r] &  I^\natural(Y,K)  \ar[d] \ar[r] & I^\natural(Y,K)  \ar[d] \cdots\\
	\cdots  H_\ast(\widetilde C(Y,K)) \ar[r] &  H_\ast(\text{Cone}(2\chi))  \ar[r] &   H_\ast(\widetilde C(Y,K))  \ar[r]^{\;\;\;[2 \epsilon \chi]\;\;\;\;\quad } &  H_{\ast -1}(\widetilde C(Y,K))  \cdots
	}
\end{equation}
Here, the vertical maps are induced by the equivalences of Theorems \ref{thm:tildeconnsum}  and \ref{thm:sharpconnsum}, the bottom horizontal arrows are induced by the short exact sequence for a mapping cone complex, and the top horizontal arrows are defined to commute. There is similar long exact sequence involving $I^\natural(Y,K)$ and $I^\#(Y,K)$ obtained from Kronheimer and Mrowka's unoriented skein exact triangle applied to the situation of Figure \ref{fig:hopftriangle}, see \cite[Section 8.7]{KM:unknot}.

\begin{figure}[t]
\centering
\includegraphics[scale=1]{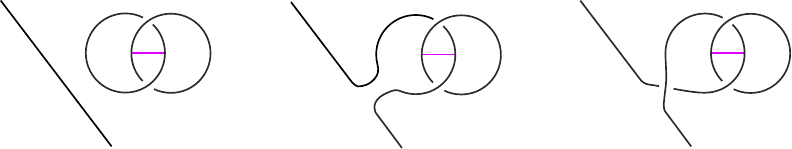}
\caption{The skein triple involving $(S^3\# Y, (H \sqcup U_1)\# K)$ (left) and two instances of $(S^3 \# Y, H\# K)$ (middle and right). The basepoint on $H\sqcup U_1$ lives on the unknot $U_1$.}
\label{fig:hopftriangle}
\end{figure}

The skein triple in Figure \ref{fig:hopftriangle} may be viewed as obtained from the skein triple for $(S^3,H\sqcup U_1)$, $(S^3,H)$ and $(S^3,H)$ (setting $(Y,K)=(S^3,U_1)$ in Figure \ref{fig:hopftriangle}), and then connect summing with $(Y,K)$ at a point on $U_1\subset H\sqcup U_1$ away from the crossing resolutions. The naturality of our equivalences with respect to split cobordisms, together with the computations in \cite[Section 8.7]{KM:unknot}, implies the following:

\begin{prop}
	The exact triangle \eqref{eq:naturalsharptriangle} is isomorphic to the exact triangle obtained from the unoriented skein exact triangle as described above.
\end{prop}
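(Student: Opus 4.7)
The plan is to exploit the naturality of the connected sum equivalences established in Theorems \ref{thm:tildeconnsum} and \ref{thm:sharpconnsum} (which in turn follow from Theorem \ref{thm:oneadmis}) with respect to split cobordisms, and then reduce the assertion to an explicit calculation for a ``universal'' skein triple living on $(S^3, -)$. More precisely, each cobordism appearing in the Kronheimer--Mrowka skein exact triangle for the triple of Figure \ref{fig:hopftriangle} is performed inside a small ball disjoint from $(Y,K)$ and containing the crossing resolutions; hence each such cobordism may be written in the form $(W_0, S_0) \,\#\, \mathrm{id}_{[0,1]\times (Y,K)}$ where $(W_0, S_0)$ is a saddle cobordism between pairs of the form $(S^3, H\sqcup U_1)$ and $(S^3, H)$, and the connect sum is performed along product arcs of basepoints. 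By the naturality clause of Theorem \ref{thm:oneadmis}, the cobordism map induced on $\widetilde{C}^\omega(S^3 \# Y, - \#\text{ or }\sqcup\, K)$ by such a split cobordism is chain homotopy equivalent to the tensor product $\widetilde{\lambda}_{(W_0,S_0)} \otimes \mathrm{id}_{\widetilde{C}(Y,K)}$.

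Second, I would compute the three cobordism maps in the universal triple purely on the $(S^3,-)$ side. The two relevant $\cS$-type complexes are $\widetilde{C}^\omega(S^3,H)$, which is a rank $2$ mapping cone of the zero $v$-map and therefore has trivial differential, and $\widetilde{C}^\omega(S^3, H\sqcup U_1)$, whose underlying data was identified in the proof of Theorem \ref{thm:sharpconnsum}: its $v$-map is $v(\mathbf{v}_+)=2\mathbf{v}_-$, $v(\mathbf{v}_-)=0$. The homology-level maps in the corresponding skein triangle for these pairs are precisely what is computed in \cite[Section 8.7]{KM:unknot}: two of the saddle cobordisms induce the projection (resp.\ inclusion) between $\widetilde{C}^\omega(S^3,H\sqcup U_1)$ and $\widetilde{C}^\omega(S^3,H)$ corresponding to the mapping cone short exact sequence, while the third (the connecting cobordism) realizes multiplication by $2$, reflecting the value $v(\mathbf{v}_+)=2\mathbf{v}_-$.

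Third, I would tensor the resulting universal triangle with $\widetilde{C}(Y,K)$. Under the isomorphisms $\widetilde{C}^\omega(S^3,H) \otimes \widetilde{C}(Y,K) \simeq \widetilde{C}(Y,K)^{\oplus 2}$ and $\widetilde{C}^\omega(S^3,H\sqcup U_1)\otimes \widetilde{C}(Y,K) \simeq \mathrm{Cone}(2\chi)$ (which is exactly the content of the proofs of Theorems \ref{thm:tildeconnsum} and \ref{thm:sharpconnsum}), the two outer maps of the tensored triangle become the standard inclusion/projection of the mapping cone of $2\chi$, while the connecting map becomes multiplication by $2\chi$, which matches the map $[2\epsilon \chi]$ at the bottom of \eqref{eq:naturalsharptriangle} up to the customary sign. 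Combined with the naturality reduction in the first step, this yields the desired isomorphism of exact triangles.

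The principal difficulty will be bookkeeping: making sure that the various identifications—$I^\natural(Y,K)\cong \widetilde{I}(Y,K)$, $I^\#(Y,K)\cong H_\ast(\mathrm{Cone}(2\chi))$, and the identification of the Kronheimer--Mrowka skein cobordism maps with the boundary maps of the algebraic mapping cone—are compatible on the nose (not merely up to sign or up to an automorphism of the cone). The sign conventions for the connecting map and the factor of $2$ are the two places where an error is easiest to make; once the universal computation in $(S^3, H\sqcup U_1)$ is pinned down carefully, the general case follows formally from naturality.
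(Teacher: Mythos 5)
Your proposal matches the paper's argument exactly: reduce via naturality of the connected sum equivalences (Theorem \ref{thm:oneadmis}) to the universal skein triple for $(S^3,H\sqcup U_1)$, $(S^3,H)$, $(S^3,H)$, identify the universal cobordism maps using the computations in \cite[Section 8.7]{KM:unknot} and the determination $v(\mathbf{v}_+)=2\mathbf{v}_-$, and then tensor with $\widetilde C(Y,K)$. The paper gives only a condensed version of this argument in the paragraph preceding the proposition, but the logical content and the ordering of steps are the same as yours.
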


If we work over the coefficient ring $\bF=\Z/2$, then $2\chi=0$, and \eqref{eq:naturalsharptriangle} splits.

\begin{cor}
	Over the field $\bF=\Z/2$ there is a natural $\Z/4$-graded isomorphism 
	\[
		I^\#(Y,K;\bF)_\ast \cong I^\natural(Y,K;\bF)_\ast\oplus I^\natural(Y,K;\bF)_{\ast+2}
	\]
\end{cor}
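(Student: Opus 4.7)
The plan is to directly specialize Theorem \ref{thm:sharpconnsum} to the case of coefficients in $\bF=\Z/2$ and then invoke Theorem \ref{thm:tildeconnsum} to rewrite the answer in terms of $I^\natural$. By Theorem \ref{thm:sharpconnsum}, we have a natural $\Z/4$-graded chain homotopy equivalence $C^\#(Y,K;\bF)\simeq \mathrm{Cone}(2\chi)$, where $\chi$ acts on $\widetilde C(Y,K;\bF)$. Over $\bF$ the coefficient $2$ vanishes, so $2\chi=0$, and the mapping cone of the zero map on a chain complex $\widetilde C$ is canonically isomorphic to a direct sum of two shifted copies of $\widetilde C$ with the direct sum differential.

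Next, I would extract the correct grading shift by unpacking the proof of Theorem \ref{thm:sharpconnsum}. There the complex $C^\#(Y,K)$ is identified with $\Z\mathbf v_+\otimes\widetilde C(Y,K)\oplus\Z\mathbf v_-\otimes\widetilde C(Y,K)$, with $\mathbf v_+$ and $\mathbf v_-$ differing in $\Z/4$-grading by $2$ (as is forced by the relation $v(\mathbf v_+)=2\mathbf v_-$, since $v$ has degree $-2$). After setting the map $2F$ from the $\mathbf v_+$-summand to the $\mathbf v_-$-summand equal to zero over $\bF$, the resulting chain complex is simply the graded direct sum of $\widetilde C(Y,K;\bF)$ with itself, where the second copy is shifted up by $2$ (mod $4$) relative to the first. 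Taking homology therefore gives
\[
	I^\#(Y,K;\bF)_\ast \;\cong\; \widetilde I(Y,K;\bF)_\ast \,\oplus\, \widetilde I(Y,K;\bF)_{\ast+2}
\]
as $\Z/4$-graded $\bF$-vector spaces, naturally in $(Y,K)$.

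Finally, Theorem \ref{thm:tildeconnsum} provides a natural $\Z/4$-graded isomorphism $\widetilde I(Y,K;\bF)\cong I^\natural(Y,K;\bF)$, and substituting this into the above display yields the claimed isomorphism. The only thing that requires any care in the argument is the bookkeeping of the $\Z/4$-grading shift in the mapping cone, which is what produces the index shift by $2$ on the second summand; this is read directly off of the relative grading of $\mathbf v_\pm$ recorded in the proof of Theorem \ref{thm:sharpconnsum}. No further input is needed, as the exact triangle \eqref{eq:naturalsharptriangle} splits for formal reasons once $2\chi$ becomes the zero connecting map.
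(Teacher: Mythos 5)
Your proposal is correct and follows essentially the same route as the paper: the key observation in both is that $2\chi=0$ over $\bF$, so the mapping cone in Theorem \ref{thm:sharpconnsum} (equivalently, the triangle \eqref{eq:naturalsharptriangle}) splits, and the grading shift of $2$ is read off from the relative grading of $\mathbf v_+$ and $\mathbf v_-$. The only stylistic difference is that you work at the chain level by unpacking the proof of Theorem \ref{thm:sharpconnsum}, while the paper simply observes that the connecting map in \eqref{eq:naturalsharptriangle} vanishes; both are valid and amount to the same splitting.
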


This last corollary is essentially \cite[Lemma 7.7]{km-tait}.

\subsection{Local coefficients and the concordance invariant $s^\#(K)$}\label{subsec:rasmussen}

Let $(Y,K)$ be a based knot in an integer homology 3-sphere. Consider the ring 
\[
	\mathscr{T}_\Q= \sT\otimes \Q = \Q[T^{\pm 1}]
\]
of Remark \ref{rem:tring} tensored by $\Q$, so that $\widetilde C_\ast(Y,K;\Delta_{\mathscr{T}_\Q})$ is a $\Z/4$-graded $\cS$-complex over $\mathscr{T}_\Q$. Recall that to each critical point $[B]$ we assign the module $T^{\text{hol}_K(B)}\mathscr{T}_\Q$. A variation of our connected sum theorem with local coefficients and one admissible link implies the following chain homotopy equivalence of $\Z/4$-graded mapping cone complexes over $\mathscr{T}_\Q$:
\begin{equation*}
	\widetilde C_\ast^\#(Y,K;\Delta_{\sT_\Q}) \simeq \widetilde C^\omega_\ast(S^3,H\sqcup U_1;\Delta_{\sT_\Q}) \otimes_{\mathscr{T}_\Q}    \widetilde C_\ast(Y,K;\Delta_{\mathscr{T}_\Q})  \label{eq:loccoeffrasconn}
\end{equation*}
As in Subsection \ref{sec:naturalsharp}, the link $(S^3,H\cup U_1)$ has its base point on $U_1$. The local coefficient system on $(S^3,H\cup U_1)$ is defined just as for $(Y,K)$ but using only $U_1$.

Thus we are in the situation of \eqref{eq:sharpapp}, but with local coefficients. Still we have that $C^\omega_\ast(S^3,H\sqcup U_1;\Delta_{\sT_\Q})$ is isomorphic to a rank 2 module $\sT_\Q\mathbf{v}_+ \oplus \sT_\Q\mathbf{v}_-$ with trivial differential. However, the $v$-map is different here: it is determined by
	\[
			v(\mathbf{v}_+)= 2\mathbf{v}_-, \qquad v(\mathbf{v}_-)=(2T^2+2T^{-2}-4)\mathbf{v}_+.
	\]
	This follows from the computation of $p(u)$ and $q(u)$ in the proof of Proposition 4.1 of \cite{km-rasmussen}. For us, $u=T$, and $v(\mathbf{v}_+)=q(u)\mathbf{v}_-$ and $v(\mathbf{v}_-)=p(u)\mathbf{v}_+$. This leads to the following description of $C_\ast^\#(Y,K;\Delta_{\sT_\Q})$: it is chain homotopy equivalent to the complex
	\begin{equation}\label{eq:sharplocalconnsumchain}
		\left(\widetilde C(Y,K;\Delta_{\sT_\Q})_\ast \oplus \widetilde C(Y,K;\Delta_{\sT_\Q})_{\ast+2},\;  \left[\begin{array}{cc} \widetilde d  & (2T^2+2T^{-2}-4)\chi \\ 2\chi & \widetilde d  \end{array} \right] \right)
	\end{equation}
	where $\chi$ is as defined in the previous subsection. Note that if we set $T=1$ we obtain the mapping cone of $2\chi$, as expected.
	
	 The following was proved in \cite{km-rasmussen} for knots in the 3-sphere.

	\begin{prop}\label{prop:unreducedrank}
		For $(S^3,K)$ any based knot in the $3$-sphere, $I_\ast^\#(S^3,K;\Delta_{\sT})$ has rank 2 as a module over $\sT$, with generators in gradings which differ by $2$ (mod $4$).
	\end{prop}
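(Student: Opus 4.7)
Our starting point is the identification of $C^\#_*(Y,K;\Delta_{\sT_\Q})$, up to chain homotopy equivalence, with the complex $(\widetilde V \oplus \widetilde V[2], D)$ of~\eqref{eq:sharplocalconnsumchain}, where $\widetilde V = \widetilde C(Y,K;\Delta_{\sT_\Q})$ and
\[
  D = \begin{pmatrix} \widetilde d & 2(T-T^{-1})^2 \chi \\ 2\chi & \widetilde d \end{pmatrix}.
\]
Set $s = T - T^{-1}$ and $K = \Q(T)$, so $s$ is a unit in $K$. The strategy is to diagonalize $D$ over $K$, bound the rank of each resulting summand by $1$, use the Euler characteristic to force equality, and then separate the two $\Z/4$-gradings via an algebraic symmetry exchanging the summands.

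Conjugation by the grading-preserving change of basis $\diag(s, 1)$ puts $D$ into the symmetric form
\[
  D' = \begin{pmatrix} \widetilde d & 2s\chi \\ 2s\chi & \widetilde d \end{pmatrix},
\]
and a further conjugation by the Hadamard matrix $\bigl(\begin{smallmatrix} 1 & 1 \\ 1 & -1 \end{smallmatrix}\bigr)$ diagonalizes $D'$ to yield a $\Z/2$-graded chain equivalence
\[
  C^\# \otimes_{\sT_\Q} K \;\simeq\; (\widetilde V_K, \widetilde d + 2s\chi) \oplus (\widetilde V_K, \widetilde d - 2s\chi).
\]
The Hadamard transform mixes the two summands (which sit in $\Z/4$-gradings differing by $2$), so this decomposition is only $\Z/2$-graded. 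For any $c \in K^*$, the complex $(\widetilde V_K, \widetilde d + c\chi)$ is the quotient $\hrC/(x-c)\hrC$, and the short exact sequence $0 \to \hrC \xrightarrow{x-c} \hrC \to \hrC/(x-c) \to 0$ induces a long exact sequence computing its homology from the $K[x]$-module structure of $\hrI$. By Corollary~\ref{localization}, $\brI \cong K[\![x^{-1},x]$ as a $K[x]$-module, on which $(x-c)$ is a unit; combined with the exact triangle~\eqref{equiv-triangle} relating $\crI$, $\hrI$, and $\brI$, this yields $\dim_K H(\widetilde V_K, \widetilde d + c\chi) = 1$ and hence the upper bound $\dim_K I^\#(Y,K;\Delta_\sT) \otimes_\sT K \leqslant 2$.

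The $\Z/2$-graded Euler characteristic of $\widetilde V$ is $\chi(\widetilde V) = 1$ (the $C$ and $C[1]$ factors cancel, leaving only the distinguished $\sT$-summand in grading $0$), so $\chi(C^\# \otimes K) = 2\chi(\widetilde V \otimes K) = 2$; combined with the previous upper bound, $\dim_K I^\# \otimes K = 2$ with all generators in even $\Z/4$-parity. To separate the two even gradings, consider
\[
  \sigma' = \begin{pmatrix} 0 & s \\ s^{-1} & 0 \end{pmatrix}
\]
acting on $C^\# \otimes_{\sT_\Q} \sT_\Q[s^{-1}]$. Writing $D = \widetilde d \otimes I + \chi \otimes N$ with $N = \bigl(\begin{smallmatrix} 0 & 2s^2 \\ 2 & 0 \end{smallmatrix}\bigr)$ and $\sigma' = 1 \otimes M$ with $M = \bigl(\begin{smallmatrix} 0 & s \\ s^{-1} & 0 \end{smallmatrix}\bigr)$, one checks $MN = NM = 2sI$, so $\sigma' D - D \sigma' = \chi \otimes (MN - NM) = 0$ and $\sigma'$ is a chain map of $\Z/4$-degree $-2$ (since it swaps the two summands of $C^\#$), with $(\sigma')^2 = \mathrm{id}$. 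Hence $\sigma'$ induces an isomorphism $I^\#_n \otimes K \xrightarrow{\cong} I^\#_{n+2} \otimes K$ for every $n$; combined with the even-parity conclusion, this forces $\dim_K I^\#_0 \otimes K = \dim_K I^\#_2 \otimes K = 1$ with the odd gradings vanishing, and the generators therefore differ by $2 \pmod 4$. The main technical obstacle is the second-paragraph claim that $\dim_K H(\widetilde V_K, \widetilde d + c\chi) = 1$: the generic case $c \notin \spec(v|_{C \otimes K})$ follows from a clean Tor computation via the structural short exact sequence $0 \to K[x] \to \fhrC \to C_v \to 0$, while the case where $c = \pm 2s$ lies in the spectrum requires more refined bookkeeping of $(x-c)$-torsion in $\hrI$ using the triangle with $\crI$ and $\brI$.
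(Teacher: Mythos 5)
Your approach is genuinely different from the paper's. The paper's proof is a short Gaussian-elimination style argument applied directly to the mapping cone complex~\eqref{eq:sharplocalconnsumchain}: over the field of fractions, since $2$ and $2T^2 + 2T^{-2} - 4$ become units, the subcomplex generated by $C_\ast \oplus C_{\ast+2}$ together with its $D$-image is acyclic, and the quotient is visibly $\sT_\Q \oplus \sT_\Q[2]$ sitting in gradings $0$ and $2$ with zero differential; both the rank and the grading statement drop out at once. You instead change basis (note the scaling matrix should be $\diag(1,s)$ or $\diag(s^{-1},1)$, not $\diag(s,1)$, for the form to symmetrize, though this is cosmetic), Hadamard-diagonalize the differential into two pieces identified with $\hrC/(x \mp 2s)\hrC$, and then bring in the equivariant machinery of Section~\ref{sec:equivtheories} -- the localization $\brI \cong K[\![x^{-1},x]$ of Corollary~\ref{localization} and the exact triangle -- to bound the rank, with Euler characteristics and an explicit involutive chain symmetry $\sigma'$ to separate the two gradings. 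Conceptually, your route has a virtue: it makes visible the fact that this proposition is really a shadow of the rank-one statement about $\hrI$ over $\sT[x]$, and your $\sigma'$ gives a structural reason for the grading dichotomy, rather than reading it off an explicit quotient.

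That said, you have correctly flagged the gap: the inequality $\dim_K H(\hrC/(x-c)) \leqslant 1$ for $c = \pm 2s$ is the load-bearing step, and it requires knowing that $\hrI \otimes K$ has $K[x]$-rank one \emph{and} no $(x \mp 2s)$-torsion; your last sentence acknowledges this but does not supply it. (In fact, the rank-one claim follows from the local equivalence classification over a field in Section~\ref{sec:localequiv}, but the torsion claim would need an argument.) It is worth noting that the paper's one-paragraph proof carries a cognate burden that is likewise left implicit: for the subcomplex $A + D(A)$, $A = C \oplus C[2]$, to actually be acyclic one needs $D|_A$ injective, and a short computation shows this amounts to invertibility of the $2\times 2$ operator matrix $\bigl(\begin{smallmatrix} v & P \\ 2 & v \end{smallmatrix}\bigr)$ on $C \otimes K$, i.e.\ that $2P$ is not an eigenvalue of $v^2$ -- a genuinely nontrivial fact, not a formal consequence of $2$ and $P$ being units. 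So neither argument, as written, resolves the same coincidence; the paper also cites~\cite{km-rasmussen} for an independent proof.
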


	We remark that our grading convention in this case is different from the one of \cite{km-rasmussen} by a shift. The two generators for  $I^\#(S^3,K;\Delta_{\sT_\Q})$ are typically denoted $\mathbf{z}_+$ and $\mathbf{z_-}$, and have gradings $1$ and $-1$ (mod $4$) in the convention of \cite{km-rasmussen}. This structure is exploited in \cite{km-rasmussen} to define a concordance invariant $s^\#(K)$ for knots in $S^3$.
	
	The construction of $s^\#(K)$ is as follows. Choose a surface cobordism $S:U_1\to K$ from the unknot $U_1$ to $K$ with a path between basepoints. This induces a map
	\[
		I^\#(S;\Delta_{\sT_\Q})':	I^\#(U_1;\Delta_{\sT_\Q})' \longrightarrow I^\#(K;\Delta_{\sT_\Q})'
	\]
	where the superscript in $I^\#(K;\Delta_{\sT_\Q})'$ indicates that we mod out by torsion. Then there are elements $\sigma_\pm(S)\in \sT_\Q$ such that we have $I^\#(S;\Delta_{\sT_\Q})'(\mathbf{v}_\pm)=\sigma_\pm(S)\mathbf{z}_\pm$ if $g(S)$ is even, and otherwise $I^\#(S;\Delta_{\sT_\Q})'(\mathbf{v}_\pm)=\sigma_\pm(S)\mathbf{z}_\mp$. Pass to the local ring of $\sT_\Q$ at $T=1$, and let $\lambda=T-T^{-1}$. Then there are unique natural numbers $m^\#_\pm(S)$ such that in this local ring $\sigma_\pm(S)$ is up to a unit equal to $\smash{\lambda^{m^\#_\pm(S)}}$. Finally,
	\[
		s^\#(K) := 2g(S) - \frac{1}{2}(m^\#_+(S) + m^\#_-(S)).
	\]
	In particular, $s^\#(K)$ is determined by the cobordism map $I^\#(S;\Delta_{\sT_\Q})$. Now suppose $([0,1]\times S^3,S)$ is a negative definite pair. From the naturality of our connected sum theorem with respect to split cobordisms, the map $I^\#(S;\Delta_{\sT_\Q})$ is induced by the element $(0,\Delta_2(1),1)\oplus (0,\Delta_2(1),1)$ in the chain complex \eqref{eq:sharplocalconnsumchain}. We summarize:

\begin{prop}
	Let $(Y,K)$ be a based knot in an integer homology 3-sphere. Then there is a natural chain homotopy equivalence from $C_\ast^\#(Y,K;\Delta_{\sT_\Q})$ to the complex \eqref{eq:sharplocalconnsumchain}. If $Y=S^3$ and there is a surface cobordism $S:U_1\to K$ in $[0,1]\times S^3$ with negative definite branched cover, then the concordance invariant $s^\#(K)$ is determined by the $\cS$-complex $\widetilde C_\ast(K;\Delta_{\sT_\Q})$ and the element $\Delta_2(1)$ therein induced by $S$.
\end{prop}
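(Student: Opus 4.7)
\medskip
\noindent\textbf{Proof proposal.}
The first statement largely formalizes the discussion preceding the proposition. The plan is to apply the local-coefficient variant of Theorem \ref{thm:oneadmis} (whose proof is identical to that of Theorem \ref{thm:connectedsumlocceoff}, deleting the $\Z$-summand on the admissible-link side) to the decomposition $(S^3\# Y, H\sqcup K) \simeq (S^3, H\sqcup U_1)\#(Y,K)$ with connected sum taken at a point of $U_1$ and the basepoint of $K$. This gives a chain homotopy equivalence
\[
\widetilde C^\#(Y,K;\Delta_{\sT_\Q}) \;\simeq\; \widetilde C^\omega(S^3,H\sqcup U_1;\Delta_{\sT_\Q})\otimes_{\sT_\Q}\widetilde C(Y,K;\Delta_{\sT_\Q}),
\]
natural up to chain homotopy with respect to split cobordisms. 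The factor $\widetilde C^\omega(S^3,H\sqcup U_1;\Delta_{\sT_\Q})$ is the mapping cone of $v$ acting on $\sT_\Q\mathbf{v}_+\oplus \sT_\Q\mathbf{v}_-$, and the values $v(\mathbf{v}_+)=(2T^2+2T^{-2}-4)\mathbf{v}_-$ (wait, read off correctly below) are dictated by the $p(u),q(u)$ computed in the proof of \cite[Proposition~4.1]{km-rasmussen}, namely $v(\mathbf{v}_+)=2\mathbf{v}_-$ and $v(\mathbf{v}_-)=(2T^2+2T^{-2}-4)\mathbf{v}_+$. Expanding the tensor product using the formulas of Subsection \ref{sec:tensor}, with the $\Z$-summand absent on the Hopf-link side, yields exactly the complex \eqref{eq:sharplocalconnsumchain}.

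For the second statement, observe that the surface cobordism $S:U_1\to K$ in $[0,1]\times S^3$, when disjoint-union-ed with the product cobordism on $H$ (and equipped with an arc on $U_1$ connecting basepoints), realizes the cobordism of pairs inducing $I^\#(S;\Delta_{\sT_\Q})$. Under the connected-sum description $(S^3,L\sqcup H)=(S^3,L)\#(S^3,H\sqcup U_1)$, this is a split cobordism, with the $(S^3,H\sqcup U_1)$-factor traversed by the identity. The naturality clause of Theorem \ref{thm:oneadmis} (with local coefficients) then identifies the induced map, under the equivalence of the first paragraph, with
\[
\mathrm{id}_{\widetilde C^\omega(S^3,H\sqcup U_1;\Delta_{\sT_\Q})}\otimes\widetilde\lambda_{([0,1]\times S^3,S)}.
\]
Here we implicitly use that the hypothesis on the negative-definite branched double cover of $S$ forces $([0,1]\times S^3,S)$ to be a negative definite pair in the sense of Definition \ref{def:negdef}, so that the $\cS$-morphism $\widetilde\lambda_{([0,1]\times S^3,S)}$ is defined within our framework.

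Since the unknot has a unique, reducible, traceless character, we have $\widetilde C(U_1;\Delta_{\sT_\Q})=\sT_\Q$ concentrated in the distinguished summand, so by Definition \ref{def:morphism} the morphism $\widetilde\lambda_{([0,1]\times S^3,S)}$ is completely determined by its value on $1\in \sT_\Q$, which is $(0,\Delta_2(1),1)\in C\oplus C_{*-1}\oplus \sT_\Q$. Consequently the map $I^\#(S;\Delta_{\sT_\Q})$ sends each of $\mathbf{v}_\pm$ to the corresponding copy of $(0,\Delta_2(1),1)$ in the two factors of $\widetilde C(K;\Delta_{\sT_\Q})$ inside \eqref{eq:sharplocalconnsumchain}. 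Passing to the quotient by torsion, the elements $\sigma_\pm(S)$, hence the natural numbers $m^\#_\pm(S)$ and the invariant $s^\#(K)=2g(S)-\tfrac12(m^\#_+(S)+m^\#_-(S))$, are recoverable from the $\cS$-complex structure on $\widetilde C(K;\Delta_{\sT_\Q})$ together with the single element $\Delta_2(1)$.

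The main technical obstacle is verifying the naturality of the connected-sum equivalence in the present local-coefficient, split-cobordism setting; this amounts to running the argument of Subsection \ref{subsec:natcon} with local coefficients, which is routine because the reducible instantons producing the gluing formula in Proposition \ref{prop:morphchqi} all have $\kappa(A)=\nu(A)=0$ and so contribute trivially in the $\Delta$-weights. The remaining identifications (the shape of \eqref{eq:sharplocalconnsumchain} and the triviality of $\widetilde C(U_1;\Delta_{\sT_\Q})$) are immediate, and Proposition \ref{prop:unreducedrank} guarantees that the torsion quotient on which $s^\#(K)$ is computed is two-dimensional, so no information is lost.
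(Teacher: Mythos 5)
Your proof is correct and mirrors the paper's own argument: apply the local-coefficient, one-admissible-link variant of the connected sum theorem to $(S^3,H\sqcup K)=(S^3,H\sqcup U_1)\#(Y,K)$, read off the $v$-map on the Hopf-link factor from \cite[Proposition~4.1]{km-rasmussen} to obtain \eqref{eq:sharplocalconnsumchain}, and use naturality under the split cobordism $S\sqcup([0,1]\times H)$ together with the observation that $\widetilde C(U_1;\Delta_{\sT_\Q})=\sT_\Q$ forces $\widetilde\lambda_{([0,1]\times S^3,S)}(1)=(0,\Delta_2(1),1)$. The only blemish is the stray self-correcting aside about the $v$-map values, which should simply be deleted.
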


\begin{remark}\label{rmk:cobremove}
	Upon developing our theory for more general cobordism maps, we expect that the negative definite cobordism condition on $S$ can be removed, and one can obtain an interpretation of $s^\#(K)$ in terms of the $\cS$-chain homotopy type of $\widetilde C_\ast(K;\Delta_{\mathscr{T}_\Q})$ for an arbitrary knot $K$. $\diamd$
\end{remark}

\subsection{Instanton homology for strongly marked webs}
\newcommand{\web}{L}

In \cite{km-tait}, Kronheimer and Mrowka defined a variation of singular instanton homology for webs in 3-manifolds. A {\emph{web}} in a closed, oriented and connected 3-manifold $Y$ is an embedded trivalent graph $\web\subset Y$. Let $\omega\subset Y$ be an embedded unoriented 1-manifold which may intersect the edges of $\web$ transversely, but misses the vertices of $L$. We say a web is {\emph{admissible}} if it satisfies the non-integrality condition stated above for admissible links, in which the link $L$ is replaced by a web.

Given a web $(Y,K)$, there is an associated {\emph{bifold}}, denoted $\check{Y}$, which is an orbifold whose underlying space is $Y$. The orbifold $\check{Y}$ has points with isotropy $\Z/2$ given by the edges of $K$, and points with isotropy the Klein-four group $V_4$ given by the vertices of $K$; all other points in the orbifold have trivial isotropy. In \cite{km-tait}, the authors pass freely between the web $(Y,\web)$ and its associated bifold $\check{Y}$.

There is a notion of {\emph{marking data}} $\mu$ for a web $(Y,\web)$, which consists of a pair $(U_\mu,E_\mu)$ where $U_\mu\subset Y$ is any subset and $E_\mu \to U_\mu \setminus \web$ is any $SO(3)$ bundle. When $\mu$ is {\emph{strong}}, Kronheimer and Mrowka define the instanton homology
\[
	J(\check{Y};\mu) =J(Y,\web;\mu)
\]
which is a vector space over $\bF=\Z/2$. This is constructed using $SO(3)$ singular instanton gauge theory. The marking data specifies a region for which we only allow determinant-1 gauge transformations. When $(Y,\web,\omega)$ is an admissible web, the marking data is strong if it is all of $Y$. In particular, when $\web$ also has no vertices, in this case
\begin{equation}
	J(\check{Y};\mu)  = I^\omega(Y,\web;\bF)\label{eq:jgroup}
\end{equation}
That is to say, in this case $(\web,\omega)$ is an admissible link, and we recover the instanton homology for admissible links with $\bF$-coefficients. We will write $(C(\check{Y};\mu),d)$ for the chain complex that computes the $\bF$-vector space $J(\check{Y};\mu)$. In general, this is not graded, but see \cite[Section 8.4]{km-tait}. Note in \eqref{eq:jgroup} that $\omega \subset Y$ is determined by the marking data $\mu$. More generally, $(C(\check{Y};\mu),d)$ is defined with coefficients in any ring with characteristic two.

To carry the construction of the map $v$ from Subsection \ref{sec:vmap} over to this setting, we must address the issue of bubbling. For context, we briefly recall why $J(\check{Y};\mu)$ is not defined with general coefficient rings, as explained in \cite[Section 3.3]{km-tait}. In describing $d^2$, we consider the ends of 1-dimensional moduli spaces $\breve{M}(\alpha_1,\alpha_2)_1$. The ends of this moduli space are as before, unless $\alpha_1=\alpha_2$. In this case, in addition to ends of the form $[0,\infty)\times \breve{M}(\alpha_1,\beta)_0 \times  \breve{M}(\beta,\alpha_1)_0$, there is an end of the form
\begin{equation}
	[0,\infty) \times V\times  V_4 \label{eq:webbubbles}
\end{equation}
where $V\subset L$ is the subset of vertices of our web. This end represents bubbling at the vertices of the web, a phenomenon which is absent in the case for links. However, because the Klein-four group $V_4$ has 4 elements, the relation $d^2=0$ holds if we work over any ring of characteristic two, for example.

Now choose a basepoint $p\in L$ away from the vertices. We have a holonomy map $h_{\alpha_1\alpha_2}:\breve{M}(\alpha_1,\alpha_2)_1\to S^1$ defined as before. We must address the possibility of bubbling, represented by the end \eqref{eq:webbubbles}. A connection class on this end with $t\in [0,\infty)$ large is obtained by gluing, along a vertex in $L$, an instanton on $\R^4/V_4$ to the flat connection on $\R\times \check{Y}$ which is the pull-back of $\alpha_1$. The key point is that because our basepoint $p\in L$ is away from the vertices, the holonomy of any such glued instanton is close to the holonomy of the flat connection on $\R\times \check{Y}$ determined by $\alpha_1$, the latter of which is trivial.

In conclusion, we may modify our holonomy map to a map $H_{\alpha_1,\alpha_2}$ just as in Subsection \ref{sec:vmap}, without making any modifications along any end where bubbling occurs. Then our observation from the previous paragraph implies that the cut-down moduli space $\{[A]\in\breve{M}(\alpha_1,\alpha_2)_1: H_{\alpha_1,\alpha_2}([A])=h\}$ for a generic $h\in S^1\setminus \{1\}$ is, as before, a finite set of points. We may then define the endomorphism $v$ on $C(\check{Y};\mu)$. Proposition \ref{prop:vmap2} continues to hold in this setting, and we may form the mapping cone complex of $v$:
\begin{equation}\label{eq:mappingconecxweb}
	\widetilde C(Y,L;\mu) := C(\check{Y};\mu) \oplus C(\check{Y};\mu), \hspace{1cm} \widetilde d = \left[\begin{array}{cc} d & 0 \\ v & d \end{array}\right]
\end{equation}
We have the following variation of Theorem \ref{thm:connectedsum} when one of the based knots is replaced by a strongly marked web; it generalizes Theorem \ref{thm:oneadmis} over $\bF$.

\begin{theorem}  {\emph{\bf{(Connected Sum Theorem for a knot and a strongly marked web)}}} \label{thm:oneweb} Let $(Y,K)$ be a based knot in an integer homology 3-sphere and $(Y',L')$ a based web with strong marking data $\mu$ containing the basepoint of $L'$. Let $\mu^\#$ be marking data on the connected sum formed by connect summing the marking data which is all of $Y$ with $\mu$. There is a chain homotopy equivalence of chain complexes over $\bF$:
\[
	\widetilde C(Y\# Y',K\# L';\mu^\#) \simeq  \widetilde C(Y,K;\bF)\otimes \widetilde C(Y',L';\mu)
\]
 Furthermore, the tensor product is naturally isomorphic to a mapping cone complex, making the chain homotopy equivalence one of mapping cone complexes. 
The equivalence is natural, up to mapping cone chain homotopies, with respect to split cobordisms.
\end{theorem}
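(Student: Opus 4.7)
The plan is to follow the proof of Theorem \ref{thm:connectedsum} as adapted in Theorem \ref{thm:oneadmis}, working throughout over a ring of characteristic two so that $d^2=0$ holds in the presence of bubbling at web vertices. First I would introduce the standard connected-sum cobordism of pairs $(W,S):(Y,K)\sqcup (Y',L')\to (Y\#Y',K\#L')$, with a pair-of-pants cobordism of webs $S$ built from $K$ and $L'$ by attaching a 2-dimensional 1-handle along the basepoints (which lie on edges, away from the vertices of $L'$). The connect-sum marking data $\mu^\#$ is the natural extension of the all-of-$Y$ marking on the $(Y,K)$ side together with $\mu$ on the $(Y',L')$ side; in particular the marking on the $(Y,K)$ side has no reducibles obstructed, while on the web side there are no reducibles at all. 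Picking generic auxiliary data, I would define a morphism
\[
  \widetilde\lambda_{(W,S)}:\widetilde C(Y,K;\bF)\otimes\widetilde C(Y',L';\mu)\longrightarrow \widetilde C(Y\#Y',K\#L';\mu^\#)
\]
by the exact same formulas and Y-shaped paths $\gamma,\gamma',\gamma^\#$ in $S$ as in Subsection \ref{CWS}, except that all components involving $\Delta_2'$ or $\delta_2'$ are dropped (the web side carries no reducible). Because the tensor product of an $\cS$-complex with a mapping cone is naturally a mapping cone (apply the discussion in Subsection \ref{sec:tensor}, deleting the distinguished $\Z$-summand on the web side), this is exactly a morphism of mapping-cone complexes. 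The chain relations of Proposition \ref{CWS-C} reduce in this setting to their mapping-cone analogues, proved by exactly the same boundary-counting arguments on 1-dimensional moduli spaces $M^+$, with the single observation that the $\R^4/V_4$-bubble ends at web vertices as in \eqref{eq:webbubbles} make no contribution to any of our maps because the paths $\gamma,\gamma',\gamma^\#$ and the basepoints are chosen away from the vertices, and because over $\bF$ the Klein-four factor $V_4$ contributes zero anyway.

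Next I would define $\widetilde\lambda_{(W',S')}$ in the reverse direction exactly as in Subsection \ref{CWS-p}, and verify it is a morphism of mapping-cone complexes by the same reasoning. The two compositions are then analyzed as in Subsection \ref{comp}. For the composition over the web connect sum, a family of metrics along which we excise $(S^1\times D^3,S^1\times D^1)$ and reglue $(D^2\times S^2,D^2\times\text{2 pts})$ gives an $\cS$-chain homotopy to a map $\widetilde\lambda^+$. I would run the Morse–Bott gluing analysis of Proposition \ref{prop:morphchqi} verbatim: the critical set $\fC$ on $(S^1\times S^2,S^1\times\text{2 pts})$ is still identified with $S^1$, is Morse–Bott non-degenerate, and the reducibles on $(S^1\times D^3,S^1\times D^1)$ form the only contributions because the branched cover is negative definite; the holonomy constraint along $\rho^\#$ still cuts out two points in $\fC$ in an orientation-preserving manner (here using that the loop avoids all vertices), yielding the desired identification with the identity cobordism map. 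The other composition is handled as in Proposition \ref{prop:lasthomotopy}, using the embedded $(S^3,S^1)$ of Figure \ref{fig:connsumcomposites2}, which can be chosen away from every vertex since $L'$ has no vertices in the prescribed local picture on the $(Y,K)$ side.

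The two compositions being $\cS$-chain homotopic to an identity and an isomorphism respectively (Lemma \ref{lemma:smoriso} applies without modification to the mapping-cone setting) gives the stated chain homotopy equivalence. Naturality with respect to split cobordisms follows by the argument of Subsection \ref{subsec:natcon}: the two ways to glue the auxiliary cobordism data produce the same topological cobordism with broken-metric interpolating family, yielding an $\cS$-chain homotopy between the two compositions in the commuting square.

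The main obstacle will be confirming that the Morse–Bott gluing analysis in Proposition \ref{prop:morphchqi} genuinely goes through without corrections from the webs, i.e. that bubbling at vertices of $L'$ does not contribute to any of the cut-down moduli spaces of dimension $\leqslant 1$ appearing in the definitions of $\widetilde\lambda_{(W,S)}$, $\widetilde\lambda_{(W',S')}$ and their composition chain homotopies. This is where the standing assumption that the basepoint of $L'$ (and hence the connected-sum locus and the chosen arcs $\gamma,\gamma',\gamma^\#,\rho,\rho',\rho^\#$) lie in the interior of edges rather than at vertices is essential; combined with working over $\bF$, this ensures all arguments from Section \ref{sec:consum} transfer.
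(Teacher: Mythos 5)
Your proposal is correct and follows exactly the route the paper intends: the paper states Theorem \ref{thm:oneweb} without a written proof, referring back to the proof strategy of Theorems \ref{thm:connectedsum} and \ref{thm:oneadmis}, and the only genuinely new content — that the $\R^4/V_4$ bubble ends at web vertices do not disturb the moduli space counts, because the basepoints and the holonomy arcs $\gamma,\gamma',\gamma^\#,\rho,\rho',\rho^\#$ avoid vertices and the coefficient ring has characteristic two — is precisely what the paragraph preceding the theorem in the paper supplies. Your proposal in fact spells out what the paper leaves implicit, including the correct observation that the Morse--Bott gluing in Proposition \ref{prop:morphchqi} and the metric-stretching argument of Proposition \ref{prop:lasthomotopy} go through unchanged since the surgery loci lie in the knotted part near the basepoint, away from vertices of $L'$.
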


We may also consider the case of a connected sum between two strongly marked webs. The following generalizes Theorem \ref{thm:connsumadlinks} over $\bF$.

\begin{theorem}  {\emph{\bf{(Connected Sum Theorem for strongly marked webs)}}}  Let $(Y,L)$, $(Y',L')$ be based webs with strong marking data $\mu$, $\mu'$ containing the basepoints of $L$, $L'$, respectively. There is a chain homotopy equivalence of mapping cone chain complexes over $\bF$:
\[
	\widetilde C(Y\# Y',L\# L';\mu\cup \mu') \simeq  \widetilde C(Y,L;\mu)\otimes \widetilde C(Y',L';\mu').
\]
Here $\mu\cup \mu'$ is marking data obtained from gluing $\mu$ and $\mu'$ on the connected sum.
\end{theorem}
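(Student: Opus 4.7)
The plan is to mimic the proof of Theorem \ref{thm:connectedsum} given in Section \ref{sec:consum}, adapted to the setting of strongly marked webs over $\bF$. Since neither $(Y,L,\mu)$ nor $(Y',L',\mu')$ admits reducibles in its configuration space (this is ensured by the strong marking data), the chain complexes on both sides are genuine mapping cone complexes rather than full $\cS$-complexes. This is actually a simplification: every component of every morphism and chain homotopy that involves a reducible limit ($\delta_1,\delta_2,\Delta_1,\Delta_2$ and their variants) drops out, and we are left only with the analogues of the maps $\lambda$ and $\mu$.

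First I would construct the cobordism of pairs $(W,S):(Y,L)\sqcup(Y',L')\to (Y\# Y', L\# L')$ by attaching a 4-dimensional 1-handle and a 2-dimensional 1-handle at the basepoints (which lie on edges, away from vertices), arranging the marking data to be compatible and to produce $\mu\cup\mu'$ on the connected sum. Choose Y-shaped paths $\gamma,\gamma',\gamma^\#$ on $S$ as in Figure \ref{fig:connsumpaths}, staying away from all vertices of the web, and analogously paths $\sigma,\sigma',\sigma^\#$ on the reverse cobordism $(W',S')$. Using the cut-down moduli spaces $M_{\gamma^\#}$, $M_\gamma$, $M_{\gamma'}$, $M_{\gamma\gamma'}$ (and their mirror versions), define morphisms $\widetilde\lambda_{(W,S)}$ and $\widetilde\lambda_{(W',S')}$ between the mapping cone complexes exactly as in Subsections \ref{CWS} and \ref{CWS-p}, but keeping only the components $\lambda_1,\lambda_2$ and $\mu_1,\mu_2$ (and $\lambda'_1,\lambda'_2,\mu'_1,\mu'_2$); the components $\lambda_3,\lambda_4,\mu_3,\mu_4,\Delta_i,\Delta_2'$ disappear because the corresponding flat reducibles do not exist. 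The chain-map relations reduce to the first two rows of Figures \ref{fig:lambdarel} and \ref{fig:mu1rel} together with the analogous relations for $\mu_2$, and are verified by counting ends of 1-dimensional cut-down moduli spaces exactly as in Proposition \ref{CWS-C}.

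Next, for the compositions I would follow Section \ref{comp}. The composite $\widetilde\lambda_{(W,S)}\circ\widetilde\lambda_{(W',S')}$ is $\cS$-chain homotopic (in the mapping-cone sense) to a map $\widetilde\lambda_{(W\circ W',S\circ S',\rho^\#)}$ defined by cutting down by $H^{\rho^\#}$ on $(W^\circ, S^\circ)$, as in Proposition \ref{prop:comphomotopy1}. Then, as in Proposition \ref{prop:morphchqi}, I would excise the neighborhood $(S^1\times D^3,S^1\times D^1)$ and stretch the metric along $S^1\times S^2$. The Morse--Bott gluing analysis of the flat critical set $\fC\cong S^1$ on $(S^1\times S^2, S^1\times 2\text{ pts})$ carries over verbatim, since the paths $\rho^\#,\rho,\rho'$ can be homotoped away from vertices, and the bubbling phenomenon at web-vertices only contributes ends of the form \eqref{eq:webbubbles} whose contribution vanishes modulo $2$. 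This identifies the composite with the identity cobordism map on $\widetilde C(Y\# Y', L\# L'; \mu\cup\mu')$ up to an $\cS$-chain homotopy, hence with the identity. The reverse composition is handled by the same cut-and-reglue argument used in Proposition \ref{prop:lasthomotopy}: cutting along the embedded $(S^3,S^1)$ in Figure \ref{fig:connsumcomposites2} and stretching yields a broken-metric map whose $\lambda$-component is the identity, and Lemma \ref{lemma:smoriso} (in its mapping-cone version, where the relevant matrix is only $2\times 2$ upper triangular) promotes this to an isomorphism. Finally, naturality with respect to split cobordisms is obtained, as in Subsection \ref{subsec:natcon}, by comparing the two composites arising from a split cobordism via a 1-parameter family of (possibly broken) metrics.

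The main obstacle is the bubbling at web vertices, which is what forces us to work over $\bF$ in the first place. However, because all of our cut-down moduli spaces are cut down by holonomies along paths $\gamma,\gamma',\gamma^\#,\rho,\rho'$ that are arranged to miss the vertices, the instantons produced by vertex bubbling have holonomies arbitrarily close to those of the unbubbled flat limit. Consequently no new codimension-one ends of cut-down moduli spaces are produced in characteristic $2$, and the entire ends-counting apparatus of Section \ref{sec:consum} goes through unchanged. The only other subtle point is the Morse--Bott gluing in the analogue of Proposition \ref{prop:morphchqi}, where we must verify that the $V_4$-bubbling along $\R\times (S^1\times S^2, S^1\times 2\text{ pts})$ does not produce additional regular reducible solutions; this again holds since the isotropy structure on $S^1\times 2\text{ pts}$ has no vertex points, so the vertex-bubbling mechanism is absent in that piece of the gluing region.
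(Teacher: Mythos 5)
The paper states this theorem without proof, deferring to "a variation of our connected sum theorem" and the discussion preceding Theorem~\ref{thm:oneweb}; your proposal correctly fills in that outline. You rightly identify the two simplifications (no reducibles on either side, so all $\delta_i$, $\Delta_i$ components drop out and both sides are genuine mapping cones) and the one new complication (vertex bubbling), and your handling of the latter --- cut-down moduli spaces avoid bubbling ends because the holonomy paths miss the vertices, while the remaining bubbling contributions cancel over $\bF$ via the $V_4$ factor --- matches the paper's treatment in Subsection~\ref{subsec:kmgroups} and the discussion around equation~\eqref{eq:webbubbles}. This is essentially the argument the authors intend.
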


\begin{figure}[t]
\centering
\includegraphics[scale=.65]{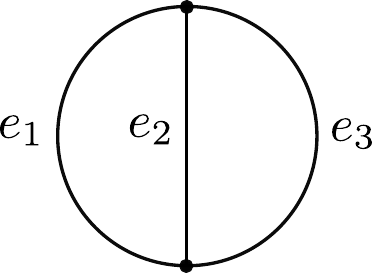}
\caption{}
\label{fig:theta}
\end{figure}

\subsection{Connect summing with a theta web}\label{sec:thetawebz2}

Let $(Y,K)$ be a based knot in an integer homology 3-sphere. Let $(S^3,\Theta)$ be the theta web, as given in Figure \ref{fig:theta}, with edges $e_1$, $e_2$ and $e_3$. Consider the connected sum $(Y\# S^3,K\# \Theta)$. The marking data $\mu=(U_\mu,E_\mu)$, where $U_\mu$ is all of $Y\# S^3$, and $E_\mu$ is trivial, is strong. We use the same notation $\mu$ for the marking data restricted to $(S^3,\Theta)$. The argument in Section 2.2 of \cite{km-def} provides an isomorphism
\begin{equation}
	J(Y\# S^3,K\# \Theta;\mu) \cong I^\natural(K;\bF). \label{eq:thetaisnatural}
\end{equation}
We may recover this, and obtain a chain-level refinement, by applying Theorem \ref{thm:oneweb}. The complex $C_\ast(\Theta;\mu)$ is in fact $\Z/2$-graded, and has one generator in degree 0. By the computation in Section 4.4 of \cite{km-def}, the $v$-map on this complex vanishes, and the mapping cone complex $\widetilde C_\ast(\Theta;\mu)$ is thus isomorphic to the $\Z/2$-graded vector space $\bF_{(0)} \oplus \bF_{(1)}$ with trivial differential. In other words, $\widetilde C_\ast(\Theta;\mu)$ is identical to the Hopf link complex $\widetilde C^\omega_\ast(H;\bF)$. Applying Theorem \ref{thm:oneweb} gives a chain homotopy equivalence 
\[
	\widetilde C_\ast(Y\# S^3,K\# \Theta;\mu) \simeq \widetilde C_\ast(Y,K;\bF) \otimes \widetilde C_\ast(\Theta;\mu),
\]
of $\Z/2$-graded mapping cone complexes, similar to the proof of Theorem \ref{thm:tildeconnsum}. In particular, we have a chain homotopy equivalence of $\Z/2$-graded complexes over $\bF$,
\[
	C_\ast(Y\# S^3,K\# \Theta;\mu) \simeq C_\ast^\natural(Y, K;\bF)
\]
which is natural with respect to split cobordism maps up to chain homotopy. Taking homology, we recover \eqref{eq:thetaisnatural}. Similar reasoning allows us to recover $I^\#(K;\bF)$ by taking the connected sum of $K$ with an unknot union a theta web.

\subsection{Local coefficients from theta webs}

More recently, in \cite{km-concordance} Kronheimer and Mrowka have used their instanton homology of webs to extract new concordance invariants. Here we explain how a variation of our connected sum theorem implies some structural results about these invariants.

\begin{figure}[t]
\centering
\includegraphics[scale=.57]{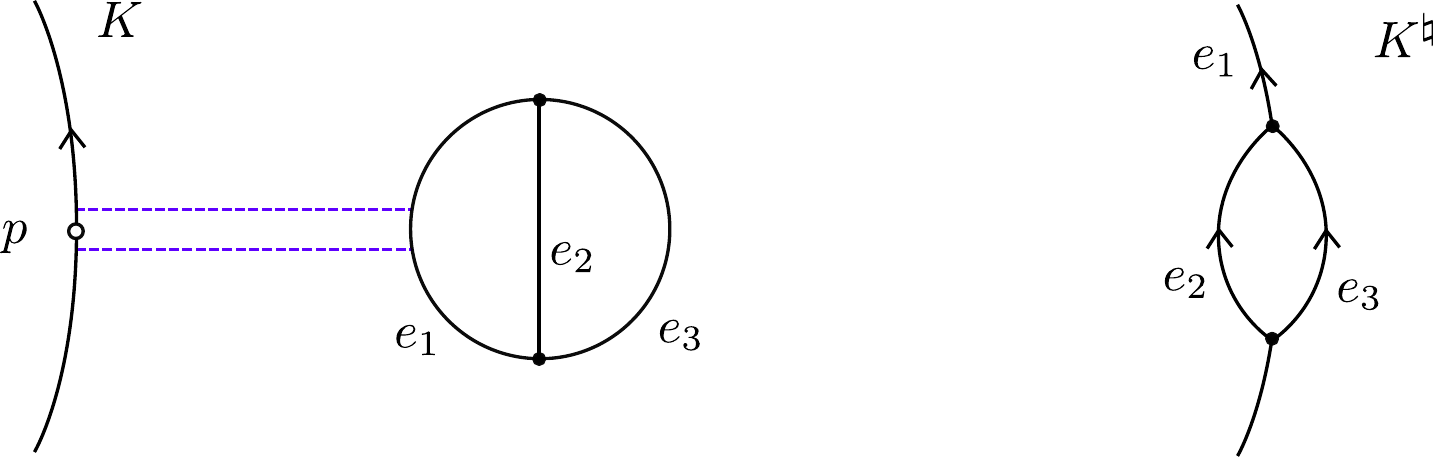}
\caption{On the left, we take our oriented knot $K$ and perform a connected sum with a standard theta web near the basepoint $p\in K$. The result is $K^\natural$ on the right, and we orient the three edges of $K^\natural$ as indicated.}
\label{fig:thetasum}
\end{figure}

Let $(Y,K)$ be a based knot in an integer homology 3-sphere. Consider the ring 
 \[
	\mathscr{S}_{BN}=\bF[T_1^{\pm 1},T_2^{\pm 1}, T_3^{\pm 1}]
\]
Now orient $K$, and in a small ball surrounding the basepoint $p\in K$, take the connected sum of $K$ with a theta web. We call the result $K^\natural$. This is a web with three edges $e_1$, $e_2$, $e_3$, each oriented in a fashion determined by the orientation of $K$, as in Figure \ref{fig:thetasum}. Let $\mu$ be the marking data of the previous subsection. The instanton complex $C(Y,K^\natural;\mu)$ can be upgraded to a complex with a local coefficient system, 
\begin{equation}
	(C_\ast^\natural(Y,K;\Delta_{BN}),d) \label{eq:naturalbncoeff}
\end{equation}
which is a $\Z/2$-graded chain complex over $\mathscr{S}_{BN}$. The local coefficient system $\Delta_{BN}$ of $\sS_{BN}$-modules is defined using holonomies along the three edges of the web $K^\natural$: the variable $T_1$ is associated to the arc containing the edge formed by the merging of $K$ and $e_1$, while $T_2$ and $T_3$ are associated to the edges $e_2$ and $e_3$, respectively. We refer to \cite{km-barnatan} for more details. The homology of the complex \eqref{eq:naturalbncoeff} is denoted
\[
	I_\ast^\natural (Y,K;\Delta_{BN}) = H_\ast\left(C^\natural(Y,K;\Delta_{BN}),d^\natural\right)
\]
and is a $\Z/2$-graded $\mathscr{S}_{BN}$-module. The recycling of the notation $I^\natural$ here is justified, as is in \cite{km-barnatan}, by the observations from Subsection \ref{sec:thetawebz2}. The $v$-map is defined in this setting, and there are no reducibles, so we may as usual form the mapping cone complex of $v$:
\[
	(\widetilde C_\ast^\natural(Y,K;\Delta_{BN}), \widetilde d^\natural)
\]
Let $\sT_\bF=\sT\otimes \bF = \bF[T^{\pm 1}]$. A variation of our connected sum theorem implies the following chain homotopy equivalence of $\Z/2$-graded mapping cone complexes over $\mathscr{S}_{BN}$:
\begin{equation}
	\widetilde C_\ast^\natural(Y,K;\Delta_{BN}) \simeq \widetilde C_\ast(Y,K;\Delta_{{\mathscr{T}_\bF}}) \otimes_{\mathscr{T}_\bF} \widetilde C_\ast^\natural(S^3,U_1;\Delta_{BN}) \label{eq:connsumloctheta}
\end{equation}
where $U_1$ is an unknot. Here and in what follows, $\mathscr{S}_{BN}$ is viewed as a $\mathscr{T}_\bF$-algebra by identifying $T=T_1$. The mapping cone complex $\widetilde C_\ast^\natural(S^3,U_1;\Delta_{BN})$ is computed in Section 4.4 of \cite{km-def}: there is one irreducible critical point for the theta web, so we have $C_\ast^\natural(S^3,U_1;\Delta_{BN})=\sT_\bF$, while the $v$-map $\mathscr{T}_\bF\to \mathscr{T}_\bF$ is multiplication by $P\in \mathscr{T}_\bF$, where
\begin{equation}
	P := T_1T_2T_3 + T_1^{-1}T_2^{-1} T_3 + T_1^{-1}T_2T_3^{-1} + T_1T_2^{-1}T_3^{-1} \label{eq:defnofp}
\end{equation}
Unravelling the right-hand side of the equivalence \eqref{eq:connsumloctheta}, we obtain 
\[
	\left(C^\natural_\ast (Y, K;\Delta_{BN}),d^\natural\right) \simeq  \left( \widetilde C_\ast (Y, K;\Delta_{\mathscr{T}_\bF})  \otimes_{\mathscr{T}_\bF} \mathscr{S}_{BN} , \;\;  \widetilde d \otimes 1_{\mathscr{S}_{BN}} + P\cdot\chi\right)
\]
where $P\cdot\chi$ is multiplication by $P$ between two summands of $\widetilde C_\ast (Y, K;\Delta_{\mathscr{T}_\bF})  \otimes_{\mathscr{T}_\bF} \mathscr{S}_{BN}$. That is, according to its decomposition as an $\cS$-complex over $\mathscr{S}_{BN}$, we have
\begin{equation}
P\cdot\chi = \left( \begin{array}{ccc} 0 & 0 & 0 \\ P & 0 & 0 \\ 0 & 0 & 0\end{array}\right)\label{eq:pmap}
\end{equation}
In conclusion, the complex $C^\natural_\ast (Y, K;\Delta_{BN})$ and its homology $I^\natural_\ast (Y, K;\Delta_{BN})$ featured in \cite{km-barnatan, km-concordance}, while defined as modules over the ring $\mathscr{S}_{BN}=\bF[T_1^{\pm 1}, T_2^{\pm 1}, T_3^{\pm 1}]$ in three variables, is entirely determined up to chain homotopy equivalence by the $\cS$-complex $\widetilde C_\ast (Y, K;\Delta_{\mathscr{T}_\bF})$ over the ring $\mathscr{T}_\bF$ in one variable. In fact, in the next subsection we will see a more precise statement at the level of homology. A similar argument to that of Proposition \ref{prop:unreducedrank} yields the following, which is proved for $Y=S^3$ by a different method in \cite{km-barnatan}.

\begin{prop}
	For $(Y,K)$ any based knot in an integer homology 3-sphere, the homology group $I_\ast^\natural(Y,K;\Delta_{BN})$ has rank 1 as a module over $\sS_{BN}$.
\end{prop}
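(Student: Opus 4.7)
My plan is to mirror the strategy of Proposition \ref{prop:unreducedrank}, passing to the field of fractions $F := \mathrm{Frac}(\sS_{BN})$ and constructing an explicit acyclic subcomplex. Using the chain homotopy equivalence recalled in the paragraph preceding the proposition, $C^\natural(Y,K;\Delta_{BN})$ is identified with the complex $V := \widetilde{C}(Y,K;\Delta_{\sT_\bF}) \otimes_{\sT_\bF} \sS_{BN}$ equipped with differential $D = \widetilde{d}\otimes 1 + P\chi$. In the $\cS$-complex decomposition $\widetilde C = C \oplus C[1] \oplus \sT_\bF$, this $D$ takes the matrix form
\[
  D = \begin{pmatrix} d & 0 & 0 \\ v+P & -d & \delta_2 \\ \delta_1 & 0 & 0 \end{pmatrix},
\]
so it suffices to show $\dim_F H(V \otimes_{\sS_{BN}} F) = 1$.

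The key observation is that $v + P \colon C\otimes_{\sT_\bF} F \to C \otimes_{\sT_\bF} F$ is invertible. Indeed, since $v$ is $\sT_\bF$-linear on a finitely generated free module, its characteristic polynomial $\chi_v \in \sT_\bF[X]$ is nonzero, while $P$ is transcendental over $\mathrm{Frac}(\sT_\bF) = \bF(T_1)$ because it is a nonconstant element of $\bF(T_1)[T_2^{\pm 1}, T_3^{\pm 1}]$. Hence $\det(v+P) = \pm\chi_v(-P) \neq 0$ in $F$.

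With this in hand, I define $W := C + D(C) \subset V\otimes F$. Using the $\cS$-complex relations $\delta_1 d = 0$ and $dv - vd = \delta_2 \delta_1$, one checks that $W$ is a subcomplex and that the map $\Psi \colon C \oplus C \to V\otimes F$ defined by $\Psi(\alpha,\alpha') = (\alpha, (v+P)\alpha', \delta_1\alpha')$ is a bijection onto $W$ (injectivity from the invertibility of $v+P$), converting the differential to $(\alpha,\alpha') \mapsto (d\alpha, \alpha - d\alpha')$ on $C \oplus C$. The latter is acyclic via the explicit chain contraction $h(\alpha,\alpha') = (\alpha',0)$. Tracking the $\bZ/2$-grading carefully, one computes $\dim W_i = n$ for each $i \in \bZ/2$, where $n := \mathrm{rank}_{\sT_\bF}(C)$, while $\dim V_0 = n+1$ and $\dim V_1 = n$.

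Consequently, $V\otimes F /W$ has total dimension one and is concentrated in $\bZ/2$-grading zero, generated by the class of the reducible element $(0,0,1)$ (which is not in $W$, again by invertibility of $v+P$). Since $D$ has odd degree but $(V/W)_1 = 0$, the induced differential on the quotient vanishes, yielding $H(V/W) \cong F$. Combined with the acyclicity of $W$, the short exact sequence $0 \to W \to V\otimes F \to V/W \to 0$ forces $H(V\otimes F) \cong F$, which proves the rank claim. The main obstacle to pin down is the invertibility of $v + P$, which hinges on the transcendence of $P$ over $\sT_\bF$; the remaining ingredients are straightforward linear algebra.
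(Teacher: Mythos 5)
Your proof is correct and follows the same strategy the paper indicates: it mirrors the proof of Proposition \ref{prop:unreducedrank} by passing to the field of fractions, isolating an acyclic subcomplex generated by $C$ and $D(C)$, and observing that the one-dimensional quotient is concentrated in a single $\Z/2$-grading so the induced differential vanishes. The one place where you rightly add substance is the invertibility of $v+P$ over $\mathrm{Frac}(\sS_{BN})$ via the transcendence of $P$ over $\bF(T_1)$ — this is the precise analogue of the (more briefly stated) invertibility claim in the proof of Proposition \ref{prop:unreducedrank}, and spelling it out is the crux of the argument.
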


We now assume that $Y$ is the 3-sphere, and omit it from the notation. The importance of the modules $I^\natural(K;\Delta_{BN})$, from the viewpoint of \cite{km-barnatan, km-concordance}, is two-fold. First is a connection to combinatorial link homology: Corollary 6.6 of \cite{km-barnatan} gives a spectral sequence whose $E_2$-page is a version of Bar-Natan's Khovanov homology over $\mathscr{S}_{BN}$ and which converges to $I^\natural(K;\Delta_{BN})$. Second, there is defined in \cite{km-concordance} a concordance invariant $z_{BN}^\natural(K)$ in the setting of $I^\natural(K;\Delta_{BN})$, which we now briefly review.

Let $\text{Frac}(\mathscr{S}_{BN})$ be the quotient field of $\mathscr{S}_{BN}$. A {\emph{fractional ideal}} is an $\mathscr{S}_{BN}$-module $M\subset \text{Frac}(\mathscr{S}_{BN})$ such that there is some $s\in \mathscr{S}_{BN}$ with $s M\subset \mathscr{S}_{BN}$. Let $S:U_1\to K$ be a cobordism of knots from the unknot to $K$. We have an induced map of rank-1 $\mathscr{S}_{BN}$-modules
\[
	I^\natural(S;\Delta_{BN})':I^\natural(U_1;\Delta_{BN})' \longrightarrow I^\natural(K;\Delta_{BN})'
\]
where $I^\natural(K;\Delta_{BN})'$ denotes the $\mathscr{S}_{BN}$-module $I^\natural(K;\Delta_{BN})$ modulo torsion. Then
\[
	z^\natural_{BN}(K) := P^{g}\cdot \left[I^\natural(K;\Delta_{BN})':\text{im}I^\natural(S;\Delta_{BN})'\right] \subset \text{Frac}(\mathscr{S}_{BN})
\]
where $g$ is the genus of $S$, and $[N:M]$ is the generalized module quotient,
\[
	[N:M] := \left\{ a/b \in \text{Frac}(\mathscr{S}_{BN}): \; aM \subset bN \right\}.
\]
The ideal $z^\natural_{BN}(K)$ is a fractional ideal, and is proven in \cite[Section 5]{km-concordance} to be a concordance invariant of the knot $K$. 

Now suppose that $( [0,1]\times S^3, S)$ is a negative definite pair. The existence of $S$ is equivalent to the assumption in \eqref{slice-genus-cond} on the slice genus of $K$. Then we have a morphism of $\cS$-complexes $\widetilde C(S;\Delta_{\mathscr{T}_\bF}):\widetilde C(U_1;\Delta_{\mathscr{T}_\bF})\to \widetilde C(K;\Delta_{\mathscr{T}_\bF})$. Upon tensoring with $\widetilde C(\Theta;\Delta_{BN})$ and using the naturality of our connected sum theorem, we obtain that $C^\natural(S;\Delta_{BN})$ is chain homotopy equivalent to the map
\[
	 \left(0, \Delta_2\otimes 1, 1 \right):\mathscr{T}_\bF\otimes_{\mathscr{T}_\bF} \mathscr{S}_{BN} \longrightarrow \widetilde C_\ast (Y, K;\Delta_{\mathscr{T}_\bF})  \otimes_{\mathscr{T}_\bF} \mathscr{S}_{BN}
\]
We summarize some of our observations in the following.

\begin{prop}\label{prop:inaturalthetastructure}
	Let $(Y,K)$ be a based knot in an integer homology 3-sphere. The module $I^\natural(Y,K;\Delta_{BN})$ over $\mathscr{S}_{BN}=\bF[T_1^{\pm 1}, T_2^{\pm 1}, T_3^{\pm 1}]$ is determined by the $\cS$-complex $\widetilde C_\ast(Y,K;\Delta_{\mathscr{T}_\bF})$ over $\mathscr{T}_\bF=\bF[T_1^{\pm 1}]$. Specifically, we have a $\Z/2$-graded isomorphism
	\[
		I_\ast^\natural(Y,K;\Delta_{BN}) \cong H_\ast \left( \widetilde C_\ast (Y, K;\Delta_{\mathscr{T}_\bF})  \otimes_{\mathscr{T}_\bF} \mathscr{S}_{BN} ,  \;\;  \widetilde d \otimes 1_{\mathscr{S}_{BN}} + P\cdot\chi\right)
	\]
	where $P\cdot\chi$ is given by \eqref{eq:pmap}. Suppose $Y=S^3$ and that there is a surface cobordism $S:U_1\to K$ in $[0,1]\times S^3$ with negative definite double branched cover. Then the image of the induced map $I^\natural(S;\Delta_{BN})$ in $I^\natural(K;\Delta_{BN})$ corresponds to the inclusion of the element
	\[
		\widetilde \lambda_S(1) = \left(0,\Delta_2(1),1\right) \in \widetilde C_\ast (Y, K;\Delta_{\mathscr{T}_\bF}).
	\]
	Thus in this case, the concordance invariant $z^\natural_{BN}(K)$ is determined by the $\cS$-complex $\widetilde C_\ast(Y,K;\Delta_{\mathscr{T}_\bF})$ over $\mathscr{T}_\bF=\bF[T_1^{\pm 1}]$ and the element $\Delta_2(1)$ therein.
\end{prop}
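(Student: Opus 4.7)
The proposition is essentially a summary of the chain-level analysis carried out immediately before it, and I would establish its three assertions in order, using as main inputs the local-coefficient variant of the web connected sum theorem, the explicit computation of $\widetilde C^\natural(S^3, U_1;\Delta_{BN})$, and the naturality of the connected sum theorem with respect to split cobordisms.

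For assertion (i), I would begin by applying the local-coefficient version of Theorem \ref{thm:oneweb} to the decomposition $(Y, K^\natural) = (Y,K)\#(S^3,U_1^\natural)$. That extension is proved exactly as Theorem \ref{thm:connectedsumlocceoff} is derived from the untwisted connected sum theorem, so it produces the chain homotopy equivalence of mapping-cone complexes \eqref{eq:connsumloctheta}. Next I would plug in the structure of the theta factor --- namely $C^\natural(U_1;\Delta_{BN}) \cong \mathscr{S}_{BN}$ with $d=0$ and $v = P\cdot \mathrm{id}$, as computed in Section 4.4 of \cite{km-def} --- and unravel the tensor product. Because $d' = 0$ on the theta factor and because this factor lacks a distinguished ``reducible'' $R$-summand, the only contribution to the off-diagonal differential on the ``top'' summand of the resulting mapping-cone pairs $v' = P$ with the $\chi$-component of the $\cS$-structure on $\widetilde C(Y,K;\Delta_{\mathscr{T}_\bF})$, giving precisely the operator $P\cdot \chi$ on $\widetilde C(Y,K;\Delta_{\mathscr{T}_\bF})\otimes_{\mathscr{T}_\bF}\mathscr{S}_{BN}$. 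Passing to homology yields (i).

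For (ii), I would invoke the naturality of Theorem \ref{thm:oneweb} applied to the split cobordism pair $S \sqcup \mathrm{id}_{\Theta}$. Under the hypothesis that $([0,1]\times S^3, S)$ is a negative definite pair, the cobordism $S:U_1\to K$ induces an $\cS$-morphism $\widetilde\lambda_S:\widetilde C(U_1;\Delta_{\mathscr{T}_\bF})\to \widetilde C(K;\Delta_{\mathscr{T}_\bF})$. Since $U_1$ supports no irreducible singular flat connections, $\widetilde C(U_1;\Delta_{\mathscr{T}_\bF})$ is the trivial $\cS$-complex $\mathscr{T}_\bF$, and the matrix form of Definition \ref{def:morphism} forces
\[
\widetilde\lambda_S(1) = (0,\Delta_2(1),1) \in C \oplus C[-1]\oplus \mathscr{T}_\bF.
\]
Naturality identifies the induced map $I^\natural(S;\Delta_{BN})$, under the equivalence of (i), with $\widetilde\lambda_S \otimes \mathrm{id}_{\mathscr{S}_{BN}}$, and evaluating on the generator of $C^\natural(U_1;\Delta_{BN})\cong\mathscr{S}_{BN}$ produces the claimed element.

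Assertion (iii) is then immediate, since the construction of $z^\natural_{BN}(K)$ in \cite{km-concordance} depends only on the pair $(I^\natural(K;\Delta_{BN}),\,\mathrm{im}\,I^\natural(S;\Delta_{BN}))$, and (i) and (ii) show that this pair is determined by $(\widetilde C(K;\Delta_{\mathscr{T}_\bF}), \Delta_2(1))$. The principal obstacle I foresee is the tensor-product unpacking in (i): one must check that among the several potential off-diagonal couplings (through $d$, $v$, $\delta_1$, $\delta_2$ on the $\cS$-side, or through $d'$ on the theta side), only the $\chi \otimes v'$ term survives, which follows directly from the formulas of Subsection \ref{sec:tensor} once $d'=0$ and $v'=P$ are imposed and once one carefully accounts for the missing $R$-summand on the mapping-cone factor.
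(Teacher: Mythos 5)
Your proposal is correct and follows essentially the same route as the paper: apply the local-coefficient web connected sum theorem to split off the theta factor, plug in the computation $d'=0$, $v'=P$ from \cite[Section 4.4]{km-def}, unravel the tensor product of an $\cS$-complex with a mapping-cone complex to obtain the twisted differential $\widetilde d\otimes 1 + P\cdot\chi$, then use naturality with respect to split cobordisms together with the triviality of $\widetilde C(U_1;\Delta_{\sT_\bF})$ to pin down $\widetilde\lambda_S(1)=(0,\Delta_2(1),1)$, and observe that the third assertion follows formally. The only point worth flagging is that the ``unraveling'' step (identifying the underlying complex of the mapping cone $\widetilde C\otimes\widetilde C'$ with a single copy of $\widetilde C\otimes\sS_{BN}$, and seeing that the $v'$-term contributes exactly $P\cdot\chi$ there) requires tracking the splitting of $\ker\chi^\otimes$ as in Subsection \ref{sec:tensor}; your sketch glosses over this resplitting, but it is precisely the bookkeeping the paper itself treats implicitly, so the argument is sound.
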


All of the above may be carried out in the framework of the unreduced theory $I^\#(Y,K)$ with local coefficients induced by the theta web. In this case, we consider the ring
\[
	\sR^\# = \bF[T_0^{\pm 1}, T_1^{\pm 1}, T_2^{\pm 1}, T_3^{\pm 1}]
\]
The group $I^\#(Y,K;\Delta_{\sR^\#})$ is a module over $\sR^\#$ and its local coefficient system is defined by holonomies around $K$ and the three arcs of the theta web; the variable $T_0$ is associated with $K$, and $T_1,T_2,T_3$ with the theta web as before. It is the homology of
\begin{equation}
	(C_\ast^\#(Y,K;\Delta_{\sR^\#}), d^\#)\label{eq:sharprsharpchain}
\end{equation}
which is a chain complex associated to $(Y,K)\#(S^3, U_1\sqcup \Theta)$ where as before the connected sum involves $U_1$, not $\Theta$, and which has marking data $\mu$ containing all of $Y\# S^3$, and local coefficient system $\Delta_{\sR^\#}$. We form the mapping cone complex with respect to the $v$-map, and apply a variation of our connected sum theorem, to obtain the following equivalence:
\begin{equation*}
	\widetilde C_\ast^\#(Y,K;\Delta_{\sR^\#}) \simeq \widetilde C_\ast(Y,K;\Delta_{{\mathscr{T}_\bF}}) \otimes_{\mathscr{T}_\bF} \widetilde C_\ast^\#(S^3,U_1;\Delta_{\sR^\#}) 
\end{equation*}
Here $\sR^\#$ is a $\sT_\bF$-module by identifying $T=T_0$. Now just as in Subsection \ref{sec:naturalsharp}, the complex $C_\ast^\#(S^3,U_1;\Delta_{\sR^\#})$ may be taken to have two generators in even grading, with trivial differential. The $v$-map in this case is multiplication by $P$. Indeed, this follows from \cite[Proposition 5.9]{km-barnatan}, after identifying the $v$-map with the map induced by a genus 1 cobordism $U_1\to U_1$ (see Remark \ref{v-maps}). Then \eqref{eq:sharprsharpchain} is chain homotopy equivalent to
\begin{equation}
	\left( \widetilde C_\ast (Y, K;\Delta_{\mathscr{T}_\bF})  \otimes_{\mathscr{T}_\bF} \mathscr{R}^\# ,  \;\;  \widetilde d \otimes 1_{\mathscr{R}^\#} + P\cdot\chi\right)^{\oplus 2}\label{eq:sharpthetabigcomplex}
\end{equation}
In particular, the theory naturally splits into two chain complexes. Further, when we set $T_0=T_1$ we recover two copies of the chain complex computed above for $I^\natural(Y,K;\Delta_{BN})$. 

Similar to Proposition \ref{prop:inaturalthetastructure}, we obtain that the module $I^\#(Y,K;\Delta_{\sR^\#})$ over $\mathscr{R}^\#$ is determined by the $\cS$-complex $\widetilde C_\ast(Y,K;\Delta_{\mathscr{T}_\bF})$ defined over $\mathscr{T}_\bF=\bF[T_0^{\pm 1}]$. Specifically, we have a $\Z/2$-graded isomorphism between $I^\#(Y,K;\Delta_{\sR^\#})$ and the homology of \eqref{eq:sharpthetabigcomplex}. Furthermore, if $Y=S^3$ and there is a cobordism of pairs $([0,1]\times S^3, S)$ from $U_1$ to $K$ which is negative definite, then the concordance invariant $z^\#(K)$ of \cite{km-barnatan} is determined by the cobordism map on $\cS$-complexes.

\subsection{Relations to equivariant homology groups}\label{subsec:relntoequivgrps}

Several of the constructions of the above chain complexes bare similarities to the equivariant homology groups discussed in Section \ref{sec:equivtheories}. To begin making these connections more precise, recall that given an $\cS$-complex $(\widetilde C_\ast, \widetilde d, \chi)$ over a ring $R$, the equivariant complex $(\hrC_*,\widehat d)$ over $R[x]$ is defined as follows, where $\widehat d$ is defined on $\widetilde C\subset \widehat C$ and extended $R[x]$-linearly:
\[
	\hrC_*=\widetilde C_\ast \otimes_R R[x], \qquad \widehat d = -\widetilde d + x\cdot \chi
\]
We slightly extend this construction to incorporate base changes of $R[x]$, as follows. Suppose $S$ is another ring, and $\varphi:R[x]\to S$ is a ring homomorphism. Define $(\hrC_*^\varphi,\widehat d^\varphi)$ by:
\begin{align*}
   \hrC_*^\varphi =S\otimes_R \widetilde C_*, \qquad  \widehat d^\varphi(s \cdot \zeta)  =-s \cdot \widetilde d\zeta+\phi(x)s\cdot \chi(\zeta)
\end{align*}
Here $s\in S$, $\zeta\in \widetilde C$, and $S$ is considered an $R$-module by restriction. In other words, we have the identification $\hrC_*^\varphi=\hrC_* \otimes_{R[x]}S$ as chain complexes over $S$. Note that when $\phi:R[x]\to R$ sends $x$ to zero, the complex $\widehat C_\ast^\phi$ is naturally identified with $\widetilde C_\ast$.

As a simple example, recall that Theorem \ref{thm:sharpconnsum} expresses the chain homotopy type of $C_\ast^\#(Y,K)$ as the mapping cone of $2\chi$ acting on $\widetilde C_\ast(Y,K)$. We may write this as
\[
	C_\ast^\#(Y,K) \simeq \widehat C^\phi_\ast(Y,K)
\]
where $\phi:\Z[x]\to \Z[x]/(x^2)$ is the ring homomorphism which is determined by sending $x$ to the equivalence class $2x$ (mod $x^2$).

For a more interesting example, we consider $I^\#(Y,K;\Delta_{\sR^\#})$ as discussed in the previous subsection. We saw there that the chain complex for this group is chain homotopy equivalent to the chain complex given by \eqref{eq:sharpthetabigcomplex}. From this we obtain
\begin{equation}
	C^\#_\ast(Y,K;\Delta_{\sR^\#}) \simeq \widehat C^\phi_\ast(Y,K;\Delta_{\sT_\bF})^{\oplus 2} = \left(\widehat C_\ast(Y,K;\Delta_{\sT_\bF})\otimes_{\sT_{\bF}[x]} \sR^\#\right)^{\oplus 2} \label{eq:bnsharpequiv}
\end{equation}
where $\phi:\sT_\bF[x]\to \sR^\#=\bF[T_{0}^{\pm 1},T_1^{\pm 1},T_2^{\pm 1}, T_3^{\pm 1}]$ is the $\sT_\bF=\bF[T_0^{\pm 1}]$-linear homomorphism determined by sending $x$ to $P$, where $P$ is given as before by \eqref{eq:defnofp}.

Note that $\bF[T_1^{\pm 1},T_2^{\pm 1}, T_3^{\pm 1}]$ a free module over $\bF[P]$. (This is an immediate consequence of Proposition \ref{SBN-free} below.) Tensoring with the ring $\bF[T_0^{\pm 1}]$ then shows that $\sR^\#$ is a free module over $\sT_\bF[x]$ with the above module structure. As a consequence, the relationship \eqref{eq:bnsharpequiv} holds at the level of homology.

\begin{cor}\label{equiv-sharp}
	Let $(Y,K)$ be a based knot in an integer homology 3-sphere. The module 
	$I^\#(Y,K;\Delta_{\sR^\#})$ over $\sR^\#$ is isomorphic to 
	$(\widehat I(Y,K;\Delta_{\mathscr{T}_\bF})\otimes_{\mathscr{T}_\bF[x]} \sR^\#)^{\oplus 2}$ where the 
	ring structure of $\sR^\#$ over $\mathscr{T}_\bF[x]$ is given by sending $x\mapsto P$.
\end{cor}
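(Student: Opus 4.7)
The plan is to derive the corollary directly from the chain-level equivalence~\eqref{eq:bnsharpequiv} established in the preceding discussion, together with a flatness argument. First I would record the identification
\[
\widehat C_\ast(Y,K;\Delta_{\sT_\bF})\otimes_{\sT_{\bF}[x]} \sR^\# \;=\; \widehat C^\phi_\ast(Y,K;\Delta_{\sT_\bF}),
\]
where $\phi:\sT_\bF[x]\to \sR^\#$ is the $\sT_\bF$-algebra map sending $x\mapsto P$, and this follows directly from the definition of $\widehat C^\phi$ given just before the corollary in the excerpt.

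Next I would take homology of both sides of \eqref{eq:bnsharpequiv}. The left-hand side gives $I^\#(Y,K;\Delta_{\sR^\#})$ by definition, while the right-hand side gives the homology of a doubled tensor product chain complex. To commute the tensor product with homology, the key input is that $\sR^\#$ is free (in particular flat) as a module over $\sT_\bF[x]$ with the $\phi$-module structure. This is exactly the flatness claim stated in the paragraph preceding the corollary, which is reduced there to the freeness of $\bF[T_1^{\pm 1}, T_2^{\pm 1}, T_3^{\pm 1}]$ over $\bF[P]$ (Proposition~\ref{SBN-free}) by tensoring with $\bF[T_0^{\pm 1}]=\sT_\bF$.

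Granting flatness, the universal coefficient/K\"unneth spectral sequence collapses and we obtain a natural $\sR^\#$-module isomorphism
\[
H_\ast\!\left(\widehat C_\ast(Y,K;\Delta_{\sT_\bF})\otimes_{\sT_{\bF}[x]} \sR^\#\right)\;\cong\; \widehat I_\ast(Y,K;\Delta_{\sT_\bF})\otimes_{\sT_{\bF}[x]} \sR^\#.
\]
Applying this to each of the two summands on the right of \eqref{eq:bnsharpequiv} yields the desired isomorphism $I^\#(Y,K;\Delta_{\sR^\#}) \cong \bigl(\widehat I(Y,K;\Delta_{\sT_\bF})\otimes_{\sT_{\bF}[x]} \sR^\#\bigr)^{\oplus 2}$, with the required $\sT_\bF[x]$-module structure $x\mapsto P$.

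The main substantive point is thus the flatness of $\sR^\#$ over $\sT_\bF[x]$, which is reduced to Proposition~\ref{SBN-free}; everything else is formal manipulation of the equivalence \eqref{eq:bnsharpequiv} together with the identification between $\widehat C^\phi_\ast$ and a base change of $\widehat C_\ast$. No new instanton-theoretic input is required beyond what was used to establish \eqref{eq:bnsharpequiv} in the body of the subsection.
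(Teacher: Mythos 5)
Your proposal is correct and follows essentially the same argument as the paper: it starts from the chain-level equivalence \eqref{eq:bnsharpequiv}, reduces to the freeness of $\sR^\#$ over $\sT_\bF[x]$ (via Proposition~\ref{SBN-free} tensored with $\bF[T_0^{\pm 1}]$), and passes to homology. The only cosmetic difference is that you phrase the last step via a collapsing K\"unneth spectral sequence, whereas the paper simply observes directly that freeness lets one commute base change with homology.
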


\begin{remark}
	In \cite{km-barnatan}, an operator $\Lambda$ is defined on $I^\#(Y,K;\Delta_{\sR^\#})$, associated to the identity cobordism with a dot placed on the singular surface $[0,1]\times K$. The $\sR^\#$-module structure of $I^\#(Y,K;\Delta_{\sR^\#})$ then lifts to an $\sF$-module structure, where
	\[
		\sF = \sR^\#[\Lambda]/(\Lambda^2 + P \Lambda + Q).
	\]
	We can lift the isomorphism of Corollary \ref{equiv-sharp} to one of $\sF$-modules. The result is
	\[
		I^\#(Y,K;\Delta_{\sR^\#}) \cong \widehat I(Y,K;\Delta_{\mathscr{T}_\bF})\otimes_{\mathscr{T}_\bF[x]} I^\#(U_1;\Delta_{\sR^\#})
	\]
	where the $\sF$-module structure is induced by that of $I^\#(U_1;\Delta_{\sR^\#})$. This follows from viewing $\Lambda$ as induced by a cobordism (with a dot) and the naturality of the connected sum theorem. As $I^\#(U_1;\Delta_{\sR^\#})$ is isomorphic as an $\sF$-module to $\sF$, $I^\#(Y,K;\Delta_{\sR^\#})$ is an $\sF$-module obtained from  $\widehat I(Y,K;\Delta_{\mathscr{T}_\bF})$ by the base change $\sT_{\bF}[x]\to \sF$ which is the base change of Corollary \ref{equiv-sharp} followed by inclusion $\sR^\#\to \sF$. $\diamd$
\end{remark}

Similar results hold for the reduced theory $I^\natural(Y,K;\Delta_{BN})$, starting from Proposition \ref{prop:inaturalthetastructure}. The description of the chain homotopy-type of $C^\natural_\ast(Y,K;\Delta_{\sT_\bF})$ given there may be represented as a chain homotopy equivalence
\begin{equation}
	C^\natural_\ast(Y,K;\Delta_{BN}) \simeq \widehat C^\phi_\ast(Y,K;\Delta_{\sT_\bF}) = \widehat C_\ast(Y,K;\Delta_{\sT_\bF})\otimes_{\sT_{\bF}[x]} \sS_{BN}\label{eq:bnnaturalequiv}
\end{equation}
where $\phi:\sT_\bF[x]\to \sS_{BN}=\bF[T_1^{\pm 1},T_2^{\pm 1}, T_3^{\pm 1}]$ is the $\sT_\bF=\bF[T_1^{\pm 1}]$-linear homomorphism determined by sending $x$ to $P$. This module structure is elucidated by the following.

\begin{prop}\label{SBN-free}
	The ring $\mathscr{S}_{BN}=\bF[T_1^{\pm 1}, T_2^{\pm 1}, T_3^{\pm 1}]$ is free as a module over
	$\bF[T_1^{\pm 1},P]$.
\end{prop}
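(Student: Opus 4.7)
The strategy is to decompose $\sS_{BN}$ as a $B$-module, where $B = \bF[T_1^{\pm 1}, P]$, by exploiting a convenient change of variables. Set $s := T_2 T_3$ and $t := T_2 T_3^{-1}$, so that
\[
  P = T_1(s + s^{-1}) + T_1^{-1}(t + t^{-1})
\]
lies in the subring $R' := \bF[T_1^{\pm 1}, s^{\pm 1}, t^{\pm 1}]$ of $\sS_{BN}$. Since $s^i t^j = T_2^{i+j}T_3^{i-j}$, the monomials in $s, t$ are exactly the $T_2^a T_3^b$ with $a \equiv b \pmod 2$, so $\sS_{BN} = R' \oplus T_2 R'$ as $\bF[T_1^{\pm 1}]$-modules; both summands are $B$-stable, and multiplication by the unit $T_2$ gives a $B$-module isomorphism $R' \simeq T_2 R'$. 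This reduces the problem to showing that $R'$ is free over $B$.

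Next, set $\sigma := s + s^{-1}$ and $\tau := t + t^{-1}$, so that $P = T_1 \sigma + T_1^{-1}\tau$. The element $s$ satisfies $s^2 + \sigma s + 1 = 0$ over $\bF[\sigma]$, which exhibits $\bF[s^{\pm 1}]$ as a free $\bF[\sigma]$-module with basis $\{1, s\}$, and similarly $\bF[t^{\pm 1}]$ is free over $\bF[\tau]$ with basis $\{1, t\}$. Tensoring over $\bF$ and extending scalars by $\bF[T_1^{\pm 1}]$ yields that $R'$ is free of rank four over $\bF[T_1^{\pm 1}, \sigma, \tau]$ with basis $\{1, s, t, st\}$. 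Since $\sigma, \tau$ are algebraically independent over $\bF[T_1^{\pm 1}]$ and, in characteristic two, the defining relation rearranges as $\tau = T_1 P + T_1^2 \sigma$, one has $\bF[T_1^{\pm 1}, \sigma, \tau] = B[\sigma]$, a polynomial ring in $\sigma$ over $B$, and therefore free over $B$ with basis $\{\sigma^n : n \geqslant 0\}$. Combining the two freeness results, $R'$ is free over $B$ with basis $\{\sigma^n, \sigma^n s, \sigma^n t, \sigma^n st : n \geqslant 0\}$, completing the proof.

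The only real obstacle is spotting the initial change of variables to $s$ and $t$; once $P$ has been recast as $T_1(s + s^{-1}) + T_1^{-1}(t + t^{-1})$, the contributions of $T_2, T_3$ decouple neatly into two Chebyshev-type quadratic extensions, and the rest is elementary commutative algebra.
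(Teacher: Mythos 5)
Your proof is correct, and it takes a genuinely different route from the paper's. The paper proceeds combinatorially: it fixes a dictionary-order ``degree'' on monomials $T_2^iT_3^j$ with coefficients in $\bF[T_1^{\pm 1}]$, proves a general lemma stating that any set $G$ whose family of leading terms $\{L(gP^k)\}$ is a $\bF[T_1^{\pm 1}]$-basis of $\sS_{BN}$ is automatically a $\bF[T_1^{\pm 1},P]$-basis, and then exhibits such a $G$ explicitly by a case-by-case table according to the degree $(i,j)$. Your argument instead factors the problem structurally: the change of variables $s=T_2T_3$, $t=T_2T_3^{-1}$ puts $P$ into the decoupled form $T_1\sigma + T_1^{-1}\tau$ with $\sigma = s+s^{-1}$, $\tau = t+t^{-1}$; the parity splitting $\sS_{BN}=R'\oplus T_2R'$ reduces to $R'=\bF[T_1^{\pm 1},s^{\pm 1},t^{\pm 1}]$; and then $R'$ is built from $\bF[T_1^{\pm 1},\sigma,\tau]$ by two quadratic free extensions, while $\bF[T_1^{\pm 1},\sigma,\tau]=B[\sigma]$ by the $\bF[T_1^{\pm 1}]$-linear change of variables $(\sigma,\tau)\mapsto(\sigma,P)$ (invertible since $T_1$ is a unit). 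Your approach is shorter and more conceptual, and it produces a cleaner, tensor-structured basis $\{\,T_2^\epsilon\sigma^n u : \epsilon\in\{0,1\},\ n\geqslant 0,\ u\in\{1,s,t,st\}\,\}$; the paper's approach is more elementary and hands-on, exhibiting the basis directly in the original variables $T_2,T_3$ without the coordinate change, which may be more useful if one wants to trace specific monomials through the argument. One small point worth making explicit in your write-up is the algebraic independence of $\sigma,\tau$ over $\bF(T_1)$, needed to conclude that $\bF[T_1^{\pm 1}][\sigma,\tau]$ is an honest polynomial ring in two variables; this follows from a transcendence-degree count (each of $T_1,s,t$ contributes one, and $\sigma,\tau$ are algebraic over $\bF(s),\bF(t)$ respectively and conversely), but it should be said.
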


As a consequence, the relationship \eqref{eq:bnnaturalequiv} also holds at the level of homology.

\begin{cor}\label{equiv-natural}
	Let $(Y,K)$ be a based knot in an integer homology 3-sphere. The module 
	$I^\natural(Y,K;\Delta_{BN})$ over $\mathscr{S}_{BN}$ is isomorphic to 
	$\widehat I(Y,K;\Delta_{\mathscr{T}_\bF})\otimes_{\mathscr{T}_\bF[x]} \mathscr{S}_{BN}$ where the 
	ring structure of $\mathscr{S}_{BN}$ over $\mathscr{T}_\bF[x]$ is given by sending $x\mapsto P$.
\end{cor}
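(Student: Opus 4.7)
The plan is to deduce this corollary from the chain-level equivalence \eqref{eq:bnnaturalequiv} by combining it with the flatness consequence of Proposition \ref{SBN-free}. Concretely, Proposition \ref{prop:inaturalthetastructure} (together with the discussion immediately preceding it, which packages the connected-sum computation with the theta web) gives a chain homotopy equivalence
\[
  C^\natural_\ast(Y,K;\Delta_{BN}) \simeq \widehat C_\ast(Y,K;\Delta_{\sT_\bF})\otimes_{\sT_{\bF}[x]} \sS_{BN},
\]
where the right-hand $\sT_\bF[x]$-module structure on $\sS_{BN}$ is the one sending $T\mapsto T_1$ and $x\mapsto P$. Taking homology, one obtains a canonical isomorphism of $\sS_{BN}$-modules
\[
  I^\natural(Y,K;\Delta_{BN}) \cong H_\ast\bigl(\widehat C(Y,K;\Delta_{\sT_\bF})\otimes_{\sT_{\bF}[x]} \sS_{BN}\bigr).
\]

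To upgrade this to the statement of Corollary \ref{equiv-natural}, I would invoke the universal coefficient spectral sequence (or the usual K\"unneth argument) for a tensor product with a flat module. Proposition \ref{SBN-free} asserts that $\sS_{BN}$ is free, hence flat, as a module over $\bF[T_1^{\pm 1},P]\subset \sS_{BN}$. The map $\sT_\bF[x]=\bF[T_1^{\pm 1}][x]\to \bF[T_1^{\pm 1},P]$ sending $x\mapsto P$ is a surjection; I would show it is in fact an isomorphism by checking algebraic independence of $T_1$ and $P$ in $\sS_{BN}$, which follows from a direct inspection of monomials in the Laurent expansion of $P=T_1T_2T_3+T_1^{-1}T_2^{-1}T_3+T_1^{-1}T_2T_3^{-1}+T_1T_2^{-1}T_3^{-1}$. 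Consequently $\sS_{BN}$ is a flat $\sT_\bF[x]$-module via the base change $x\mapsto P$.

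With flatness in hand, the tensor product commutes with passage to homology:
\[
  H_\ast\bigl(\widehat C(Y,K;\Delta_{\sT_\bF})\otimes_{\sT_{\bF}[x]} \sS_{BN}\bigr)
  \;\cong\; \widehat I(Y,K;\Delta_{\sT_\bF})\otimes_{\sT_{\bF}[x]} \sS_{BN},
\]
which gives the desired isomorphism of $\sS_{BN}$-modules (indeed of $\Z/2$-graded $\sS_{BN}$-modules, as all maps preserve the mod $2$ grading).

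The main obstacle is establishing Proposition \ref{SBN-free} itself, i.e.\ producing an explicit $\bF[T_1^{\pm 1},P]$-basis of $\sS_{BN}$; the remaining steps are then formal homological algebra. I would prove that proposition by exhibiting, for instance, the finite set $\{T_2^a T_3^b : 0\leqslant a\leqslant 1,\; b\in \Z\}$ (or a comparable explicit collection) as such a basis, using that the relation $P=T_1T_2T_3+T_1^{-1}T_2^{-1}T_3+T_1^{-1}T_2T_3^{-1}+T_1T_2^{-1}T_3^{-1}$ expresses $T_1T_2T_3$ as a $\bF[T_1^{\pm 1}]$-linear combination of lower-degree monomials in $T_2,T_3$ together with $P$, allowing an induction on the $T_2$-degree; checking $\bF[T_1^{\pm 1},P]$-linear independence of the chosen basis then reduces to a combinatorial look at leading monomials.
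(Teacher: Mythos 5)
Your overall plan matches the paper's: from the chain homotopy equivalence \eqref{eq:bnnaturalequiv}, the corollary reduces to showing that $\sS_{BN}$ is flat over $\sT_\bF[x]$ (via $x\mapsto P$), which the paper deduces from freeness of $\sS_{BN}$ over $\bF[T_1^{\pm 1},P]$ (Proposition \ref{SBN-free}). Your observation that $T_1$ and $P$ are algebraically independent, so that $\sT_\bF[x]\cong \bF[T_1^{\pm 1},P]$, is a small useful addition the paper leaves implicit. The flat-base-change step that carries the chain-level statement to homology is standard and fine.

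The gap is in your sketched proof of Proposition \ref{SBN-free}. The proposed $\bF[T_1^{\pm 1},P]$-basis $\{T_2^a T_3^b : 0\leqslant a\leqslant 1,\; b\in \Z\}$ cannot work, and the suggested ``induction on the $T_2$-degree'' fails as a mechanism. The obstruction is that all four monomials of $P=T_1T_2T_3+T_1^{-1}T_2^{-1}T_3+T_1^{-1}T_2T_3^{-1}+T_1T_2^{-1}T_3^{-1}$ have the same exponent pair up to sign, $(|i|,|j|)=(1,1)$, so the defining relation does \emph{not} express $T_2T_3$ as a combination of terms with strictly lower $T_2$-degree (or lower $(|i|,|j|)$ or lower total degree): trying to reduce $T_2^2$, for instance, one obtains
\[
T_2^2 = T_1^{-1}P\,T_2T_3^{-1} + T_1^{-2} + T_1^{-2}T_2^2T_3^{-2} + T_3^{-2},
\]
and the term $T_2^2T_3^{-2}$ has the same $T_2$-exponent as the element you started with; repeating the substitution only shifts the $T_3$-exponent and never terminates. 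So your two-element family in $T_2$ cannot generate, and a na\"ive leading-monomial argument does not get off the ground. The paper's actual proof handles this by choosing the much larger generating set
\[
G=\{T_2^m,\,T_3^m,\,T_2^nT_3^{-1},\,T_2^{-n}T_3,\,T_2^{-1}T_3^{n},\,T_2T_3^{-n},\,T_2T_3\mid m\in \Z,\ n\in \Z_{\geq 1}\}
\]
and working with the (possibly multi-term) leading part $L(Q)$ of a Laurent polynomial with respect to the dictionary order on $(|i|,|j|)$: the point is that $L(P)=P$ itself (all four monomials are in the top degree class), and one must verify by hand that the collection $\{L(gP^k)\}$ fills out a basis degree class by degree class. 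That combinatorial check is the real content of the proposition, and it is not captured by the simple Gr\"obner-style reduction you propose.
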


The proof of Proposition \ref{SBN-free} needs some preparation. Firstly, we fix the dictionary order on $\Z_{\geq 0}\times \Z_{\geq 0}$ by declaring $(i,j)\geq (i',j')$ if either $i>i'$ or $i=i'$, $j>j'$. For a non-zero element $Q=\sum R_{i,j}T_2^iT_3^j$ of the ring $\mathscr{S}_{BN}$ where $R_{i,j}\in \bF[T_1^{\pm 1}]$, define
\[
  {\rm Deg}(Q):=\max_{i,j}\{(|i|,|j|)\mid R_{i,j}\neq 0\}\in \Z_{\geq 0}\times \Z_{\geq 0}
\] 
where the maximum is defined with respect to the dictionary order. We also define $L(Q)$, the leading terms of such a non-zero $Q$, to be the following expression:
\[
  L(Q):=\sum_{(|i|,|j|)={\rm Deg}(Q)} R_{i,j}T_2^iT_3^j \in \sS_{BN}
\]
Proposition \ref{SBN-free} is a consequence of the following two lemmas.

\begin{lemma}
	Suppose a subset $G\subset \mathscr{S}_{BN}$ is given such that the set
	\[
	  \{L(gP^i)\mid g\in G,\,i\geq 0\}
	\] 
	forms a basis of $\mathscr{S}_{BN}$ over  $\bF[T_1^{\pm 1}]$. 
	Then $\mathscr{S}_{BN}$ is free over $\bF[T_1^{\pm 1},P]$ with basis $G$.
\end{lemma}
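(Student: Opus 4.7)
The plan is to prove the lemma by a standard leading-term reduction, exploiting a decomposition of $\mathscr{S}_{BN}$ as an $\bF[T_1^{\pm 1}]$-module graded by $\Z_{\geqslant 0}\times \Z_{\geqslant 0}$. For each $(m,n)\in \Z_{\geqslant 0}^{2}$ let $V_{(m,n)}\subset \mathscr{S}_{BN}$ denote the $\bF[T_1^{\pm 1}]$-submodule spanned by the monomials $T_2^{i}T_3^{j}$ with $(|i|,|j|)=(m,n)$, so that $\mathscr{S}_{BN}=\bigoplus_{(m,n)}V_{(m,n)}$ as $\bF[T_1^{\pm 1}]$-modules and $L(Q)\in V_{{\rm Deg}(Q)}$ by definition. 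Each basis element $L(gP^{i})$ lies in the single graded piece $V_{{\rm Deg}(gP^{i})}$, so projecting the hypothesized basis relation onto each $V_{(m,n)}$ shows that the subset $\{L(gP^{i}) : {\rm Deg}(gP^{i})=(m,n)\}$ is itself an $\bF[T_1^{\pm 1}]$-basis of $V_{(m,n)}$. The only facts about ${\rm Deg}$ used below are that multiplication by a nonzero element of $\bF[T_1^{\pm 1}]$ preserves it and that the dictionary order on $\Z_{\geqslant 0}^{2}$ is a well-ordering.

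For spanning, I would induct on ${\rm Deg}(Q)\in \Z_{\geqslant 0}^{2}$. Given nonzero $Q$, the basis hypothesis lets me expand $L(Q)=\sum_{g,i}a_{g,i}(T_1)\,L(gP^{i})$ as a finite sum, and projecting onto the graded pieces forces $a_{g,i}=0$ unless ${\rm Deg}(gP^{i})={\rm Deg}(Q)$. Rewriting gives
\begin{equation*}
Q-\sum_{g,i}a_{g,i}(T_1)\,gP^{i}=\bigl(Q-L(Q)\bigr)-\sum_{g,i}a_{g,i}(T_1)\bigl(gP^{i}-L(gP^{i})\bigr),
\end{equation*}
and both terms on the right have ${\rm Deg}$ strictly less than ${\rm Deg}(Q)$ (or vanish). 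By the inductive hypothesis this remainder lies in the $\bF[T_1^{\pm 1},P]$-span of $G$, and hence so does $Q$. The base case ${\rm Deg}(Q)=(0,0)$ is automatic, since then every $gP^{i}$ appearing in the sum already lies in $V_{(0,0)}=\bF[T_1^{\pm 1}]$ and coincides with its own leading part.

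For linear independence, suppose $\sum_{g}c_{g}(T_1,P)\,g=0$ is a nontrivial finite relation with $c_{g}\in \bF[T_1^{\pm 1},P]$; expand $c_{g}(T_1,P)=\sum_{i}c_{g,i}(T_1)P^{i}$ and let $d$ be the maximum of ${\rm Deg}(gP^{i})$ over all $(g,i)$ with $c_{g,i}\neq 0$. Projecting the resulting relation $\sum_{g,i}c_{g,i}(T_1)\,gP^{i}=0$ onto the graded piece $V_{d}$ yields
\begin{equation*}
\sum_{{\rm Deg}(gP^{i})=d}c_{g,i}(T_1)\,L(gP^{i})=0,
\end{equation*}
which is a nontrivial $\bF[T_1^{\pm 1}]$-linear dependence among the hypothesized basis elements, a contradiction.

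The proof is essentially a Gr\"obner-style reduction, so there is no serious obstacle; the only point that deserves a brief check is the equality $L(c(T_1)\,gP^{i})=c(T_1)\,L(gP^{i})$ needed in both steps, which is immediate since $c(T_1)\in \bF[T_1^{\pm 1}]$ is nonzero and $\mathscr{S}_{BN}$ is an integral domain, so the monomials making up $L(gP^{i})$ are not cancelled when multiplied by $c(T_1)$.
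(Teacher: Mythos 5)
Your proof is correct and follows the same strategy as the paper: induction on $\mathrm{Deg}$ (with respect to the dictionary well-ordering) for spanning, and projection onto the top graded piece for linear independence. Your version is cleaner in that you make the $\bF[T_1^{\pm 1}]$-module grading $\mathscr{S}_{BN}=\bigoplus V_{(m,n)}$ explicit and justify why only the terms with $\mathrm{Deg}(gP^{i})=\mathrm{Deg}(Q)$ can contribute to $L(Q)$, a point the paper leaves implicit.
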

\begin{proof}
	Let $Q\in \mathscr{S}_{BN}$ be non-zero. By assumption, there are $R_0,\ldots,R_n\in \bF[T_1^{\pm 1}]$ and $g_0,\ldots,g_n\in G$
	such that the leading terms of $Q$ are given as the sum
	\[
	  L(Q)=\sum_{i}R_iL(g_iP^i)
	\]
	In particular, either $Q=\sum_{i}R_ig_iP^i$ or we have the following:
	\[{\rm Deg}(Q-\sum_{i}R_ig_iP^i)<{\rm Deg}(Q).\]
	Thus by induction we can write $Q$ as a linear combination of the elements in $G$ over the ring 
	$\bF[T_1^{\pm 1},P]$. Next, suppose we have a relation of the form
	\[
	  \sum_{m=1}^{N}R_mg_mP^{k_m}=0
	\]
	with $R_m\in \bF[T_1^{\pm 1}]$ not all zero, $k_m\in \Z_{\geq 0}$, and $g_m\in G$. Then we define $(i_0,j_0)\in \Z_{\geq 0}\times \Z_{\geq 0}$ to be the maximum of 
	${\rm Deg}(g_mP^{k_m})$ among all $m$ that $R_m\neq 0$. Therefore, 
	\[
	  \sum_{{\rm Deg}(g_mP^{k_m})=(i_0,j_0)}R_mL(g_m P^{k_m})=0.
	\]
	This contradicts our assumption and implies that $R_m=0$ for all $m$. In particular, $G$ gives an
	$\bF[T_1^{\pm 1},P]$-basis for $\mathscr{S}_{BN}$.
\end{proof}

\begin{lemma}
	Let $G$ be the following subset of $\mathscr{S}_{BN}$:
	\[
	  G:=\{T_2^m,\,T_3^m,\,T_2^nT_3^{-1},\,T_2^{-n}T_3,\,T_2^{-1}T_3^{n},\,T_2T_3^{-n},\,T_2T_3\mid m\in \Z,
	  n\in \Z_{\geq 1}\}.
	\]
	Then $\{L(gP^k)\mid g\in G,\,k\geq 0\}$ is an $\bF[T_1^{\pm 1}]$-basis for $\mathscr{S}_{BN}$.
\end{lemma}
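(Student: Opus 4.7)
To prove the lemma, the plan is to decompose $\sS_{BN}$ as an internal direct sum of $\bF[T_1^{\pm 1}]$-submodules indexed by pairs $(p,q) \in \Z_{\geq 0}^2$, writing
\[
\sS_{BN} \;=\; \bigoplus_{(p,q) \in \Z_{\geq 0}^2} M_{p,q},
\]
where $M_{p,q}$ is the $\bF[T_1^{\pm 1}]$-span of those monomials $T_2^a T_3^b$ with $(|a|,|b|) = (p,q)$. Observe that $L(Q) \in M_{p,q}$ whenever ${\rm Deg}(Q) = (p,q)$, so the basis claim reduces to showing that for every $(p,q)$ the subset
\[
\bigl\{\,L(gP^k) \,:\, g \in G,\ k \geq 0,\ {\rm Deg}(gP^k) = (p,q)\,\bigr\}
\]
is an $\bF[T_1^{\pm 1}]$-basis of $M_{p,q}$. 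Note that $M_{p,q}$ has rank $1$, $2$, or $4$ according to whether $p = q = 0$, exactly one of $p,q$ vanishes, or neither vanishes.

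The first step is an explicit description of $L(gP^k)$. Writing $A := T_1 T_2 + T_1^{-1} T_2^{-1}$ and $B := T_1^{-1} T_2 + T_1 T_2^{-1}$, we have $P = AT_3 + BT_3^{-1}$, and hence (over $\bF$)
\[
P^k \;=\; \sum_{s=0}^{k} \binom{k}{s}\, A^s B^{k-s}\, T_3^{2s-k}.
\]
The maximum $|T_2|$-power in $P^k$ is $k$, realized by $T_2^k$ with coefficient $\sum_s \binom{k}{s} T_1^{2s-k} T_3^{2s-k}$ and by $T_2^{-k}$ with coefficient $\sum_s \binom{k}{s} T_1^{k-2s} T_3^{2s-k}$. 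For $g = T_2^a T_3^b$, the maximum of $|a+i|$ over the support of $P^k$ is $|a|+k$, attained at $i = k$ if $a > 0$, at $i = -k$ if $a < 0$, and at both if $a = 0$; restricting to that slice and then maximizing $|b+j|$ over $j = 2s - k$ gives $|b|+k$, attained at $j = \operatorname{sgn}(b)\,k$ if $b \neq 0$ and at $j = \pm k$ if $b = 0$. This shows ${\rm Deg}(gP^k) = (|a|+k,\,|b|+k)$, and exhibits $L(gP^k)$ as a sum of one, two, or four monomials with coefficients $T_1^{\pm k}$ determined by the above formulas. For example, $L(P^k) = T_1^k T_2^k T_3^k + T_1^{-k} T_2^k T_3^{-k} + T_1^{-k} T_2^{-k} T_3^k + T_1^k T_2^{-k} T_3^{-k}$ for $k \geq 1$, and $L(T_2^a T_3^b \cdot P^k) = T_1^{\pm k} T_2^{a \pm k} T_3^{b \pm k}$ when $a, b \neq 0$, with the signs dictated by the signs of $a$ and $b$.

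Next, I would enumerate for each $(p,q) \in \Z_{\geq 0}^2$ the pairs $(g, k)$ with $g = T_2^a T_3^b \in G$ and $|a| + k = p$, $|b| + k = q$. The combinatorial structure of $G$ (pinning $(a, b)$ to lie on a coordinate axis, or to be $(1,1)$, or to be $(\pm 1, \mp n)$ or $(\pm n, \mp 1)$ for some $n \geq 1$) makes a direct case-by-case count straightforward: one gets exactly $1$ pair for $(p,q) = (0,0)$, exactly $2$ pairs when exactly one of $p, q$ vanishes, and exactly $4$ pairs when $p, q \geq 1$ — matching the rank of $M_{p,q}$ in each case. For instance, when $p > q \geq 1$ the four pairs are $(T_2^{p-q+1}T_3^{-1}, q-1)$, $(T_2^{-(p-q+1)}T_3, q-1)$, $(T_2^{p-q}, q)$, and $(T_2^{-(p-q)}, q)$. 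The corresponding leading terms assemble into a square matrix over $\bF[T_1^{\pm 1}]$ with respect to the natural monomial basis of $M_{p,q}$; in each case one checks that, after a permutation of rows, this matrix is triangular with diagonal entries of the form $T_1^n$, so that its determinant is a unit in $\bF[T_1^{\pm 1}]$. Invertibility of the transition matrix on every block yields both linear independence and spanning, completing the proof.

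The main obstacle will be the bookkeeping in these last two steps: one must handle the overlaps in the defining list of $G$ (e.g.\ $T_2 T_3^{-1}$ lies in both the $T_2^n T_3^{-1}$ and $T_2 T_3^{-n}$ families and must be counted only once), the boundary case $k = 0$ in which formulas that would otherwise produce multiple monomials with distinct $T_3$-exponents collapse to a single monomial, and the subdivision into cases $p = q$, $p > q$, $q > p$, and $pq = 0$ needed to list all $(g,k)$-pairs landing in each block $M_{p,q}$. The underlying mechanism is uniform — the leading term of $P^k$ is the ``norm-like'' element $\sum_{\epsilon_2, \epsilon_3 \in \{\pm 1\}} T_1^{\epsilon_2 \epsilon_3 k} T_2^{\epsilon_2 k} T_3^{\epsilon_3 k}$, and twisting by $g \in G$ selects a distinguished corner of this four-element box — but translating this into a complete, one-to-one enumeration requires all the separate sub-cases above.
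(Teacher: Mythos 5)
Your proposal is correct and follows essentially the same strategy as the paper's proof: partition $\sS_{BN}$ into the $\bF[T_1^{\pm 1}]$-modules spanned by the four-element (or smaller) orbits $\{T_2^{\pm p}T_3^{\pm q}\}$, enumerate which $(g,k)$ pairs land in each degree box, and verify that the corresponding leading terms span that block. The paper simply records the enumeration in a table (using the $T_2 \leftrightarrow T_3$ symmetry to reduce to $i\geq j$) and leaves the block-invertibility check as a routine observation, whereas you spell out the explicit formula for $L(gP^k)$ and the triangular structure of the transition matrix; both arguments are sound.
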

\begin{proof}
	For a given $(i,j)\in \Z_{\geq 0}\times \Z_{\geq 0}$, we characterize all the elements of the form 
	$gP^k$ with $g\in G$ having ${\rm Deg}(gP^k)=(i,j)$. Because of the symmetrical role of $T_2$ and $T_3$, we assume 
	that $i\geq j$. In the following table we list all such elements $gP^k$ with degree $(i,j)$.
	
	\begin{center}
\def\arraystretch{.35}% 
\begin{tabular}{|c|l|}
 \hline
 &\\
  $i=j=0$&$1\cdot P^0$\\ 
 &\\
 \hline
  &\\
  $i>0$, $j=0$&$T_2^i\cdot P^0$,\hspace{.10cm} $T_2^{-i}\cdot P^0$\\ 
   &\\
 \hline
  &\\
   $i=j>0$&$P^i$,\hspace{.10cm} $T_2T_3P^{i-1}$,\hspace{.10cm} $T_2T_3^{-1}P^{i-1}$,\hspace{.10cm}$T_2^{-1}T_3P^{i-1}$\\ 
  &\\
 \hline
   &\\
   $i>j>0$&$T_2^{i-j}P^j$,\hspace{.10cm} $T_2^{j-i}P^j$,\hspace{.10cm} $T_2^{i-j+1}T_3^{-1}P^{j-1}$,\hspace{.10cm}$T_2^{j-i-1}T_3P^{j-1}$\\ 
  &\\
 \hline
\end{tabular}
\end{center}	
It is clear that in each case the leading terms of the listed elements give an $\bF[T_1^{\pm 1}]$-basis of
	\[
	  \bF[T_1^{\pm 1}]\{T_2^iT_3^j, T_2^iT_3^{-j},T_2^{-i}T_3^j,T_2^{-i}T_3^{-j}\},
	\]  
	the direct sum of which give a decomposition of $\sS_{BN}$ into $\bF[T_1^{\pm 1}]$-modules.
\end{proof}

\subsection{Remarks on some concordance invariants}\label{subsection:remarks-conc}

For a knot $K$ whose slice genus satisfies the assumption in \eqref{slice-genus-cond}, we have given rather specific recipes for how to describe the invariants of \cite{km-rasmussen, km-concordance} in terms of our theory of $\cS$-complexes with local coefficients. It is natural to ask whether the concordance invariants $s^\#(K)$, $z_{BN}^\natural(K)$, $z^\#(K)$ are related to the various concordance invariants constructed in this paper. A closely related question is the following:
\begin{question}
	Are the invariants $s^\#(K)$, $z_{BN}^\natural(K)$, $z^\#(K)$ determined by the local equivalence class of the $\cS$-complex 
	$\widetilde C(S^3,K;\Delta_\sT)$ with local coefficients?
\end{question}

We now suggest a possible recipe for how the above concordance invariants may be derived more directly from our equivariant homology theories. We first mimic the definition of $z^\natural_{BN}(K)$, as far as we can, in the setting of $\widehat{I}(K;\Delta_{\sT_\bF})$, motivated by Corollary \ref{equiv-natural}. To ensure that we have a cobordism map at our disposal, we {\emph{assume}} that there is a connected, oriented surface cobordism $S:U_1\to K$ in $[0,1]\times S^3$ with negative definite branched double cover. Then we have an induced map
\[
	\widehat{I}(S;\Delta_{\sT_\bF})':\widehat{I}(U_1;\Delta_{\sT_\bF})' \longrightarrow \widehat{I}(K;\Delta_{\sT_\bF})'
\]
where the prime superscripts indicate that we mod out by the torsion elements over $\sT_\bF[x]$. Let $g$ be the genus of $S$. We define
\begin{equation}\label{eq:zhatdef}
	\widehat{z}(K) := x^g\cdot [ \widehat{I}(K;\Delta_{\sT_\bF})' : \text{im}\, \widehat{I}(S;\Delta_{\sT_\bF})' ] \subset \text{Frac}(\sT_\bF[x])
\end{equation}
Because the equivariant homology $\widehat{I}(K;\Delta_{\sT_\bF})$ is a rank 1 module over $\sT_\bF[x]$, it is isomorphic as a module to an ideal $I\subset \sT_\bF[x]$. Then $\widehat z(K)$ may be described as
\[
	\widehat z(K) = x^g\zeta^{-1}\cdot I \subset \text{Frac}(\sT_\bF[x])
\]
where $\zeta= \widehat{I}(S;\Delta_{\sT_\bF})(1)$. Thus $\widehat{z}(K)$ is a fractional ideal for the ring $\sT_\bF[x]=\bF[T^{\pm 1}, x]$. Now the ring homomorphism $\sT_\bF[x]\to \sS_{BN}$ induced by sending $T\mapsto T_1$ and $x\mapsto P$ induces a map from fractional ideals of $\sT_\bF[x]$ to fractional ideals of $\sS_{BN}$. By Proposition \ref{prop:inaturalthetastructure} and the naturality of the isomorphism in Corollary \ref{equiv-natural}, we conclude that $\widehat{z}(K)$ is sent to $z^\natural(K)$ under this correspondence.

Similar remarks hold for the relationship between $\widehat{z}(K)$ and $z^\#(K)$. Consider the homomorphism $\sT_\bF[x]\to \sR^\#$ induced by sending $T\mapsto T_0$ and $x\mapsto P$, and the homomorphism $\sR^\#\to \sS_{BN}$ which sets $T_0=T_1$. We obtain the commutative diagram on the left:

\noindent \begin{minipage}{.5\textwidth}
\begin{equation*}
	\begin{tikzcd}[column sep=1ex, fill=none, row sep=5ex, fill=none, /tikz/baseline=-10pt]
 & \sT_\bF[x] \arrow{ld} \arrow{rd} &\\
\sR^\# \arrow{rr} & & \sS_{BN} 
\end{tikzcd}
\end{equation*}
\end{minipage}
\begin{minipage}{.5\textwidth}
\begin{equation*}
	\begin{tikzcd}[column sep=1ex, fill=none, row sep=5ex, fill=none, /tikz/baseline=-10pt]
 & \widehat{z}(K)   &\\
z^\#(K)  \arrow[mapsfrom]{ru} & & z^\natural_{BN}(K) \arrow[mapsfrom]{lu} \arrow[mapsfrom]{ll}
\end{tikzcd}
\end{equation*}
\end{minipage}
These homomorphisms induce maps between the fractional ideals of the three rings, and via these correspondences the invariant $\widehat{z}(K)$ is sent to $z^\#(K)$ and $z^\natural_{BN}(K)$, as indicated in the above diagram on the right.

There is a relation between $\widehat{z}(K)$ and the nested sequence of ideals $J_i^{\sT_\bF}(K)$ defined in \eqref{eq:locjideals} which can be described as follows. Given any $\sT_\bF[x]$-module $Z$ in $\text{Frac}(\sT_\bF[x])$ and any integer $i$, we define 
\[
  \fJ_i(Z):=\left \{\frac{a}{b}\mid \exists \: \frac{a x^{j-i}+a_{j-i-1}x^{j-i-1}+\dots+a_0}{b x^{j}+b_{j-1}x^{j-1}+
  \dots+b_0} \in Z\right\}\subset \text{Frac}(\sT_\bF).
\]
This is clearly a $\sT_\bF$-submodule of $\text{Frac}(\sT_\bF)$.

\begin{prop}
	For any $K$ as above, $\fJ_{i+\sigma(K)/2}(\widehat z(K))\subset J_i^{\sT_\bF}(K)$. In particular, $\fJ_i(\widehat z(K))$ is an ideal of $\sT_\bF$.
\end{prop}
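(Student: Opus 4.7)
The plan is to translate the condition $a/b \in \fJ_{i+\sigma(K)/2}(\widehat z(K))$ into an element of $\fI \subset \sT_\bF[\![x^{-1},x]$ with a controlled leading term, by pushing the defining relations of $\widehat z(K)$ through the localization map $i_\ast$.

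First I would simplify the grading shift. Since $([0,1]\times S^3, S)$ is a negative definite pair from $U_1$ to $K$ with $S$ of genus $g$, the identity $\sigma(K') = \sigma(K) + \tfrac{1}{2}S\cdot S + \chi(S)$ from Definition \ref{def:negdef}, applied with $\sigma(U_1) = 0$, $S\cdot S = 0$, and $\chi(S) = -2g$, forces $g = -\sigma(K)/2$. Thus $\fJ_{i+\sigma(K)/2}(\widehat z(K)) = \fJ_{i-g}(\widehat z(K))$, and it suffices to bound the latter by $J_i^{\sT_\bF}(K)$.

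Next I would unpack the definitions. Given $a/b \in \fJ_{i-g}(\widehat z(K))$, there exist polynomials $P, Q \in \sT_\bF[x]$ of top degrees $j-i+g$ and $j$ with leading coefficients $a$ and $b$, such that $P/Q \in \widehat z(K) = x^g\cdot[\widehat I(K;\Delta_{\sT_\bF})' : \mathrm{im}\,\widehat I(S;\Delta_{\sT_\bF})']$. Setting $\zeta := \widehat I(S;\Delta_{\sT_\bF})(1) \in \widehat I(K;\Delta_{\sT_\bF})'$, the definition of the generalized module quotient unwinds to the relation
\[
    P\zeta = Q x^g\eta
\]
in $\widehat I(K;\Delta_{\sT_\bF})'$, for some $\eta \in \widehat I(K;\Delta_{\sT_\bF})'$.

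Then I would apply the localization $i_\ast : \widehat I(K;\Delta_{\sT_\bF}) \to \brI(K;\Delta_{\sT_\bF}) \cong \sT_\bF[\![x^{-1},x]$ from the large equivariant triangle, which descends to the torsion-free quotient because $\brI$ is torsion-free. Because the unknot has no irreducible critical points, $\widehat I(U_1;\Delta_{\sT_\bF}) = \sT_\bF[x]$ and the corresponding $i_\ast$ is the inclusion $\sT_\bF[x] \hookrightarrow \sT_\bF[\![x^{-1},x]$ sending $1$ to $1$. Naturality of the equivariant triangle combined with Corollary \ref{localization} then gives $i_\ast(\zeta) = u$, where $u = 1 + \sum_{k<0} b_k x^k$ is a unit of $\sT_\bF[\![x^{-1},x]$. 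Writing $\xi := i_\ast(\eta) \in \fI$, the displayed relation becomes
\[
    P u = Q x^g \xi
\]
in $\sT_\bF[\![x^{-1},x]$. Comparing top-degree terms, the left side has top term $a x^{j-i+g}$; if $\xi$ has leading term $c x^k$ then the right side has top term $b c x^{j+g+k}$, forcing $k = -i$ and $b c = a$. Hence $c = a/b$ lies in $\sT_\bF$, which simultaneously verifies the ``in particular'' clause that $\fJ_i(\widehat z(K)) \subset \sT_\bF$ is an ideal, and, by the definition of $J_i^{\sT_\bF}(K)$ as the set of leading coefficients of elements of $\fI$ with top degree $-i$, shows that $a/b \in J_i^{\sT_\bF}(K)$. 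The main subtlety is that the formal quotient $Pu/(Qx^g)$ need not make sense in $\sT_\bF[\![x^{-1},x]$ since $b$ is not assumed invertible in $\sT_\bF$; this is precisely why we invoke the a priori existing element $\xi = i_\ast(\eta)$ and extract the divisibility $b \mid a$ only after the leading-term comparison.
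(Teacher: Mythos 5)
Your proof is correct and follows essentially the same route as the paper's: translate membership in the fractional ideal $\widehat z(K)$ into a relation in $\widehat I(K;\Delta_{\sT_\bF})'$, push it through the localization $i_\ast$ to get an actual element $\xi\in\fI\subset\sT_\bF[\![x^{-1},x]$, and read off the leading coefficient of $\xi$ to land in $J_i^{\sT_\bF}(K)$, with the shift by $g=-\sigma(K)/2$ (which you correctly extract from Definition~\ref{def:negdef}) accounting for the index offset. The paper phrases the localization step as the module-quotient identity of its display~\eqref{z-hat-re}, whereas you apply $i_\ast$ directly to $P\zeta = Qx^g\eta$ and verify only the one-way inclusion needed; this is a mild streamlining rather than a different argument.
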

\begin{proof}
	Suppose a cobordism $S:U_1\to K$ is chosen as above. Recall that 
	\[
		\fI(K)=\text{im}(i_\ast:\widehat{I}(K;\Delta_{\sT_\bF})\to\sT_\bF[\![x^{-1},x] ),
	\]
	\[
	  J_i^{\sT_\bF}(K)=\left \{a\in \sT_\bF\mid \exists \; ax^{-i} + a_{-i-1}x^{-i-1} + \cdots\in \fI \right\}.
	\]
	First, with notation as in \eqref{eq:zhatdef}, we claim that
	\begin{equation}\label{z-hat-re}
	 [\widehat{I}(K;\Delta_{\sT_\bF})' : \text{im}\, \widehat{I}(S;\Delta_{\sT_\bF})']=
	 [\fI (K): \widebar{I}(S;\Delta_{\sT_\bF})(1)]
	\end{equation}
	where $1\in \sT_\bF[\![x^{-1},x]$ in \eqref{z-hat-re} is a generator of $\fI (U_1)=\sT_\bF[x]$ as a module over $\sT_\bF[x]$. 
	This claim is a straightforward consequence of the exact triangle in \eqref{top-equiv-triangle} and the fact that 
	all elements in $\widecheck{I}(K;\Delta_{\sT_\bF})$ are $\sT_\bF[x]$-torsion. Corollary \ref{localization} implies that
	\[
	  \widebar{I}(S;\Delta_{\sT_\bF})(1)=1+\sum_{i=-\infty}^{-1}b_ix^i.
	\]
	Let $a/b\in \fJ_{i+\sigma(K)/2}(\widehat z(K))$. Then \eqref{z-hat-re} implies there are $P(x), Q(x)\in \sT_\bF[x]$ such that
	\[
	  \frac{P(x)(1+\sum_{i=-\infty}^{-1}b_ix^i)}{Q(x)}\in \fI (K),  
	\]
	and the ratio of the leading term of $P(x)/Q(x)$ is equal to $\frac{a}{b}x^{-i}$. Thus $a/b\in  J_i^{\sT_\bF}(K)$.
\end{proof}

The above proposition and the preceding discussion provides a partial answer to Question \ref{question:kmconc} for the family of knots with slice genus $-\sigma(K)/2$.

\begin{question}
	Can the definition of $\widehat{z}(K)$ be extended to all knots in the 3-sphere such that the relationship to $z^\#(K)$ and  $z^\natural_{BN}(K)$ described above still holds?
\end{question}

Note that if one constructs general cobordism maps for the equivariant theory, then there is an obvious way of extending the definition of $\widehat{z}(K)$ to all knots using \eqref{eq:zhatdef}. 

A similar discussion holds for $s^\#(K)$. In particular, if there is a surface cobordism $S:U_1\to K$ as above, we may define $\widehat{s}(K)$ by replacing $\sT_\bF$ in \eqref{eq:zhatdef} with $\sT_\bQ$, and from our discussion in Subsection \ref{subsec:rasmussen}, $s^\#(K)$ may be recovered from $\widehat{s}(K)$.

\newpage

%!TEX root = main.tex

\section{Computations}\label{sec:computations}

In this section we study our invariants for two-bridge knots and torus knots. In particular, we prove Theorems \ref{thm:hinvcomps} and \ref{thm:sasahiracomparison} from the introduction. 

After discussing the utility of passing to the double branched cover in Subsection \ref{sec:branchedcover}, we turn to two-bridge knots, where much of the structure of our invariants can be described combinatorially. We rely on previous results about instantons on $\R\times L(p,q)$, which depend upon equivariant ADHM constructions. Along the way we describe all of our invariants for the right-handed trefoil, from which the computation $\hinv_\sT=1$ follows. We also make contact with Sasahira's instanton homology for lens spaces, from which Theorem \ref{thm:sasahiracomparison} follows. We then discuss the $(3,5)$ and $(3,4)$ torus knots, making use of Austin's work \cite{austin}, which also relies on equivariant ADHM constructions. Subsection \ref{subsection:irr-torus-knots} discusses the irreducible Floer homology of torus knots. Finally, in Subsection \ref{sec:morevanishing}, we discuss some more examples for which $\hinv(K)$ vanishes.

\subsection{Passing to the branched cover}\label{sec:branchedcover}

The geometrical input involved in the singular instanton Floer complexes $\widetilde C_\ast(K)$ for a knot $K\subset S^3$ as defined in Section \ref{sec:tilde} can be related to the corresponding data on the double branched cover $\pi : \Sigma\to S^3$ over $K$. On the level of critical sets, this is established in \cite{PS}, and the description extends to the cylinder in a straightforward manner.

Recall that the critical set $\fC(K)= \fC(S^3,K)$ of the unperturbed singular Chern-Simons functional for $K$ may be identified with the $SU(2)$ traceless character variety of $K$ from \eqref{eq:charvar}, denoted $\sX(K)=\sX(S^3,K)$. For a closed oriented 3-manifold $Y$, we define
\[
	\sX(Y) := \{\rho:\pi_1(Y)\to SU(2)\} /SU(2),
\]
and via holonomy $\sX(Y)$ may be identified with the critical set $\fC(Y)$ of flat connections modulo gauge transformations for the $SU(2)$ Chern-Simons functional on $Y$. If $Y$ is a $\Z/2$-homology 3-sphere, as is the case for $\Sigma$, then $\sX(Y)$ is naturally identified with the corresponding $SO(3)$ character variety, by taking adjoints. Note in this case there are unique $SU(2)$ and $SO(3)$ bundles over $Y$ up to isomorphism.

Let $\tau:\Sigma\to \Sigma$ be the covering involution of $\Sigma$. Fix an $SU(2)$ bundle over $\Sigma$ and let $\fg_\Sigma$ be its adjoint bundle. A lift $\widetilde \tau$ of $\tau$ to $\fg_\Sigma$ is specified by choosing a bundle isomorphism $f:\tau^\ast \fg_\Sigma \to \fg_\Sigma$ via the relation $\widetilde \tau = f\circ p^{-1}$ where $p:\tau^\ast \fg_\Sigma \to \fg_\Sigma$ is the pullback map. Such lifts fall into two types, depending on whether $\widetilde{\tau}|_K$ is the identity or of order two. We restrict our attention here to the latter case; these are locally conjugate to the model given in \eqref{eq:locmodelbranched}. For such a lift $\widetilde{\tau}$, the quotient $\check{\fg}_K=\fg_\Sigma/\widetilde \tau $ is an $SO(3)$ orbifold bundle over $(S^3,K)$, isomorphic to the adjoint orbifold bundle $\check{\fg}_E$ considered in Section \ref{sec:connections}. Let $\fC^\tau(\Sigma)$ denote the subset of $\fC(\Sigma)$ of classes that are represented by a connection whose adjoint is fixed under the induced action for some such choice of a lift $\widetilde \tau$. Now, define a map
\[
	\Pi:\fC(K) \longrightarrow \fC(\Sigma)^\tau
\]
as follows: given an $SU(2)$ singular connection representing a class in $\fC(K)$, take its $SO(3)$ adjoint, pull back the induced orbifold connection to obtain an $SO(3)$ connection on $\Sigma$, and then take the gauge equivalence class of its unique $SU(2)$ lift. In terms of representations, $\fC(\Sigma)^\tau$ corresponds to the subset $\sX(\Sigma)^\tau\subset \sX(\Sigma)$ consisting of $\rho:\pi_1(\Sigma)\to SU(2)$ such that $\tau^\ast \rho = u\rho u^{-1}$ for some order four element $u\in SU(2)$, up to conjugacy.

The fibers of the map $\Pi$ are either one or two points. More precisely, we may divide the classes in $\fC(\Sigma)^\tau$ into three types, determined by their gauge stabilizers:
\begin{enumerate}
	\item[(i)] ({\emph{trivial}}) the trivial connection class $\theta_\Sigma$ with stabilizer $SU(2)$;
	\item[(ii)] ({\emph{abelian}}) non-trivial classes with stabilizer isomorphic to $U(1)$;
	\item[(iii)] ({\emph{irreducible}}) classes with stabilizer $\{\pm1 \}$.
\end{enumerate}
The map $\Pi$ is onto, and $\Pi^{-1}(\theta_\Sigma)=\{\theta\}$ where $\theta$ is the reducible singular flat connection for $K$; the fiber over a class of type (ii) in $\fC(\Sigma)^\tau$ consists of a unique irreducible class in $\fC(K)$; and over a class of type (iii) are {\emph{two}} irreducible class in $\fC(K)$. In particular, a non-irreducible class in $\fC(\Sigma)^\tau$ may come from an irreducible class in $\fC(K)$. In terms of representations, as is described in \cite[Section 4]{PS}, (i) corresponds to the trivial homomorphism, (ii) to non-trivial representations with abelian image, and (iii) to non-abelian representations. An abelian representation in $\sX(\Sigma)^\tau$ lifts to a unique binary dihedral representation in $\sX(K)$, and an irreducible representation has two irreducible lifts to $\sX(K)$. The map $\Pi$ factors as
\[
	\fC(K) \longrightarrow \fC(K)/\iota \longrightarrow \fC(\Sigma)^\tau
\]
where the first map is the quotient map associated to the flip symmetry $\iota$ of Subsection \ref{subsec:flip}, and the second map is a bijection. The fibers of $\Pi$ consisting of one point are given by fixed points of $\iota$, while the fibers with two points are the free orbits of $\iota$. In particular, the involution $\iota$ restricted to $\fC(K)$ acts freely on flat connections whose pullbacks to $\Sigma$ are irreducible, and fixes all other connection classes.

For $\alpha,\beta\in \fC(K)$ we have an $\R$-invariant flip symmetry $\iota:M(\alpha,\beta)\to  M(\iota\alpha,\iota\beta)$ between moduli spaces on $\R\times (S^3,K)$, and $\iota^2=\text{id}$. Thus $\iota$ acts on $M(\alpha,\beta)\cup  \iota M(\alpha,\beta)$ as an involution. If either $\alpha$ or $\beta$ is fixed by $\iota$ we have an identification
\[
	\left(\breve{M}(\alpha,\beta)\cup  \iota \breve{M}(\alpha,\beta))\right)/\iota= \breve{M}(\alpha_\Sigma,\beta_\Sigma)^{\tau}
\]
where $\breve{M}(\alpha_\Sigma,\beta_\Sigma)^\tau$ is the moduli space of instantons on $\R \times \Sigma$ which are fixed by some lift $\widetilde{\tau}$ of the branched covering $\tau$ extended to the cylinder. Here $\alpha_\Sigma = \Pi(\alpha)$ and $\beta_\Sigma=\Pi(\beta)$. We also assume that our metric on $\Sigma$ is invariant with respect to the branched covering involution. (We assume for simplicity that we do no need any holonomy perturbations to achieve regularity.) Then $\breve{M}(\alpha_\Sigma,\beta_\Sigma)^{\tau}$ is the fixed point set of a $\Z/2$-action on $\breve{M}(\alpha_\Sigma,\beta_\Sigma)$ induced by $\tau$. If $\iota$ acts freely on both $\alpha$ and $\beta$ we have instead an identification
\[
	\left(\breve{M}(\alpha,\beta)\cup  \iota \breve{M}(\alpha,\beta))\right)/\iota \cup \left(\breve{M}(\alpha,\iota\beta)\cup \breve{M}(\iota\alpha,\beta))\right)/\iota = \breve{M}(\alpha_\Sigma,\beta_\Sigma)^{\tau}
\]
where the two sets on the left hand side are disjoint. 

\begin{lemma}\label{lemma:flipsymmetry}
	The involution $\iota$ acts freely on $\breve{M}(\alpha,\beta)\cup  \iota\breve{M}(\alpha,\beta)$, and $\nu(\iota[A])=-\nu([A])$.
\end{lemma}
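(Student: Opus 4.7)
The second identity $\nu(\iota[A])=-\nu([A])$ will follow immediately from the effect of the flip symmetry on the monopole number already recorded in \eqref{iota-effect}, so the substance is the freeness claim.

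My plan is to split into cases according to whether both limits are $\iota$-fixed. If either $\alpha\neq\iota\alpha$ or $\beta\neq\iota\beta$, then the two subsets $\breve{M}(\alpha,\beta)$ and $\iota\breve{M}(\alpha,\beta)=\breve{M}(\iota\alpha,\iota\beta)$ are disjoint, since their elements have distinct asymptotic limits, and $\iota$ interchanges them, which is manifestly a free action. It therefore suffices to treat the case $\iota\alpha=\alpha$, $\iota\beta=\beta$, where Lemma \ref{lemma:binarydihedral} tells us both endpoints are represented by binary dihedral traceless $SU(2)$ representations.

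In this case I will argue by contradiction. Assume $[A]\in\breve{M}(\alpha,\beta)$ is $\iota$-fixed, so that $g\cdot A=A\otimes\xi$ for some $g\in\sG$. Taking adjoints gives $\Ad(g)\cdot A^{\text{ad}}=A^{\text{ad}}$, exhibiting $\Ad(g)$ as a non-trivial stabilizer of $A^{\text{ad}}$ in the enlarged $SO(3)$-gauge group that does not lift to an $SU(2)$-gauge transformation of $E$. This is equivalent to saying that, after pulling back along the double branched cover $\pi\colon\R\times\Sigma\to\R\times S^3$, the connection $\pi^*A^{\text{ad}}$ is a smooth, genuinely $SO(3)$-reducible ASD connection, and hence decomposes as $\underline{\R}\oplus L$ for a Hermitian line bundle $L\to\R\times\Sigma$ carrying a $U(1)$-ASD connection whose flat limits correspond to the abelian lifts $\alpha_\Sigma,\beta_\Sigma\in\fC(\Sigma)^\tau$.

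The hard part will be to convert this reducibility into an actual obstruction to $[A]$ being non-constant. The key idea is an index comparison: the moduli space of abelian ASD $U(1)$-connections on $\R\times\Sigma$ with those flat limits forms a stratum of $\breve{M}(\alpha,\beta)$ of strictly smaller expected dimension than the ambient irreducible stratum, because reducing to the abelian subgroup drops the gauge-theoretic index by twice the jump in the stabilizer dimension. Combined with the regularity of $\breve{M}(\alpha,\beta)$, obtained after an $\iota$-equivariant perturbation of the sort discussed in the remark following Proposition \ref{prop:perturb2} together with a standard equivariant transversality argument on the $U(1)$-reducible locus, this will force the abelian stratum of $\breve{M}(\alpha,\beta)$ to be empty, ruling out any non-constant $\iota$-fixed trajectory and completing the proof.
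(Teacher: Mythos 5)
Your reduction to the case $\iota\alpha=\alpha$, $\iota\beta=\beta$ and your key observation---that an $\iota$-fixed class $[A]$ pulls back to a reducible ASD connection on $\R\times\Sigma$---match the paper. The handling of the monopole number via \eqref{iota-effect} is also correct and is exactly what the paper does.

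Where you go astray is in the final paragraph. You treat the non-existence of a non-constant $\iota$-fixed instanton as something that must be engineered by a clever combination of an index drop and an equivariant perturbation. But no such machinery is needed, and the route you sketch is actually unavailable: the paper repeatedly warns that equivariant transversality does not hold generically (this is precisely why the functoriality in the paper is restricted to negative definite pairs), and moreover Subsection~\ref{sec:branchedcover} works under the standing assumption that no holonomy perturbation is used, so you cannot freely invoke an ``$\iota$-equivariant perturbation.'' Even if you could, the argument ``the reducible stratum has smaller expected dimension, so generically it is empty'' is not a proof of emptiness at a fixed unperturbed metric.

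The correct conclusion is much more elementary and unconditional. A non-constant finite-energy reducible ASD connection $A'$ on $\R\times\Sigma$ would split its adjoint as $\underline{\R}\oplus L$ with a $U(1)$ ASD connection on the complex line bundle $L$. Its curvature $F_L$ is then a non-zero $L^2$ anti-self-dual harmonic $2$-form on $\R\times\Sigma$. Since $\Sigma$ is a rational homology sphere, there are no such forms, forcing $F_L=0$; the connection is flat and hence a constant trajectory. So no non-constant reducible instanton exists on the cylinder at all, and in particular $[A]$ cannot be $\iota$-fixed. This single observation---there is no such instanton---is the entire content of the paper's proof of freeness, and it replaces everything after your sentence beginning ``The hard part.''
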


\begin{proof}

	Let $[A]\in \breve{M}(\alpha,\beta)$ be a (non-constant) instanton fixed by $\iota$. Then the pull-back of $[A]$ to $\R\times \Sigma$ is a non-constant reducible. 
	Since there is no such instanton we conclude that the action of $\iota$ is free.
	The behavior of the monopole number with respect to $\iota$ is given in \eqref{iota-effect}.
\end{proof}

\subsection{Two-bridge knots}\label{subsec:twobridge}

Let $p, q$ be relative prime, with $p$ odd. Write $K_{p,q}$ for the two-bridge knot whose two-fold branched cover is the lens space $L(p,q)$. The critical set $\fC(L(p,q))$ is easy to describe. Write $\smash{\xi_{L(p,q)}^i}$ for the flat $SU(2)$ connection class on $L(p,q)$ corresponding to the conjugation class of the representation $\pi_1(L(p,q))=\Z/p\to SU(2)$ defined by $\zeta\mapsto \zeta^i\oplus \zeta^{-i}$. Then
\[
	\fC(L(p,q)) = \{ \xi^1_{L(p,q)},\ldots, \xi^{(p-1)/2}_{L(p,q)}\}
\]
Our convention is to identify $L(p,q)$ with the quotient of $S^3\subset \C^2$ by the action of $\Z/p$, where $\Z/p$, viewed as the $p^\text{th}$ roots of unity, acts as $\zeta\cdot (z_1,z_2) = (\zeta z_1, \zeta^q z_2)$. Furthermore, the two-bridge knot $K_{p,q}$ is the fixed point set of $L(p,q)$ under the involution induced by the conjugation action $(z_1,z_2)\mapsto (\overline z_1 , \overline z_2)$. Thus the orbifold $(S^3,K_{p,q})$ is the quotient of $S^3$ by the action of the dihedral group of order $2p$.

As observed in \cite{PS}, all of the classes in $\fC(L(p,q))$ are fixed by the action of $\tau$, and so $\fC(L(p,q))=\fC(L(p,q))^\tau$. Furthermore, each $\smash{\xi^i_{L(p,q)}}$ is reducible, and thus uniquely lifts to a class $\xi^i\in \fC(K)$ which is fixed by $\iota$. Each of these is non-degenerate. Note that $\xi^0=\theta$ is the flat reducible, while $\xi^i$ for $1\leqslant i \leqslant (p-1)/2$ is irreducible. Thus our irreducible Floer chain complex has underlying (ungraded) group given by
\[
	C(K_{p,q}) = \bigoplus_{i=1}^{(p-1)/2} \Z\cdot \xi^i
\]
The gradings may also be computed, as recalled below. Consequently, the framed $\cS$-complex $\widetilde C(K_{p,q}) = C(K_{p,q}) \oplus C(K_{p,q})\oplus \Z$ has rank $p$. On the other hand, by Theorem \ref{thm:tildeconnsum} the homology of $\widetilde C(K_{p,q})$ is isomorphic to Kronheimer and Mrowka's $I^\natural (K_{p,q})$, which is also of rank $p$ by \cite[Corollary 1.6]{KM:unknot}. Thus $\widetilde d=0$, and in particular all the chain-level maps $d$, $v$, $\delta_1$ and $\delta_2$ must vanish. Proposition \ref{h-g-reinterpret} implies:

\begin{prop}
	For a two-bridge knot $K_{p,q}$ we have $\hinv(K_{p,q})=0$.
\end{prop}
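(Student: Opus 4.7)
The plan is to follow the chain of reasoning laid out immediately before the proposition statement and make the final algebraic step explicit via Proposition \ref{h-g-reinterpret}. The main setup has already been assembled: the irreducible chain group $C(K_{p,q})$ has rank $(p-1)/2$, so $\widetilde C(K_{p,q}) = C(K_{p,q})\oplus C(K_{p,q})\oplus \Z$ is a free abelian group of rank $p$. Theorem \ref{thm:tildeconnsum} identifies its homology with $I^\natural(K_{p,q})$, and the Kronheimer--Mrowka computation cited above tells us that $I^\natural(K_{p,q})$ also has rank $p$.

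First I would use this rank coincidence to conclude that the total differential $\widetilde d$ on $\widetilde C(K_{p,q})$ vanishes. Indeed, writing $\widetilde d$ in the matrix form \eqref{eq:d-tilde}, the rank of $H_\ast(\widetilde C(K_{p,q}))$ is $p - 2\,\mathrm{rk}(\widetilde d)$, and setting this equal to $p$ forces $\widetilde d = 0$. Reading off the components of \eqref{eq:d-tilde}, this gives
\[
    d = 0, \qquad v = 0, \qquad \delta_1 = 0, \qquad \delta_2 = 0.
\]

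With all four structure maps vanishing, Proposition \ref{h-g-reinterpret} immediately yields $\hinv(K_{p,q})=0$. On the one hand, since $\delta_1=0$ there is no $\alpha\in C_\ast$ with $\delta_1(\alpha)\neq 0$, so the criterion for $\hinv > 0$ fails, forcing $\hinv \leqslant 0$. On the other hand, taking $k=0$ in condition \eqref{h-neg}, we need $\alpha\in C_\ast$ and $a_0\in\Z$ with $a_0\neq 0$ and $d\alpha = \delta_2(a_0)$; setting $\alpha = 0$ and $a_0 = 1$ satisfies this trivially since both sides vanish. Hence $\hinv\geqslant 0$, and combining the two inequalities yields $\hinv(K_{p,q}) = 0$.

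There is no real obstacle here; all of the work has been done in the paragraphs preceding the proposition. The only subtlety worth flagging is to make sure the counting argument for $\widetilde d = 0$ is stated cleanly (freeness of $\widetilde C(K_{p,q})$ and the rank formula for the homology of a complex of free abelian groups), after which the conclusion is purely formal from Proposition \ref{h-g-reinterpret}.
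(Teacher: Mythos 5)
Your argument is correct and matches the paper's own reasoning: both use the rank coincidence $\operatorname{rk}\widetilde C(K_{p,q}) = p = \operatorname{rk} I^\natural(K_{p,q})$ to conclude $\widetilde d = 0$, hence $d=v=\delta_1=\delta_2=0$, and then apply Proposition \ref{h-g-reinterpret}. You spell out the rank bookkeeping and the $k=0$ case of \eqref{h-neg} a bit more explicitly than the paper does, but this is the same proof.
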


This result is no longer true if we use local coefficients. The key observation is that some of the zero-dimensional moduli spaces $\breve{M}(\xi^i,\xi^{j})_0$ are non-empty, and in fact consist of exactly two points $[A]$ and $[A']$, which descend to a unique instanton on $\R\times L(p,q)$. The flip symmetry $\iota$ interchanges these instantons, reversing orientations, and so their contributions to the differential considered above cancel. However, it will happen that sometimes the monopole number $\nu([A])= -\nu([A'])$ is nonzero, in which case the contributions will not cancel in the setting of the local coefficient system $\Delta_\sT$.

Moduli spaces of instantons on $\R\times L(p,q)$ were studied in \cite{austin, furuta-invariant, furuta-hashimoto}. See also \cite[Section 4.1]{sasahira-lens} for a nice summary. An argument using the Weitzenb\"{o}ck formula shows that all such moduli are unobstructed and smooth. The $0$-dimensional moduli spaces can be described explicitly, and are determined as follows. First, consider the congruence
\begin{equation}
	 a+qb \equiv 0 \;(\text{mod } p) \label{eq:congruence}
\end{equation}
We count solutions $(a,b)$ in a rectangle determined by $k_1,k_2\in \Z_{>0}$ as follows:
\begin{align}
	N_1(k_1,k_2;p,q) &:= \#\left\{ (a,b)\in \Z^2 \text{ solving } \eqref{eq:congruence}, \;\; \; |a|<k_1, |b|<k_2\right\} \label{eq:n1defn}\\
	N_2(k_1,k_2;p,q) &:= \#\left\{  (a,b)\in \Z^2 \text{ solving } \eqref{eq:congruence}, \;  \begin{array}{c}|a|<k_1, |b|=k_2 \text{ or }\\ |a|=k_1, |b|<k_2 \phantom{ \text{ or }}\end{array}\right\}\label{eq:n2defn}
\end{align}

\begin{theorem}[\cite{austin, furuta-invariant}]\label{thm:lensmodulipoint}
	Let $0\leqslant i, j\leqslant (p-1)/2$, where $i\neq j$. Suppose there exists $k_1, k_2\in \Z_{>0}$ and $\varepsilon_1,\varepsilon_2\in \{+1,-1\}$ such that the following hold:
	\begin{equation}
		k_1 \equiv \varepsilon_1 i +  \varepsilon_2 j\;\; (\text{{\emph{mod }}}p), \quad qk_2 \equiv -\varepsilon_1 i  + \varepsilon_2 j \;\;(\text{{\emph{mod }}}p)\label{eq:k1k2}
	\end{equation}
	\begin{equation}
		N_1(k_1,k_2;p,q) = 1, \quad N_2(k_1,k_2;p,q)=0\label{eq:n1n2}
	\end{equation}
	Then $\breve{M}(\xi_{L(p,q)}^{i}, \xi_{L(p,q)}^{j})_0$ defined for $\R\times L(p,q)$ is a point. Otherwise it is empty.
\end{theorem}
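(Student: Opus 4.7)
The plan is to reduce the computation on $\mathbb{R}\times L(p,q)$ to an equivariant ADHM problem on $S^4$, and then to count the equivariant ADHM data satisfying the prescribed asymptotics. The essential ingredients are taken from Austin's work \cite{austin} and Furuta--Hashimoto \cite{furuta-invariant}; what we do is identify the precise combinatorial conditions that isolate the $0$-dimensional components.

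First, I would recall the passage to equivariant instantons. The covering $\pi: \mathbb{R}\times S^3\to \mathbb{R}\times L(p,q)$ identifies the moduli space $\breve{M}(\xi^i_{L(p,q)},\xi^j_{L(p,q)})_0$ (or rather its non-quotient version) with the moduli space of $(\mathbb{Z}/p)$-equivariant $SU(2)$ instantons on $\mathbb{R}\times S^3$, whose flat limits at $t=\pm\infty$ are the trivial connection, and whose $(\mathbb{Z}/p)$-equivariant structures at the two ends are given by the weights $\pm i$ and $\pm j$. Since an instanton on $\mathbb{R}\times S^3$ with trivial asymptotic limits extends, after conformal compactification, to an instanton on $S^4$, we are reduced to counting $(\mathbb{Z}/p)$-equivariant $SU(2)$ instantons on $S^4$ with prescribed weights at the two fixed points $0,\infty\in S^4$ coming from the $(\mathbb{Z}/p)\subset SO(4)$ action determined by $L(p,q)$.

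Next, I would invoke the ADHM construction in its equivariant form. An $SU(2)$ instanton on $S^4$ of charge $k$ is parameterized by ADHM data $(B_1, B_2, I, J)$ with $B_i\in \mathrm{End}(V)$, $I:W\to V$, $J:V\to W$, where $\dim V=k$ and $\dim W=2$, modulo $GL(V)$. A $(\mathbb{Z}/p)$-action on $S^4$ induces an action on this data, and an equivariant instanton corresponds to ADHM data invariant under a lift of this action. The vector spaces $V$ and $W$ decompose into isotypic components $V=\bigoplus_a V_a$ and $W = W_i\oplus W_{-i}$ (at one end) indexed by characters of $\mathbb{Z}/p$, and the maps $B_1, B_2, I, J$ shift these weights by prescribed amounts determined by the action $\zeta\cdot(z_1,z_2)=(\zeta z_1,\zeta^q z_2)$. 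Concretely, $B_1$ shifts weight by $1$, $B_2$ by $q$, while the weights on $W$ at the two ends $0$ and $\infty$ are $\pm i$ and $\pm j$ respectively.

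With the equivariant decomposition fixed, the expected dimension of the moduli space is a sum of contributions indexed by integers $(a,b)$ solving the congruence $a+qb\equiv 0\pmod p$ inside a rectangle determined by the dimensions of the $V_a$. The topological invariants (rank of the isotypic pieces, total charge) are governed by a pair $(k_1,k_2)\in \mathbb{Z}_{>0}^2$, which must satisfy \eqref{eq:k1k2} in order for the boundary data at the two fixed points to match the prescribed weights $i,j$ via one of the four sign choices $(\varepsilon_1,\varepsilon_2)$. The counting function $N_1(k_1,k_2;p,q)$ precisely measures the virtual dimension of the interior of the corresponding equivariant ADHM moduli, while $N_2(k_1,k_2;p,q)$ measures contributions from the boundary which, if nonzero, would force the moduli space to have higher expected dimension or to be non-compact at the boundary.

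The main obstacle, and the step that requires the most care, is the precise translation between the expected dimension of the $(\mathbb{Z}/p)$-equivariant moduli space on $S^4$ and the lattice counts $N_1,N_2$. This comes from the equivariant index theorem applied to the deformation complex of the ADHM data, where one must carefully track the shifts in weights induced by $B_1,B_2,I,J$. Given this translation, the conditions $N_1(k_1,k_2;p,q)=1$ and $N_2(k_1,k_2;p,q)=0$ assert respectively that the zero-dimensional moduli space is a single reduced point and that there are no additional boundary contributions; the hypothesis $i\neq j$ ensures the moduli space consists of non-constant trajectories, so that $\breve M$ is obtained from $M$ by quotienting by $\mathbb{R}$-translation. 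Finally, if none of the four sign pairs $(\varepsilon_1,\varepsilon_2)$ produce $(k_1,k_2)$ satisfying \eqref{eq:k1k2} and \eqref{eq:n1n2}, then every equivariant ADHM problem has either positive virtual dimension or no solutions, and the moduli space is empty. This gives the stated dichotomy.
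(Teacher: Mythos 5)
The paper does not actually prove this theorem; it is imported verbatim from \cite{austin,furuta-invariant} (and the paper does not reproduce a proof). So there is no proof in the paper to compare against directly. Your sketch is a reasonable outline of the route taken in those references: pass from $\R\times L(p,q)$ to $\Z/p$-equivariant $SU(2)$ instantons on $S^4$, and then to equivariant ADHM data graded by isotypic components under $\Z/p$, with $B_1$, $B_2$ shifting weights by $1$ and $q$, and the boundary weights at $0,\infty\in S^4$ matching $\pm i$, $\pm j$.

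That said, there are two places where your outline blurs the decisive points. First, your interpretation of $N_1$ and $N_2$ is imprecise: you say $N_1$ ``precisely measures the virtual dimension of the interior'' and $N_2$ ``measures contributions from the boundary,'' but what the references establish (and what the paper itself records a bit later when it writes $d = N_1 + \tfrac{1}{2}N_2 - 1$ for the orbifold dimension, equivalently $2N_1 + N_2 - 2$ on the lens space) is that the expected dimension of each component is a specific affine combination of $N_1$ and $N_2$. In particular $N_1\geqslant 1$ automatically, since $(a,b)=(0,0)$ is always a solution to the congruence inside the rectangle, so $N_1=1$ does not ``assert the moduli space is a reduced point'' but rather is one half of the condition that the expected dimension vanishes; one also needs $N_2=0$, and conversely, once one has the dimension formula together with the parity constraints ($N_1$ odd, $N_2$ even, from the involution $(a,b)\mapsto(-a,-b)$), the conditions $N_1=1$, $N_2=0$ follow from $d=0$. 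That chain of reasoning is what handles the ``otherwise empty'' direction and is not visible in your sketch: you say ``every equivariant ADHM problem has either positive virtual dimension or no solutions'' but do not explain why the dimension cannot be zero for some other $(k_1,k_2)$; the point is that the dimension formula makes $(k_1,k_2)$ with $N_1=1$, $N_2=0$ the \emph{only} possibility for a zero-dimensional component. Second, and more seriously, your argument establishes at best that when the conditions hold the expected dimension is zero, not that the moduli space is nonempty. The references establish nonemptiness by actually exhibiting the equivariant ADHM solution; without this, one only knows the component has virtual dimension $0$, which is compatible with being empty. This is the step you flag as requiring the most care but then do not carry out, and it is the one that cannot be dispensed with.
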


Furthermore, the positive integers $k_1$, $k_2$ above are related to topological energy as follows: when there exists an instanton $[A']$ on $\R\times L(p,q)$ as in the theorem, we have
\begin{equation}\label{eq:chernsimonsk1k2}
	\kappa(A') = \frac{1}{8\pi^2}\int_{\R\times L(p,q)} \text{tr}(F_{A'}\wedge F_{A'}) = \frac{1}{p} \cdot c_2(\widetilde E) = \frac{k_1 k_2}{p}
\end{equation}
where $\widetilde E\to S^4$ is an $SU(2)$ bundle that supports an extension of the pullback of the instanton $A'$ to the 4-sphere compactification of $\R \times S^3$. 

We now return to the orbifold $(S^3,K_{p,q})$. Consider a moduli space $\breve{M}(\xi_{L(p,q)}^{i}, \xi_{L(p,q)}^{j})_d$ of instantons on $\R\times L(p,q)$. It follows from the computations of \cite[Section 7.1]{PS} that the $\tau$-invariant moduli space has dimension $d/2$. Thus pullback induces an embedding
\[
	\left(\breve{M}(\xi^{i}, \xi^{j})_{\frac{d}{2}}\right)/\iota \hookrightarrow \breve{M}(\xi_{L(p,q)}^{i}, \xi_{L(p,q)}^{j})_d
\]
of smooth manifolds, whose image is the fixed point set of an involution on the codomain. In particular, setting $d=0$, we find that $\breve{M}(\xi^{i}, \xi^{j})_0$ is entirely determined by the above theorem, combined with the behavior of the symmetry $\iota$ as described in Lemma \ref{lemma:flipsymmetry}.

\begin{cor}
	Let $0\leqslant i, j\leqslant (p-1)/2$, where $i\neq j$, and consider the corresponding moduli space $\breve{M}(\xi^{i}, \xi^{j})_0$ of instantons on $\R\times (S^3,K_{p,q})$. 
	\begin{itemize}
	\item[\emph{(i)}] $\breve{M}(\xi^{i}, \xi^{j})_0=\emptyset$  if and only if  $\smash{\breve{M}(\xi_{L(p,q)}^{i}, \xi_{L(p,q)}^{j})}_0=\emptyset$.
	\item[\emph{(ii)}] If $\breve{M}(\xi^{i}, \xi^{j})_0\neq \emptyset$, then it consists of two oppositely oriented points. 
	\end{itemize}
\end{cor}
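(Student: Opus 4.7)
The plan is to reduce everything to the embedding
\[
  \left(\breve{M}(\xi^i,\xi^j)_0\right)/\iota \hookrightarrow \breve{M}(\xi_{L(p,q)}^i,\xi_{L(p,q)}^j)_0
\]
supplied by pulling back singular connections under the branched cover $\pi:L(p,q)\to S^3$, which was already set up in the paragraph preceding the corollary (the right-hand side being identified with the image, namely the $\tau$-fixed part of the moduli space upstairs). Two preliminary facts are crucial. First, all critical points $\xi^i$ are $\iota$-invariant, since each corresponds via $\Pi$ to an abelian (hence $\tau$-invariant) class in $\fC(L(p,q))$; thus $\iota\breve{M}(\xi^i,\xi^j)=\breve{M}(\xi^i,\xi^j)$. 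Second, the moduli space $\breve{M}(\xi_{L(p,q)}^i,\xi_{L(p,q)}^j)_0$ is already zero-dimensional, so its $\tau$-fixed locus is all of it (every point is, trivially, fixed by the induced $\tau$-action as it is isolated and the action preserves the moduli space).

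For part (i), both directions now come for free from the above embedding: emptiness of the downstairs quotient is equivalent to emptiness of the upstairs $\tau$-fixed set, which, by the previous paragraph, is equivalent to emptiness of the upstairs moduli space itself. Part (ii) then proceeds as follows. Lemma \ref{lemma:flipsymmetry} tells us $\iota$ acts freely on $\breve{M}(\xi^i,\xi^j)_0$; combined with the fact that the upstairs moduli space, when non-empty, is a single point by Theorem \ref{thm:lensmodulipoint}, the quotient $\breve{M}(\xi^i,\xi^j)_0/\iota$ contains exactly one point and therefore $\breve{M}(\xi^i,\xi^j)_0$ consists of exactly two points, which form a single free $\iota$-orbit.

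It remains to show that the two points carry opposite orientations. My plan here is to trace through the orientation conventions of Subsection \ref{subsec:ors} and the sign formulas \eqref{iota-effect}. Concretely, the involution $\iota$ is induced by tensoring with the flat $\Z/2$ bundle $\xi$ on $(\R\times S^3)\setminus(\R\times K_{p,q})$, and its effect on the determinant line bundle $l_z$ can be computed in terms of topological data (the monopole number negation $\nu(\iota A)=-\nu(A)$ and the index of the ASD operator). Since at each $\xi^i$ the flip symmetry acts trivially on the critical point but flips the sign of the monopole number on the path class, the induced action on the one-dimensional moduli $M(\xi^i,\xi^j)_1=\R\times \breve{M}(\xi^i,\xi^j)_0$ reverses orientation; equivalently, after quotienting by $\R$, the two members of an $\iota$-orbit carry opposite signs. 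This orientation-reversal is the singular analogue of the standard charge-conjugation sign in \cite{km-embedded-i}, and is the step I anticipate will require the most care: one has to check that the formal computation produces a net sign of $-1$ rather than $+1$, which is a parity computation in $\text{ind}(\sD_A)$ together with the dimensional data $d,d'$ controlling the identifications \eqref{eq:reddetlinemin} and \eqref{ori-bdle}. Once this sign is confirmed, the corollary follows.
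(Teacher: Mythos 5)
Your argument for part (i) has the right structure but a logical slip in the second preliminary fact: being zero-dimensional does not make every point of a $\tau$-invariant moduli space automatically $\tau$-fixed (an involution can freely swap isolated points). The correct reason is the one you invoke later for part (ii): by Theorem \ref{thm:lensmodulipoint}, the upstairs moduli space $\breve{M}(\xi_{L(p,q)}^i,\xi_{L(p,q)}^j)_0$ is a \emph{single} point whenever it is nonempty, and a $\tau$-invariant one-point set is necessarily $\tau$-fixed. With that correction, the downstairs nonempty $\Leftrightarrow$ upstairs nonempty equivalence goes through.

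The ``exactly two points'' half of part (ii) is fine: Lemma \ref{lemma:flipsymmetry} gives a free $\iota$-action, so the downstairs moduli space is a $2{:}1$ cover of the single upstairs point. However, your claim that these two points are oppositely oriented is left as a sketch, and you explicitly flag that the key step (showing the induced action of $\iota$ on orientations is by $-1$ rather than $+1$) has not been carried out. That is a genuine gap: the parity computation you propose (tracing through \eqref{eq:reddetlinemin}, \eqref{ori-bdle}, and the index of $\sD_A$) is not obviously accessible with what the paper develops, and it would take real work to make precise — in fact, $\nu(\iota A)=-\nu(A)$ by itself does not force an orientation reversal, since moduli spaces in the same degree but with different monopole numbers are separate components and one needs to understand how $\iota$ pairs orientations of determinant lines across components.

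The paper sidesteps this entirely. Just before the corollary, it is observed that the framed $\cS$-complex $\widetilde C(K_{p,q})$ has rank $p$, while its homology $I^\natural(K_{p,q})$ also has rank $p$ by Kronheimer--Mrowka's computation; therefore $\widetilde d=0$, and in particular $d$, $\delta_1$, $\delta_2$ all vanish. Since the signed count of $\breve{M}(\xi^i,\xi^j)_0$ is an entry of one of those vanishing maps and the moduli space contains exactly two points, the two points must be oppositely oriented. This indirect argument buys a clean, complete proof at the cost of leaning on \cite[Corollary 1.6]{KM:unknot}; your direct-orientation approach, if completed, would be more self-contained, but as written it is not a proof.
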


In the case that $\breve{M}(\xi^{i}, \xi^{j})_0\neq \emptyset$, Theorem \ref{thm:lensmodulipoint} implies that $\breve{M}(\xi^{i}, \xi^{j})_0$ consists of two points. These two points are oppositely oriented because of the vanishing of the maps $d$, $\delta_1$ and $\delta_2$. To compute the maps $d$, $\delta_1$ and $\delta_2$ with local coefficients, we have:

\begin{prop}\label{prop:2bridgemonopoleno}
	Suppose $\breve{M}(\xi^{i}, \xi^{j})_0\neq \emptyset$, so that it contains two instantons $[A]$ and $\iota [A]$. Let $k=k_1k_2$ where $k_1,k_2\in\Z_{>0}$ are solutions to \eqref{eq:k1k2}, \eqref{eq:n1n2}. Then
	\[
		\left\{\nu([A]), \nu(\iota[A])\right\}  = \begin{cases} \{0\} & \text{if }k\equiv 0 \mod 2\\
															\{2,-2\} & \text{if } k\equiv 1 \mod 2 \end{cases}
	\]
\end{prop}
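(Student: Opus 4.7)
\medskip

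\noindent\textit{Proof proposal.} By Lemma \ref{lemma:flipsymmetry} we have $\nu(\iota[A])=-\nu([A])$, so it suffices to show that $\nu([A])=0$ when $k=k_1k_2$ is even and that $|\nu([A])|=2$ when $k$ is odd. The plan is to extract $\nu([A])$ from the $\tau$-invariant instanton $[A']\in\breve{M}(\xi_{L(p,q)}^i,\xi_{L(p,q)}^j)_0$ to which the two orbifold instantons $[A]$ and $\iota[A]$ pull back (via the discussion preceding Lemma \ref{lemma:flipsymmetry}), together with the explicit ADHM description of $[A']$ coming from \cite{austin,furuta-invariant} that was already used in Theorem \ref{thm:lensmodulipoint} and the energy formula \eqref{eq:chernsimonsk1k2}.

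First I would set up the relevant equivariant topological framework. Conformally compactify $\R\times L(p,q)$, so that $A'$ extends (after gauge transformation) to an $SU(2)$-connection on a bundle $\widetilde E$ over the compactification, with $c_2(\widetilde E)=k_1k_2$. The branched involution $\tau$ extends to this compactification, fixing a 2-sphere $F$ that is the compactification of $\R\times K_{p,q}$. A lift $\widetilde\tau$ of $\tau$ to $\widetilde E$ is determined (up to the overall sign $\iota$) by the prescribed meridional holonomy $\mathrm{diag}(i,-i)$ of our singular model, and this choice precisely distinguishes $[A]$ from $\iota[A]$. Along $F$, the $\widetilde\tau$-equivariant bundle $\widetilde E|_F$ splits into $\pm i$-eigenbundles for $\widetilde\tau$, giving a line bundle decomposition $\widetilde E|_F\cong L\oplus L^{-1}$, and with the conventions of Subsection \ref{subsec:ors} the defining formula for $\nu$ becomes
\[
\nu([A]) \;=\; 2\,c_1(L)[F].
\]
Thus the question reduces to computing $c_1(L)[F]\in\Z$ for the two choices of lift $\widetilde\tau$.

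Second, I would relate $c_1(L)[F]$ to $k_1 k_2$ via the equivariant topology of $\widetilde E$. The cleanest tool is the $G$-index (or $G$-signature) theorem applied to $\widetilde E$ with the $\Z/2$-action by $\widetilde\tau$: this writes the $\widetilde\tau$-trace of the index of the Dirac operator coupled to $\widetilde E$ as a sum of a global term involving $c_2(\widetilde E)=k_1k_2$ and a fixed-point contribution from $F$ involving $c_1(L)[F]$ and the normal Euler class of $F$. A short calculation (using that the normal bundle of $F\cong S^2$ in $S^4$ has Euler number $2$ and that $\widetilde\tau$ acts as $-1$ on the normal directions) should produce a congruence of the form $c_1(L)[F]\equiv k_1k_2\pmod 2$. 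This already yields the parity dichotomy: $\nu([A])$ is even, and $\nu([A])/2$ has the same parity as $k_1k_2$.

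The main obstacle is to upgrade this congruence to the sharp assertion $|c_1(L)[F]|\leqslant 1$, so that $c_1(L)[F]\in\{0\}$ when $k$ is even and $c_1(L)[F]\in\{\pm1\}$ when $k$ is odd. For this I would appeal directly to the explicit $\tau$-equivariant ADHM datum that parameterizes $[A']$ in Theorem \ref{thm:lensmodulipoint}: the condition $N_1(k_1,k_2;p,q)=1$ says that a unique lattice point $(a,b)$ with $|a|<k_1,|b|<k_2$ solves \eqref{eq:congruence}, and in Austin's description this single lattice point records precisely the rank of the $\widetilde\tau$-eigenspace decomposition of the ADHM vector space. The line bundle $L$ is built from this one-dimensional eigenspace, forcing $c_1(L)[F]\in\{-1,0,+1\}$, and the parity constraint from the $G$-index formula then pins down the value. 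The two possible signs correspond to the two lifts $\widetilde\tau$ and $-\widetilde\tau$, which is exactly the distinction between $[A]$ and $\iota[A]$, completing the proof.
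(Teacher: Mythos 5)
Your high‑level strategy — pull back to $S^4$, interpret $\nu([A])$ as twice a Chern number of a $\widetilde\tau$-eigenbundle over the fixed sphere $F$, and then constrain it using equivariant index theory together with the arithmetic of $(k_1,k_2)$ — is in the right spirit, and the reduction $\nu([A]) = 2c_1(L)[F]$ is indeed what sits behind the paper's Proposition~\ref{prop:lefschetzmonopole}. However, both legs of your Step~2 have genuine gaps.

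First, the Atiyah--Segal--Singer $G$-index theorem for the nontrivial element $\widetilde\tau$ \emph{localizes entirely} on the fixed set $F=S^2$: the Lefschetz number $\text{Lef}(\widetilde\tau,\slashed{D}_{\widetilde A})$ is an integral over $F$ involving only $\widehat{A}(F)$, the equivariant Chern character of $\widetilde E|_F$, and the action of $\tau$ on the normal bundle. There is no ``global term involving $c_2(\widetilde E)$'' — that term only appears when one evaluates the character at the identity, i.e.\ the ordinary index. So the congruence $c_1(L)[F]\equiv k_1k_2 \pmod 2$ cannot come out of the $G$-index (or $G$-signature) theorem applied to $\widetilde\tau$ in the way you describe. (The congruence itself is true, but you would need to combine the ordinary index $\text{ind}\,\slashed{D}_{\widetilde A}=k_1k_2$ with the $\widetilde D_{4p}$-representation decomposition and the fact that 2-dimensional representations contribute evenly and $n_p^+ = n_p^-$.) Second, the appeal to the equivariant ADHM datum to get $|c_1(L)[F]|\leqslant 1$ is too vague: saying ``this single lattice point records the rank of the $\widetilde\tau$-eigenspace decomposition'' does not identify, or bound, the Chern number of the eigenbundle $L$. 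Making this rigorous would require actually unpacking Austin's and Furuta's equivariant ADHM description and extracting the $\widetilde\tau$-weight at the two fixed points of $F$; this is a nontrivial computation that you have not carried out.

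The paper sidesteps both issues by working with the $\widetilde D_{4p}$-equivariant index of $\slashed{D}_{\widetilde A}$ directly. Proposition~\ref{prop:lefschetzmonopole} gives $\nu(A)/2 = \text{Lef}(\widetilde\tau,\slashed{D}_{\widetilde A}) = n_0^+ - n_0^-$, where $n_0^\pm$ are the multiplicities of the two one-dimensional representations of $\widetilde D_{4p}$ on which $\widetilde\tau$ acts by $\pm1$. Proposition~\ref{prop:spindiracindex}, using the cyclic restriction formula for $\text{ind}_{\Z/2p}(\slashed{D}_{\widetilde A})$ (from Sasahira's work) and the hypothesis $N_1(k_1,k_2;p,q)=1$, shows that $n_0^+ + n_0^- = M(0,k_1,k_2;p,q)$ equals $0$ if $k_1k_2$ is even and $1$ if $k_1k_2$ is odd. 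Combined with the nonnegativity $n_0^\pm\geqslant 0$ (surjectivity of the Dirac operator on $S^4$), this forces $n_0^+-n_0^-\in\{0\}$ or $\{\pm1\}$ according to parity, with no separate bound argument needed. This is exactly the sharp dichotomy you were trying to establish, and it falls out in one stroke from the representation-theoretic bookkeeping.
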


To prove this we utilize the twisted spin Dirac operator. To set this up, let $[A]\in \breve{M}(\xi^i,\xi^j)$, so that $A$ is a singular instanton on $\R\times (S^3, K_{p,q})$. Pull back $A$ to an instanton on $\R\times S^3$. By Uhlenbeck's removable singularity theorem, this pull back connection extends, after possibly gauge transforming, to an instanton $\widetilde A$  on the compactified $S^4$. Write $\widetilde E\to S^4$ for the bundle on which $\widetilde A$ is supported, and set $k:=c_2(\widetilde E)$.

Recall that $(S^3,K_{p,q})$ is the quotient of $S^3$ by the dihedral group $D_{2p}$, generated by $\zeta\in \Z/p\subset U(1)$ and $\tau\in\Z/2$, where $\zeta\cdot (z_1,z_2)= (\zeta z_1, \zeta^q z_2)$ and $\tau\cdot (z_1,z_2) = (\overline z_1 , \overline z_2)$. Here we view $S^3\subset \C^2$ as the unit sphere. This action extends to $\C^2\cup \infty=S^4$. We may lift the action of $D_{2p}$ to an action of the binary dihedral group $\widetilde D_{4p}$ of order $4p$ on the bundle $\widetilde E$. This lift may be chosen such that $\widetilde{A}$ is invariant under the action of $\widetilde  D_{4p}$.

The action of $D_{2p}$ on the 4-sphere also lifts to an action of $\widetilde D_{4p}$ on the spinor bundles $S^\pm \to S^4$. Thus we may consider the spin Dirac operator coupled to $\widetilde A$:
\[
	\slashed{D}_{\widetilde A}:\Gamma(\widetilde E\otimes S^- ) \longrightarrow \Gamma(\widetilde E\otimes S^+)
\]
The group $\widetilde D_{4p}$ induces actions on the domain and codomain, and $\slashed{D}_{\widetilde A}$ is equivariant with respect to these actions. Write $\widetilde \tau$ for the lift of the action of $\tau$ to $\widetilde D_{4p}$.

\begin{prop}\label{prop:lefschetzmonopole}
	$\text{\emph{Lef}}(\widetilde \tau,\slashed{D}_{\widetilde A})=\nu(A)/2$.
\end{prop}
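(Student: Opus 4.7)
The approach is to apply the equivariant Atiyah--Singer fixed-point theorem (the $G$-spin theorem) to the involution $\widetilde\tau$ acting on the spin Dirac operator $\slashed D_{\widetilde A}$. The fixed set of $\tau$ on $S^4 = \C^2\cup\{\infty\}$ under $(z_1,z_2)\mapsto(\overline z_1,\overline z_2)$ is the 2-sphere $F=\R^2\cup\{\infty\}$, and the compactified singular locus $S^+=\R\times K_{p,q}$ sits inside $F$ as (the closure of) a codimension-zero piece after compactification of $\R\times S^3$. The normal bundle $N_F$ lies entirely in the $(-1)$-eigenspace of $d\widetilde\tau$.

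First I would verify that the Uhlenbeck-compactified bundle-with-connection $(\widetilde E,\widetilde A)$ admits a $\widetilde D_{4p}$-equivariant structure extending the dihedral symmetry of $A$; this is the standard removable-singularity argument performed equivariantly, using that the limits at the added points of $S^4$ are the (abelian) pullbacks of the critical points $\xi^i,\xi^j$. Next, I would apply the $G$-spin formula to localize:
\[
\text{Lef}(\widetilde\tau,\slashed D_{\widetilde A}) \;=\; \int_F \widehat A(TF)\cdot \text{ch}_{\widetilde\tau}(\widetilde E|_F)\cdot \mathcal U(N_F,\widetilde\tau),
\]
where $\mathcal U(N_F,\widetilde\tau)$ is the universal characteristic form of a $(-1)$-normal bundle. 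Since $\dim F=2$ we have $\widehat A(TF)=1$, and $\mathcal U(N_F,\widetilde\tau)$ contributes only a constant in the relevant degree. The restriction $\widetilde E|_F$ splits $\widetilde\tau$-equivariantly as $L\oplus L^{-1}$ along the singular locus (this is exactly the reduction built into the local model \eqref{eq:locmodelbranched}), so the degree-$2$ part of $\text{ch}_{\widetilde\tau}(\widetilde E|_F)$ is a universal constant times $c_1(L)|_F$.

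The final step is to identify $\int_F c_1(L)$ with the Chern--Weil integral defining $\nu(A)$. By construction, $L$ is the line bundle along which the $SO(3)$ adjoint of $A$ reduces on $S^+$, and its curvature on $S^+\subset F$ is precisely the form $\Omega$ appearing in the definition $\nu(A)=\tfrac{i}{\pi}\int_{S^+}\Omega$. Matching the constants produced by the $G$-spin formula against the normalization in the definition of $\nu$ then yields the factor of $\tfrac12$ and the identification $\text{Lef}(\widetilde\tau,\slashed D_{\widetilde A})=\nu(A)/2$. A useful cross-check is to compare with the analogous computations of Austin \cite{austin} and Furuta--Hashimoto, where the same Lefschetz-number technology is used to compute indices on $L(p,q)$.

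The main obstacle is the careful bookkeeping of the lift $\widetilde\tau$: as an element of the binary dihedral group $\widetilde D_{4p}$, it acts with order four on $\widetilde E$ (even though it has order two on the adjoint bundle, as in \eqref{eq:locmodelbranched}), so both the equivariant Chern character $\text{ch}_{\widetilde\tau}(\widetilde E|_F)$ and the normal-bundle contribution $\mathcal U(N_F,\widetilde\tau)$ must be computed with this in mind. Once the conventions are aligned with those of Kronheimer--Mrowka for the orientation of $S^+$ and the sign in $\nu(A)$, the ambiguity in the lift is the only thing that could affect the overall sign, and the factor of $\tfrac12$ should emerge naturally from the ratio of $\tfrac{1}{2\pi}$ (in $c_1$) to $\tfrac{1}{\pi}$ (in $\nu$).
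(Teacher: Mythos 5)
Your proposal is correct and follows essentially the same approach as the paper: both apply the equivariant Atiyah--Segal--Singer index theorem to localize at the fixed 2-sphere $F=\R^2\cup\{\infty\}$, use the $\widetilde\tau$-equivariant reduction $\widetilde E|_F\cong L\oplus L^{-1}$ (with $\widetilde\tau$ acting by $\mathrm{diag}(i,-i)$, order four as you correctly flag), and match the resulting Chern--Weil integral to $\nu(A)/2$. The constant-matching you defer to the end is carried out explicitly in the paper --- $\mathrm{ch}(\widetilde\tau,\widetilde E)=-2iF_{\widetilde A_0}$ in degree two, the normal-bundle factor $\det(1+\exp(-F_N))^{-1/2}$ contributes $\tfrac12$ at constant order, and the overall prefactor is $-\tfrac{1}{2\pi}$ --- which together give $\tfrac{i}{2\pi}\int_{S^2}F_{\widetilde A_0}$ and confirm your prediction about where the factor of $\tfrac12$ comes from.
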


\begin{proof} We use the Atiyah--Segal--Singer equivariant index theorem, as stated in \cite[Theorem 6.16]{bgv}, applied to the operator $\slashed{D}_{\widetilde A}$ and the action of $\widetilde\tau \in \widetilde D_{4p}$:
	\[
		\text{Lef}(\widetilde \tau,\slashed{D}_{\widetilde A}) = -\frac{1}{2\pi} \int_{S^2} \frac{\widehat{A}(S^2)\text{ch}(\widetilde \tau,\widetilde E)}{\det(1-\tau_1 \cdot \text{exp}(-F_N))^{1/2}}
	\]
	Here $S^2=\R^2\cup \infty \subset S^4$ is the fixed point set of $\tau$; the term $\text{ch}(\widetilde \tau , \widetilde E)$ is defined to be $\text{Tr}(\widetilde \tau\cdot \text{exp}(-F_{\widetilde A}))$; $F_N$ is the component of Riemannian curvature form of $S^4$ normal to $S^2$; and $\tau_1$ is the action of $\tau$ on the normal bundle over $S^2$. We have $\widehat{A}(S^2)=1$. The connection $\widetilde A$ descends to the singular connection $A$, and so by assumption there is a preferred reduction $\widetilde L \oplus \widetilde L^{-1}\to S^2$ over the fixed point set, at which $\widetilde A$ splits as $\widetilde A_0\oplus \widetilde A_0^\ast$. Furthermore, the action of $\widetilde \tau$ on $\widetilde L \oplus \widetilde L^{-1}$ is of the form $\text{diag}(i,-i)$. Thus
	\begin{align*}
		\text{ch}(\widetilde \tau , \widetilde E) 
									 =  \text{Tr}\left(\left[\begin{array}{cc} i & 0 \\ 0 & -i \end{array}\right]\cdot\text{exp}\left[\begin{array}{cc}- F_{\widetilde A_0} & 0 \\ 0 & F_{\widetilde A_0} \end{array}\right]\right)
									 = -2i F_{\widetilde A_0}
	\end{align*}
	The action of $\tau$ on the normal bundle of $S^2$ is by negation, so for the denominator we have
	\[
		\det(1-\tau_1 \cdot \text{exp}(-F_N))^{-1/2} = \det(1+ \text{exp}(-F_N))^{-1/2} = \frac{1}{2}\left( 1 + \frac{1}{4} F_N \right)
	\]
	We find that only the constant term of the denominator contributes to the final integral:
	\[
		\text{Lef}(\widetilde \tau, \slashed{D}_{\widetilde A}) = -\frac{1}{2\pi}\int_{S^2} -2iF_{\widetilde A_0} \cdot \frac{1}{2} = \frac{i}{2\pi}\int_{S^2} F_{\widetilde A_0} = \frac{1}{2}\nu(A)
	\]
	The last equality follows because the fixed point set $S^2$, with $0$ and $\infty$ removed, is mapped with the connection $\widetilde A$ isomorphically to $\R\times K$ with the connection $A$.
\end{proof}

The character of $\slashed{D}_{\widetilde A}$ restricted to the subgroup $\Z/2p \subset \widetilde D_{4p}$, which we write as $\text{ind}_{\Z/2p}(\slashed{D}_{\widetilde A})$, is computed in \cite[Section 4.2]{sasahira-lens}. For $0\leqslant i \leqslant 2p-1$, we write $\chi_j$ for the character of $\Z/2p$ defined by sending the generator to $e^{j \pi i  /p}$. Then
\begin{equation}
	\text{ind}_{\Z/2p}(\slashed{D}_{\widetilde A}) = \sum_{j=0}^{2p-1} M(j,k_1,k_2;p,q) \cdot \chi_j \label{eq:diraccyclicchar}
\end{equation}
where the coefficient $M(j,k_1,k_2;p,q)$ is defined to be the number of solutions $(c,d)\in\Z^2$ with $0\leqslant c \leqslant k_1-1$ and $0\leqslant d\leqslant k_2-1$ to the congruence
\begin{equation}
	-k_1 + 2c + 1 + q ( -k_2 + 2d + 1 ) \equiv j \;\; (\text{mod }2p) \label{eq:congruencedirac}
\end{equation}
Here $k_1$ and $k_2$ are solutions to \eqref{eq:k1k2}; in the situation we consider, they will be uniquely determined as also satisfying \eqref{eq:n1n2}. The binary dihedral group $\widetilde D_{4p}$ has $p-1$ two-dimensional representations, whose characters we denote by $\widetilde \chi_j$ for $1\leqslant j \leqslant p-1$; and also 4 one-dimensional representations, dented $\widetilde \chi^\pm_0$, $\smash{\widetilde \chi_p^\pm}$. Here $\widetilde \chi_0^+$ is the trivial representation and $\widetilde \chi_0^-$ has $\widetilde \tau$ acting by $-1$. The restriction from $\widetilde D_{4p}$ to the subgroup $\Z/2p$ sends:
\begin{equation}
	\widetilde \chi_j \mapsto \chi_j + \chi_{2p-j} \quad  (1\leqslant j \leqslant p-1), \qquad 
	\widetilde \chi_0^\pm \mapsto \chi_0, \qquad \widetilde \chi_{p}^\pm\mapsto \chi_p \label{eq:binihedraltocyclic}
\end{equation}
In particular, if we consider the $\widetilde{D}_{4p}$ character of $\slashed{D}_{\widetilde A}$, we may write
\[
	\text{ind}_{\widetilde D_{4p}}(\slashed{D}_{\widetilde A}) =n_0^+ \widetilde \chi_0^+ + n_0^- \widetilde \chi_0^- +n_p^+ \widetilde \chi_p^+ + n_p^- \widetilde \chi_p^- +  \sum_{j=1}^{p-1} n_j \widetilde \chi_j  
\]
The operator $\slashed{D}_{\widetilde A}$ is surjective, so each of these coefficients is non-negative. We then have, combining \eqref{eq:binihedraltocyclic} and \eqref{eq:diraccyclicchar}, the following:
\begin{align*}
	 M(j,k_1,k_2;p,q) + M(2p-j,k_1,k_2;p,q) & = n_j \qquad(1\leqslant j \leqslant p-1) \\ M(0,k_1,k_2;p,q) &= n_0^+ + n_0^-\\ M(p,k_1,k_2;p,q) &= n_p^+ + n_p^-
\end{align*}
Observe that $\widetilde \chi_j(\widetilde \tau)=0$, while $\widetilde \chi^\pm_0(\widetilde \tau)=\pm 1$ and $\widetilde \chi^\pm_p(\widetilde \tau)=\pm i$. Consequently, we have $\text{Lef}(\widetilde \tau, \slashed{D}_{\widetilde A})=n_0^+-n_0^-$. Note that this Lefschetz number is real by Proposition \ref{prop:lefschetzmonopole}, so necessarily $n_p^+=n_p^-$.

\begin{prop}\label{prop:spindiracindex}
	Let $0\leqslant i, j\leqslant (p-1)/2$, where $i\neq j$, satisfying \eqref{eq:k1k2} and \eqref{eq:n1n2} for some $k_1,k_2\in \Z_{>0}$, so that the moduli space $\smash{\breve{M}(\xi_{L(p,q)}^{i}, \xi_{L(p,q)}^{j})_0} = \{ [ A' ] \}$ is a point. Then the associated coupled spin Dirac operator $\slashed{D}_{A'}$ defined over $\R\times L(p,q)$ is surjective and
	\begin{equation*}
		\dim \text{\text{\emph{ker}}} \slashed{D}_{A'} = 
		\begin{cases}
			0 & \text{\emph{ if }}\;\; k_1k_1\equiv 0 \mod 2\\
			1 & \text{\emph{ if }}\;\; k_1k_2 \equiv 1 \mod 2
		\end{cases}
	\end{equation*}
\end{prop}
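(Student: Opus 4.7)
The plan is to establish surjectivity by a Weitzenböck argument, and then to compute $\dim\ker\slashed D_{A'}$ by passing to the $\Z/p$-equivariant index of the compactified operator $\slashed D_{\widetilde A}$ on $S^{4}$ given by \eqref{eq:diraccyclicchar} and extracting its $\Z/p$-invariant part. For surjectivity, equip $L(p,q)$ with the round metric, so that $\R\times L(p,q)$ has product metric of constant positive scalar curvature $R>0$. The Weitzenböck identity
\begin{equation*}
	\slashed D_{A'}\slashed D_{A'}^{*} \;=\; \nabla_{A'}^{*}\nabla_{A'} + \tfrac{R}{4} + \rho(F_{A'}^{+}),
\end{equation*}
acting on $S^{+}\otimes E$, together with $F_{A'}^{+}=0$ (since $A'$ is ASD), gives $\slashed D_{A'}\slashed D_{A'}^{*}\geqslant R/4>0$, so $\ker\slashed D_{A'}^{*}=0$. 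The limiting Dirac operators at the ends, twisted by the abelian flat connections $\xi^{i}_{L(p,q)}$ and $\xi^{j}_{L(p,q)}$ on the closed lens space, are invertible by the same positivity estimate, so $\slashed D_{A'}$ is Fredholm on weighted Sobolev spaces with exponential decay and is surjective.

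For the kernel dimension, the pullback of $A'$ along the branched covering $\R\times S^{3}\to\R\times L(p,q)$ extends, via Uhlenbeck removable singularity at the points $\{0,\infty\}\subset S^{4}$, to the $\Z/p$-equivariant instanton $\widetilde A$ on $\widetilde E\to S^{4}$ with $c_{2}(\widetilde E)=k_{1}k_{2}$ described before \eqref{eq:diraccyclicchar}. The same Weitzenböck argument (positive scalar curvature on $S^{4}$, ASD $\widetilde A$) shows $\slashed D_{\widetilde A}$ is surjective, so $\ker\slashed D_{\widetilde A}$ realizes the full $\Z/2p$-character \eqref{eq:diraccyclicchar}. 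A $\Z/p$-invariant harmonic spinor on $S^{4}$ descends via the covering to a harmonic section on $\R\times L(p,q)$, and conversely harmonic spinors on $\R\times L(p,q)$ with $L^{2}$ decay pull back and extend to $S^{4}$ by removable singularity for $L^{2}$ harmonic spinors, giving $\ker\slashed D_{A'}\cong(\ker\slashed D_{\widetilde A})^{\Z/p}$. Since $p$ is odd, $\Z/p\subset\Z/2p$ is the subgroup generated by $g^{2}$, and the character $\chi_{j}$ of $\Z/2p$ restricts to the trivial character of $\Z/p$ exactly when $j\in\{0,p\}$; hence
\begin{equation*}
	\dim\ker\slashed D_{A'} \;=\; M(0,k_{1},k_{2};p,q) + M(p,k_{1},k_{2};p,q).
\end{equation*}

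It remains to compute these two integers. Substituting $a=2c+1-k_{1}$ and $b=2d+1-k_{2}$ translates the congruence \eqref{eq:congruencedirac} for $M(j,k_{1},k_{2};p,q)$ with $j\in\{0,p\}$ into $a+qb\equiv j\pmod{2p}$ subject to $|a|\leqslant k_{1}-1$, $|b|\leqslant k_{2}-1$ and the parities $a\equiv k_{1}-1$, $b\equiv k_{2}-1\pmod 2$. Reducing modulo $p$ recovers the defining congruence for $N_{1}(k_{1},k_{2};p,q)$, so the hypothesis $N_{1}=1$ of Theorem \ref{thm:lensmodulipoint} forces $(a,b)=(0,0)$ as the unique candidate in range. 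This point satisfies the parity constraints iff $k_{1}$ and $k_{2}$ are both odd, in which case it contributes to $M(0)$ (since $0\equiv 0\pmod{2p}$) but not to $M(p)$ (since $0\not\equiv p\pmod{2p}$); otherwise neither count receives a contribution. Thus $M(0)+M(p)$ is $1$ when $k_{1}k_{2}$ is odd and $0$ when $k_{1}k_{2}$ is even, matching the claim.

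The main obstacle will be justifying the identification $\ker\slashed D_{A'}\cong(\ker\slashed D_{\widetilde A})^{\Z/p}$: the averaging along the branched cover is routine, but one has to verify that the $\Z/p$-action lifts to spinors compatibly with the unique spin structure on $L(p,q)$ (using that $p$ odd has a unique lift to $\mathrm{Spin}$), and that exponential decay on the cylindrical ends corresponds precisely to regularity at the two cone points $\{0,\infty\}\subset S^{4}$ via a removable-singularity analysis for coupled harmonic spinors.
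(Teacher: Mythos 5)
Your proof is correct and takes essentially the same approach as the paper: the heart of the argument is the congruence analysis using $N_1(k_1,k_2;p,q)=1$ to pin down $(a,b)=(0,0)$, matching the paper's computation of $M(0,k_1,k_2;p,q)$, while the surjectivity and descent-to-$\R\times L(p,q)$ steps that you supply via Weitzenb\"ock and removable singularities are taken for granted in the paper (with reference to Sasahira). The one cosmetic difference is that you identify $\dim\ker\slashed D_{A'}$ with $M(0)+M(p)$ (the $\Z/p$-invariant part) rather than with $M(0)$ alone (the $\Z/2p$-invariant part, as the paper asserts); these agree because $M(p)=0$ under the $N_1=1$ hypothesis as you observe, and in fact universally, since $g^p=-1\in\Z/2p$ acts by $-1$ on both $\widetilde E$ and $S^\pm$ (cf.\ $\widetilde\tau^2=-1$ acting as $\mathrm{diag}(-1,-1)$ on $\widetilde L\oplus\widetilde L^{-1}$), hence trivially on $\widetilde E\otimes S^\pm$, forcing $M(j)=0$ for all odd $j$.
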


\begin{proof}
	Suppose $M(0,k_1,k_2;p,q)>0$, so that there exists a solution $(c,d)$ to \eqref{eq:congruencedirac} with $j=0$, where $0 \leqslant c\leqslant k_1-1$ and $0\leqslant d\leqslant k_2-1$. Then $(a,b)$ with $a:= -k_1+2c+1$ and $b:=-k_2+2d+1$ is a solution to \eqref{eq:congruence} with $|a|<k_1$ and $|b|<k_2$. However, the assumption $N_1(k_1,k_2;p,q)=1$ implies the only such solution is $(a,b)=(0,0)$. Thus 
\[
	c=(k_1-1)/2, \qquad d=(k_2-1)/2
\]
are uniquely determined, and we must have $M(0,k_1,k_2;p,q)=1$ and $k_1k_2\equiv 1$ (mod 2). The same reasoning shows that if $k_1k_2\equiv 0$ (mod 2) then there is no solution to \eqref{eq:congruencedirac}. It remains to observe that $M(0,k_1,k_2;p,q)$ is precisely $\text{ind}\slashed{D}_{A'}=\dim \text{ker} \slashed{D}_{A'}$.
\end{proof}

\begin{proof}[Proof of Proposition \ref{prop:2bridgemonopoleno}]
	By Proposition \ref{prop:spindiracindex}, $n_0^++n_0^- =0$ if $k=k_1k_2\equiv 0$ (mod $2$), and $n_0^++n_0^- =1$ if $k=k_1k_2\equiv 1$ (mod $2$). Note $n_0^\pm \in \Z_{\geqslant 0}$. By Proposition \ref{prop:lefschetzmonopole}, $n_0^+-n_0^-=\text{Lef}(\widetilde \tau,\slashed{D}_{\widetilde A})=\nu(A)/2$, from which the result follows.
\end{proof}

The assumption that $[A]\in \breve{M}(\xi^i,\xi^j)$ lies in the $0$-dimensional component of the moduli space was only used in the proof of Proposition \ref{prop:spindiracindex}, and there we only relied on the relation $N_1(k_1,k_2;p,q)=1$. In general, $[A]\in \breve{M}(\xi^i,\xi^j)_d$ where
\[
	d = N_1(k_1,k_2;p,q) + \frac{1}{2}N_2(k_1,k_2;p,q)-1
\]
for some $k_1$, $k_2$ satisfying \eqref{eq:k1k2}. Note that the involution $(a,b)\mapsto (-a,-b)$ on the sets appearing in \eqref{eq:n1defn}, \eqref{eq:n2defn} shows that $N_1(k_1,k_2;p,q)$ is odd and $N_2(k_1,k_2;p,q)$ is even. In particular, if $d=1$, then we must have $N_1(k_1,k_2;p,q)=1$ and $N_2(k_1,k_2;p,q)=2$. Thus the above work carries through in this situation as well.

\begin{cor}\label{cor:monopolevmap}
	If $[A]\in \breve{M}(\xi^i,\xi^j)_1$ then the conclusion of Proposition \ref{prop:2bridgemonopoleno} still holds.
\end{cor}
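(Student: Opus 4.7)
The plan is to observe that the proof of Proposition~\ref{prop:2bridgemonopoleno} actually only uses one input from the hypothesis that $[A]$ lives in the zero-dimensional moduli space, namely the relation $N_1(k_1,k_2;p,q)=1$, and that this relation still holds when $[A]\in\breve{M}(\xi^i,\xi^j)_1$. Indeed, as noted in the paragraph preceding the corollary, if $[A]\in\breve{M}(\xi^i,\xi^j)_d$ then the dimension formula
\[
d = N_1(k_1,k_2;p,q) + \tfrac{1}{2}N_2(k_1,k_2;p,q)-1
\]
together with the parity facts ($N_1$ odd, $N_2$ even, coming from the involution $(a,b)\mapsto(-a,-b)$) forces $N_1(k_1,k_2;p,q)=1$ and $N_2(k_1,k_2;p,q)=2$ when $d=1$. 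So the integers $k_1,k_2\in\Z_{>0}$ satisfying \eqref{eq:k1k2} are still uniquely determined, and the key constraint $N_1=1$ persists.

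Next I would revisit each of the ingredients used in the proof of Proposition~\ref{prop:2bridgemonopoleno} and verify that none of them uses $d=0$. The Lefschetz computation of Proposition~\ref{prop:lefschetzmonopole} is entirely local, performed on the fixed 2-sphere of $\widetilde\tau$ via the Atiyah--Segal--Singer theorem, and its conclusion $\text{Lef}(\widetilde\tau,\slashed{D}_{\widetilde A})=\nu(A)/2$ depends only on the equivariant structure of $\widetilde A$ near the fixed point set, not on the ambient moduli-space dimension. The cyclic-group character computation \eqref{eq:diraccyclicchar} is likewise purely topological in $(k_1,k_2,p,q)$ and carries over. Surjectivity of $\slashed{D}_{A'}$ on $\R\times L(p,q)$ continues to hold by the same Weitzenb\"ock argument used implicitly in Proposition~\ref{prop:spindiracindex}, since it depends only on the positive scalar curvature of $L(p,q)$ and the ASD condition, not on the dimension of $\breve{M}(\xi^i_{L(p,q)},\xi^j_{L(p,q)})$.

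Finally, the counting step in the proof of Proposition~\ref{prop:spindiracindex}, in which the quantity $M(0,k_1,k_2;p,q)$ was identified, used only $N_1(k_1,k_2;p,q)=1$: a solution $(c,d)$ to \eqref{eq:congruencedirac} with $j=0$ produces a solution $(a,b)=(-k_1+2c+1,-k_2+2d+1)$ to \eqref{eq:congruence} with $|a|<k_1$, $|b|<k_2$, and under $N_1=1$ this forces $(a,b)=(0,0)$, equivalently $c=(k_1-1)/2$, $d=(k_2-1)/2$, which is an admissible integer solution exactly when both $k_1$ and $k_2$ are odd. Therefore $M(0,k_1,k_2;p,q)=1$ when $k=k_1k_2$ is odd and $M(0,k_1,k_2;p,q)=0$ when $k$ is even, exactly as before. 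Combining with $n_0^+-n_0^-=\nu(A)/2$ and $n_0^\pm\in\Z_{\geqslant 0}$ yields the claimed dichotomy for $\{\nu([A]),\nu(\iota[A])\}$. I do not expect a serious obstacle here, since the argument is an extraction of the relevant hypotheses from the previous proof; the only subtle point is verifying that the Dirac surjectivity used in Proposition~\ref{prop:spindiracindex} remains valid for connections in one-dimensional moduli spaces, which follows from the positive scalar curvature Weitzenb\"ock argument that is independent of the dimension of the relevant instanton moduli space.
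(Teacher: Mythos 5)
Your proposal is correct and follows exactly the paper's argument (given in the paragraph immediately preceding the corollary): the key observation is that the proof of Proposition~\ref{prop:2bridgemonopoleno} relies only on the relation $N_1(k_1,k_2;p,q)=1$, and the parity constraints on $N_1$ (odd) and $N_2$ (even) coming from the involution $(a,b)\mapsto(-a,-b)$ force $N_1=1$, $N_2=2$ when $d=1$. Your additional verifications that the Lefschetz computation and Dirac surjectivity do not depend on the moduli space dimension are correct and make the paper's terse "thus the above work carries through" more explicit, but the underlying route is identical.
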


\subsubsection{The irreducible chain complex with local coefficients}\label{subsec:2bridgecomplexes}

We now have an algorithm to compute the irreducible chain complex $(C_\ast(K_{p,q};\Delta \otimes \bF),d)$ which is a module over $\sR\otimes \bF=\bF[U^{\pm 1},T^{\pm 1}]$, along with the maps $\delta_1$ and $\delta_2$. We have
\[
	C_\ast(K_{p,q};\Delta\otimes{\bF}) = \bigoplus_{i=1}^{(p-1)/2} U^{c_i}\cdot \bF[U^{\pm 1},T^{\pm 1}]\cdot \xi^i
\]
Fixing $1\leqslant i\leqslant (p-1)/2$, let $k_1,k_2\in \Z_{>0}$ be any pair of solutions to \eqref{eq:k1k2} after setting $j=0$. Using \eqref{eq:chernsimonsk1k2} the Chern--Simons invariant $c_i$ associated to $\xi^i$ is given by 
\[
	c_i := 2\cdot k_1k_2/2p = k_1k_2/p
\]
The factor of $2$ appears in the denominator because the orbifold $(S^3,K_{p,q})$ is the quotient of $L(p,q)$ by the branched cover involution, while the factor of $2$ appears in the numerator because the Chern--Simons functional is related to the scaled action $2\kappa$. Furthermore, the $\Z/4$-grading of the generator $\xi^i$ is given by:
\[
	\text{gr}(\xi^i)\equiv N_1(k_1,k_2;p,q) + \frac{1}{2}N_2(k_1,k_2;p,q) \;\; (\text{mod } 4)
\]
Next, fix $0\leqslant i,j\leqslant (p-1)/2$ and $i\neq j$. Define $a_{ij}\in \bF[U,T^{\pm 1}]$ to be
\[
	a_{ij} :=  \begin{cases}U^{-k_1k_2/p}(T^{ 2}-T^{ -2}), &  \text{ if } \exists k_1,k_2\in \Z_{>0} \text{ solving }\eqref{eq:k1k2}, \eqref{eq:n1n2}, k_1k_2 \text{ odd} \\ 0, & \text{  otherwise  } \end{cases}
\]
Then the maps $d$, $\delta_1$, and $\delta_2$ are determined as follows:
\[
	 \langle d \xi^i,\xi^j \rangle = a_{ij}, \qquad \delta_1(\xi^i) = a_{i0}, \qquad \langle \delta_2(1),\xi^i\rangle = a_{0i} 
\]
All that remains, in order to describe the entire $\cS$-complex $\widetilde C_\ast(K_{p,q};\Delta\otimes{\bF})$ (and in fact all of its structure as an enriched $\cS$-complex), is to compute the $v$-maps. Properties of the 2-dimensional moduli spaces $\smash{\breve{M}(\xi^i_{L(p,q)},\xi^j_{L(p,q)})_2}$ are described in \cite{austin}, and it seems probable that the $v$-maps can be computed directly, starting from the equivariant ADHM constructions described therein.

\begin{remark}
Although we have not computed the $v$-maps here, note that Corollary \ref{cor:monopolevmap} determines the monopole numbers of instantons which contribute to the $v$-map. $\diamd$
\end{remark}

\begin{remark}\label{rem:2bridgetoz}
If we work over the general local coefficient system $\Delta$, without tensoring by $\bF$, we have only determined each map $d$, $\delta_1$, $\delta_2$ up to a sign, i.e. each $a_{ij}$ should be written instead as $\pm a_{ij}$. However, as long as there are no ``cycles'' in the differential, it is straightforward to verify that these signs do not matter, and our description determines the equivalence class of the complex over $\Delta$.  $\diamd$
\end{remark}

\subsubsection{Sasahira's instanton homology for lens spaces}\label{subsec:sasahira}

The complex computed above is closely related to a version of instanton homology for lens spaces defined by Sasahira \cite{sasahira-lens}, the idea for which goes back to Furuta \cite{furuta-invariant}. We first define the underlying chain group, which is a vector space over $\bF=\Z/2$, to be
\[
	C_\ast(L(p,q)) = \bigoplus_{i=1}^{(p-1)/2} \bF \cdot \xi_{L(p,-q)}^i
\]
The minus sign in $-q$ is included to properly align our orientation conventions with \cite{sasahira-lens}. There is of course a natural identification of this group with $C_\ast(K_{p,-q};\bF)$, via $\smash{\xi_{L(p,-q)}^i\mapsto \xi^i}$, and we define a $\Z/4$-grading on $C_\ast(L(p,q))$ using this identification. Next, consider
\[
	\bF_4 := \bF[x]/(x^2+x+1),
\]
the field with four elements. We have a ring homomorphism $f:\bF[U^{\pm 1}, T^{\pm 1}]\to \bF_4$ which is determined by $f(U)=1$ and $f(T)=x$. Note $f(T^{2}-T^{-2})=1$. Then 
\[
	\langle d \xi^i_{L(p,-q)} , \xi^j_{L(p,-q)} \rangle := f(a_{ij}) \in \bF \subset \bF_4
\]
The homology of the complex $(C_\ast(L(p,q)), d)$ is denoted $I_\ast(L(p,q))$, and is a $\Z/4$-graded $\bF$-vector space. That this is the same as the definition in \cite{sasahira-lens} follows from Proposition \ref{prop:spindiracindex}. Another notation for $I_\ast(L(p,q))$ in \cite{sasahira-lens} is given by $\smash{I^{(0)}_\ast(L(p,q))}$.

\begin{theorem}
	There is an isomorphism from $(C_\ast(K_{p,q};\Delta_{\bF_4}),d)$, where the local system $\Delta_{\bF_4}$ is obtained from $\Delta\otimes{\bF}$ via the base change $f:\bF[U^{\pm 1},T^{\pm 1}]\to \bF_4$, to Sasahira's chain complex $(C_\ast(L(p,-q))\otimes \bF_4, d)$ tensored over $\bF_4$. As a result,
	\[
		I_\ast(K_{p,q};\Delta_{\bF_4}) \cong I_\ast(L(p,-q))\otimes \bF_4
	\]
	as $\Z/4$-graded vector spaces over the field $\bF_4$.
\end{theorem}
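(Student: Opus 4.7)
The plan is to unwind the combinatorial descriptions on both sides and observe that they agree essentially on the nose. The identification of generators is already built into the definitions: in Subsection \ref{subsec:2bridgecomplexes} the complex $(C_\ast(K_{p,q};\Delta\otimes\bF),d)$ is generated by the irreducible flat connections $\xi^i$ for $1\leqslant i\leqslant (p-1)/2$, and under the correspondence of Subsection \ref{sec:branchedcover} each $\xi^i$ lifts uniquely to the abelian flat connection $\xi^i_{L(p,q)}$ on the branched cover. On Sasahira's side, the chain group $C_\ast(L(p,-q))$ is defined to be freely generated by $\xi^i_{L(p,q)}$, so the two groups are canonically identified as $\bF$-vector spaces (or, after tensoring with $\bF_4$, as $\bF_4$-vector spaces).

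Next, I would carry out the base change computation. In $\bF_4=\bF[x]/(x^2+x+1)$ one has $x^{-1}=x+1=x^2$, so $x^{-2}=(x+1)^2=x^2+1=x$, and therefore
\[
  f(T^{2}-T^{-2})=x^{2}+x^{-2}=(x+1)+x=1\in\bF\subset\bF_4.
\]
Combined with $f(U)=1$, this shows that for each pair $i\neq j$ the coefficient $f(a_{ij})$ equals $1$ if there exist $k_1,k_2\in\Z_{>0}$ solving the congruences (\ref{eq:k1k2})--(\ref{eq:n1n2}) with $k_1k_2$ odd, and $0$ otherwise. This is exactly the prescription used in Sasahira's complex: by Proposition \ref{prop:spindiracindex} the coefficient of $d$ on Sasahira's side is determined by the dimension (mod $2$) of the kernel of the coupled Dirac operator $\slashed{D}_{A'}$ on $\R\times L(p,-q)$, which is $1$ precisely when $k_1k_2$ is odd. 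Thus the matrix entries of the two differentials agree after the base change $f$.

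Finally, I would verify that the $\Z/4$-gradings agree: the grading of $\xi^i$ on the $(S^3,K_{p,q})$ side is recorded in Subsection \ref{subsec:2bridgecomplexes} as $N_1(k_1,k_2;p,q)+\tfrac12 N_2(k_1,k_2;p,q)\pmod 4$, and the same expression defines Sasahira's grading of $\xi^i_{L(p,q)}$ in $C_\ast(L(p,-q))$, so the identification above is a degree zero isomorphism of $\Z/4$-graded chain complexes over $\bF_4$ (after tensoring Sasahira's $\bF$-complex with $\bF_4$). Taking homology gives the stated isomorphism.

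The only real subtlety, and thus the main thing to check carefully, is the bookkeeping between $L(p,q)$ and $L(p,-q)$: this sign is dictated by Sasahira's orientation conventions (and is why his chain group for $L(p,q)$ is indexed by $\xi^i_{L(p,-q)}$), and one has to match these up with the direction of the identification $\Pi$ of Subsection \ref{sec:branchedcover} between irreducibles for $K_{p,q}$ and flat connections on $L(p,q)$. Since everything is done over a characteristic $2$ ring, no sign issues obstruct the identification, and the argument reduces to the elementary base change computation above together with the combinatorial descriptions of moduli spaces on $\R\times L(p,q)$ from \cite{austin,furuta-invariant}.
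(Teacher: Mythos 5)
Your argument is correct and follows essentially the same route as the paper. The paper itself does not give an explicit proof of this theorem: in Subsection~\ref{subsec:sasahira} it \emph{defines} $C_\ast(L(p,q))$ with generators $\xi^i_{L(p,-q)}$, grading imported via the identification with $C_\ast(K_{p,-q};\bF)$, and differential coefficients $f(a_{ij})$, so the isomorphism in the statement is close to tautological once the definitions are unwound, and the real content is in (a) the combinatorial description of $C_\ast(K_{p,q};\Delta\otimes\bF)$ from Subsection~\ref{subsec:2bridgecomplexes}, (b) the elementary computation $f(T^2-T^{-2})=1$, and (c) Proposition~\ref{prop:spindiracindex} matching this up with Sasahira's original Dirac-index criterion. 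Your proof does exactly this. One small terminological reversal: you say each $\xi^i$ ``lifts uniquely to the abelian flat connection $\xi^i_{L(p,q)}$,'' whereas the paper's convention (Subsection~\ref{sec:branchedcover}) is the opposite direction --- the abelian class $\xi^i_{L(p,q)}\in\fC(\Sigma)^\tau$ is the one that lifts uniquely to a class $\xi^i\in\fC(K)$ via a one-point fiber of $\Pi$; since the correspondence is a bijection for two-bridge knots, this does not affect the argument.
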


In particular, the euler characteristics are equal, and given by $\sigma(K_{p,q})/2$. Thus we obtain a way of computing the signature of $K_{p,q}$ from the arithmetic functions $N_1(k_1,k_2;p,-q)$ and $N_2(k_1,k_2;p,-q)$, although the authors suspect that this is probably not new.

\begin{remark} Proposition \ref{prop:spindiracindex} simplifies the construction of $(C_\ast(L(p,q)),d)$ given by Sasahira. Indeed, in \cite{sasahira-lens}, the coefficient $d:=\smash{\langle d \xi_{L(p,-q)}^i,\xi_{L(p,-q)}^j\rangle}$ is computed in two steps: (1) first, check if the relevant 0-dimensional moduli space is empty or a point $[A']$ using Theorem \ref{thm:lensmodulipoint}; then, (2) compute $\text{ind} \slashed{D}_{A'}$, and its parity will give the answer for $d\in \bF$. However, Proposition \ref{prop:spindiracindex} says that $\text{ind} \slashed{D}_{A'}\equiv k_1k_2$ (mod $2$), where $k_1$ and $k_2$ have already been determined in step (1), via Theorem \ref{thm:lensmodulipoint}. $\diamd$
\end{remark}

\begin{cor}
	Let $k\in \Z$. Then $\hinv_\sS(K_{8k+1,-2})=0$ for any coefficient system $\Delta_\sS$.
\end{cor}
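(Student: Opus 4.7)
The plan is to show that in the universal $\cS$-complex $\widetilde C(K_{8k+1,-2};\Delta)$ over $\sR = \Z[U^{\pm 1},T^{\pm 1}]$, both maps $\delta_1$ and $\delta_2$ vanish identically. Since vanishing is preserved under any base change to an $\sR$-algebra $\sS$, the local-coefficient analogue of Proposition \ref{h-g-reinterpret} then forces $\hinv_\sS(K_{8k+1,-2}) = 0$. The case $k = 0$ is the unknot (with $C_\ast = 0$), so I assume $k \geq 1$ and set $p = 8k+1$, $q = -2$.

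By Subsection \ref{subsec:2bridgecomplexes}, the coefficient $a_{ij}$ appearing in $d$, $\delta_1$ or $\delta_2$ is nonzero precisely when there exist $k_1,k_2 \in \Z_{>0}$ satisfying the congruences \eqref{eq:k1k2} together with $N_1(k_1,k_2;p,-2) = 1$ and $N_2(k_1,k_2;p,-2) = 0$, and in addition $k_1 k_2$ is odd. The first step is thus to enumerate all such $(k_1,k_2)$. Analyzing the congruence $a \equiv 2b \pmod{p}$, one sees that the smallest $|b|$ with $2b \equiv \pm 1 \pmod{p}$ equals $4k$ and the smallest $|a|$ with $a \equiv \pm 2 \pmod{p}$ equals $2$. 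The solutions $(\pm 2, \pm 1)$ rule out $k_1 \geq 3$, and the solutions $(\pm 1, \mp 4k)$ cap $k_2 \leq 4k$ when $k_1 = 1$; this leaves exactly
\[
(k_1,k_2) \in \{(1,l) : 1 \leq l \leq 4k\} \cup \{(2,1)\}.
\]

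Next I recover $(i,j)$ with $0 \leq i,j \leq 4k$ by solving \eqref{eq:k1k2}. Using $2^{-1} \equiv 4k+1 \pmod{p}$, a short computation gives
\[
(k_1,k_2) = (1,l) \;\Longrightarrow\; (i,j) = \begin{cases} (4k-l,\; 4k+1-l), & 1 \leq l \leq 4k-1,\\ (0,\; 1), & l = 4k, \end{cases}
\]
and $(k_1,k_2) = (2,1) \Longrightarrow (i,j) = (2,0)$. The map $(k_1,k_2) \mapsto (i,j)$ is injective on our list, so each $(i,j)$ that appears is covered by a unique $(k_1,k_2)$.

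The proof concludes with a parity count. The only $(i,j)$ pairs with $i=0$ or $j=0$ that can arise are $(0,1)$, coming from $(k_1,k_2)=(1,4k)$, and $(2,0)$, coming from $(k_1,k_2)=(2,1)$; both have $k_1 k_2$ even, so they contribute $a_{ij} = 0$. Hence $\delta_1 = 0$ and $\delta_2 = 0$, as required. The main obstacle is the enumeration of admissible $(k_1,k_2)$ in the second paragraph, which depends essentially on the special arithmetic of $q = -2$ and $p \equiv 1 \pmod{8}$; the remaining steps are routine.
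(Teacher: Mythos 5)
Your proof is correct and is essentially a more explicit version of the argument the paper gives. The paper's own proof is a short appeal to Sasahira's Proposition 4.9 (that $I_\ast(L(8k+1,2))=0$) together with the assertion that ``essentially the same computation'' yields $I_\ast(K_{8k+1,-2};\Delta_\sS)=0$, whence $\hinv_\sS=0$. You instead carry out the underlying ADHM arithmetic directly: you enumerate the admissible $(k_1,k_2)$ for $p=8k+1$, $q=-2$, recover the corresponding pairs $(i,j)$ using $2^{-1}\equiv 4k+1\pmod{p}$, and observe that the only pairs touching the reducible $\theta=\xi^0$ are $(0,1)$ from $(k_1,k_2)=(1,4k)$ and $(2,0)$ from $(k_1,k_2)=(2,1)$, both with $k_1k_2$ even, so $a_{01}=a_{20}=0$ and hence $\delta_1=\delta_2=0$ already over the universal ring $\sR$. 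Your lattice enumeration (the points $(\pm 2,\pm 1)$ forcing $k_1\leqslant 2$, the points $(\pm 1,\mp 4k)$ capping $k_2\leqslant 4k$ when $k_1=1$) checks out, as does the recovery of $(i,j)$. One advantage of the way you phrase the conclusion is worth noting: the paper's intermediate claim that $I_\ast(K_{8k+1,-2};\Delta_\sS)=0$ for arbitrary $\sS$ is not literally correct (e.g.\ over $\sS=\Z$ the coefficients $T^2-T^{-2}$ specialize to zero and $d$ vanishes, so $I_\ast$ is free of rank $4k$), whereas the vanishing $\delta_1=\delta_2=0$ over $\sR$ persists under any base change and is exactly the input Proposition \ref{h-g-reinterpret} needs to pin $h_\sS$ at $0$ from both sides.
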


\begin{proof}
	Sasahira computes in \cite[Proposition 4.9]{sasahira-lens} that $I_\ast(L(8k+1,2))=0$. Essentially the same computation shows $I_\ast(K_{8k+1,2};\Delta_\sS)=0$, which implies the result.
\end{proof}

\subsubsection{Invariants for the right-handed trefoil}

We describe the full structure of the our invariant for the right-handed trefoil, which with our conventions is the $(3,-1)$ two-bridge knot. Recall that we write $\sR=\Z[U^{\pm 1}, T^{\pm 1}]$. From Subsection \ref{subsec:2bridgecomplexes} and Remark \ref{rem:2bridgetoz}, we have the following:
\[
	C_\ast(K_{3,-1};\Delta) = U^{c_1} \sR \cdot \xi^1
\]
To compute $c_1$, we note that the solution $(k_1,k_2)$ to \eqref{eq:k1k2}, \eqref{eq:n1n2} for $i=1$ and $j=0$ is $(k_1,k_2)=(1,1)$. Thus $c_1=k_1k_2/p = 1/3$. This also gives the usual grading of $U^{1/3}\xi^1$ as $1$ (mod 4). Note that there is no room for a $v$-map, because there is only 1 generator. The map $\delta_2$ is zero for grading reasons, while $\delta_1$ is determined by
\[
	\delta_1(\xi^1) = \pm U^{-1/3}(T^{ 2}-T^{ -2}),
\]
using the data of $k_1$, $k_2$ discussed above. The generator $U^{1/3}\xi^1$ will be identified with the homotopy class of the path from $\xi^1$ to $\theta$ determined by the unique instanton that contributes to the computation of $\delta_1$. The instanton grading of this generator is given by the Chern--Simons invariant of this path, which is $1/3$.

Next, the $\cS$-complex $\widetilde C_\ast := \widetilde C_\ast(K_{3,-1};\Delta)$ is
\[
	\widetilde C_\ast = \left(U^{1/3} \sR\cdot \xi^1\right) \oplus \left(U^{1/3}\sR \cdot \chi\xi^1	\right) \oplus  \sR \cdot \theta
\]
where the generator $U^{1/3}\chi\xi^1$ has grading $2$ (mod 4), and the differential is simply
\[
	\widetilde d = \left[ \begin{array}{ccc} 0 & 0 & 0 \\ 0 & 0 & 0 \\ \pm U^{-1/3}(T^{ 2}-T^{ -2}) & 0  & 0  \end{array} \right]
\]
This determines the I-graded $\cS$-complex of the right-handed trefoil, and, as no perturbations are necessary, also the isomorphism-type of its enriched $\cS$-complex.

We may also compute the equivariant homology groups. Let us first continue to work over our universal coefficient ring $\sR=\Z[T^{\pm 1}, U^{\pm 1}]$. The exact triangle \eqref{top-equiv-triangle-intro} splits:
\begin{equation*}\label{eq:trefoilexact1}
	0 \longrightarrow \hrI( K_{3,-1} , \Delta_{\sR}) \xrightarrow{i_\ast} \brI( K_{3,-1} , \Delta_{\sR}) \xrightarrow{p_\ast}  \crI( K_{3,-1} , \Delta_{\sR}) \longrightarrow 0
\end{equation*}
More precisely, this is a short exact sequence of $\sR[x]$-modules, and is computed from our description of the $\cS$-complex for $K_{3,-1}$ given above and the definitions in Subsection \ref{sec:equivtheories}:
\begin{equation}\label{eq:exactseqfortrefoil}
	0 \longrightarrow U^{1/3}\sR\cdot \xi^1 \oplus \sR[x] \xrightarrow{i_\ast} \sR[\![ x^{-1} , x] \xrightarrow{p_\ast}  \frac{\sR[\![ x^{-1} , x]}{(T^2-T^{-2})x^{-1} + \sR[x]} \longrightarrow 0
\end{equation}
Here $p_\ast$ is the obvious projection, $i_\ast$ embeds $\sR[x]$ into $\sR[\![x^{-1},x]$, and 
\[
	i_\ast(U^{1/3}\xi^1)= (T^2-T^{-2})x^{-1}.
\]
The $\sR[x]$-module structures on the second and the third groups in the exact sequence \eqref{eq:exactseqfortrefoil} are the obvious ones, while on the first group we have
\begin{equation}
	x\cdot U^{1/3}\xi^1 = T^2-T^{-2} \in \sR[x], \label{eq:actiononxi1}
\end{equation}
with the usual module structure on the $\sR[x]$-factor. From this description it is clear that the ideal $\fI=\text{im}(i_\ast)$ introduced in Subsection \ref{small-model} is:
\[
	\fI = (T^2-T^{-2})x^{-1} + \sR[x] \subset \sR[\![x^{-1},x].
\]
Our nested sequence of ideals of $\sR$, from \eqref{eq:locjideals}, is given by
\[
	J^\sR_1 = (T^2-T^{-2}), \qquad J^\sR_i = 0\;\; (i\geqslant 2), \qquad J^\sR_i = \sR \;\; (i\leqslant 0).
\]
We summarize some more consequences of these computations.

\begin{prop}
	For the right-handed trefoil we have $\hinv_\sR =1$. The same holds for $\hinv_\sS$ for any integral domain base change $\sS$ for which $U$ and $(T^2-T^{-2})$ remain nonzero. Furthermore, the function $\Gamma^R_{K_{3,-1}}:\Z\to \R_{\geqslant 0}\cup \infty$ for $R={\Z[T^{\pm 1}]}$ satisfies the following:
	\begin{equation*}
		\Gamma^R_{K_{3,-1}}(k)=\begin{cases}   0, & k\leqslant 0\\
					1/3, &  k=1\\
					\infty, & k\geqslant 2
		\end{cases}
	\end{equation*}
\end{prop}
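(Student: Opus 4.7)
The plan is to read off each invariant directly from the explicit $\cS$-complex
\[
\widetilde C_\ast(K_{3,-1};\Delta)=\bigl(U^{-1/3}\sR\cdot\xi^1\bigr)\oplus\bigl(U^{-1/3}\sR\cdot\chi\xi^1\bigr)\oplus\sR\cdot\theta
\]
computed immediately above the proposition, in which the only nonzero structure map is $\delta_1(U^{-1/3}\xi^1)=\pm(T^2-T^{-2})$ while $d=v=\delta_2=0$. Observe in particular that the base change from $\sR$ to any integral domain $\sS$ produces an $\cS$-complex of the same form, with $\delta_1$ still given by multiplication by $\pm(T^2-T^{-2})$.

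First I would establish $\hinv_\sS=1$ using Proposition~\ref{h-g-reinterpret}. Applied to $\alpha=U^{-1/3}\xi^1$, the identities $d\alpha=0$ and $\delta_1(\alpha)=\pm(T^2-T^{-2})\neq 0$ in $\sS$ (guaranteed by the hypothesis on $\sS$) yield $\hinv_\sS\geq 1$. For the matching upper bound, note that $v\equiv 0$ forces $\delta_1\circ v^{k-1}=0$ for every $k\geq 2$ and every choice of $\alpha$, so the same proposition rules out $\hinv_\sS\geq 2$. Taking $\sS=\sR$ gives the first claim; the second claim is the general case.

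Next I would treat the function $\Gamma^{R}_{K_{3,-1}}$ for $R=\Z[T^{\pm 1}]$. The values for $k\geq 2$ are immediate from Theorem~\ref{Gamma-Y-K}(ii) and the computation $\hinv_R(K_{3,-1})=1$ just established. For $k=1$, I would unravel the definition of $\Gamma$: elements $\alpha\in C$ of $\widetilde{\mathrm{gr}}$ equal to $1$ are precisely $f(T)\cdot U^{-1/3}\xi^1$ with $f(T)\in R$, every such $\alpha$ automatically satisfies $d\alpha=0$, and the requirement that $k-1=0$ be the smallest index with $\delta_1\circ v^{i}(\alpha)\neq 0$ is equivalent to $\delta_1(\alpha)=\pm f(T)(T^2-T^{-2})\neq 0$, i.e.\ $f(T)\neq 0$. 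All such $\alpha$ have $\deg_I(\alpha)=1/3$, since $T$ is I-neutral, giving $\Gamma^R_{K_{3,-1}}(1)=1/3$. For $k\leq 0$, the condition \eqref{Gamma-neg-inf} collapses to $d\alpha=0$ (as $\delta_2=0$), which is satisfied by the zero chain $\alpha=0$ with all $a_i=0$; since $\deg_I(0)=-\infty$ by the convention $\deg_I(\sum s_k\zeta_k)=\max\{\deg_I(\zeta_k):s_k\neq 0\}$, the infimum is $-\infty$ and $\Gamma^R_{K_{3,-1}}(k)=\max(-\infty,0)=0$.

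The computation is essentially mechanical once the chain complex has been described; the only non-trivial ingredient is the value $c_1=-1/3$ of the Chern-Simons invariant of $\xi^1$, which pins down the I-grading $1/3$ entering $\Gamma(1)$. The main obstacle, if any, is simply matching the bigrading conventions of the enriched $\cS$-complex with the formal symbol $U^{-1/3}$ used to encode the I-grading shift on the summand $U^{-1/3}\sR\cdot\xi^1$; once this bookkeeping is in place, no further gauge-theoretic input is needed, and in particular no perturbation argument is required because the unperturbed critical set is already non-degenerate and all relevant moduli spaces are regular.
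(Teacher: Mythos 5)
Your proof is correct and follows exactly the route the paper itself takes: the proposition is stated as a summary of the explicit $\cS$-complex computed just above it, and each invariant is read off mechanically from that complex using Proposition~\ref{h-g-reinterpret} (for $h$) and the definitions of $\Gamma$. The one small slip is that the finiteness criterion $\Gamma^R_{(Y,K)}(i)<\infty \Leftrightarrow i\leqslant h_R(Y,K)$ is part (iv), not part (ii), of Theorem~\ref{Gamma-Y-K}; with that citation corrected, the argument is complete.
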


We turn to the base change $\sT_\bF = \bF[T^{\pm 1}]$. Using Corollary \ref{equiv-natural} we may recover the computation of $I^\natural(K_{3,-1};\Delta_{BN})$ from \cite{km-concordance}. Here we note that
\[
	\widehat{C}(K_{3,-1},\Delta_{\sT_\bF}) = \widehat{I}(K_{3,-1},\Delta_{\sT_\bF}) = \sT_\bF \cdot \xi^1 \oplus \sT_\bF[x]
\]
where similar to before the $\sT_\bF[x]$-module structure is the standard one on the summand $\sT_\bF[x]$, but just as in \eqref{eq:actiononxi1} acts on the generator $\xi^1$ as:
\begin{equation}
	x\cdot \xi^1 = T^2- T^{-2} \in \sT_\bF[x]\label{eq:actionxi1t}
\end{equation}
Now Corollary \ref{equiv-natural} gives us an isomorphism of $\sS_{BN}$-modules
\begin{equation}
	I^\natural(K_{3,-1};\Delta_{BN})\cong \widehat I(K_{3,-1};\Delta_{\mathscr{T}_\bF})\otimes_{\mathscr{T}_\bF[x]} \mathscr{S}_{BN}\label{eq:isofromcorequivnat}
\end{equation}
We have a surjective homomorphism to the right hand side of \eqref{eq:isofromcorequivnat}:
\begin{equation}
	\sS_{BN} \oplus \sS_{BN} \longrightarrow \left(\sT_\bF \cdot \xi^1\otimes_{\sT_{\bF}[x]} \sS_{BN} \right)\oplus  \sS_{BN} \label{eq:surjectivehomtrefoil}
\end{equation}
which sends $(a,b)\mapsto (\xi^1\otimes a,  b)$. Using relation \eqref{eq:actionxi1t} and that the module structure of $\sS_{BN}=\bF[T_1^{\pm 1}, T_2^{\pm 1}, T_3^{\pm 1}]$ over $\sT_{\bF}[x]$ sends $x\mapsto P$, we find that the kernel of \eqref{eq:surjectivehomtrefoil} consists of elements $(Pa, (T^2-T^{-2})b)$. Thus $I^\natural(K_{3,-1};\Delta_{BN})$ has the presentation
\[
	\sS_{BN} \longrightarrow \sS_{BN} \oplus \sS_{BN}, \qquad 1\mapsto (P,T^2-T^{-2}).
\]
We have recovered part of \cite[Proposition 8.1]{km-concordance}, proven by an entirely different method:

\begin{cor} \label{trefoil-computation}
	Let $K$ be the right-handed trefoil. Then $I^\natural(K;\Delta_{BN})$ is isomorphic as an $\sS_{BN}$-module to the ideal $(P,T^2-T^{-2})\subset \sS_{BN}$.
\end{cor}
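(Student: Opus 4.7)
The excerpt has already done the heavy lifting: the paragraph preceding the corollary exhibits a surjection $\sS_{BN}\oplus \sS_{BN}\to I^\natural(K;\Delta_{BN})$ whose kernel is generated by $(P,T^2-T^{-2})$, giving the presentation $\sS_{BN}\xrightarrow{\phi}\sS_{BN}\oplus\sS_{BN}\to I^\natural(K;\Delta_{BN})\to 0$ with $\phi(1)=(P,T^2-T^{-2})$. So the task is purely algebraic: identify this cokernel with the ideal $(P,T^2-T^{-2})\subset \sS_{BN}$.

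The plan is to introduce the $\sS_{BN}$-linear map
\[
	\psi:\sS_{BN}\oplus\sS_{BN}\longrightarrow \sS_{BN},\qquad (a,b)\longmapsto a(T^2-T^{-2})+bP,
\]
whose image is by definition the ideal $(P,T^2-T^{-2})$. I would then show that $\ker(\psi)=\mathrm{im}(\phi)$, after which the first isomorphism theorem identifies $I^\natural(K;\Delta_{BN})=\mathrm{coker}(\phi)$ with $\mathrm{im}(\psi)=(P,T^2-T^{-2})$. The inclusion $\mathrm{im}(\phi)\subseteq\ker(\psi)$ is immediate, since $\psi(P,T^2-T^{-2})=2P(T^2-T^{-2})=0$ in characteristic two.

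The only substantive step is the reverse inclusion $\ker(\psi)\subseteq\mathrm{im}(\phi)$, which reduces to checking that $P$ and $T^2-T^{-2}$ are coprime in the UFD $\sS_{BN}=\bF[T_1^{\pm 1},T_2^{\pm 1},T_3^{\pm 1}]$ (where $T=T_1$). Here I use that in characteristic two $T^2-T^{-2}=(T+T^{-1})^2$, so it suffices to show that the prime $T_1+T_1^{-1}$ does not divide $P$. Setting $T_1+T_1^{-1}=0$ (equivalently $T_1=1$ in characteristic two) reduces $P$ to
\[
	T_2T_3+T_2^{-1}T_3+T_2T_3^{-1}+T_2^{-1}T_3^{-1}=(T_2+T_2^{-1})(T_3+T_3^{-1})\neq 0,
\]
confirming coprimality. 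Given this, if $a(T^2-T^{-2})+bP=0$, then $a(T^2-T^{-2})=bP$ forces $P\mid a$ and $(T^2-T^{-2})\mid b$; writing $a=Pc$ and comparing yields $b=c(T^2-T^{-2})$, so $(a,b)=c\cdot(P,T^2-T^{-2})=c\cdot\phi(1)$, as required.

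The main (minor) obstacle is the coprimality verification, which is handled by the explicit reduction modulo $T_1+T_1^{-1}$ above; everything else is a formal cokernel-versus-image manipulation that follows directly from the presentation already exhibited in the excerpt.
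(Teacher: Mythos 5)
Your argument is correct and supplies precisely the algebraic step the paper leaves implicit: from the presentation $\sS_{BN}\to\sS_{BN}^{\oplus 2}$, $1\mapsto(P,T^2-T^{-2})$, identify the cokernel with the ideal via the ``crossed'' map $\psi(a,b)=a(T^2-T^{-2})+bP$, the coprimality of $P$ and $T^2-T^{-2}$ being the only substantive point. One small slip in the wording: $T_1+T_1^{-1}=T_1^{-1}(T_1+1)^2$ is not prime in characteristic two---the relevant prime is $T_1+1$---but since you in fact reduce modulo $T_1=1$, i.e.\ modulo the ideal $(T_1+1)$, the verification is exactly the correct one and the conclusion stands.
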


Since the slice genus condition in \eqref{slice-genus-cond} is satisfied by the right-handed trefoil, we can define the ideal $\widehat{z}(K_{3,-1})$ and use \cite[Proposition 8.1]{km-concordance} to see that it is equal to $(x,T^2-T^{-2})$.

\subsection{The $(3,5)$ torus knot}\label{sec:35}

We now turn back to the coefficient ring $\Z$, and study some knots which have non-trivial $h=h_\Z$ invariants. We begin with the torus knot $K=T_{3,5}$, whose double branched cover $\Sigma$ is the Poincar\'{e} homology sphere $\Sigma(2,3,5)$. Recall that for an integer homology 3-sphere such as $\Sigma$, Floer's chain complex $(C_\ast(\Sigma),d)$ for the $\Z/8$-graded instanton homology $I_\ast(\Sigma)$ is generated by the irreducible critical points in $\fC(\Sigma)$, perhaps after a suitable holonomy perturbation. For the Poincar\'{e} sphere, no perturbation is needed, and there are two non-degenerate irreducibles $\{\alpha_\Sigma, \beta_\Sigma\} \subset \fC(\Sigma)$, so we can write
\[
	C_\ast(\Sigma) = \Z_{(1)} \oplus \Z_{(5)} = I_\ast(\Sigma).
\]
The Floer gradings are $\text{gr}(\alpha_\Sigma)\equiv 1$ (mod 8) and $\text{gr}(\beta_\Sigma)\equiv 5$ (mod 8). Our convention is that $\text{gr}(\alpha_\Sigma)$ is the expected dimension modulo 8 of $M(\alpha_\Sigma,\theta_\Sigma)$, the moduli space of finite energy instantons on $\R\times \Sigma$ with limits $\alpha_\Sigma$ and $\theta_\Sigma$ at $-\infty $ and $+\infty$, respectively, where $\theta_\Sigma$ is a trivial connection.

From our discussion in Subsection \ref{sec:branchedcover}, and the fact that $\fC(\Sigma)^\tau=\fC(\Sigma)$ (see \cite[Proposition 8]{saveliev:brieskorn}), there are four irreducibles in $\fC(K)$, which are non-degenrate: two lifts, $\alpha$ and $\iota \alpha$, of $\alpha_\Sigma$, and two lifts, $\beta$ and $\iota\beta$, of $\beta_\Sigma$. We may use the zero perturbation and the index computations of \cite[Theorem 7.2]{PS} to write
\begin{equation}
	C_\ast(K) = \Z_{(1)}^2 \oplus \Z_{(3)}^2 = I_\ast(K) \label{eq:complex35}
\end{equation}
where each $\Z_{(1)}$ is generated by one of $\alpha$ and $\iota \alpha$, and each $\Z_{(3)}$ by one of $\beta$ and $\iota\beta$. For grading reasons, as already indicated in \eqref{eq:complex35}, we have $d=0$. 

For grading reasons, $\delta_2=0$. We now turn to $\delta_1$. Recall that $\Sigma(2,3,5)$ is a quotient of the 3-sphere by an action of the binary icosohedral group in $SU(2)\cong S^3$ of order 120. In particular, it has a metric of positive scalar curvature, covered by the round metric on the 3-sphere. Using an equivariant ADHM construction, Austin showed that for this metric, all finite energy instanton moduli on $\R\times \Sigma$ are unobstructed, smooth manifolds, and
\[
	\breve{M}(\alpha_\Sigma,\theta_\Sigma)_0 = \{[A']\}
\]
consists of one unparametrized instanton up to gauge, see \cite[Proposition 4.1]{austin}. The metric may be chosen so that the covering involution $\tau$ that yields the orbifold $(S^3,K)$ is an isometry; see \cite{dunbar} for the classification of spherical orbifolds. Again, our discussion from Subsection \ref{sec:branchedcover} implies that each of $\breve{M}(\alpha,\theta)_0$ and $\breve{M}(\iota\alpha,\theta)_0$ consist of a unique instanton, $[A]$ and $\iota[A]$, respectively, which lift $[A']$. Consequently, $\delta_1:\Z_{(1)}^2\to \Z$ is nonzero, from which $\hinv (T_{3,5})\geqslant 1$ follows.

By Theorem \ref{thm:tildeconnsum}, a chain complex computing $I^\natural(K)$ for the $(3,5)$ torus knot is
\[
	\widetilde C_\ast(K) = \left(\Z_{(1)}^2 \oplus \Z_{(3)}^2\right) \oplus \left(\Z_{(0)}^2 \oplus \Z_{(2)}^2\right) \oplus \Z_{(0)}
\]
\[
	\widetilde d = \left[\begin{array}{ccc}0 & 0 & 0 \\ v & 0 & 0 \\ \delta_1 & 0 & 0 \end{array} \right]
\]
On the other hand, it is known that $I^\natural(K)$ has rank $7$. Indeed, for a general knot $K$, Kronheimer and Mrowka's spectral sequence from \cite{KM:unknot} provides the rank inequality
\[
	\text{rank } Kh^\text{red}(K) \geqslant \text{rank } I^\natural (K),
\]
where $Kh^\text{red}(K)$ is the reduced Khovanov homology of $K$. For the $(3,5)$ torus knot, the left hand side is equal to 7. Furthermore, $\text{rank}I^\natural (K)$ is bounded below by $|\Delta_K|$, the sum of the absolute values of the alexander polynomial, see \eqref{eq:alexanderbound}. For the $(3,5)$ torus knot $|\Delta_K|=7$, so $\text{rank}I^\natural(K)=7$ as claimed. It is straightforward to see that $\delta_1v=0$, for otherwise the homology of $\widetilde C_\ast(K)$ would have rank less than $7$. Thus $\hinv(T_{3,5})\leqslant 1$, and we obtain:

\begin{prop}
	For the $(3,5)$ torus knot $K$ we have $\hinv(K)=1$.
\end{prop}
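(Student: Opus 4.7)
The plan is to squeeze $\hinv(T_{3,5})$ between the bounds $1$ and $1$ using precisely the framework assembled in Subsection \ref{sec:branchedcover} and the opening of Section \ref{sec:computations}.

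\emph{Step 1: Identify the generators.} First I would invoke the double branched cover $\pi:\Sigma(2,3,5)\to S^3$ to import the known computation of the flat critical set. The set $\fC(\Sigma(2,3,5))=\{\alpha_\Sigma,\beta_\Sigma\}$ consists of two $\tau$-invariant irreducibles of type (iii) in the sense of Subsection \ref{sec:branchedcover}, so each lifts to two distinct irreducibles in $\fC(K)$, giving the four generators $\alpha,\iota\alpha,\beta,\iota\beta$ in $\Z/4$-gradings $1,1,3,3$ via \cite[Theorem 7.2]{PS}. Thus $C_*(K)=\Z^2_{(1)}\oplus \Z^2_{(3)}$, and both $d$ and $\delta_2$ vanish for grading reasons.

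\emph{Step 2: Lower bound.} I would equip $\Sigma(2,3,5)$ with its round-quotient positive scalar curvature metric, which is $\tau$-equivariant by the classification of spherical orbifolds in \cite{dunbar}, and transfer it to $(S^3,K)$. The Weitzenbock formula makes every finite-energy moduli space unobstructed, so no holonomy perturbation is needed. Austin's equivariant ADHM computation \cite[Proposition 4.1]{austin} identifies $\breve M(\alpha_\Sigma,\theta_\Sigma)_0$ as a single point $[A']$. Proposition \ref{reg-dbl-cover} then promotes the two $\tau$-lifts $[A]\in\breve M(\alpha,\theta)_0$ and $[\iota A]\in\breve M(\iota\alpha,\theta)_0$ to regular singular instantons, each contributing $\pm 1$ to $\delta_1$. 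Hence $\delta_1\neq 0$, and Proposition \ref{h-g-reinterpret} gives $\hinv(T_{3,5})\geq 1$.

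\emph{Step 3: Upper bound via rank count.} The framed complex $\widetilde C_*(K)=C_*\oplus C_{*-1}\oplus \Z$ has total rank $9$. With $d=\delta_2=0$, the only possibly nonzero parts of $\widetilde d$ are $v:C_3\to C_1$ and $\delta_1:C_1\to \Z$, and these land in distinct summands of $\widetilde C$, so
\[
\mathrm{rank}\,H_*(\widetilde C_*) \;=\; 9 - 2\bigl(\mathrm{rank}\,v + \mathrm{rank}\,\delta_1\bigr).
\]
By Theorem \ref{thm:tildeconnsum} this equals $\mathrm{rank}\,I^\natural(T_{3,5})$, which is sandwiched between the alexander bound $|\Delta_{T_{3,5}}|=7$ and Kronheimer--Mrowka's Khovanov bound $\mathrm{rank}\,Kh^{\mathrm{red}}(T_{3,5})=7$. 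Therefore $\mathrm{rank}\,v+\mathrm{rank}\,\delta_1=1$, and since $\delta_1\neq 0$ by Step 2 we must have $v=0$. In particular $\delta_1\circ v=0$, so by Proposition \ref{h-g-reinterpret} there is no cycle $\alpha\in C_*$ with $\delta_1\alpha=0$ and $\delta_1 v\alpha\neq 0$, ruling out $\hinv(T_{3,5})\geq 2$.

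\emph{Main obstacle.} The most delicate point is Step 2: converting Austin's equivariant count on $\Sigma(2,3,5)$ to a genuine non-vanishing of $\delta_1$ on $(S^3,K)$. One needs (i) a $\tau$-equivariant regular metric, which is supplied by the spherical geometry; (ii) that the two lifts $[A]$ and $[\iota A]$ land in genuinely distinct gauge-equivalence classes, which follows from the observation in Subsection \ref{sec:branchedcover} that $\iota$ acts freely on the type-(iii) part of the character variety; and (iii) that signed cancellation cannot kill $\delta_1(\alpha)$ or $\delta_1(\iota\alpha)$, which is automatic since each of the two relevant moduli spaces is a single oriented point. Everything else is a formal consequence of the algebraic framework developed in Sections \ref{sec:tilde} and \ref{sec:equivtheories}.
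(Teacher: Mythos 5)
Your proof follows the same route as the paper's: branched-cover identification of the four generators $\alpha,\iota\alpha,\beta,\iota\beta$ in gradings $1,1,3,3$, Austin's equivariant ADHM construction together with the $\tau$-invariant round-quotient metric to see $\delta_1\neq 0$, and a rank count of $\widetilde C_\ast(K)$ against $\rank I^\natural(T_{3,5})=7$ for the upper bound.

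There is, however, a gap in Step 3. Since $v$ has degree $-2\ (\text{mod }4)$, it has a component $v_{13}\colon C_1\to C_3$ in addition to $v_{31}\colon C_3\to C_1$; you have dropped $v_{13}$. Moreover $v_{13}$ and $\delta_1$ share the domain $C_1$, so the portion of $\widetilde d$ on the grading-$1$ summand is the single map $(v_{13},\delta_1)\colon C_1\to C_3\oplus\Z$, and the correct identity is
\[
\rank\widetilde d \;=\; \rank(v_{13},\delta_1)\;+\;\rank v_{31}\;=\;1,
\]
not $\rank\widetilde d=\rank v+\rank\delta_1$. Since $\delta_1\neq 0$ gives $\rank(v_{13},\delta_1)\geqslant 1$, one concludes $v_{31}=0$; this suffices, because by grading $\delta_1 v$ can only be nonzero on $C_3$ and $\delta_1 v|_{C_3}=\delta_1 v_{31}=0$, so Proposition~\ref{h-g-reinterpret} rules out $\hinv\geqslant 2$. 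But the stronger claim $v=0$ that you deduce is not justified by the rank count: $v_{13}$ is a priori allowed to be nonzero with $\rank(v_{13},\delta_1)=1$. The paper avoids this issue by arguing contrapositively that $\delta_1 v\neq 0$ together with $\delta_1\neq 0$ would force $\rank\widetilde d\geqslant 2$ and $\rank H_\ast(\widetilde C)\leqslant 5<7$. Your Steps 1 and 2 match the paper's argument; the final conclusion of Step 3 is correct, but the intermediate assertion $v=0$ should be replaced by $v_{31}=0$.
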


\subsection{The $(3,4)$ torus knot}\label{sec:34}

The case of the $(3,4)$ torus knot $K$ is similar to that of the $(3,5)$ torus knot. This is because the double branched cover is again a finite quotient of $S^3$, this time by the binary tetrahedral group in $SU(2)\cong S^3$ of order 24. Again $\fC(\Sigma)^\tau=\fC(\Sigma)$ and there are two classes $\alpha_\Sigma$, $\beta_\Sigma\in\fC(\Sigma)$ apart from $\theta_\Sigma$, but in this case one is abelian, say $\beta_\Sigma$. Thus there are two irreducibles $\alpha,\iota\alpha\in \fC(K)$ that pull back to $\alpha_\Sigma$, and only one $\beta\in \fC(K)$ that pulls back to $\beta_\Sigma$. Using \cite{austin} and \cite[Lemma 7.5]{PS} we compute
\begin{equation}
	C_\ast(K) = \Z^2_{(1)} \oplus \Z_{(3)} = I_\ast(K),\label{eq:34}
\end{equation}
where each copy of $\Z_{(1)}$ is generated by one of $\alpha$ and $\iota\alpha$, while $\Z_{(3)}$ is generated by $\beta$. We sketch the computation for the reader.

We begin with the algorithm of Austin \cite{austin}. We start with the graph $G_{T^\ast}$ of the extended Dynkin diagram $\widetilde E_6$ for the binary tetrahedral group $T^\ast$. By the McKay Correspondence, this graph encodes the unitary representation theory of the binary tetrahedral group. We recall that $T^\ast$ is the subgroup of $SU(2)$, viewed as the unit quaternions, given by
\[
	\{ \pm 1, \pm i, \pm j, \pm k, \frac{1}{2}(\pm 1 \pm i \pm j \pm k)\}.
\]
Let $\{R_i\}$ be the set of unitary representations of $T^\ast$ up to isomorphism. We let $R_1$ be the trivial representation of $T^\ast$, $R_2$ the tautological representation including $T^\ast$ into $SU(2)$, and $R_3=R_7^\ast$ the two non-trivial $U(1)$ representations; note that $T^\ast$ has abelianization $\Z/3$. Then the vertices of $G_{T^\ast}$ are $\{R_i\}$, while $R_i$ and $R_j$ are connected by an edge if upon writing $R_i \otimes R_2 = \sum n_{ij} R_j$ we have $n_{ij}=1$; in general, $n_{ij}\in \{0,1\}$.

\begin{figure}[t]
\centering
\begin{tikzpicture}

	\draw (0,0) -- (4,0);
	\draw (2,0) -- (2,2);
	
	\draw[fill=red] (0,0) circle(.1);
	\draw[fill=white] (1,0) circle(.1);
	\draw[fill=white] (2,0) circle(.1);
	\draw[fill=green] (2,1) circle(.1);
	\draw[fill=white] (3,0) circle(.1);
	\draw[fill=red] (4,0) circle(.1);
	\draw[fill=black] (2,2) circle(.1);
	
	\node at (.4,1.5) {$G_{T^\ast}=\widetilde{E}_6$};
	\node at (2.5,2) {$R_1$};
	\node at (2.5,1) {$R_2$};
	\node at (0,-.5) {$R_3$};
	\node at (1,-.5) {$R_4$};
	\node at (2,-.5) {$R_5$};
	\node at (3,-.5) {$R_6$};
	\node at (4,-.5) {$R_7$};
	
	\draw (2,-1);
	
\end{tikzpicture}\quad\quad
\begin{tikzpicture}

    \draw (1,0) -- (3,0);
	\draw (2,0) -- (2,2);
	\draw (1,0) -- (2,-1);
	\draw (2,-1) -- (3,0);

	\draw[fill=white] (1,0) circle(.1);
	\draw[fill=white] (2,0) circle(.1);
	\draw[fill=green] (2,1) circle(.1);
	\draw[fill=white] (3,0) circle(.1);
	\draw[fill=black] (2,2) circle(.1);
	\draw[fill=red] (2,-1) circle(.1);
	
	\node at (1,1.5) {$G'_{T^\ast}$};
	
\end{tikzpicture}\quad\quad
\begin{tikzpicture}

	\draw (2,0) -- (2,2);

	\draw[fill=red] (2,0) circle(.1);
	\draw[fill=green] (2,1) circle(.1);
	\draw[fill=black] (2,2) circle(.1);
	\draw (2,-1);
	
	\node at (1,1.5) {$\mathscr{S}_{T^\ast}$};
	
	\node at (3.25,2) {$\theta = R_1\oplus R_1$};
	\node at (2.85,1) {$\alpha = R_2$};
	\node at (3.25,0) {$\beta = R_3\oplus R_7$};
	
\end{tikzpicture}
\caption{}\label{fig:e6}
\end{figure}

Next, let $G'_{T^\ast}$ be the graph obtained from $G_{T^\ast}$ by identifying conjugate representations. In our case, we only identify $R_3$ and $R_7$. Then form a new graph $\sS_{T^\ast}$ by extracting the vertices of $G'_{T^\ast}$ that are $SU(2)$ representations, and connecting two vertices by an edge if there is a path in $G'_{T^\ast}$ connecting the representations not passing through any other $SU(2)$ representations. Austin shows that for a vertex $v$ in $\sS_{T^\ast}$ corresponding to $\alpha_v\in \fC(\Sigma)$,
\[
	\dim M(\alpha_v,\theta_\Sigma)  \equiv  4 I(v) -3 \mod 8,
\]
where $I(v)$ is the length of a shortest path of edges from $v$ to $R_1\oplus R_1=\theta_\Sigma$ within $\mathscr{S}_{T^\ast}$. From Figure \ref{fig:e6} we compute in our case that $\dim M(\alpha_\Sigma,\theta_\Sigma) \equiv 1$ (mod 8) and $\dim M(\beta_\Sigma,\theta_\Sigma)\equiv 5$ (mod 8).

Finally, from \cite[Lemma 7.5]{PS} we have for any non-trivial $\gamma\in\fC(K)$ the relation
\[
	\dim M(\gamma,\theta) = \frac{1}{2} \left(\dim M(\Pi(\gamma),\theta_\Sigma) + 1 \right) \mod 4
\]
from which we conclude that $\text{gr}(\alpha)\equiv \text{gr}(\iota\alpha)\equiv  1$ (mod 4) and $\text{gr}(\beta)\equiv 3$ (mod 4). This verifies the computation of \eqref{eq:34}. Furthermore, from \cite[Proposition 4.1]{austin} and the fact that $\alpha_\Sigma$ is adjacent to $\theta_\Sigma$ in the graph $\sS_{T^\ast}$, the moduli space $\breve{M}(\alpha_\Sigma,\theta_\Sigma)_0$ is a point. Just as for the $(3,5)$ torus knot, we conclude that $\delta_1\neq 0$ on $C_1(K)$. Now
\[
	\widetilde C_\ast(K) = \left(\Z_{(1)}^2 \oplus \Z_{(3)}\right) \oplus \left(\Z_{(0)}^2 \oplus \Z_{(2)}\right) \oplus \Z_{(0)}
\]
has rank 7, while $I^\natural(K)$ has rank 5, because $\text{rank } Kh^\text{red}(K)=5=|\Delta_K|$. We again conclude that $\hinv(K)=1$, just as for the $(3,5)$ torus knot.

\begin{remark}
	An alternative approach to computing the gradings replaces Austin's algorithm with Fintushel and Stern's method \cite{fs}. However, the most important input above is in showing that $\delta_1\neq 0$, which follows from Austin's equivariant ADHM construction. $\diamd$
\end{remark}

\subsection{The irreducible homology of torus knots}\label{subsection:irr-torus-knots}

Fintushel and Stern \cite{fs} computed $I_\ast(\Sigma(p,q,r))$, Floer's $\Z/8$-graded instanton homology for the Brieskorn integer homology sphere $\Sigma=\Sigma(p,q,r)$. Here $p,q,r$ are relatively prime positive integers. The unperturbed critical set $\fC(\Sigma)$ is non-degenerate, and each generator has odd grading, so that $d=0$ and $C_\ast(\Sigma)=I_\ast(\Sigma)$.

Let $p,q$ be odd. Then $\Sigma(p,q,2)$ is the double branched cover of the torus knot $K=T_{p,q}$. Just as in Subsections \ref{sec:35} and \ref{sec:34}, $\fC(\Sigma)^\tau=\fC(\Sigma)$ by \cite[Proposition 8]{saveliev:brieskorn}. Furthermore, according to \cite[Lemma 4.1]{collin-saveliev}, the gradings of the generators in $C_\ast(\Sigma)$ are congruent mod 2 to the gradings of the corresponding generators in $C_\ast(K)$. Thus here also $d=0$ and $C_\ast(K)=I_\ast(K)$. By Theorem \ref{thm:herald} we obtain
\begin{equation}
	I(T_{p,q}) \cong \Z^{-\sigma(T_{p,q})/2}\label{eq:irrtorus}
\end{equation}
supported in odd gradings (mod 4). The conjecture in \cite[Section 7.4]{PS} is equivalent to the rank of $I_\ast(T_{p,q})$ being evenly distributed between gradings $1$ and $3$ (mod 4). 

The isomorphism \eqref{eq:irrtorus} also holds when one of $p$ or $q$ is even. In fact, in this case and when $p$ and $q$ are odd, one can directly count that the number of irreducible traceless $SU(2)$ representations in $\sX(T_{p,q})$ is equal to $-\sigma(T_{p,q})/2$, using \cite[Theorem 1]{klassen} and formula \eqref{signature} below. These representations correspond to nondegenerate critical points, and thus $C_\ast(T_{p,q})$ has rank $-\sigma(T_{p,q})/2$, providing an upper bound on the rank of $I_\ast(T_{p,q})$. Theorem \ref{thm:irrhomthy} provides the same lower bound, implying the isomorphism \eqref{eq:irrtorus}.

\subsection{More vanishing results for $\hinv(K)$}\label{sec:morevanishing}

We describe some simple conditions under which $\hinv(K)=0$ for a knot $K\subset S^3$. Write $\Delta_K=\sum a_j t^j$ for the symmetrized Alexander polynomial with $\Delta_K(1)=1$, and define 
\[
	|\Delta_K|:=\sum_j |a_j|
\]
to be the sum of the absolute values of its coefficients. The instanton homology $I^\natural (K)$ is isomorphic over $\C$ to the sutured instanton knot homology of $K$ \cite[Proposition 1.4]{KM:unknot}, the latter of which has an additional $\Z$-grading whose graded euler characteristic is the Alexander polynomial \cite{KM:alexander,lim}. This implies
\begin{equation}
	\text{rank } I^\natural (K) \geqslant |\Delta_K|. \label{eq:alexanderbound}
\end{equation}
On the other hand, by Theorem \ref{thm:tildeconnsum}, $I^\natural(K)$ is isomorphic to the homology of the $\cS$-complex $\widetilde C_\ast(K)=C_\ast(K)\oplus C_{\ast-1}(K)\oplus \Z$. In particular, if the irreducible complex $C_\ast(K)$ may be chosen to have $\frac{1}{2}(|\Delta_K|-1)$ generators, then the differential of $\widetilde C_\ast(K)$ is necessarily zero by \eqref{eq:alexanderbound}, and in particular, $\delta_1=\delta_2=0$, implying $\hinv(K)=0$. More precisely, given a perturbation for which $\fC_\pi(K)=\fC_\pi^\text{irr}\cup \{\theta\}$ is non-degenerate, if $|\fC_\pi(K)| = \frac{1}{2}(|\Delta_K|+1)$, then $\hinv(K)=0$. For the examples we will consider, the perturbation $\pi$ will always be zero, so that the implication may be written in terms of the traceless character variety $\sX(K)$. That is, if $\sX(K)$ is a finite set of nondegenerate points then:
\begin{equation}
	|\sX(K)| = \frac{1}{2}(|\Delta_K|+1) \quad \Longrightarrow \quad \hinv(K)=0.\label{eq:vanimp}
\end{equation}
Applying this to the case of torus knots, we obtain the following:

\begin{prop}
	For $K$ a $(p,q)$ torus knot, if $1+|\sigma(K)|=|\Delta_K|$, then $\hinv(K)=0$. This condition is satisfied by the following two families:
        \begin{align*}
 	       	(p,2pk+2), & \qquad k\geqslant 1,\;\; p\equiv \phantom{\pm}1 \pmod 2\\
 	       	(p,2pk\pm (2-p)), & \qquad k \geqslant 1,\;\; p \equiv \pm 1 \pmod 4
        \end{align*}	
\end{prop}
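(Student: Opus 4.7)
The plan is to combine the reduction in \eqref{eq:vanimp} with an explicit verification of the numerical identity $|\Delta_K|=1+|\sigma(K)|$ on the two specified families.

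For the general implication, we use that for any torus knot $K=T_{p,q}$ the unperturbed critical set is finite and nondegenerate, and (by Subsection \ref{subsection:irr-torus-knots}, combining \cite[Theorem 1]{klassen} with the explicit signature count) the irreducible part of the traceless character variety $\sX(K)$ has exactly $|\sigma(K)|/2$ elements. Together with the reducible class, $|\sX(K)|=1+|\sigma(K)|/2$, so the hypothesis $1+|\sigma(K)|=|\Delta_K|$ is exactly the condition $|\sX(K)|=\tfrac{1}{2}(|\Delta_K|+1)$ feeding into \eqref{eq:vanimp}, which yields $\hinv(K)=0$.

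The theorem is thus reduced to verifying the combinatorial identity $|\Delta_{T_{p,q}}|=1+|\sigma(T_{p,q})|$ on the two families. Since
\[
\Delta_{T_{p,q}}(t)=\frac{(t^{pq}-1)(t-1)}{(t^p-1)(t^q-1)}
\]
is a product of cyclotomic polynomials, its coefficients lie in $\{-1,0,1\}$ and $|\Delta_{T_{p,q}}|$ equals the number of nonzero coefficients. A direct polynomial-division computation yields closed forms for $|\Delta_{T_{p,q}}|$ in both families; on the signature side, one uses the Brieskorn--Gordon--Litherland--Murasugi lattice-point formula (or equivalently the inductive step under $q\mapsto q+2p$) to obtain analogous closed forms for $|\sigma(T_{p,q})|$. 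The identity is then checked by induction on $k$: verify the base case $k=1$ directly, and show that both sides grow by the same $p$-dependent amount under $q\mapsto q+2p$.

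The main obstacle will be the case analysis for the second family, where the pairing of the sign in $q=2pk\pm(2-p)$ with the congruence $p\equiv\pm 1\pmod 4$ arises from a parity constraint on the boundary contribution to the lattice-point count for $\sigma(T_{p,q})$. Aligning the signs correctly is essential: for instance, for $p=3\equiv-1\pmod 4$ the minus-sign choice yields $q=6k+1$ and hence $T_{3,7}$ (with $|\sigma|=8$, $|\Delta|=9$, so the identity holds), whereas the opposite sign would give $T_{3,5}$, whose computation $\hinv(T_{3,5})=1$ in Subsection \ref{sec:35} shows that the identity fails. Tracking this parity dependence carefully through both the signature formula and the cyclotomic factorization of $\Delta_{T_{p,q}}$ is the delicate heart of the verification.
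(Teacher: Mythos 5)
Your proposal follows essentially the same route as the paper's proof: reduce to the numerical identity $1+|\sigma(K)|=|\Delta_K|$ via \eqref{eq:vanimp} together with the nondegeneracy and cardinality of $\sX(T_{p,q})$, then verify that identity from the Litherland lattice-point signature formula and the explicit form of $\Delta_{T_{p,q}}$. The only cosmetic difference is that you suggest organizing the verification as an induction in $k$ (stepping $q\mapsto q+2p$) rather than writing down closed forms for $B(p,q)$ and $|\Delta_{T_{p,q}}|$ directly as the paper does, and your $T_{3,5}$/$T_{3,7}$ sanity check is a good confirmation of the sign conventions.
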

\begin{proof}
	For a $(p,q)$ torus knot we saw in the previous subsection that $\sX(K)$ is nondegenerate and has cardinality equal to $|\sigma(K)|/2+1$. 
	So the first part of the claim follows from \eqref{eq:vanimp}.
	
	The signature of torus knots is given by the following expression \cite{Lith:sign-torus,Kauffman:knots}
	\begin{equation}\label{signature}
		\sigma(T_{p,q})=(p-1)(q-1)-4\cdot B(p,q)
	\end{equation}
	where, assuming that $p$ is odd, we have
	\begin{equation*}\label{B(p,q)}
	  B(p,q)=\#\left \{(m,n) \mid 1\leq m \leq \frac{p-1}{2},\; 1\leq n \leq q-1,\; \frac{1}{2}\leq \frac{m}{p}+\frac{n}{q}\right \}.
	\end{equation*}
	A straightforward computation shows that:
	\begin{equation*}
		B(p,q)=\left\{
		\begin{array}{ll}
			\(\frac{p-1}{2}\)\(\frac{3kp+k-1\pm3}{2}\) & \;\;\; q=2kp\pm2\\
			\(\frac{p-1}{2}\)\((\frac{2k+1}{2})(\frac{3p+1}{2})-\frac{1}{2}\pm2\)\mp\lfloor \frac{p}{4}\rfloor& \;\;\; q=(2k+1)p\pm2\\
		\end{array}
		\right.
	\end{equation*}
	Thus we can use \eqref{signature} to write
	\begin{equation*}\label{signature-example}
		\sigma(T_{p,q})=\left\{
		\begin{array}{ll}
			-\(p-1\)\(kp+k\pm1\) &\;\;\; q=2kp\pm2\\
			-(p-1)\((2k+1)(\frac{p+1}{2})\pm2\)\pm 4\lfloor \frac{p}{4}\rfloor&\;\;\; q=(2k+1)p\pm2\\
		\end{array}
		\right.
	\end{equation*}

	The Alexander polynomial of a torus knot is given by the following formula:
	\[
	  \Delta_{T_{p,q}}=\frac{(t^{pq}-1)(t-1)}{(t^p-1)(t^q-1)}.
	\]
	In the case that $q=lp\pm 2$ with $l\geq 1$, this formula simplifies as follows:
	\begin{equation*}
		\Delta_{T_{p,q}}=\left\{
		\begin{array}{ll}
			1+(t-1)\(\sum_{i=1}^{p-1}\sum_{j=0}^{li-1}t^{pj+2i}+\sum_{i=1}^{\frac{p-1}{2}}t^{2i-1}\)&\;\; q=lp+2\\
			1+(t-1)\(\sum_{i=1}^{p-1}\sum_{j=1}^{li-1}t^{pj-2i}-\sum_{i=1}^{\frac{p-1}{2}}t^{-2i+1}\) &\;\; q=lp-2\\
		\end{array}
		\right.
	\end{equation*}
	These identities imply that:
	\begin{equation*}
		|\Delta_{T_{p,lp\pm 2}}|=\(\frac{p-1}{2}\)\((p+1)l\pm 2\)\pm1
	\end{equation*}
	Form the above identities it is easy to check that the identity $1+|\sigma(K)|=|\Delta_K|$ holds for the mentioned families of torus knots.
\end{proof}

The same method may be applied, for example, to find Montesinos knots $K$ with $\hinv(K)=0$. In this case, the double branched cover $\Sigma$ of $K$ is again a Brieskorn homology sphere $\Sigma(p,q,r)$, and by \cite{fs}, the unperturbed critical set $\fC(\Sigma)=\sX(\Sigma)$ is non-degenerate and of cardinality $2|\lambda(\Sigma)|+1$, where $\lambda$ is the Casson invariant. For example, the $(-2,3,7)$ pretzel knot satisfies \eqref{eq:vanimp}, and consequently has $\hinv=0$.

\newpage

%!TEX root = main.tex

\appendix

\section{Modified holonomy maps}\label{app:hol}

In this appendix we construct the modified holonomy maps that are used to define the $v$-map, and related maps, first used in Subsection \ref{sec:vmap} of the main text. The construction is inspired by one that is described in \cite[\S 7.3.2]{donaldson-book}.

Fix a pair $(Y,K)$ of an integer homology sphere and a knot. Let $\pi$ be a perturbation of the Chern-Simons functional for the pair $(Y,K)$, and for any $\alpha\in\fC^\text{irr}_\pi(Y,K)$ fix a neighborhood $\mathcal U_\alpha$ of $\alpha$ in $\sB(Y,K)$ together with a subset $\mathcal W_\alpha \subset\sC(Y,K)$ that maps diffeomorphically to $\mathcal U_\alpha$ via the projection. For instance, we may use a Coulomb chart around $\alpha$ to pick $\mathcal W_\alpha$. We call elements of $\mathcal W_\alpha$ lifts of the elements of $\mathcal U_\alpha$. Then for each pair of irreducible critical points $\alpha_i\in\fC^\text{irr}_\pi(Y,K)$, we obtain the holonomy map 
\[
	h_{\alpha_1\alpha_2}:\sB(Y,K;\alpha_1,\alpha_2) \longrightarrow S^1
\]
that is invariant with respect to translations, as outlined in Section \ref{sec:tilde}. Recall that the holonomy is of the adjoint connection along $\R\times \{p\}$ where $p\in K$ is a basepoint. The preferred reduction of the bundle along the singular locus guarantees that this holonomy lies in $S^1\cong SO(2)\subset SO(3)$. Below we implicitly assume that all holonomies are of the adjoint connection, without further mention. We wish to modify the map $h_{\alpha_1\alpha_2}$ so that the restrictions of this map to the moduli spaces of instantons have the properties listed in Subsection \ref{sec:vmap}. Our key tool in the construction of modified holonomy maps are {\it almost homomorphisms} of $S^1$.

\begin{definition}\label{almost-homo}
	An {\emph{almost homomorphism}} of $S^1$ is the data of sequence of continuous maps $h_k:[0,\infty)^k\times (S^1)^{k+1}\to S^1$ for any $k\geq 0$ satisfying the following properties.
	\begin{itemize}
		\item[(i)] For $1\leq i\leq k$, if $s_i\geq 1$, then
		\begin{align*}
			h_k(s_1,\dots,s_{i-1},s_i,s_{i+1},\dots&,s_k,g_0,g_1,\dots,g_{k})\\
			=h_{k-i}(s_{i+1},&\dots,s_k,g_i,\dots,g_{k})\cdot h_{i-1}(s_1,\dots,s_{i-1},g_0,\dots,g_{i-1}).
		\end{align*}
		\item[(ii)] For $1\leq i\leq k$, if $s_i\leq  \frac{1}{2}$, then
		\begin{align*}
			h_k(s_1,\dots,s_{i-1},s_i,s_{i+1},\dots&,s_k,g_0,g_1,\dots,g_{k+1})\\
			=h_{k-1}(s_1,\dots,s_{i-1}&,s_{i+1},\dots,s_k,g_0,g_1,\dots,g_{i-2},g_ig_{i-1},g_{i+1},\dots,g_{k+1}). \;\; \diamd
		\end{align*}
	\end{itemize}
\end{definition}

\begin{remark}
	An almost homomorphism of $S^1$ is essentially a version of a homotopy diagram (as defined in \cite{vogt}) for the category $S^1$, where $S^1$ is viewed as a groupoid. $\diamd$
\end{remark}

\begin{example}
	For any $k\geq 0$, define $h_k:[0,\infty)^k\times (S^1)^{k+1}\to S^1$ as
	\begin{equation}\label{can-almost-homo}
		h_k(s_1,s_2,\dots,s_k,g_0,g_1,\dots,g_{k})=g_k\cdot g_{k-1}\cdots g_0.
	\end{equation}
	Clearly this sequence of maps defines an almost homomorphism of $S^1$, which we call the {\it canonical almost homomorphism}. $\diamd$
\end{example}

\begin{definition}
	A {\it homotopy} of almost homomorphisms is a family of continuous maps 
	\[
	  \tilde h_k:[0,1]\times [0,\infty)^k\times (S^1)^{k+1}\to S^1
	\]
	such that for any $t\in [0,1]$, the sequence of maps $\{h_k^t\}_{k\geq 0}$ defined by $h_k^t(\cdot) = \tilde h_k(t,\cdot)$ is an almost homomorphism. We say that $\{\tilde h_k \}_{k\geq 0}$ is a homotopy between the almost homomorphisms $\{ h^0_k \}_{k\geq 0}$ and $\{ h^1_k \}_{k\geq 0}$. $\diamd$ \end{definition}

\begin{lemma}\label{cons-almost-hom}
	Let $h_0:S^1\to S^1$ be a continuous map and $\widetilde h_0:[0,1]\times S^1 \to S^1$ be a homotopy from the identity map to $h_0$. 
	Then $\tilde h_0$ can be extended to a homotopy of almost homomorphisms $\{\tilde h_k\}_{k\geq 0}$ such that $\{h^0_k\}_{k\geq 0}$ is the canonical almost homomorphism.
\end{lemma}

\begin{proof}
	We may construct $\tilde h_k:[0,1]\times [0,\infty)^k\times (S^1)^{k+1}\to S^1$ by induction on $k$. Suppose 
	$\tilde h_k$ is constructed for $k\leq i-1$ such that the maps $h_k^t$ satisfy (i) and (ii) of Definition \ref{almost-homo}, 
	and $h_k^0$ is given by \eqref{can-almost-homo}. We wish to extend this construction to $\tilde h_i$ while the above properties are still satisfied. 
	The properties in Definition \ref{almost-homo} and the condition on $h_i^0$ uniquely determine
	$\tilde h_i(t,s_1,s_2,\dots,s_k,g_0,g_1,\dots,g_{k})$ when $t=0$ or one of $s_i$ belongs to $[0,1/2]\cup [1,\infty)$. These points in the domain of 
	$\tilde h_i$ form a retract of $[0,1]\times [0,\infty)^k\times (S^1)^{k+1}$, and hence we may extend $\tilde h_i$ to the rest of its domain.
\end{proof}

The inductive argument in the proof of Lemma \ref{cons-almost-hom} can be used in the construction of almost homomorphisms that satisfy additional properties. The following lemma gives an instance of such properties.

\begin{lemma} \label{subset-1}
	For any $k\geq 0$, let $S_k$ be a subset of $[0,\infty)^k\times (S^1)^{k+1}$ such that
	$\cup_{k\geq 0}S_k$ is finite. Then there is an almost homomorphism $\{h_k\}_{k\geq 0}$ which is homotopic to the canonical almost homomorphism and such that $h_k$ evaluates to $1$ at the elements of $S_k$.
\end{lemma}

Now for each pair $\alpha_1$, $\alpha_2\in \fC^\text{irr}_\pi(Y,K)$, we define a modified holonomy map 
\[
  H_{\alpha_1\alpha_2}:\sB(Y,K;\alpha_1,\alpha_2) \longrightarrow S^1
\]
that depends on the choice of an almost homomorphism $\{h_k\}_{k\geq 0}$ which is homotopic to the canonical almost homomorphism, and the choices of $\mathcal U_\alpha$ and $\mathcal W_\alpha$ for each $\alpha\in \fC^{\text{irr}}_\pi(Y,K)$. For $[A] \in \sB(Y,K;\alpha_1,\alpha_2)$, let $[A_t]\in \sB(Y,K)$ denote the restriction of $[A]$ to $\{t\}\times Y$. Let $V^0_{[A]}$ be an open subspace of $\Bbb R$ such that $t\in V^0_{[A]}$ if and only if $[A_t]$ belongs to $\mathcal U_\alpha$ for some choice of $\alpha$. We also need an open subspace $V_{[A]}$ of $V^0_{[A]}$ consisting of the points $t$ such that $t$ is in an interval $I$ of length at least $1/3$ and $I\subset V^0_{[A]}$. 

The definition of $V_{[A]}$ implies that it is a union of finitely many open intervals. Thus there is a sequence of real numbers
\[
  b_0<a_1<b_1<\dots <a_k<b_k<a_{k+1}
\]
such that 
\[
  V_{[A]}=(-\infty,b_0) \cup (a_1,b_1) \cup \dots \cup (a_k,b_k) \cup (a_{k+1},\infty).
\]
For $1\leq i \leq k$, define
\[
  s_i:=b_i-a_i,\hspace{2cm} c_i:=\frac{a_i+b_i}{2}.
\]
Since $c_i\in V_{[A]}$, there is a preferred lift of $[A_{c_i}]$ to $\mathcal{W}_\alpha\subset \sC(Y,K)$ for some $\alpha$. In particular, we may obtain a well-defined $g_i\in S^1$ by taking the holonomy of $[A]$ along $\{y\}\times [c_i,c_{i+1}]$ for any $1\leq i\leq k-1$. Here $y$ is the basepoint on the knot $K$ as before. Similarly we may take the holonomy of $[A]$ along $\{y\}\times (-\infty,c_{1}]$ and $\{y\}\times [c_k,\infty)$ to respectively obtain $g_0$ and $g_{k}$. Clearly $h_{\alpha_1\alpha_2}([A])$ is equal to the product $g_k\cdot g_{k-1}\cdots g_0$. We define
\[
  H_{\alpha_1\alpha_2}([A])=h_k(s_1,s_2,\dots,s_k,g_0,g_1,\dots,g_{k}).
\]

\begin{prop}\label{Ha-props}
	The map $H_{\alpha_1\alpha_2}:\sB(Y,K;\alpha_1,\alpha_2) \to S^1$ is continuous and invariant with respect to translation action on 
	$\sB(Y,K;\alpha_1,\alpha_2)$. We may also assume that any given finite subset 
	$S$ of  $\sB(Y,K;\alpha_1,\alpha_2)$ is mapped into $1\in S^1$.
\end{prop}
\begin{proof}
	Property (ii) in Definition \ref{almost-homo} implies that if we remove the intervals in $V_{[A]}$ with length at most
	 $1/2$ to obtain $V_{[A]}'$, and then follow a similar definition as that of $H_{\alpha_1\alpha_2}([A])$ with $V_{[A]}'$
	 we obtain the same value of $H_{\alpha_1\alpha_2}([A])$. For $[A']\in \sB(Y,K;\alpha_1,\alpha_2)$ that is close
	 to $A$, the subspace $V_{[A']}'$ of $\R$ is close to the subspace of $V_{[A]}$ given by intervals of length at least
	 $1/2$. From this and continuity of almost homomorphisms, it is easy to see that $H_{\alpha_1\alpha_2}$ is continuous. It is 
	 clear from the definition that $H_{\alpha_1\alpha_2}$ is invariant with respect to the translation action. 
	 The last part of the proposition is a consequence of Lemma \ref{subset-1}.
\end{proof}

Singular connections on the cylinder associated to $(Y,K)$ can be glued to each other to form new singular connections. For instance for any $\alpha_1,\alpha_3\in\fC_\pi(Y,K)$ and $\alpha_2\in\fC^\text{irr}_\pi(Y,K)$ we have the following gluing map:
\[
 {\rm Gl}:\sB(Y,K;\alpha_1,\alpha_2)\times \R_{>0} \times \sB(Y,K;\alpha_2,\alpha_3)\to \sB(Y,K;\alpha_2,\alpha_3)
\]
To be more specific, let $f:\R\to [0,1]$ be a smooth function such that $f(t)=0$ for $t\leq -1$ and $f(t)=1$ for $t\geq 1$. Suppose $[A]\in \sB(Y,K;\alpha_1,\alpha_2)$ and $[A']\in \sB(Y,K;\alpha_2,\alpha_3)$. We may assume that the representative connection $A$ for $[A]$ is chosen such that it is in temporal gauge and is asymptotic to the fixed lift in $\mathcal{W}_{\alpha_2}$ of $\alpha_2$ as $t\to \infty$. This assumption fixes $A$ uniquely. Similarly, let $A'$ be in temporal gauge and asymptotic to the lift of $\alpha_2$ as $t\to -\infty$. Now ${\rm Gl}([A],T,[A'])$ is the element of $\sB(Y,K;\alpha_1,\alpha_3)$ represented by the connection 
\[
  f\left(\frac{t}{1+T}\right)\tau_T^*(A)+\left(1-f\left(\frac{t}{1+T}\right)\right)\tau_{-T}^*(A)
\]
where $\tau_s:\R\times Y\to \R\times Y$ is the translation $\tau_s(t,y)=(t+s,y)$. The following proposition describes the behavior of modified holonomy maps with respect to the gluing map ${\rm Gl}$.

\begin{prop}\label{Gl-Ha-comp}
	For any $[A]\in \sB(Y,K;\alpha_1,\alpha_2)$ and $[A']\in \sB(Y,K;\alpha_2,\alpha_3)$, we have
	\[
	  \lim_{T\to \infty} H_{\alpha_1\alpha_3}({\rm Gl}([A],T,[A']))=H_{\alpha_2\alpha_3}([A'])\cdot H_{\alpha_1\alpha_2}([A]).
	\]
\end{prop}

\begin{proof}
	For fixed $[A],[A']$, as $T\to \infty$, there is a corresponding index $i$ with parameter $s_i\to \infty$, and the result follows immediately from property (i) in Definition \ref{almost-homo}.
\end{proof}

The restriction of the maps $H_{\alpha_1,\alpha_2}$ to the moduli spaces $\breve{M}(\alpha_1,\alpha_2)_{d}$ can be used to construct the maps  required for the definition of the map $v$ in Subsection \ref{sec:vmap}. By taking $S$ in Proposition \ref{Ha-props} to be the union of $0$-dimensional moduli spaces, we may assume that property (H1) in Subsection \ref{sec:vmap} is satisfied. Properties (H2) and (H3) hold by similar arguments as in Proposition \ref{Gl-Ha-comp}. The only missing point is that our maps at this point are only continuous. (In fact, continuity of these maps would be enough to define the map $v$ and prove Proposition \ref{prop:vmap}.) By induction on $d$ and for $d\leq 2$, we may approximate the constructed maps $\breve{M}(\alpha_1,\alpha_2)_{d} \to S^1$ such that properties (H1)--(H3) still hold and the restriction of $H_{\alpha_1,\alpha_2}$ to each stratum of $\breve{M}(\alpha_1,\alpha_2)_{d}$ is smooth. 

Next, we turn to the definition of modified holonomy maps for a cobordism of pairs $(W,S):(Y,K)\to (Y',K')$ that are compatible with the chosen modified holonomy maps for $(Y,K)$ and $(Y',K')$ in an appropriate sense. To achieve this, first we introduce the corresponding notion for almost homomorphisms.

\begin{definition}\label{almost-hom-cont}
	Suppose $\{ h_k \}_{k\geq 0}$ and $\{ h'_k \}_{k\geq 0}$ are almost homomorphisms of $S^1$. A {\it continuation from $\{ h_k \}_{k\geq 0}$ to 
	$\{ h'_k \}_{k\geq 0}$} is the data of a sequence of continuous maps \[\bh_{k,l}:[0,\infty)^{k+l}\times (S^1)^{k+l+1}\to S^1\]
	for any $k,l\geq 0$ satisfying the following properties.\\
	
	\noindent \text{(i)} For $1\leq i\leq k$, if $s_i\geq 1$, then
		\begin{align*}
			\bh_{k,l}(s_1&,\dots,s_{i-1},s_i,s_{i+1},\dots,s_k,s_1',\dots,s_l',g_1,\dots,g_{k},{\rm g},g_1',\dots,g_{l}')=\\
		\bh_{k-i,l}(&s_{i+1},\dots,s_k,s_1',\dots,s_l',g_{i+1},\dots,g_{k},{\rm g},g_1',\dots,g_{l}')\cdot h_{i-1}(s_1,\dots,s_{i-1},g_1,\dots,g_{i})	,
		\end{align*}
		and for $1\leq i\leq l$, if $s_i'\geq 1$, then 
		\begin{align*}
			\bh_{k,l}(&s_1,\dots,s_k,s_1',\dots,s_{i-1}',s_i',s_{i+1}',\dots,s_l',g_1,\dots,g_{k},{\rm g},g_1',\dots,g_{l}')=\\
			h_{l-i}'(&s_{i+1}',\dots,s_{l}',g_i',\dots,g_{l}')\cdot \bh_{k,i-1}(s_1,\dots,s_k,s_1',\dots,s_{i-1}',g_1,\dots,g_{k},{\rm g},g_1',\dots,g_{i-1}').		
		\end{align*}
		
		\noindent \text{(ii)} For $1\leq i\leq k$, if $s_i\leq  \frac{1}{2}$, then
		\begin{align*}
			\bh_{k,l}(s_1&,\dots,s_{i-1},s_i,s_{i+1},\dots,s_k,s_1',\dots,s_l',g_1,\dots,g_{k},{\rm g},g_1',\dots,g_{l}')=\\
			\bh_{k-1,l}(s_1&,\dots,s_{i-1},s_{i+1},\dots,s_k,s_1',\dots,s_l',g_1,\dots,g_{i-1},g_ig_{i+1},\dots,g_{k},{\rm g},g_1',\dots,g_{l}').
		\end{align*}
		In the above identity, if $i=k$, then $g_ig_{i+1}$ should be replaced with $g_i{\rm g}$.
		Similarly, for $1\leq i\leq l$, if $s_i'\leq  \frac{1}{2}$, then
		\begin{align*}
			\bh_{k,l}(&s_1,\dots,s_k,s_1',\dots,s_{i-1}',s_i',s_{i+1}',\dots,s_l',g_1,\dots,g_{k},{\rm g},g_1',\dots,g_{l}')=\\
			\bh_{k,l-1}(&s_1,\dots,s_k,s_1',\dots,s_{i-1}',s_{i+1}',\dots,s_l',g_1,\dots,g_{k},{\rm g},g_1',\dots,g_{i-1}'g_i',g_{i+1}',\dots,g_{l}'),
		\end{align*}
		and if $i=1$, then $g_{i-1}'g_i'$ should be replaced with ${\rm g}g_i'$. $\diamd$ 
\end{definition}

\begin{example}\label{triv-cont}
	Given any almost homomorphism $\{ h_k \}_{k\geq 0}$, there is a trivial continuation from $\{ h_k \}_{k\geq 0}$ to itself given by $\bh_{k,l}=h_{k+l}$ for 
	any $k,l \geq 0$. $\diamd$
\end{example}

\begin{lemma}\label{cons-cont-almost-hom}
	Let $\{ h_k \}_{k\geq 0}$ and $\{ h'_k \}_{k\geq 0}$ be homotopic almost homomorphisms of $S^1$. Then there is a continuation 
	$\{\bh_{k,l}\}_{k,l\geq 0}$ from $\{ h_k \}_{k\geq 0}$ to $\{ h'_k \}_{k\geq 0}$.
	Moreover, we can assume that the continuation maps evaluate to $1$ at any finite subset $\cup_{k,l\geq 0}S_{k,l}$ contained in the union of the domains of 
	$\{\bh_{k,l}\}_{k,l\geq 0}$.
\end{lemma}
\begin{proof}
	We wish to construct the maps $\bh_{k,l}$ by induction on $k+l$. In each step of induction, the properties in Definition \ref{almost-hom-cont}
	 determine $\bh_{k,l}$ on a subset of the domain. However, there might be an obstruction to extending the map to the whole domain.
	To resolve this issue, we use a similar trick as in Lemma \ref{cons-almost-hom} by constructing a stronger object. Suppose the family 
	of almost homomorphisms $\{\tilde h^t_k\}_{k=0}$ gives a homotopy from $\{ h_k \}_{k\geq 0}$ to $\{ h'_k \}_{k\geq 0}$. Then 
	we inductively construct a map
	\[
	  \widetilde \bh_{k,l}:[0,1]\times [0,\infty)^{k+l}\times (S^1)^{k+l+1}\to S^1
	\]
	such that for any $t\in [0,1]$, the sequence of maps $\{\bh_{k,l}^t\}_{k\geq 0}$ defined by $\bh_{k,l}^t(\cdot) = \widetilde \bh_{k,l}(t,\cdot)$ is 
	a continuation from $\{ h_k \}_{k\geq 0}$ to $\{\tilde h^t_k\}_{k=0}$ and the continuation $\{\bh_{k,l}^0\}_{k\geq 0}$ from 
	$\{ h_k \}_{k\geq 0}$ to $\{ h_k \}_{k\geq 0}$ is provided by Example \ref{triv-cont}. In particular, $\{\bh_{k,l}^1\}_{k\geq 0}$ gives a continuation 
	from $\{ h_k \}_{k\geq 0}$ to $\{ h'_k \}_{k\geq 0}$. Now there is no obstruction in carrying out the induction 
	step. A straightforward examination of the construction shows that the property in the second part of this lemma can 
	be guaranteed in this inductive argument.
 \end{proof}

A similar inductive argument can be used to prove the following lemma.

\begin{lemma}\label{hom-cons-cont-almost-hom}
	There is a homotopy between any two choices of continuations $\{\bh^0_{k,l}\}_{k,l\geq 0}$ and $\{\bh^1_{k,l}\}_{k,l\geq 0}$ provided by Lemma \ref{cons-cont-almost-hom}. To be more precise, there is a continuous map
	\[
	  \widetilde \bh_{k,l}:[0,1]\times [0,\infty)^{k+l}\times (S^1)^{k+l+1}\to S^1
	\]
	for any $k,l\geq 0$ such that $\{\bh^t_{k,l}\}_{\{k,l\geq 0\}}$, given by $\bh^t_{k,l}:=\widetilde \bh_{k,l}(t,\cdot)$, is a continuation that is equal to the given continuations for $t=0$ and $1$.
	Moreover, for any finite subset $S$ of the union over $k$ and $l$ of the spaces $(0,1)\times [0,\infty)^{k+l}\times (S^1)^{k+l+1}$, we may assume that the maps $\widetilde \bh_{k,l}$ evaluate to $1$ at the points of $S$.
\end{lemma}

Now we are ready to define modified holonomy maps for a cobordism of pairs $(W,S):(Y,K)\to (Y',K')$ and a path $\gamma$ connecting the basepoints $y$ and $y'$ of $K$ and $K'$. For the pair $(Y,K)$, fix an almost homomorphism $\{h_k\}_{k\geq 0}$ homotopic to the canonical almost homomorphism, neighborhood $\mathcal U_\alpha$, and a lift $\mathcal W_\alpha$ of $\mathcal U_\alpha$ for each $\alpha\in \fC^{\text{irr}}_\pi(Y,K)$. Fix similar choices for the pair $(Y',K')$. Using Lemma \ref{cons-cont-almost-hom}, we may find a continuation $\{\bh_{k,l}\}_{k,l\geq 0}$ from $\{ h_k \}_{k\geq 0}$ to $\{ h'_k \}_{k\geq 0}$. For $[A]\in \sB(W,S;\alpha,\alpha')$ and $t\in (-\infty,0)$, let $[A_t]\in \sB(Y,K)$ denote the restriction of $[A]$ to $\{t\}\times Y$ on the cylindrical end associated to $(Y,K)$. 

Let $V_{[A]}\subset (-\infty,0)$ consist of $t_0$ such that there is an interval $I$ of size at least $1/3$ containing $t_0$ such that for any $t\in I$, $[A_t]$ belongs to one of $\mathcal U_\alpha$. Similarly, we define a subset $V_{[A]}'\subset (0,\infty)$ using the restriction of $[A]$ to $\{t\}\times Y'$ on the cylindrical end associated to $(Y',K')$. There are real numbers
\[
  b_0<a_1<b_1<\dots <a_k<b_k<0<a_1'<b_1'<\dots <a_l<b_l'<a_{l+1}',
\]
such that the sets $V_{[A]}$ and $V'_{[A]}$ have the following form:
\[
  V_{[A]}=(-\infty,b_0) \cup (a_1,b_1) \cup \dots \cup (a_k,b_k),\hspace{1cm}V_{[A]}'=(a_1',b_1') \cup \dots \cup (a_l',b_l') \cup (a_{l+1}',\infty).
\]
For $1\leq i \leq k$ and $1\leq j\leq l$, define the following real numbers:
\[
  s_i:=b_i-a_i,\hspace{1cm} c_i:=\frac{a_i+b_i}{2},\hspace{2cm}s_j':=b_j'-a_j',\hspace{1cm} c_j':=\frac{a_j'+b_j'}{2}.
\]
For $2\leq i\leq k$ and $1\leq j\leq l-1$, let $g_i\in S^1$ and $g_i'\in S^1$ be respectively the holonomy of $[A]$ along $\{y\}\times [c_{i-1},c_{i}]$  and $\{y'\}\times [c_j',c'_{j+1}]$. We also let $g_1$ be the holonomy of $[A]$ along $\{y\}\times (-\infty,c_{1}]$, $g_l'$ be the holonomy of $[A]$ along $\{y'\}\times [c_l',\infty)$, and ${\rm g}$ be the holonomy of $[A]$ along $\gamma$ from $\{y\}\times\{c_k\}$ to $\{y'\}\times\{c_1'\}$. We define
\[
  H_{\alpha\alpha'}^\gamma([A])=\bh_{k,l}(s_1,\dots,s_k,s_1',\dots,s_l',g_1,\dots,g_{k},{\rm g},g_1',\dots,g_{l}').
\]

Similar to the case of cylinders, $H_{\alpha\alpha'}^\gamma:\sB(W,S;\alpha,\alpha') \to S^1$ is continuous, and we assume it maps a given finite subset $S$ of $\sB(W,S;\alpha,\alpha')$ into $1\in S^1$. There are gluing maps
\[
 {\rm Gl_{in}}:\sB(Y,K;\alpha_1,\alpha_2)\times \R_{>0} \times \sB(W,S;\alpha_2,\alpha')\to\sB(W,S;\alpha_1,\alpha'),
\]
\[
 {\rm Gl_{out}}:\sB(W,S;\alpha,\alpha_1') \times \R_{>0} \times \sB(Y',K';\alpha_1',\alpha_2')\to\sB(W,S;\alpha,\alpha_2')
\]
such that the following relations hold:
\[
  \lim_{T\to \infty} H^\gamma_{\alpha_1\alpha'}({\rm Gl_{in}}([A],T,[A']))=H_{\alpha_2\alpha'}^\gamma([A'])\cdot H_{\alpha_1\alpha_2}([A]),
\]
\[
  \lim_{T\to \infty} H_{\alpha\alpha_2'}^\gamma({\rm Gl_{out}}([A],T,[A']))=H_{\alpha_1'\alpha_2'}([A'])\cdot H^\gamma_{\alpha\alpha_1'}([A]).
\]

We use the restriction of the map $H_{\alpha\alpha'}^\gamma$ to the moduli spaces $M(W,S;\alpha,\alpha')_1$, after a smooth approximation, to define the map $\mu$ in Subsection \ref{cut-down}. Verifying Proposition \ref{prop:vmaprels2} uses compatibility of $H_{\alpha\alpha'}^\gamma$ with respect to the gluing maps spelled out in the last paragraph. Lemma \ref{hom-cons-cont-almost-hom} allows us to show that the map $\mu$ (and the remaining components of the cobordism map associated to $(W,S)$) are invariant of the auxiliary choices (including the choice of the continuation) up to chain homotopy.

\newpage

\bibliography{references}
%\bibliography{main.bbl}
\bibliographystyle{hplain}
\Addresses
\end{document}